\numberwithin{equation}{section}
\newtheorem{thm}{Theorem}[section]
\newtheorem{lem}[thm]{Lemma}
\newtheorem{prop}[thm]{Proposition}
\newtheorem{cor}[thm]{Corollary}
\theoremstyle{remark}
\newtheorem{rmk}[thm]{Remark}
\renewcommand{\tilde}{\widetilde}
\renewcommand{\hat}{\widehat}
\renewcommand{\bar}{\overline}
\newcommand{\nn}{\nonumber}
\newcommand{\R}{{\mathbb R}}
\newcommand{\Ss}{{\mathbb{S}}}
\newcommand{\del}{\partial}
\newcommand{\Denote}{\stackrel{\Delta}{=}}
\newcommand{\One}{\boldsymbol{1}}
\newcommand{\Ni}{\noindent}
\newcommand{\Id}{{\bf I}}
\newcommand{\Eps}{\epsilon}
\newcommand{\dt}{ \, {\rm d} t}
\newcommand{\dw}{ \, {\rm d} w}
\newcommand{\dx}{ \, {\rm d} x}
\newcommand{\dy}{ \, {\rm d} y}
\newcommand{\dz}{ \, {\rm d} z}
\newcommand{\dv}{ \, {\rm d} v}
\newcommand{\dbmu}{\, {\rm d} \bar\mu}
\newcommand{\dtau}{\, {\rm d} \tau}
\newcommand{\dxi}{\, {\rm d} \xi}
\newcommand{\dsigma}{\, {\rm d}\sigma}
\newcommand{\CalB}{{\mathcal{B}}}
\newcommand{\CalD}{{\mathcal{D}}}
\newcommand{\CalE}{{\mathcal{E}}}
\newcommand{\CalF}{{\mathcal{F}}}
\newcommand{\CalG}{{\mathcal{G}}}
\newcommand{\CalH}{{\mathcal{H}}}
\newcommand{\CalL}{{\mathcal{L}}}
\newcommand{\CalQ}{{\mathcal{Q}}}
\newcommand{\CalS}{{\mathcal{S}}}
\newcommand{\CalT}{{\mathcal{T}}}
\newcommand{\CalX}{{\mathcal{X}}}
\newcommand{\CalY}{{\mathcal{Y}}}
\newcommand{\IntTRRS}{\int_{\T^3} \!\! \int_{\R^3} \!\! \int_{\R^3} \!\! \int_{\Ss^2_+}}
\newcommand{\IntRRS}{\int_{\R^3} \!\! \int_{\R^3} \!\! \int_{\Ss^2_+}}
\newcommand{\NN}{\mathbb{N}}
\newcommand{\ZZ}{\mathbb{Z}}
\newcommand{\T}{\mathbb{T}}
\newcommand{\abs}[1]{\left\lvert#1\right\rvert}
\newcommand{\norm}[1]{\left\lVert#1 \, \right\rVert}
\newcommand{\vint}[1]{\left\langle#1\right\rangle}
\newcommand{\viint}[2]{\left\langle#1, \, #2 \,\right\rangle}
\newcommand{\vpran}[1]{\left(#1\right)}
\newcommand{\Fl}[1]{f^{(\ell)}_{#1, +}}
\newcommand{\Fk}[2]{f^{(#1)}_{#2, +}}
\newcommand{\Hl}[1]{h^{(\ell)}_{#1, +}}
\newcommand{\Tnorm}[1]{{\left\vert\kern-0.25ex\left\vert\kern-0.25ex\left\vert #1 
    \right\vert\kern-0.25ex\right\vert\kern-0.25ex\right\vert}}
\begin{document}


\date{}

\title[$L^\infty$-Solutions of Non-cutoff Boltzmann]{De Giorgi Argument for Weighted $L^2\cap L^\infty$ solutions
	to the non-cutoff Boltzmann Equation}
%
%
\author{R. Alonso \and Y. Morimoto \and W. Sun \and T. Yang}

\begin{abstract}
This paper gives the first affirmative answer to the question of the global existence of Boltzmann equations without angular cutoff in the $L^\infty$-setting. In particular, we show that when the initial data is close to equilibrium and the perturbation is small in $L^2 \cap L^\infty$ with a polynomial decay tail, the Boltzmann equation has a global solution in the weighted $L^2\cap L^\infty$-space. In order to overcome the difficulties arising from the singular cross-section and the low regularity, a De Giorgi type argument
is crafted in the kinetic context with the help of the averaging lemma. More specifically, we use a strong averaging lemma to obtain suitable $L^p$-estimates for level-set functions. These estimates are crucial for constructing an appropriate energy functional to carry out the De Giorgi argument. Similar as in \cite{AMSY}, we extend local solutions to global ones by using the spectral gap of the linearised Boltzmann operator. The convergence to the equilibrium state is then obtained as a byproduct with relaxations shown in both $L^2$ and $L^\infty$-spaces.

	
{\bf keywords: } Boltzmann, De Giorgi argument, velocity averaging, level-set estimates, spectral gap.
\end{abstract}

\subjclass[2010]{76P05, 35Q35, 47H20}
\maketitle

\tableofcontents

\section{Introduction}
\subsection{Setup and Objective}
We consider in this paper the nonlinear Boltzmann equation
\begin{align} \label{eq:Nonlinear-Boltzmann-intro}
   \del_t F + v \cdot \nabla_x F = Q(F, F). 
\end{align}
Solutions to this equation $F=F(t,x,v)\geq0$ are the mass density distribution of particles at a time-space point $(t,x)\in(0,\infty)\times\T^{3}$ with velocity $v\in\R^{3}$.  The equation is supplemented with the initial condition
\begin{equation}\label{initial-data}
F(0,x,v) = F_0(x,v)\geq0.
\end{equation}
The nonlinear operator $Q(F,F)$ stands for the \textit{collision operator} which is defined by the integral formula
\begin{align*}
   Q(F, F) 
= \iint_{\R^3 \times \Ss^2}
    B( v - v_\ast , \sigma) \vpran{F'_\ast F' - F_\ast F}
    \dsigma \dv_\ast.
\end{align*}
In this paper we treat the hard potential case with the collision kernel taking the form
\begin{equation}\label{cross-section}
B( v - v_\ast , \sigma) = |v - v_\ast|^{\gamma}b(\cos\theta),
\qquad
\gamma > 0. 
\end{equation}
Following a convention, we assume without loss of generality that $b(\cos\theta)$ is supported on $\cos\theta \geq 0$. This is valid due to the structure of the collision operator. We consider the so-called non-cutoff kernels satisfying
\begin{equation*}
   \sin\theta\,b(\cos\theta)\sim \frac{C}{\theta^{1+2s}},
\qquad 
   \text{for $\theta$ near $0$ and for any $s\in(0,1)$}.
\end{equation*}  
Away from the region of grazing interactions $\theta=0$, the scattering kernel $b$ is assumed to be integrable in $\mathbb{S}^{2}$. 

\medskip
\noindent
The regime close to equilibrium is considered in this work as we seek solutions of the form
\begin{equation*}
F(t,x,v) = \mu + f(t,x,v),
\qquad
\mu = \mu(v) = (2\pi)^{-3/2}\,e^{-|v|^{2}/2}.
\end{equation*}
In such a situation, the unknown $f$ satisfies the nonlinear Boltzmann equation
\begin{equation*}
\partial_{t}f = \mathcal{L}f + Q(f,f),\qquad f(0,x,v)=f_0(x,v),
\end{equation*}
where $\mathcal{L}$ stands for the linear operator
\begin{equation*}
\mathcal{L}f = Q(\mu,f) + Q(f,\mu) - v\cdot\nabla_{x}f.
\end{equation*} 
Physical solutions satisfy the laws of mass, momentum, and energy conservation, which translate to 
\begin{align}\label{eq:conservation-laws}
\int_{\T^{3}}\int_{\R^{2}}f(t,x,v){\rm d}v{\rm d}x = 0,\quad\int_{\T^{3}}\int_{\R^{2}}v\,f(t,x,v){\rm d}v{\rm d}x = 0,\quad\int_{\T^{3}}\int_{\R^{2}}|v|^{2}f(t,x,v){\rm d}v{\rm d}x = 0,
\end{align} 
for all $t\geq0$.

The goal of this work is to show the existence of solutions of the Boltzmann equation for any initial data $f_0$ satisfying $(1 + |v|^2)^{k_0}f_0 \in L^{2}_{x,v}\cap L^{\infty}_{x,v}$ in the perturbative framework.
We note that since the well-established work of constructing near-equilibrium solutions (\cite{AMUXY, AMUXY-1,AMUXY2011-AA,AMUXY2011-CMP,GS2011}) in
the case of $\mu^{-1/2}f_0 \in L^{2}_{v}H^{2}_{x}$,
a lot of efforts have been made to lower the regularity requirement on the initial data in seeking global solutions (cf. \cite{DSSS,D-S,MoSa} and the references therein). To the best of our knowledge, our work is the first to obtain a global solution in the $L^\infty$-setting for the non-cutoff Boltzmann, thus adding a missing link to the studies of the global well-posedness of the nonlinear Boltzmann equations.

%


\subsection{Significance and Main Result} 

\noindent
The problem of constructing solutions to the Boltzmann equation with initial data having minimal spatial regularity has been highly appealing to the community in both cutoff and without cutoff contexts of the Boltzmann equation.  It is desirable to create mathematical tools that can deal with singularity creation/propagation since they may connect to the physical phenomena of shock formation and/or attenuation on the macroscopic scale. 

\vspace{0.5mm}
For the cutoff Boltzmann equations, the development of the well-posedness theory for solutions near equilibrium in the  $L^\infty$-framework  can be traced back to Grad \cite{Grad} for local existence and later by Ukai \cite{Ukai1974} for global  existence under the  Grad's angular cutoff assumption. Ukai's theory relies on the spectral analysis \cite{E-P} of the linearized Boltzmann operator and a bootstrap argument. 
In addition to the $L^\infty$-framework, the well-posedness theory for cutoff Boltzmann has also been well developed in other settings. For example,  in the $L^1$-setting the classical theory on the renormalized solution was established by DiPerna-Lions in their seminal work \cite{D-L} by making essential use of the famous H-theorem and the velocity averaging lemma. The $L^2$-framework based on energy methods has also been extensively explored (\cite{Guo-0, LY, LYY}). Furthermore, an $L^2-L^{\infty}$ interplay method has been introduced in \cite{UkaiYang,Guo} 
and applied to various contexts (see \cite{GKTT} and the references therein) to obtain solutions with low regularity and close to equilibrium.  Earlier theory on solutions near equilibrium has been focused on perturbations with Gaussian tails. More recently, a big step forward is made in \cite{GMM} where the authors introduced a new framework of using spectral analysis to relax the velocity decay constraint from Gaussian to polynomial. 
In the cutoff context a key point for the control of the collision operator is to work in Banach spaces with an ``algebraic structure" in the spatial variable.  

\vspace{0.5mm}
For Boltzmann equations without angular cutoff, the well-posedness for large data in $L^1$-framework was obtained by  Alexandre-Villani in \cite{A-V}, and the $L^2$-theory for perturbative solutions around an equilibrium was first established in \cite{AMUXY-1,AMUXY2011-AA,AMUXY2011-CMP,AMUXY2012JFA,GS2011}. In \cite{AMUXY,AMUXY2013KRM}, the authors considered the space $L^{2}_{v}H^{\beta}_{x}$ with $\beta>3/2$ for local existence.  The particular range of $\beta$ seems almost optimal since 
the main idea is to use the Sobolev embedding $H^{\beta}_{x}\subseteq L^{\infty}_{x}$ to handle the quadratic nonlinearity of the collision operator. This mimics the idea implemented for the cutoff case through the ``algebraic" control $\|f\,g\|_{H^{\beta}}\leq \| f\|_{H^{\beta}}\| g \|_{H^{\beta}}$.

Contrary to the extensive studies in the $L^2$-setting, the $L^\infty$-theory for the wellposedness of the Boltzmann equation without angular cutoff has remained open. Recently in \cite{DSSS}, the authors were able to construct global-in-time solutions in a space based on the Wiener algebra in $x$ with the norm
\begin{equation*}
\| f \|_{\mathcal{W}} := \sum_{k}\sup_{t}\| \mathcal{F}_{x}\{f\}(t,k,\cdot) \|_{L^{2}_{v}},
\end{equation*}
The key point in~\cite{DSSS} is again the ``algebraic" control $\| f\,g\|_{\mathcal{W}} \leq \|f\|_{\mathcal{W}} \|g\|_{\mathcal{W}}$. The Wiener algebra is significantly more general than the Sobolev spaces $H^\beta_x$ used in earlier works. It is a considerable step toward $L^\infty_x$-spaces, although still more restrictive.  We further note that the aforementioned works are in the context of velocity with Gaussian tails, in which spectral and coercivity properties of the collision operator appear naturally.  For the recent development on the perturbation with a polynomial decay, one can refer to \cite{AMSY, GMM, HTT} and the references therein.

The goal of our paper is to give a global existence proof for the non-cutoff Boltzmann equation in the $L^\infty$-setting. Instead of following the path of exploring algebraic structures, we apply a different framework based on a De Giorgi argument. The full machinery of the De Giorgi-Nash-Moser method has been used lately in a series of remarkable developments for the Landau and non-cutoff Boltzmann equations \cite{GIMV, KGH, IS, S}. Among these works, one direction can be summarized as {\it conditional regularities} of solutions to kinetic equations \cite{GIMV, IS, S}. More specifically, solutions with properties which remain to be justified in general are shown to have quantitative regularization. In this respect, these works are mainly concerned with the regularization mechanism of kinetic equations while our goal is to establish a self-contained existence result. In \cite{KGH}, by using strategies inspired by~\cite{GIMV} the authors have proved a global well-posedness result for the Landau equation. To achieve the H\"{o}lder continuity typical in the De Giorgi-Nash-Moser argument, the initial data in \cite{KGH} needs to be better prepared than merely in weighted $L^\infty$-spaces. Moreover, compared with the Boltzmann operator, the Landau operator has a more localized structure  which is closer to classical nonlinear parabolic operators. As a consequence, it is not clear to us how to extend the argument in~\cite{KGH} to the Boltzmann equation with an $L^\infty$-data.

In this paper we are not aiming at showing the H\"{o}lder continuity of the solutions or making use of such regularization. Instead, we make essential use of the classical velocity averaging lemma to transfer the regularity from the velocity to the spatial variable. The approach we adopt in this paper is more related to a level-set method introduced for the homogeneous Boltzmann equation in \cite{A} inspired by the first part of \cite{CV}. See also the application to a linear radiative transfer equation in the forward-peak regime \cite{AS, ACuba}.  Built upon some key estimates in our recent work \cite{AMSY}, we develop the theory in the polynomial-tail setting.  The method presented here relies purely on the weak formulation of the collision operator and the averaging properties of the transport operator. We believe it is general and ready to apply to a wide range of kinetic equations including the Landau equation.

Finally we remark that unlike the results in~\cite{DSSS, KGH}, so far we are not able to obtain uniqueness in $L^\infty$-spaces. 

\smallskip
With some details of the parameters left out, the main theorem can be summarized as:

\begin{thm}
	Suppose the cross section of the Boltzmann equation satisfies~\eqref{cross-section} with $\gamma \in (0, 1)$ and $s \in (0, 1)$ and the initial data $F_0 \geq 0$ satisfies \eqref{eq:conservation-laws}. Then for $k_0, k$ large enough with $k > k_0$, there exists $\delta^\natural_{\ast} > 0$ such that if 
\begin{align*} 
   \norm{\vint{v}^{k_0}(F_0(x,v)-\mu)}_{L^{2}_{x,v}\cap L^{\infty}_{x,v}}
\leq \delta^\natural_{\ast},
\qquad
     \norm{\vint{v}^{k}(F_0(x,v)-\mu)}_{L^{2}_{x,v}}
< \infty.
\end{align*}
Then there exists a non-negative solution $F \in L^\infty(0, \infty; L^2_x L^2_{k} (\T^3 \times \R^3))$ to \eqref{eq:Nonlinear-Boltzmann-intro}. Moreover, for some $\delta_0$ and $\lambda'>0$, the solution $F$ satisfies 
\begin{equation}\label{solution-1}
  \norm{\vint{v}^{k_0}(F(t,x,v)-\mu)}_{L^{\infty}_{x,v}}
\leq \delta_0,
\qquad
  \norm{\vint{v}^{k}(F(t,x,v)-\mu)}_{L^{2}_{x,v}}
< C e^{-\lambda' t}.
\end{equation}
Furthermore, for some $C_\ast, \tilde \lambda > 0$, the weighted $L^\infty$-norm of the perturbation decays exponentially in time:
\begin{align*}
  \norm{\vint{v}^{k_0}(F(t,x,v)-\mu)}_{L^{\infty}_{x,v}}
\leq
  C_\ast e^{-\tilde \lambda t}.
\end{align*}
\end{thm}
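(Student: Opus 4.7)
The plan is to construct the global solution by combining a De Giorgi level-set iteration (adapted to the kinetic setting via the velocity averaging lemma) with the spectral gap of the linearised operator $\mathcal{L}$, following the template of \cite{AMSY}. Setting $F=\mu+f$ and fixing an approximation scheme $f^{\varepsilon}$ obtained by truncating the angular singularity and mollifying the data (so classical local well-posedness applies), the task reduces to establishing three $\varepsilon$-uniform bounds: an $L^{2}_{k}$ energy estimate producing exponential decay, a weighted $L^{\infty}$ bound of size $\delta_{0}$, and nonnegativity $F^{\varepsilon}\ge 0$. A standard compactness argument then passes to the limit, and the conservation laws \eqref{eq:conservation-laws} are preserved in the passage.

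The $L^{2}$ control rests on bilinear estimates from the previous sections. For small perturbations one obtains, schematically,
\begin{equation*}
\tfrac{d}{dt}\|\vint{v}^{k}f\|_{L^{2}_{x,v}}^{2}+c\,\mathcal{D}_{k}(f)\le C\,\|\vint{v}^{k_{0}}f\|_{L^{\infty}_{x,v}}\,\mathcal{D}_{k}(f),
\end{equation*}
where $\mathcal{D}_{k}$ denotes the fractional velocity dissipation extracted from $Q(\mu,\cdot)$. Once the $L^{\infty}$ smallness is supplied, the dissipation dominates; combined with \eqref{eq:conservation-laws} and the spectral gap of $\mathcal{L}$ on $\bigl(\Null\mathcal{L}\bigr)^{\perp}$, this yields $\|\vint{v}^{k}f(t)\|_{L^{2}_{x,v}}\le Ce^{-\lambda' t}$.

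The $L^{\infty}$ bound is the heart of the proof. Writing $g=\vint{v}^{k_{0}}f$, introduce levels $K_{\ell}=\delta_{0}(1-2^{-\ell})$ and the truncations $g^{(\ell)}_{+}=(g-K_{\ell})_{+}$. Testing the equation for $g$ against $g^{(\ell)}_{+}$ and using the weak formulation of $Q$ yields a level-set energy inequality in which the singular bilinear contributions either vanish on the sublevel set $\{g\le K_{\ell}\}$ or are absorbed by the fractional dissipation thanks to the a~priori $L^{2}$ smallness. The strong averaging lemma, applied to the kinetic transport structure satisfied by $g^{(\ell)}_{+}$, then upgrades $L^{2}_{t,x,v}$ integrability to $L^{2+\eta}_{t,x,v}$ for some $\eta>0$, which furnishes the superlinear gain needed for a recursion of the form
\begin{equation*}
U_{\ell}\le C^{\ell}\,U_{\ell-1}^{1+\kappa},\qquad U_{\ell}:=\|g^{(\ell)}_{+}\|_{L^{2+\eta}_{t,x,v}}^{2}.
\end{equation*}
For $\delta^{\natural}_{\ast}$ small enough, $U_{0}$ is small and a standard De Giorgi lemma forces $U_{\ell}\to 0$, giving $g\le \delta_{0}$. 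The nonnegativity $F\ge 0$ follows from an identical iteration applied to the negative part of $F$.

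Global existence and the final $L^{\infty}$ decay are then obtained by iterating the local argument on successive time slabs $[n,n+1]$: the $L^{2}$ decay of the first step shrinks the driving data at each stage, and a De Giorgi run on shrinking levels $C_{\ast}e^{-\tilde\lambda n}$ with $\tilde\lambda<\lambda'$ yields $\|\vint{v}^{k_{0}}f(t)\|_{L^{\infty}_{x,v}}\le C_{\ast}e^{-\tilde\lambda t}$. The hardest step is closing the nonlinear De Giorgi recursion: because the non-cutoff operator couples values of $f$ across arbitrary scales in $\theta$, a naive truncation destroys the cancellations responsible for coercivity of $Q(\mu,\cdot)$, so one must carefully track how the singular bilinear terms interact with the level cutoff and balance the gain exponent $\eta$ produced by the averaging lemma against the singularity order $s$ and the polynomial weight $k_{0}$, uniformly in $\ell$. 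This is where the level-set strategy of \cite{A,CV} and the $L^{p}$-weak-formulation estimates of the preceding sections are indispensable.
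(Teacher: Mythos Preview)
Your overall architecture---De Giorgi level sets, velocity averaging to gain integrability, spectral gap to globalize, then a second De Giorgi pass for $L^\infty$ decay---matches the paper's. But two structural choices in your sketch would not close.

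\textbf{The approximation scheme.} Truncating the angular singularity so that ``classical local well-posedness applies'' destroys exactly the mechanism you later invoke: the fractional coercivity $\int Q(\mu,f)f\,dv\lesssim -c\|f\|_{H^{s}_{\gamma/2}}^{2}+\text{l.o.t.}$ With a cutoff kernel there is no $H^{s}_{v}$ dissipation, hence no input for the strong averaging lemma, and your $L^{2}\to L^{2+\eta}$ gain is not available uniformly in $\varepsilon$. The paper does not cut off the singularity; it keeps (a weakly singular version of) the kernel and adds a separate parabolic regularizer $\varepsilon L_{\alpha}f=-\varepsilon\bigl(\langle v\rangle^{2\alpha}f-\nabla_{v}\!\cdot(\langle v\rangle^{2\alpha}\nabla_{v}f)\bigr)$. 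The fixed-point construction uses the $H^{1}_{v}$ gain from $\varepsilon L_{\alpha}$, while all De Giorgi estimates use the $H^{s}_{v}$ coercivity of $Q$ and are therefore uniform in $\varepsilon$. A second device you omit is the cutoff $\chi$ in the first slot, $Q(\mu+f\chi(\langle v\rangle^{k_{0}}f),\mu+f)$: this forces the smallness $\sup_{x}\|f\|_{L^{1}\cap L^{2}}\le\delta_{0}$ \emph{a priori} on the approximants (so the bilinear estimates and coercivity hold unconditionally), and the De Giorgi iteration then shows \emph{a posteriori} that $\chi\equiv 1$.

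\textbf{The $L^{\infty}$ decay.} Iterating the De Giorgi bound on slabs $[n,n+1]$ runs into the shape of the estimate: on each slab one gets $\sup_{t}\|\langle v\rangle^{k_{0}}f\|_{L^{\infty}}\le\max\{\,2\|\langle v\rangle^{k_{0}}f(n)\|_{L^{\infty}},\ K_{0}(\CalE_{0})\,\}$. The second entry decays with $\CalE_{0}\sim e^{-\lambda' n}$, but the first doubles the previous $L^{\infty}$ bound, so the recursion $A_{n+1}\le\max\{2A_{n},Ce^{-c n}\}$ does not decay. The paper removes the initial-data term altogether: it runs the iteration with increasing initial times $T_{k}=T(1-2^{-(k+1)})$ and \emph{averages} the level-set inequality in $t_{1}\in[T_{k-1},T_{k}]$, converting $\|\langle v\rangle^{2}f_{k}(t_{1})\|_{L^{2}}^{2}$ into a time integral controlled by $\CalE_{k-1}$. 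That is the step that transfers the $L^{2}$ exponential decay to $L^{\infty}$.
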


\subsection{Notations} We employ several notations for function spaces in this paper. First, $L^2_{x,v}$ or $L^2(\T^3 \times \R^3)$ denotes the usual $L^2$-space over $\T^3 \times \R^3$ and $L^2_x H^s_v(\T^3 \times \R^3)$ denotes the space where $(I - \Delta_v)^{s/2} f \in L^2_{x,v}$. Any weight in the subindex denotes a weight in $v$ only. For example, $L^2_x L^2_k$ denotes the space where $\vint{v}^k f \in L^2_{x,v}$ where $\vint{v}$ is the Japanese bracket defined by $\vint{v}^2 = 1 + |v|^2$. 

There are many parameters in this paper. Among them, we reserve the key ones for designated meanings that will not change throughout this paper:
\begin{itemize}
\item $\gamma$: power in the hard potential.

\item $s$: strength of the singularity in the collision kernel.

\item $s'$: regularity in $x$ (and $t$) derived from  the averaging lemma.

\item $\gamma_0$: dissipation coefficient in Lemma~\ref{lem:decomp-Q-ell}.

\item $c_0$: dissipation coefficient in Proposition~\ref{prop:coercivity-1}. 

\item $k_0$: moment in the $L^\infty$-bound (in $t, x, v$) of the solution.

\item $\delta_0$: smallness of the $L^\infty$-norm (in $t, x$) of the solution for the energy estimates to close.

\item $\Eps$: strength of the regularizing operator $\Eps L_\alpha$ with $L_\alpha$ defined in~\eqref{def:L-alpha}.


\item $\CalE_k$: $k$-th energy level.

\item $\ell_0$: minimal order of moments needed for the inhomogeneous embedding in~\eqref{estT1-1}.

\item $\lambda_0$: spectral gap of the linearized Boltzmann operator.

\end{itemize}

\Ni Other parameters such as $p, q, p', k, \beta, \beta', s'', \ell, \theta, \eta$ may change from statement to statement.  Constants denoted by $C, C_\ell, C_k$ may change from line to line.  

\Ni Since we assume that the collision kernel $b(\cos\theta)$ is supported on $\cos\theta \geq 0$, the integration limits for $b$ can be either $\Ss^2$ or $\Ss^2_+$ and we use them interchangeably. 

\subsection{Methodology and organization}    
A brief outline of the strategy implemented in this paper is as follows:  the underlying condition for the validity of the \textit{a priori} estimates, in addition to sufficient regularity, is the smallness condition
\begin{equation} \label{intro:smallness}
\sup_{t,x}\| f \|_{L^{1}_{w_0}\cap L^{2}}\leq \delta_0,
\end{equation}
where $w_0>0$ is a threshold of polynomial decay and $\delta_0>0$ is a sufficiently small quantity.     
With this condition, $L^{2}_{x,v}$ and $L^{2}_{x}H^{s}_{v}$ energy estimates with general weights in velocity can be proved. The bound in $L^2_x H^s_v$ demonstrates the natural regularization in the velocity variable reminiscent of a fractional Laplace's equation.  
Using a time-localized averaging lemma, one can ``complete" the velocity energy estimate of the equation to include the regularization in the spatial variable using the norm $H^{s'}_{x}L^{2}_{v}$ for some $s' \in(0,s)$.  This confirms the hypoelliptic properties of the equation as expected. The hypoellipticity paves the way to apply the De Giorgi argument through embeddings of Sobolev spaces into various $L^p$ spaces. In particular, we construct the crucial energy functional
\begin{align*}
     \CalE_{p}(K,T_1,T_2)
:= \sup_{ t \in [ T_1 , T_2 ] } 
       \norm{\Fl{K}}^{2}_{L^{2}_{x,v}} 
& +  c_0\int^{T_2}_{T_1}\int_{\T^{3}}
         \norm{\vint{\cdot}^{\gamma/2}\Fl{K}}^{2}_{H^{s}_{v}} \dx\dtau \nn
\\
& + \frac{1}{C_0}\vpran{\int^{T_2}_{T_1} \norm{(1-\Delta_{x})^{\frac {s''}{2}}\vpran{\Fl{K}}^{2}}^{p}_{L^{p}_{x,v}}\dtau}^{\frac{1}{p}},
\end{align*}
where $0 < s'' < s$ will be suitably chosen, $K$ is any positive number, $p$ is a parameter depending on $s$ and $\Fl{K}$ is the level-set function defined ~by
\begin{align} \label{def:level-set-into}
   \Fl{K} = \vpran{\vint{v}^\ell f - K} \One_{\vint{v}^\ell f - K \geq 0}.
\end{align}
With $K = K_n \to K_0$ for some $K_0$ depending on the initial data, the main step in the De Giorgi argument is to show that $\CalE_p(K_n, T_1, T_2)$ satisfies an inhomogeneous (in degree) iterative relation (see~\eqref{key-estimate-linear} and~\eqref{key-estimate}). This key iterative relation leads to the limit $\CalE_p(K_n, T_1, T_2) \to 0$ as $n \to \infty$, thus proving the weighted $L^\infty$-bound. To enforce the smallness condition~\eqref{intro:smallness} when constructing the approximate solutions, we introduce a cutoff function $\chi$ and consider the modified collision operator 
\begin{align*}
   Q(\mu + f\chi(\vint{v}^{k_0} f), \mu + f),
\end{align*}
so that the smallness condition is satisfied naturally for the approximate solutions. Such cutoff function automatically disappears after one applies the De Giorgi method and shows {\it a posteriori} that the smallness condition holds intrinsically when the initial data is small enough. 

The strategy described above is applied first to the linearized equation and then to the nonlinear equation to obtain local solutions with $L^\infty$-bounds to the original Boltzmann equation. A major part of this paper is dedicated to the linear analysis. Although obtaining a solution to the linearized equation is fairly straightforward, significant effort has been 
carried out to show the $L^\infty$-bounds of the solution. A delicate issue is to handle the moments required in various estimates. Interestingly, the moment requirement imposed on solutions for linear and nonlinear estimates drastically differs, with the nonlinear case much easier to handle. The key factor at play is the quadratic structure of the collision operator. This structure reduces the moment needed on solutions for the $L^{2}_{x,v}$ estimates which is essential for closing the argument. Finally, combining the local existence with the spectral gap we obtain a global solution to the original Boltzmann equation. 

This paper is laid out as follows. After including a technical toolbox in Section 2, we establish the well-posedness for the linearized Boltzmann equation in Sections~\ref{Sec:a-priori-linear} and~\ref{Sec:linear-wellposedness}: Section~\ref{Sec:a-priori-linear} consists of a priori $L^2_{x,v}$ and $L^2_x H^s_v$-estimates and $L^1$-estimates for the collision term for the linear equation. These estimates are used in Section~\ref{Sec:linear-wellposedness} to show the existence of solutions to the linear equation.  
In Section~\ref{Sec:nonlinear-local} we establish the nonlinear counterparts of the estimates of those in Section~\ref{Sec:a-priori-linear} and apply them to establish the local existence of the nonlinear Boltzmann equation. In Section~\ref{sec:nonlinear-global} we combine the results in Section~\ref{Sec:nonlinear-local} and the spectral gap property of the linearized Boltzmann operator to establish the global existence of the nonlinear Boltzmann equation. 
The result proved in Section~\ref{sec:nonlinear-global} is only for the weakly singular kernels. We extend the result to the strong singularity in Section~\ref{Sec:strong-singularity}.  

\smallskip
\noindent

\smallskip
\noindent
To help the reader better understand the structure of the proof, we show a flow chart of the main steps in Figure~\ref{strategy}.  Starting from the smallness assumption, the $L^{2}$-theory is performed.  The velocity regularization appears in a standard way whereas spatial regularization is obtained through velocity averaging.  The $L^{2}$-theory comprises both $f$ and $K$-levels $\Fl{K}$. A higher moment of order $k$ is needed (to be precise, $k = k_0+\ell_0+2$ for some $\ell_0$ depending on $s$) in the $L^{2}$-estimates to prove algebraic $k_0$ moments in the $L^{\infty}$-estimates.     

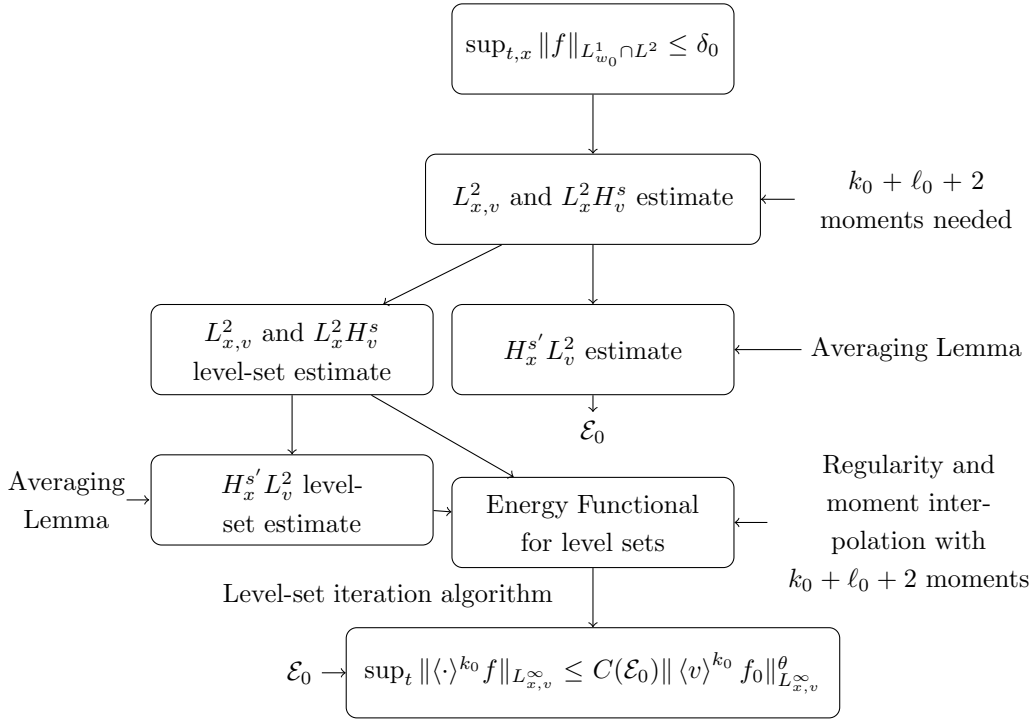
\begin{figure}[!h]
\centering
\begin{tikzpicture}
\node[draw=black, text width=10em, text centered, rounded corners, minimum height=4em, minimum size=12mm] (init) {$\sup_{t,x}\| f \|_{L^{1}_{w_0}\cap L^{2}}\leq \delta_0$};
\node[draw=black, text width=12em, text centered, rounded corners, minimum size=12mm, node distance=2cm, below of=init]  (velEnergy) {$L^{2}_{x,v}$ and $L^2_{x}H^s_{v}$ estimate};
\node[draw=black, text width=10em, text centered, rounded corners, minimum size=12mm, node distance=2cm, below of=velEnergy]   (spatialEnergy) {$H^{s'}_{x}L^{2}_{v}$ estimate};
\node[node distance=1.1cm, below of=spatialEnergy] (E0) {$\mathcal{E}_0$};
\node[draw=black, text width=10em, text centered, rounded corners, minimum size=12mm, node distance=4.0cm, left of=spatialEnergy]  (LevelVel) {$L^{2}_{x,v}$ and $L^2_{x}H^s_{v}$ level-set estimate};
\node[draw=black, text width=10em, text centered, rounded corners, minimum size=12mm, node distance=2.0cm, below of=LevelVel]  (LevelSpatial) {$H^{s'}_{x}L^{2}_{v}$ level-set estimate};
\node[draw=black, text width=10em, text centered, rounded corners, minimum size=12mm, node distance=2.3cm, below of=spatialEnergy]  (EnergyFunct) {Energy Functional for level sets};
\node[draw=black, text width=18em, text centered, rounded corners, minimum size=12mm, node distance=2cm, below of=EnergyFunct]  (infinity) {$\sup_{t}\| \langle\cdot\rangle^{k_0}f \|_{L^{\infty}_{x,v}} \leq C(\mathcal{E}_0)\|\vint{v}^{k_0} f_0 \|^{\theta}_{L^{\infty}_{x,v}}$};
\node[text width=8.5em, text centered, node distance=4.3cm, right of=velEnergy] (a) {$k_0+\ell_0+2$ moments needed};
\node[node distance=2cm, below of=a] (avlemma) {Averaging Lemma};
\node[text width=6em, text centered, node distance=3cm, left of=LevelSpatial] (avlemma1) {Averaging Lemma};
\node[text width=10em, text centered, node distance=4.2cm, right of=EnergyFunct] (inter) {Regularity and moment interpolation with $k_0+\ell_0+2$ moments};
\node[text width=15em, text centered, node distance=1.8cm, below right of=LevelSpatial] (iteration) {Level-set iteration algorithm};
\node[node distance=3.9cm, left of=infinity] (E0v2) {$\mathcal{E}_0$};
\path [draw, ->] (init) -- (velEnergy);
\path [draw, ->] (velEnergy) -- (spatialEnergy);
\path [draw, ->] (velEnergy) -- (LevelVel);
\path [draw, ->] (LevelVel) -- (LevelSpatial);
\path [draw, ->] (LevelSpatial) -- (EnergyFunct);
\path [draw, ->] (LevelVel) -- (EnergyFunct);
\path [draw, ->] (EnergyFunct) -- (infinity);
\path [draw, ->] (a) -- (velEnergy);
\path [draw, ->] (avlemma) -- (spatialEnergy);
\path [draw, ->] (-6.2,-6) -- (-5.9,-6cm);
\path [draw, ->] (inter) -- (EnergyFunct);
\path [draw, ->] (spatialEnergy) -- (E0);
\path [draw, ->] (E0v2) -- (infinity);

\end{tikzpicture}
\caption{Flow chart of the strategy.  Moments are related as $k_0>w_0>0$ and so does regularity as $s>s'>0$.  The constant $C(\mathcal{E}_0)$ is independent of the smallness parameter $\delta_0$.}\label{strategy}
\end{figure}


\section{Technical Toolbox}
\subsection{Function Spaces}
In this paper we use two classical function spaces, namely, Bessel potential and Sobolev-Slobodeckij spaces.  Most of the work is based on the former, yet, the proof of some estimates is simpler if performed in the latter.

\medskip
\noindent
\textbf{Definition.} For $p\in[1,\infty)$ and $\beta\in\mathbb{R}$, the Bessel Potential space is
\begin{equation} \label{def:H-s-p}
H^{\beta,p}(\mathbb{R}^{d}):=\Big\{ u\in L^{p}(\mathbb{R}^{d})\, \big|\, \mathcal{F}^{-1}\big\{ (1+|\xi|^{2})^{\frac{\beta}{2}}\mathcal{F}u\big\} \in L^{p}(\mathbb{R}^{d})\Big\},
\end{equation}
where $\CalF$ is the Fourier transform. The norm that equips $H^{\beta,p}(\mathbb{R}^{d})$ is naturally 
\begin{equation*}
\| u \|_{H^{\beta,p}(\mathbb{R}^{d})} := \big\| \mathcal{F}^{-1}\big\{ (1+|\xi|^{2})^{\frac{\beta}{2}}\mathcal{F}u \big\}\big\|_{L^{p}(\mathbb{R}^{d})} = \big\| (1-\Delta)^{\frac{\beta}{2}}u \big\|_{L^{p}(\mathbb{R}^{d})}. 
\end{equation*} 
\textbf{Definition.} For $p\in[1,\infty)$ and $\beta\in(0,1)$, the Sobolev-Slobodeckij space is
\begin{equation} \label{def:W-s-p}
W^{\beta,p}(\mathbb{R}^{d}):= \Big\{ u\in L^{p}(\mathbb{R}^{d})\, \big|\, \int_{\mathbb{R}^{d}}\int_{\mathbb{R}^{d}}\frac{|u(x) - u(y)|^{p}}{|x-y|^{d+\beta p}}\dy\dx < \infty\Big\}.
\end{equation}
A natural norm that equips $W^{\beta,p}(\mathbb{R}^{d})$ is given by
\begin{equation*}
\| u \|_{W^{\beta,p}(\mathbb{R}^{d})} := \bigg(\int_{\mathbb{R}^{d}} |u(x)|^{p} \dx + \int_{\mathbb{R}^{d}}\int_{\mathbb{R}^{d}}\frac{|u(x) - u(y)|^{p}}{|x-y|^{d+\beta p}}\dy\dx\bigg)^{\frac{1}{p}}.
\end{equation*}
The Bessel potential spaces and Sobolev-Slobodeckij spaces agree for $p=2$.  More generally, the following relation holds:

\smallskip
\noindent
(i) For all $p\in(1,2]$, $\beta\in(0,1)$ it holds that $W^{\beta,p}(\mathbb{R}^{d}) \hookrightarrow H^{\beta,p}(\mathbb{R}^{d})$. 

\smallskip
\noindent
(ii) For all $p\in[2,\infty)$, $\beta\in(0,1)$ it holds that $H^{\beta,p}(\mathbb{R}^{d})  \hookrightarrow W^{\beta,p}(\mathbb{R}^{d})$. 

\smallskip
\noindent
The proof of this fact can be found in \cite{Stein}, Theorem 5 in Chapter V.  
  
\subsection{Useful facts about polynomial weights}
In this section, we list some useful estimates that are needed for later estimation. Recall that
\begin{align*}
  \norm{g}_{ H^{\beta}_{\ell} } 
= \norm{\vint{v}^\ell g}_{ H^{\beta}_{v} } ,\qquad \ell,\,\beta\in\R .
\end{align*}

First we present two lemmas related to commutator estimates of fractional derivatives. Since their proofs are technical and not directly associated with the Boltzmann operator, we leave them to Appendix~\ref{appendix:lemmas}. 
\begin{lem}[cf.,\cite{HMUY-2008}] \label{prop:equivalence}
Let $1\le p \le\infty$. Suppose $\ell,\, \theta \in \R$. Then there exists a generic constant $C$ independent of $f$ such that
\begin{align*}
    \frac{1}{C} \norm{\vint{v}^\ell \vint{D_v}^\theta f}_{ L^p_{v} }
\leq 
    \norm{\vint{D_v}^\theta \vint{v}^\ell f}_{ L^p_v } 
\leq 
   C  \norm{\vint{v}^\ell \vint{D_v}^\theta f}_{ L^p_{v} },
\end{align*}
that is, these two norms are equivalent. 
\end{lem}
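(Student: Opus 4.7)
The plan is to establish both inequalities simultaneously by analyzing the commutator $[\vint{D_v}^\theta, \vint{v}^\ell]$, since
$$\vint{D_v}^\theta(\vint{v}^\ell f) = \vint{v}^\ell \vint{D_v}^\theta f + [\vint{D_v}^\theta, \vint{v}^\ell] f,$$
so it suffices to show the commutator is controlled in $L^p_v$ by a strictly weaker norm of the form $\|\vint{v}^{\ell'}\vint{D_v}^{\theta'} f\|_{L^p_v}$ with $\ell' < \ell$ and $\theta' < \theta$. Once such a lower-order bound is available, a finite iteration (reducing to the trivial case $\ell = \theta = 0$) delivers the claimed equivalence with a constant depending only on $\ell$, $\theta$, and $p$.

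For $1 < p < \infty$ I would invoke pseudodifferential calculus. The composition $\vint{D_v}^\theta \circ \vint{v}^\ell$ is an operator whose full symbol admits the asymptotic expansion
$$\sigma(v,\xi) \sim \vint{v}^\ell \vint{\xi}^\theta + \sum_{|\alpha|\ge 1} \tfrac{(-i)^{|\alpha|}}{\alpha!}\,\partial_\xi^\alpha \vint{\xi}^\theta \, \partial_v^\alpha \vint{v}^\ell,$$
so each remainder term behaves like $\vint{v}^{\ell-|\alpha|}\vint{\xi}^{\theta-|\alpha|}$. After conjugation by $\vint{v}^{-\ell}\vint{D_v}^{-\theta}$ the commutator becomes a zero-order pseudodifferential operator on $\R^3$, hence bounded on $L^p_v$ by the Calder\'on--Vaillancourt theorem (or by a Mihlin-type multiplier theorem applied dyadically).

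For the endpoint cases $p = 1$ and $p = \infty$, the multiplier theorems no longer apply, so I would argue via an explicit kernel representation. Writing $\vint{D_v}^\theta f = G_\theta * f$ with $G_\theta$ the Bessel potential kernel (for $\theta<0$ directly, and for $\theta>0$ by reducing to negative $\theta$ via $\vint{D_v}^\theta = \vint{D_v}^{\theta-2N}(1-\Delta_v)^N$ with $N$ large enough), one has
$$[\vint{D_v}^\theta, \vint{v}^\ell] f(v) = \int G_\theta(v-w)\bigl(\vint{w}^\ell - \vint{v}^\ell\bigr) f(w)\,dw.$$
Using Peetre's inequality $\vint{w}^\ell \le C_\ell \vint{v-w}^{|\ell|}\vint{v}^\ell$, together with the mean value theorem to extract a factor $|v-w|$ from $\vint{w}^\ell - \vint{v}^\ell$, one reduces the matter to convolution with a kernel enjoying additional decay, which yields the lower-order bound in $L^1$ and $L^\infty$ via Young's inequality.

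The main obstacle I anticipate is the bookkeeping at the endpoints: the symbolic calculus provides a clean picture only for $1 < p < \infty$, and the explicit kernel analysis at $p = 1, \infty$ demands careful pointwise estimates on derivatives of $G_\theta$ together with Peetre-type weight manipulations, uniformly in the relative signs of $\ell$ and $\theta$. These technicalities are routine once organized, which is presumably why the authors relegate the proof to Appendix~\ref{appendix:lemmas}.
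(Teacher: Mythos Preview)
Your commutator-and-symbol approach is correct and matches the paper's: the appendix (following \cite{HMUY-2008} and the calculus in \cite{Kumanogo}) treats $\vint{D_v}^\theta \vint{v}^\ell \vint{D_v}^{-\theta}\vint{v}^{-\ell}$ as an SG-type pseudodifferential operator of order $(0,0)$ and invokes its $L^p$-boundedness, with the endpoint cases handled by the Bessel-kernel estimates you outline. One minor remark: the ``finite iteration to $\ell=\theta=0$'' is not actually needed---once the commutator is shown to be of order $(\ell-1,\theta-1)$, the lower-order norm is absorbed in a single step via $\vint{v}^{-1}\le 1$ and the $L^p$-boundedness of $\vint{D_v}^{-1}$ (valid for all $1\le p\le\infty$ since $G_1\in L^1$).
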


We will also need a homogeneous version related to fractional derivatives. 
\begin{lem} \label{cor:commut-homo-fraction}
Suppose $\alpha \in (0, 1)$ and $f \in H^\alpha_v(\R^3)$ Then $\vint{v}^{-2} f \in H^\alpha_v(\R^3)$ with the bound
\begin{align*}
   \norm{(-\Delta_v)^{\alpha/2} \vpran{\vint{v}^{-2} f}}_{L^2_{v}(\R^3)}
\leq
   C \norm{(-\Delta_v)^{\alpha/2} f}_{L^2_{v} (\R^3)}. 
\end{align*}
\end{lem}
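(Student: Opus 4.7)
The natural approach, given that $\alpha\in(0,1)$, is to use the Aronszajn--Slobodeckij representation of the homogeneous seminorm,
$$
\norm{(-\Delta_v)^{\alpha/2} g}_{L^2_v(\R^3)}^2 \;\simeq\; \iint_{\R^3\times\R^3} \frac{|g(v)-g(w)|^2}{|v-w|^{3+2\alpha}} \dv\dw,
$$
applied to $g=mf$ with $m(v):=\vint{v}^{-2}$, and to split the Gagliardo numerator in the standard product form
$$
m(v)f(v)-m(w)f(w) \;=\; m(v)\bigl(f(v)-f(w)\bigr)+f(w)\bigl(m(v)-m(w)\bigr).
$$
The first summand contributes at most $\|m\|_{L^\infty}^2\leq 1$ times the Gagliardo seminorm of $f$, and so is controlled by a constant times $\norm{(-\Delta_v)^{\alpha/2}f}_{L^2_v}^2$ as required.

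For the second summand I would swap the order of integration and reduce matters to the pointwise kernel bound
$$
I(w):=\int_{\R^3}\frac{|m(v)-m(w)|^2}{|v-w|^{3+2\alpha}}\dv \;\leq\; C\vint{w}^{-2\alpha}.
$$
To prove this, I would split the $v$-integral into the near region $|v-w|\leq \tfrac12\vint{w}$, where $\vint{v}\sim\vint{w}$ and the mean value inequality gives $|m(v)-m(w)|\lesssim |v-w|\vint{w}^{-3}$, and the far region $|v-w|>\tfrac12\vint{w}$, where one only uses the trivial bound $|m(v)-m(w)|\leq 2$. Passing to spherical coordinates, both pieces reduce to elementary one-dimensional radial integrals that converge because $\alpha<1$, and combine to yield the claimed decay in $w$.

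The final step, which I expect to be the sharpest and the main obstacle, is to absorb $\int \vint{w}^{-2\alpha}|f(w)|^2\dw$ into $\norm{(-\Delta_v)^{\alpha/2}f}_{L^2_v}^2$. Since $\vint{w}^{-2\alpha}\leq |w|^{-2\alpha}$ for $|w|\geq 1$ and since on $|w|\leq 1$ one can trivially dominate $\vint{w}^{-2\alpha}$ by $|w|^{-2\alpha}$, the required inequality is exactly the fractional Hardy inequality on $\R^3$,
$$
\int_{\R^3}\frac{|f(w)|^2}{|w|^{2\alpha}}\dw \;\leq\; C\norm{(-\Delta_v)^{\alpha/2}f}_{L^2_v}^2,
$$
which is valid for $0<\alpha<3/2$, and in particular throughout the range $\alpha\in(0,1)$ asserted in the lemma. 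The compatibility of the weight produced by the kernel estimate with the admissible range of the fractional Hardy inequality in dimension three is precisely what makes the homogeneous bound possible; for significantly larger $\alpha$, or in lower dimensions, this route would break down and a genuine commutator analysis of $[(-\Delta_v)^{\alpha/2},m]$ would be needed instead.
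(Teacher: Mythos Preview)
Your argument is correct. The Gagliardo-seminorm splitting, the kernel bound $I(w)\lesssim\vint{w}^{-2\alpha}$ (both the near-field mean-value estimate and the far-field tail integral converge exactly under $\alpha<1$), and the appeal to the fractional Hardy inequality on $\R^3$ (valid for $0<\alpha<3/2$) all work as stated. One minor simplification: since $\vint{w}\geq |w|$ holds for every $w$, the inequality $\vint{w}^{-2\alpha}\leq |w|^{-2\alpha}$ is immediate without splitting into $|w|\leq 1$ and $|w|>1$.

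The paper defers its own proof to an appendix not included in the provided source, so a line-by-line comparison is not possible. That said, the lemma's internal label (\texttt{cor:commut-homo-fraction}) and its placement immediately after the weight--Bessel-potential commutation result strongly suggest the paper proceeds via a Fourier-side or pseudo-differential commutator argument rather than your real-space Gagliardo computation. Your route is more elementary and self-contained---it needs only the integral characterization of the $\dot H^\alpha$ seminorm and the classical fractional Hardy inequality---whereas a commutator approach would presumably recycle symbol-calculus machinery already developed for the preceding lemma. Your method also makes the dimensional constraint ($\alpha<3/2$ in $\R^3$) completely transparent, as you rightly emphasize in your closing remark.
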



Next we recall the now-classical trilinear estimate.
\begin{prop} [\cite{AMUXY-1, MoSa}] \label{prop:trilinear}
Denote $a^+ = \max\{a, 0\}$. Then the bilinear operator $Q$ satisfies
\begin{align*}
   \Big| \int_{\R^3} Q(f, g) \, h \, \dv \Big| \leq C \Big( \| f \|_{ L^1_{(m - \gamma/2)^{+} + \gamma + 2s} }
 \!\!\!\! + \| f \|_{L^2} \Big)\| g \|_{ H^{s+\sigma}_{ \gamma/2 + 2s + m} }\| h \|_{H^{s-\sigma}_{\gamma/2 - m}} 
\end{align*}
for any $\sigma \in [\min\{s-1, -s\}, s]$, $m \in \R$, $\gamma \geq 0$ and $0 < s < 1$.  Here,  $f,\, g,\, h$  are any functions so that the corresponding norms are well-defined.  The constant $C$ is independent of $f,\, g,\, h$. 
\end{prop}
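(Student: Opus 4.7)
The plan is to prove this via the now-classical combination of the weak symmetric formulation of the collision operator, a dyadic decomposition of the angular singularity, and Bobylev-type Fourier analysis on each dyadic piece.

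First, I would rewrite the left-hand side using the pre-post collisional change of variables,
\[
\int_{\R^3} Q(f,g) \, h \dv = \int_{\R^3}\!\!\int_{\R^3}\!\!\int_{\Ss^2_+} B(v - v_\ast, \sigma) \, f_\ast \bigl( g' h' - g h \bigr) \dsigma \dv_\ast \dv,
\]
and split the inner difference as $g' h' - g h = g'(h' - h) + h(g' - g)$ to isolate the collisional increments that will absorb the $2s$ order of smoothness generated by the grazing singularity.

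Next, I would perform a dyadic decomposition $b = \sum_{k \geq 0} b_k$ with $b_k$ supported on $\theta \sim 2^{-k}$, so that $\int b_k(\cos\theta) \theta^2 \sin\theta \dtheta \sim 2^{-k(2-2s)}$ for small angles. On each piece, a Taylor expansion of $h' - h$ around $v$ (and analogously for $g' - g$) yields a first-order term that vanishes after averaging in $\sigma$ by angular symmetry and a second-order term of size $\theta^2 |v-v_\ast|^2$ that pairs integrably against $\theta^{-1-2s}$. The kinetic factor $|v-v_\ast|^\gamma$ is redistributed via $|v-v_\ast|^\gamma \leq \vint{v}^\gamma \vint{v_\ast}^\gamma$ together with the collision identities on $(v',v'_\ast)$; this produces the weight $\gamma/2 + 2s + m$ on $g$ and $\gamma/2 - m$ on $h$, while $f$ absorbs a $v_\ast$-weight of order $\gamma + 2s$ in $L^1$. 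On each dyadic piece, the collisional difference can be recast through a Bobylev-type formula as a frequency multiplier of order $2s$, so the trilinear form becomes a weighted Fourier pairing whose dyadic sum converges geometrically. The parameter $\sigma \in [\min\{s-1,-s\}, s]$ parametrizes how the $2s$-regularity is partitioned between $g$ and $h$.

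The extra moment $(m - \gamma/2)^+$ in the $L^1$-norm of $f$ arises precisely when $m > \gamma/2$, so that the $h$-weight $\gamma/2 - m$ becomes negative and one must transfer additional polynomial weight through the collision; the alternative $\|f\|_{L^2}$-bound is invoked at the endpoint regimes $\sigma = s$ and $\sigma = \min\{s-1,-s\}$, where the Taylor cancellation degenerates and one instead pairs $f_\ast$ against $g' h'$ directly via Plancherel or Hausdorff-Young. The principal obstacle I expect is the careful bookkeeping of polynomial weights across the collision map: energy is conserved but velocity is shuffled, so $\vint{v'}^\ell$ and $\vint{v'_\ast}^\ell$ must be controlled uniformly in $\sigma$ by $\vint{v}$ and $\vint{v_\ast}$ in a way compatible with the dyadic splitting. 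Additionally, the negative-regularity endpoint of $\sigma$ forces $h$ into a dual Sobolev space and requires shifting derivatives onto $f_\ast g'$ by duality, which is exactly where the $L^2$-alternative on $f$ and the $(m - \gamma/2)^+$ moment correction become indispensable for closing the estimate.
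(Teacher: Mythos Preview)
The paper does not prove this proposition: it is quoted as a known result from \cite{AMUXY-1, MoSa} and used as a black box throughout, so there is no in-paper proof to compare against.

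That said, your outline has a concrete error at the first step. The pre-post collisional change of variables gives
\[
\int_{\R^3} Q(f,g)\,h\,\dv \;=\; \int_{\R^3}\!\!\int_{\R^3}\!\!\int_{\Ss^2} B\, f_\ast\, g\,\bigl(h' - h\bigr)\,\dsigma\,\dv_\ast\,\dv,
\]
not $\int B\, f_\ast (g'h' - gh)$; your version differs from the correct one by $\int B\,(f'_\ast - f_\ast)\,g'h'$, which does not vanish. Consequently the product-rule split $g'h' - gh = g'(h'-h) + h(g'-g)$ is not the operative mechanism: in the correct weak form a single increment $h'-h$ carries the angular singularity, and the allocation of the $2s$ derivatives between $g$ and $h$ (the parameter $\sigma$) is achieved via Littlewood--Paley localisation and commutator estimates rather than by splitting a product. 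In the proofs in \cite{AMUXY-1, MoSa} one also decomposes $|v-v_\ast|^\gamma$ into a part smooth near the diagonal and a compactly supported singular part; the $\|f\|_{L^2}$ alternative arises from controlling the latter, not from a Taylor-endpoint degeneration as you suggest. The broad architecture you propose---angular dyadic decomposition plus Bobylev's Fourier identity---is the standard one, but these details need correcting before the argument can close.
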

\begin{lem}[\cite{ADVW}] \label{prop:change-variable} 
Suppose $f$ and $b$ are functions that make sense of the integrals below. Then

\noindent
(a) (Regular change of variables)
\begin{align*}
  \int_{\R^3} \int_{\mathbb{S}^2} b(\cos\theta)&|v-v_\ast|^\gamma f(v') {\rm d}\sigma \dv 
   = \int_{\R^3} \int_{\mathbb{S}^2} b(\cos\theta) \frac{1}{\cos^{3+\gamma}(\theta/2)}|v-v_\ast|^\gamma f(v) {\rm d}\sigma \dv .
\end{align*}
\noindent
(b) (Singular change of variables)
\begin{align*}
    \int_{\R^3} \int_{\mathbb{S}^2} b(\cos\theta)&|v-v_\ast|^\gamma f(v') {\rm d}\sigma \dv_\ast
= \int_{\R^3} \int_{\mathbb{S}^2} b(\cos\theta) \frac{1}{\sin^{3+\gamma}(\theta/2)}|v-v_\ast|^\gamma f(v_\ast) {\rm d}\sigma \dv_\ast .
\end{align*}
\end{lem}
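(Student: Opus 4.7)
Both identities are classical Jacobian computations for the natural changes of variables associated with the collision map $v' = \tfrac{v+v_\ast}{2} + \tfrac{|v-v_\ast|}{2}\sigma$. They appear in~\cite{ADVW}; the plan below only sketches the strategy, since the technicalities reduce to careful bookkeeping.

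For (a), I would fix $v_\ast$ and $\sigma \in \Ss^2_+$ and perform the change of variables $v \mapsto v'$ on $\R^3$. Its differential equals $\tfrac{1}{2}(I + \sigma \otimes \hat u)$ with $\hat u = (v-v_\ast)/|v-v_\ast|$, so the rank-one update identity $\det(I+uw^\top) = 1 + u\cdot w$ yields $|\det| = (1+\cos\theta)/8 = \cos^2(\theta/2)/4$. Squaring $v'-v_\ast = \tfrac{|v-v_\ast|}{2}(\hat u + \sigma)$ produces the companion geometric identity $|v'-v_\ast| = |v-v_\ast|\cos(\theta/2)$, which lets one rewrite $|v-v_\ast|^\gamma$ in terms of the new integration variable. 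Combining these with the reparametrization of the scattering angle with respect to $v'-v_\ast$---whose angle to $\sigma$ is exactly $\theta/2$---and collecting all the $\cos(\theta/2)$ factors gives the claimed weight $\cos^{-(3+\gamma)}(\theta/2)$, after which $f(v')$ becomes $f(v)$ in the relabeled variable.

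For (b), the argument is structurally identical but now with $v$ and $\sigma$ fixed and $v_\ast$ as the integration variable, so $\partial v'/\partial v_\ast = \tfrac{1}{2}(I - \sigma \otimes \hat u)$ and the Jacobian determinant is $(1-\cos\theta)/8 = \sin^2(\theta/2)/4$. Its vanishing as $\theta \to 0$ is the source both of the divergent weight $\sin^{-(3+\gamma)}(\theta/2)$ on the right-hand side and of the adjective ``singular''---and is exactly why this substitution cannot be applied to $Q$ directly without first exploiting cancellation in the weak formulation. The companion geometric identity here is $|v - v'| = |v-v_\ast|\sin(\theta/2)$, obtained from $v-v' = \tfrac{|v-v_\ast|}{2}(\hat u - \sigma)$ and $1-\cos\theta = 2\sin^2(\theta/2)$.

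The principal obstacle in executing either computation is bookkeeping: one must consistently distinguish the angle $\theta$ measured against $v-v_\ast$ before and after the substitution, track how the reparametrization of $\sigma$ interacts with the new velocity, and invoke the support condition $\cos\theta \geq 0$ (i.e.\ $\sigma \in \Ss^2_+$) to ensure bijectivity of the map. Once all the geometric identities are correctly accumulated, the stated weights follow by direct algebra.
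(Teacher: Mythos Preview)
The paper does not supply its own proof of this lemma; it is stated as a citation to \cite{ADVW} and used as a black box. Your sketch is the standard Jacobian argument from that reference and is correct in outline---the one point worth making explicit is that the ``reparametrization of the scattering angle'' you allude to is really an additional change of variables in $\sigma$ (for fixed $v',v_\ast$, sending the angle $\psi$ between $v'-v_\ast$ and $\sigma$ to $\theta=2\psi$), which contributes a further factor $1/(4\cos\psi)$; combined with the $v$-Jacobian $4/\cos^2\psi$ and the kinetic factor $1/\cos^\gamma\psi$ this yields exactly $\cos^{-(3+\gamma)}(\theta/2)$.
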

%

%
%
\begin{prop} [\cite{AMUXY2012JFA}] \label{prop:coercivity-1}
Suppose for some constants $D_0, E_0 > 0$, the function $F$ satisfies 
\begin{align*}
  F \geq 0, 
\qquad  
  \| F \|_{L^1} \geq D_0 > 0,
\qquad 
  \| F \|_{L^1_2} + \| F \|_{L\log L} \leq E_0 < \infty.
\end{align*}
Then there exist two constants $c_0$ and $C$ such that
\begin{align*}
\int_{\R^3} Q(F, f) \, f \dv \leq - c_0 \| f \|^2_{H^s_{\gamma/2}} + C \| f \|^2_{L^2_{\gamma/2}} .
\end{align*}
\end{prop}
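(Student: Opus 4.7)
The plan is to follow the classical strategy behind the cited result \cite{AMUXY2012JFA}: symmetrize the weak form of $Q(F,f)$ to extract a Dirichlet-type quadratic form in $f'-f$, and then combine Bobylev's identity with the non-concentration properties of $F$ (afforded by the mass lower bound together with the entropy upper bound) to convert that quadratic form into a weighted fractional Sobolev norm of $f$. In the symmetrization step, using the pre–post collisional change of variables together with the regular change in Lemma~\ref{prop:change-variable}(a), I would rewrite
$$
\int_{\R^3} Q(F,f)\,f\,\dv
= -\tfrac{1}{2}\iiint B(v-v_\ast,\sigma)\,F(v_\ast)\,(f'-f)^2\,\dsigma\dv_\ast\dv + R(F,f),
$$
where $R(F,f)$ collects the lower-order contributions arising from the Jacobian factor $\cos^{-3-\gamma}(\theta/2)$ and from the loss part $(F\ast|\cdot|^{\gamma})f^{2}$. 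The bound $\|F\|_{L^1_2}\le E_0$ immediately gives $|R(F,f)|\le C(E_0)\,\|f\|_{L^{2}_{\gamma/2}}^{2}$, which is absorbed into the error term on the right-hand side of the proposition.

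For the coercive lower bound on the Dirichlet form, I would apply Bobylev's identity and pass to Fourier variables. Modulo contributions that can again be absorbed into $C\|f\|_{L^{2}_{\gamma/2}}^{2}$, the angular singularity $b(\cos\theta)\sim\theta^{-1-2s}$ generates a symbol of order $|\xi|^{2s}$ at high frequencies, provided the factor involving $\widehat{F}$ appearing in the Bobylev expansion admits a positive lower bound. This is exactly where the hypotheses on $F$ enter: $\|F\|_{L^{1}}\ge D_0$ prevents trivial degeneracy of the symbol, while $\|F\|_{L^{1}_{2}}+\|F\|_{L\log L}\le E_0$ gives, via Dunford–Pettis, uniform integrability of $F$, which in turn prevents $F$ from concentrating to a Dirac mass and thus forces the averaged quantity $\int_{\Ss^2}\bigl(\widehat F(0)-\mathrm{Re}\,\widehat F(\xi^{-})\bigr)\,b(\cos\theta)\,\dsigma$ to stay above $c_0\,|\xi|^{2s}$ for $|\xi|$ large. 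Undoing Plancherel and inserting the polynomial weight $\vint{v}^{\gamma/2}$ (commuted through $\vint{D_v}^{s}$ by Lemma~\ref{prop:equivalence}) then produces precisely the coercive term $c_0\|f\|_{H^{s}_{\gamma/2}}^{2}$.

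The main obstacle is the quantitative non-concentration step: converting the qualitative information "$F$ is uniformly integrable" provided by finite entropy into a genuinely uniform-in-$\xi$ lower bound on the Bobylev symbol at high frequencies. The $L\log L$ bound is indispensable here, since merely requiring $\|F\|_{L^{1}}\ge D_0$ and $\|F\|_{L^{1}_{2}}\le E_0$ would still permit $F$ to approach a Dirac measure along a subsequence, which would annihilate the angular-singularity contribution. A secondary, more bookkeeping difficulty is inserting the weight $\vint{v}^{\gamma/2}$ at every stage: one must control pointwise estimates like $|v-v_\ast|^{\gamma}\gtrsim \vint{v}^{\gamma}\vint{v_\ast}^{-\gamma}$ before applying Bobylev, and then commute $\vint{v}^{\gamma/2}$ with the fractional derivative using only the $L^{1}_{2}$ regularity of $F$; this is routine given the commutator estimates collected in the toolbox, but it is what determines the final weight $\gamma/2$ appearing in the statement.
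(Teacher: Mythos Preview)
The paper does not prove this proposition; it is quoted from \cite{AMUXY2012JFA} (building on the entropy-dissipation estimate of \cite{ADVW}) and used as a black box. Your sketch is a faithful outline of the argument in those references: the symmetrization
\[
\int Q(F,f)\,f\,\dv = -\tfrac{1}{2}\iiint B\,F_\ast\,(f'-f)^2 + \tfrac{1}{2}\iiint B\,F_\ast\,\bigl((f')^2-f^2\bigr)
\]
is exactly the starting point, the second piece is handled by the cancellation lemma (your Lemma~\ref{prop:change-variable}(a)) and lands in $C\|f\|_{L^2_{\gamma/2}}^2$, and the Dirichlet form is bounded below by the weighted $H^s$-norm using the Fourier-side/non-concentration argument of \cite{ADVW}. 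One small correction: the original proofs do not literally invoke Bobylev's identity (which concerns $\widehat{Q(F,f)}$), but rather a direct Plancherel computation on the quadratic form $\iiint B\,F_\ast\,(f'-f)^2$ after freezing $v_\ast$; the resulting symbol involves $\widehat{F}(0)-|\widehat{F}(\xi^-)|$-type quantities only after a translation, and the quantitative lower bound comes from a concrete lemma (often stated as: for any $R>0$ there exists $\epsilon(D_0,E_0,R)>0$ and a ball of radius $R$ on whose complement $F$ has mass $\ge\epsilon$) rather than from Dunford--Pettis per se. With that caveat your plan matches the literature and would carry through.
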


Throughout this paper, we use $\dbmu$ to denote the measure
\begin{align*}
  \dbmu = \dsigma \dv \dv_\ast \dx.
\end{align*}

For the convenience of the later analysis, we record a simple decomposition and bound related to the nonlinear Boltzmann operator:
\begin{lem} [\cite{AMSY}] \label{lem:decomp-Q-ell}
(a) Let $G, h$ be functions that make sense of the integrals below. Then for any $s \in (0, 1)$ and $\ell \geq 0$,
\begin{align} \label{eqn:decomp-Q-ell}
    \iint_{\T^3 \times \R^3} 
     Q(G, h) h \vint{v}^{2\ell} \dv\dx
&= \int_{\T^3}\int_{\R^3} Q(G, \, \vint{v}^{\ell} h) \, \vint{v}^{\ell} h \dv\dx \nn
\\
& \quad \,
  + \IntTRRS b(\cos\theta) |v - v_\ast|^\gamma
        G_\ast h \, h' \vint{v'}^{\ell}  \vpran{\vint{v'}^{\ell} - \vint{v}^{\ell}\cos^{\ell} \tfrac{\theta}{2}} \dbmu  \nn
\\
& \quad \, 
  + \IntTRRS 
       b(\cos\theta) |v - v_\ast|^\gamma 
       G_\ast h \vint{v}^{\ell}  h' \vint{v'}^{\ell}
       \vpran{\cos^{\ell}  \tfrac{\theta}{2} - 1} 
       \dbmu. 
\end{align}

\Ni (b) Suppose in addition $G \geq 0 $ and $G = \mu + g$. Let $\gamma_0, \gamma_1$ be the positive constants satisfying 
\begin{align*}
    \int_{\R^3} |v - v_\ast|^\gamma \mu(v_\ast) \dv_\ast
\geq 
   \gamma_1 \vint{v}^\gamma
\end{align*}
and
\begin{align*}
   \gamma_0 
\geq 
   -\frac{\gamma_1}{2} \int_{\Ss^2} b(\cos\theta) \vpran{\cos^{2\ell - 3 - \gamma} \tfrac{\theta}{2} - 1} \dsigma,
\qquad
   \text{for all $\ell > \frac{3 + \gamma}{2}$}.
\end{align*}
Then we have
\begin{align} 
   \iint_{\T^3 \times \R^3} 
     Q(G, h) h \vint{v}^{2\ell} \dv\dx
&\leq
  \frac{1}{2} \IntTRRS
        b(\cos\theta) |v - v_\ast|^\gamma G_\ast |h|^2 \vint{v}^{2\ell}
        \vpran{\cos^{2\ell - 3 - \gamma} \tfrac{\theta}{2} - 1} \dbmu  \nn
\\
& \,\,
   + \IntTRRS 
       b(\cos\theta) |v - v_\ast|^\gamma 
       G_\ast |h| |h'| \vint{v'}^{\ell}
       \vpran{\vint{v'}^{\ell} - \vint{v}^{\ell} \cos^{\ell} \tfrac{\theta}{2}}  
       \dbmu \label{ineq:decomp-Q-ell-1}
\\
&\leq
   -\vpran{\gamma_0 - C_{\ell} \sup_x \norm{g}_{L^1_\gamma}}
     \norm{\vint{v}^{\ell + \gamma/2} h}_{L^2_{x, v}}^2  \nn
\\
& \quad \,
   + \IntTRRS 
       b(\cos\theta) |v - v_\ast|^\gamma 
       G_\ast |h| |h'| \vint{v'}^{\ell}
       \vpran{\vint{v'}^{\ell} - \vint{v}^{\ell} \cos^{\ell} \tfrac{\theta}{2}}  
       \dbmu.  \label{ineq:decomp-Q-ell-2}
\end{align}
\end{lem}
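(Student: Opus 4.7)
My plan is to treat (a) and (b) separately: part (a) is an algebraic rearrangement arising from the weak formulation of $Q$, while part (b) follows by AM--GM combined with the regular change of variables of Lemma~\ref{prop:change-variable}(a).

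For (a), the starting point is the classical involution $(v, v_\ast, \sigma) \leftrightarrow (v', v'_\ast, \sigma')$, which has unit Jacobian and preserves $b(\cos\theta)|v - v_\ast|^\gamma$. This yields the weak identity
\begin{equation*}
\int_{\R^3} Q(G, h)\, \phi \dv = \IntRS b(\cos\theta) |v - v_\ast|^\gamma G_\ast\, h\, (\phi' - \phi) \dsigma \dv_\ast
\end{equation*}
for any admissible $\phi$. Applying this identity with $\phi = h \vint{v}^{2\ell}$ and then, for the operator $Q(G, \vint{v}^\ell h)$, with test function $\phi = \vint{v}^\ell h$, integrating in $x$, and subtracting the two, one finds that the two sides of \eqref{eqn:decomp-Q-ell} differ by
\begin{equation*}
\IntTRRS b(\cos\theta) |v - v_\ast|^\gamma G_\ast\, h\, h'\, \vint{v'}^\ell\, (\vint{v'}^\ell - \vint{v}^\ell) \dbmu .
\end{equation*}
The algebraic split $\vint{v'}^\ell - \vint{v}^\ell = \bigl(\vint{v'}^\ell - \vint{v}^\ell \cos^\ell(\theta/2)\bigr) + \vint{v}^\ell \bigl(\cos^\ell(\theta/2) - 1\bigr)$ reproduces exactly the last two terms of \eqref{eqn:decomp-Q-ell}.

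For the first bound \eqref{ineq:decomp-Q-ell-1} of (b), rather than going through (a), I would return to the weak formulation and expand
$\vint{v'}^{2\ell} = \vint{v}^\ell \vint{v'}^\ell \cos^\ell(\theta/2) + \vint{v'}^\ell \bigl(\vint{v'}^\ell - \vint{v}^\ell \cos^\ell(\theta/2)\bigr)$.
This peels off the second term of \eqref{ineq:decomp-Q-ell-1} immediately, leaving a cross term together with the negative piece $-\iint b\, G_\ast\, h^2 \vint{v}^{2\ell} \dbmu$. AM--GM in the sharp form
\begin{equation*}
|hh'| \vint{v}^\ell \vint{v'}^\ell \cos^\ell(\theta/2) \le \tfrac{1}{2} h^2 \vint{v}^{2\ell} + \tfrac{1}{2} (h')^2 \vint{v'}^{2\ell} \cos^{2\ell}(\theta/2)
\end{equation*}
is calibrated so that the $h^2$ half exactly cancels the negative piece. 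On the $(h')^2$ half, the regular change of variables $v \mapsto v'$ of Lemma~\ref{prop:change-variable}(a) introduces a Jacobian factor $\cos^{-(3+\gamma)}(\theta/2)$, converting $(h')^2 \vint{v'}^{2\ell} \cos^{2\ell}(\theta/2)$ into $h^2 \vint{v}^{2\ell} \cos^{2\ell - 3 - \gamma}(\theta/2)$. Summing the two contributions gives \eqref{ineq:decomp-Q-ell-1}.

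To upgrade to \eqref{ineq:decomp-Q-ell-2}, I would split $G = \mu + g$ inside the first integral of \eqref{ineq:decomp-Q-ell-1}. Under $\ell > (3+\gamma)/2$ one has $\cos^{2\ell - 3 - \gamma}(\theta/2) - 1 \le 0$, and the pointwise bound $|\cos^{2\ell-3-\gamma}(\theta/2) - 1| = O(\theta^2)$ near $\theta = 0$ absorbs the angular singularity $\theta^{-1-2s}$ (which is integrable for $s < 1$), so the angular integral $\int_{\Ss^2_+} b(\cos\theta)(\cos^{2\ell - 3 - \gamma}(\theta/2) - 1) \dsigma$ is finite and non-positive. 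Combining this with the Grad-type lower bound $\int_{\R^3} |v - v_\ast|^\gamma \mu_\ast \dv_\ast \ge \gamma_1 \vint{v}^\gamma$ and the hypothesis tying $\gamma_0$ to this angular integral produces the coercive contribution $-\gamma_0 \|\vint{v}^{\ell + \gamma/2} h\|_{L^2_{x,v}}^2$ from the $\mu_\ast$ piece. The $g_\ast$ piece is controlled in absolute value by the same angular integral together with $|v - v_\ast|^\gamma \le \vint{v}^\gamma \vint{v_\ast}^\gamma$ and the norm $\sup_x \|g\|_{L^1_\gamma}$, producing the additive correction $C_\ell\, \sup_x \|g\|_{L^1_\gamma}\, \|\vint{v}^{\ell + \gamma/2} h\|_{L^2_{x,v}}^2$. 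The main technical hurdle is keeping the cosine powers and Jacobians consistent across the regular change of variables: the exponent $2\ell - 3 - \gamma$ is precisely the combination of $\cos^{2\ell}(\theta/2)$ from AM--GM with $\cos^{-(3+\gamma)}(\theta/2)$ from the Jacobian, and aligning it with the definition of $\gamma_0$ is where the bookkeeping has to be meticulous.
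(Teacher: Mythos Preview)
Your approach is correct and matches the argument the paper defers to \cite{AMSY}: addition--subtraction for part~(a), and AM--GM together with the regular change of variables of Lemma~\ref{prop:change-variable}(a) for part~(b). One small slip in your derivation of \eqref{ineq:decomp-Q-ell-1}: as written, you expand $\vint{v'}^{2\ell}$ first and then peel off the second term, which produces $h\,h'$ rather than $|h|\,|h'|$ in that term; since $\vint{v'}^{\ell} - \vint{v}^{\ell}\cos^{\ell}\tfrac{\theta}{2}$ has no definite sign, you cannot simply replace $hh'$ by $|h||h'|$ after the fact. The fix is to use $G_\ast \ge 0$ to bound $hh'\,\vint{v'}^{2\ell} \le |h|\,|h'|\,\vint{v'}^{2\ell}$ \emph{before} the algebraic split, and then proceed exactly as you describe; everything else goes through unchanged.
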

\begin{proof}
Part (a) follows from a direct addition-subtraction applied to the definition of $Q$. Part (b) follows from (3.15) and (3.16) in \cite{AMSY}.
\end{proof}

Cancellation plays a vital role in dealing with the strong singularity. Let us recall a useful representation for $|v'|$:
\begin{align} 
|v'|^{2} = |v|^{2}\cos^{2}\frac{\theta}{2} + |v_\ast|^{2}\sin^{2}\frac{\theta}{2}+2\cos\frac{\theta}{2}\sin\frac{\theta}{2}|v-v_*| v\cdot\omega,
\label{formula:classical-1}
\end{align}
and, as a result, 
\begin{align}
\langle v' \rangle^{2} = \langle v \rangle^{2}\cos^{2}\frac{\theta}{2} + \langle v_\ast\rangle^{2} \sin^{2}\frac{\theta}{2} + 2\cos\frac{\theta}{2}\sin\frac{\theta}{2}|v-v_*| (v_\ast \cdot \omega),
\label{formula:classical-2}
\end{align}
where 
\begin{align} \label{def:omega-direction}
  \omega = \frac{\sigma - (\sigma \cdot \hat{u}) \hat{u}}{ | \sigma - (\sigma \cdot \hat{u}) \hat{u} |} ,
\qquad 
  \hat{u} = \frac{v - v_\ast}{|v - v_\ast|} .
\end{align}
By its definition, $\omega$ satisfies that $\omega \perp (v - v_\ast)$, thus, $v \cdot \omega = v_\ast \cdot \omega$.  Consequently, one has the freedom to choose $v \cdot \omega $ or $v_\ast \cdot \omega$ in the estimates. We also introduce the notation $\tilde \omega$  for later use:
\begin{align} \label{def:tilde-omega}
   \tilde \omega
= \frac{v' - v}{|v' - v|}.
\end{align}

\begin{lem}[see \cite{AMSY}]\label{lem:diff-v-k}
Suppose $\ell > 6$ and $(v, v_\ast), (v', v'_\ast)$ are the velocity pairs before and after the collision or vice versa. Let $\omega$ be the vector defined in~\eqref{def:omega-direction}.  Then,
\begin{align}\label{precise}
\begin{split}
\langle v' \rangle^{\ell} - \langle v \rangle^{\ell} \cos^{\ell} \tfrac{\theta}{2} =   \ell \langle v \rangle^{\ell-2}&|v-v_*| \big (v \cdot \omega \big)\cos^{\ell-1} \tfrac{\theta}{2}  \sin\tfrac{\theta}{2} 
+ \langle v_\ast \rangle^{\ell}\sin^{\ell} \tfrac{\theta}{2}  + \mathfrak{R}_1 + \mathfrak{R}_2 + \mathfrak{R}_3 ,
\end{split}
\end{align} 
where there exists a constant $C_\ell$ only depending on $\ell$ such that
\begin{align} \label{bound:mathfrak-R-2-3}
\begin{split}
& | \mathfrak{R}_1 | \leq C_\ell \, \langle v \rangle \langle v_\ast\rangle^{\ell-1} \sin^{\ell-3} \tfrac{\theta}{2}, \qquad | \mathfrak{R}_2 | \leq C_\ell \, \langle v\rangle^{\ell-2} \langle v_\ast \rangle^2 \sin^2\tfrac{\theta}{2},\\
&\quad \text{and} \qquad  | \mathfrak{R}_3 | \leq C_\ell\, \langle v\rangle^{\ell-4} \langle v_\ast\rangle^4 \sin^2\tfrac{\theta}{2}.
\end{split}
\end{align}
\end{lem}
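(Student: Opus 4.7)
The plan is to obtain the expansion via a second-order Taylor expansion of $\phi(x)=x^{\ell/2}$ applied to the formula~\eqref{formula:classical-2}. Abbreviate $c=\cos\tfrac{\theta}{2}$, $s=\sin\tfrac{\theta}{2}$, $u=|v-v_\ast|$, and $\xi=v\cdot\omega=v_\ast\cdot\omega$; the last equality comes from the perpendicularity $\omega\perp(v-v_\ast)$. Write $\langle v'\rangle^2=A+B$ with $A=c^2\langle v\rangle^2$ and $B=s^2\langle v_\ast\rangle^2+2csu\xi$. The support assumption on $b$ forces $\cos\theta\geq 0$, so $c\geq 1/\sqrt{2}$ and $A\geq\tfrac12\langle v\rangle^2>0$; since $A\geq 0$, $A+B=\langle v'\rangle^2\geq 0$, and $\tau\mapsto A+\tau B$ is affine, we have $A+\tau B\geq 0$ throughout $[0,1]$. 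Taylor's theorem with integral remainder then gives
\[
  \langle v'\rangle^\ell-\langle v\rangle^\ell c^\ell
  = \phi'(A)\,B+\int_0^1(1-\tau)\,\phi''(A+\tau B)\,B^2\,\dtau.
\]

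A direct computation yields $\phi'(A)B=\ell\,c^{\ell-1}s\,\langle v\rangle^{\ell-2}u\xi+\tfrac{\ell}{2}\,c^{\ell-2}s^2\,\langle v\rangle^{\ell-2}\langle v_\ast\rangle^2$: the first summand is exactly the leading term in~\eqref{precise}, and the second has the pointwise size of $\mathfrak{R}_2$ and is absorbed into it. For the integral remainder I use the elementary bounds $|\xi|\leq\min(|v|,|v_\ast|)$, which combined with $u\leq\langle v\rangle+\langle v_\ast\rangle$ gives $|u\xi|\leq 2\langle v\rangle\langle v_\ast\rangle$ and hence $B^2\leq C(s^4\langle v_\ast\rangle^4+s^2\langle v\rangle^2\langle v_\ast\rangle^2)$, together with the energy-conservation bound $\langle v'\rangle^2\leq\langle v\rangle^2+\langle v_\ast\rangle^2$, which yields $(A+\tau B)^{\ell/2-2}\leq C_\ell(\langle v\rangle^{\ell-4}+\langle v_\ast\rangle^{\ell-4})$ since $\ell>6$. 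Multiplying these bounds produces four monomials in $\langle v\rangle$, $\langle v_\ast\rangle$, and $s$; two of them, namely $s^2\langle v\rangle^{\ell-2}\langle v_\ast\rangle^2$ and $s^4\langle v\rangle^{\ell-4}\langle v_\ast\rangle^4$ (which is $\leq s^2\langle v\rangle^{\ell-4}\langle v_\ast\rangle^4$), plug directly into $\mathfrak{R}_2$ and $\mathfrak{R}_3$.

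The remaining two monomials, $s^4\langle v_\ast\rangle^\ell$ and $s^2\langle v\rangle^2\langle v_\ast\rangle^{\ell-2}$, resist direct placement when $\langle v_\ast\rangle\gg\langle v\rangle$, and to dispatch them I split into the regimes $s\langle v_\ast\rangle\leq c\langle v\rangle$ and $s\langle v_\ast\rangle>c\langle v\rangle$. In the first regime these monomials collapse into $\mathfrak{R}_2$ or $\mathfrak{R}_3$ after trading factors of $\langle v_\ast\rangle$ against $C\langle v\rangle$ via $\langle v_\ast\rangle\leq(c/s)\langle v\rangle$. In the complementary regime the explicit $\langle v_\ast\rangle^\ell\sin^\ell\tfrac{\theta}{2}$ term on the right-hand side of~\eqref{precise} absorbs the dominant $s^\ell\langle v_\ast\rangle^\ell$ contribution, and the residual is rearranged using $c\langle v\rangle\leq s\langle v_\ast\rangle$ to trade factors of $c\langle v\rangle$ against factors of $s\langle v_\ast\rangle$; after this redistribution the residual falls within the shape $\langle v\rangle\langle v_\ast\rangle^{\ell-1}s^{\ell-3}$ defining $\mathfrak{R}_1$. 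The most delicate point is precisely this second regime: the explicit correction $\langle v_\ast\rangle^\ell\sin^\ell\tfrac{\theta}{2}$ and the unusual high power $s^{\ell-3}$ in the bound on $\mathfrak{R}_1$ are engineered to close the estimate after a Young-type redistribution of weights between $v$ and $v_\ast$, and verifying that each residual monomial is routed to the right destination is the technical heart of the argument.
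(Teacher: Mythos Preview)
The paper does not prove this lemma here; it is quoted from \cite{AMSY}. So I evaluate your argument on its own merits. The architecture---second-order Taylor expansion of $x\mapsto x^{\ell/2}$ applied to \eqref{formula:classical-2}, followed by the case split $s\langle v_\ast\rangle\lessgtr c\langle v\rangle$---is sound and does lead to \eqref{precise}--\eqref{bound:mathfrak-R-2-3}. However, both regimes contain a genuine gap as written.

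\textbf{Regime $s\langle v_\ast\rangle\le c\langle v\rangle$.} You propose to absorb $s^4\langle v_\ast\rangle^\ell$ and $s^2\langle v\rangle^2\langle v_\ast\rangle^{\ell-2}$ into $\mathfrak{R}_2,\mathfrak{R}_3$ by trading $\langle v_\ast\rangle$ against $(c/s)\langle v\rangle$. But each such swap costs a factor $s^{-1}$, and you must swap $\ell-4>2$ powers; e.g.\ $s^2\langle v\rangle^2\langle v_\ast\rangle^{\ell-2}\le c^{\ell-4}s^{6-\ell}\langle v\rangle^{\ell-2}\langle v_\ast\rangle^2$, which for $\ell>6$ blows up as $s\to0$ and is \emph{not} controlled by $s^2\langle v\rangle^{\ell-2}\langle v_\ast\rangle^2$. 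The correct move is to avoid the crude bound $(A+\tau B)^{\ell/2-2}\le C_\ell(\langle v\rangle^{\ell-4}+\langle v_\ast\rangle^{\ell-4})$ altogether: from $s\langle v_\ast\rangle\le c\langle v\rangle$ and $|u\xi|\le 2\langle v\rangle\langle v_\ast\rangle$ one gets $|B|\le 5A$, hence $A+\tau B\le 6A$ and $(A+\tau B)^{\ell/2-2}\le C_\ell\langle v\rangle^{\ell-4}$. The $\langle v_\ast\rangle^{\ell-4}$ factor then never appears, only the two ``good'' monomials survive, and separately $S:=s^\ell\langle v_\ast\rangle^\ell=(s\langle v_\ast\rangle)^{\ell-4}s^4\langle v_\ast\rangle^4\le \langle v\rangle^{\ell-4}s^2\langle v_\ast\rangle^4$ goes into $\mathfrak{R}_3$.

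\textbf{Regime $s\langle v_\ast\rangle>c\langle v\rangle$.} You say the explicit $s^\ell\langle v_\ast\rangle^\ell$ ``absorbs'' the contribution $s^4\langle v_\ast\rangle^\ell$ and the residual lands in $\mathfrak{R}_1$. But you only have an \emph{upper} bound on the integral remainder $I$; you cannot subtract $S=s^\ell\langle v_\ast\rangle^\ell$ from that upper bound and still control $|I-S|$, and $S$ alone is not dominated by any of the three remainder profiles (let $\langle v\rangle\to1$, $\langle v_\ast\rangle\to\infty$). The clean fix is to redo the Taylor expansion in this regime around $A'=s^2\langle v_\ast\rangle^2$: then $\phi(A')=S$ is extracted \emph{exactly}, and using $c\langle v\rangle<s\langle v_\ast\rangle$ (whence $|B'|\le 5A'$ with $B'=c^2\langle v\rangle^2+2csu\xi$) one checks that each of $\phi'(A')B'$, the new remainder $I'$, the subtracted $c^\ell\langle v\rangle^\ell$, and the leading term $L$ is bounded by $C_\ell\, s^{\ell-3}\langle v\rangle\langle v_\ast\rangle^{\ell-1}$, i.e.\ by the $\mathfrak{R}_1$-profile. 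With these two adjustments your outline becomes a complete proof.
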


We are ready to establish an all-important commutator estimate.

\begin{prop}[Weighted commutator] \label{prop:commutator}
Suppose 
\begin{align*}
   G = \mu + g ,
\qquad
   g \in L^\infty_x L^1_{\ell+\gamma} \cap L^2_{x, v} ,
\qquad
   \ell \geq 8+\gamma, 
\qquad
   \gamma \in (0, 1),
\qquad
   s \in (0, 1).
\end{align*}

\Ni (a) For general $G, F, H$ making sense of the terms in the inequality, we have
\begin{align} \label{ineq:trilinear-1}
& \quad \,
   \iiiint_{\T^3 \times \R^6 \times \Ss^2}
    G_\ast \frac{F}{\vint{v}^\ell} H' 
    \vpran{\vint{v'}^{\ell} - \vint{v}^{\ell} \cos^\ell \tfrac{\theta}{2}} 
        b(\cos\theta) |v - v_\ast|^\gamma \dbmu   \nn
\\
&\leq
\ell \iiiint_{\T^3 \times \R^6 \times \Ss^2}
    G_\ast \vpran{F \vint{v}^{-2}  - F' \vint{v'}^{-2}} H' 
    \vpran{v_\ast \cdot \tilde \omega} \cos^{\ell}\tfrac{\theta}{2} \sin\tfrac{\theta}{2}
        b(\cos\theta) |v - v_\ast|^{1+\gamma} \dbmu \nn
\\
& \quad \,
   +    C_\ell \vpran{1 + \sup_x \norm{g}_{L^1_{\ell+\gamma}}}
   \min \left\{\norm{F}_{L^2_{x,v}} 
   \norm{H}_{L^2_{x,v}}, \,\,
   \norm{F/\vint{v}^{\ell-1-\gamma}}_{L^\infty_{x, v}}   
   \norm{H}_{L^1_{x,v}}\right\}  \nn
\\
& \quad \,
   + C_\ell \vpran{1 + \sup_x \norm{g}_{L^1_{4+\gamma}}}
   \min \left\{\norm{F}_{L^2_{x,v}} 
   \norm{H}_{L^2_{x,v}}, \,\,
   \norm{F}_{L^\infty_{x, v}}   
   \norm{H}_{L^1_{x, v}}\right\},
\end{align}
where $\tilde \omega$ is the unit vector defined in~\eqref{def:tilde-omega}. 

\smallskip

\Ni (b) Suppose $G = \mu + g$ is non-negative
and there exist $\ell, K_0$ such that
\begin{align} \label{assump:bound-g}
   g \vint{v}^\ell \leq K_0,
\qquad
   \ell > 8 + \gamma.
\end{align}
Then for any $s \in (0, 1)$ and the same $\ell$ as in~\eqref{assump:bound-g}, we have
\begin{align} \label{ineq:trilinear-1-f-minus}
& \quad \,
   \iiiint_{\T^3 \times \R^6 \times \Ss^2}
    G_\ast \frac{F}{\vint{v}^\ell} H' 
    \vpran{\vint{v'}^{\ell} - \vint{v}^{\ell} \cos^\ell \tfrac{\theta}{2}} 
        b(\cos\theta) |v - v_\ast|^\gamma \dbmu   \nn
\\
&\leq
\ell \iiiint_{\T^3 \times \R^6 \times \Ss^2}
    G_\ast \vpran{F \vint{v}^{-2}  - F' \vint{v'}^{-2}} H' 
    \vpran{v_\ast \cdot \tilde \omega} \cos^{\ell}\tfrac{\theta}{2} \sin\tfrac{\theta}{2}
        b(\cos\theta) |v - v_\ast|^{1+\gamma} \dbmu \nn
\\
& \quad \,
   +    C_\ell \vpran{1 + K_0}
   \min \left\{\norm{F}_{L^2_{x,v}} 
   \norm{H}_{L^2_{x,v}}, \,\,
   \norm{F}_{L^\infty_{x, v}}   
   \norm{H}_{L^1_{x,v}} \right\}  \nn
\\
& \quad \,
  + C_\ell (1 + K_0) \vpran{\sup_x \norm{F/\vint{v}^{\ell-1-\gamma}}_{L^1_v}}
   \norm{H}_{L^1_x L^1_\gamma}.
\end{align}
\end{prop}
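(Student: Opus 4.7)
The plan is to expand the weight difference via Lemma~\ref{lem:diff-v-k} and to treat the principal term separately from the rest. Applying that lemma splits the integral on the LHS of~\eqref{ineq:trilinear-1} into five pieces, one for each of the terms $\ell\vint{v}^{\ell-2}|v-v_*|(v\cdot\omega)\cos^{\ell-1}(\theta/2)\sin(\theta/2)$, $\vint{v_*}^\ell\sin^\ell(\theta/2)$, $\mathfrak{R}_1$, $\mathfrak{R}_2$, and $\mathfrak{R}_3$. All four of the latter carry an angular factor with at least $\sin^2(\theta/2)$ (indeed $\sin^{\ell-3}(\theta/2)$ for $\mathfrak{R}_1$ and $\sin^\ell(\theta/2)$ for the second term), which is $\sigma$-integrable against $b(\cos\theta)\,d\sigma \sim \theta^{-1-2s}d\theta\,d\phi$ since $s\in(0,1)$. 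For each of these four, I would perform the regular change of variables $v' \mapsto v$ inside $H'$ via Proposition~\ref{prop:change-variable}(a), pay the Jacobian $\cos^{-(3+\gamma)}(\theta/2)$, bound the $v_\ast$-integral of $G_\ast$ by $1+\sup_x\|g\|_{L^1_{\ell+\gamma}}$ or (for $\mathfrak{R}_2, \mathfrak{R}_3$, whose weights in $v$ are only $\vint{v}^{\ell-2}, \vint{v}^{\ell-4}$) by $1+\sup_x\|g\|_{L^1_{4+\gamma}}$, and close the remaining $(x,v)$-integral by Cauchy--Schwarz for the $\|F\|_{L^2}\|H\|_{L^2}$ alternative or by $L^\infty\cdot L^1$ splitting for the $\|F/\vint{v}^{\ell-1-\gamma}\|_{L^\infty}\|H\|_{L^1}$ alternative. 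This produces precisely the last two min-terms on the RHS of~\eqref{ineq:trilinear-1}.

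The core of the argument is the leading piece, where only a single $\sin(\theta/2)$ is available and the cancellation structure $F\vint{v}^{-2} - F'\vint{v'}^{-2}$ must be generated. Starting from $v\cdot\omega = v_\ast\cdot\omega$ (since $\omega\perp v-v_\ast$) and the geometric identity $\tilde\omega = -\sin(\theta/2)\,\hat u + \cos(\theta/2)\,\omega$, which follows directly from $v'-v = \tfrac{|v-v_*|}{2}(\sigma-\hat u)$, I would rewrite
\[
(v\cdot\omega)\cos^{\ell-1}\tfrac{\theta}{2}\sin\tfrac{\theta}{2}
= (v_\ast\cdot\tilde\omega)\cos^{\ell}\tfrac{\theta}{2}\sin\tfrac{\theta}{2}
+ \sin^2\tfrac{\theta}{2}\Bigl[(v_\ast\cdot\hat u)\cos^{\ell-2}\tfrac{\theta}{2} + (v_\ast\cdot\tilde\omega)\sin\tfrac{\theta}{2}\cos^{\ell-2}\tfrac{\theta}{2}\Bigr].
\]
The bracketed remainder contributes only $\sin^2(\theta/2)$-type terms, handled exactly as in the first paragraph. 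For the main factor $(v_\ast\cdot\tilde\omega)\cos^{\ell}(\theta/2)\sin(\theta/2)$ I use the algebraic split
\[
F\vint{v}^{-2}H' = \bigl(F\vint{v}^{-2} - F'\vint{v'}^{-2}\bigr)H' + F'\vint{v'}^{-2}H' .
\]
The first summand is exactly the difference term on the first line of the RHS of~\eqref{ineq:trilinear-1}. For the second, both $F'$ and $H'$ evaluate at $v'$, so Proposition~\ref{prop:change-variable}(a) converts the integral to one in $F(v)H(v)\vint{v}^{-2}$ against a kernel still carrying $(v_\ast\cdot\tilde\omega)\sin(\theta/2)$; decomposing $\tilde\omega$ along $\hat u$ and $\omega$ and using the azimuthal antisymmetry $\int(v_\ast\cdot\omega)\,d\phi = 0$ yields the missing extra $\sin(\theta/2)$, reducing it to the $\sin^2(\theta/2)$-bounded estimates of paragraph one.

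For Part~(b), the pointwise hypothesis $g\vint{v}^\ell \leq K_0$ gives $G_\ast\vint{v_\ast}^\ell \lesssim 1+K_0$ uniformly, so the weight $\vint{v}^{\ell-1-\gamma}$ that appeared in the $L^\infty$-alternative of Part~(a) can be absorbed directly into the $v_\ast$-moment of $G_\ast$. This replaces $\|F/\vint{v}^{\ell-1-\gamma}\|_{L^\infty}\|H\|_{L^1}$ by the sharper $\|F\|_{L^\infty}\|H\|_{L^1}$ inside the min of~\eqref{ineq:trilinear-1-f-minus}; the trade-off is one residual contribution where the absorption is only partial, producing the separate term $C_\ell(1+K_0)\sup_x\|F/\vint{v}^{\ell-1-\gamma}\|_{L^1_v}\|H\|_{L^1_xL^1_\gamma}$. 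The main obstacle throughout is the first angular rewrite: the leading term of Lemma~\ref{lem:diff-v-k} carries only a single $\sin(\theta/2)$, which is not $\sigma$-integrable for $s\geq 1/2$, so the whole argument hinges on simultaneously producing the cancellation difference $F\vint{v}^{-2} - F'\vint{v'}^{-2}$ (to be exploited later by a fractional-derivative absorption) and converting $(v\cdot\omega)\cos^{\ell-1}(\theta/2)$ into $(v_\ast\cdot\tilde\omega)\cos^{\ell}(\theta/2)$ without introducing any new, non-integrable contribution.
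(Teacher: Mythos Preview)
Your approach for Part~(a) is essentially the paper's: the five-term expansion via Lemma~\ref{lem:diff-v-k}, the angular rewrite converting $(v_\ast\cdot\omega)\cos^{\ell-1}\tfrac{\theta}{2}$ into $(v_\ast\cdot\tilde\omega)\cos^{\ell}\tfrac{\theta}{2}$ plus a $\sin^2\tfrac{\theta}{2}$ remainder, and the add--subtract of $F'\vint{v'}^{-2}$ to isolate the difference term. One small point: the piece with $F'\vint{v'}^{-2}H'(v_\ast\cdot\tilde\omega)$ actually \emph{vanishes identically} by symmetry, not merely reduces to a $\sin^2\tfrac{\theta}{2}$ term. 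After the regular change of variables, $\tilde\omega$ is perpendicular to the new $v-v_\ast$ direction, so the azimuthal average of $v_\ast\cdot\tilde\omega$ is zero and nothing survives; your ``missing extra $\sin\tfrac{\theta}{2}$'' explanation via the decomposition $\tilde\omega=\cos\tfrac{\theta}{2}\,\omega-\sin\tfrac{\theta}{2}\,\hat u$ is muddled because after the change of variables $\hat u$ and $\omega$ no longer refer to the new integration variable. This does not affect the outcome, but the paper's one-line ``vanishes by symmetry'' is cleaner.

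Part~(b) has a genuine gap. You say the pointwise bound $G_\ast\vint{v_\ast}^\ell\lesssim 1+K_0$ lets you ``absorb the weight directly into the $v_\ast$-moment of $G_\ast$,'' but once you replace $G_\ast\vint{v_\ast}^\ell$ by a constant, the $v_\ast$-integral in your Part~(a) scheme (regular change of variables on $H'$) diverges: there is no remaining decay in $v_\ast$. The correct mechanism, which your sketch does not mention, is the \emph{singular} change of variables $v_\ast\mapsto v'$ (Lemma~\ref{prop:change-variable}(b)). For $\Gamma_2$ and $\Gamma_3$ one writes
\[
\iiiint (1+K_0)\,\frac{|F|}{\vint{v}^{\ell}}\,|H'|\,\sin^{\ell}\tfrac{\theta}{2}\,b\,|v-v_\ast|^{\gamma}
= \iiiint (1+K_0)\,\frac{|F|}{\vint{v}^{\ell}}\,|H(v_\ast)|\,\sin^{\ell-3-\gamma}\tfrac{\theta}{2}\,b\,|v-v_\ast|^{\gamma},
\]
which is now $\sigma$-integrable since $\ell>8+\gamma$, and then closes as $C_\ell(1+K_0)\sup_x\|F/\vint{v}^{\ell-\gamma}\|_{L^1_v}\|H\|_{L^1_xL^1_\gamma}$. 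This is exactly the last line of~\eqref{ineq:trilinear-1-f-minus}. The remaining terms $\Gamma_1,\Gamma_4,\Gamma_5$ are handled as in Part~(a), with the observation that $\|g\|_{L^1_{4+\gamma}}\leq C(1+K_0)$ because $g\geq -\mu$ and $g\leq K_0\vint{v}^{-\ell}$ with $\ell-4-\gamma>3$.
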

\begin{proof}
The proofs for both parts follow from a revision of the proof of Proposition 3.1 in~\cite{AMSY}. Applying Lemma~\ref{lem:diff-v-k}, we decompose the integral as 
\begin{align} \label{decomp:general}
& \quad \,
   \iiiint_{\T^3 \times \R^6 \times \Ss^2}
    G_\ast \frac{F}{\vint{v}^\ell} H' 
    \vpran{\vint{v'}^{\ell} - \vint{v}^{\ell} \cos^\ell \tfrac{\theta}{2}} 
        b(\cos\theta) |v - v_\ast|^\gamma \dbmu    \nn
\\
& = \ell \iiiint_{\T^3 \times \R^6 \times \Ss^2}
    G_\ast \frac{F}{\vint{v}^2} H' 
    |v - v_\ast| (v \cdot \omega) \cos^{\ell-1}\tfrac{\theta}{2} \sin\tfrac{\theta}{2}
        b(\cos\theta) |v - v_\ast|^\gamma \dbmu   \nn
\\
& \quad \,
  + \iiiint_{\T^3 \times \R^6 \times \Ss^2}
    \vpran{G_\ast \vint{v_\ast}^\ell} \frac{F}{\vint{v}^\ell} H' 
    \sin^\ell \tfrac{\theta}{2} \,
        b(\cos\theta) |v - v_\ast|^\gamma \dbmu  \nn
\\
& \quad \,
   + \sum_{i=1}^3
       \iiiint_{\T^3 \times \R^6 \times \Ss^2}
    G_\ast \frac{F}{\vint{v}^\ell} H' \mathfrak{R}_i
     b(\cos\theta) |v - v_\ast|^\gamma \dbmu
\,\Denote \, \sum_{n=1}^5 \Gamma_n .
\end{align}
The main difference between~\eqref{ineq:trilinear-1} and~\eqref{ineq:trilinear-1-f-minus} is that in~\eqref{ineq:trilinear-1} the extra $\gamma$-weight falls on $g$ while in~\eqref{ineq:trilinear-1-f-minus} it is on $H$. 


\smallskip

\Ni (a) Deriving the bound for $\Gamma_1$ requires careful use of symmetry in the case of the strong singularity. The idea is similar to the proof of Proposition 3.1 in \cite{AMSY}. In particular, we decompose $\Gamma_1$ as 
\begin{align*}
  \Gamma_1
& = \ell \iiiint_{\T^3 \times \R^6 \times \Ss^2}
    G_\ast \vpran{F' \vint{v'}^{-2} } H' 
    \vpran{v_\ast \cdot \tilde \omega} \cos^{\ell}\tfrac{\theta}{2} \sin\tfrac{\theta}{2}
        b(\cos\theta) |v - v_\ast|^{1+\gamma} \dbmu
\\
& \quad \,
  + \ell \iiiint_{\T^3 \times \R^6 \times \Ss^2}
    G_\ast \vpran{F \vint{v}^{-2} } H' 
    \vpran{v_\ast \cdot \frac{v'-v_\ast}{|v' - v_\ast|}} \cos^{\ell-1}\tfrac{\theta}{2} \sin^2\tfrac{\theta}{2}
        b(\cos\theta) |v - v_\ast|^{1+\gamma} \dbmu
\\
& \quad \,
  + \ell \iiiint_{\T^3 \times \R^6 \times \Ss^2}
    G_\ast \vpran{F \vint{v}^{-2} - F' \vint{v'}^{-2}} H' 
    \vpran{v_\ast \cdot \tilde \omega} \cos^{\ell}\tfrac{\theta}{2} \sin\tfrac{\theta}{2}
        b(\cos\theta) |v - v_\ast|^{1+\gamma} \dbmu
\\
& \Denote 
   \Gamma_{1,1} + \Gamma_{1,2} + \Gamma_{1,3} .
\end{align*}
By symmetry, the first term $\Gamma_{1,1}$ vanishes. The second term $\Gamma_{1,2}$ is readily bounded by
\begin{align*}
   \abs{\Gamma_{1,2}}
&\leq
  C_\ell \iiint_{\T^3 \times \R^6}
    \vpran{\abs{G_\ast} \vint{v_\ast}^{2+\gamma}} |F| \abs{H'} 
    \dv\dv_\ast \dx
\\
&\leq
  C_\ell \vpran{1 + \sup_x \norm{g}_{L^1_{2+\gamma}}}
   \min \left\{\norm{F}_{L^2_{x,v}} 
   \norm{H}_{L^2_{x,v}}, \,\,
   \norm{F}_{L^\infty_{x, v}}   
   \norm{H}_{L^1_{x, v}}\right\} .
\end{align*}
We will leave $\Gamma_{1, 3}$ as is since in the later analysis, Proposition~\ref{prop:strong-sing-cancellation} will be applied in each specific case. Putting the components together gives the bound of $\Gamma_1$ as
\begin{align} \label{bound:Gamma-1}
  \Gamma_1
&\leq
  \ell \iiiint_{\T^3 \times \R^6 \times \Ss^2}
    G_\ast \vpran{F \vint{v}^{-2}  - F' \vint{v'}^{-2}} H' 
    \vpran{v_\ast \cdot \tilde \omega} \cos^{\ell}\tfrac{\theta}{2} \sin\tfrac{\theta}{2}
        b(\cos\theta) |v - v_\ast|^{1+\gamma} \dbmu \nn
\\
& \quad \,
   +    C_\ell \vpran{1 + \sup_x \norm{g}_{L^1_{4+\gamma}}}
   \min \left\{\norm{F}_{L^2_{x,v}} 
   \norm{H}_{L^2_{x,v}}, \,\,
   \norm{F}_{L^\infty_{x, v}}   
   \norm{H}_{L^1_{x, v}}\right\} .
\end{align}
\noindent
Next we show the estimate for $\Gamma_2$. Start with the direct bound using Cauchy-Schwarz and a regular change of variables stated in Lemma~\ref{prop:change-variable}:
\begin{align*}
   \Gamma_2
&\leq
 C \vpran{\iiint_{\T^3 \times \R^6}
    \abs{G_\ast} \vint{v_\ast}^{\ell+\gamma} F^2 \dv \dv_\ast \dx}^{1/2}
  \vpran{\iiint_{\T^3 \times \R^6 \times \Ss^2}
    \abs{G_\ast} \vint{v_\ast}^{\ell + \gamma} H^2 
      \dv \dv_\ast \dx  }^{1/2}
\\
&\leq
   C \vpran{\sup_{x} \norm{G \vint{v}^{\ell + \gamma}}_{L^1}}
   \norm{F}_{L^2_{x, v}} \norm{H}_{L^2_{x, v}}
\\
&\leq
   C_\ell \vpran{1 + \sup_{x} \norm{\vint{v}^{\ell + \gamma} g}_{L^1}}
   \norm{F}_{L^2_{x, v}} \norm{H}_{L^2_{x, v}}. 
\end{align*}
A second way to estimate $\Gamma_2$ is
\begin{align*} 
  \Gamma_2
&\leq
 C \vpran{\sup_x \norm{G\vint{v}^{\ell+\gamma}}_{L^1_v}} 
  \norm{F/\vint{v}^{\ell- \gamma}}_{L^\infty_{x, v}} 
  \norm{H}_{L^1_{x, v}}
\\
&\leq
  C_\ell \vpran{1 + \sup_{x} \norm{\vint{v}^{\ell+\gamma} g}_{L^1_v}}
  \norm{F/\vint{v}^{\ell- \gamma}}_{L^\infty_{x, v}}
  \norm{H}_{L^1_{x, v}},
\end{align*}
where again we have applied the regular change of variables. Overall, we have 
\begin{align} \label{bound:Gamma-2}
   \Gamma_2
\leq
   C_\ell \vpran{1 + \sup_{x} \norm{\vint{v}^{\ell + \gamma} g}_{L^1_v}}
   \min \left\{\norm{F}_{L^2_{x, v}} \norm{H}_{L^2_{x, v}}, \ \norm{F/\vint{v}^{\ell- \gamma}}_{L^\infty_{x, v}}
  \norm{H}_{L^1_{x, v}} \right\}. 
\end{align}
Estimate for $\Gamma_3$ is similar to $\Gamma_2$. By 
the bound on $\mathfrak{R}_1$ in Lemma~\ref{lem:diff-v-k}, we have
\begin{align} \label{bound:Gamma-3}
   \Gamma_3 
&\leq
  C_\ell \iiiint_{\T^3 \times \R^6 \times \Ss^2}
    \vpran{\abs{G_\ast} \vint{v_\ast}^{\ell-1}} \frac{|F|}{\vint{v}^{\ell-1}} \abs{H'}  \sin^{\ell-3} \tfrac{\theta}{2}
     b(\cos\theta) |v - v_\ast|^\gamma \dbmu   \nn 
\\
& \leq
   C_\ell \vpran{1 + \sup_{x} \norm{\vint{v}^{\ell - 1+ \gamma} g}_{L^1_v}}
   \min \left\{\norm{F}_{L^2_{x, v}} \norm{H}_{L^2_{x, v}}, \ \norm{F/\vint{v}^{\ell-1- \gamma}}_{L^\infty_{x, v}}
  \norm{H}_{L^1_{x, v}} \right\}.   
\end{align}
Estimates for $\Gamma_4$ and $\Gamma_5$ are more straightforward. Using the upper bound of $\mathfrak{R}_2$ and a regular change of variables, we have
\begin{align} \label{bound:Gamma-4}
  \abs{\Gamma_4}
&\leq
  C_\ell \iiiint_{\T^3 \times \R^6 \times \Ss^2}
    \vpran{\abs{G_\ast} \vint{v_\ast}^2} \frac{|F|}{\vint{v}^2} \abs{H'} 
    \sin^2 \tfrac{\theta}{2}
     b(\cos\theta) |v - v_\ast|^\gamma \dbmu \nn
\\
&\leq
  C_\ell \iiiint_{\T^3 \times \R^6 \times \Ss^2}
    \vpran{\abs{G_\ast} \vint{v_\ast}^{2+\gamma}} |F| \abs{H'} 
    \sin^2 \tfrac{\theta}{2}
     b(\cos\theta) \dbmu  \nn
\\
&\leq
  C_\ell \vpran{1 + \sup_x \norm{g}_{L^1_{2+\gamma}}}
   \min \left\{\norm{F}_{L^2_{x,v}} 
   \norm{H}_{L^2_{x,v}}, \,\,
   \norm{F}_{L^\infty_{x, v}}   
   \norm{H}_{L^1_{x, v}}\right\}.
\end{align}
Similarly, we can bound $\Gamma_5$ by
\begin{align} \label{bound:Gamma-5}
   \abs{\Gamma_5}
&\leq
  C_\ell \iiiint_{\T^3 \times \R^6 \times \Ss^2}
    \vpran{\abs{G_\ast} \vint{v_\ast}^4} \frac{|F|}{\vint{v}^4} \abs{H'} 
    \sin^2 \tfrac{\theta}{2}
     b(\cos\theta) |v - v_\ast|^\gamma \dbmu   \nn
\\
&\leq
    C_\ell \vpran{1 + \sup_x \norm{g}_{L^1_{4+\gamma}}}
   \min \left\{\norm{F}_{L^2_{x,v}} 
   \norm{H}_{L^2_{x,v}}, \,\,
   \norm{F}_{L^\infty_{x, v}}   
   \norm{H}_{L^1_{x, v}}\right\} .
\end{align}
The desired estimate in~\eqref{ineq:trilinear-1} is obtained by adding all the bounds for $\Gamma_1, \cdots,  \Gamma_5$ in~\eqref{bound:Gamma-1}-\eqref{bound:Gamma-5}.

\smallskip

\Ni (b) The proof is similar to part (a) with a revision based on the extra condition~\eqref{assump:bound-g} on $g$. In particular, we use the decomposition in~\eqref{decomp:general}:
\begin{align*}
    \iiiint_{\T^3 \times \R^6 \times \Ss^2}
    G_\ast \frac{F}{\vint{v}^\ell} H' 
    \vpran{\vint{v'}^{\ell} - \vint{v}^{\ell} \cos^\ell \tfrac{\theta}{2}} 
        b(\cos\theta) |v - v_\ast|^\gamma \dbmu
= \sum_{n=1}^5 {\Gamma_n},
\end{align*}
where $\Gamma_n$'s are exactly the same as in~\eqref{decomp:general}.
The estimates of $\Gamma_1, \Gamma_4, \Gamma_5$ remain the same as in part (a), which give
\begin{align} \label{est:rest-f-minus}
& \quad \,
   \abs{\Gamma_1} + \abs{\Gamma_4} + \abs{\Gamma_5}   \nn
\\
&\leq
   \ell \abs{\iiiint_{\T^3 \times \R^6 \times \Ss^2}
    G_\ast \vpran{F \vint{v}^{-2}  - F' \vint{v'}^{-2}} H' 
   \vpran{v_\ast \cdot \tilde \omega} \cos^{\ell}\tfrac{\theta}{2} 
   \sin\tfrac{\theta}{2}
        b(\cos\theta) |v - v_\ast|^{1+\gamma} \dbmu} \nn
\\
& \quad \, 
  +    C_\ell \vpran{1 + \sup_x \norm{g}_{L^1_{4+\gamma}}}
   \min \left\{\norm{F}_{L^2_{x,v}} 
   \norm{H}_{L^2_{x,v}}, \,\,
   \norm{F}_{L^\infty_{x, v}}   
   \norm{H}_{L^1_{x, v}}\right\}.
\end{align}

To prove~\eqref{ineq:trilinear-1-f-minus}, we combine~\eqref{assump:bound-g} with the positivity of $G$ and the singular change of variables to estimate $\Gamma_2$ and get
\begin{align} \label{bound:Gamma-2-f-minus-1}
   \abs{\Gamma_2}
&= \abs{\iiiint_{\T^3 \times \R^6 \times \Ss^2}
    \vpran{G_\ast \vint{v_\ast}^\ell} \frac{F}{\vint{v}^\ell} H' 
    \sin^\ell \tfrac{\theta}{2} \,
        b(\cos\theta) |v - v_\ast|^\gamma \dbmu}  \nn
\\
& \leq
   \iiiint_{\T^3 \times \R^6 \times \Ss^2}
    \vpran{\mu_\ast \vint{v_\ast}^\ell + K_0} \frac{|F|}{\vint{v}^\ell} |H'| 
    \sin^\ell \tfrac{\theta}{2} \,
        b(\cos\theta) |v - v_\ast|^\gamma \dbmu  \nn
\\
&\leq
  C_\ell (1 + K_0) \vpran{\sup_x \norm{F/\vint{v}^{\ell - \gamma}}_{L^1_v}}
   \norm{H}_{L^1_x L^1_\gamma}. 
\end{align}
Similarly, using the positivity of $G$ and~\eqref{assump:bound-g}, we have the bound of $\Gamma_3$ as
\begin{align} \label{bound:Gamma-3-f-minus-1}
  \abs{\Gamma_3}
&\leq
  C_\ell \iiiint_{\T^3 \times \R^6 \times \Ss^2}
    \vpran{G_\ast \vint{v_\ast}^{\ell-1}} \frac{|F|}{\vint{v}^{\ell-1}} |H'|  \sin^{\ell-3} \tfrac{\theta}{2}
     b(\cos\theta) |v - v_\ast|^\gamma \dbmu   \nn
\\
&\leq
  C_\ell (1 + K_0) \vpran{\sup_x \norm{F/\vint{v}^{\ell-1-\gamma}}_{L^1_v}}
   \norm{H}_{L^1_{x,v}}. 
\end{align}
Combining~\eqref{est:rest-f-minus},~\eqref{bound:Gamma-2-f-minus-1} and~\eqref{bound:Gamma-3-f-minus-1} gives~\eqref{ineq:trilinear-1-f-minus}. 
\end{proof}

We summarize explicit bounds for the first term on the right-hand side of~\eqref{ineq:trilinear-1} and~\eqref{assump:bound-g} in the following lemma:

\begin{prop} \label{prop:strong-sing-cancellation}
Let $\tilde \omega$ be the unit vector defined in~\eqref{def:tilde-omega}. Suppose $G, F, H$ are functions that make sense of the integral below. 

\smallskip
\Ni (a) If $s \in [1/2, 1)$, then for any pair of $(s_1, \gamma_1)$ satisfying 
\begin{align} \label{def:s-1-gamma-1}
   s_1 \in (2s - 1, s),
\qquad
   \frac{\gamma_1}{2}
= \frac{2+\gamma}{2} + s_1 -2
< \frac{\gamma}{2},
\end{align}
we have
\begin{align} \label{bound:cancellation-1}
& \quad \,
   \abs{\IntRRS G_\ast \vpran{F \vint{v}^{-2} - F' \vint{v}^{-2}} \, H' \big (v_\ast \cdot \tilde \omega \big)\cos^{\ell} \tfrac{\theta}{2}  \sin  \tfrac{\theta}{2} b(\cos\theta) |v - v_\ast|^{1+\gamma}
         \dsigma \dv_\ast \dv}  \nn
\\
& \leq     
  C \norm{G}_{L^1_{3+\gamma+2s} \cap L^2}
  \norm{F}_{H^{s_1}_{\gamma_1/2}}
  \norm{H}_{L^2_{\gamma/2}}. 
\end{align}

\Ni (b) If $s \in (0, 1/2)$, then 
\begin{align} \label{bound:cancellation-2}
& \quad \,
   \abs{\IntRRS G_\ast \vpran{F \vint{v}^{-2} - F' \vint{v}^{-2}} \, H' \big (v_\ast \cdot \tilde \omega \big)\cos^{\ell} \tfrac{\theta}{2}  \sin  \tfrac{\theta}{2} b(\cos\theta) |v - v_\ast|^{1+\gamma}
         \dsigma \dv_\ast \dv}  \nn
\\
& \leq     
  C \norm{G}_{L^1_{2+\gamma}}
  \min \left\{\norm{F}_{L^2_v} \norm{H}_{L^2_v}, \,\, \norm{F}_{L^\infty_v} \norm{H}_{L^1_v} \right\}. 
\end{align}

\Ni (c) If $F \in W^{1, \infty}(\R^3_v)$, then for any $s \in (0, 1)$ we have
\begin{align} \label{bound:cancellation-3}
& \quad \,
   \abs{\IntRRS G_\ast \vpran{F \vint{v}^{-2} - F' \vint{v'}^{-2}} \, H' 
   \big (v_\ast \cdot \tilde \omega \big)\cos^{\ell} \tfrac{\theta}{2}  \sin  \tfrac{\theta}{2} b(\cos\theta) |v - v_\ast|^{1+\gamma}
          \dsigma \dv_\ast \dv}  \nn
\\
& \leq     
  C \norm{F}_{W^{1, \infty}(R^3_v)}
  \norm{G}_{L^1_{3+\gamma}}
  \norm{H}_{L^1_\gamma}.
\end{align}
\end{prop}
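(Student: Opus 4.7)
\emph{Overall strategy.} The three estimates share a common aim: exploit the implicit smoothness in the difference $F\vint{v}^{-2}-F'\vint{v'}^{-2}$ to absorb the strong angular singularity $\sin(\theta/2)b(\cos\theta)\sim\theta^{-1-2s}$. Since $|v-v'|=|v-v_\ast|\sin(\theta/2)$, every power of smoothness $|v-v'|^{\alpha}$ extracted from the difference turns into an extra $\sin^{\alpha}(\theta/2)$ in the angular integrand. The plan starts from the decomposition
\[ F\vint{v}^{-2}-F'\vint{v'}^{-2} = (F-F')\vint{v}^{-2} + F'\vpran{\vint{v}^{-2}-\vint{v'}^{-2}} , \]
in which the second piece is harmless because $\vint{\cdot}^{-2}$ is Lipschitz, contributing an extra $|v-v'|\lesssim|v-v_\ast|\sin(\theta/2)$; the first piece carries the case-by-case difficulty.

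\emph{Cases (b) and (a).} In case (b) with $s<1/2$ no genuine cancellation is needed: the angular integral $\int_{\Ss^{2}}\sin(\theta/2)b(\cos\theta)\dsigma$ already converges. I would bound $|F-F'|\leq|F|+|F'|$, split the integrand, apply the regular change of variables from Lemma~\ref{prop:change-variable} on the post-collisional piece, and use either Cauchy--Schwarz or the $L^\infty\times L^{1}$ pairing; Peetre-type inequalities distribute the weights on the $v$ and $v_\ast$ variables to produce~\eqref{bound:cancellation-2}. In case (a) with $s\in[1/2,1)$ the first piece $(F-F')\vint{v}^{-2}$ requires fractional cancellation. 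Apply Cauchy--Schwarz so that the integral splits into a factor involving the fractional seminorm
\[ \iiint\frac{|F(v)-F(v')|^{2}}{|v-v'|^{3+2s_1}}\,|v-v_\ast|^{\gamma_1+2s_1}\dsigma\dv\dv_\ast \lesssim \|F\|^{2}_{H^{s_1}_{\gamma_1/2}} , \]
and a factor involving $|H'|^{2}|v-v_\ast|^{\gamma-2s_1}$ together with the angular integral $\int\sin^{1+2s_1}(\theta/2)b(\cos\theta)\dsigma$, which converges precisely when $s_1>2s-1$. The choice $\gamma_1/2=(2+\gamma)/2+s_1-2$ in~\eqref{def:s-1-gamma-1} is exactly the bookkeeping that makes both factors finite, and the excess moments from $G_\ast|v_\ast|$ are absorbed into $\|G\|_{L^{1}_{3+\gamma+2s}\cap L^{2}}$ via a further Cauchy--Schwarz in $v_\ast$ when needed, yielding~\eqref{bound:cancellation-1}.

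\emph{Case (c) and main obstacle.} When $F\in W^{1,\infty}(\R^{3}_{v})$, the mean value theorem applied to $v\mapsto F(v)\vint{v}^{-2}$ gives $|F\vint{v}^{-2}-F'\vint{v'}^{-2}|\leq C\|F\|_{W^{1,\infty}}|v-v'|$, so that $\sin(\theta/2)b(\cos\theta)$ becomes $\sin^{2}(\theta/2)b(\cos\theta)\sim\theta^{1-2s}$, integrable for every $s\in(0,1)$. After a regular change of variables $v\mapsto v'$ via Lemma~\ref{prop:change-variable}, the remaining integral takes the form $\iint G_\ast|v_\ast||v-v_\ast|^{2+\gamma}|H(v)|\vint{v}^{-2}\dv\dv_\ast$, where the $\vint{v}^{-2}$ is the residual decay from $F\vint{\cdot}^{-2}$; combining the Peetre bound $|v-v_\ast|^{2+\gamma}\leq C(\vint{v_\ast}^{2+\gamma}+\vint{v}^{2+\gamma})$ with this residual weight produces exactly $\|G\|_{L^{1}_{3+\gamma}}\|H\|_{L^{1}_{\gamma}}$. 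The main obstacle I foresee is the exponent arithmetic in case (a): one must simultaneously enforce $s_1>2s-1$ for angular integrability, $s_1<s$ for $H^{s_1}$-control of $F$, and $\gamma_1/2<\gamma/2$ so that $H$ retains enough weight. Only the narrow window~\eqref{def:s-1-gamma-1} makes all three compatible, and managing this bookkeeping while tracking the singular change of variables for the $H'$ factor is the most delicate step of the proof.
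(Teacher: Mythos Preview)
Your proposal follows essentially the same route as the paper. Part~(b) is handled identically: the paper uses $\int_{\Ss^2}\sin(\theta/2)b(\cos\theta)\,\dsigma<\infty$ for $s<1/2$, bounds the difference by $|F|\vint{v}^{-2}+|F'|\vint{v'}^{-2}$, absorbs $|v-v_\ast|^{1+\gamma}\vint{v}^{-2}\lesssim\vint{v_\ast}^{1+\gamma}$ (together with the $\vint{v_\ast}$ from $v_\ast\cdot\tilde\omega$ this gives $\vint{v_\ast}^{2+\gamma}$), and closes by Cauchy--Schwarz or the $L^\infty\times L^1$ pairing. For part~(a) the paper gives no argument at all---it simply cites (3.13) in \cite{AMSY}; your Cauchy--Schwarz / fractional-seminorm sketch is the natural direct proof of that cited estimate.

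One small point in part~(c). You apply the mean value theorem directly to the composite $v\mapsto F(v)\vint{v}^{-2}$ and then invoke a ``residual $\vint{v}^{-2}$'' decay. Naive MVT places that decay at an intermediate point $\xi\in[v,v']$, not at $v$ or $v'$, so the step to $\|H\|_{L^1_\gamma}$ needs justification. The paper---and in fact your own opening decomposition---resolves this by writing $(F-F')\vint{v}^{-2}+F'(\vint{v}^{-2}-\vint{v'}^{-2})$ and treating each piece separately: the first keeps $\vint{v}^{-2}$ explicitly, and for the second one uses $|\vint{v}^{-2}-\vint{v'}^{-2}|\leq C(|v|+|v'|)|v-v'|\vint{v}^{-2}\vint{v'}^{-2}$ together with the equivalence $|v'-v_\ast|\sim|v-v_\ast|$ on $\Ss^2_+$ to obtain the pointwise bound $C\|F\|_{W^{1,\infty}}\vint{v'}^\gamma\vint{v_\ast}^{3+\gamma}\sin^2(\theta/2)b(\cos\theta)$. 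If you follow your decomposition rather than MVT on the composite, your argument goes through exactly as written.
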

\begin{proof}
Part (a) is an immediate application of (3.13) in \cite{AMSY} (with a reshuffle of the function names). Part~(b) is a direct bound using the fact that $b(\cos\theta) \sin\tfrac{\theta}{2}$ is integral if $s \in (0, 1/2)$. Hence,
\begin{align*}
   \text{LHS of~\eqref{bound:cancellation-2}}
&\leq
  C\IntRRS |G_\ast| \vpran{|F| \vint{v}^{-2} + |F'| \vint{v'}^{-2}} \, |H'| \vint{v_\ast} |v - v_\ast|^{1+\gamma}
         \dsigma \dv_\ast \dv
\\
& \leq
  C\IntRRS |G_\ast| \vint{v_\ast}^{2+\gamma} \vpran{|F| + |F'|} \, |H'| \dsigma \dv_\ast \dv.
\end{align*}
Depending on the property of $F$, we can obtain two types of bounds here:
\begin{align*}
  \IntRRS |G_\ast| \vint{v_\ast}^{2+\gamma} \vpran{|F| + |F'|} \, |H'| \dsigma \dv_\ast \dv
\leq
  \norm{G}_{L^1_{2+\gamma}} \norm{F}_{L^2_v} \norm{H}_{L^2_v} 
\end{align*}
and
\begin{align*}
  \IntRRS |G_\ast| \vint{v_\ast}^{2+\gamma} \vpran{|F| + |F'|} \, |H'| \dsigma \dv_\ast \dv
\leq
  \norm{G}_{L^1_{2+\gamma}} \norm{F}_{L^\infty_v} \norm{H}_{L^1_v}. 
\end{align*}
A combination of them gives~\eqref{bound:cancellation-2}.

\Ni Part (c) follows directly from the Mean Value Theorem, which gives the following bound:
\begin{align*}
   \abs{F \vint{v}^{-2} - F' \vint{v'}^{-2}} \, |v - v_\ast|^{1+\gamma}
&\leq
   \vpran{\abs{F - F'} \vint{v}^{-2} + \abs{\vint{v}^{-2} - \vint{v'}^{-2}} |F'|} 
   |v - v_\ast|^{1+\gamma}
\\
&\leq
   \vpran{\norm{\nabla_v F}_{L^\infty_v} \frac{|v - v'|}{\vint{v}^2}
 + \norm{F}_{L^\infty_v} \frac{(|v| + |v'|) |v - v'|}{\vint{v}^2 \vint{v'}^2}}
    |v - v_\ast|^{1+\gamma}
\\
&= \vpran{\norm{\nabla_v F}_{L^\infty_v} \frac{|v - v_\ast|^{2+\gamma}}{\vint{v}^2}
 + \norm{F}_{L^\infty_v} \frac{(|v| + |v'|) |v - v_\ast|^{2+\gamma}}{\vint{v}^2 \vint{v'}^2}} \sin\tfrac{\theta}{2}.
\end{align*}
Since on $\Ss^2_+$ it holds that 
\begin{align*}
    \tfrac{\sqrt{2}}{2}\abs{v - v_\ast}
\leq
   \abs{v' - v_\ast}
\leq
   \abs{v - v_\ast}, 
\end{align*}
there exists a generic constant $C$ such that
\begin{align*}
   \frac{(|v| + |v'|) |v - v_\ast|^{2+\gamma}}{\vint{v}^2 \vint{v'}^2}
&= \frac{|v| |v - v_\ast|^{2+\gamma}}{\vint{v}^2 \vint{v'}^2}
   + \frac{|v'| |v - v_\ast|^{2+\gamma}}{\vint{v}^2 \vint{v'}^2} 
\leq 
  C \vint{v_\ast}^{2+\gamma}. 
\end{align*}
Hence, the (partial) integrand satisfies 
\begin{align*}
& \quad \, 
  \abs{\vpran{F \vint{v}^{-2} - F' \vint{v'}^{-2}}
  \big (v_\ast \cdot \tilde \omega \big)\cos^{\ell} \tfrac{\theta}{2}  \sin  \tfrac{\theta}{2} b(\cos\theta)} |v - v_\ast|^{1+\gamma}
\\
& \leq
  C \vpran{\norm{\nabla_v F}_{L^\infty_v} \frac{|v - v_\ast|^{2+\gamma}}{\vint{v}^2}
 + \norm{F}_{L^\infty_v} \frac{(|v| + |v'|) |v - v_\ast|^{2+\gamma}}{\vint{v}^2 \vint{v'}^2}} \vint{v_\ast} 
\\
& \leq 
  C \norm{F}_{W^{1, \infty}_v}
  \vint{v'}^\gamma \vint{v_\ast}^{3+\gamma},
\end{align*}
when restricted on $\Ss^2_+$. Inserting such bound into the left-hand side of~\eqref{bound:cancellation-3}, we get 
\begin{gather*}
  \text{LHS of~\eqref{bound:cancellation-3}}
\leq
  C \norm{F}_{W^{1, \infty}_v}
  \iint_{\R^3 \times \R^3}
    |G_\ast| \vint{v_\ast}^{3+\gamma}
    \vint{v'}^\gamma H' \dv_\ast \dv
\\
\leq
  C \norm{F}_{W^{1, \infty}_v}
  \norm{G}_{L^1_{3+\gamma}}
  \norm{H}_{L^1_\gamma}. \qedhere
\end{gather*}

\end{proof}

We also record a proposition using the symmetry cancellation:
\begin{prop} \cite[Lemma 2.1]{CCL} \label{prop:symmetry-cancel}
Suppose $H \in W^{2, \infty}(\R^3)$. Then for any $s \in (0, 1)$, it holds that
\begin{align*}
   \abs{\int_{\Ss^2}
       \vpran{H' - H} b(\cos\theta) \dsigma}
&\leq
  C \vpran{\sup_{|u| \leq |v| + |v_\ast|} \abs{\nabla H (u)} 
        + \sup_{|u| \leq |v| + |v_\ast|} \abs{\nabla^2 H (u)}} |v - v_\ast|^2.
\end{align*}
\end{prop}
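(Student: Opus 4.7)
The plan is to combine a second-order Taylor expansion of $H$ about $v$ with the azimuthal symmetry of the angular integration on $\Ss^2$. First I would write, by Taylor's theorem,
\begin{equation*}
H(v') - H(v) = \nabla H(v) \cdot (v' - v) + R(v,v'),
\end{equation*}
with remainder bounded by $|R(v,v')| \leq \tfrac{1}{2}\sup_{|u|\leq |v|+|v_\ast|}|\nabla^2 H(u)|\cdot |v'-v|^2$. Using $v' = \tfrac{v+v_\ast}{2} + \tfrac{|v-v_\ast|}{2}\sigma$, one has the identity $|v'-v| = |v-v_\ast|\sin(\theta/2)$, so the remainder contribution to the integral is controlled by $C\sup|\nabla^2 H|\cdot |v-v_\ast|^2 \int_{\Ss^2}\sin^2(\theta/2) b(\cos\theta)\dsigma$. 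The angular integral is finite exactly for $s\in(0,1)$: near $\theta = 0$, the integrand behaves like $\theta^2 \cdot \theta^{-2-2s}$ and the surface measure contributes $\sin\theta \sim \theta$, giving $\theta^{1-2s}$, which is integrable.

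For the linear term, I would decompose $\sigma$ in the frame adapted to $\hat u = (v-v_\ast)/|v-v_\ast|$, writing $\sigma = \cos\theta\, \hat u + \sin\theta\, \omega$ with $\omega$ ranging over the unit circle perpendicular to $\hat u$. A direct substitution then yields the explicit decomposition
\begin{equation*}
v'-v = (v_\ast - v)\sin^2(\theta/2) + |v-v_\ast|\sin(\theta/2)\cos(\theta/2)\,\omega.
\end{equation*}
The azimuthal integration of $\omega$ over the perpendicular circle vanishes by symmetry, eliminating the transverse piece, so the linear contribution reduces to
\begin{equation*}
\nabla H(v)\cdot (v_\ast - v)\int_{\Ss^2}\sin^2(\theta/2) b(\cos\theta)\dsigma,
\end{equation*}
which is bounded by $C\sup|\nabla H|\cdot |v-v_\ast|$ with the same finite angular constant as in the remainder estimate. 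Combining the two contributions produces the desired bound.

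The main technical point is the azimuthal cancellation of the $\omega$-component of $v'-v$: without it, the first-order Taylor term would contribute a full $|v-v_\ast|$ per unit $\theta$ which, integrated against the singular $b(\cos\theta)$, would fail to be absolutely convergent once $s \geq 1/2$. The second-order Taylor truncation with its $\sin^2(\theta/2)$ factor is precisely what tames the non-cutoff singularity in the remainder, and the explicit frame decomposition of $\sigma$ is what produces the parallel projection onto $(v_\ast-v)$ after averaging in $\omega$. No additional structural features of $H$ or of the collision operator are invoked; the result is a purely geometric symmetry-cancellation estimate on the angular kernel, and the range of applicability is determined only by the integrability threshold $s < 1$ of $\sin^2(\theta/2) b(\cos\theta)$ on $\Ss^2$.
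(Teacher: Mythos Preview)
Your argument is the standard one and matches the approach in the cited reference [CCL]; the paper itself does not reprove the statement. The Taylor expansion, the identity $|v'-v|=|v-v_\ast|\sin(\theta/2)$, and the azimuthal cancellation of the $\omega$-component are all exactly right, and the integrability check for $\sin^2(\theta/2)b(\cos\theta)$ is clean.

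There is one point to flag. Your linear term yields the bound $C\sup|\nabla H|\cdot|v-v_\ast|$, not $C\sup|\nabla H|\cdot|v-v_\ast|^2$, so your final sentence ``produces the desired bound'' does not quite match the inequality as printed in the paper. In fact the printed inequality cannot hold with $|v-v_\ast|^2$ on the gradient term: take $H(v)=a\cdot v$ linear, so $\nabla^2 H=0$; then your computation gives
\[
\int_{\Ss^2}(H'-H)b(\cos\theta)\dsigma = a\cdot(v_\ast-v)\int_{\Ss^2}\sin^2(\theta/2)b(\cos\theta)\dsigma,
\]
which is of order $|v-v_\ast|$ and cannot be dominated by $C|a|\,|v-v_\ast|^2$ as $|v-v_\ast|\to 0$. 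So what you have actually proved is
\[
\abs{\int_{\Ss^2}(H'-H)b(\cos\theta)\dsigma}\leq C\Big(\sup_{|u|\leq|v|+|v_\ast|}|\nabla H(u)|\cdot|v-v_\ast|+\sup_{|u|\leq|v|+|v_\ast|}|\nabla^2 H(u)|\cdot|v-v_\ast|^2\Big),
\]
which is the correct statement; the exponent in the paper's display appears to be a slight imprecision. This does not affect the applications: in the places where the proposition is invoked (the bound on $T_{1,3}^+$ and the convergence argument for $Q_\eta$), the $|v-v_\ast|$ contribution from the gradient term is absorbed by the available velocity weights just as well as $|v-v_\ast|^2$ would be. It would be worth stating explicitly in your write-up which version you have established.
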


\subsection{Interpolation results}
In this section we collect several results about interpolation in fractional Sobolev spaces that will be used in the sequel.
\begin{lem}\label{app-inter-x-theta-1}
Let $\eta,\eta'\in(0,1)$.  Then for $r = r(\eta, \eta', d) > 2$ and $\alpha = \alpha(\eta, \eta', d)\in(0,1)$ defined in~\eqref{def:alpha-r}, it follows that
\begin{align} \label{ineq:interp-1}
   \norm{\varphi}_{L^{r}_{x,v}}
\leq 
  C \vpran{\int_{\T^d} 
  \norm{(-\Delta_v)^{\eta/2}\varphi(x,\cdot)}^2_{L^2_v} \dx}^{\frac{\alpha}{2}}
     \vpran{\int_{\R^{d}} \norm{(1-\Delta_x)^{\eta'/2} \varphi(\cdot, v)}^{2}_{L^{2}_x} \dv}^{\frac{1-\alpha}{2}}.
\end{align} 
The constant $C$ only depends on $\eta, \eta', d$.
\end{lem}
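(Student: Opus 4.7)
The plan is to combine the fractional Sobolev embeddings in each slice variable with log-convexity of $L^p$-norms, and then to use a single Minkowski swap to reconcile the resulting mixed norms with the right-hand side of \eqref{ineq:interp-1}. First, in the admissible parameter range $\eta,\eta'<d/2$ (in particular for $d=3$), the standard fractional Sobolev embeddings give, for a.e.\ $x\in\T^d$ and a.e.\ $v\in\R^d$ respectively,
\begin{align*}
\norm{\varphi(x,\cdot)}_{L^{p_v}_v} &\leq C\,\norm{(-\Delta_v)^{\eta/2}\varphi(x,\cdot)}_{L^2_v},
\qquad \tfrac{1}{p_v}=\tfrac{1}{2}-\tfrac{\eta}{d}, \\
\norm{\varphi(\cdot,v)}_{L^{p_x}_x} &\leq C\,\norm{(1-\Delta_x)^{\eta'/2}\varphi(\cdot,v)}_{L^2_x},
\qquad \tfrac{1}{p_x}=\tfrac{1}{2}-\tfrac{\eta'}{d}.
\end{align*}
Squaring and integrating in the complementary variable then controls $\norm{\varphi}^2_{L^2_x L^{p_v}_v}$ and $\norm{\varphi}^2_{L^2_v L^{p_x}_x}$ by the two integrals appearing on the right-hand side of \eqref{ineq:interp-1}.

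Next, I would interpolate fibrewise in $v$: for each fixed $x$, log-convexity of $L^p$-norms gives
\begin{equation*}
\norm{\varphi(x,\cdot)}_{L^r_v} \leq \norm{\varphi(x,\cdot)}^{\alpha}_{L^{p_v}_v}\,\norm{\varphi(x,\cdot)}^{1-\alpha}_{L^2_v},
\qquad \tfrac{1}{r}=\tfrac{\alpha}{p_v}+\tfrac{1-\alpha}{2}.
\end{equation*}
Raising to the $r$-th power, integrating in $x$, and applying H\"older in $x$ with the conjugate pair $\bigl(\tfrac{2}{r\alpha},\,\tfrac{p_x}{r(1-\alpha)}\bigr)$ imposes a second identity $\tfrac{1}{r}=\tfrac{\alpha}{2}+\tfrac{1-\alpha}{p_x}$; matching the two expressions for $1/r$ pins down
\begin{equation*}
\alpha=\frac{\eta'}{\eta+\eta'},\qquad \frac{1}{r}=\frac{1}{2}-\frac{\eta\eta'}{d(\eta+\eta')}\in\bigl(0,\tfrac{1}{2}\bigr),
\end{equation*}
so $r>2$ and $\alpha\in(0,1)$ automatically, consistent with \eqref{def:alpha-r}. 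The outcome of this two-step interpolation is
\begin{equation*}
\norm{\varphi}_{L^r_{x,v}} \leq \norm{\varphi}^{\alpha}_{L^2_x L^{p_v}_v}\,\norm{\varphi}^{1-\alpha}_{L^{p_x}_x L^2_v}.
\end{equation*}

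Finally, because $p_x\geq 2$, Minkowski's integral inequality yields $\norm{\varphi}_{L^{p_x}_x L^2_v}\leq \norm{\varphi}_{L^2_v L^{p_x}_x}$; inserting this together with the two Sobolev bounds from the first paragraph produces exactly \eqref{ineq:interp-1}. The only delicate point is coordinating the exponents across the two interpolations: this consistency is what uniquely determines $\alpha$ and $r$ and forces the symmetric powers $\alpha/2$ and $(1-\alpha)/2$ on the right-hand side. All other steps reduce to standard H\"older and Minkowski manipulations.
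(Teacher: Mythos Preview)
Your argument is correct and follows essentially the same route as the paper's proof: slice-wise Sobolev embeddings, H\"older interpolation first in $v$ then in $x$, and a single Minkowski swap (since $p_x\geq 2$) to reconcile the mixed-norm order. The only cosmetic difference is that the paper tracks the exponents through auxiliary parameters $\alpha_1,\alpha_2$ whereas you solve directly for $\alpha=\eta'/(\eta+\eta')$ and $1/r=1/2-\eta\eta'/(d(\eta+\eta'))$; these coincide with the paper's $r,\alpha$ in \eqref{def:alpha-r}.
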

\begin{proof}
By the Sobolev embedding in $\R^d$ and $\T^d$ there exists $c$ depending only $\eta, \eta', d$ such that
\begin{align}
   \int_{\T^d} \norm{(-\Delta_v)^{\eta/2}\varphi(x,\cdot)}^{2}_{L^{2}_{v}} \dx 
&\geq 
   c \int_{\T^d}\Big(\int_{\R^{d}} \big| \varphi(x, v) \big|^{p} \dv\Big)^{\frac{2}{p}}\dx,  \label{interp:1}
\\
   \int_{\R^{d}}\|(1-\Delta_x)^{\eta'/2} \varphi(\cdot, v)\|^{2}_{L^{2}_x} \dv &\geq 
   c \int_{\R^{d}}\left(\int_{\T^{d}} \big|\varphi(x, v)\big|^{q} \dx\right)^{\frac{2}{q}} \dv, \label{interp:2}
\end{align}
where
\begin{align}  \label{def:p-q}
   \frac{1}{q} = \frac {1}{2} - \frac{\eta'}{d}, 
\qquad 
   \frac{1}{p} = \frac {1}{2} - \frac{\eta}{d},
\qquad
   q, \, p > 2. 
\end{align}
Set
\begin{align}
   &\alpha_1 = \frac{q - 2}{ \frac{p}{2} \, q - 2} \in (0, 1), 
\qquad 
   \alpha_2 = \frac{p}{2} \, \alpha_1 \in (0, 1),  \label{alpha12}
\\  
   & r = p \, \alpha_1 + 2(1 - \alpha_1) > 2,
\qquad
   \alpha = \frac{2\alpha_2}{r} \in (0, 1). \label{def:alpha-r}
\end{align} 
One can readily check that
\begin{align} \label{rel:parameters-1}
    \frac{\alpha_1}{\alpha_2} = \frac{2}{p},
\qquad 
   \frac{1-\alpha_1}{1-\alpha_2} = \frac{q}{2} > 1,
\qquad
   r = 2 \alpha_2 + q (1- \alpha_2),
\qquad
   \frac{q}{2r}(1- \alpha_2) = \frac{1-\alpha}{2}.
\end{align}
Then, using the H\"{o}der inequality we have 
\begin{align*}
    \int_{\T^d} \int_{\R^{d}} \abs{\varphi(x,v)}^r \dv \dx
&\leq 
   \int_{\T^d} \vpran{\int_{\R^{d}} \abs{\varphi(x,v)}^{p} \dv}^{\alpha_1}
   \vpran{\int_{\R^{d}} \abs{\varphi(x,v)}^{2} \dv}^{1-\alpha_1}\dx
\\
&\leq
    \vpran{\int_{\T^d} \vpran{\int_{\R^{d}} \abs{\varphi(x,v)}^{p} \dv}^{\frac{\alpha_1}{\alpha_2}}\dx}^{\alpha_2}
    \vpran{\int_{\T^{d}}\vpran{\int_{\R^{d}} \abs{\varphi(x,v)}^{2} \dv}^{\frac{1-\alpha_1}{1-\alpha_{2}}}\dx}^{1-\alpha_{2}}
\\
& = \vpran{\int_{\T^d} \vpran{\int_{\R^{d}} \abs{\varphi(x,v)}^{p} \dv}^{\frac{2}{p}}\dx}^{\alpha_2}
    \vpran{\int_{\T^{d}}\vpran{\int_{\R^{d}} \abs{\varphi(x,v)}^{2} \dv}^{\frac{ q }{2}}\dx}^{1-\alpha_{2}}
\\
&\leq 
   C \vpran{\int_{\T^d} \|(-\Delta_{v})^{\eta/2}\varphi(x,\cdot)\|^{2}_{L^{2}_v} \dx}^{\alpha_2}
   \vpran{\int_{\R^{d}}\vpran{\int_{\T^{d}} \abs{\varphi(x,v)}^{ q } \dx}^{\frac{2}{ q }}\dv}^{\frac{ q }{2}(1-\alpha_{2})}
\\
&\leq 
    C \vpran{\int_{\T^d} \norm{(-\Delta_{v})^{\eta/2}\varphi(x,\cdot)}_{L^{2}_v}^2 \dx}^{\alpha_2}
    \vpran{\int_{\R^{d}} \norm{(1-\Delta_x)^{\eta'/2} \varphi(\cdot, v)}^{2}_{L^{2}_x} \dv}^{\frac{q}{2}(1-\alpha_{2})},
\end{align*}
where the Minkowski's integral inequality is used in the second last step.
Inequality~\eqref{ineq:interp-1} then follows by the definition and property of $\alpha$ in~\eqref{def:alpha-r} and~\eqref{rel:parameters-1}. 
\end{proof}
\noindent
Observe that the estimates hold in the proof of Lemma~\ref{app-inter-x-theta-1}, or equivalently, the existence of $\alpha, \alpha_1, \alpha_2, r$ in the correct range is guaranteed as long as $p, q > 2$. Based on such observation, we have a second interpolation similar as Lemma~\ref{app-inter-x-theta-1}:
%
%
%
\begin{lem}\label{app-inter-x-theta}
Let $\eta, \eta' \in (0,1)$ and $m \geq 1$.  Then, for some $\tilde r = r(\eta, \eta', m, d) > 2$ and $\tilde \alpha = \tilde \alpha (\eta, \eta', m, d) \in (0, 1)$, we have
\begin{align*}
\| \varphi \|_{L^{\tilde r}_{x,v}}\leq C\bigg(\int_{\T^{d}}\| (-\Delta_v)^{\eta/2}\varphi(x,\cdot) \|^{2}_{L^{2}_{v}}\dx\bigg)^{\frac{\tilde \alpha}{2}}\bigg(\int_{\R^{d}}\big\|(1-\Delta_x)^{\eta'/2} \varphi^{2}(\cdot, v) \big\|_{L^{m}_x}\dv\bigg)^{\frac{1-\tilde \alpha}{2}}.
\end{align*} 
The constant $C$ only depends on $\eta, \eta',m, d$.
\end{lem}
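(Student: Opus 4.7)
The plan is to mimic the proof of Lemma~\ref{app-inter-x-theta-1} essentially word-for-word, modifying only the way spatial integrability of $\varphi$ is extracted. In that proof, the second factor on the right came from the Sobolev embedding \eqref{interp:2} applied to $\varphi$ directly. Here the hypothesis controls a spatial norm of $\varphi^{2}$ in $L^{m}_{x}$ instead of $\varphi$ in $L^{2}_{x}$, so I first extract integrability of $\varphi^{2}$ and then convert it to integrability of $\varphi$ through the trivial identity $\|\varphi^{2}(\cdot,v)\|_{L^{q_{1}}_{x}}=\|\varphi(\cdot,v)\|_{L^{2q_{1}}_{x}}^{2}$.

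Concretely, I would invoke the Sobolev embedding $H^{\eta',m}(\T^{d})\hookrightarrow L^{q_{1}}(\T^{d})$, valid for some $q_{1}>1$: one may take $q_{1}=\tfrac{md}{d-m\eta'}$ when $m\eta'<d$, and any finite $q_{1}>1$ otherwise. Since $m\geq 1$ and $\eta'\in(0,1)$, this always delivers $q_{1}>1$. Applied to $\varphi^{2}(\cdot,v)$, it gives
\[
\|\varphi(\cdot,v)\|_{L^{2q_{1}}_{x}}^{2}=\|\varphi^{2}(\cdot,v)\|_{L^{q_{1}}_{x}}\leq C\,\|(1-\Delta_{x})^{\eta'/2}\varphi^{2}(\cdot,v)\|_{L^{m}_{x}}.
\]
Integrating in $v$ and setting $q:=2q_{1}>2$, this produces the analogue of \eqref{interp:2},
\[
\int_{\R^{d}}\|\varphi(\cdot,v)\|_{L^{q}_{x}}^{2}\dv\leq C\int_{\R^{d}}\|(1-\Delta_{x})^{\eta'/2}\varphi^{2}(\cdot,v)\|_{L^{m}_{x}}\dv,
\]
which is exactly the input needed to replicate the argument that follows \eqref{interp:2}.

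With this value of $q>2$ and the same $p>2$ coming from the velocity-variable Sobolev embedding \eqref{interp:1}, I would then define $\alpha_{1},\alpha_{2},\tilde r,\tilde\alpha$ by the formulas \eqref{alpha12}--\eqref{def:alpha-r} (using the new $q=2q_{1}$) and run the same H\"older/Minkowski chain as in the previous proof. The final inequality acquires exponents $\alpha_{2}/\tilde r$ on the velocity factor and $\tfrac{q}{2\tilde r}(1-\alpha_{2})$ on the spatial factor; these match $\tilde\alpha/2$ and $(1-\tilde\alpha)/2$ with $\tilde\alpha:=2\alpha_{2}/\tilde r\in(0,1)$ by the parameter relations \eqref{rel:parameters-1}.

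I do not expect any real obstacle. The only point needing verification is that $q_{1}>1$ can always be arranged, so that $q>2$ and the parameter relations are valid. Since $m\geq 1$ and $\eta'>0$, this is immediate from the Sobolev embedding on $\T^{d}$ in all three regimes $m\eta'<d$, $m\eta'=d$ and $m\eta'>d$. The remainder is a routine transcription of the proof of Lemma~\ref{app-inter-x-theta-1}.
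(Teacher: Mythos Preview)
Your proposal is correct and follows essentially the same route as the paper: apply the Sobolev embedding $H^{\eta',m}(\T^{d})\hookrightarrow L^{q_{1}}(\T^{d})$ to $\varphi^{2}$, convert via $\|\varphi^{2}\|_{L^{q_{1}}_{x}}=\|\varphi\|_{L^{2q_{1}}_{x}}^{2}$ to obtain the analogue of \eqref{interp:2} with $\tilde q=2q_{1}>2$, and then rerun the H\"older/Minkowski chain of Lemma~\ref{app-inter-x-theta-1} with the parameters \eqref{alpha12}--\eqref{def:alpha-r}. Your treatment is in fact slightly more careful than the paper's, since you explicitly note that the embedding still yields some $q_{1}>1$ in the borderline and supercritical regimes $m\eta'\geq d$, whereas the paper simply writes the subcritical formula $\tfrac{2}{\tilde q}=\tfrac{1}{m}-\tfrac{\eta'}{d}$.
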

\begin{proof}
Using Sobolev imbedding we have that 
\begin{align*}
   \int_{\T^d}\| (-\Delta_v)^{\eta/2}\varphi(x,\cdot) \|^{2}_{L^{2}_{v}} \dx &\geq 
     c \int_{\T^d}\Big(\int_{\R^{d}} \big| \varphi(x, v) \big|^{p} \dv\Big)^{\frac{2}{p}}\dx, 
\\
   \int_{\R^{d}}\|(1-\Delta_x)^{\eta'/2} \varphi^2(\cdot, v)\|_{L^m_x} \dv &\geq 
     c \int_{\R^{d}}\left(\int_{\T^{d}} \big|\varphi(x, v)\big|^{\tilde q} \dx\right)^{\frac{2}{\tilde q}} \dv,
\end{align*}
where $c=c(\eta,\eta',m,d)$ and
\begin{align*} 
   \frac{2}{\tilde q} = \frac {1}{m} - \frac{\eta'}{d}, 
\qquad 
   \frac{1}{p} = \frac {1}{2} - \frac{\eta}{d},
\qquad
   p, \,\, \tilde q > 2,
\end{align*}
which are in a similar form as~\eqref{interp:1} and~\eqref{interp:2}. By the comment before the statement of Lemma~\ref{app-inter-x-theta}, the desired inequality holds with
\begin{gather*}
   \tilde \alpha_1 = \frac{\tilde{q} - 2}{\frac{p}{2} \, \tilde{q} - 2} \in (0, 1), 
\qquad 
   \tilde \alpha_2 = \frac{p}{2} \, \tilde \alpha_1 \in (0, 1), 
\qquad  
   \tilde r = p \, \tilde \alpha_1 + 2(1 - \tilde \alpha_1) > 2,
\qquad
    \tilde \alpha = \frac{2 \tilde \alpha_2}{\tilde r}. \qedhere
\end{gather*} 
\end{proof}
\smallskip

Next we show a ``Leibniz" rule for fractional derivatives:
\begin{lem}\label{app-square-f}
Let $p \in (1,2)$, $0\leq \beta'<\beta\in(0,1)$,
\begin{align} \label{def:p-0}
   p'=\frac{p}{2-p}
\qquad \text{that is} \qquad
   2 p' = \frac{p}{1 - p/2}.
\end{align}
Then for any $\varphi$ making sense of the terms of the inequality below, it follows that
\begin{align*}
    \norm{(-\Delta)^{\frac {\beta'}{2} }\varphi^{2}}_{L^{p}(\mathbb{R}^{d})}
\leq 
   C \vpran{\norm{\varphi}_{ \dot{H}^{\beta}(\mathbb{R}^{d}) } \norm{\varphi^{2}}^{\frac12}_{ L^{p'}(\mathbb{R}^{d}) } + \norm{\varphi^{2}}_{L^{p}(\mathbb{R}^{d})}},
\end{align*}
where the constant $C$ only depends on $d, \beta' , \beta, p$ and $\dot{H}^\beta(\mathbb{R}^{d})$ is the homogeneous Bessel potential space. 
\end{lem}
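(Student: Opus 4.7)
The plan is to route the estimate through the Sobolev--Slobodeckij norm. The case $\beta'=0$ is trivial since $(-\Delta)^0$ is the identity and $\|\varphi^2\|_{L^p}$ already appears on the right, so assume $\beta'\in(0,1)$. The chain of inequalities
\[
\|(-\Delta)^{\beta'/2}\varphi^2\|_{L^p}\ \leq\ C\|\varphi^2\|_{H^{\beta',p}}\ \leq\ C\|\varphi^2\|_{W^{\beta',p}}\ =\ C\bigl(\|\varphi^2\|_{L^p}+[\varphi^2]_{W^{\beta',p}}\bigr)
\]
holds because $|\xi|^{\beta'}/(1+|\xi|^2)^{\beta'/2}$ satisfies the Mikhlin condition (it is smooth away from the origin, bounded, and its homogeneous derivatives behave correctly on $|\xi|\to 0,\infty$) and is thus an $L^p$-bounded Fourier multiplier for $p\in(1,\infty)$, together with embedding (i) from the preliminaries, which needs $p\in(1,2]$. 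The real task is therefore to bound the Gagliardo seminorm $[\varphi^2]_{W^{\beta',p}}$ by $C\bigl(\|\varphi\|_{\dot{H}^\beta}\|\varphi^2\|_{L^{p'}}^{1/2}+\|\varphi^2\|_{L^p}\bigr)$.

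For this I would split the $(x,y)$-integration into the near zone $\{|x-y|\leq 1\}$ and the far zone $\{|x-y|>1\}$. In the near zone, factor $|\varphi^2(x)-\varphi^2(y)|=|\varphi(x)-\varphi(y)|\cdot|\varphi(x)+\varphi(y)|$ and apply H\"older's inequality in $(x,y)$ with conjugate exponents $r_1=2/p$ and $r_2=2/(2-p)$, splitting the weight non-uniformly via
\[
|x-y|^{-(d+\beta' p)}=|x-y|^{-(d+2\beta)\cdot p/2}\cdot|x-y|^{-c_2\cdot(2-p)/2},\qquad c_2=d-\frac{2p(\beta-\beta')}{2-p}.
\]
The first resulting factor is precisely the Gagliardo seminorm of order $(\beta,2)$, equivalent to $\|\varphi\|_{\dot{H}^\beta}^p$. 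For the second factor I would use $|\varphi(x)+\varphi(y)|^{2p'}\leq C(|\varphi(x)|^{2p'}+|\varphi(y)|^{2p'})$ and Fubini to reduce it to $\|\varphi\|_{L^{2p'}}^{2p'}\int_{|z|\leq 1}|z|^{-c_2}\,dz$. Since $\beta>\beta'$ and $p<2$ force $c_2<d$, the ball integral converges. Then the identity $p'(2-p)=p$ together with $\|\varphi\|_{L^{2p'}}=\|\varphi^2\|_{L^{p'}}^{1/2}$ yields
\[
I_{\mathrm{near}}^{1/p}\leq C\,\|\varphi\|_{\dot{H}^\beta}\,\|\varphi^2\|_{L^{p'}}^{1/2}.
\]

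For the far zone the elementary bound $|\varphi^2(x)-\varphi^2(y)|^p\leq 2^p(|\varphi^2(x)|^p+|\varphi^2(y)|^p)$ combined with Fubini reduces the integral to $\|\varphi^2\|_{L^p}^p\int_{|z|>1}|z|^{-(d+\beta'p)}\,dz$, which is finite because $\beta'p>0$. Adding the near- and far-zone contributions and tracing back through the embeddings finishes the proof. The main subtlety is the weight allocation in the H\"older step: one must assign the $L^2$-factor the exact power $d+2\beta$ so that the homogeneous $\dot{H}^\beta$-seminorm emerges cleanly, and then verify that the residual exponent $c_2$ on the $L^{2p'}$-factor remains strictly below $d$ so that the near-zone integration over $\{|x-y|\leq 1\}$ converges. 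This inequality $c_2<d$ uses both hypotheses $\beta>\beta'$ and $p<2$ essentially; without either, the near-zone argument would break and a more elaborate tool (Hardy--Littlewood--Sobolev or a paraproduct decomposition) would be required.
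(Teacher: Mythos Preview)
Your proof is correct and is essentially the same as the paper's. Both arguments route through the embedding $W^{\beta',p}\hookrightarrow H^{\beta',p}$ for $p\in(1,2]$, split the Gagliardo integral into near and far zones, handle the far zone trivially, and in the near zone factor $\varphi^2(x)-\varphi^2(y)$ and apply H\"older with the pair $(2/p,\,2/(2-p))$; your exponent $c_2=d-2p(\beta-\beta')/(2-p)$ coincides with the paper's $d-2p'(\beta-\beta')$ since $p'=p/(2-p)$, so the computations match line for line (the paper phrases the H\"older step as using the measure $|x-y|^{-d}\,dx\,dy$, you absorb that weight into the splitting, but these are the same thing).
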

\begin{proof}
By the continuous embedding of the Bessel potential space in the fractional Sobolev-Slobodeckij space for $p\in(1,2]$, it follows that
\begin{align*}
    \norm{(-\Delta)^{\frac {\beta'}{2} }\varphi^{2}}_{L^{p}}^{p}
& \leq C \int_{\mathbb{R}^{d}}\int_{\mathbb{R}^{d}}\frac{\abs{\varphi^{2}(x) - \varphi^{2}(y)}^{p}}{\abs{x - y}^{d + \beta' p}} \dx\dy
\\
&\leq  
   C\vpran{\int_{\mathbb{R}^{d}}\int_{|x - y |\leq 1}
      + \int_{\mathbb{R}^{d}}\int_{ | x - y |>1}}
   \frac{\abs{\varphi^{2}(x) - \varphi^{2}(y)}^{p}}
          {\abs{x - y}^{d + \beta' p}} \dx\dy
\Denote I_{1} + I_{2}.
\end{align*}
A simple computation shows that 
\begin{align*}
   I_{2}
\leq 
   C \norm{\varphi^{2}}^{p}_{L^{p}}, 
\end{align*}
where $C$ depends on $d,\beta$ and $p$.  To estimate $I_1$, decompose its integrand as the product
\begin{align*}
   \frac{\abs{\varphi^{2}(x) - \varphi^{2}(y)}^{p}}{\abs{x - y}^{\beta' p}} 
 = \frac{\abs{\varphi(x) - \varphi(y)}^{p} }{\abs{x - y}^{ \beta p}}  \,
   \frac{\abs{\varphi(x) + \varphi(y)}^{p} }{\abs{x - y}^{ -(\beta - \beta')p }}.
\end{align*}
Then a direct application of the H\"{o}lder inequality with measure $|x-y|^{-d}\dx\dy $ and pair $\big( \frac{2}{p},\frac{2}{2-p} \big)$ gives
\begin{align*}
    I_{1}
&\leq 
   \norm{\varphi}^{p}_{\dot{H}^{\beta}} 
    \vpran{\int_{\mathbb{R}^{d}} \int_{|x -y | \leq 1}
        \frac{\abs{\varphi(x) + \varphi(y)}^{2p'}}{\abs{x - y}^{d - 2p'(\beta - \beta')}}\dx\dy}^{\frac {p}{2p'}} 
\leq 
   C_{d,\beta,\beta',p} \norm{\varphi}^{p}_{\dot{H}^{\beta}} \norm{\varphi^{2}}^{\frac{p}{2}}_{L^{p'}},
\end{align*}
which combined with the estimate for $I_2$ proves the result.
\end{proof}
\subsection{Strong Averaging Lemma}
\noindent
The following result is a time-localised version of \cite[Theorem 1.3]{Bouchut} that is needed for the Cauchy problem.
\begin{prop}\label{average-lemma-p}
Fix $0 \leq T_{1} < T_{2}$, $p\in(1,\infty)$, $\beta\geq0$, and assume that $f \in C\big([T_{1},T_{2}];L^{p}_{x,v}\big)$ with $\Delta^{\beta/2}_{v}f \in L^{p}_{t,x,v}$ satisfies
\begin{align*}
\del_{t}f + v\cdot\nabla_{x} f = \mathcal{F},\qquad t\in(0,\infty).
\end{align*}
Then, for any $r\in[0,\frac{1}{p}]$, $m \in \mathbb{N}$, $\beta_{-}\in[0,\beta)$, if we define
\begin{align} \label{def:s-prime}
   s^\flat = \frac{(1- r\, p)\,\beta_{-}}{p\,(1+m+\beta)},
\end{align} 
and
\begin{align*}
   \tilde f = f \, 1_{(T_{1},T_{2})}(t),
\qquad
   \tilde{\mathcal{F}} = \mathcal{F} \, 1_{(T_{1},T_{2})}(t),
\end{align*}
then it follows that
\begin{align} \label{ineq:averaging-lemma}
   \norm{(-\Delta_x)^{\frac{s^\flat}{2}} \tilde f}_{L^{p}_{t,x,v}} 
   + \norm{(-\del_t^2)^{\frac{s^\flat}{2}} \tilde f}_{L^{p}_{t,x,v}} 
&\leq
   C \Big( \norm{\vint{v}^{1+m} (1-\Delta_{x})^{-\frac r2}(1-\Delta_{v})^{-\frac m2}f(T_1)}_{L^{p}_{x,v}} \nn 
\\
& \hspace{1cm} +  \norm{\vint{v}^{1+m} (1-\Delta_{x})^{-\frac r2}(1-\Delta_{v})^{-\frac m2}f(T_2)}_{L^{p}_{x,v}} \nn   
\\
& \hspace{1cm} 
  + \norm{\vint{v}^{1+m}(1-\Delta_{x} - \del^{2}_{t})^{-\frac r2}(1- \Delta_{v})^{-\frac m2}\tilde{\mathcal{F}}}_{L^{p}_{t,x,v}}
\\
& \hspace{1cm}
    + \norm{(-\Delta_v)^{\beta/2} \tilde f}_{L^{p}_{t,x,v}}
    + \big\| \,\tilde f \,\big\|_{L^{p}_{t,x,v}}  \Big),  \nn
\end{align}
where the constant $C$ only depends on $d,\beta,r,m,p$. 
\end{prop}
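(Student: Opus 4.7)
The proof reduces the time-localized estimate on $(T_1, T_2)$ to the whole-space version of the averaging lemma \cite[Theorem 1.3]{Bouchut} via a zero extension in time. First, I would define $\tilde f = f \, \mathbf{1}_{(T_1, T_2)}(t)$ on $\R_t \times \T^d_x \times \R^d_v$. Since $f \in C([T_1, T_2]; L^p_{x,v})$ satisfies the kinetic equation on $(T_1, T_2)$, distributional differentiation in $t$ yields
\begin{align*}
\partial_t \tilde f + v \cdot \nabla_x \tilde f
= \tilde{\mathcal F} + f(T_1, \cdot, \cdot) \otimes \delta_{t=T_1} - f(T_2, \cdot, \cdot) \otimes \delta_{t=T_2}
\Denote \tilde G,
\end{align*}
as an equation on the whole real line $\R_t$.

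Next, applying Bouchut's whole-space averaging lemma to $\tilde f$ with the same parameters $p, r, m, \beta, \beta_-$ and with $s^\flat$ given by~\eqref{def:s-prime}, I would obtain
\begin{align*}
\|(-\Delta_x)^{s^\flat/2}\tilde f\|_{L^p_{t,x,v}} + \|(-\partial_t^2)^{s^\flat/2}\tilde f\|_{L^p_{t,x,v}}
\leq C\Big(\|(-\Delta_v)^{\beta/2}\tilde f\|_{L^p_{t,x,v}} + \|\tilde f\|_{L^p_{t,x,v}} + \mathcal S\Big),
\end{align*}
where $\mathcal S := \big\|\vint{v}^{1+m}(1-\Delta_x-\partial_t^2)^{-r/2}(1-\Delta_v)^{-m/2}\tilde G\big\|_{L^p_{t,x,v}}$. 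Splitting $\tilde G$ linearly, the contribution of $\tilde{\mathcal F}$ to $\mathcal S$ produces exactly the third source term of~\eqref{ineq:averaging-lemma}, and the lower-order terms $\|(-\Delta_v)^{\beta/2}\tilde f\|_{L^p}$ and $\|\tilde f\|_{L^p}$ are precisely the corresponding norms of $f$ restricted to $(T_1, T_2)$.

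The main technical obstacle is the control of the two boundary distributions in $\mathcal S$, namely the trace-type inequality
\begin{align*}
\big\|\vint{v}^{1+m}(1-\Delta_x-\partial_t^2)^{-r/2}(1-\Delta_v)^{-m/2}[f(T_i)\otimes\delta_{T_i}]\big\|_{L^p_{t,x,v}}
\leq C\big\|\vint{v}^{1+m}(1-\Delta_x)^{-r/2}(1-\Delta_v)^{-m/2}f(T_i)\big\|_{L^p_{x,v}},
\end{align*}
for $i = 1, 2$. The operator $(1-\Delta_v)^{-m/2}$ commutes with the $(t,x)$-Bessel potential, and via Lemma~\ref{prop:equivalence} the weight $\vint{v}^{1+m}$ can be absorbed past $(1-\Delta_v)^{-m/2}$; thus the bound reduces to a purely $(t,x)$ estimate. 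This is a trace-type inequality for the mixed Bessel potential on the hyperplane $\{t = T_i\}$: by duality against test functions $\phi \in L^{p'}_{t,x}$ one is led to the multiplier $(1-\Delta_x)^{r/2}(1-\Delta_x-\partial_t^2)^{-r/2}$ (bounded on $L^{p'}$ by Mikhlin-type arguments) composed with the time-trace $\psi \mapsto \psi(T_i, \cdot)$, and Fourier analysis shows that the inverse $\tau$-transform of $(1+|\xi|^2+|\tau|^2)^{-r/2}$ rescales to a one-dimensional Bessel kernel evaluated at $\langle\xi\rangle(t-T_i)$ whose integrability governs the admissible range of $r$. The condition $rp \leq 1$ is exactly the critical threshold for this time-Bessel kernel, with $rp = 1$ being the borderline case and explaining the restriction $r \in [0, 1/p]$ in the statement. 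Once the trace estimate is established, combining it with the $\tilde{\mathcal F}$-bound and the restrictions to $(T_1, T_2)$ of the lower-order norms completes the proof of~\eqref{ineq:averaging-lemma}.
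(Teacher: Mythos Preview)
Your overall strategy---extend by zero in time, pick up the boundary deltas, and invoke Bouchut's whole-line result---is exactly right, and matches the paper's approach. The gap is in the trace-type inequality you assert for the boundary terms. You apply Bouchut's theorem with the \emph{same} smoothing parameter $r$ to all three pieces of $\tilde G$, and then claim that
\[
\big\|\vint{v}^{1+m}(1-\Delta_x-\partial_t^2)^{-r/2}(1-\Delta_v)^{-m/2}[f(T_i)\otimes\delta_{T_i}]\big\|_{L^p_{t,x,v}}
\]
is finite and controlled by the $L^p_{x,v}$-norm of $\vint{v}^{1+m}(1-\Delta_x)^{-r/2}(1-\Delta_v)^{-m/2}f(T_i)$. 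This is false for the range of $r$ in the statement. After passing to Fourier in $x$, the time part is the one-dimensional Bessel kernel $\mathcal{B}_r$ evaluated at $\vint{\xi}(t-T_i)$; since $\mathcal{B}_r(s)\sim |s|^{r-1}$ near the origin, one needs $(1-r)p<1$, i.e.\ $r>1-1/p$, for this to land in $L^p_t$. For $p\geq 2$ this is incompatible with every $r\in[0,1/p]$ (and for $p<2$ it fails on most of the range, including $r=0$, where the operator is the identity and $\delta_{T_i}$ is plainly not in $L^p_t$). So the claim that ``$rp\leq 1$ is exactly the critical threshold for this time-Bessel kernel'' misidentifies the threshold.

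The paper's fix, which is the missing idea in your argument, is to treat the boundary source $A$ with a \emph{strictly larger} smoothing exponent $\tilde r>r$: write $A=(1-\Delta_x-\partial_t^2)^{\tilde r/2}(1-\Delta_v)^{m/2}\big[(1-\Delta_x-\partial_t^2)^{-\tilde r/2}(1-\Delta_v)^{-m/2}A\big]$ and apply Bouchut with this $\tilde r$ for $A$ while keeping $r$ for $\tilde{\mathcal F}$. After splitting the $(t,x)$-potential via \cite[Lemma~2.3]{Bouchut} into $(1-\Delta_x)^{-r/2}(1-\partial_t^2)^{-(\tilde r-r)/2}$, the time smoothing of order $\tilde r-r$ applied to $\delta_{T_i}$ produces $\mathcal{B}_{\tilde r-r}(t-T_i)$, which lies in $L^p_t$ precisely when $\tilde r>r+1-1/p$. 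The price is that the output regularity is governed by the worse exponent, $s^\flat=\frac{(1-\tilde r)\beta}{1+m+\beta}$; optimizing $\tilde r\downarrow r+1-1/p$ and replacing $\beta$ by $\beta_-$ yields exactly the formula~\eqref{def:s-prime}. This also explains why the stated $s^\flat$ is $\frac{(1-rp)\beta_-}{p(1+m+\beta)}$ rather than the larger $\frac{(1-r)\beta_-}{1+m+\beta}$ your direct application of Bouchut would give---a discrepancy your write-up does not account for.
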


\begin{proof}
Multiplying the transport equation by $1_{(T_{1},T_{2})}(t)$ we arrive at
\begin{equation*}
  \del_t \tilde f + v \cdot \nabla_{x} \tilde f 
= \big( f(T_1)\delta(t-T_1) -   f(T_2)\delta(t-T_2) \big) + \tilde{\mathcal{F}} 
\;\;\Denote \;
    A + \tilde{\mathcal{F}}, \qquad t\in(-\infty,\infty).
\end{equation*}
Write the sources as
\begin{align*}
  A 
&= (1-\Delta_{x} - \del^{2}_{t})^{\frac {\tilde r}{2} }(1- \Delta_{v})^{\frac {m}{2} } \Big( (1-\Delta_{x} - \del^{2}_{t})^{-\frac {\tilde r}{2} }(1- \Delta_{v})^{-\frac {m}{2}} A\Big),
\\
\tilde{\mathcal{F}} &= (1-\Delta_{x} - \del^{2}_{t})^{\frac r2}(1- \Delta_{v})^{\frac m2} \Big( (1-\Delta_{x} - \del^{2}_{t})^{-\frac r2}(1- \Delta_{v})^{-\frac m2} \tilde{\mathcal{F}}\Big),
\end{align*}
for $0 \leq r < \tilde r \in (0,1]$ and $m\in\mathbb{N}$. By \cite[Theorem 1.3]{Bouchut} and the additive contribution of the sources one has that 
\begin{align*}
  \norm{(-\Delta_x)^{\frac{s^\flat}{2}} \tilde f}_{L^p_{t,x,v}} 
+  \norm{(-\del_t^2)^{\frac{s^\flat}{2}} \tilde f}_{L^p_{t,x,v}}
&\leq 
   C\Big(\|\tilde f\|_{L^p_{t,x,v}} + \norm{(-\Delta_v)^{\beta/2} \tilde f}_{L^p_{t,x,v}}
\\
& \hspace{1cm} 
   + \norm{\vint{v}^{1+m} (1-\Delta_{x} - \del^{2}_{t})^{-\frac{\tilde r}{2} }(1- \Delta_{v})^{-\frac {m}{2}}A}_{L^{p}_{t,x,v}} 
\\
& \hspace{1cm} 
   + \norm{\vint{v}^{1+m}(1-\Delta_{x} - \del^{2}_{t})^{-\frac r2}(1- \Delta_{v})^{-\frac m2}\tilde{\mathcal{F}})}_{L^{p}_{t,x,v}} \Big),
\end{align*}
for $s^\flat=\min\big\{\frac{(1-r)\beta}{m+1+\beta}, \frac{(1-\tilde r)\beta}{m+ 1+\beta}\big\}=\frac{(1-\tilde r)\beta}{m+ 1+\beta}$.  It remains to estimate the term involving $A$ on the right. First by \cite[Lemma 2.3]{Bouchut} we have
\begin{align*}
   \norm{\vint{v}^{1+m} (1-\Delta_{x} - \del^{2}_{t})^{-\frac{\tilde r}{2} }(1- \Delta_{v})^{-\frac {m}{2}}A}_{L^{p}_{t,x,v}} 
\leq
  C \norm{\vint{v}^{1+m} (1-\Delta_{x})^{-\frac{r}{2}} (1 - \del^{2}_{t})^{-\frac{\tilde r -\tilde r}{2}}(1- \Delta_{v})^{-\frac {m}{2}}A}_{L^{p}_{t,x,v}}. 
\end{align*}
By the definition of $A$, we can explicitly compute 
\begin{align*}
  (1 - \del^{2}_{t})^{-\frac{\tilde r -\tilde r}{2}} (1-\Delta_{x})^{-\frac{r}{2}} (1- \Delta_{v})^{-\frac {m}{2}}A 
& = \CalB_{\tilde r - r}(t - T_1)  \vpran{(1-\Delta_{x})^{-\frac{r}{2}} (1- \Delta_{v})^{-\frac {m}{2}}\,f(T_1)} 
\\
& \quad 
 - \CalB_{\tilde r - r}(t - T_2) \vpran{(1-\Delta_{x})^{-\frac{r}{2}} (1- \Delta_{v})^{-\frac {m}{2}}\,f(T_2)},
\end{align*}
where $\CalB_{\tilde r - r}$ is the Bessel kernel of order $\tilde r - r$ in $\R$.
The asymptotic behaviours of the Bessel kernel near $0$ and $\infty$ give
\begin{align*}
   0 
\leq 
  \CalB_{\tilde r - r}(t - T_1) 
\leq 
  C_{d,\tilde r, r} \frac{e^{-|t - T_1|}}{|t - T_1|^{1+ r - \tilde r}},
\qquad
   0 
\leq 
  \CalB_{\tilde r - r}(t - T_2)
\leq 
  C_{d,\tilde r, r} \frac{e^{-|t - T_2|}}{|t - T_2|^{1+ r - \tilde r}},
\qquad
  t \in \R.
\end{align*}
As a consequence, it follows that
\begin{align*}
& \quad \,
 \norm{\vint{v}^{1+m} \CalB_{\tilde r - r}(t - T_1) 
   \vpran{(1-\Delta_{x})^{-\frac{r}{2}} (1- \Delta_{v})^{-\frac {m}{2}} f(T_1)}}_{L^{p}_{t,x,v}} 
\\
&\leq 
  C_{d,\tilde r,p} \norm{\vint{v}^{1+m} (1-\Delta_{x})^{-\frac r2}(1-\Delta_v)^{-\frac m2}f(T_1)}_{L^{p}_{x,v}}, 
\end{align*}
under the condition $ \tilde r > 1 - \frac{1}{p} + r >0$, which together with $s^\flat=\frac{(1-\tilde r)\beta}{m+ 1+\beta}$ implies the choice \eqref{def:s-prime}.  
Analogous estimate follows at the point $t=T_2$.
\end{proof}

The next proposition is an estimate for $Q(F, \mu)$:

\begin{prop} \label{prop:Q-F-mu}
For any $F \in L^1_{\gamma+2s}(\R^3_v)$, the quantity $Q(F, \mu)$ is in $L^\infty(\R^3_v)$ with the bound
\begin{align} \label{bound:L-infty-Q-F-mu}
   \norm{Q(F, \mu)}_{L^\infty(\R^3_v)}
\leq
  C \norm{F}_{L^1_{\gamma+2s}(\R^3_v)}.
\end{align}
\end{prop}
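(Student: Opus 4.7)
My plan is to decompose the integrand as
\begin{align*}
   F'_\ast \mu' - F_\ast \mu = F_\ast(\mu' - \mu) + (F'_\ast - F_\ast)\,\mu',
\end{align*}
writing $Q(F,\mu)(v) = I_1(v) + I_2(v)$, where $I_1$ carries the \emph{smooth} difference $\mu'-\mu$ and $I_2$ carries the \emph{rough} difference $F'_\ast-F_\ast$ multiplied by the smooth factor $\mu'$. The idea is to handle $I_1$ directly by Taylor-expanding $\mu$, and to transfer the rough difference in $I_2$ onto the smooth side by a change of variables.

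For $I_1$ the key step will be the pointwise $\sigma$-integrated bound
\begin{align*}
   \abs{\int_{\Ss^2} b(\cos\theta)(\mu'-\mu)\dsigma} \leq C\,\mathfrak{g}(v,v_\ast)\,\vint{v-v_\ast}^{2s},
\end{align*}
where the factor $\mathfrak{g}(v,v_\ast)$ carries enough Gaussian decay to absorb the polynomial growth of $|v-v_\ast|^\gamma$ in $v$. I would obtain this via a $\theta$-splitting at the threshold $\theta_0 = \vint{v-v_\ast}^{-1}$: for $\theta\leq\theta_0$, Taylor-expand $\mu$ around $v$ to second order and apply the symmetric cancellation (as in Proposition~\ref{prop:symmetry-cancel}, or by the explicit identity $\int b(v'-v)\dsigma = \tfrac{v-v_\ast}{2}\int b(\cos\theta-1)\dsigma$ coming from integrating over $\Ss^2$ rotationally around $\hat u$), which gives a contribution of order $\bigl(|\nabla\mu(v)| + \norm{\nabla^2\mu}_{L^\infty}\bigr)\,(|v-v_\ast|+|v-v_\ast|^2)\,\theta_0^{2-2s}$; for $\theta\geq\theta_0$, use $|\mu'-\mu|\leq \mu'+\mu$ together with $\int_{\theta\geq\theta_0} b\dsigma \leq C\theta_0^{-2s}$. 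The classical identity~\eqref{formula:classical-1} gives $|v'|^2 \geq \tfrac{1}{2}|v|^2 - C|v_\ast|^2$ on the angular support $\cos\theta\geq 0$, so $\mu(v')$ also enjoys Gaussian decay in $|v|$ when $|v_\ast|$ is moderate, which is enough to absorb the factor $|v-v_\ast|^\gamma \leq C\vint{v}^\gamma\vint{v_\ast}^\gamma$ and yield $|I_1(v)|\leq C\norm{F}_{L^1_{\gamma+2s}}$ uniformly in $v$.

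For $I_2$, the rough difference $F'_\ast - F_\ast$ precludes Taylor expansion on the rough factor, so I would transfer the difference onto the smooth side. In the gain piece $\iint b|v-v_\ast|^\gamma F'_\ast \mu'\dsigma\dv_\ast$ I apply the change of variables $v_\ast \to v'_\ast$ (Jacobian $4\cos^{-2}(\theta/2)$, bounded on $\cos\theta\geq 0$; compare Lemma~\ref{prop:change-variable}) so that the roughness is collected into a single $F$ of the new integration variable while the smooth $\mu'$ becomes a smooth, Gaussian-decaying function of that variable. Subtracting the loss piece $\iint b|v-v_\ast|^\gamma F_\ast \mu'\dsigma\dv_\ast$, one recovers a difference between two smooth functions of the same integration variable multiplied by $F$, and the same $\theta$-splitting + cancellation analysis as for $I_1$ delivers $|I_2(v)|\leq C\norm{F}_{L^1_{\gamma+2s}}$.

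The main obstacle will be keeping the $v$-dependence uniform: since $|v-v_\ast|^\gamma$ grows polynomially in $v$, it must be absorbed at every step by the Gaussian decay of $\mu$, $\nabla\mu$, or $\nabla^2\mu$ at the relevant post-collisional points. The decisive geometric input, valid precisely on the angular support $\cos\theta\geq 0$ of $b$, is that $|v'|$ remains comparable to $|v|$ whenever $|v|$ dominates $|v_\ast|$; without this the Gaussian decay of $\mu$ alone would not close a uniform $L^\infty$ bound, and the proof would only yield $L^1_{\gamma+2s}$ moments up to a $v$-dependent constant.
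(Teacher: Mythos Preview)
Your treatment of $I_1$ is essentially correct. The gap is in $I_2$. After the change $v_\ast\to w:=v'_\ast$ in the gain piece (the Jacobian is indeed $4\cos^{-2}(\theta/2)$), the angle between $\sigma$ and $(v-w)/|v-w|$ becomes $\tilde\theta=\theta/2$, so the transformed gain integrand carries the kernel $4\cos^{-2-\gamma}(\tilde\theta)\,b(\cos 2\tilde\theta)\,|v-w|^\gamma$, while the relabelled loss integrand carries $b(\cos\tilde\theta)\,|v-w|^\gamma$. Near $\tilde\theta=0$ these behave like $2^{-2s}\tilde\theta^{-2-2s}$ and $\tilde\theta^{-2-2s}$, and both arguments of $\mu$ tend to $v$; hence the difference still carries the full $\tilde\theta^{-2-2s}$ singularity with the nonzero coefficient $(2^{-2s}-1)\mu(v)|v-w|^\gamma$. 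This leading term is azimuthally constant, so no further $\sigma$-symmetry is available, and the ``same $\theta$-splitting + cancellation analysis as for $I_1$'' cannot close. In fact the gain and loss pieces of $I_2$ each diverge separately; the finiteness of $I_2$ rests on a cancellation intrinsic to the combined $(v_\ast,\sigma)$-integration of $F'_\ast-F_\ast$ (the cancellation-lemma mechanism of \cite{ADVW}), and your change of variables does not preserve it.

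The paper avoids this entirely by working on the Fourier side. It splits $|v-v_\ast|^\gamma=\Phi_c+\Phi_{\bar c}$, handles $Q_{\bar c}(F,\mu)$ via the trilinear estimate, and for $Q_c(F,\mu)$ uses the Bobylev representation: one Taylor-expands $\hat\Phi_c(\xi_\ast-\xi^-)-\hat\Phi_c(\xi_\ast)$ to second order, the first-order contribution vanishes by $\sigma$-symmetry, and the remainder gains $|\xi^-|^2\sim|\xi|^2\sin^2(\theta/2)$, which together with $|\nabla^\alpha\hat\Phi_c(\xi)|\lesssim\vint{\xi}^{-3-\gamma-|\alpha|}$ and the Gaussian decay of $\hat\mu$ closes the $L^\infty$ bound for the full range $s\in(0,1)$.
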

\begin{proof}
The proof follows from a similar line of argument as the proof of Proposition 2.1 in \cite{AMUXY-1}. First we decompose $|v - v_\ast|^\gamma$ as
\begin{align*}
   |v - v_\ast|^\gamma
= \Phi_c + \Phi_{\bar c}, 
\end{align*}
where $\Phi_{\bar c}$ is smooth and $\Phi_{\bar c} = 0$ near $v = v_\ast$ while $\Phi_c = |v - v_\ast|^\gamma$ near $v = v_\ast$. The main property of $\Phi_c$ is 
\begin{align} \label{bound:Phi-c}
   \abs{\nabla^\alpha \hat\Phi_c (\xi)} 
\lesssim
  \frac{1}{\vint{\xi}^{3 + \gamma + |\alpha|}} ,
\qquad
  \forall \, |\alpha| \in \NN \cup \{0\} ,
\end{align}
where $\hat \Phi_c$ is the Fourier transform of $\Phi_c$. Denote $Q_c, Q_{\bar c}$ as the corresponding collision operators such that
\begin{align*}
   Q (F, \mu) = Q_c (F, \mu) + Q_{\bar c}(F, \mu).
\end{align*}
Then by the trilinear estimate (2.1) in~\cite{AMUXY-1}, we have
\begin{align} \label{bound:Q-reg}
   \norm{Q_{\bar c} (F, \mu)}_{L^\infty}
\leq
   C \norm{F}_{L^1_{\gamma + 2s}}. 
\end{align}
Hence we are left to bound $Q_c (F, \mu)$. Take an arbitrary $h \in L^1(\R^3)$. 
In the Fourier space, we have
\begin{align} \label{def:Q-c-bilinear}
  \vint{Q_c(F, \mu), h}
&= \iiint_{\Ss^2 \times \R^6} b\vpran{\frac{\xi}{|\xi|} \cdot \sigma}
   \vpran{\hat \Phi_c (\xi_\ast - \xi^-) - \hat\Phi_c(\xi_\ast)}
   \hat F(\xi_\ast) \hat \mu(\xi - \xi_\ast)
   \overline{\hat h(\xi)} \dxi \dxi_\ast \dsigma,
\end{align}
where 
\begin{align*}
   \xi^\pm = \frac{1}{2} \vpran{\xi \pm |\xi| \sigma}, 
\qquad
  |\xi^-| = |\xi| \sin\tfrac{\theta}{2}
\quad \text{with} \quad
  \cos\theta = \frac{\xi}{|\xi|} \cdot \sigma. 
\end{align*}
Note that $\xi^+$ is perpendicular to $\xi^-$.
%
By Taylor's theorem, we have
\begin{align} \label{eq:Taylor-hat-Phi-c}
   \hat \Phi_c (\xi_\ast - \xi^-) - \hat\Phi_c(\xi_\ast)
= -\xi^- \cdot \nabla \hat \Phi_c (\xi_\ast)
   + \vpran{\int_0^1 (1 - t) \nabla^2 \hat \Phi_c (\xi_\ast - t \xi^-) \dt} : (\xi^- \otimes \xi^-).
\end{align}
Similar as in~\cite{AMUXY-1}, we decompose $\xi^-$ as
\begin{align*}
    \xi^-
= \frac{|\xi|}{2} \vpran{\vpran{\frac{\xi}{|\xi|} \cdot \sigma} \frac{\xi}{|\xi|} - \sigma}
   + \vpran{1 - \vpran{\frac{\xi}{|\xi|} \cdot \sigma}} \frac{\xi}{2}.
\end{align*}
Inserting~\eqref{eq:Taylor-hat-Phi-c} into~\eqref{def:Q-c-bilinear}, we get
\begin{align*}
  \vint{Q_c(F, \mu), h}
& = - \iiint_{\Ss^2 \times \R^6} b\vpran{\frac{\xi}{|\xi|} \cdot \sigma}
   \vpran{\frac{|\xi|}{2} \vpran{\vpran{\frac{\xi}{|\xi|} \cdot \sigma} \frac{\xi}{|\xi|} - \sigma}} \cdot \nabla \hat \Phi_c (\xi_\ast)
   \hat F(\xi_\ast) \hat \mu(\xi - \xi_\ast)
   \overline{\hat h(\xi)} \dxi \dxi_\ast \dsigma
\\
& \quad \,
   - \iiint_{\Ss^2 \times \R^6} b\vpran{\frac{\xi}{|\xi|} \cdot \sigma}
   \vpran{1 - \vpran{\frac{\xi}{|\xi|} \cdot \sigma}} \frac{\xi}{2} \cdot \nabla \hat \Phi_c (\xi_\ast)
   \hat F(\xi_\ast) \hat \mu(\xi - \xi_\ast)
   \overline{\hat h(\xi)} \dxi \dxi_\ast \dsigma
\\
& \quad \, 
  + \int_0^1 (1 - t)  \iiint_{\Ss^2 \times \R^6} b\vpran{\frac{\xi}{|\xi|} \cdot \sigma}
     \hat F(\xi_\ast) \hat \mu(\xi - \xi_\ast)
   \overline{\hat h(\xi)}
   \vpran{\nabla^2 \hat \Phi_c (\xi_\ast - t \xi^-) : (\xi^- \otimes \xi^-)} 
\\
& \Denote
   \vint{Q_c^{(1)}(F, \mu), h}
   + \vint{Q_c^{(2)}(F, \mu), h}
   + \vint{Q_c^{(3)}(F, \mu), h}. 
\end{align*}
By symmetry $\vint{Q_c^{(1)}(F, \mu), h}$ vanishes. By the property that
\begin{align*}
    \abs{1 - \vpran{\frac{\xi}{|\xi|} \cdot \sigma}}
 = 2 \sin^2{\tfrac{\theta}{2}}, 
\qquad
  \cos\theta = \frac{\xi}{|\xi|} \cdot \sigma,
\end{align*}
we have
\begin{align} \label{bound:Q-c-2}
  \abs{\vint{Q_c^{(2)}(F, \mu), h}}
&\leq
  C \iiint_{\Ss^2 \times \R^6} 
   |\xi| \abs{\nabla \hat \Phi_c (\xi_\ast)}
   \abs{\hat F(\xi_\ast)} \abs{\hat \mu(\xi - \xi_\ast)}
   \abs{\overline{\hat h(\xi)}} \dxi \dxi_\ast \dsigma  \nn
\\
&\leq
   C \norm{F}_{L^1_v} \norm{h}_{L^1_v}
   \iint_{\R^6} \frac{\vint{\xi_\ast} \vint{\xi - \xi_\ast}}{\vint{\xi_\ast}^{4+\gamma}}
   \abs{\hat \mu(\xi - \xi_\ast)} \dxi\dxi_\ast   \nn
\\
& \leq 
   C \norm{F}_{L^1_v} \norm{h}_{L^1_v}. 
\end{align}
To bound $\vint{Q_c^{(3)}(F, \mu), h}$, we first use the property of $\xi^-$ to get
\begin{align*}
   \abs{\vint{Q_c^{(3)}(F, \mu), h}}
&\leq
   \int_0^1 (1 - t) \iiint_{\Ss^2 \times \R^6} b\vpran{\frac{\xi}{|\xi|} \cdot \sigma}
   \abs{\hat F(\xi_\ast)} \abs{\hat \mu(\xi - \xi_\ast)}
   \abs{\overline{\hat h(\xi)}}
   \abs{\nabla^2 \hat \Phi_c (\xi_\ast - t \xi^-)}  \abs{\xi^-}^2
\\
& \leq
   C \norm{F}_{L^1_v} \norm{h}_{L^1_v} 
   \int_0^1 \iiint_{\Ss^2 \times \R^6} 
   \abs{\hat \mu(\xi - \xi_\ast)}
   \abs{\nabla^2 \hat \Phi_c (\xi_\ast - t \xi^-)}  \abs{\xi}^2
   \dxi\dxi_\ast \dsigma \dt
\\
& \leq
   C \norm{F}_{L^1_v} \norm{h}_{L^1_v} 
   \int_0^1 \iiint_{\Ss^2 \times \R^6} 
   \abs{\hat \mu(\xi - \xi_\ast)}
   \frac{\abs{\xi}^2}{\vint{\xi_\ast - t \xi^-}^{3+\gamma+2}}  
   \dxi\dxi_\ast \dsigma \dt. 
\end{align*}
Make the change of variables
\begin{align*}
  w = \xi_\ast - \xi,
\qquad
  z = \xi_\ast - t \xi^-. 
\end{align*}
Since $b(\cos\theta)$ is supported on $\cos\theta \geq 0$, we have $\theta \in [0, \pi/2]$ and
\begin{align*}
   \abs{\frac{\del(w, z)}{\del(\xi_\ast, \xi)}}
= \abs{\det 
    \begin{pmatrix}
      I & -I  \\[2pt]
      I & -\frac{t}{2} \vpran{I - \sigma \otimes \frac{\xi}{|\xi|}}
    \end{pmatrix}}
= (1 - t/2)^2 \vpran{1 - t \sin^2 \tfrac{\theta}{2}}
\geq 
    \frac{1}{4} \vpran{1 - t \sin^2 \tfrac{\theta}{2}}
\geq 
   1/8. 
\end{align*}
Similarly, by $\sin\frac{\theta}{2} \leq \sqrt{2}/2$ and the fact that $\xi^+ \perp \xi^-$, we have 
\begin{align*}
   |w - z| 
= \abs{\xi - t \xi^-}
= \abs{\xi^+ + (1 - t) \xi^-}
\geq
   \abs{\xi^+}
\geq
    \tfrac{\sqrt{2}}{2} |\xi|, 
\end{align*}
which gives
\begin{align*}
   |\xi| \leq \sqrt{2} |w - z|. 
\end{align*}
Applying the change of variables $(\xi, \xi_\ast) \to (w, z)$ in $Q_c^{(3)}$, we have
\begin{align*}
  \abs{\vint{Q_c^{(3)}(F, \mu), h}}
&\leq
     C \norm{F}_{L^1_v} \norm{h}_{L^1_v} 
   \int_0^1 \iiint_{\Ss^2 \times \R^6} 
   \abs{\hat \mu(w)}
   \frac{\vint{w}^2 \vint{z}^2}{\vint{z}^{3+\gamma+2}}  
   \dw\dz \dsigma \dt
\leq
  C \norm{F}_{L^1_v} \norm{h}_{L^1_v}. 
\end{align*}
Combining the estimates for $Q_c^{(1)}, Q_c^{(2)}, Q_c^{(3)}$ and $Q_{\bar c}$ gives the bound in~\eqref{bound:L-infty-Q-F-mu}. 
\end{proof}

\begin{rmk} \label{rmk:weight-L-infty}
It is clear from the proof of Proposition~\ref{prop:Q-F-mu} that we can replace $\mu$ by $\mu \vint{v}^\ell$ for any $\ell$ and obtain that
\begin{align} \label{bound:L-infty-Q-F-mu-weight}
   \norm{Q(F, \mu \vint{v}^\ell)}_{L^\infty(\R^3_v)}
\leq
  C_\ell \norm{F}_{L^1_{\gamma+2s}(\R^3_v)}.
\end{align}
\end{rmk}

Finally we state some elementary interpolations and the specific form of the Gronwall's inequality used frequently in later sections. 

\begin{lem} \label{lem:basic-interpolation}
For  any $\alpha > 0$ and $k \in \R$, we have
\begin{align*}
   L^\infty_{k} (\R^3) \hookrightarrow L^1_{k-3-\alpha} (\R^3), 
\qquad
   L^2_{k} (\R^3) \hookrightarrow L^1_{k-3/2-\alpha} (\R^3),
\qquad
  L^\infty_{k} (\R^3) \hookrightarrow L^2_{k-3/2-\alpha} (\R^3).
\end{align*}
\end{lem}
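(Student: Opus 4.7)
The plan is to verify each embedding by a direct computation using H\"older's inequality (or Cauchy--Schwarz) and the elementary fact that $\vint{v}^{-3-\eps} \in L^1(\R^3)$ for any $\eps > 0$. In each case the job reduces to redistributing the polynomial weight between the function and an auxiliary factor of $\vint{v}$ which must be integrable at the correct exponent.

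\textbf{First embedding.} For $f \in L^\infty_k(\R^3)$, I would write
\begin{align*}
   \norm{f}_{L^1_{k-3-\alpha}}
= \int_{\R^3} \vpran{\vint{v}^k |f|} \vint{v}^{-3-\alpha} \dv
\leq
   \norm{f}_{L^\infty_k} \int_{\R^3} \vint{v}^{-3-\alpha} \dv,
\end{align*}
and the last integral is finite since $3+\alpha > 3$.

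\textbf{Second embedding.} For $f \in L^2_k(\R^3)$, apply Cauchy--Schwarz with the split $\vint{v}^{k-3/2-\alpha} = \vint{v}^k \cdot \vint{v}^{-3/2-\alpha}$:
\begin{align*}
   \norm{f}_{L^1_{k-3/2-\alpha}}
= \int_{\R^3} \vpran{\vint{v}^k |f|} \vint{v}^{-3/2-\alpha} \dv
\leq
   \norm{f}_{L^2_k} \vpran{\int_{\R^3} \vint{v}^{-3-2\alpha} \dv}^{1/2},
\end{align*}
which is finite because $3+2\alpha > 3$.

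\textbf{Third embedding.} For $f \in L^\infty_k(\R^3)$, I would estimate directly
\begin{align*}
  \norm{f}_{L^2_{k-3/2-\alpha}}^2
= \int_{\R^3} \vpran{\vint{v}^k |f|}^2 \vint{v}^{-3-2\alpha} \dv
\leq
   \norm{f}_{L^\infty_k}^2 \int_{\R^3} \vint{v}^{-3-2\alpha} \dv,
\end{align*}
and again the integral converges since $3+2\alpha > 3$.

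\textbf{Expected obstacle.} There is essentially no obstacle here; the statement is entirely elementary and amounts to checking that the integrals of $\vint{v}^{-3-\alpha}$ and $\vint{v}^{-3-2\alpha}$ over $\R^3$ are finite whenever $\alpha > 0$. The only small point worth noting is that the $3$-dimensionality of the velocity space is what dictates the loss of exactly $3$ or $3/2$ moments (plus the arbitrarily small $\alpha$), so the result would read differently in other dimensions.
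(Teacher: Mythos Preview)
Your proof is correct and is exactly the standard argument; the paper in fact states this lemma without proof, treating it as elementary, so your direct verification via H\"older/Cauchy--Schwarz and the integrability of $\vint{v}^{-3-\alpha}$ is precisely what is implicitly expected.
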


\begin{lem} \label{lem:Gronwall}
Let $C_1, C_2$ be two positive constants. Suppose $u(t) \geq 0$ satisfies
\begin{align*}
    \frac{\rm d}{\dt} u^2(t) \leq C_1 u(t) + C_2 u^2(t), 
\qquad
   u |_{t= 0} = u_0. 
\end{align*}
Then
\begin{align*}
   u^2(t)
\leq 
  e^{(1+ C_2) t} \vpran{u^2_0 + C_1^2 \, t}.
\end{align*}
Note that the coefficient in the second term $C_1^2$ is independent of $C_2$.
\end{lem}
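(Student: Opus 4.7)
The plan is to reduce the given differential inequality to a standard linear first-order ODE inequality in the variable $y(t):=u^2(t)$, and then apply the integrating factor method, followed by a convenient upper bound on $(e^{at}-1)/a$.

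First, I would absorb the linear-in-$u$ term into the quadratic one by Young's inequality: since $u \geq 0$,
\begin{align*}
  C_1 \, u(t) \leq u(t)^2 + \tfrac{1}{4} C_1^2 .
\end{align*}
Substituting into the hypothesis, the function $y(t) = u(t)^2$ satisfies the linear differential inequality
\begin{align*}
  \frac{\rm d}{\dt} y(t) \leq (1 + C_2)\, y(t) + \tfrac{1}{4} C_1^2 ,
\qquad
  y(0) = u_0^2.
\end{align*}

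Next, I would multiply by the integrating factor $e^{-(1 + C_2) t}$ and integrate from $0$ to $t$, obtaining
\begin{align*}
  y(t)
\leq
  e^{(1 + C_2) t} u_0^2 + \frac{C_1^2}{4 (1+C_2)} \vpran{e^{(1+C_2) t} - 1}.
\end{align*}
Finally, using the elementary bound $\frac{e^{a t} - 1}{a} = \int_0^t e^{a s} \ds \leq t\, e^{a t}$ valid for $a, t > 0$ with $a = 1 + C_2$, the second term is controlled by $\tfrac{1}{4} C_1^2 \, t \, e^{(1 + C_2) t} \leq C_1^2 \, t \, e^{(1 + C_2) t}$, which yields
\begin{align*}
   u^2(t) = y(t) \leq e^{(1+C_2) t} \vpran{u_0^2 + C_1^2 \, t},
\end{align*}
as claimed. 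Note that the crucial point (coefficient of $C_1^2$ independent of $C_2$) is produced precisely by the Young splitting in the first step, where the whole of the quadratic slack $u^2$ is pushed into the exponent rather than balanced against $C_1^2$. There is no real obstacle here; the only care needed is the choice of Young's inequality constants so that the $(1+C_2)$ factor appears cleanly in the exponent.
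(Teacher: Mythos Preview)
Your proof is correct and follows essentially the same approach as the paper: both absorb $C_1 u$ into $(1+C_2)u^2$ plus a constant via Young's inequality (the paper calls it ``Cauchy--Schwarz'' and uses the cruder bound $C_1 u \leq C_1^2 + u^2$), then apply the standard Gronwall/integrating-factor argument and finish with the same elementary bound $\tfrac{e^{at}-1}{a}\leq t\,e^{at}$. The only cosmetic difference is that you carry the sharper constant $\tfrac14 C_1^2$ before discarding it at the end.
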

\begin{proof}
First by the Cauchy-Schwarz inequality, we have
\begin{align*}
   \frac{\rm d}{\dt} u^2(t) 
\leq 
  C_1^2 + \vpran{1+ C_2} u^2(t). 
\end{align*}
Then by the usual Gronwall's inequality, 
\begin{gather*}
   u^2(t)
\leq
  e^{(1+C_2) t} \vpran{u_0^2 + C_1^2 \frac{1 - e^{-(1 + C_2) t}}{1+C_2}}
\leq
  e^{(1+C_2) t} \vpran{u_0^2 + C_1^2 \, t}.  \qedhere
\end{gather*}
\end{proof}

%
%


\section{Linear Local Theory: A priori Estimates} \label{Sec:a-priori-linear}
We start with the theory for the linear equation
\begin{equation}\label{BEe1-1}
  \partial_{t}f + v\cdot\nabla_{x}f 
= Q(G,F) + \Eps\,L_{\alpha}F 
\,\Denote \,
   \tilde{Q}(G,F),
\qquad 
   (t,x,v)\in (0,T) \times \T^{3}\times\R^{3},
\end{equation}
where $T > 0$ is fixed and $G$ is a fixed nonnegative function and we write
\begin{equation*}
    G(t,x,v) = \mu(v) + g(t,x,v) \geq  0. 
\end{equation*}
The operator $L_{\alpha}$ is a regularising linear operator defined by
\begin{align} \label{def:L-alpha}
  L_{\alpha}\psi(v) 
= - \vpran{\vint{v}^{2\alpha} \psi - \nabla_{v} \cdot \vpran{\langle v \rangle^{2\alpha} \nabla_{v}\psi}},
\qquad 
   \alpha \geq 0,
\end{align}
where $\alpha > 0$ will be specified and fixed in the sequel.

The goal of this section is to establish a priori estimates in various $L^2$-based spaces. Hence we suppose $F(t,x,v)$ is a sufficiently smooth nonnegative solution to~\eqref{BEe1-1} and let $f(t,x,v)$ be its perturbation around the global Maxwellian, 
\textit{i.e.},
\begin{equation*}
F(t,x,v) = \mu(v) + f(t,x,v) \geq 0.
\end{equation*}
Then for any $\Eps\in (0,1]$, the pair $(F,f)$ 
satisfies the equation
\begin{equation}\label{BEe1}
  \partial_{t}f + v\cdot\nabla_{x}f 
= \Eps \,L_{\alpha}F + Q(G,F)
= \Eps \,L_{\alpha} (\mu + f) + Q(G, \mu + f),
\qquad (t,x,v)\in (0,T)\times\T^{3}\times\R^{3}.
\end{equation}

\subsection{Local in time $L^{2}$-estimates}
First we derive a uniform-in-$\Eps$ $L^2$-estimate for equation~\eqref{BEe1}.  

\begin{prop}[Bilinear uniform-in-$\Eps$ estimate]\label{bilinear-zero-level}
Suppose $G = \mu + g \geq 0$ satisfies that
\begin{align} \label{cond:coercivity-1}
    \inf_{t, x} \norm{G}_{L^1_{v}} \geq D_0 > 0, 
\qquad
   \sup_{t, x} \vpran{\norm{G}_{L^1_2} + \norm{G}_{L\log L}} 
   < E_0 < \infty.
\end{align}
Suppose $s \in (0, 1)$ and $\ell > 8 + \gamma$. Let $F = \mu + f$ be a solution to equation~\eqref{BEe1}. Then
\begin{align} \label{ineq:energy-basic-1}
   \frac{\rm d}{\dt} \norm{\vint{v}^{\ell}  f}_{L^2_{x,v}}^2
&\leq 
    -\vpran{\frac{\gamma_0}{2} - C_{\ell} \sup_{x} \norm{g}_{L^1_{\gamma}}}
  \norm{\vint{v}^{\ell+\gamma/2} f}_{L^2_{x,v}}^2
   + C_{\ell} \vpran{1 + \sup_{x} \norm{g}_{L^1_{\ell+\gamma}}^{b_0}} \norm{\vint{v}^{\ell} f}^2_{L^2_{x, v}}  \nn
\\
& \quad \,
  - \frac{c_0 \delta_2}{4} \int_{\T^3} \norm{\vint{v}^{\ell} f}^2_{H^s_{\gamma/2}} \dx
  - \frac{\Eps}{2} \norm{\vint{v}^{\ell+\alpha} f}_{L^2_x H^1_v}^2   \nn
\\
& \quad \,
   + C_{\ell} \vpran{\Eps + \sup_x \norm{g}_{L^1_{\ell+\gamma+2s} \cap L^2}}
   \norm{\vint{v}^\ell f}_{L^2_{x,v}},
\end{align}
where $\delta_2$ is a small enough constant satisfying~\eqref{cond:delta-2}, $\gamma_0, c_0$ are the positive constants in Lemma~\ref{lem:decomp-Q-ell} and Proposition~\ref{prop:coercivity-1} respectively, and $b_0$ is a constant that only depends on $s, \gamma$. All the coefficients $c_0, \gamma_0, \delta_2, b_0, C_\ell$ are independent of ~$\Eps$. 
Furthermore, for any $0 \leq T_1 < T_2 < T$ and $0 < s' < \frac{s}{2(s+3)}$, we have the regularisation 
\begin{align} \label{bound:velocity-avg-basic-1}
& \quad \, 
     \int^{T_2}_{T_{1}}
     \norm{(1 - \Delta_{t})^{s'/2}  f}^{2}_{L^{2}_{x,v}} \dtau
  + \int^{T_2}_{T_{1}}
     \norm{(1 - \Delta_{x})^{s'/2}  f}^{2}_{L^{2}_{x,v}} \dtau  \nn
\\
&\leq 
  C\int^{T_2}_{T_1} 
   \vpran{\Eps^2 \norm{\vint{v}^{3+2\alpha} f}^{2}_{L^{2}_{x,v}}
   + \norm{(1 -\Delta_{v})^{s/2} f}^{2}_{L^{2}_{x,v}}} \dt  \nn
\\
& \quad \, 
  + C \vpran{1 + \sup_{t,x} \norm{g}_{L^1_{3+\gamma+2s} \cap L^2}^2}\int^{T_2}_{T_1} \norm{\vint{v}^{3+\gamma+2s} f}_{L^2_{x, v}}^2 \dt \nn
\\
& \quad \, 
  +  C \norm{\vint{v}^3 f(T_1)}^{2}_{L^{2}_{x,v}} 
     + C \norm{\vint{v}^3 f(T_2)}^{2}_{L^{2}_{x,v}} 
  + C \vpran{\Eps^2 + \sup_{t, x} \norm{g}_{L^1_{3+\gamma+2s} \cap L^2}^2} (T_2 - T_1),
\end{align}
where the coefficient $C$ is independent of $\Eps$.
\end{prop}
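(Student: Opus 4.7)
The plan is to derive \eqref{ineq:energy-basic-1} by the standard weighted $L^{2}$-energy method: multiply \eqref{BEe1} by $f \vint{v}^{2\ell}$ and integrate over $\T^{3}\times\R^{3}$. The transport term integrates cleanly to $\tfrac{1}{2}\tfrac{\rm d}{\dt}\norm{\vint{v}^{\ell} f}^{2}_{L^{2}_{x,v}}$ by the periodicity in $x$, and the right-hand side splits into four pieces: $Q(G,f)$, $Q(G,\mu)$, $\Eps L_\alpha f$, and $\Eps L_\alpha \mu$, which I treat independently.

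For the principal bilinear contribution $\iint Q(G,f)\,f \vint{v}^{2\ell}\dv\dx$ I would apply Lemma~\ref{lem:decomp-Q-ell}(b), which rewrites it as the unweighted bilinear form acting on $\vint{v}^{\ell} f$ plus a commutator term. The unweighted piece is controlled by the coercivity in Proposition~\ref{prop:coercivity-1}, furnishing the dissipation $-c_{0}\int_{\T^{3}}\norm{\vint{v}^{\ell} f}^{2}_{H^{s}_{\gamma/2}}\dx$ up to an absorbable $L^{2}_{\gamma/2}$ contribution; the two hypotheses in \eqref{cond:coercivity-1} are exactly what is needed to apply that proposition pointwise in $(t,x)$. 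For the commutator I would use Proposition~\ref{prop:commutator}(a); its only genuinely delicate piece is the strong-singularity term coming from the first term on the right-hand side of \eqref{ineq:trilinear-1}, which I estimate by Proposition~\ref{prop:strong-sing-cancellation}(a) for $s\ge 1/2$ and \ref{prop:strong-sing-cancellation}(b) for $s<1/2$. Both cases yield bounds of the form $\norm{g}_{L^{1}_{\ell+\gamma+2s}\cap L^{2}}\,\norm{\vint{v}^{\ell}f}_{H^{s_{1}}_{\gamma_{1}/2}}\,\norm{\vint{v}^{\ell}f}_{L^{2}_{\gamma/2}}$ with $s_{1}<s$ and $\gamma_{1}<\gamma$; interpolating $H^{s_{1}}_{\gamma_{1}/2}$ between $H^{s}_{\gamma/2}$ and $L^{2}$ and applying Young's inequality lets me absorb a small fraction $\delta_{2}$ of the $H^{s}$-dissipation at the cost of a term $C_{\ell}(1 + \sup_{x}\norm{g}_{L^{1}_{\ell+\gamma}}^{b_{0}})\norm{\vint{v}^{\ell} f}^{2}_{L^{2}_{x,v}}$, where $b_{0}$ is the Young exponent and depends only on $s,\gamma$ through $s_{1}/s$.

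The source $\iint Q(G,\mu)\,f\vint{v}^{2\ell}\dv\dx$ is controlled by the trilinear estimate of Proposition~\ref{prop:trilinear} with $m=\ell$, exploiting that $\vint{v}^{\ell}\mu$ lies in every weighted Sobolev space; this yields the last line of \eqref{ineq:energy-basic-1}. The regularising contribution $\Eps \iint L_{\alpha}f\cdot f \vint{v}^{2\ell}\dv\dx$ is treated by two integrations by parts: the mass-type part produces $-\Eps \norm{\vint{v}^{\ell+\alpha} f}^{2}_{L^{2}_{x,v}}$, the diffusive part produces $-\Eps \norm{\vint{v}^{\ell+\alpha}\nabla_{v} f}^{2}_{L^{2}_{x,v}}$, and the cross term $-2\ell \Eps \iint \vint{v}^{2\alpha+2\ell-2} (v\cdot \nabla_{v}f)\,f\dv\dx$ is rewritten by moving the $v$-derivative, producing only a lower-order $\vint{v}^{2\alpha+2\ell-2}$ weight that is absorbed into half of the good term. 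Finally $\Eps L_{\alpha}\mu\in L^{2}_{\ell}$ uniformly, giving a source $C\Eps \norm{\vint{v}^{\ell}f}_{L^{2}_{x,v}}$. Combining the four pieces yields \eqref{ineq:energy-basic-1}.

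For the regularisation estimate \eqref{bound:velocity-avg-basic-1}, I would invoke the strong averaging lemma, Proposition~\ref{average-lemma-p}, with $p=2$, $m=2$, $\beta$ slightly less than $s$, $\beta_{-}$ close to $\beta$, and $r$ close to $0$, so that $s^{\flat} = \frac{(1-2r)\beta_{-}}{2(3+\beta)}$ can be taken to be any prescribed $s'<\tfrac{s}{2(s+3)}$. The source $\tilde{\mathcal{F}} = Q(G,F) + \Eps L_{\alpha}F$ is tested against the negative-order operator $\vint{v}^{1+m}(1-\Delta_{x}-\del_{t}^{2})^{-r/2}(1-\Delta_{v})^{-m/2}$; the Boltzmann piece is controlled in duality by the trilinear estimate, with the weights $\vint{v}^{3+\gamma+2s}$ on the right of \eqref{bound:velocity-avg-basic-1} being exactly the moments required by that duality together with the choice $m=2$, while the $\Eps L_{\alpha}F$ piece is bounded directly in $L^{2}_{x,v}$ by $\Eps\norm{\vint{v}^{3+2\alpha}f}_{L^{2}_{x,v}}$ plus a constant coming from the $\mu$-part. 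The boundary contributions at $t=T_{i}$ furnish the $\norm{\vint{v}^{3}f(T_{i})}^{2}_{L^{2}_{x,v}}$ terms, and the $(-\Delta_{v})^{\beta/2}\tilde{f}$ term on the right of \eqref{ineq:averaging-lemma} gives the $\norm{(1-\Delta_{v})^{s/2}f}^{2}_{L^{2}_{x,v}}$ term in \eqref{bound:velocity-avg-basic-1}. The main obstacle throughout is the careful book-keeping of polynomial weights and Sobolev indices: one must verify that $\ell>8+\gamma$ is sufficient to make Lemma~\ref{lem:decomp-Q-ell} applicable, to produce exponents $s_{1}<s$ usable in Proposition~\ref{prop:strong-sing-cancellation}, and to control all arising moments of $g$ by those appearing on the right-hand side of \eqref{ineq:energy-basic-1} without creating a Gronwall loop.
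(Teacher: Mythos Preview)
Your overall strategy is right, and your treatment of $Q(G,\mu)$, of $\Eps L_\alpha$, and of the averaging-lemma part matches the paper's. But there is a real gap in how you handle $\iint Q(G,f)\,f\vint{v}^{2\ell}$. You describe a \emph{single} estimate: decompose via Lemma~\ref{lem:decomp-Q-ell} into the unweighted bilinear form plus commutator (this is actually the identity in part~(a), not~(b) as you write), then apply Proposition~\ref{prop:coercivity-1} to obtain $-c_0\norm{\vint{v}^\ell f}_{H^s_{\gamma/2}}^2 + C\norm{\vint{v}^\ell f}_{L^2_{\gamma/2}}^2$. You call the remainder $C\norm{\vint{v}^\ell f}_{L^2_{\gamma/2}}^2 = C\norm{\vint{v}^{\ell+\gamma/2}f}_{L^2}^2$ ``absorbable,'' but it is not: the constant $C$ is generically larger than $c_0$, so it cannot be swallowed by the $H^s_{\gamma/2}$-dissipation, and it carries an extra $\gamma/2$ moment, so it cannot be folded into $\norm{\vint{v}^\ell f}_{L^2}^2$ either. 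Your single estimate therefore produces neither a clean $H^s$ dissipation nor the moment dissipation $-(\gamma_0/2-\cdots)\norm{\vint{v}^{\ell+\gamma/2}f}_{L^2}^2$ that \eqref{ineq:energy-basic-1} requires.

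The paper's device is to run \emph{two} complementary bounds on the same term and combine them. The first uses the inequality \eqref{ineq:decomp-Q-ell-2} in Lemma~\ref{lem:decomp-Q-ell}(b) to extract the moment dissipation $-(\gamma_0-C_\ell\sup_x\norm{g}_{L^1_\gamma})\norm{\vint{v}^{\ell+\gamma/2}f}_{L^2}^2$ directly, leaving a small residual $\delta_1\norm{\vint{v}^\ell f}_{H^s_{\gamma/2}}^2$ from the commutator. The second is the coercivity-based estimate you described, producing $-\tfrac{c_0}{2}\norm{\vint{v}^\ell f}_{H^s_{\gamma/2}}^2$ plus the troublesome $C_\ell(1+\sup_x\norm{g}_{L^1_\gamma})\norm{\vint{v}^{\ell+\gamma/2}f}_{L^2}^2$. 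One then multiplies the second by a small $\delta_2$ and adds to the first: condition \eqref{cond:delta-2} says precisely that $\delta_2$ is small enough for the $L^2_{\ell+\gamma/2}$-surplus of the second to be absorbed by the moment dissipation of the first, while $\delta_1 < c_0\delta_2/4$ ensures the $H^s$-surplus of the first is absorbed by the (scaled) second. So $\delta_2$ is a combination weight between two estimates, not a Young parameter internal to one as you suggest.
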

\begin{proof} 
Multiply~\eqref{BEe1} by $\vint{v}^{2\ell} f$ and integrate in $x, v$. The regularising term is bounded as
\begin{align} \label{est:L-alpha-1}
& \quad \,
  \Eps \iint_{\T^3 \times \R^3} L_\alpha (\mu + f) f \vint{v}^{2\ell} \dv\dx \nn
\\
&=  -\Eps \iint_{\T^3 \times \R^3} \vint{v}^{2\ell}\vpran{\vint{v}^{2\alpha} f - \nabla_v \cdot (\vint{v}^{2\alpha} \nabla_v f)} f  \dv\dx
    + \Eps \iint_{\T^3 \times \R^3} L_\alpha(\mu) f \vint{v}^{2\ell} \dv\dx  \nn
\\
&\leq 
  - \frac{\Eps}{2} \norm{f}_{L^2_{\ell+\alpha}(\T^3 \times \R^3)}^2
   - \Eps \iint_{\T^3 \times \R^3}
       \abs{\vint{v}^{\alpha+\ell} \nabla_v f}^2 \dx\dv
   + C_{\ell} \Eps \norm{\vint{v}^{\ell} f}_{L^2_{x, v}}^2
   + C_{\ell} \Eps \norm{f}_{L^1_{x, v}}  \nn
\\
&\leq 
  - \frac{\Eps}{2} \norm{f}_{L^2_{\ell+\alpha}(\T^3 \times \R^3)}^2
   - \Eps \iint_{\T^3 \times \R^3}
       \abs{\vint{v}^{\alpha+\ell} \nabla_v f}^2 \dx\dv
   + C_{\ell} \Eps \norm{\vint{v}^{\ell} f}_{L^2_{x, v}}^2
   + C_{\ell} \Eps \norm{\vint{v}^{\ell} f}_{L^2_{x, v}}.   
\end{align}
Decompose the integration of the collision term as
\begin{align} \label{def:T-0}
& \quad \, 
  \iint_{\T^3 \times \R^3} 
     Q(G, \mu + f) f \vint{v}^{2\ell} \dv\dx  \nn
\\
& = \iint_{\T^3 \times \R^3} 
     Q(G, f) f \vint{v}^{2\ell} \dv\dx
      +  \iint_{\T^3 \times \R^3} 
     Q(G, \mu) f \vint{v}^{2\ell} \dv\dx  
\Denote
   T_0 + \tilde T_0,
\end{align}
where by the trilinear estimate in Proposition~\ref{prop:trilinear}, we have
\begin{align} \label{est:tilde-T-0}
   \tilde T_0
\leq
   C_{\ell} \vpran{\sup_x \norm{g}_{L^1_{\ell + \gamma+2s} \cap L^2}}
   \norm{\vint{v}^\ell f}_{L^2_{x,v}}. 
\end{align}
To bound $T_0$,  we use~\eqref{ineq:decomp-Q-ell-2} in Lemma~\ref{lem:decomp-Q-ell} and \eqref{ineq:trilinear-1} in Proposition~\ref{prop:commutator} and get
\begin{align} 
  T_0
& \leq
   -\vpran{\gamma_0 - C_{\ell} \sup_x \norm{g}_{L^1_\gamma}}
     \norm{\vint{v}^{\ell + \gamma/2} f}_{L^2_{x, v}}^2  \nn
\\
& \quad \,
   + \IntRRS 
       b(\cos\theta) |v - v_\ast|^\gamma 
       G_\ast \frac{|f|\vint{v}^\ell}{\vint{v}^\ell} |f'| \vint{v'}^{\ell}
       \vpran{\vint{v'}^{\ell} - \vint{v}^{\ell} \cos^{\ell} \tfrac{\theta}{2}} 
       \dbmu \label{est:T-0-1-1}
\\
& \leq 
  \ell \iiiint_{\T^3 \times \R^6 \times \Ss^2}
    G_\ast \vpran{|f|\vint{v}^{\ell-2}  - |f'|\vint{v'}^{\ell-2}} |f'|\vint{v'}^{\ell}
    \vpran{v_\ast \cdot \tilde \omega} \cos^{\ell}\tfrac{\theta}{2} \sin\tfrac{\theta}{2}
        b(\cos\theta) |v - v_\ast|^{1+\gamma} \dbmu \nn
\\
& \quad \,
   +  C_\ell \vpran{1 + \sup_x \norm{g}_{L^1_{\ell+\gamma}}}
   \norm{\vint{v}^\ell f}_{L^2_{x,v}}^2 
    -\vpran{\gamma_0 - C_{\ell} \sup_x \norm{g}_{L^1_\gamma}}
     \norm{\vint{v}^{\ell + \gamma/2} f}_{L^2_{x, v}}^2. \label{est:T-0-1}
\end{align}
Here we treat the mild and strong singularities separately. If $s \in (0, 1/2)$, then we apply part (b) of Proposition~\ref{prop:strong-sing-cancellation} and bound the first term on the right-hand side of~\eqref{est:T-0-1} as
\begin{align*}
& \quad \, 
  \ell \abs{\iiiint_{\T^3 \times \R^6 \times \Ss^2}
    G_\ast \vpran{|f|\vint{v}^{\ell-2}  - |f'|\vint{v'}^{\ell-2}} |f'|\vint{v'}^{\ell}
    \vpran{v_\ast \cdot \tilde \omega} \cos^{\ell}\tfrac{\theta}{2} \sin\tfrac{\theta}{2}
        b(\cos\theta) |v - v_\ast|^{1+\gamma} \dbmu}
\\
& \leq 
   C_\ell \vpran{1 + \sup_x \norm{g}_{L^1_{2+\gamma}}}
  \norm{\vint{v}^\ell f}_{L^2_{x,v}}^2. 
\end{align*}
If $s \in [1/2, 1)$, then by part (a) of Proposition~\ref{prop:strong-sing-cancellation} and interpolation, we have
\begin{align*}
& \quad \, 
  \ell \abs{\iiiint_{\T^3 \times \R^6 \times \Ss^2}
    G_\ast \vpran{|f|\vint{v}^{\ell-2}  - |f'|\vint{v'}^{\ell-2}} |f'|\vint{v'}^{\ell}
    \vpran{v_\ast \cdot \tilde \omega} \cos^{\ell}\tfrac{\theta}{2} \sin\tfrac{\theta}{2}
        b(\cos\theta) |v - v_\ast|^{1+\gamma} \dbmu}
\\
& \leq
  C_\ell \vpran{1 + \sup_x\norm{g}_{L^1_{3+\gamma+2s} \cap L^2}}
  \norm{\vint{v}^\ell f}_{L^2_x H^{s_1}_{\gamma_1/2}}
  \norm{\vint{v}^\ell f}_{L^2_x L^2_{\gamma/2}}
\\
& \leq
   \delta_1 \norm{\vint{v}^\ell f}_{L^2_x H^s_{\gamma/2}}^2
   + C_{\delta_1} 
       \vpran{1 + \sup_x\norm{g}_{L^1_{3+\gamma+2s} \cap L^2}^{b_0}} \norm{\vint{v}^\ell f}_{L^2_{x, v}}^2, 
\end{align*}
where $\delta_1$ can be chosen arbitrarily small and $b_0 = b_0(s, \gamma, s_1, \gamma_1)$. In fact, one can show that 
\begin{align*}
    b_0 = \frac{2s_1}{s-s_1} + \frac{2s}{s-s_1} \frac{\gamma_1}{\gamma - \gamma_1} + 2 > 0. 
\end{align*}
Since the particular form of $b_0$ is not needed we omit its derivation. Moreover, since the choices of $s_1, \gamma_1$ only depend on $s, \gamma$, we can view $b_0$ as a constant that only depends on $s, \gamma$.
We can now combine both cases and get that for any $s \in (0, 1)$, 
\begin{align*}
& \quad \, 
  \ell \abs{\iiiint_{\T^3 \times \R^6 \times \Ss^2}
    G_\ast \vpran{|f|\vint{v}^{\ell-2}  - |f'|\vint{v'}^{\ell-2}} |f'|\vint{v'}^{\ell}
    \vpran{v_\ast \cdot \tilde \omega} \cos^{\ell}\tfrac{\theta}{2} \sin\tfrac{\theta}{2}
        b(\cos\theta) |v - v_\ast|^{1+\gamma} \dbmu}
\\
& \leq
   \delta_1 \norm{\vint{v}^\ell f}_{L^2_x H^s_{\gamma/2}}^2
   + C_{\delta_1} 
       \vpran{1 + \sup_x\norm{g}_{L^1_{3+\gamma+2s} \cap L^2}^{b_0}} \norm{\vint{v}^\ell f}_{L^2_{x, v}}^2.
\end{align*}
Substituting this bound into~\eqref{est:T-0-1} gives
\begin{align} \label{est:T-0}
   T_0
&\leq
  -\vpran{\gamma_0 - C_{\ell} \sup_x \norm{g}_{L^1_\gamma}}
     \norm{\vint{v}^{\ell + \gamma/2} f}_{L^2_{x, v}}^2  \nn
\\
& \quad \, 
  + C_{\ell, \delta_1} \vpran{1 + \sup_x \norm{g}_{L^1_{\ell+\gamma}}^{b_0}}
   \norm{\vint{v}^\ell f}_{L^2_{x,v}}^2 
  + \delta_1 \norm{\vint{v}^\ell f}_{L^2_x H^s_{\gamma/2}}^2, 
\end{align}
where $\delta_1$ can be taken arbitrarily small. Combining~\eqref{est:L-alpha-1}, \eqref{est:tilde-T-0} and \eqref{est:T-0} gives the energy estimate as
\begin{align} \label{est:basic-L-2-bound-1}
   \frac{\rm d}{\dt} \norm{\vint{v}^{\ell} f}_{L^2_{x,v}}^2
&\leq 
    -\vpran{\gamma_0 - C_{\ell} \sup_{x} \norm{g}_{L^1_{\gamma}}}
  \norm{\vint{v}^{\ell+\gamma/2} f}_{L^2_{x,v}}^2
   + C_{\ell, \delta_1} \vpran{1 + \sup_{x} \norm{g}_{L^1_{\ell+\gamma}}^{b_0}} \norm{\vint{v}^{\ell} f}^2_{L^2_{x, v}}  \nn
\\
& \hspace{-5mm}
   + C_{\ell} \vpran{\Eps + \sup_x \norm{g}_{L^1_{\ell + \gamma+2s} \cap L^2}}
   \norm{\vint{v}^\ell f}_{L^2_{x,v}}  \!\!
  - \frac{\Eps}{2} \norm{\vint{v}^{\ell+\alpha} f}_{L^2_x H^1_v}^2
  + \delta_1 \norm{\vint{v}^\ell f}_{L^2_x H^s_{\gamma/2}}^2, 
\end{align}
where all the constants are independent of $\Eps$.

To complete the basic energy estimate, we include the $H^s$-regularisation. To this end, we only need to perform the second kind of estimate for $T_0$, as done in the proof of Proposition 3.2 in \cite{AMSY}. By Proposition~\ref{prop:coercivity-1}, equation~\eqref{eqn:decomp-Q-ell} in Lemma~\ref{lem:decomp-Q-ell} and the same estimates in~\eqref{est:T-0-1-1}-\eqref{est:T-0} for $T_0$, we have
\begin{align} \label{est:T-0-2}
   T_0
& = \int_{\T^3}\int_{\R^3} Q(G, \, \vint{v}^{\ell} f) \, \vint{v}^{\ell} f \dv\dx \nn
\\
& \quad \,
  + \IntTRRS b(\cos\theta) |v - v_\ast|^\gamma
        G_\ast f \, f' \vint{v'}^{\ell}  \vpran{\vint{v'}^{\ell} - \vint{v}^{\ell}\cos^{\ell} \tfrac{\theta}{2}} \dbmu  \nn
\\
& \quad \, 
  + \IntTRRS 
       b(\cos\theta) |v - v_\ast|^\gamma 
       G_\ast f f' \vint{v'}^{\ell}
       \vint{v}^{\ell} \vpran{\cos^{\ell}  \tfrac{\theta}{2} - 1} 
       \dbmu \nn
\\
& \leq
  -c_0 \norm{\vint{v}^{\ell} f}_{L^2_x H^s_{\gamma/2}}^2
    + \frac{c_0}{2} \norm{\vint{v}^\ell f}_{L^2_x H^s_{\gamma/2}}^2 
  + C_{\ell} \vpran{1 + \sup_{x} \norm{g}_{L^1_{\ell+\gamma}}^{b_0}} \norm{\vint{v}^{\ell} f}^2_{L^2_{x, v}}   
\\
& \quad \,
   + C_{\ell} \vpran{1 + \sup_x \norm{g}_{L^1_\gamma}} \norm{\vint{v}^{\ell + \gamma/2} f}_{L^2_{x,v}}^2  \nn
\\
& \leq
  -\frac{c_0}{2} \norm{\vint{v}^{\ell} f}_{L^2_x H^s_{\gamma/2}}^2
  + C_{\ell} \vpran{1 + \sup_{x} \norm{g}_{L^1_{\ell+\gamma}}^{b_0}} \norm{\vint{v}^{\ell} f}^2_{L^2_{x, v}}  
   + C_{\ell} \vpran{1 + \sup_x \norm{g}_{L^1_\gamma}} \norm{\vint{v}^{\ell + \gamma/2} f}_{L^2_{x,v}}^2.   \nn
\end{align}
Choose $\delta_1, \delta_2$ small enough such that
\begin{align} \label{cond:delta-2}
   \delta_2 C_{\ell} \vpran{1 + \sup_x \norm{g}_{L^1_\gamma}}
\leq 
   \frac{\gamma_0}{2}, 
\qquad
  \delta_2 < 1, 
\qquad
   \delta_1 < \frac{c_0 \delta_2}{4}. 
\end{align}
Multiplying~\eqref{est:T-0-2} by $\delta_2$ and adding it to~\eqref{est:basic-L-2-bound-1} gives~\eqref{ineq:energy-basic-1}.

\smallskip

Finally we apply the averaging lemma in Proposition~\ref{average-lemma-p} to obtain the regularisation in $x$. In light of equation \eqref{BEe1}, if we invoke Proposition \ref{average-lemma-p} with 
\begin{align*}
    \beta=s, 
\quad m=2, 
\quad r=0,
\quad p=2, 
\quad s^\flat = \frac{s_-}{2(s+3)} =: s',
\end{align*} 
then for any $0\leq T_{1} \leq T_{2}<T$, 
\begin{align*}
& \quad \,
   \int^{T_{2}}_{T_{1}} \norm{(1 - \Delta_{t})^{s'/2}f}^{2}_{L^{2}_{x,v}} \dtau   
   + \int^{T_{2}}_{T_{1}} \norm{(1 - \Delta_{x})^{s'/2}f}^{2}_{L^{2}_{x,v}} \dtau 
\\
&\leq 
     C \norm{\vint{v}^3 f(T_1)}^{2}_{L^{2}_{x,v}} 
     + C \norm{\vint{v}^3 f(T_2)}^{2}_{L^{2}_{x,v}} 
     + C \int^{T_2}_{T_{1}} 
          \norm{(1 -\Delta_{v})^{s/2} f}^{2}_{L^{2}_{x,v}} \dtau
\\
& \quad \,
   + C \int_{T_1}^{T_2} \norm{\vint{v}^{3} (1 -\Delta_{v})^{-1}\tilde{Q}(G,F)}^{2}_{L^{2}_{x,v}} \dtau.
\end{align*}
By the trilinear estimate in Proposition~\ref{prop:trilinear}, 
it follows that
\begin{align*}
  \norm{\langle v \rangle^{3} (1 - \Delta_{v})^{-1}\tilde{Q}(G,F)}_{L^{2}_{v}}
& \leq 
  \norm{\vint{v}^{3} (1 -\Delta_{v})^{-s} \vpran{Q(G,f) + Q(g,\mu)}}_{L^{2}_{v}} 
  + \Eps \norm{\vint{v}^{3} (1 -\Delta_{v})^{-1}L_{\alpha} F}_{L^{2}_{v}} 
\\
&\leq 
   C \vpran{1 + \norm{g}_{L^{1}_{3+\gamma+2s} \cap L^2}} 
   \norm{f}_{L^{2}_{3+\gamma+2s}}
   + \norm{g}_{L^{1}_{3+\gamma+2s} \cap L^2}
   + \Eps \, C\,\| f \|_{L^{2}_{3+2\alpha}}
   + C \Eps.
\end{align*}
In this way, we are led to
the desired inequality showing the spatial regularisation of $f$.
\end{proof}

Applying the Gronwall's inequality to Proposition~\ref{bilinear-zero-level}, we obtain the following bound:

\begin{cor}\label{cor:bi-cor-basic-energy-estimate-linear}
Suppose $G = \mu + g \geq 0$ satisfies that
\begin{align*} 
    \inf_{t, x} \norm{G}_{L^1_{v}} \geq D_0 > 0, 
\qquad
   \sup_{t, x} \vpran{\norm{G}_{L^1_2} + \norm{G}_{L\log L}} 
   < E_0 < \infty.
\end{align*}
Let $F = \mu + f$ be a solution to equation~\eqref{BEe1} with $s \in (0, 1)$. Assume that the following conditions hold:
\begin{align} \label{bound:addi-g}
   \sup_{t,x} \norm{g}_{L^\infty_{k_0}(\R^3)}  < \infty,
\qquad
   \sup_{t,x} \norm{g}_{L^1_{\gamma}(\R^3)} < \delta_0 < \frac{\gamma_0}{4 C_{\ell}}, 
\qquad
   8 + \gamma < \ell \leq k_0 - 5 - \gamma.
\end{align}
Let 
\begin{align*}
   \Sigma(g) = 1 + \sup_{t,x}\| g\|_{L^{\infty}_{k_0}}^{b_0}.
\end{align*} 
where $b_0$ is the same exponent as in Proposition~\ref{bilinear-zero-level}. Then it holds that
\begin{align}\label{bi-cor-bas-e1}
   \norm{\vint{\cdot}^{\ell} f(t)}^{2}_{L^{2}_{x,v}} 
\leq 
    C_\ell e^{C_\ell\,\Sigma(g)\,t}
    \vpran{\norm{\vint{\cdot}^{\ell} f_0}^{2}_{L^{2}_{x,v}}  + \sup_{t,x}\| g\|^{2}_{L^{\infty}_{k_0}}\,t + \Eps^{2} t},\qquad t\in[0,T),
\end{align}
and
\begin{align*}
& \quad \,
   c_0\delta_2 \vpran{\int^{t}_{0}\big\| \langle v \rangle^{\ell+\gamma/2} (1 -\Delta_{v})^{s/2}f  \big\|^{2}_{L^{2}_{x,v}}{\rm d}\tau }  \nn
   + \frac{\Eps}{4}\,\int^{t}_{0}\big\| \langle v \rangle^{\ell+\alpha} (1 -\Delta_{v})^{1/2}f  \big\|^{2}_{L^{2}_{x,v}}{\rm d}\tau 
\\
& \leq 
   C_\ell e^{C_\ell\,\Sigma(g)\,t}
    \vpran{\norm{\vint{\cdot}^{\ell} f_0}^{2}_{L^{2}_{x,v}}  + \sup_{t,x}\| g\|^{2}_{L^{\infty}_{k_0}}\,t + \Eps^{2} t},\qquad t\in[0,T).
\end{align*}
Furthermore, if in addition,  
\begin{align} \label{cond:alpha-1}
   \ell \geq 3 + 2 \alpha, 
\end{align}
then for any $0 < s' < \frac{s}{2(s+3)}$ it holds that
\begin{align}\label{bi-cor-bas-e2}
& \quad \, 
  \int^{t}_{0} \norm{(1 - \Delta_{t})^{s'/2}f}^{2}_{L^{2}_{x,v}}{\rm d}\tau
+ \int^{t}_{0} \norm{(1 - \Delta_{x})^{s'/2}f}^{2}_{L^{2}_{x,v}}{\rm d}\tau   \nn
\\
&\leq 
   C e^{C\,\Sigma(g)\,t} \Big( \| \vint{\cdot}^{9} f_0 \|^{2}_{L^{2}_{x,v}} + \sup_{t,x}\| g\|^{2}_{L^{\infty}_{k_0}}t + \Eps^{2} t\Big),
\end{align}
where the exponent $9$ is chosen such that $9 > 8 + \gamma$.
\end{cor}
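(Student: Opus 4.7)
The plan is to combine the differential inequality~\eqref{ineq:energy-basic-1} with the averaging bound~\eqref{bound:velocity-avg-basic-1} from Proposition~\ref{bilinear-zero-level}, reducing all norms of $g$ appearing on the right-hand side to the weighted $L^\infty_{k_0}$ norm through the elementary embeddings in Lemma~\ref{lem:basic-interpolation}, and then closing via the Gronwall-type Lemma~\ref{lem:Gronwall}.

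First, using $\sup_{t,x}\|g\|_{L^1_\gamma} < \delta_0 < \gamma_0/(4C_\ell)$, the coefficient $\gamma_0/2 - C_\ell \sup_x \|g\|_{L^1_\gamma}$ on the right of~\eqref{ineq:energy-basic-1} is at least $\gamma_0/4 > 0$, so the corresponding term is non-positive and can be discarded. Next, since $\ell \leq k_0 - 5 - \gamma$ and $s \in (0,1)$, Lemma~\ref{lem:basic-interpolation} yields
\begin{align*}
\sup_{t,x}\|g\|_{L^1_{\ell+\gamma}}, \ \sup_{t,x}\|g\|_{L^1_{\ell+\gamma+2s}}, \ \sup_{t,x}\|g\|_{L^2_v} \leq C_\ell \sup_{t,x}\|g\|_{L^\infty_{k_0}},
\end{align*}
so every $g$-dependent prefactor in~\eqref{ineq:energy-basic-1} is dominated by $C_\ell\,\Sigma(g)$ (for the $L^2_{x,v}$ coefficient) or by $C_\ell \sup_{t,x}\|g\|_{L^\infty_{k_0}}$ (for the source-like term). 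Setting $u(t) = \|\langle v\rangle^\ell f(t)\|_{L^2_{x,v}}$, the inequality becomes
\begin{align*}
\frac{d}{dt} u^2(t)
&\leq -\tfrac{c_0\delta_2}{4}\int_{\T^3}\|\langle v\rangle^\ell f\|^2_{H^s_{\gamma/2}}\dx - \tfrac{\Eps}{2}\|\langle v\rangle^{\ell+\alpha} f\|^2_{L^2_xH^1_v} \\
&\quad+ C_\ell\,\Sigma(g)\,u^2(t) + C_\ell\bigl(\Eps + \sup_{t,x}\|g\|_{L^\infty_{k_0}}\bigr)\,u(t).
\end{align*}

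Second, dropping the two non-positive dissipation terms puts this in the form required by Lemma~\ref{lem:Gronwall} with $C_1 = C_\ell(\Eps + \sup\|g\|_{L^\infty_{k_0}})$ and $C_2 = C_\ell\,\Sigma(g)$. Since $C_1^2 \lesssim \Eps^2 + \sup\|g\|^2_{L^\infty_{k_0}}$, the lemma delivers~\eqref{bi-cor-bas-e1} directly. To obtain the coercivity/regularisation estimate, I would instead integrate the differential inequality on $[0,t]$, isolate the two non-positive terms on the left, and bound the integral of the right-hand side using the exponential control of $u^2$ just established together with the crude inequality $u(\tau) \leq 1 + u^2(\tau)$ to absorb the linear-in-$u$ term.

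Finally, for~\eqref{bi-cor-bas-e2} I would feed these bounds into~\eqref{bound:velocity-avg-basic-1}. The condition $\ell \geq 3 + 2\alpha$ ensures $\|\langle v\rangle^{3+2\alpha}f\|_{L^2_{x,v}}^2 \leq u^2(t)$, and $\ell > 8 + \gamma \geq 3 + \gamma + 2s$ gives $\|\langle v\rangle^{3+\gamma+2s}f\|_{L^2_{x,v}}^2 \leq u^2(t)$; the $H^s_v$ term on the right of~\eqref{bound:velocity-avg-basic-1} is exactly what the regularisation estimate controls, and $\|\langle v\rangle^3 f(T_i)\|^2_{L^2}\leq u^2(T_i)$. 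Using once more Lemma~\ref{lem:basic-interpolation} to replace $\sup\|g\|_{L^1_{3+\gamma+2s}\cap L^2}$ by $\sup\|g\|_{L^\infty_{k_0}}$ and inserting the exponential bound~\eqref{bi-cor-bas-e1} (with $\|\langle v\rangle^3 f_0\|_{L^2_{x,v}} \leq \|\langle v\rangle^9 f_0\|_{L^2_{x,v}}$) produces~\eqref{bi-cor-bas-e2}. The only real obstacle is the bookkeeping needed to verify that the moment budget $\ell \leq k_0 - 5 - \gamma$ simultaneously covers all the weights $\ell + \gamma$, $\ell + \gamma + 2s$, $3 + 2\alpha$, $3 + \gamma + 2s$ appearing in the two sources of estimates; everything else is a direct application of Gronwall and Lemma~\ref{lem:basic-interpolation}.
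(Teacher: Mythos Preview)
Your proposal is correct and follows essentially the same approach as the paper's proof: both use the smallness of $\sup_{t,x}\|g\|_{L^1_\gamma}$ to make the weighted dissipation term non-positive, invoke the embeddings of Lemma~\ref{lem:basic-interpolation} (under $\ell \leq k_0 - 5 - \gamma$) to replace all $L^1$- and $L^2$-norms of $g$ by $\|g\|_{L^\infty_{k_0}}$, and then apply Gronwall. The only cosmetic difference is that the paper first uses Cauchy--Schwarz to convert the linear-in-$u$ source into $\Eps^2 + \sup\|g\|_{L^\infty_{k_0}}^2$ and then applies the standard Gronwall inequality, whereas you invoke Lemma~\ref{lem:Gronwall} directly---but that lemma performs exactly this Cauchy--Schwarz step internally, so the two routes coincide.
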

\begin{proof}
Applying the additional bounds in~\eqref{bound:addi-g} to~\eqref{ineq:energy-basic-1} gives
\begin{align} \label{ineq:energy-basic-2}
     \frac{\rm d}{\dt} \norm{\vint{v}^{\ell}  f}_{L^2_{x,v}}^2
&\leq 
    -\frac{\gamma_0}{4}
  \norm{\vint{v}^{\ell+\gamma/2} f}_{L^2_{x,v}}^2
  -\frac{c_0 \delta_2}{4} \norm{\vint{v}^{\ell} f}^2_{L^{2}_{x}H^s_{\gamma/2}} - \frac{\Eps}{4} \norm{\vint{v}^{\ell+\alpha} f}_{L^2_x H^1_v}^2\nn
\\
& \quad \, 
   + C_{\ell}\vpran{1 + \sup_{x} \norm{g}_{L^\infty_{k_0}}^{b_0}} \norm{\vint{v}^{\ell} f}^2_{L^2_{x, v}} 
   + C_{\ell} \vpran{\Eps + \sup_x \norm{g}_{L^\infty_{k_0}}}
   \norm{\vint{v}^\ell f}_{L^2_{x,v}}  \nn
\\
&\leq 
    -\frac{\gamma_0}{4}
  \norm{\vint{v}^{\ell+\gamma/2} f}_{L^2_{x,v}}^2
  -\frac{c_0 \delta_2}{4} \norm{\vint{v}^{\ell} f}^2_{L^{2}_{x}H^s_{\gamma/2}} - \frac{\Eps}{4} \norm{\vint{v}^{\ell+\alpha} f}_{L^2_x H^1_v}^2\nn
\\
& \quad \, 
   + C_{\ell}\vpran{1 + \sup_{x} \norm{g}_{L^\infty_{k_0}}^{b_0}} \norm{\vint{v}^{\ell} f}^2_{L^2_{x, v}} 
   + \vpran{\Eps^2 + \sup_x \norm{g}_{L^\infty_{k_0}}^2}.
\end{align}
Estimate \eqref{bi-cor-bas-e1} follows directly from applying the Gronwall's inequality to~\eqref{ineq:energy-basic-2}.  When integrating in time \eqref{ineq:energy-basic-2} one concludes that
\begin{align*}
& \quad \,
   \norm{\vint{v}^\ell f(t)}_{L^2_{x,v}}^2 
   + \frac{c_0\delta_{2}}{2} 
   \int^{t}_{0} \norm{\vint{v}^{\ell}f}^{2}_{L^{2}_{x}H^{s}_{\gamma/2}}\dtau
  + \frac{\Eps}{4} \int^{t}_0\norm{\vint{v}^{\ell+\alpha} f}_{L^2_x H^1_v}^2\dtau
\\
&\leq 
   C_{\ell} 
   \Sigma(g) \int^{t}_{0} \norm{\vint{v}^{\ell}f}^{2}_{L^{2}_{x,v}}\dtau 
  + C_{\ell}  e^{C_\ell\,\Sigma(g)\,t}
      \vpran{\Eps^{2} + \sup_{t,x} \norm{g}^{2}_{L^{\infty}_{k_0}}}\,t 
  + \norm{\vint{v}^\ell f_0}_{L^2_{x,v}}^2
\\
&\leq 
     C_\ell e^{C_\ell\,\Sigma(g)\,t} 
     \vpran{\norm{\vint{\cdot}^{\ell} f_0}^{2}_{L^{2}_{x,v}}  
           + \sup_{t,x} \norm{g}^{2}_{L^{\infty}_{k_0}} t + \Eps^{2} t}.
\end{align*}
This bound together with estimate \eqref{bound:velocity-avg-basic-1}, with $T_{1}=0$ and $T_{2}=t$, gives \eqref{bi-cor-bas-e2} for sufficiently large $C>0$ under the condition $\ell\geq 3+2\alpha$.
\end{proof}
%
%

%
%

Our main $L^\infty$-bound will be based on various $L^2$-estimates of the level-set functions defined as follows:  for any $\ell\geq0$ and $K\geq0$ define the levels $f^{(\ell)}_{K}:=f\,\vint{v}^{\ell} - K$ and 
\begin{equation*}
    \Fl{K} = f^{(\ell)}_{K}\,1_{\{ f^{(\ell)}_{K} \geq 0 \} } , 
\qquad 
   f^{(\ell)}_{K, - } = - f^{(\ell)}_{K}\,1_{ \{ f^{(\ell)}_{K} < 0 \}}.
\end{equation*}

\subsection{$L^2$-estimates for level sets}

The focus of this subsection is to prove the following natural  \textit{a priori} estimate for the level sets.  It is a building block for the energy functional presented later in the argument.   
\begin{prop}\label{thm:L2-level-set}
Suppose $G = \mu + g \geq 0$, $F = \mu + f$ and $s \in (0, 1)$. Suppose in addition $G$ satisfies that
\begin{align*} 
    \inf_{t,x} \norm{G}_{L^1_{v}} \geq D_0 > 0, 
\qquad
    \sup_{t,x}\Big(\norm{G}_{L^1_2} + \norm{G}_{L\log L} \Big) < E_0 < \infty.
\end{align*}
Then for any $\ell > 8 + \gamma$, 

\smallskip
\noindent
(a) the (bilinear) collision term satisfies, 
\begin{align} \label{est:level-set-1}
   \int_{\T^3} \int_{\R^3} Q(G, F) \Fl{K} \vint{v}^{\ell} \dv\dx
&\leq   
   -\gamma_0 \vpran{1 - C \sup_x\norm{g}_{L^1_\gamma}} 
   \norm{\Fl{K}}^2_{L^2_x L^2_{\gamma/2}}  \nn
   -\frac{c_0 \delta_4}{4} \norm{\Fl{K}}^2_{L^2_x H^s_{\gamma/2}}
\\
& \quad \, 
   +    C_\ell \vpran{1 + \sup_x \norm{g}_{L^1_{\ell+\gamma}}
                                + \sup_x\norm{g}_{L^1_{3+\gamma+2s} \cap L^2}^{b_0}}
   \norm{\Fl{K}}^2_{L^2_{x,v}}    \nn
\\
& \quad \, 
   + C_\ell (1 + K) \vpran{1 + \sup_x\norm{g}_{L^1_{\ell+\gamma}}}
   \norm{\Fl{K}}_{L^1_x L^1_\gamma}.
\end{align}
where $\delta_4$ satisfies the bound in~\eqref{bound:Eps-2-linear}.

\smallskip
\Ni (b) The regularising term satisfies
\begin{align} \label{est:level-set-3}
\int_{\T^3} \int_{\R^3} L_{\alpha}(F) \Fl{K} \vint{v}^{\ell} \dv\dx
&\leq
   - \frac{1}{2}\norm{\vint{v}^{\alpha} \Fl{K}}^2_{L^2_{x}H^{1}_{v}} + C_\ell \norm{\Fl{K}}_{L^2_{x, v}}^2  
  + C_{\ell,\alpha}\,(1+K) \norm{\Fl{K}}_{L^1_{x, v}}.
\end{align}
\end{prop}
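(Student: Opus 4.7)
The plan is to establish (a) as a level-set analog of the bilinear $L^{2}$-estimate in Proposition~\ref{bilinear-zero-level} and (b) as a direct integration-by-parts calculation; both rely on the pointwise identity $\vint{v}^{\ell} f = \Fl{K} + K - f^{(\ell)}_{K,-}$, valid a.e.\ because $\Fl{K}$ and $f^{(\ell)}_{K,-}$ have disjoint supports. For part (a), I split $Q(G,F)=Q(G,\mu)+Q(G,f)$. The $Q(G,\mu)$-contribution is handled by Proposition~\ref{prop:trilinear} (with $\sigma=s$, so the test $\Fl{K}\vint{v}^{\ell}$ only needs to lie in $L^{2}$) and absorbs, after Young's inequality, into the $\norm{\Fl{K}}^{2}_{L^{2}_{x,v}}$-tail of~\eqref{est:level-set-1}. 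For $Q(G,f)$ I substitute $f=\vint{v}^{-\ell}(\Fl{K}+K-f^{(\ell)}_{K,-})$ into the weak form, obtaining three pieces:
\begin{align*}
& \int Q(G,\vint{v}^{-\ell}\Fl{K})\,\Fl{K}\vint{v}^{\ell}\dv
 - \int Q(G,\vint{v}^{-\ell}f^{(\ell)}_{K,-})\,\Fl{K}\vint{v}^{\ell}\dv \\
& \qquad\qquad + K \int Q(G,\vint{v}^{-\ell})\,\Fl{K}\vint{v}^{\ell}\dv.
\end{align*}

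The first piece, taken with $h=\vint{v}^{-\ell}\Fl{K}$ so that $h\vint{v}^{\ell}=\Fl{K}$, matches the form $\int Q(G,h) h\vint{v}^{2\ell}\dv$ of Lemma~\ref{lem:decomp-Q-ell}(b) and yields the coercivity $-(\gamma_{0}-C_{\ell}\sup_{x}\norm{g}_{L^{1}_{\gamma}})\norm{\Fl{K}}^{2}_{L^{2}_{x}L^{2}_{\gamma/2}}$ plus the weighted commutator, which I control by Proposition~\ref{prop:commutator} followed by Proposition~\ref{prop:strong-sing-cancellation} exactly as in the proof of Proposition~\ref{bilinear-zero-level}; pairing with Lemma~\ref{lem:decomp-Q-ell}(a) and Proposition~\ref{prop:coercivity-1} furnishes the $H^{s}$-coercivity $-c_{0}\norm{\Fl{K}}^{2}_{L^{2}_{x}H^{s}_{\gamma/2}}$, and the convex combination with weight $\delta_{4}$ closes this piece. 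The second piece collapses, via $\Fl{K}\cdot f^{(\ell)}_{K,-}\equiv 0$, to the nonpositive quantity $-\iiint G_{\ast}\vint{v}^{-\ell}f^{(\ell)}_{K,-}(\Fl{K}\vint{v}^{\ell})'\, b(\cos\theta)|v-v_{\ast}|^{\gamma}\dsigma\dv_{\ast}\dv \leq 0$. For the third piece, I decompose $(\Fl{K}\vint{v}^{\ell})'-\Fl{K}\vint{v}^{\ell}$ into $(\Fl{K})'(\vint{v'}^{\ell}-\vint{v}^{\ell}\cos^{\ell}\tfrac{\theta}{2})$, $(\Fl{K})'\vint{v}^{\ell}(\cos^{\ell}\tfrac{\theta}{2}-1)$, and $((\Fl{K})'-\Fl{K})\vint{v}^{\ell}$, handle the first via Proposition~\ref{prop:commutator}, and exploit the $O(\sin^{2}\tfrac{\theta}{2})$ cancellation in the remaining two (using Proposition~\ref{prop:symmetry-cancel} on the Schwartz weight $\vint{v}^{-\ell}$) to obtain a bound of order $C_{\ell}(1+\sup_{x}\norm{g}_{L^{1}_{\ell+\gamma}})K\norm{\Fl{K}}_{L^{1}_{x}L^{1}_{\gamma}}$.

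For part (b), the $L_{\alpha}\mu$ contribution is bounded trivially by $C_{\ell,\alpha}\norm{\Fl{K}}_{L^{1}_{x,v}}$ since $L_{\alpha}\mu$ is Schwartz, so it suffices to estimate $\int L_{\alpha} f\cdot\Fl{K}\vint{v}^{\ell}\dv$. The dissipative part $-\int\vint{v}^{2\alpha}f\Fl{K}\vint{v}^{\ell}\dv$, rewritten using $f\vint{v}^{\ell}=\Fl{K}+K$ on the support of $\Fl{K}$, becomes $-\norm{\vint{v}^{\alpha}\Fl{K}}^{2}_{L^{2}_{v}}-K\int\vint{v}^{2\alpha}\Fl{K}\dv$, both of favorable sign. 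The divergence part, after integration by parts and the on-support identity $\vint{v}^{\ell}\nabla_{v}f=\nabla_{v}\Fl{K}-f\nabla_{v}\vint{v}^{\ell}$, produces $-\norm{\vint{v}^{\alpha}\nabla_{v}\Fl{K}}^{2}_{L^{2}_{v}}$ plus cross terms of type $\int v\vint{v}^{2\alpha-2}(\Fl{K}+K)\cdot\nabla_{v}\Fl{K}\dv$, which a further integration by parts together with Young's inequality deposits into $C_{\ell}\norm{\Fl{K}}^{2}_{L^{2}_{x,v}}$ and $C_{\ell,\alpha}(1+K)\norm{\Fl{K}}_{L^{1}_{x,v}}$ while preserving the $-\tfrac{1}{2}\norm{\vint{v}^{\alpha}\Fl{K}}^{2}_{L^{2}_{x}H^{1}_{v}}$ coercivity. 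The main obstacle I anticipate is the third piece of part~(a): producing the bound $K\norm{\Fl{K}}_{L^{1}_{x}L^{1}_{\gamma}}$ from the non-cutoff $K$-dependent collision integral without losing extra velocity moments requires isolating both the weighted commutator in $\vint{v'}^{\ell}-\vint{v}^{\ell}\cos^{\ell}\tfrac{\theta}{2}$ and the $O(\sin^{2}\tfrac{\theta}{2})$ angular cancellation, so that the angular integral converges for every $s\in(0,1)$.
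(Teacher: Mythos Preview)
Your overall decomposition is the same as the paper's: the three pieces of $Q(G,f)$ you write out are exactly the paper's $T_{1}$ (after the sign reduction you call the ``second piece''), $T_{2}$, and the $\mu$-term is the paper's $T_{3}$. The handling of the $\Fl{K}$-piece via Lemma~\ref{lem:decomp-Q-ell}, Proposition~\ref{prop:commutator}, Proposition~\ref{prop:strong-sing-cancellation} and the $\delta_{4}$-convex combination, as well as part~(b), all agree with the paper.

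The genuine gap is in your treatment of the $Q(G,\mu)$ contribution. Applying Proposition~\ref{prop:trilinear} with $\sigma=s$ gives a bound that is \emph{linear} in $\Fl{K}$, namely $C_{\ell}\big(1+\sup_{x}\norm{g}_{L^{1}_{\ell+\gamma+2s}\cap L^{2}}\big)\norm{\Fl{K}}_{L^{2}_{x,v}}$. Young's inequality cannot ``absorb'' this into the quadratic term $\norm{\Fl{K}}^{2}_{L^{2}_{x,v}}$ without producing an additive constant $C_{\ell}(1+\norm{g})^{2}$ that is independent of $K$ and of $\Fl{K}$; but every term on the right of~\eqref{est:level-set-1} vanishes when $\Fl{K}\equiv 0$, so no such constant is permitted. (Downstream this would wreck the De~Giorgi iteration, where one needs $\CalE_{k}\to 0$.) The paper avoids this by instead decomposing $T_{3}$ as in~\eqref{decomp:T-3}: the principal part $\int Q(G,\mu\vint{v}^{\ell})\,\Fl{K}\,\dv\dx$ is controlled using the pointwise bound $\norm{Q(G,\mu\vint{v}^{\ell})}_{L^{\infty}_{v}}\le C_{\ell}\norm{G}_{L^{1}_{\gamma+2s}}$ from Remark~\ref{rmk:weight-L-infty}, which yields an $L^{1}$-pairing $C_{\ell}(1+\sup_{x}\norm{g}_{L^{1}_{\gamma+2s}})\norm{\Fl{K}}_{L^{1}_{x,v}}$; the remaining commutator and $(1-\cos^{\ell}\tfrac{\theta}{2})$ pieces are handled exactly as in $T_{2}$. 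A smaller point: for the subterm $K\!\iiint G_{\ast}\big((\Fl{K})'-\Fl{K}\big)b|v-v_{\ast}|^{\gamma}$ in your ``third piece'', Proposition~\ref{prop:symmetry-cancel} is not the right tool (the $\sigma$-dependence sits in $(\Fl{K})'$, which is not $W^{2,\infty}$); the paper uses the regular change of variables in Lemma~\ref{prop:change-variable}(a), which produces the $O(\sin^{2}\tfrac{\theta}{2})$ factor via $\cos^{-3-\gamma}\tfrac{\theta}{2}-1$.
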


\begin{proof}
(a) As in the proof of Proposition~\ref{bilinear-zero-level}, we make two estimates of the $Q$-term: one with $H^s$-norm in $v$ and one without.
To derive the one without the $H^s$-norm, make the decomposition
\begin{align} \label{decomp:linear}
  \int_{\T^3} \int_{\R^3} Q(G, F) \Fl{K} \vint{v}^{\ell} \dv\dx
&= \int_{\T^3} \int_{\R^3} Q \vpran{G, f - \tfrac{K}{\vint{v}^\ell}} \Fl{K} \vint{v}^{\ell} \dv\dx  \nn
\\
& \quad \,
+ \int_{\T^3} \int_{\R^3} Q \vpran{G, \tfrac{K}{\vint{v}^\ell}} \Fl{K} \vint{v}^{\ell} \dv\dx  \nn
\\
& \quad \, 
+ \int_{\T^3} \int_{\R^3} Q \vpran{G, \mu} \Fl{K} \vint{v}^{\ell} \dv\dx
\ \Denote T_1 + T_2 + T_3 .  
\end{align}
By the definition of $Q$ and the positivity of $G$, the first term $T_1$ satisfies
\begin{align} \label{bound:T-1-levl-set}
  T_1
& = \iiiint_{\T^3 \times \R^6 \times \Ss^2}
        G_\ast \vpran{f - \tfrac{K}{\vint{v}^\ell}} \vpran{\Fl{K}(v') \vint{v'}^{\ell} - \Fl{K} \vint{v}^{\ell}} 
        b(\cos\theta) |v - v_\ast|^\gamma \dbmu   \nn
\\
& \leq 
  \iiiint_{\T^3 \times \R^6 \times \Ss^2}
        G_\ast \Fl{K} \frac{1}{\vint{v}^\ell} \vpran{\Fl{K}(v') \vint{v'}^{\ell} - \Fl{K} \vint{v}^{\ell}} 
        b(\cos\theta) |v - v_\ast|^\gamma \dbmu.
\end{align}
Continuing from here, the part of the integrand involving $\Fl{K}$ satisfies
\begin{align*}
& \quad \,
   \Fl{K} \frac{1}{\vint{v}^\ell} \vpran{\Fl{K}(v') \vint{v'}^{\ell} - \Fl{K} \vint{v}^{\ell}} 
\\
&=  \frac{1}{\vint{v}^\ell} 
   \vpran{\Fl{K}(v) \Fl{K}(v') \vint{v}^\ell \cos^\ell \tfrac{\theta}{2} 
   - \vpran{\Fl{K}}^2 \vint{v}^\ell} 
   + \frac{1}{\vint{v}^\ell} \Fl{K}(v) \Fl{K}(v') \vpran{\vint{v'}^{\ell} - \vint{v}^{\ell} \cos^\ell \tfrac{\theta}{2}} 
\\
& \leq 
   \frac{1}{2} \vpran{\vpran{\Fl{K}(v')}^2 \cos^{2\ell} \tfrac{\theta}{2}
     - \vpran{\Fl{K}}^2}
   + \frac{1}{\vint{v}^\ell} \Fl{K}(v) \Fl{K}(v') \vpran{\vint{v'}^{\ell} - \vint{v}^{\ell} \cos^\ell \tfrac{\theta}{2}}  .
\end{align*}
By the regular change of variables, ~\eqref{ineq:trilinear-1} in Proposition~\ref{prop:commutator} and Proposition~\ref{prop:strong-sing-cancellation}, we bound $T_1$ as 
\begin{align} \label{bound:T-1-first-1}
   T_1
&\leq
 \frac{1}{2} \iiiint_{\T^3 \times \R^6 \times \Ss^2}
        G_\ast  \vpran{\vpran{\Fl{K}(v')}^2 \cos^{2\ell} \tfrac{\theta}{2}
     - \vpran{\Fl{K}}^2}
        b(\cos\theta) |v - v_\ast|^\gamma \dbmu   \nn
\\
& \quad \,
  + \iiiint_{\T^3 \times \R^6 \times \Ss^2}
        G_\ast \frac{\Fl{K}(v)}{\vint{v}^\ell}  \Fl{K}(v') \vpran{\vint{v'}^{\ell} - \vint{v}^{\ell} \cos^\ell \tfrac{\theta}{2}} 
        b(\cos\theta) |v - v_\ast|^\gamma \dbmu   \nn
\\
&\leq
  -\gamma_0 \vpran{1 - C\sup_x\norm{g}_{L^1_\gamma}}
   \norm{\Fl{K}}^2_{L^2_x L^2_{\gamma/2}}  
  + C_\ell \vpran{1 + \sup_x \norm{g}_{L^1_{\ell+\gamma}}}
     \norm{\Fl{K}}^2_{L^2_{x, v}}  \nn
\\
& \quad \,
   + C\vpran{1 + \sup_x\norm{g}_{L^1_{3+\gamma+2s} \cap L^2}}
  \norm{\Fl{K}}_{L^2_x H^{s_1}_{\gamma_1/2}}
  \norm{\Fl{K}}_{L^2_x L^2_{\gamma/2}}  \nn
\\
& \leq
  -\gamma_0 \vpran{1 - C\sup_x\norm{g}_{L^1_\gamma}}
   \norm{\Fl{K}}^2_{L^2_x L^2_{\gamma/2}}  
   + \delta_3 \norm{\Fl{K}}_{L^2_x H^s_{\gamma/2}}^2 \nn
\\
& \quad \,     
  + C_{\ell, \delta_3} \vpran{1 + \sup_x \norm{g}_{L^1_{\ell+\gamma}}
  + \sup_x\norm{g}_{L^1_{3+\gamma+2s} \cap L^2}^{b_0}}
     \norm{\Fl{K}}^2_{L^2_{x, v}},
\end{align}
where $b_0$ is the same exponent as in Proposition~\ref{bilinear-zero-level} and $\delta_3 > 0$ can be arbitrarily small.
In the estimate above we have combined the mild and strong singularities.
Next we estimate $T_2$. Writing out $Q$, we get
\begin{align*}
   T_2
& = K \iiiint_{\T^3 \times \R^6 \times \Ss^2}
       G_\ast \frac{1}{\vint{v}^\ell}
       \vpran{\Fl{K}(v') \vint{v'}^{\ell} - \Fl{K} \vint{v}^{\ell}} 
       b(\cos\theta) |v - v_\ast|^\gamma \dbmu
\\
& = K \iiiint_{\T^3 \times \R^6 \times \Ss^2}
       G_\ast 
       \vpran{\Fl{K}(v') - \Fl{K}(v) } 
       b(\cos\theta) |v - v_\ast|^\gamma \dbmu
\\
& \quad \, 
   + K \iiiint_{\T^3 \times \R^6 \times \Ss^2}
       G_\ast \Fl{K}(v') \frac{1}{\vint{v}^\ell}
       \vpran{\vint{v'}^\ell - \vint{v}^\ell \cos^\ell \tfrac{\theta}{2}} 
       b(\cos\theta) |v - v_\ast|^\gamma \dbmu
\\
& \quad \,
   - K \iiiint_{\T^3 \times \R^6 \times \Ss^2}
       G_\ast \Fl{K}(v') 
       \vpran{1 - \cos^\ell \tfrac{\theta}{2}} 
       b(\cos\theta) |v - v_\ast|^\gamma \dbmu.
\end{align*}
Applying the regular change of variables to the first term, then~\eqref{ineq:trilinear-1} in Proposition~\ref{prop:commutator} and part (c) in Proposition~\ref{prop:strong-sing-cancellation} with $F=1$ to the second term and a direct estimate to the third term, we get
\begin{align*}
   T_2
& \leq
      C K \vpran{1 + \sup_x\norm{g}_{L^1_\gamma}}
   \norm{\Fl{K}}_{L^1_x L^1_\gamma}
+ C_\ell K \vpran{1 + \sup_x\norm{g}_{L^1_{\ell + \gamma}}}
   \norm{\Fl{K}}_{L^1_x L^1_\gamma} 
\\
& \leq
   C_\ell K \vpran{1 + \sup_x\norm{g}_{L^1_{\ell + \gamma}}}
   \norm{\Fl{K}}_{L^1_x L^1_\gamma}. 
\end{align*}
Applying similar estimates and Remark~\ref{rmk:weight-L-infty} to $T_3$, we have 
\begin{align} \label{decomp:T-3}
   T_3
& \leq \int_{\T^3} \int_{\R^3} Q \vpran{G, \mu \vint{v}^\ell} \Fl{K} \dv\dx \nn
\\
& \quad \,
     +  \iiiint_{\T^3 \times \R^6 \times \Ss^2}
       G_\ast \Fl{K}(v') \frac{\mu \vint{v}^\ell}{\vint{v}^\ell}
       \vpran{\vint{v'}^\ell - \vint{v}^\ell \cos^\ell \tfrac{\theta}{2}} 
       b(\cos\theta) |v - v_\ast|^\gamma \dbmu \nn
\\
& \quad \,
    - \iiiint_{\T^3 \times \R^6 \times \Ss^2}
       G_\ast \Fl{K}(v') 
       \mu \vpran{1 - \cos^\ell \tfrac{\theta}{2}} 
       b(\cos\theta) |v - v_\ast|^\gamma \dbmu  \nn
\\
& \leq
   C_\ell \vpran{1 + \sup_x\norm{g}_{L^1_{\ell + \gamma}}}
   \norm{\Fl{K}}_{L^1_x L^1_\gamma}. 
\end{align}
Combining all the estimates, we obtain the first bound for the right-hand side as
\begin{align} \label{bound:RHS-1}
   \int_{\T^3} \int_{\R^3} Q(G, F) \Fl{K} \vint{v}^{\ell} \dv\dx
&\leq
   -\gamma_0 \vpran{1 - C \sup_x\norm{g}_{L^1_\gamma}}
   \norm{\Fl{K}}^2_{L^2_x L^2_{\gamma/2}}   \nn
  + \delta_3 \norm{\Fl{K}}_{L^2_x H^s_{\gamma/2}}^2
\\
& \quad \,
   +    C_{\ell, \delta_3} \vpran{1 + \sup_x \norm{g}_{L^1_{\ell+\gamma}}
           + \sup_x\norm{g}_{L^1_{3+\gamma+2s} \cap L^2}^{b_0}}
   \norm{\Fl{K}}^2_{L^2_{x,v}}     \nn
\\
& \quad \,
   + C_\ell(1 + K) \vpran{1 + \sup_x\norm{g}_{L^1_{\ell+\gamma}}}
   \norm{\Fl{K}}_{L^1_x L^1_\gamma} . 
\end{align}
Next we derive the second bound with the $H^s$-norm. To this end, we use Proposition~\ref{prop:coercivity-1}, part (a) in Lemma~\ref{lem:decomp-Q-ell}, inequality \eqref{bound:T-1-levl-set} and similar bounds in the proof of~\eqref{bound:T-1-first-1} to re-estimate $T_1$ as
\begin{align} \label{bound:RHS-2}
  T_1
& \leq
    \iiiint_{\T^3 \times \R^6 \times \Ss^2}
        G_\ast \Fl{K} \vpran{\Fl{K}(v') - \Fl{K}(v) } 
        b(\cos\theta) |v - v_\ast|^\gamma \dbmu   \nn
\\
& \quad \,
  +     \iiiint_{\T^3 \times \R^6 \times \Ss^2}
        G_\ast \Fl{K} \Fl{K}(v') \frac{1}{\vint{v}^\ell}\vpran{\vint{v'}^\ell -\vint{v}^\ell \cos^\ell \tfrac{\theta}{2}}               
        b(\cos\theta) |v - v_\ast|^\gamma \dbmu  \nn
\\
& \quad \,
  +     \iiiint_{\T^3 \times \R^6 \times \Ss^2}
        G_\ast \Fl{K} \Fl{K}(v') \vpran{1 - \cos^\ell \tfrac{\theta}{2}}               
        b(\cos\theta) |v - v_\ast|^\gamma \dbmu   \nn
\\
& \leq
   - \frac{c_0}{2} \norm{\Fl{K}}^2_{L^2_x H^s_{\gamma/2}}
   +  C_\ell \vpran{1 + \sup_x \norm{g}_{L^1_{\ell+\gamma}}
        + \sup_x\norm{g}_{L^1_{3+\gamma+2s} \cap L^2}^{b_0}}
   \norm{\Fl{K}}^2_{L^2_{x,v}}   \nn
\\
& \quad \,
   + C_\ell \vpran{1 + \sup_x \norm{g}_{L^1_{\gamma}}} \norm{\Fl{K}}^2_{L^2_x L^2_{\gamma/2}}. 
\end{align}
Multiply~\eqref{bound:RHS-2} by a small enough $\delta_4$, choose $\delta_3$ small enough and add it to~\eqref{bound:RHS-1}. This gives the desired bound in part (a). The specific requirements for $\delta_3, \delta_4$ are
\begin{align} \label{bound:Eps-2-linear}
  C_\ell \vpran{1 + \sup_x \norm{g}_{L^1_{\gamma}}} \delta_4
\leq
  \frac{1}{8} c_0,
\qquad
   \delta_3 < \frac{c_0 \delta_4}{4}.
\end{align}

\smallskip
\noindent
(b) To estimate the contribution of the $\Eps$-regularising term 
to the energy estimate of the level set, denote
\begin{align*} 
  T_R 
= \iint_{\T^3 \times \R^3} \vpran{L_{\alpha}F} \, \Fl{K} \vint{v}^\ell \dv\dx  \nn
= \iint_{\T^3 \times \R^3}
   -\vint{v}^\ell \Fl{K}\vpran{\vint{v}^{2\alpha}  - \nabla_v \cdot (\vint{v}^{2\alpha} \nabla_v )} F \dv\dx. 
\end{align*}
Decomposing $F$ gives
\begin{align} \label{decomp:L-R}
   T_R
& = \iint_{\T^3 \times \R^3}
       - \vint{v}^\ell \Fl{K}\vpran{\vint{v}^{2\alpha}  - \nabla_v \cdot (\vint{v}^{2\alpha} \nabla_v )} \mu \dv\dx  \nn
\\
& \quad \,
     + \iint_{\T^3 \times \R^3}
       - \vint{v}^\ell \Fl{K}\vpran{\vint{v}^{2\alpha}  - \nabla_v \cdot (\vint{v}^{2\alpha} \nabla_v )} \frac{K}{\vint{v}^\ell} \dv\dx  \nn
\\
& \quad \,
   + \iint_{\T^3 \times \R^3}
     - \vint{v}^\ell \Fl{K}\vpran{\vint{v}^{2\alpha}  - \nabla_v \cdot (\vint{v}^{2\alpha} \nabla_v )} \vpran{f - \frac{K}{\vint{v}^\ell}} \dv\dx  
\Denote
    T^1_R + T^2_{R} + T^3_{R}. 
\end{align}
Then $T^1_R$ is directly bounded as
\begin{align*}
   T^1_{R} \leq C_{\ell,\alpha} \, \norm{\Fl{K}}_{L^1_{x, v}}.
\end{align*}
Carrying out the computation of differentiation, we get
\begin{align*}
  T^2_{R}
\leq
   K \iint_{\T^3 \times \R^3}
     \Fl{K}\vpran{C_{\ell,\alpha}\vint{v}^{2\alpha-2}  - \vint{v}^{2\alpha}} \dv\dx
\leq
  C_{\ell,\alpha}\,K \norm{\Fl{K}}_{L^1_{x,v}},
\end{align*}
where we have applied the positivity of $\Fl{K}$ and the bound $C_{\ell,\alpha}\vint{v}^{2\alpha-2} - \vint{v}^{2\alpha} \leq C'_{\ell, \alpha} \One_{|v| \leq V_0}$ for some constant $V_0$ large enough.

To estimate $T^3_{R}$, we break it into two parts:
\begin{align*}
   T^3_{R}
&= \iint_{\T^3 \times \R^3}
   \vpran{-\vint{v}^\ell \Fl{K}\vpran{\vint{v}^{2\alpha}  - \nabla_v \cdot (\vint{v}^{2\alpha} \nabla_v )} \vpran{f - \frac{K}{\vint{v}^\ell}}} \dv\dx
\\
&= -\iint_{\T^3 \times \R^3}
   \vint{v}^\ell \Fl{K}  \vint{v}^{2\alpha} \vpran{f - \frac{K}{\vint{v}^\ell}} \dv\dx
\\
& \quad \, 
  + \iint_{\T^3 \times \R^3}
       \vint{v}^\ell \Fl{K}\vpran{\nabla_v \cdot (\vint{v}^{2\alpha} \nabla_v ) \vpran{f - \frac{K}{\vint{v}^\ell}}} \dv\dx
\Denote
   T^{3,1}_{R} + T^{3,2}_{R}.
\end{align*}
Then $T^{3,1}_{R}
= -\norm{\vint{v}^{\alpha} \Fl{K}}_{L^2_{x,v}}^2$.
Integrating by parts, we have
\begin{align*}
   T^{3,2}_{R}
&= -
    \iint_{\T^3 \times \R^3}
       \nabla_v \vpran{\vint{v}^\ell \Fl{K} } \cdot \vpran{ (\vint{v}^{2\alpha} \nabla_v ) \vpran{f - \frac{K}{\vint{v}^\ell}}} \dv\dx
\\
&= - \iint_{\T^3 \times \R^3}
      \vint{v}^{2\alpha} \nabla_v \vpran{\vint{v}^\ell \Fl{K} } \cdot \vpran{\nabla_v \frac{\Fl{K}}{\vint{v}^\ell}} \dv\dx
\\
& \leq
  - \norm{\vint{v}^{\alpha} \nabla_v \Fl{K}}_{L^2_{x, v}}^2
  + C_\ell \, \norm{\vint{v}^{\alpha - 1} \Fl{K}}_{L^2_{x, v}}^2. 
\end{align*}
Hence, 
\begin{align*}
  T^3_{R}
\leq
  - \norm{\vint{v}^{\alpha} \Fl{K}}_{L^2_{x}H^{1}_{v}}^2
  + C_\ell \, \norm{\vint{v}^{\alpha - 1} \Fl{K}}_{L^2_{x, v}}^2.
\end{align*}
Altogether we have
\begin{align*}
   T_R
\leq
   - \frac{1}{2} \norm{\vint{v}^{\alpha} \Fl{K}}^2_{L^2_{x}H^{1}_{v}} 
   + C_\ell  \,\norm{\Fl{K}}_{L^2_{x, v}}^2 
   + C_{\ell,\alpha}\,(1+K) \norm{\Fl{K}}_{L^1_{x, v}}, 
\end{align*} 
which concludes the proof of part $(b)$.
\end{proof}


To show that $|f| \vint{v}^k \leq K$ in the later part, we will need to bound not only the level set function $f^{(\ell)}_{K, +}$ but also the one for $(-f)^{(\ell)}_{K, +}$ since the former only gives $f \vint{v}^k \leq K$. Given the linearity of the Boltzmann operator $Q(G, F)$ in $F$, it is not surprising that estimate for $(-f)^{(\ell)}_{K, +}$ follows a similar line as that for $f^{(\ell)}_{K, +}$. The equation for $h = -f$ is
\begin{align} \label{eq:h-1}
   \del_t h + v \cdot \nabla_x h 
= -Q (G, \mu - h) - \Eps L_\alpha (\mu - h),
\qquad
   h|_{t=0} = -f_0(x, v). 
\end{align}

\begin{prop} \label{thm:level-set-minus-f}
Let $h = -f$. Suppose 
\begin{align*}
 & \hspace{2cm}
   G = \mu + g \geq 0, 
\qquad
   F = \mu + f = \mu - h,
\\
 &   \inf_{t,x} \norm{G}_{L^1_{v}} \geq D_0 > 0, 
\qquad
    \sup_{t,x}\Big(\norm{G}_{L^1_2} + \norm{G}_{L\log L} \Big) < E_0 < \infty.
\end{align*}
Then for any $s \in (0, 1)$ and $\ell > 8 + \gamma$, 

\noindent
(a) The bilinear collision term satisfies
\begin{align*}
   -\int_{\T^3} \int_{\R^3} Q(G, F) \Hl{K} \vint{v}^{\ell} \dv\dx
&\leq 
  -\gamma_0 \vpran{1 - C \sup_x\norm{g}_{L^1_\gamma}}
   \norm{\Hl{K}}^2_{L^2_x L^2_{\gamma/2}}  
   - \frac{c_0 \delta_4}{4} \norm{\Fl{K}}^2_{L^2_x H^s_{\gamma/2}}
\\
& \quad \,
   +    C_\ell \vpran{1 + \sup_x \norm{g}_{L^1_{\ell+\gamma}}}
   \norm{\Hl{K}}^2_{L^2_{x,v}}
\\
& \quad \,
  + C_\ell (1 + K) \vpran{1 +  \sup_x \norm{g}_{L^1_{\ell + \gamma}}}
  \norm{\Hl{K}}_{L^1_x L^1_\gamma},
\end{align*}
where $\delta_4$ is the same constant in~\eqref{bound:Eps-2-linear}.

\Ni (b) For the regularising term it holds that
\begin{align*}
  - \int_{\T^3} \int_{\R^3} L_{\alpha}(F) \Hl{K} \vint{v}^{\ell} \dv\dx
&\leq
   - \frac{1}{2} \norm{\vint{v}^{\alpha} \Hl{K}}^2_{L^2_{x}H^{1}_{v}} + C_\ell \norm{\Hl{K}}_{L^2_{x, v}}^2 
  + C_{\ell,\alpha}\,(1+K) \norm{\Hl{K}}_{L^1_{x, v}}.
\end{align*}
\end{prop}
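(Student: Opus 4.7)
The proof proceeds by mirroring the argument of Proposition~\ref{thm:L2-level-set}, exploiting the fact that $h = -f$ and $F = \mu + f = \mu - h$. By linearity of $Q$ in its second argument and of $L_\alpha$,
\begin{equation*}
   -Q(G, F) = Q(G, h) - Q(G, \mu),
\qquad
   -L_\alpha F = L_\alpha(h) - L_\alpha(\mu),
\end{equation*}
so the two quantities to estimate decompose into pieces in direct parallel with $T_1, T_2, T_3$ (and $T^1_R, T^2_R, T^3_R$ for the regularizer) from the earlier proof.

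For part (a), I would write
\begin{equation*}
   -\iint Q(G,F)\Hl{K}\vint{v}^\ell \dv\dx
 = \iint Q\vpran{G, h-\tfrac{K}{\vint{v}^\ell}}\Hl{K}\vint{v}^\ell \dv\dx
 + \iint Q\vpran{G, \tfrac{K}{\vint{v}^\ell}}\Hl{K}\vint{v}^\ell \dv\dx
 - \iint Q(G,\mu)\Hl{K}\vint{v}^\ell \dv\dx,
\end{equation*}
which is precisely \eqref{decomp:linear} with $f, \Fl{K}$ replaced throughout by $h, \Hl{K}$ and a sign flip on the $Q(G,\mu)$ piece. The $T_1$-analogue is the only term that requires a genuine sign argument: writing $h - K/\vint{v}^\ell = \Hl{K}/\vint{v}^\ell - h^{(\ell)}_{K,-}/\vint{v}^\ell$ and using that $h^{(\ell)}_{K,-}(v)$ vanishes on $\{\Hl{K}(v) > 0\}$, the $h^{(\ell)}_{K,-}$-contribution collapses to $-\iiiint G_\ast h^{(\ell)}_{K,-}(v)\vint{v}^{-\ell}\Hl{K}(v')\vint{v'}^\ell b(\cos\theta)|v-v_\ast|^\gamma \dbmu \leq 0$ by non-negativity of each factor and can be discarded. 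This reproduces the structure of \eqref{bound:T-1-levl-set}, after which Proposition~\ref{prop:commutator} and Proposition~\ref{prop:strong-sing-cancellation} (treating the mild and strong singularities uniformly as in the derivation of \eqref{bound:T-1-first-1}), combined with Lemma~\ref{lem:decomp-Q-ell}(a) and Proposition~\ref{prop:coercivity-1} for the $H^s$-coercive version \eqref{bound:RHS-2}, produce the $-\gamma_0(1-C\sup_x\norm{g}_{L^1_\gamma})\norm{\Hl{K}}_{L^2_x L^2_{\gamma/2}}^2$ term and, after adding a $\delta_4$-multiple of the strong version with $\delta_4$ as in \eqref{bound:Eps-2-linear}, the $-\tfrac{c_0\delta_4}{4}\norm{\Hl{K}}^2_{L^2_x H^s_{\gamma/2}}$ gain. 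The $T_2$- and $-T_3$-analogues are direct $L^1$ estimates (the sign flip on $T_3$ is immaterial since the original bound \eqref{decomp:T-3} was already taken in absolute value), yielding the $(1+K)\norm{\Hl{K}}_{L^1_x L^1_\gamma}$ and $\norm{\Hl{K}}_{L^2_{x,v}}^2$ remainder terms.

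For part (b), I would split
\begin{equation*}
   -\iint L_\alpha(F)\Hl{K}\vint{v}^\ell\dv\dx
 = \iint L_\alpha(h)\Hl{K}\vint{v}^\ell\dv\dx - \iint L_\alpha(\mu)\Hl{K}\vint{v}^\ell\dv\dx,
\end{equation*}
and then further decompose the first integral via $h = K/\vint{v}^\ell + (h - K/\vint{v}^\ell)$ in direct parallel with \eqref{decomp:L-R}. The same integration-by-parts performed on the $T^{3,2}_R$-analogue, combined with the algebraic bound on the $T^{3,1}_R$-analogue, yields $-\tfrac12\norm{\vint{v}^\alpha \Hl{K}}_{L^2_x H^1_v}^2$ modulo an absorbable $C_\ell\norm{\Hl{K}}^2_{L^2_{x,v}}$ term. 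The $-\iint L_\alpha(\mu)\Hl{K}\vint{v}^\ell$ piece together with the $T^2_R$-analogue produce the $C_{\ell,\alpha}(1+K)\norm{\Hl{K}}_{L^1_{x,v}}$ contribution just as in the bounds for $T^1_R$ and $T^2_R$.

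The main (and rather minor) obstacle is verifying that the sign trick used to discard the $h^{(\ell)}_{K,-}$-term in the $T_1$-analogue still goes through after the substitution $f \to h$; this is immediate since the argument depends only on the non-negativity of $G_\ast$, $b(\cos\theta)$, $|v-v_\ast|^\gamma$, $\Hl{K}$, and $h^{(\ell)}_{K,-}$, all of which are preserved. Consequently the estimates of Proposition~\ref{thm:L2-level-set} transcribe to the present setting essentially verbatim, with no new trilinear or cancellation estimate required.
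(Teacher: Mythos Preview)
Your proposal is correct and follows essentially the same route as the paper: the same three-term decomposition $J_1 + J_2 + J_3$ for part (a), with $J_1, J_2$ handled identically to $T_1, T_2$ and $J_3$ handled by the same tools (Proposition~\ref{prop:Q-F-mu} and \eqref{ineq:trilinear-1}, which the paper explicitly notes does not require positivity of the integrand), and the same splitting for part (b). One small imprecision: your claim that the $T_3$-bound was ``already in absolute value'' is not quite literal---the $(1-\cos^\ell\tfrac{\theta}{2})$-piece in \eqref{decomp:T-3} was dropped by sign for $T_3$, and in $J_3$ it reappears with the opposite sign and must be bounded directly (which is elementary since $\mu\vint{v}^\ell$ decays rapidly)---but this does not affect the argument.
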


\begin{proof}
(a) Make a similar decomposition as in~\eqref{decomp:linear}:
\begin{align} \label{decomp:linear-level-set}
  -\int_{\T^3} \int_{\R^3} Q(G, F) \Hl{K} \vint{v}^{\ell} \dv\dx
&= \int_{\T^3} \int_{\R^3} Q \vpran{G, h - \tfrac{K}{\vint{v}^\ell}} \Hl{K} \vint{v}^{\ell} \dv\dx  \nn
\\
& \quad \,
+ \int_{\T^3} \int_{\R^3} Q \vpran{G, \tfrac{K}{\vint{v}^\ell}} \Hl{K} \vint{v}^{\ell} \dv\dx  \nn
\\ 
& \quad \, 
- \int_{\T^3} \int_{\R^3} Q \vpran{G, \mu} \Hl{K} \vint{v}^{\ell} \dv\dx
\ \Denote J_1 + J_2 + J_3 .
\end{align}
Since $J_1, J_2$ have the same forms as $T_1, T_2$ in~\eqref{decomp:linear}, by taking $\delta_4$ with a bound in~\eqref{bound:Eps-2-linear}, we get
\begin{align*}
   J_1 + J_2
&\leq
  -\gamma_0 \vpran{1 - C \sup_x\norm{g}_{L^1_\gamma}}
   \norm{\Hl{K}}^2_{L^2_x L^2_{\gamma/2}}  
   - \frac{c_0 \delta_4}{4} \norm{\Fl{K}}^2_{L^2_x H^s_{\gamma/2}}
\\
& \quad \,
   +    C_\ell \vpran{1 + \sup_x \norm{g}_{L^1_{\ell+\gamma}}}
   \norm{\Hl{K}}^2_{L^2_{x,v}}
  + C_\ell K \vpran{1 + \sup_x\norm{g}_{L^1_{\ell + \gamma}}}
   \norm{\Hl{K}}_{L^1_x L^1_\gamma}.
\end{align*}
Decomposing $J_3$ similarly as $T_3$ in~\eqref{decomp:T-3} and applying Proposition~\ref{prop:Q-F-mu}, inequality~\eqref{ineq:trilinear-1} in Proposition~\ref{prop:commutator}, we have
\begin{align*} 
   J_3
& = \int_{\T^3} \int_{\R^3} Q \vpran{G, -\mu} \Hl{K} \vint{v}^{\ell} \dv\dx \nn
\\
& \leq \int_{\T^3} \int_{\R^3} Q \vpran{G, -\mu \vint{v}^\ell} \Hl{K} \dv\dx
\\
& \quad \,
     +  \iiiint_{\T^3 \times \R^6 \times \Ss^2}
       G_\ast \Hl{K}(v') \frac{(-\mu) \vint{v}^\ell}{\vint{v}^\ell}
       \vpran{\vint{v'}^\ell - \vint{v}^\ell \cos^\ell \tfrac{\theta}{2}} 
       b(\cos\theta) |v - v_\ast|^\gamma \dbmu  \nn
\\
&\quad \,
   + \iiiint_{\T^3 \times \R^6 \times \Ss^2}
       G_\ast \Hl{K}(v') \mu \vint{v}^\ell
       \vpran{1 - \cos^\ell \tfrac{\theta}{2}} 
       b(\cos\theta) |v - v_\ast|^\gamma \dbmu
\\
& \leq
  C_\ell \vpran{1 +  \sup_x \norm{g}_{L^1_{\ell + \gamma}}}
  \norm{\Hl{K}}_{L^1_x L^1_\gamma},
\end{align*}
where note that inequality~\eqref{ineq:trilinear-1} in Proposition~\ref{prop:commutator} does not require positivity of the functions in the integrand. Estimate in part (a) is a combination of the bounds for $J_1, J_2, J_3$.

\medskip
\Ni (b) Decompose the integral in part (c) in a similar way as in~\eqref{decomp:L-R}:
\begin{align*}
   - \int_{\T^3} \int_{\R^3} L_{\alpha}(F) \Hl{K} \vint{v}^{\ell} \dv\dx
& = \iint_{\T^3 \times \R^3}
   \vpran{\vint{v}^\ell \Hl{K}\vpran{\vint{v}^{2\alpha}  - \nabla_v \cdot (\vint{v}^{2\alpha} \nabla_v )} \mu} \dv\dx  \nn
\\
& \quad \,
     + \iint_{\T^3 \times \R^3}
   \vpran{- \vint{v}^\ell \Hl{K}\vpran{\vint{v}^{2\alpha}  - \nabla_v \cdot (\vint{v}^{2\alpha} \nabla_v )} \frac{K}{\vint{v}^\ell}} \dv\dx  \nn
\\
& \quad \,
  +  \iint_{\T^3 \times \R^3}
   \vpran{- \vint{v}^\ell \Hl{K}\vpran{\vint{v}^{2\alpha}  - \nabla_v \cdot (\vint{v}^{2\alpha} \nabla_v )} \vpran{h - \frac{K}{\vint{v}^\ell}}} \dv\dx.  \nn
\end{align*}
It is then clear that estimates for the three terms above are similar to those for $T^1_R$, $T^2_R$ and $T^3_R$ in Proposition~\ref{thm:L2-level-set}, since they rely on the absolute values of the terms. Hence we obtain a similar bound. 
\end{proof}

\subsection{A level sets estimate for the $L^{1}$-norm of the collisional operator}
In this part we estimate an $L^1$-norm related to $Q(G, F)$, which provides the basis for a later application of the averaging lemma. 
By subtracting $K$ from $f\,\vint{v}^{\ell}$ and multiplying equation \eqref{BEe1-1} by $\Fl{K}$, we obtain that
\begin{align}\label{e2}
\del_{t}(\Fl{K})^{2} + v\cdot\nabla_{x}(\Fl{K})^{2} = 2\,\tilde{Q}(G,F)\,\vint{v}^{\ell}\,\Fl{K},\qquad (t,x,v)\in (0,\infty)\times\T^{3}\times\R^{3}.
\end{align}
When applying the averaging lemma to the level sets in the next section, it will be important to estimate
\begin{align*}
   \int_0^T \int_{\T^3} \int_{\R^3}
    \Big|\vint{v}^{j}(1-\Delta_{v})^{-\kappa/2}\big( \tilde{Q}(G,F)\,\vint{v}^{\ell}\,\Fl{K} \big)\Big|\dv\dx\dt,
\qquad 
   j,\,\ell \geq 0, \ \ \kappa \geq 0, \ \
  T > 0. 
\end{align*}

One key observation is that the dominant part of the integrand above is its positive part.
\begin{lem}\label{L1}
Let $(F, f)$ be a pair satisfying the linearized Boltzmann equation \eqref{BEe1-1}.  Then, for any $j,\ell\geq0$, $\kappa\geq0$, $K\geq0$ and $0 \leq T_1 < T_2 < T$, it follows that
\begin{align*}
& \quad \, 
   \int_{T_1}^{T_2} \int_{\T^3} \abs{ \vint{v}^{j}(1-\Delta_{v})^{-\kappa/2} 
   \vpran{\tilde{Q}(G,F)\,\vint{v}^{\ell}\,\Fl{K}}(\cdot,\cdot,v)} \dx\dt 
\\
&\leq  
   \tfrac{1}{2}\int_{\T^{3}} \vint{v}^{j}(1-\Delta_{v})^{-\kappa/2}(\Fl{K})^{2}(T_1, \cdot, v)\dx 
\\
&\quad \,  
  + 2 \int_{T_1}^{T_2} \int_{\T^3} \Big[ \vint{v}^{j} (1-\Delta_{v})^{-\kappa/2}\big( \tilde{Q}(G,F)\,\vint{v}^{\ell}\,\Fl{K} \big)(\cdot,\cdot,v) \Big]^{+} \dx\dt,\qquad \forall\, v\in\R^{3},
\end{align*}
where $[\cdot]^+$ denotes the positive part of the term. 
\end{lem}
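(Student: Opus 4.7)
My plan is to exploit the elementary identity $|a|=2a^{+}-a$ in order to trade the absolute value for its positive part plus a signed integral that can be evaluated directly using equation~\eqref{e2}. Concretely, setting
\begin{align*}
\phi(t,x,v) := \vint{v}^{j}(1-\Delta_{v})^{-\kappa/2}\bigl(\tilde Q(G,F)\,\vint{v}^{\ell}\,\Fl{K}\bigr)(t,x,v),
\end{align*}
the pointwise identity immediately yields
\begin{align*}
\int_{T_1}^{T_2}\!\!\int_{\T^{3}}|\phi|\dx\dt
  = 2\int_{T_1}^{T_2}\!\!\int_{\T^{3}}[\phi]^{+}\dx\dt
     -\int_{T_1}^{T_2}\!\!\int_{\T^{3}}\phi\dx\dt,
\end{align*}
so the task reduces to bounding the last (signed) term from above by $\tfrac{1}{2}\int_{\T^{3}}\vint{v}^{j}(1-\Delta_{v})^{-\kappa/2}(\Fl{K})^{2}(T_{1},\cdot,v)\dx$.

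Next I would apply the operator $\mathcal{T}:=\vint{v}^{j}(1-\Delta_{v})^{-\kappa/2}$ to both sides of~\eqref{e2}. Since $\mathcal{T}$ acts only in the $v$-variable it commutes with $\del_{t}$, and one obtains
\begin{align*}
\del_{t}\bigl[\mathcal{T}(\Fl{K})^{2}\bigr]
   + \mathcal{T}\bigl(v\cdot\nabla_{x}(\Fl{K})^{2}\bigr)
= 2\phi.
\end{align*}
Integrating in $x\in\T^{3}$ and then in $t\in[T_{1},T_{2}]$, and swapping $\mathcal{T}$ with $\int_{\T^{3}}\dx$ by Fubini (valid since $\mathcal{T}$ is a convolution in $v$ composed with multiplication by a function of $v$ alone), the transport term vanishes by periodicity: $\int_{\T^{3}}v\cdot\nabla_{x}(\Fl{K})^{2}\dx = 0$ for each fixed $v$. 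This produces the telescoping identity
\begin{align*}
2\int_{T_1}^{T_2}\!\!\int_{\T^{3}}\phi\dx\dt
= \int_{\T^{3}}\mathcal{T}(\Fl{K})^{2}(T_{2},x,v)\dx - \int_{\T^{3}}\mathcal{T}(\Fl{K})^{2}(T_{1},x,v)\dx.
\end{align*}

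To close the argument I would use that $(1-\Delta_{v})^{-\kappa/2}$ is convolution with the (strictly positive) Bessel kernel of order $\kappa$, so that $\mathcal{T}(\Fl{K})^{2}\geq 0$ pointwise; the $T_{2}$-boundary term may therefore be discarded to obtain the desired upper bound on the signed integral. Substituting back into the identity from the first step gives the stated inequality pointwise in $v$. The only subtle point is the commutation/cancellation of the transport term; once one justifies via Fubini that $\mathcal{T}$ commutes with $\int_{\T^{3}}\dx$, the rest is routine, and the positivity of the Bessel kernel (standard, cf.\ Stein) is what allows the $T_{2}$ term to be dropped.
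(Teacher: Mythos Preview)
Your proof is correct and follows essentially the same route as the paper: integrate \eqref{e2} over $\T^{3}\times[T_{1},T_{2}]$ (the transport term drops by periodicity), apply the positivity-preserving operator $\vint{v}^{j}(1-\Delta_{v})^{-\kappa/2}$, and discard the non-negative boundary term at $T_{2}$. The only cosmetic difference is that you use the identity $|a|=2a^{+}-a$ directly, whereas the paper writes $|\CalG|=\CalG_{+}+\CalG_{-}$ and bounds $\CalG_{-}$; these are trivially equivalent reformulations of the same argument.
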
   
\begin{proof}
First we fix $v \in \R^3$ and integrate \eqref{e2} in $(t,x)$ to obtain that
\begin{align}\label{e3}
   \int_{\T^{3}} (\Fl{K})^{2}(T_2, x, v) \dx 
= \int_{\T^{3}} (\Fl{K})^{2}(T_1, x, v) \dx 
    + 2 \int_{T_1}^{T_2} \!\! \int_{\T^3} \tilde{Q}(G,F)(t,x,v) \vint{v}^{\ell}\,\Fl{K}(t,x,v) \dx\dt, 
\end{align}
for any $v \in \R^3$. 
\smallskip
\noindent
An application of the Bessel potential in velocity to \eqref{e3} then leads us to
\begin{align}\label{e5}
  0
&\leq
   \int_{\T^{3}} \vint{v}^{j}(1-\Delta_{v})^{-\kappa/2} (\Fl{K})^{2}(T, \cdot, v)\dx   \nn
\\
&= \int_{\T^{3}}\vint{v}^{j}(1-\Delta_{v})^{-\kappa/2}(\Fl{K})^{2}(0, \cdot, v)\dx   \nn
\\
& \quad \, 
   + 2\int_{T_1}^{T_2} \int_{\T^3}\vint{v}^{j}(1-\Delta_{v})^{-\kappa/2}\big( \tilde{Q}(G,F)\,\vint{v}^{\ell}\,\Fl{K} \big)(\cdot,\cdot,v)\, \dx\dt,\qquad \forall\, v\in\R^{3}.
\end{align} 
Hence, if we denote
\begin{align*}
   \CalG=\vint{v}^{j}(1-\Delta_{v})^{-\kappa/2}\big( \tilde{Q}(G,F)\,\vint{v}^{\ell}\,\Fl{K} \big), 
\end{align*}
and $\CalG_-$ and $\CalG_+$ as its negative and positive parts respectively, then for any $v \in \R^3$, 
\begin{equation*}
   \int_{T_1}^{T_2} \int_{\T^3} \CalG_{-}(t,x,v)\,\dx\dt
\leq 
   \tfrac{1}{2}\int_{\T^{3}}\vint{v}^{j}(1-\Delta_{v})^{-\kappa/2}(\Fl{K})^{2}(0, x, v)\dx 
   + \int_{T_1}^{T_2} \int_{\T^3} \CalG_{+}(t,x,v)\,\dx\dt.
\end{equation*}
We thereby conclude that
\begin{align*}
& \quad \, 
   \int_{T_1}^{T_2} \int_{\T^3} \abs{\vint{v}^{j}(1-\Delta_{v})^{-\kappa/2} \vpran{\tilde{Q}(G,F)\,\vint{v}^{\ell}\,\Fl{K}} (\cdot,\cdot,v)} \dx\dt 
\\
&= \int_{T_1}^{T_2} \int_{\T^3} \CalG_{+}(\cdot,\cdot,v)  \dx\dt
      + \int_{T_1}^{T_2} \int_{\T^3}\CalG_{-}(\cdot,\cdot,v) \dx\dt 
\\
&\leq 
   \tfrac{1}{2}\int_{\T^{3}}\vint{v}^{j}(1-\Delta_{v})^{-\kappa/2}(\Fl{K})^{2}(T_1, \cdot, v)\dx 
   + 2\,\int_0^T \int_{\T^3} \CalG_{+}(\cdot,\cdot,v) \dx\dt 
\\
&= \tfrac{1}{2}\int_{\T^{3}}\vint{v}^{j}(1-\Delta_{v})^{-\kappa/2}(\Fl{K})^{2}(T_1, \cdot, v)\dx 
\\
&\quad  \,
  + 2\int_{T_1}^{T_2} \int_{\T^3}\Big[ \vint{v}^{j}(1-\Delta_{v})^{-\kappa/2}\big( \tilde{Q}(G,F)\,\vint{v}^{\ell}\,\Fl{K} \big)(\cdot,\cdot,v) \Big]^{+} \dx\dt,\qquad \forall\, v\in\R^{3},
\end{align*}
which proves the lemma.
\end{proof}

The counterpart for $h = -f$ states
\begin{lem}\label{L1-h}
Let $h$ be a solution to the equation
\begin{align*}
   \del_t h + v \cdot \nabla_x h 
= Q(G, -\mu + h) + \Eps L_\alpha (-\mu + h) 
= \tilde Q(G, -\mu + h). 
\end{align*}
Then, for any $0 \leq T_1 < T_2 < T$, $j,\ell\geq0$, $\kappa\geq0$, $K\geq0$, it follows that
\begin{align*}
& \quad \, 
   \int_{T_1}^{T_2} \int_{\T^3} \abs{ \vint{v}^{j}(1-\Delta_{v})^{-\kappa/2} 
   \vpran{\tilde{Q}(G, -\mu + h)\,\vint{v}^{\ell}\,\Hl{K}}(\cdot,\cdot,v)} \dx\dt 
\\
&\leq  
   \tfrac{1}{2}\int_{\T^{3}} \vint{v}^{j}(1-\Delta_{v})^{-\kappa/2}(\Hl{K})^{2}(0, \cdot, v)\dx 
\\
&\quad \,  
  + 2 \int_{T_1}^{T_2} \int_{\T^3} \Big[ \vint{v}^{j} (1-\Delta_{v})^{-\kappa/2}\big( \tilde{Q}(G, -\mu + h)\,\vint{v}^{\ell}\,\Hl{K} \big)(\cdot,\cdot,v) \Big]^{+} \dx\dt,\qquad \forall\, v\in\R^{3},
\end{align*}
where $[\cdot]^+$ denotes the positive part of the term. 
\end{lem}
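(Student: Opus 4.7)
The plan is to mirror the proof of Lemma~\ref{L1}, with $f$ replaced by $h$ and the source $\tilde{Q}(G,F)$ replaced by $\tilde{Q}(G,-\mu+h)$. The starting point is the pointwise identity on the set $\{h\vint{v}^{\ell} - K \geq 0\}$: since $K$ is a constant, $\partial_t \Hl{K} = \vint{v}^{\ell}\partial_t h$ and $v\cdot\nabla_x \Hl{K} = \vint{v}^{\ell}\,v\cdot\nabla_x h$ there, so multiplying the transport equation $\partial_t h + v\cdot\nabla_x h = \tilde{Q}(G,-\mu+h)$ by $\vint{v}^{\ell}\Hl{K}$ yields
\begin{align*}
\tfrac{1}{2}\partial_t (\Hl{K})^2 + \tfrac{1}{2} v\cdot\nabla_x (\Hl{K})^2 = \tilde{Q}(G,-\mu+h)\,\vint{v}^{\ell}\,\Hl{K},
\end{align*}
which also trivially holds on the complementary set where $\Hl{K}=0$, hence on all of $(0,T)\times\T^3\times\R^3$.

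Next, for fixed $v\in\R^3$, I would integrate this identity in $(t,x)$ over $[T_1,T_2]\times\T^3$. The transport term vanishes by periodicity in $x$, leaving
\begin{align*}
\int_{\T^3}(\Hl{K})^2(T_2,x,v)\dx = \int_{\T^3}(\Hl{K})^2(T_1,x,v)\dx + 2\int_{T_1}^{T_2}\!\!\int_{\T^3}\tilde{Q}(G,-\mu+h)\,\vint{v}^{\ell}\,\Hl{K}\dx\dt.
\end{align*}
Applying the weight $\vint{v}^{j}(1-\Delta_v)^{-\kappa/2}$ (which acts only in $v$ and commutes with integration in $(t,x)$) and using that the resulting left-hand side is non-negative — this is where one invokes that for $\kappa\geq0$ the Bessel kernel is pointwise non-negative, so $(1-\Delta_v)^{-\kappa/2}(\Hl{K})^2 \geq 0$ — I obtain
\begin{align*}
0 \leq \int_{\T^3}\vint{v}^{j}(1-\Delta_v)^{-\kappa/2}(\Hl{K})^2(T_1,\cdot,v)\dx + 2\int_{T_1}^{T_2}\!\!\int_{\T^3}\CalG(\cdot,\cdot,v)\dx\dt,
\end{align*}
where $\CalG := \vint{v}^{j}(1-\Delta_v)^{-\kappa/2}\bigl(\tilde{Q}(G,-\mu+h)\,\vint{v}^{\ell}\,\Hl{K}\bigr)$. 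Splitting $\CalG = \CalG_+ - \CalG_-$ and moving $\CalG_-$ to the left gives
\begin{align*}
\int_{T_1}^{T_2}\!\!\int_{\T^3}\CalG_-\dx\dt \leq \tfrac{1}{2}\int_{\T^3}\vint{v}^{j}(1-\Delta_v)^{-\kappa/2}(\Hl{K})^2(T_1,\cdot,v)\dx + \int_{T_1}^{T_2}\!\!\int_{\T^3}\CalG_+\dx\dt,
\end{align*}
and adding the trivial identity $\int\int\CalG_+ = \int\int\CalG_+$ to both sides yields the claimed bound on $\int\int|\CalG|\dx\dt$.

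There is essentially no new obstacle beyond Lemma~\ref{L1}: the only points to verify are the chain-rule identity on the level set $\{h\vint{v}^{\ell}\geq K\}$, which works precisely because $K$ is a pure constant annihilated by $\partial_t$ and $v\cdot\nabla_x$, and the positivity preservation of the Bessel potential acting on $(\Hl{K})^2$. Both are straightforward, so the proof is a direct transcription of the argument for Lemma~\ref{L1} with the appropriate change of unknown.
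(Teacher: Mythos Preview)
Your proof is correct and follows exactly the approach the paper intends: Lemma~\ref{L1-h} is stated in the paper without a separate proof, as the direct transcription of Lemma~\ref{L1} with $f$ replaced by $h$ and $\tilde Q(G,F)$ by $\tilde Q(G,-\mu+h)$, and your argument reproduces that transcription faithfully. Note also that the bound you obtain involves $(\Hl{K})^{2}(T_1,\cdot,v)$, which matches Lemma~\ref{L1} and is what the argument actually yields; the ``$0$'' in the statement of Lemma~\ref{L1-h} appears to be a typo for $T_1$.
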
 

The remainder of this subsection focuses on proving the following theorem:
\begin{prop}\label{T1}
Suppose $G= \mu + g \geq 0$ and $F = \mu + f$ satisfy equation \eqref{BEe1-1}.  Then, for any 
\begin{align*}
  [T_1, T_2] \subseteq [0, T),
\quad
   s\in(0,1),
\quad
    \epsilon \in [0, 1],
\quad 
   j \geq 0,
\quad 
   \ell > 8 + \gamma,
\quad 
   \kappa > 2, 
\quad
   K > 0,
\end{align*} 
it holds that
\begin{align}\label{Qlevelaverage-2}
& \quad \,
    \int_{T_1}^{T_2} \int_{\T^3} \int_{\R^3} \abs{\vint{v}^{j}(1 - \Delta_{v})^{-\kappa/2}\big( \tilde{Q}(G,F)\,\vint{v}^{\ell}\,\Fl{K} \big)} \dv\dx\dt  \nn
\\
&\leq C\,\| \vint{v}^{j/2} \Fl{K}(T_1, \cdot, \cdot) \|^{2}_{L^{2}_{x,v}}
   + C_\ell \vpran{1 + \sup_{t,x} \norm{g}_{L^1_{\ell+\gamma}}}
     \norm{\Fl{K}}_{L^2_{t,x} L^2_j}^2 \nn
\\
& \quad \, 
   + C\vpran{1 + \sup_{t,x} \norm{g}_{L^1_{3+\gamma+2s} \cap L^2}}
  \norm{\Fl{K}}_{L^2_{t,x} H^{s}_{\gamma/2}}^2 
  + C\vpran{1 + \sup_{t,x} \norm{g}_{L^1_{j+2+\gamma}}}
  \norm{\Fl{K}}_{L^2_{t, x} L^2_{j + \gamma/2 + 1}}^2  \nn
\\
& \quad \,
  +   C (1+K) \vpran{1 + \sup_{t,x} \norm{g}_{L^1_{\ell + \gamma}}}
   \norm{\Fl{K}}_{L^1_{t, x} L^1_{j+\gamma}},
\end{align}
where $C, C_\ell$ are independent of $\Eps$ and $T_1, T_2$. 
Identical estimate holds for $\tilde Q(G, -\mu + h)$ with $\Fl{K}$ replaced by ~$\Hl{K}$.
\end{prop}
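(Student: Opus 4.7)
The starting point is Lemma~\ref{L1} applied at each fixed $v\in\R^3$ and then integrated in $v$. Writing $\CalG(t,x,v)=\vint{v}^j(1-\Delta_v)^{-\kappa/2}\big(\tilde Q(G,F)\vint{v}^{\ell}\Fl{K}\big)$, this reduces the left-hand side of \eqref{Qlevelaverage-2} to two pieces: an initial-data term $\tfrac12\iint_{\T^3\times\R^3}\vint{v}^j (1-\Delta_v)^{-\kappa/2}(\Fl{K})^2(T_1,\cdot,\cdot)\dv\dx$, and the integrated positive part $2\int_{T_1}^{T_2}\iint_{\T^3\times\R^3}[\CalG]_+\dv\dx\dt$. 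For the initial-data piece, the Bessel kernel $\mathcal{B}_\kappa$ is non-negative, integrable and has exponential decay, so the elementary submultiplicative bound $\vint{v}^j\le C\vint{v-w}^j\vint{w}^j$ combined with $\|\vint{\cdot}^j\mathcal{B}_\kappa\|_{L^1}<\infty$ gives
\begin{align*}
   \iint_{\T^3\times\R^3} \vint{v}^j (1-\Delta_v)^{-\kappa/2}(\Fl{K})^2(T_1)\dv\dx \le C\norm{\vint{v}^{j/2}\Fl{K}(T_1)}_{L^2_{x,v}}^2,
\end{align*}
which is exactly the first term on the right-hand side of \eqref{Qlevelaverage-2}.

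For the positive-part piece I would use duality: $\iiint[\CalG]_+\dv\dx\dt = \sup_{0\le\phi\le 1}\iiint \CalG\,\phi\dv\dx\dt$ over measurable $\phi(t,x,v)\in[0,1]$. Moving the self-adjoint operator $(1-\Delta_v)^{-\kappa/2}$ onto the test function produces
\begin{align*}
   \iiint \CalG\,\phi\dv\dx\dt = \iiint \tilde Q(G,F)\,\vint{v}^{\ell}\Fl{K}\,\Psi\dv\dx\dt,\qquad \Psi := (1-\Delta_v)^{-\kappa/2}\bigl(\vint{v}^j\phi\bigr).
\end{align*}
Since $\mathcal{B}_\kappa$, and for $\kappa>2$ also $\nabla_v\mathcal{B}_\kappa$, are integrable on $\R^3$ with exponential decay, the same submultiplicative bound on $\vint{v}^j$ yields the pointwise estimates $|\Psi(v)|\le C\vint{v}^j$ and $|\nabla_v\Psi(v)|\le C\vint{v}^j$ uniformly in $(t,x)$. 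Hence $\Psi/\vint{v}^j\in W^{1,\infty}_v$ with a universal bound; this is exactly the regularity that will be required by Proposition~\ref{prop:strong-sing-cancellation}(c).

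It remains to estimate $\iiint \tilde Q(G,F)\vint{v}^{\ell}\Fl{K}\,\Psi\dv\dx\dt$. Split $\tilde Q(G,F)=Q(G,\mu+f)+\Eps L_\alpha(\mu+f)$. The bilinear piece is handled in direct analogy with the proof of Proposition~\ref{thm:L2-level-set}(a), with the test function $\vint{v}^{\ell}\Fl{K}$ replaced by $\vint{v}^{\ell}\Fl{K}\Psi$: I would decompose $Q(G,\mu+f)=Q(G,f-K\vint{v}^{-\ell})+Q(G,K\vint{v}^{-\ell})+Q(G,\mu)$, use the weak form of $Q$, and in each piece invoke the weighted-commutator identity \eqref{ineq:trilinear-1} of Proposition~\ref{prop:commutator} together with Proposition~\ref{prop:strong-sing-cancellation}. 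The coercive $L^2_{\gamma/2}$ contribution, which had a negative sign in Proposition~\ref{thm:L2-level-set}(a), must now be retained in absolute value and, together with the trilinear residuals, contributes the $\|\Fl{K}\|^2_{L^2_j}$ and $\|\Fl{K}\|^2_{L^2_x H^s_{\gamma/2}}$ terms on the right-hand side of \eqref{Qlevelaverage-2}. The cancellation inherent to the strong singularity is extracted by applying Proposition~\ref{prop:strong-sing-cancellation}(c) to the smooth factor $\Psi/\vint{v}^j$, producing precisely the $\|\Fl{K}\|^2_{L^2_{j+\gamma/2+1}}$ term. The $Q(G,K\vint{v}^{-\ell})$ and $Q(G,\mu)$ pieces are bounded as in \eqref{decomp:T-3} and yield the $(1+K)\|\Fl{K}\|_{L^1_x L^1_{j+\gamma}}$ contribution. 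Finally, the $\Eps L_\alpha$ part is handled by integration by parts in $v$ using $|\Psi|,|\nabla_v\Psi|\lesssim\vint{v}^j$, exactly as in the proof of Proposition~\ref{thm:L2-level-set}(b), and its output is absorbed into the stated right-hand side.

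The principal technical obstacle is the careful bookkeeping of velocity weights through the various weak-form decompositions and regular/singular changes of variables, so that the moments appearing in the final bound match the stated exponents: in particular the somewhat unusual moment $j+\gamma/2+1$ appears only after Proposition~\ref{prop:strong-sing-cancellation}(c) is invoked with $\Psi/\vint{v}^j\in W^{1,\infty}$, while the $H^s_{\gamma/2}$ norm arises (for $s\ge 1/2$) from Proposition~\ref{prop:strong-sing-cancellation}(a) combined with an interpolation in the spirit of the proof of Proposition~\ref{bilinear-zero-level}. The estimate for the equation satisfied by $h=-f$ follows line-by-line by replacing Lemma~\ref{L1} with Lemma~\ref{L1-h} and Proposition~\ref{thm:L2-level-set} with Proposition~\ref{thm:level-set-minus-f}.
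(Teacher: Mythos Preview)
Your overall architecture matches the paper's: reduce via Lemma~\ref{L1}, move the Bessel potential onto the indicator (your $\phi$; the paper takes $\phi=\One_{A_K}$ directly), and then mimic the level-set decomposition of Proposition~\ref{thm:L2-level-set} with the extra factor $\Psi$ (the paper calls it $W_K$). However, there are two genuine gaps.

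\textbf{First, $W^{1,\infty}$ on $\Psi$ is not enough; you need $W^{2,\infty}$.} When you carry out the Cauchy--Schwarz step in the analogue of $T_1$ with the test function $\vint{v}^\ell\Fl{K}\Psi$, an extra term appears that has no counterpart in Proposition~\ref{thm:L2-level-set}(a), namely
\[
\tfrac{1}{2}\iiiint_{\T^3\times\R^6\times\Ss^2} G_\ast\,(\Fl{K})^2\,(\Psi'-\Psi)\,b(\cos\theta)\,|v-v_\ast|^\gamma\,\dbmu.
\]
For $s\in[1/2,1)$ this integral is singular and the only available cancellation is the $\sigma$-symmetry of Proposition~\ref{prop:symmetry-cancel}, which requires control of $\nabla^2\Psi$. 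Proposition~\ref{prop:strong-sing-cancellation}(c) is inapplicable here: it treats integrals that already carry an extra $\sin\tfrac{\theta}{2}$ from the commutator structure, not the raw difference $\Psi'-\Psi$. The paper proves $|\Psi|+|\nabla\Psi|+|\nabla^2\Psi|\le C\vint{v}^j$ for $\kappa>2$ (this is exactly why the hypothesis $\kappa>2$ is there) and then invokes Proposition~\ref{prop:symmetry-cancel}; this is the true source of the $\norm{\Fl{K}}_{L^2_{t,x}L^2_{j+\gamma/2+1}}^2$ term.

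\textbf{Second, the $\Eps L_\alpha$ contribution is not ``exactly as in Proposition~\ref{thm:L2-level-set}(b)''.} Because $\Psi$ depends on $v$, the integration by parts produces a leading-order remainder
\[
\tfrac{1}{2}\iint_{\T^3\times\R^3}\vint{v}^{2\alpha}(\Fl{K})^2\,\Delta_v\Psi\,\dv\dx,
\]
which carries the full weight $\vint{v}^{2\alpha}$ and cannot be absorbed by the bound $|\nabla_v\Psi|\lesssim\vint{v}^j$ alone. The paper closes this by exploiting the structural fact that $(1-\Delta_v)^{1-\kappa/2}(\vint{v}^j\phi)\ge 0$ pointwise (positivity of the Bessel potential), hence $\Delta_v\Psi\le\Psi$ pointwise; this makes the remainder no larger than $\tfrac{1}{2}$ of the dissipation term and the argument closes with a coefficient margin. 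You will need this positivity observation, not merely the size bounds on $\Psi$ and $\nabla_v\Psi$.
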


\begin{proof}
First note that for any $\kappa \geq 0$,
\begin{align*}
    \int_{\R^{3}}\int_{\T^{3}}\vint{v}^{j}(1-\Delta_{v})^{-\kappa/2}(\Fl{K})^{2}(T_1, x, v) \dx\dv 
\leq
  C  \norm{\vint{v}^{j/2} \Fl{K}(T_1, \cdot, \cdot)}^{2}_{L^{2}_{x,v}},
\end{align*}
which explains the first term in the right side of \eqref{Qlevelaverage-2}.
%
Thus, using Lemma \ref{L1} we have that for $j\geq0$, $\ell\geq0$, $\kappa\geq0$,
\begin{align*}
& \quad \, 
    \int_{T_1}^{T_2} \int_{\T^3} \int_{\R^3} \abs{\vint{v}^{j}(1-\Delta_{v})^{-\kappa/2} \vpran{\tilde{Q}(G , F) \vint{v}^{\ell} \Fl{K}}} \dv\dx\dt 
   - C\,\| \vint{v}^{j/2} \Fl{K}(T_1, \cdot, \cdot) \|^{2}_{L^{2}_{x,v}} 
\\
&\leq 
   2 \int_{T_1}^{T_2} \int_{\T^3} \int_{\R^3} \Big[\vint{v}^{j} (1-\Delta_{v})^{-\kappa/2} \vpran{\tilde{Q}(G,F)\,\vint{v}^{\ell} \Fl{K}}\Big]^{+} \dv\dx\dt
\\
& = 2 \int_{T_1}^{T_2} \int_{\T^3} \int_{\R^3} \vint{v}^{j} (1-\Delta_{v})^{-\kappa/2} \vpran{\tilde{Q}(G,F)\,\vint{v}^{\ell} \Fl{K}} \One_{A_{K}} \dv\dx\dt
\\
&= 2 \int_{T_1}^{T_2} \int_{\T^3} \int_{\R^3} \tilde{Q}(G,F)\,\vint{v}^{\ell} \Fl{K} (1-\Delta_{v})^{-\kappa/2} \vpran{\vint{v}^{j} \One_{A_{K}}} \dv\dx\dt,
\end{align*}  
where $A_K$ is the set given by 
\begin{align} \label{def:set-A-K}
   A_{K} = \big\{(t,x,v)\in (T_1, T_2) \times \T^3 \times \R^3\, |\, (1-\Delta_{v})^{-\kappa/2}\big(\tilde{Q}(G,F)\,\vint{v}^{\ell}\,\Fl{K}\big)\geq0 \big\}.  
 \end{align} 
Using that $\tilde{Q}(G,F) = Q(G,F) + \epsilon\,L_{\alpha}(F)$,
we have
\begin{align*}
& \quad \,
    \int_{T_1}^{T_2} \int_{\T^3} \int_{\R^3} \Big| \vint{v}^{j}(1 -  \Delta_{v})^{-\kappa/2}\big( \tilde{Q}(G,F)\,\vint{v}^{\ell}\,\Fl{K} \big)\Big| \dv\dx\dt - C\,\| \vint{v}^{j/2} \Fl{K}(T_1, \cdot, \cdot) \|^{2}_{L^{2}_{x,v}} 
\\
&\leq 
   2\int_{T_1}^{T_2} \int_{\T^3} \int_{\R^3} Q(G,F)\,\vint{v}^{\ell}\,\Fl{K}\, (1-\Delta_{v})^{-\kappa/2}\big(\vint{v}^{j}\One_{A_{K}}\big) \,\dv\dx\dt 
\\
&\quad \, 
   + 2\,\epsilon\int_{T_1}^{T_2} \int_{\T^3} \int_{\R^3} L_{\alpha}(F)\,\vint{v}^{\ell}\,\Fl{K}\, (1-\Delta_{v})^{-\kappa/2}\big(\vint{v}^{j}\One_{A_{K}}\big) \,\dv\dx\dt.
\end{align*}
Denote in the following
\begin{align} \label{def:W-K}
  W_{K} = (1-\Delta_{v})^{-\kappa/2}\vpran{\vint{v}^{j}\One_{A_{K}}}\geq 0. 
\end{align} 
Then, for $\kappa > 2$ it holds for all derivatives up to second order that 
\begin{align} \label{bound:W-K-deriv}
\big| W_{K}(v) \big| +  \big| \nabla_{i} W_{K}(v) \big| +  \big| \nabla^{2}_{i,k} W_{K}(v) \big| \leq C\vint{v}^{j}\,,\qquad i, \,k = 1,2,3\,,   
\end{align}
with $C$ independent of $K$.  In fact, noting that the $\kappa^{th}$ Bessel kernel $\CalB_\kappa (w)$ in dimension $d$ satisfies
\begin{align*}
0 \le \CalB_\kappa (w) = C_{d, \kappa}\left\{ \begin{array}{ll}
|w|^{\kappa-d}(1 + o(1)), &\enskip \mbox{if} \enskip 0< \kappa <d, \\
\log \frac{1}{|w|} ( 1 + o(1)), &\enskip \mbox{if} \enskip \kappa = d,\\
(1+o(1)),& \enskip \mbox{if} \enskip \kappa > d,
\end{array} \right. \enskip \mbox{as} \enskip |w| \rightarrow 0,
\end{align*}
and 
\begin{align*}
0 \le \CalB_\kappa (w) = C'_{d, \kappa} \frac{e^{-|w|}}{ |w|^{(d+1-\kappa)/2}}( 1 + o(1)), \enskip \mbox{as} \enskip |w| \rightarrow \infty,
\, \mbox{(see, \cite[(4.2), (4.3)]{AS1961})},
\end{align*}
we have
\begin{align*}
\vint{v}^{-j} |W_K(v)| &\le \vint{v}^{-j} \left(\int_{\{|w|\le 1\}} \CalB_\kappa (w)\vint{v-w}^j\dw + \int_{\{|w| \ge 1\}} 
\CalB_\kappa (w)\vint{v}^j \vint{w}^j\dw\right) \le C.
\end{align*}
Since the inequality $|\nabla_i \CalB_\kappa (w) | \le C'(\CalB_\kappa (w) + \CalB_{\kappa-1} (w))$ holds (see \cite[(4.5)]{AS1961}),
we have the estimate of the first-order derivative. The estimate of second order is also obvious because similar inequality holds
(see (4.4), (4.1) and (3.7) of \cite{AS1961}). In this way, we are led to estimate
\begin{align} \label{two-terms-estimate-1}
 & \quad \, 
    \int_{T_1}^{T_2} \int_{\T^3} \int_{\R^3} \abs{\vint{v}^{j}(1 -  \Delta_{v})^{-\kappa/2}\big( \tilde{Q}(G,F)\,\vint{v}^{\ell}\,\Fl{K} \big)} \dv\dx\dt 
    - C\,\| \vint{v}^{j/2} \Fl{K}(T_1, \cdot, \cdot) \|^{2}_{L^{2}_{x,v}}   \nn
\\
&\leq 
   2 \int_{T_1}^{T_2} \int_{\T^3} \int_{\R^3} Q(G,F)\,\vint{v}^{\ell} \Fl{K}W_{K} \dv\dx\dt 
   + 2 \epsilon\int_{T_1}^{T_2} \int_{\T^3} \int_{\R^3} L_{\alpha}(F) \vint{v}^{\ell} \Fl{K} W_{K} \,\dv\dx\dt   \nn
\\
&\Denote 
   \int_{T_1}^{T_2} \CalQ \dt + \epsilon\, \int_{T_1}^{T_2} T_R^+ \dt.
\end{align}
We will estimate the main term $\CalQ$ and the regularising linear term $T_R^+$ separately. The proofs align with those for Proposition~\ref{thm:L2-level-set}. We start with $\CalQ$ and write it as
\begin{align} \label{decomp:CalQ}
  \int_{\T^3} \int_{\R^3} Q(G, F) \vint{v}^{\ell} \Fl{K} W_K \dv\dx
&= \int_{\T^3} \int_{\R^3} Q \vpran{G, f - \tfrac{K}{\vint{v}^\ell}} \vint{v}^{\ell} \Fl{K} W_K \dv\dx  \nn
\\
& \quad \,
+ \int_{\T^3} \int_{\R^3} Q \vpran{G, \tfrac{K}{\vint{v}^\ell}} \vint{v}^{\ell} \Fl{K} W_K \dv\dx  \nn
\\
& \quad \, 
+ \int_{\T^3} \int_{\R^3} Q \vpran{G, \mu} \vint{v}^{\ell} \Fl{K} W_K \dv\dx \Denote T_1^+ + T_2^+ + T_3^+.  
\end{align}
Then by the definition of $Q$ and the positivity of $G$, the first term $T_1^+$ satisfies
\begin{align} \label{bound:T-1-plus-1}
  T_1^+
& = \iiiint_{\T^3 \times \R^6 \times \Ss^2}
        G_\ast \vpran{f - \tfrac{K}{\vint{v}^\ell}} 
        \vpran{\Fl{K}(v') W_K(v') \vint{v'}^{\ell} - \Fl{K} W_K \vint{v}^{\ell}} 
        b(\cos\theta) |v - v_\ast|^\gamma \dbmu   \nn
\\
& \leq 
  \iiiint_{\T^3 \times \R^6 \times \Ss^2}
        G_\ast \Fl{K} \frac{1}{\vint{v}^\ell} 
        \vpran{\Fl{K}(v') W_K(v') \vint{v'}^{\ell} - \Fl{K} W_K \vint{v}^{\ell}}
        b(\cos\theta) |v - v_\ast|^\gamma \dbmu.
\end{align}
By Cauchy-Schwarz, the part of the integrand involving $\Fl{K}$ satisfies
\begin{align*}
& \quad \,
   \Fl{K} \frac{1}{\vint{v}^\ell} 
   \vpran{\Fl{K}(v') W_K(v') \vint{v'}^{\ell} - \Fl{K} W_K \vint{v}^{\ell}}
\\
&=  \frac{1}{\vint{v}^\ell} 
   \vpran{\Fl{K}(v) \Fl{K}(v')  W_K(v') \vint{v}^\ell \cos^\ell \tfrac{\theta}{2} 
   - \vpran{\Fl{K}}^2 \vint{v}^\ell W_K} 
\\
& \quad \, 
   + \frac{1}{\vint{v}^\ell} \Fl{K}(v) \Fl{K}(v') W_K(v') 
   \vpran{\vint{v'}^{\ell} - \vint{v}^{\ell} \cos^\ell \tfrac{\theta}{2}} 
\\
& \leq 
   \tfrac{1}{2} \vpran{\vpran{\Fl{K}(v')}^2 W_K(v') \cos^{2\ell} \tfrac{\theta}{2}
     - \vpran{\Fl{K}}^2 W_K}
\\
& \quad \, 
   + \frac{1}{\vint{v}^\ell} \Fl{K}(v) \Fl{K}(v') W_K(v') \vpran{\vint{v'}^{\ell} - \vint{v}^{\ell} \cos^\ell \tfrac{\theta}{2}}
  + \tfrac{1}{2} \vpran{\Fl{K} (v)}^2 \vpran{W_K' - W_K}. 
\end{align*}
Write the upper bound of $T_1^+$ correspondingly as
\begin{align} \label{decomp:T-1-plus-linear}
   T_1^+ \leq T_{1, 1}^+ + T_{1, 2}^+ + T_{1, 3}^+. 
\end{align}
Similar as in the estimates of $T_1$ in~\eqref{bound:T-1-first-1},  the bounds for $T_{1, 1}^+$ and $T_{1, 2}^+$ follow from the regular change of variables, bound~\eqref{ineq:trilinear-1} in Proposition~\ref{prop:commutator} together with Proposition~\ref{prop:strong-sing-cancellation}:
\begin{align} \label{bound:T-1-first-plus}
   T_{1,1}^+ + T_{1,2}^+
&= \tfrac{1}{2} \iiiint_{\T^3 \times \R^6 \times \Ss^2}
      G_\ast \vpran{\vpran{\Fl{K}(v')}^2 W_K(v') \cos^{2\ell} \tfrac{\theta}{2}
     - \vpran{\Fl{K}}^2 W_K} 
        b(\cos\theta) |v - v_\ast|^\gamma \dbmu   \nn
\\
& \quad \,
  + \iiiint_{\T^3 \times \R^6 \times \Ss^2}
        G_\ast \frac{\Fl{K}(v)}{\vint{v}^\ell}  \Fl{K}(v') W_K(v')
        \vpran{\vint{v'}^{\ell} - \vint{v}^{\ell} \cos^\ell \tfrac{\theta}{2}} 
        b(\cos\theta) |v - v_\ast|^\gamma \dbmu   \nn
\\
&\leq
  -\gamma_0 \vpran{1 - C\sup_x\norm{g}_{L^1_\gamma}}
   \norm{\Fl{K} \sqrt{W_K}}^2_{L^2_x L^2_{\gamma/2}}   \nn
\\
& \quad \,
  + C_\ell \vpran{1 + \sup_x \norm{g}_{L^1_{\ell+\gamma}}}
     \norm{\Fl{K}}_{L^2_{x, v}} 
      \norm{\Fl{K} W_K}_{L^2_{x, v}} \nn
\\
& \quad \,
   + C\vpran{1 + \sup_x\norm{g}_{L^1_{3+\gamma+2s} \cap L^2}}
  \norm{\Fl{K}}_{L^2_x H^{s_1}_{\gamma_1/2}}
  \norm{\Fl{K} W_K}_{L^2_x L^2_{\gamma/2}}.  \nn
\end{align}
Inserting the bound of $W_K$ in~\eqref{bound:W-K-deriv}, we get
\begin{align*}
   T_{1,1}^+ + T_{1,2}^+
&\leq
  C_\ell \vpran{1 + \sup_x \norm{g}_{L^1_{\ell+\gamma}}}
     \norm{\Fl{K}}_{L^2_{x, v}} 
      \norm{\Fl{K}}_{L^2_x L^2_j} \nn
\\
& \quad \,
   + C\vpran{1 + \sup_x\norm{g}_{L^1_{3+\gamma+2s} \cap L^2}}
  \norm{\Fl{K}}_{L^2_x H^{s}_{\gamma/2}}^2  \nn
\\
& \quad \,
   + C\vpran{1 + \sup_x\norm{g}_{L^1_{3+\gamma+2s} \cap L^2}}
  \norm{\Fl{K}}_{L^2_x L^2_{j+\gamma/2}}^2.  \nn
\end{align*}
Note that in the estimate above we have combined the cases of the mild and strong singularities. 

\Ni The bound of $T_{1,3}^+$ is derived by using Proposition~\ref{prop:symmetry-cancel}, which gives
\begin{align} 
  T_{1,3}^+
& = \tfrac{1}{2} 
       \iiiint_{\T^3 \times \R^6 \times \Ss^2}
        G_\ast \vpran{\Fl{K} (v)}^2 \vint{v}^j \vpran{W_K' - W_K}
        b(\cos\theta) |v - v_\ast|^\gamma \dbmu   \nn
\\
& \leq 
   C \iiiint_{\T^3 \times \R^6 \times \Ss^2}
       G_\ast \vpran{\Fl{K} (v)}^2 \vint{v}^j
       \vpran{\sup_{|u| \leq |v| + |v_\ast|} \abs{\nabla W_K (u)} 
        + \sup_{|u| \leq |v| + |v_\ast|} \abs{\nabla^2 W_K (u)}}
       |v - v_\ast|^{2+\gamma}    \nn
\\
& \leq 
   C \iiiint_{\T^3 \times \R^6 \times \Ss^2}
       G_\ast \vpran{\Fl{K} (v)}^2 \vint{v}^j
       \vpran{\vint{v}^{j+2+\gamma} + \vint{v_\ast}^{j+2+\gamma}}
       \dbmu   \nn
\\
& \leq
   C\vpran{1 + \sup_x\norm{g}_{L^1_v}}
  \norm{\Fl{K}}_{L^2_x L^2_{j+\gamma/2 + 1}}^2
  + C\vpran{1 + \sup_x\norm{g}_{L^1_{j+2+\gamma}}}
  \norm{\Fl{K}}_{L^2_x L^2_{j/2}}^2.  
\end{align}
Combining the estimates for $T_{1,1}^+, T_{1,2}^+, T_{1,3}^+$, we have
\begin{align} 
   T_1^+
&\leq
  C_\ell \vpran{1 + \sup_x \norm{g}_{L^1_{\ell+\gamma}}}
     \norm{\Fl{K}}_{L^2_x L^2_j}^2 \nn
   + C\vpran{1 + \sup_x\norm{g}_{L^1_{3+\gamma+2s} \cap L^2}}
  \norm{\Fl{K}}_{L^2_x H^{s}_{\gamma/2}}^2  \nn
\\
& \quad \,
  + C\vpran{1 + \sup_x\norm{g}_{L^1_{j+2+\gamma}}}
  \norm{\Fl{K}}_{L^2_x L^2_{j + \gamma/2 + 1}}^2.   
\end{align}

The estimates for $T_2^+$ and $T_3^+$ is similar to those for $T_2$ and $T_3$ in Proposition~\ref{thm:L2-level-set}. In fact comparing the forms of $T_2^+, T_3^+$ with $T_2, T_3$ defined in~\eqref{decomp:linear}, one can see that the only difference is that $\Fl{K}$ in $T_2, T_3$ is now replaced by $\Fl{K}W_{K}$. Since no particular structure of $\Fl{K}$ is used in the bounds of $T_2$ and $T_3$, we simply replace $\Fl{K}$ in those bounds by $\Fl{K}W_{K}$ and obtain
\begin{align*}
  T_2^+ + T_3^+
&= \int_{\T^3} \int_{\R^3} Q \vpran{G, \tfrac{K}{\vint{v}^\ell}} \vint{v}^{\ell} \Fl{K} W_K \dv\dx
+ \int_{\T^3} \int_{\R^3} Q \vpran{G, \mu} \vint{v}^{\ell} \Fl{K}W_K \dv\dx
\\
& \leq
  C (1+K) \vpran{1 + \sup_x\norm{g}_{L^1_{\ell + \gamma}}}
   \norm{\Fl{K}}_{L^1_x L^1_{j+\gamma}}\,.
\end{align*}
The bound of $\CalQ$ is the combination of the bounds of $T_1^+, T_2^+, T_3^+$, which writes
\begin{align}  \label{est:CalQ}
  \CalQ
& \leq 
   C_\ell \vpran{1 + \sup_x \norm{g}_{L^1_{\ell+\gamma}}}
     \norm{\Fl{K}}_{L^2_x L^2_j}^2 \nn
   + C\vpran{1 + \sup_x\norm{g}_{L^1_{3+\gamma+2s} \cap L^2}}
  \norm{\Fl{K}}_{L^2_x H^{s}_{\gamma/2}}^2  \nn
\\
& \quad \,
  + C\vpran{1 + \sup_x\norm{g}_{L^1_{j+2+\gamma}}}
  \norm{\Fl{K}}_{L^2_x L^2_{j + \gamma/2 + 1}}^2
  +   C (1+K) \vpran{1 + \sup_x\norm{g}_{L^1_{\ell + \gamma}}}
   \norm{\Fl{K}}_{L^1_x L^1_{j+\gamma}}. 
\end{align}

Next we estimate $T_R^+$ defined in~\eqref{two-terms-estimate-1}. The estimate follows the same line as the one for $T_R$ in part (b) of Proposition~\ref{thm:L2-level-set}. We start with a similar decomposition as in~\eqref{decomp:L-R}:
\begin{align} \label{decomp:L-R-plus}
   \tfrac{1}{2} T_R^+
& = \iint_{\T^3 \times \R^3}
   \vpran{- \vint{v}^\ell \Fl{K} W_K \vpran{\vint{v}^{2\alpha}  - \nabla_v \cdot (\vint{v}^{2\alpha} \nabla_v )} \mu} \dv\dx  \nn
\\
& \quad \,
     + \iint_{\T^3 \times \R^3}
   \vpran{- \vint{v}^\ell \Fl{K} W_K \vpran{\vint{v}^{2\alpha}  - \nabla_v \cdot (\vint{v}^{2\alpha} \nabla_v )} \frac{K}{\vint{v}^\ell}} \dv\dx  \nn
\\
& \quad \,
   + \iint_{\T^3 \times \R^3}
   \vpran{- \vint{v}^\ell \Fl{K} W_K \vpran{\vint{v}^{2\alpha}  - \nabla_v \cdot (\vint{v}^{2\alpha} \nabla_v )} \vpran{f - \frac{K}{\vint{v}^\ell}}} \dv\dx \nn
\\
&\Denote
    T^+_{R,1} + T^+_{R,2} + T^+_{R,3}. 
\end{align}
It is then clear that the first two terms are bounded similarly as $T_R^1$ and $T_R^2$ which results in
\begin{align*}
    T^+_{R,1} + T^+_{R,2}
 \leq 
   C_\ell \,(1+K) \norm{\Fl{K}}_{L^1_{x} L^1_j}.
\end{align*}
The third term $T^+_{R,3}$ needs more careful estimates due to the presence of $W_K$. Via integration by parts once, we have
\begin{align} \label{decomp:T-R-3-plus}
   T^+_{R,3}
&= -\iint_{\T^3 \times \R^3}
   \vint{v}^{2\alpha} \vpran{\Fl{K}}^2 W_K \dx \dv
   - \iint_{\T^3 \times \R^3}
   \vint{v}^{2\alpha} \nabla_v \vpran{\vint{v}^\ell \Fl{K} W_K}
   \cdot \nabla_v \vpran{\frac{1}{\vint{v}^\ell} \Fl{K}} \dx \dv  \nn
\\
& =
   -\iint_{\T^3 \times \R^3}
   \vint{v}^{2\alpha} \vpran{\Fl{K}}^2 W_K \dx \dv
   - \iint_{\T^3 \times \R^3}
   \vint{v}^{2\alpha} W_K \abs{\nabla_v \Fl{K}}^2 \dx\dv
   - Rem, 
\end{align}
where the remainder $Rem$ has five pieces 
\begin{align*}
   Rem
&= \iint_{\T^3 \times \R^3}
     \vint{v}^{2\alpha} W_K \vpran{\Fl{K}}^2
     \nabla_v \vint{v}^\ell \cdot \nabla_v \vpran{\vint{v}^{-\ell}} \dv\dx
\\
& \quad \,
   + \iint_{\T^3 \times \R^3}
     \vint{v}^{2\alpha} W_K \frac{\Fl{K}}{\vint{v}^\ell}
     \nabla_v \vint{v}^\ell \cdot \nabla_v \Fl{K} \dx\dv
\\
& \quad \,
    + \iint_{\T^3 \times \R^3}
        \vint{v}^{2\alpha} W_K \, \Fl{K}\,
        \nabla_v \Fl{K} \cdot \nabla_v \vpran{\vint{v}^{-\ell}} \dx\dv
\\
& \quad \,
    + \iint_{\T^3 \times \R^3}
        \vint{v}^{2\alpha} \vint{v}^\ell \vpran{\Fl{K}}^2
        \nabla_v W_K \cdot \nabla_v \vpran{\vint{v}^{-\ell}} \dx\dv
\\
& \quad \,
    + \frac{1}{2}\iint_{\T^3 \times \R^3} \vint{v}^{2\alpha}
       \nabla_v \vpran{\Fl{K}}^2 \cdot \nabla_v W_K \dx\dv
\, \Denote
   \sum_{j=1}^5 Rem_j. 
\end{align*}
It is clear that $Rem_1, Rem_2, Rem_3$ are directly bounded as
\begin{align}  \label{bound:Rem-1-2-3}
& \quad \,
   \abs{Rem_1} + \abs{Rem_2} + \abs{Rem_3}  \nn
\\
& \leq
   C_\ell \iint_{\T^3 \times \R^3}
   \vint{v}^{2\alpha-1} W_K 
   \vpran{\Fl{K}}^2 \dx\dv
   + \frac{1}{8} \iint_{\T^3 \times \R^3}
   \vint{v}^{2\alpha-1} W_K 
   \abs{\nabla_v \Fl{K}}^2 \dx\dv \nn
\\
& \leq
   \frac{1}{8} \iint_{\T^3 \times \R^3}
   \vint{v}^{2\alpha} W_K 
   \vpran{\vpran{\Fl{K}}^2 + \abs{\nabla_v \Fl{K}}^2} \dx\dv
  + C_\ell \norm{\Fl{K}}_{L^2_{x}L^2_{j/2}}^2.
\end{align}
By integrating by parts, $Rem_4$ satisfies
\begin{align} \label{bound:Rem-4}
   \abs{Rem_4}
&= \abs{\iint_{\T^3 \times \R^3}
        W_K \nabla_v \cdot \vpran{\vint{v}^{2\alpha} \vint{v}^\ell \vpran{\Fl{K}}^2
        \nabla_v \vint{v}^{-\ell}} \dx\dv}  \nn
\\
&\leq
   C_\ell \iint_{\T^3 \times \R^3}
   \vint{v}^{2\alpha-2} W_K 
   \vpran{\Fl{K}}^2 \dx\dv
   + C_\ell \iint_{\T^3 \times \R^3}
   \vint{v}^{2\alpha-1} W_K 
   \abs{\Fl{K}} \abs{\nabla_v \Fl{K}} \dx\dv    \nn
\\
&\leq
   \frac{1}{8} \iint_{\T^3 \times \R^3}
   \vint{v}^{2\alpha} W_K 
   \vpran{\vpran{\Fl{K}}^2 + \abs{\nabla_v \Fl{K}}^2} \dx\dv
  + C_\ell \norm{\Fl{K}}_{L^2_{x} L^2_{j/2}}^2.
\end{align}
The last term $Rem_5$ needs more careful treatment. Integrating by parts, we have
\begin{align} \label{bound:Rem-5-1}
  - Rem_5
 = \frac{1}{2} \iint_{\T^3 \times \R^3}
       \vpran{\Fl{K}}^2 \nabla_{v}\vint{v}^{2\alpha}\cdot\nabla_v W_K \dx\dv 
  + \frac{1}{2}\iint_{\T^3 \times \R^3}
       \vint{v}^{2\alpha}\vpran{\Fl{K}}^2 \Delta_v W_K \dx\dv. 
\end{align}
The main observation here is for any $\kappa \geq 2$,
\begin{align*}
    (I - \Delta_v) W_K
 = (I - \Delta_v)^{1 - \kappa/2} \vpran{\vint{v}^j \One_{A_K}}
 \geq 0,
\end{align*}
where the pointwise positivity is a consequence of the positivity of the Bessel potential. Hence for pointwise $t, x, v$ we have
\begin{align*}
   \Delta_v W_K \leq W_K.
\end{align*}
Applying such relation in the second term of~\eqref{bound:Rem-5-1}, we obtain that
\begin{align*}
   \frac{1}{2}\iint_{\T^3 \times \R^3}
       \vint{v}^{2\alpha}&\vpran{\Fl{K}}^2\Delta_v W_K \dx\dv
\leq 
   \frac{1}{2}\iint_{\T^3 \times \R^3}
       \vint{v}^{2\alpha}\vpran{\Fl{K}}^2 W_K \dx\dv\,,
\end{align*}
which is a leading order-term in moments.  However, it is dominated by the dissipation because it has a smaller coefficient $\frac{1}{2}$.  Using integration by parts, the first term in~\eqref{bound:Rem-5-1} satisfies
\begin{align*}
\frac{1}{2}\iint_{\T^3 \times \R^3}
     \vpran{\Fl{K}}^2 \nabla_{v}\vint{v}^{2\alpha}\cdot\nabla_v W_K \dx\dv
&=-\iint_{\T^3 \times \R^3}
       \Fl{K} \, \nabla_v\Fl{K}\cdot
       \vpran{\nabla_{v}\vint{v}^{2\alpha}} \, W_K \dx\dv
\\
 &\quad \,
     -\frac{1}{2}\iint_{\T^3 \times \R^3}
       \vpran{\Fl{K}}^2 \vpran{\Delta_{v}\vint{v}^{2\alpha}} W_K \dx\dv\,.
\end{align*}
These terms can be controlled similarly to the $Rem_{1}$ and $Rem_{2}$ as they are lower-order in moments.  Thus,
\begin{align*}
\bigg|\frac{1}{2}&\iint_{\T^3 \times \R^3}
       \vpran{\Fl{K}}^2 \nabla_{v}\vint{v}^{2\alpha}\cdot\nabla_v W_K \dx\dv\bigg|\\
   & \leq
   \frac{1}{8} \iint_{\T^3 \times \R^3}
   \vint{v}^{2\alpha} W_K 
   \vpran{\vpran{\Fl{K}}^2 + \abs{\nabla_v \Fl{K}}^2} \dx\dv
  + C_\ell \norm{\Fl{K}}_{L^2_{x} L^2_{j/2}}^2.    
\end{align*}
The conclusion is that
\begin{align} \label{bound:Rem-5}
  - Rem_5
&\leq
      \frac{1}{2}\iint_{\T^3 \times \R^3}
       \vpran{\Fl{K}}^2 \vint{v}^{2\alpha} W_K \dx\dv \nn
\\
& \quad \,
   + \frac{1}{8} \iint_{\T^3 \times \R^3}
     \vint{v}^{2\alpha} W_K 
     \vpran{\vpran{\Fl{K}}^2 + \abs{\nabla_v \Fl{K}}^2} \dx\dv
      + C_\ell \norm{\Fl{K}}_{L^2_{x} L^2_{j/2}}^2.      
\end{align}
Now combining the estimates for $Rem_1, \cdots, Rem_5$ with the dissipation terms in~\eqref{decomp:T-R-3-plus}, we have
\begin{align*}
   T^+_{R, 3}
&\leq
   - \frac{1}{8}\iint_{\T^3 \times \R^3}
   \vint{v}^{2\alpha} \vpran{\Fl{K}}^2 W_K \dx \dv\\
   & \qquad - \frac{5}{8} \iint_{\T^3 \times \R^3}
   \vint{v}^{2\alpha} W_K \abs{\nabla_v \Fl{K}}^2 \dx\dv
   + C_\ell \norm{\Fl{K}}_{L^2_{x} L^2_{j/2}}^2. 
\end{align*}
Together with the bounds for $T^+_{R, 1}$ and $T^+_{R, 2}$, we obtain that
\begin{align} \label{bound:T-plus-R}
   T^+_R
 \leq 
   C_\ell \norm{\Fl{K}}_{L^2_{x} L^2_{j/2}}^2
   + C_\ell \, (1+K) \norm{\Fl{K}}_{L^1_{x} L^1_j},
\end{align}
which, when combined with $\CalQ$, can be absorbed into the upper bound for $\CalQ$ in~\eqref{est:CalQ}. 

\smallskip
\Ni For $h=-f$, all the previous estimates follow identically except $T^+_3$, for which we apply a similar estimate for $J_3$ in the proof of Proposition~\ref{thm:level-set-minus-f} instead of $T_3$ in Proposition~\ref{thm:L2-level-set}. Then the same bound ~follows. 
\end{proof}


\subsection{Time-space-velocity energy functional}
In this subsection we complete the $L^{2}$-energy estimate for the level-set function by adding the regularisation in the spatial variable. To such end we introduce the energy functional for $s''\in(0,s) \subseteq (0, 1)$, $\ell\geq0$, $p>1$, 
\begin{align}\label{EFunctional}
     \CalE_{p}(K,T_1,T_2)
:= \sup_{ t \in [ T_1 , T_2 ] } 
       \norm{\Fl{K}}^{2}_{L^{2}_{x,v}} 
& +  \int^{T_2}_{T_1}\int_{\T^{3}}
         \norm{\vint{\cdot}^{\gamma/2}\Fl{K}}^{2}_{H^{s}_{v}} \dx\dtau \nn
\\
& + \frac{1}{C_0}\vpran{\int^{T_2}_{T_1} \norm{(1-\Delta_{x})^{\frac {s''}{2}}\vpran{\Fl{K}}^{2}}^{p}_{L^{p}_{x,v}}\dtau}^{\frac{1}{p}}. 
\end{align}
The constant $C_0$ does not play any essential role and the parameters $s''>0$, $p>1$ will be suitably chosen 
as the discussion progresses.  We start with imposing one condition on $p$: let $r(1)$ and $r(p)$ be the exponents given in Lemma \ref{app-inter-x-theta} such that
\begin{align} \label{cond:p}
   r(1) = \tilde r(s,s'',1,3) > 2, 
\qquad 
   r(p) = \tilde r(s,s'',p,3) > r(1) > 2.
\end{align}  
We require that $p$ satisfies the condition
\begin{align} \label{cond:parameter-p-1-1}
   \frac{r(p)}{2p} \frac{r(1) - 2}{r(p) - 2} > 1. 
\end{align}
Such $p$ exists, since by the continuity of $r(\cdot)$,
\begin{align*}
   \frac{r(z)}{2z} \frac{r(1) - 2}{r(z) - 2} 
\to 
   \frac{r(1)}{2} > 1
\quad 
   \text{as $z \to 1$}.
\end{align*}
Hence a sufficient condition for~\eqref{cond:parameter-p-1} to hold is by letting $p$ be close enough to 1. Since such closeness is needed for later parts, we simply enforce it here: let $p^\sharp \in (1, 2)$ be fixed and close enough to 1 such that
\begin{align} \label{cond:p-sharp}
    \min_{[1, p^\sharp]} \frac{r(p)}{2p} \frac{r(1) - 2}{r(p) - 2} > 1,
\end{align}
and in what follows we restrict to 
\begin{align} \label{cond:parameter-p-1}
   1 < p \leq p^\sharp.
\end{align}
The reason for imposing~\eqref{cond:parameter-p-1} or~\eqref{cond:parameter-p-1-1} will be clear in the proof of the following key interpolation lemma: 
%
%
\begin{lem}[Energy functional interpolation]\label{Interpolationlemma}
Let the parameters $T_1, T_2, s, s'', \ell, n$ be given such that
\begin{align*}
   0 \leq T_1 < T_2 < T,
\quad
    0<s'' < s\in(0,1),
\quad
    \ell\geq0,
\quad
    n \geq 0. 
\end{align*} 
Let $\ell_0$ be large enough with the specification in~\eqref{cond:ell-0-1-1}. Suppose 
\begin{align*}
    \sup_{t}\big\| \vint{v}^{\ell_0+\ell} f \big\|_{L^{1}_{x,v}} \leq C_1.
\end{align*}
Let $p > 1$ be fixed and satisfying~\eqref{cond:parameter-p-1} and let $\CalE_p(K, T_1, T_2)$ be the energy functional defined in~\eqref{EFunctional}. 
Then there exists a constant $q_\ast$ which is independent of $p$ and satisfies $1 < q_\ast < \frac{r(1)}{2}$
%
such that the following holds: for any $1 < q \leq q_\ast$, we can find a pair of parameters $(r_\ast, \xi_\ast)$ with the properties
\begin{align} \label{def:r-ast-xi-ast}
   r_\ast > q_\ast > q > 1, 
\qquad
   \xi_\ast > 2q_\ast > 2q > 2, 
\end{align} 
such that for any $0 \leq M < K$ and $0\leq T_{1}\leq T_{2}\leq T$, 
\begin{align}\label{estT1-1}
   \norm{\vint{\cdot}^{\frac{n}{q}} \vpran{\Fl{K}}^{2}}_{L^{q}((T_1, T_2) \times \T^3 \times \R^3)}
\leq
   \frac{C\,\CalE_{p}(M,T_{1},T_{2})^{\frac{r_\ast}{q}}}{(K-M)^{\frac{\xi_\ast-2q}{q}}},
\end{align}
where $C$ only depends on $(C_1, s, s'', q, p)$. The parameters $q_\ast, r_\ast, \xi_\ast$ are defined in~\eqref{cond:p-theta-1}, ~\eqref{def:r-ast} and~\eqref{def:xi-ast} and they only depend on $(s, s'')$. In particular, all of these parameters are independent of $K, M$, $T_1, T_2$ and ~$f$.
\end{lem}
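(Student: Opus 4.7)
The plan is to combine a standard level-set gain with the hypoelliptic interpolation encoded in $\CalE_p$. On $\{\Fl{K}>0\}$ we have $f\vint{v}^\ell > K > M$, so $\Fl{M} > K-M$ and $\Fl{K}\leq \Fl{M}$ pointwise. Consequently, for any $\xi_\ast \geq 2q$,
\begin{align*}
  (\Fl{K})^{2q}
\leq
  (\Fl{M})^{2q}\,\One_{\{\Fl{K}>0\}}
\leq
  \frac{(\Fl{M})^{\xi_\ast}}{(K-M)^{\xi_\ast-2q}},
\end{align*}
and the target estimate \eqref{estT1-1} reduces to a bound of the form
\begin{align*}
  \iiint_{(T_1,T_2)\times\T^3\times\R^3} \vint{v}^n (\Fl{M})^{\xi_\ast}
\leq
  C\, \CalE_p(M,T_1,T_2)^{r_\ast}.
\end{align*}
The strict inequality $\xi_\ast > 2q$ is forced at this step; it is what produces the positive gain $(K-M)^{\xi_\ast-2q}$ which will eventually drive the De Giorgi iteration to come.

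Second, to control $\iiint \vint{v}^n (\Fl{M})^{\xi_\ast}$ by $\CalE_p$, I would apply Lemma~\ref{app-inter-x-theta} fibrewise in $t$ to $\varphi=\Fl{M}(t,\cdot,\cdot)$, with $\eta=s$, $\eta'=s''$, $m=p$, producing for each $t$ the bound
\begin{align*}
  \norm{\varphi(t)}_{L^{r(p)}_{x,v}}
\leq
  C\, A_s(t)^{\tilde\alpha/2}\, B_{s'',p}(t)^{(1-\tilde\alpha)/2},
\end{align*}
where $A_s(t) = \int_{\T^3}\norm{(-\Delta_v)^{s/2}\varphi(t,x,\cdot)}_{L^2_v}^2\dx$ and $B_{s'',p}(t) = \int_{\R^3}\norm{(1-\Delta_x)^{s''/2}\varphi^2(t,\cdot,v)}_{L^p_x}\dv$. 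Raising to an appropriate power and integrating in $t$ merges $A_s$ with the $H^s_v$-dissipation summand of $\CalE_p$ (after extracting the $\vint{v}^{\gamma/2}$ weight with the help of the $L^1_{\ell_0+\ell}$ moment hypothesis), and merges $B_{s'',p}$ with the third summand of $\CalE_p$, once an intermediate H\"older step upgrades the $v$-integrability from $L^1_v$ to $L^p_v$; this upgrade is precisely where the assumption $\sup_t\|\vint{v}^{\ell_0+\ell}f\|_{L^1_{x,v}}\leq C_1$ enters, with $\ell_0$ taken large enough as in \eqref{cond:ell-0-1-1}. A parallel application of Lemma~\ref{app-inter-x-theta-1} yields a companion estimate at the (larger) exponent $r(1)>2$ involving only $A_s(t)$ and the $L^2$-energy.

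Third, the target $L^{\xi_\ast}$-control is obtained by convex-combining the $L^{r(p)}_{t,x,v}$ and $L^{r(1)}_{t,x,v}$ outputs of the two interpolation lemmas with the base $L^2_{t,x,v}$ bound coming from the $\sup_t\|\varphi\|_{L^2_{x,v}}^2$ piece of $\CalE_p$ (times $T_2-T_1$). The monomial weight $\vint{v}^{n/q}$ is then absorbed using the same $L^1_{\ell_0+\ell}$ moments of $f$ via $\Fl{M}\leq \vint{v}^{-(\ell_0+\ell)}\,\vint{v}^{\ell_0+\ell}f$. The definitions of $q_\ast$, $r_\ast$, $\xi_\ast$ in \eqref{cond:p-theta-1}, \eqref{def:r-ast} and \eqref{def:xi-ast} are exactly the algebraic identities that arise from balancing the convex-combination exponents against the target $L^{\xi_\ast}$-norm; condition \eqref{cond:parameter-p-1-1} ensures simultaneously $r_\ast>q_\ast>q>1$ and $\xi_\ast>2q_\ast>2q$ via the limiting relation $\tfrac{r(p)}{2p}\cdot\tfrac{r(1)-2}{r(p)-2}\to r(1)/2 > 1$ as $p\to 1^+$, which is why $p$ must be taken close to $1$ as encoded in $p^{\sharp}$ and \eqref{cond:p-sharp}.

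The main obstacle I expect is the simultaneous bookkeeping of the three exponents $(q_\ast,r_\ast,\xi_\ast)$: they must respect (i) the strict gain $\xi_\ast-2q>0$ that feeds the iteration, (ii) the correct scaling $r_\ast>q_\ast$ that yields the \emph{inhomogeneous}-in-degree iterative relation used later, and (iii) the H\"older splits that simultaneously absorb $\vint{v}^{n/q}$ and upgrade $L^1_v\to L^p_v$ in the $B_{s'',p}$ factor. All three constraints only admit a joint solution when $p$ is strictly greater than $1$ but sufficiently close to $1$; this is precisely the content of the preliminary conditions \eqref{cond:parameter-p-1-1}--\eqref{cond:p-sharp} imposed before the lemma is stated, and keeping the exponents in the right range is the most delicate part of the argument.
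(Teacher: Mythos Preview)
Your proposal captures the overall architecture correctly—the level-set Chebyshev gain, the hypoelliptic interpolation, the role of the moment hypothesis, and the parameter balancing—but your specific interpolation scheme differs from the paper's and contains a concrete gap.

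The paper does not use Lemma~\ref{app-inter-x-theta-1} in this proof at all, and $r(1)$ never appears as a function-space endpoint: it enters only in the preliminary parameter analysis \eqref{bound:beta-xi-upper}--\eqref{bound:beta-xi-lower}. Instead, the paper interpolates $\|\vint{v}^{n/\xi_\ast}\Fl{M}\|_{L^{\xi_\ast}_{x,v}}$ between a \emph{low} exponent $L^\theta_{x,v}$ (with $1<\theta<2$, close to $2$, carrying a \emph{positive} weight $\vint{v}^{a_0}$) and $L^{r(p)}_{x,v}$ (carrying a \emph{negative} weight $\vint{v}^{-2}$). This negative weight is the key device you are missing: (i) via Lemma~\ref{cor:commut-homo-fraction} it can be pushed through $(-\Delta_v)^{s/2}$, so that the $A_s$-factor matches the unweighted $H^s_v$-dissipation in $\CalE_p$; and (ii) after squaring, the resulting $\vint{v}^{-4}$ upgrades the $L^1_vL^p_x$ output of Lemma~\ref{app-inter-x-theta} to the $L^p_{x,v}$-norm that actually appears in $\CalE_p$. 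The $L^\theta$-factor, with its positive weight $\vint{v}^{a_0}$, is then further interpolated between $L^1_{x,v}$ (consuming the moment hypothesis, which is where \eqref{cond:ell-0-1-1} enters) and $L^2_{x,v}$ (the $\sup_t$ energy). The convex-combination identity \eqref{rel:convex-comb} is what makes the H\"older split in time close exactly on the two remaining pieces of~$\CalE_p$.

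Your claim that Lemma~\ref{app-inter-x-theta-1} yields a bound ``involving only $A_s(t)$ and the $L^2$-energy'' is not correct: that lemma produces a factor $\int_{\R^3}\|(1-\Delta_x)^{s''/2}\varphi(\cdot,v)\|_{L^2_x}^2\dv$, an $H^{s''}_xL^2_v$-norm of $\varphi$ itself, which is not a summand of $\CalE_p$. Without the negative-weight device you also do not explain how the $L^1_vL^p_x$-norm from Lemma~\ref{app-inter-x-theta} is matched with the $L^p_{x,v}$-piece of $\CalE_p$, nor how the weight $\vint{v}^{n/q}$ is allocated so that the $H^s_v$-factor remains unweighted.
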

\begin{proof}
Recall the definitions of $r(1), r(p)$ in~\eqref{cond:p}. For any $(\theta,\xi, q)$ satisfying the relation
\begin{align} \label{cond:theta}
1 < \theta < 2 < 2q < \xi < r(1) < r(p),
\end{align}  
which is depicted in Figure~\ref{parameters-1-1}, we define $\beta \in (0, 1)$ by
\begin{align} \label{def:beta-xi}
   \frac{1}{\xi}= \frac{1-\beta}{\theta} + \frac{\beta}{r(p)},
\qquad
   \beta \in (0, 1).
\end{align}  
Note that for a given pair of $(\theta, p)$, the parameters $\beta$ and $\xi$ are in one-to-one correspondence. 
%
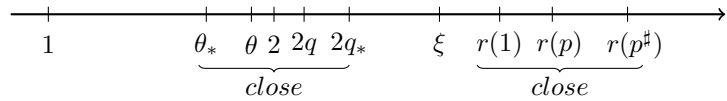
\begin{figure}[b] 
\centering
\begin{tikzpicture}
\draw[thick,->] (0,0) -- (9.5,0);
\draw (0.5,-.1) -- (0.5,.1);
\draw (2.6,-.1) -- (2.6,.1);
\draw(3.2,-.1) -- (3.2,.1);
\draw (3.5,-.1) -- (3.5,.1);
\draw(3.9,-.1) -- (3.9,.1);
\draw(4.5,-.1) -- (4.5,.1);
\draw (6.5,-.1) -- (6.5,.1);
\draw (5.7,-.1) -- (5.7,.1);
\draw (7.2,-.1) -- (7.2,.1);
\draw (8.2,-.1) -- (8.2,.1);
\node[] at (0.5,-0.4) {$1$};
\node[] at (2.6,-0.4) {$\theta_\ast$};
\node[] at (3.2,-0.4) {$\theta$};
\node[] at (3.5,-0.4) {$2$};
\node[] at (3.9,-0.4) {$2q$};
\node[] at (4.5,-0.4) {$2q_\ast$};
\node[] at (6.5,-0.4) {$r(1)$};
\node[] at (5.7,-0.4) {$\xi$};
\node[] at (7.3,-0.4) {$r(p)$};
\node[] at (8.3,-0.4) {$r(p^\sharp)$};
\draw[decoration={brace,mirror,raise=5pt},decorate]
  (2.5,-.5) -- node[below=6pt] {$close$} (4.5,-.5);
  \draw[decoration={brace,mirror,raise=5pt},decorate]
  (6.2,-.5) -- node[below=6pt] {$close$} (8.4,-.5);
\end{tikzpicture}
\caption{Choice of parameters}\label{parameters-1-1}
\end{figure}
\Ni We observe that for any $p$ satisfying~\eqref{cond:parameter-p-1}, by the definition of $\beta$, it holds that
\begin{align} \label{bound:beta-xi-upper}
 \frac{\beta \xi}{2p} < \frac{\beta \xi}{2}
= \frac{r(p)}{2} \frac{\xi - \theta}{r(p) - \theta} 
\leq
   \frac{r(p^\sharp)}{2} \frac{\xi - \theta}{r(1) - 2}
\to 
  0
\quad
  \text{as $(\theta, \xi) \to (2, 2)$}, 
\end{align}
and by~\eqref{cond:parameter-p-1-1},
\begin{align} \label{bound:beta-xi-lower}
  \frac{\beta \xi}{2} > \frac{\beta \xi}{2 p}
= \frac{r(p)}{2p} \frac{\xi - \theta}{r(p) - \theta} 
\to 
  \frac{r(p)}{2p} \frac{r(1) - 2}{r(p) - 2} > 1
\quad
  \text{as $(\theta,\xi) \to (2, r(1))$}.
\end{align}
The limits above are uniform in $p$ as long as $p$ satisfies~\eqref{cond:parameter-p-1}. By continuity, there exist $q_\ast, \theta_\ast$ such that if
\begin{align} \label{cond:p-theta-1}
   1 < q \leq q_\ast < r(1)/2
\quad \text{and} \quad
   \theta_\ast \leq \theta < 2, 
\end{align}
then for $(\beta, \xi)$ satisfying~\eqref{def:beta-xi}, we have
\begin{align*}
   \frac{\beta \xi}{2} < 1
\qquad
  \text{as $\xi \to 2 q_\ast$ and $\xi > 2q_\ast$},
\end{align*}
and 
\begin{align*}
  \frac{\beta \xi}{2} > p > 1
\quad
  \text{as $\xi \to r(1)$}.
\end{align*}
As an example, we can choose 
\begin{align} \label{def:q-ast-theta-ast}
   q_\ast
= 1 + \frac{1}{2} \frac{r(1) - 2}{r(p^\sharp)}, 
\qquad
  \theta_\ast
> 2 - \frac{1}{2} \frac{r(1) - 2}{r(p^\sharp)}. 
\end{align}
Such a choice guarantees that
\begin{align*}
  \frac{r(p^\sharp)}{2} \frac{2q_\ast - \theta_\ast}{r(1) - 2} < 1. 
\end{align*}
It is then clear that the choices of $q_\ast, \theta_\ast$ only depend on $p^\sharp, s, s''$.
By \eqref{bound:beta-xi-upper}, if $\xi_\ast$ is sufficiently close to $2q_\ast$, then $\beta \xi /2 < 1$. As a result, for any $\zeta \in (0, 1)$, there exists $\xi_\ast(\zeta)$ paired with $\beta_\ast(\zeta)$ such that
\begin{align} \label{def:xi-ast}
   \zeta \frac{\beta_\ast(\zeta) \xi_\ast (\zeta)}{2} + (1 - \zeta) \frac{\beta_\ast(\zeta) \xi_\ast (\zeta)}{2 p} = 1. 
\end{align}
The notations $\xi_\ast(\zeta), \beta_\ast(\zeta)$ are simply emphasizing the dependence of $\xi_\ast, \beta_\ast$ on $\zeta$ instead of indicating they are functions of $\zeta$.

With the preparations above, we now fix $(q, \theta)$ satisfying~\eqref{cond:p-theta-1} and let $\zeta = \tilde \alpha(s, s'', p, 3)$, where $\tilde \alpha(s, s'', p, 3)$ is the parameter in Lemma~\ref{app-inter-x-theta}. Next we fix a pair of parameters $\xi_\ast, \beta_\ast$ satisfying $(\beta_\ast, \xi_\ast) = (\beta_\ast (\zeta), \xi_\ast(\zeta))$,
such that~\eqref{def:beta-xi} holds and
\begin{align} \label{rel:convex-comb}
    \zeta \frac{\beta_\ast \xi_\ast}{2} + (1 - \zeta) \frac{\beta_\ast \xi_\ast}{2 p} = 1. 
\end{align}
With these parameters chosen we carry out various interpolations. First, \begin{align} \label{est:interpolation-1}
    \norm{\vint{\cdot}^{\frac{n}{q}}  \vpran{\Fl{K}}^{2}}^{q}_{L^{q}_{t,x,v}} 
& = \int^{T_2}_{T_1} 
     \norm{\vint{\cdot}^{\frac {n}{2q}} \Fl{K}}^{2q}_{L^{2q}_{x,v}} \dtau 
\leq 
    \frac{1}{(K-M)^{\xi_\ast-2q}}
      \int^{T_2}_{T_1} 
        \norm{\vint{\cdot}^{\frac{n}{\xi_\ast}} f^{(\ell)}_{M,+}}^{\xi_\ast}_{L^{\xi_\ast}_{x,v}}\dtau \nn
\\
&\leq 
    \frac{1}{(K-M)^{\xi_\ast - 2q}}
       \int^{T_2}_{T_1} 
         \norm{\vint{v}^{a_0} f^{(\ell)}_{M,+}}^{(1-\beta_\ast) \xi_\ast}_{L^{\theta}_{x,v}}
         \norm{\vint{v}^{-2} f^{(\ell)}_{M,+}}^{\beta_\ast \xi_\ast}_{L^{r(p)}_{x,v}}\dtau,
\end{align}
where $a_0 
= \frac{1}{1-\beta_\ast} \vpran{\frac{n}{\xi_\ast} + 2 \beta_\ast}$.
%
%
Application of Lemma \ref{app-inter-x-theta} with $(\tilde r, \eta, \eta', m) = (r(p), s, s'', p)$  and Lemma~\ref{cor:commut-homo-fraction} to~\eqref{est:interpolation-1}  gives
\begin{align} \label{est:integral-intermediate-1}
& \quad \,
   \int^{T_2}_{T_1} 
      \norm{\vint{v}^{a_0} f^{(\ell)}_{M,+}}^{(1-\beta_\ast) \xi_\ast}_{L^{\theta}_{x,v}}      
      \norm{\vint{v}^{-2} f^{(\ell)}_{M,+}}^{\beta_\ast \xi_\ast}_{L^{r(p)}_{x,v}}\dtau  \nn
\\
&\leq 
   C \int^{T_2}_{T_1} 
     \norm{\vint{v}^{a_0} f^{(\ell)}_{M,+}}^{(1-\beta_\ast) \xi_\ast}_{L^{\theta}_{x,v}}     
     \norm{(-\Delta_{v})^{\frac{s}{2}} \vpran{\vint{v}^{-2} f^{(\ell)}_{M,+}}}^{\zeta\beta_\ast \xi_\ast}_{L^{2}_{x,v}}
     \norm{\vint{v}^{-4}(1-\Delta_{x})^{ \frac{s''}{2}} \big( f^{(\ell)}_{M,+} \big)^{2}}^{\frac{1-\zeta}{2}\beta_\ast \xi_\ast}_{L^1_v L^p_x}\dtau \nn
\\
&\leq 
   C \int^{T_2}_{T_1} 
       \norm{\vint{v}^{a_0} f^{(\ell)}_{M,+}}^{(1-\beta_\ast) \xi_\ast}_{L^{\theta}_{x,v}} 
       \norm{(-\Delta_{v})^{\frac{s}{2}} f^{(\ell)}_{M,+}}^{\zeta\beta_\ast \xi_\ast}_{L^{2}_{x,v}}
       \norm{(1-\Delta_{x})^{ \frac{s''}{2}} \vpran{f^{(\ell)}_{M,+}}^{2}}^{\frac{1-\zeta}{2}\beta_\ast \xi_\ast}_{L^{p}_{x,v}} \dtau,
\end{align}
where $C = C(s, s'', p)$. 
By~\eqref{rel:convex-comb} and the H\"{o}lder's inequality, the integral term in~\eqref{est:integral-intermediate-1} is controlled~by
\begin{align*}
& \quad \,
   \vpran{\sup_{t} \norm{\vint{v}^{a_0}f^{(\ell)}_{M,+}}^{(1-\beta_\ast) \xi_\ast}_{L^{\theta}_{x,v}}}
   \vpran{\int^{T_2}_{T_1} \norm{(-\Delta_{v})^{\frac{s}{2}} f^{(\ell)}_{M,+}}^{2}_{L^{2}_{x,v}}\dtau}^{\frac{\zeta \beta_\ast \xi_\ast}{2}}
\\
&\hspace{2.5cm} \times
  \vpran{\int^{T_2}_{T_1} \norm{(1-\Delta_{x})^{ \frac{s''}{2}}   
               \vpran{f^{(\ell)}_{M,+}}^{2}}^{p}_{L^{p}_{x,v}}\dtau}^{\frac{1-\zeta}{2p} \beta_\ast \xi_\ast}
\\
&\leq
    \vpran{\sup_{t} \norm{\vint{v}^{a_0} f^{(\ell)}_{M,+}}^{(1-\beta_\ast) \xi_\ast}_{L^{\theta}_{x,v}}}
    \CalE_{p}(M,T_{1},T_{2})^{\frac{\zeta\beta_\ast \xi_\ast}{2}}\CalE_{p}(M,T_{1},T_{2})^{\frac{1-\zeta}{2} \beta_\ast \xi_\ast}.
\end{align*}
Interpolating the $L^{\theta}_{x,v}$-norm with 
\begin{align} \label{def:beta-prime}
   \frac{1}{\theta} = \frac{1-\beta'}{1} + \frac{\beta'}{2},
\qquad
  \beta' \in (0, 1),
\end{align}
it follows that
\begin{align*}
  \norm{\vint{v}^{a_0} f^{(\ell)}_{M,+}}^{(1-\beta_\ast) \xi_\ast}_{L^{\theta}_{x,v}}
&\leq
   \norm{\vint{v}^{\frac{a_0}{1-\beta'}} f^{(\ell)}_{M,+}}^{(1-\beta') (1-\beta_\ast) \xi_\ast}_{L^{1}_{x,v}}\big\| f^{(\ell)}_{M,+} \big\|^{\beta' (1-\beta_\ast) \xi_\ast}_{L^{2}_{x,v}}\\
&\leq 
   C_1^{(1-\beta') (1-\beta_\ast) \xi_\ast}
   \CalE_{p}(M,T_{1},T_{2})^{\beta' (1-\beta_\ast) \frac{\xi_\ast}{2}},
\end{align*}
by taking 
\begin{align} \label{cond:ell-0-1-1}
   \ell_0 \geq \frac{a_0}{1-\beta'}.
\end{align}
%
Overall, we have
\begin{align*}
   \int^{T_2}_{T_1} 
      \norm{\vint{v}^{a_0} f^{(\ell)}_{M,+}}^{(1-\beta_\ast) \xi_\ast}_{L^{\theta}_{x,v}}      
      \norm{\vint{v}^{-2} f^{(\ell)}_{M,+}}^{\beta_\ast \xi_\ast}_{L^{r(p)}_{x,v}}\dtau  
\leq
  C C_1^{(1-\beta')(1-\beta_\ast) \xi_\ast} \CalE_p(M,T_{1},T_{2})^{r_\ast},
\end{align*}
with
\begin{align*}
   r_\ast 
= \beta' (1-\beta_\ast) \frac{\xi_\ast}{2} 
    + \frac{\zeta \beta_\ast \xi_\ast}{2} 
    + \frac{1-\zeta}{2} \beta_\ast \xi_\ast. 
\end{align*} 
We can make $\beta'$ arbitrarily close to 1 by taking $\theta_\ast$ in~\eqref{cond:p-theta-1} close enough to 2. This way we have
\begin{align} \label{def:r-ast}
   r_\ast = \big( 1-(1-\beta')(1-\beta_\ast) \big)\frac{\xi_\ast}{2} >q_\ast > 1,
\end{align}
hence the desired bound in~\eqref{estT1-1}. 
\end{proof}

\begin{rmk} \label{rmk:parameters}
The parameters $(\theta_\ast, q_\ast, r_\ast, \xi_\ast, \beta_\ast, \beta')$  in Lemma~\ref{Interpolationlemma} can be made explicit. Here we give an example of these parameters such that Lemma~\ref{Interpolationlemma} holds. First we fix $p^\sharp$ which satisfies~\eqref{cond:p-sharp} and let 
\begin{align*}
   p = p^\sharp, 
\qquad
   q_\ast = 1 + \frac{1}{2} \frac{r(1) - 2}{r(p^\sharp)}.
\end{align*}
Note that for each $\theta \in (1,2)$, equations~\eqref{def:beta-xi} and~\eqref{rel:convex-comb} provide a system that uniquely determines $(\xi, \beta)$ in terms of $\theta$. We recall the system below
\begin{align} \label{def:xi-beta-recall-1}
  \frac{1}{\xi}= \frac{1-\beta}{\theta} + \frac{\beta}{r(p^\sharp)},
\quad \ \text{and} \quad \ 
  \zeta^\sharp \frac{\beta \xi}{2} + (1 - \zeta^\sharp) \frac{\beta \xi}{2 p^\sharp} = 1, 
\end{align}
where $\zeta^\sharp = \tilde \alpha(s, s'', p^\sharp, 3)$. For a fixed $\theta$, we can solve and obtain
\begin{align} \label{soln:xi-beta-theta}
   \xi 
= \xi(\theta)
= \frac{r(p^\sharp) - \theta}{r(p^\sharp)}
   \frac{1}{\frac{\zeta^\sharp}{2} + \frac{1 - \zeta^\sharp}{2 p^\sharp}}
   + \theta,
\qquad
  \beta
= \beta(\theta)
= \frac{1}{\xi} \frac{1}{\frac{\zeta^\sharp}{2} + \frac{1 - \zeta^\sharp}{2 p^\sharp}} \in (0, 1).
\end{align}
The condition for $\theta$ comes from the combination of~\eqref{def:beta-prime} and~\eqref{def:r-ast}. At this moment we only need $q \in (1, r(1)/2)$. Hence we require
\begin{align} \label{def:beta-prime-recall}
  \frac{1}{\theta} = \frac{1-\beta'}{1} + \frac{\beta'}{2}
\quad \ \text{and} \quad \
  \big( 1-(1-\beta')(1-\beta) \big)\frac{\xi}{2} > q_\ast.  
\end{align}
Solving $(\theta, \beta')$-system above, we obtain the condition on $\theta$ as
\begin{align} \label{cond:theta-example}
  0 < 2 - \theta < 2 - \frac{2}{1 + \frac{\xi - 2 q_\ast}{1 - \beta}}.
\end{align}
The existence issue is equivalent to whether there exists $\theta \in (1,2)$ such that~\eqref{soln:xi-beta-theta} and~\eqref{cond:theta-example} hold simultaneously. In order to check this, we note that by~\eqref{soln:xi-beta-theta}, for any $\theta \in (1, 2)$, it holds that
\begin{align*}
   \xi 
= \xi(\theta) > 2 \frac{r(p^\sharp) - \theta}{r(p^\sharp)} + \theta
= 2 + \frac{r(p^\sharp) - 2}{r(p^\sharp)} \theta
> 2 q_\ast.
\end{align*}
In particular, it holds that 
\begin{align} \label{def:c-ast}
  \lim_{\theta \to 2} \vpran{\xi(\theta)-2 q_\ast}
\geq 
  2 + 2 \frac{r(p^\sharp) - 2}{r(p^\sharp)}  - 2 q_\ast = : 2c_\ast > 0,
\qquad
  c_\ast \in (0, 1).
\end{align}
Hence the right-hand side of~\eqref{cond:theta-example} satisfies 
\begin{align} \label{def:c-ast-ast}
   \lim_{\theta \to 2} \vpran{2 - \frac{2}{1 + \frac{\xi - 2}{1 - \beta}}}
\geq
   \lim_{\theta \to 2} \vpran{2 - \frac{2}{1 + (\xi - 2)}}
\geq
  2 - \frac{2}{1 + 2 c_\ast} 
= \frac{4 c_\ast}{1 + 2 c_\ast}= : c_{\ast\ast} \in (0, 2),
\end{align}
while the middle term clearly satisfies $\lim_{\theta \to 2} (2 - \theta) = 0$. This shows there is a range of $\theta$ values that satisfy all the desired properties. For a particular example we first introduce two parameters
\begin{align*}
   c^\sharp 
= \frac{1}{\frac{\zeta^\sharp}{2} + \frac{1 - \zeta^\sharp}{2 p^\sharp}}
> 2,
\qquad
   \alpha^\sharp
= \min \left\{\frac{1}{2} \frac{c^\sharp -2}{\vpran{1 - \frac{1}{r(p^\sharp)}} \frac{2}{1 + 2c_\ast}}, \ \ \frac{1}{2} \right\},
\end{align*}
where $c_\ast$ is defined in~\eqref{def:c-ast}. 
Then use the parameter $c_{\ast\ast}$ defined in~\eqref{def:c-ast-ast} and let
\begin{align*}
  \theta_\ast = 2 - \alpha^\sharp c_{\ast\ast} \in (1, 2). 
\end{align*}
By~\eqref{soln:xi-beta-theta} we can solve and obtain
\begin{align*}
   \xi
= \frac{r(p^\sharp) - \theta}{r(p^\sharp)} c^\sharp + \theta
= \frac{r(p^\sharp) - (2 - \alpha^\sharp c_{\ast\ast})}{r(p^\sharp)} c^\sharp + (2 - \alpha^\sharp c_{\ast\ast}).
\end{align*}
Now we check that~\eqref{cond:theta-example} holds: by the definition of $\alpha^\sharp$, we have
\begin{align*} 
   \xi - 2 
&= \frac{r(p^\sharp) - (2 - \alpha^\sharp c_{\ast\ast})}{r(p^\sharp)} c^\sharp - \alpha^\sharp c_{\ast\ast}
= \frac{r(p^\sharp) - 2}{r(p^\sharp)} c^\sharp
   - \vpran{1 - \frac{1}{r(p^\sharp)}} \alpha^\sharp c_{\ast\ast}
\\
& = c_\ast c^\sharp
   - \alpha^\sharp \vpran{1 - \frac{1}{r(p^\sharp)}} 
      \frac{4 c_\ast}{1 + 2 c_\ast}
= 2 c_\ast \vpran{\frac{c^\sharp}{2} - \alpha^\sharp \vpran{1 - \frac{1}{r(p^\sharp)}} 
      \frac{2}{1 + 2 c_\ast}}
\\
&\geq
  2 c_\ast \vpran{\frac{c^\sharp}{2} - \frac{c^\sharp - 2}{2}}
= 2 c_\ast. 
\end{align*}
Hence, repeating the previous estimate, we have
\begin{align*}
   2 - \frac{2}{1 + \frac{\xi_\ast - 2}{1 - \beta_\ast}}
\geq 
  2 - \frac{2}{1 + (\xi_\ast - 2)}
\geq
  2 - \frac{2}{1+2 c_\ast}
= c_{\ast\ast} > \alpha^\sharp c_{\ast \ast} =  2 - \theta_\ast,
\end{align*}
that is, inequality~\eqref{cond:theta-example} holds. With such $(\theta_\ast, \xi)$, we obtain $\beta, \beta', r_\ast$ via formulas~\eqref{def:xi-beta-recall-1}, \eqref{def:beta-prime-recall}, and~\eqref{def:r-ast}.
\end{rmk}

\begin{rmk} 
We also make a comment regarding $\ell_0$ in Lemma~\ref{Interpolationlemma}. Note that by Remark~\ref{rmk:parameters}, $\beta'$ and $\xi_\ast, \beta_\ast$ are functions of $\theta$. Hence $\ell_0$ depends on $n, \theta$, which in term depends on $n, s, s''$.
\end{rmk}

With Lemma \ref{Interpolationlemma} at hand, we are ready to prove the precise estimate regarding the energy functional \eqref{EFunctional} in the context of the Boltzmann equation.

\begin{prop}[Energy functional interpolation inequality]\label{thm:SV-energy-functional-linear}
Let $T>0$ be fixed and let $\ell_0>0$ be sufficiently large such that it satisfies~\eqref{cond:ell-0-2}. 
Assume that the given function $G$ satisfies~\eqref{cond:coercivity-1} and
\begin{align}  \label{bi-hyp*}
   G = \mu + g \geq 0,
\qquad
    \sup_{t, x} \| g \|_{ L^{1}_{\gamma} } \leq \delta_{0},
\qquad
   \sup_{t,x}\| g \|_{ L^{\infty}_{k_0} } \leq C.
\end{align}
Fix $\ell$ such that
\begin{align*}
   8 + \gamma \leq \ell\leq k_0 - 4 -\gamma,
\end{align*}
and assume that $f$ is a solution of ~\eqref{BEe1} which satisfies
\begin{align*}
   F = \mu + f \geq 0,
\qquad 
   \sup_{ t }\| \vint{v}^{\ell_0+\ell}f(t,\cdot,\cdot) \|_{ L^{1}_{x,v} }
\leq C_1 < \infty.
\end{align*}
Then, there exist $s'' > 0$ and $p >1$ such that for any 
\begin{align*}
   0\leq T_{1} \leq T_{2}<T,
\qquad
    \epsilon\in[0,1],
\qquad 
  0 \leq M < K,
\end{align*}  
if we let $\CalE_p(M, T_1, T_2)$ be the energy functional in~\eqref{EFunctional} with the parameters $p, s''$, then
it follows that
\begin{align}\label{key-estimate-linear}
& \qquad \, \norm{\Fl{K} (T_2)}^{2}_{L^{2}_{x,v}} 
   +  \int^{T_2}_{T_1}\| \vint{v}^{\gamma/2}(1 -\Delta_{v})^{\frac {s}{2}}\Fl{K}(\tau)\|^{2}_{L^{2}_{x,v}} {\rm d}\tau  \nn
\quad 
\\
& \qquad  \quad
   + \frac{1}{C_0}\bigg(\int^{T_{2}}_{T_{1}}\big\| (1-\Delta_{x})^{\frac {s''}{2}}\vpran{\Fl{K}}^{2} \big\|^{p}_{L^{p}_{x,v}}{\rm d}\tau\bigg)^{\frac{1}{p}}   \nn
\\
& \leq 
  C \| \vint{v}^{2} \Fl{K}(T_1) \|^{2}_{L^{2}_{x,v}} 
   + C \| \vint{v}^{2}\Fl{K}(T_1) \|^{2}_{L^{2p}_{x,v}} 
   +  \frac{C\, K}{K-M}
   \sum^{4}_{i=1}\frac{\CalE_{p}(M,T_{1},T_{2})^{\beta_{i}}}{(K-M)^{a_i}}, \end{align}
where the parameters $\beta_i>1$ and $a_i>0$ are defined in~\eqref{def:a-beta-i} and $C$ is independent of $K, M, f, T_1, T_2$.
%
Furthermore, 
the estimate holds for $h=-f$, solution to equation \eqref{eq:h-1}, with $\Fl{K}$ replaced by $(-f)^{(\ell)}_{K,+}$.
\end{prop}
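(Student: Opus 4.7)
The plan is to produce all three pieces of $\CalE_p(K,T_1,T_2)$ on the left of~\eqref{key-estimate-linear} from two separate ingredients---the time-integrated $L^2_{x,v}$-level-set identity (for $\|\Fl{K}(T_2)\|^2_{L^2_{x,v}}$ and the $L^2_t H^s_{\gamma/2}$-dissipation) and the strong averaging lemma of Proposition~\ref{average-lemma-p} applied to $(\Fl{K})^2$ (for the $L^p_t H^{s''}_x$-piece)---and then to absorb every remainder by Lemma~\ref{Interpolationlemma}. Since $\Fl{K}=\vint{v}^\ell f - K$ on $\{\vint{v}^\ell f>K\}$ and vanishes elsewhere, the level-set function satisfies in the sense of distributions
\begin{equation*}
\del_t(\Fl{K})^2+v\cdot\nabla_x(\Fl{K})^2=2\,\vint{v}^\ell\,\tilde Q(G,F)\,\Fl{K}.
\end{equation*}

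Integrating in $(x,v)$ and then in $t\in[T_1,T_2]$, and invoking parts~(a) and~(b) of Proposition~\ref{thm:L2-level-set} together with the smallness hypothesis~\eqref{bi-hyp*} on $\sup_{t,x}\|g\|_{L^1_\gamma}$, yields
\begin{equation*}
\|\Fl{K}(T_2)\|^2_{L^2_{x,v}}+\tfrac{c_0\delta_4}{4}\int_{T_1}^{T_2}\!\!\|\Fl{K}\|^2_{L^2_x H^s_{\gamma/2}}\dtau\leq\|\Fl{K}(T_1)\|^2_{L^2_{x,v}}+\mathcal{R}_1,
\end{equation*}
where $\mathcal{R}_1$ collects time integrals of weighted $L^2_{x,v}$-norms of $\Fl{K}$ together with a $C(1+K)\int_{T_1}^{T_2}\!\|\Fl{K}\|_{L^1_x L^1_\gamma}\dtau$ piece arising from $T_2,T_3$ and $T^2_R$ in Proposition~\ref{thm:L2-level-set}.

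For the spatial regularisation, apply Proposition~\ref{average-lemma-p} to $(\Fl{K})^2\One_{[T_1,T_2]}(t)$ with $\beta=s$, $p\in(1,p^\sharp]$ close to $1$, $m=3$ (so that $(1+m)/2=2$ matches the moment weight of the $\|\vint{v}^2\Fl{K}(T_1)\|^2_{L^{2p}_{x,v}}$ term in~\eqref{key-estimate-linear} via $\|\vint{v}^{1+m}\psi^2\|_{L^p}=\|\vint{v}^{(1+m)/2}\psi\|^2_{L^{2p}}$), and $r\in[0,1/p]$ tuned so that $s^\flat=s''$. The output has four pieces: the $T_1$-endpoint is precisely $\|\vint{v}^2\Fl{K}(T_1)\|^2_{L^{2p}_{x,v}}$; the $T_2$-endpoint is, after the $(1-\Delta_v)^{-m/2}$-smoothing and weighted Young's inequality, dominated by $\|\Fl{K}(T_2)\|^2_{L^2_{x,v}}$ (plus lower-order moments) and then reabsorbed into the first LHS piece using the $1/C_0$ prefactor with $C_0$ large; the $L^2_{t,x} H^s_v$-dissipation of $(\Fl{K})^2$ is controlled by that of $\Fl{K}$ and its $L^\infty_t L^2_{x,v}$-norm; and the source integral---whose $L^1_{t,x,v}$-bound is delivered by Proposition~\ref{T1} and upgraded to the required $L^p_{t,x,v}$-bound via the $L^1\!\to\! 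L^p$ action of the Bessel kernel of $(1-\Delta_x-\del^2_t)^{-r/2}$ on $\R\times\T^3$ (Young's inequality)---is bounded by $\|\vint{v}^{j/2}\Fl{K}(T_1)\|^2_{L^2_{x,v}}$ plus weighted $L^2_{t,x,v}$- and $(1+K)\,L^1_{t,x,v}$-norms of $\Fl{K}$ of the same flavour as in $\mathcal{R}_1$.

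Each $\Fl{K}$-norm on the collected right-hand side is a weighted $L^q_{t,x,v}$-norm with $q\in\{1,2\}$ and a velocity moment bounded in terms of $k_0$; by Lemma~\ref{Interpolationlemma} it is dominated by $C\,\CalE_p(M,T_1,T_2)^{r_\ast/q}/(K-M)^{(\xi_\ast-2q)/q}$ with $r_\ast/q>1$ thanks to~\eqref{def:r-ast}. The $(1+K)$ and $K$ factors from Propositions~\ref{thm:L2-level-set} and~\ref{T1} combine with one power $(K-M)^{-1}$ peeled off the interpolation exponent to produce the prefactor $K/(K-M)$ of~\eqref{key-estimate-linear}. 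The mirror argument for $h=-f$ is identical upon using Proposition~\ref{thm:level-set-minus-f} and the $h$-analogue of Proposition~\ref{T1}. The principal obstacle is purely a bookkeeping one: the tuple $(p,s'',m,r,\ell_0)$ together with the interpolation exponents $(\theta_\ast,q_\ast,\xi_\ast,r_\ast,\beta_\ast,\beta')$ of Remark~\ref{rmk:parameters} must be chosen simultaneously so that every remainder exponent $\beta_i=r_\ast/q_i>1$ (the degree-inhomogeneity fuelling the forthcoming De~Giorgi iteration), the moment $\ell_0+\ell$ absorbs every $v$-weight arising from Propositions~\ref{thm:L2-level-set},~\ref{T1} and from the $\vint{v}^{1+m}$-factor in Proposition~\ref{average-lemma-p}, and the source bound of Proposition~\ref{T1} matches the averaging lemma's $L^p_{t,x,v}$-requirement for the chosen $r$; the parameter windows~\eqref{cond:p-sharp},~\eqref{cond:parameter-p-1},~\eqref{cond:ell-0-1-1} and Remark~\ref{rmk:parameters} render all these constraints simultaneously satisfiable.
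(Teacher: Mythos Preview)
Your overall plan---Proposition~\ref{thm:L2-level-set} for the first two pieces of the left side, Proposition~\ref{average-lemma-p} applied to $(\Fl{K})^2$ for the third, and Lemma~\ref{Interpolationlemma} to close---is the paper's. But two of the steps you sketch do not go through as written.

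\emph{Source term.} The averaging lemma requires the source in $L^p_{t,x,v}$, namely $\|\vint{v}^{1+m}(1-\Delta_x-\del_t^2)^{-r/2}(1-\Delta_v)^{-m/2}\tilde{\CalF}\|_{L^p_{t,x,v}}$. With $m=3$ you spend the full $(1-\Delta_v)^{-3/2}$ on Proposition~\ref{T1} (which needs $\kappa>2$) and obtain only an $L^1_{t,x,v}$ bound; the $(t,x)$-Bessel convolution then upgrades $L^1_{t,x}\to L^p_{t,x}$ pointwise in $v$, landing you in $L^p_{t,x}L^1_v$, not $L^p_{t,x,v}$. The paper instead writes $m=\kappa+\sigma$ and $r=\sigma$: the factor $(1-\Delta_v)^{-\kappa/2}$ with $\kappa>2$ goes to Proposition~\ref{T1}, while the residual $(1-\Delta_x-\del_t^2)^{-\sigma/2}(1-\Delta_v)^{-\sigma/2}$ furnishes the embedding $L^1\hookrightarrow H^{-\sigma,p}$ across all of $(t,x,v)$ (whence the constraint $\sigma p^\ast>6$ in~\eqref{cond:p-p-prime}).

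\emph{The $T_2$ endpoint.} The endpoint in Proposition~\ref{average-lemma-p} carries the weight $\vint{v}^{1+m}$, so the best your smoothing produces is $\|\vint{v}^{2}\Fl{K}(T_2)\|^2_{L^2_{x,v}}$, which cannot be absorbed into the \emph{unweighted} $\|\Fl{K}(T_2)\|^2_{L^2_{x,v}}$ on the left of~\eqref{key-estimate-linear}. The paper never tries to absorb a $T_2$-quantity: after the same embedding the endpoint reduces to $\|\vint{v}^4(I-\Delta_v)^{-\kappa/2}(\Fl{K}(T_2))^2\|_{L^1_{x,v}}$, and integrating the equation for $(\Fl{K})^2$ first in $x$ and then in $t$ over $[T_1,T_2]$ expresses this exactly as $\|\vint{v}^2\Fl{K}(T_1)\|^2_{L^2_{x,v}}$ plus the same source integral already controlled by Proposition~\ref{T1}, so nothing at time $T_2$ survives. (A minor point: taking $\beta=s$ blocks Lemma~\ref{app-square-f}, which needs a strict gap $\beta'<\beta$; the paper chooses $\beta\in(0,s)$.)
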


\begin{proof}
We start with the bound of the term that involves the $x$-derivative on the left-hand side of~\eqref{key-estimate-linear}. This constitutes the main part of the proof. To this end, fix $\sigma \in (0, 1/2)$. From equation \eqref{BEe1} one has that
\begin{align} \label{eq:Fl-k-2}
   \frac{\text{d}}{\dt} \vpran{\Fl{K}}^{2} 
   + v\cdot\nabla_{x} \vpran{\Fl{K}}^{2} 
= 2\tilde{Q}(G,F)\vint{v}^{\ell}\Fl{K}
\Denote
   (1-\Delta_{x} - \del^{2}_{t})^{\frac\sigma2}(1-\Delta_{v} )^{\frac\sigma2+\frac{\kappa}{2}}\CalG^{(\ell)}_{K},
\qquad
   \kappa > 2,
\end{align}
that is, we have defined $\CalG^{(\ell)}_K$ as
\begin{align*}
   \CalG^{(\ell)}_K
= 2 (1-\Delta_{x} - \del^{2}_{t})^{-\frac\sigma2}
    (1-\Delta_{v} )^{-\frac{\sigma}{2} - \frac{\kappa}{2}}
    \vpran{\tilde Q(G, F) \vint{v}^{\ell}\Fl{K}},
\qquad
  \kappa > 2,
\end{align*}
where $\kappa$ can be any number larger than 2. In what follows we take
\begin{align} \label{assump:kappa-sigma}
   \sigma + \kappa \leq 3. 
\end{align}
Choose the parameters in Proposition \ref{average-lemma-p} as
\begin{align*}
   m=\kappa+\sigma, 
\quad
   \beta \in (0, s),
\quad
   s^\flat = \frac{(1- 2\sigma)\beta_-}{2(1+\sigma + \kappa + \beta)} =: s'' < \min \left\{\beta, \,\, \frac{(1- \sigma p)\beta_-}{p(1+\sigma + \kappa + \beta)} \right\},
\quad
   r = \sigma,
\quad
  \kappa > 2,
\end{align*}
where $1 < p < 2$ is chosen to be close enough to 1 such that ~\eqref{cond:parameter-p-1} holds and
\begin{align} \label{cond:p-p-prime}
   \sigma p < 1,  
\qquad
   1 < p < \frac{p}{2-p} < q_\ast,
\qquad
    \sigma p^\ast = \sigma p /(p-1)> 6,
\end{align}
where $q_\ast$ is defined in~\eqref{def:q-ast-theta-ast} and the third condition guarantees that
\begin{align} \label{embedding-1}
   H^{-\sigma,p} \vpran{\T^3_x \times \R^3_v}
\supseteq 
   L^{1} \vpran{\T^3_x \times \R^3_v}
\quad \text{since} \quad  
   H^{\sigma, p^\ast} \vpran{\T^3_x \times \R^3_v}
\subseteq 
   L^{\infty} \vpran{\T^3_x \times \R^3_v}.
\end{align}

\Ni With the choices of these parameters and~\eqref{assump:kappa-sigma}, we now apply Proposition \ref{average-lemma-p} and obtain
that
\begin{align} \label{ineq:x-reg-level}
  \norm{(1-\Delta_{x})^{\frac {s''}{2}} \vpran{\Fl{K}}^{2}}_{L^p_{t,x,v}} 
&\leq 
  C \Big(\norm{\vint{v}^{4} \vpran{\Fl{K}(T_{1})}^{2}}_{L^p_{x,v}} 
             + \norm{\vint{v}^{4} (I - \Delta_v)^{-\kappa/2}\vpran{\Fl{K}(T_{2})}^{2}}_{H^{-\sigma, p}_{x, v}} 
                    \nn
\\
&\qquad
   + \norm{\vpran{\Fl{K}}^{2}}_{L^p_{t,x,v}}
   + \norm{(-\Delta_{v})^{\frac{\beta}{2}}\vpran{\Fl{K}}^{2}}_{L^p_{t,x,v}} 
   + \norm{\vint{v}^{1+\sigma+\kappa}\CalG^{(\ell)}_{K}}_{L^{p}} \Big) \nn
\\
& \leq 
   C \Big(\norm{\vint{v}^2 \vpran{\Fl{K}(T_{1})}}_{L^{2p}_{x,v}}^2 
              + \norm{\vint{v}^4 (I - \Delta_v)^{-\kappa/2} \vpran{\Fl{K}(T_{2})}^2}_{L^1_{x,v}}  \nn
\\
&\qquad
                 + \norm{\vpran{\Fl{K}}^{2}}_{L^p_{t,x,v}}
   + \norm{(-\Delta_{v})^{\frac{\beta}{2}}\vpran{\Fl{K}}^{2}}_{L^p_{t,x,v}} 
   + \norm{\vint{v}^{4}\CalG^{(\ell)}_{K}}_{L^{p}} \Big).
\end{align}
In what follows, we bound the terms on the right-hand side of~\eqref{ineq:x-reg-level} in order with the bound for $\Fl{K}(T_2)$ left to the end. Let $n = 0$ in Lemma~\ref{Interpolationlemma}. Then the third term on the right-hand side of~\eqref{ineq:x-reg-level} is bounded as
\begin{align}\label{estT1-2}
   \norm{\vpran{\Fl{K}}^{2}}_{L^p_{t,x,v}}
\leq 
   \frac{\tilde{C}_{0} \,\CalE_{p}(M,T_{1},T_{2})^{\frac{r_\ast}{p}}}{(K-M)^{\frac{\xi_\ast-2p}{p}}},
\qquad
     \text{with $r_\ast > p$ and $\xi_\ast > 2p$.}
\end{align}
For the fourth term one invokes Lemma \ref{app-square-f} with $p'=p/(2-p)$ to get
\begin{align*}
& \hspace{-2mm}
    \norm{(-\Delta_{v})^{\frac{\beta}{2}} \vpran{\Fl{K}}^{2}}^{p}_{L^p_{t,x,v}} 
 = \int^{T_2}_{T_1}\int_{\T^{3}} 
      \norm{(-\Delta_{v})^{\frac{\beta}{2}} \vpran{\Fl{K}}^{2}}^{p}_{L^p_{v}}\dx\,\dtau
\\
&\leq  
  C  \int^{T_2}_{T_1} \int_{\T^{3}} 
      \vpran{\norm{(-\Delta_{v})^{\frac{s}{2}} \Fl{K}}^{p}_{L^{2}_{v}}
                  \norm{\vpran{\Fl{K}}^{2}}^{\frac{p}{2}}_{L^{p'}_{v}} 
                  + \norm{\vpran{\Fl{K}}^{2}}^{p}_{L^{p}_{v}}} \dx\dtau
\\
&\leq 
  C \vpran{\int^{T_2}_{T_1} \norm{(-\Delta_{v})^{\frac{s}{2}} \Fl{K}}^{2}_{L^{2}_{x,v}}\dtau}^{\frac{p}{2}}
   \vpran{\int^{T_2}_{T_1} \norm{\vpran{\Fl{K}}^{2}}^{p'}_{L^{p'}_{x,v}}\dtau}^{\frac{2-p}{2}} 
   + C \int^{T_2}_{T_1} \norm{\vpran{\Fl{K}}^{2}}^{p}_{L^{p}_{x,v}} \dtau.
\end{align*}
Since 
~\eqref{cond:p-p-prime} holds, we can apply Lemma \ref{Interpolationlemma} in the $p$ and $p'$ norms  with $n=0$ to obtain that
\begin{align}\label{estT2-1}
   \norm{(-\Delta_{v})^{\frac{s'}{2}}\vpran{\Fl{K}}^{2}}_{L^p_{t,x,v}} 
\leq 
   \tilde{C}_0 \vpran{ \frac{\CalE_{p}(M,T_{1},T_{2})^{\frac{1}{2}\vpran{1+r_\ast'/p'}}}{(K-M)^{a_2}} + \frac{\CalE_{p}(M,T_{1},T_{2})^{ \frac{r_\ast}{p}}}{(K-M)^{a_1}}},
\end{align}  
where the parameters satisfy
\begin{align} \label{def:a-1-2}
 &   r'_\ast > p', 
\qquad
  \tfrac{1}{2} \vpran{1 + r'_\ast/p'} > 1,
\qquad
   r_\ast > p,   \nn
\\
& \hspace{-3mm}
  a_1 = \vpran{\xi_\ast - 2 p}/p > 0.
\qquad
  a_2 = \vpran{\xi'_\ast - 2 p'}/p' > 0.
\end{align}
So far for Lemma~\ref{Interpolationlemma} to apply, we need
\begin{align} \label{cond:ell-0-1}
   \ell_0 
\geq 
  \frac{a_0(s, s'')}{1-\beta'(s, s'')}, 
\qquad
  a_0(p, s) = \frac{2\beta_\ast(s, s'')}{1-\beta_\ast(s, s'')}, 
\end{align}
where $\beta', \beta_\ast$ are defined in the proof of Lemma~\ref{Interpolationlemma}. 

\Ni Next we bound the last term on the right-hand side of~\eqref{ineq:x-reg-level}.  Using Lemma~\ref{prop:equivalence}, the embedding in~\eqref{embedding-1}, the assumptions for $G$ in~\eqref{bi-hyp*} and Proposition \ref{T1} with $j=4$, we get
\begin{align} \label{bound:G-K-ell-1}
   \norm{\vint{v}^{4} \CalG^{(\ell)}_{K}}_{L^p_{t,x,v}} 
&= 2 \vpran{\int^{T_2}_{T_1} \norm{\vint{v}^{4} (1-\Delta_{x} - \del_{t}^2)^{-\frac\sigma2}(1-\Delta_{v})^{-\frac\sigma2-\frac{\kappa}{2}}
    \vpran{\tilde{Q}(G,F)\vint{v}^{\ell}\Fl{K}}}^{p}_{L^{p}_{x,v}}\dtau}^{\frac 1p}  \nn
\\
&\leq  
  2 C_{\sigma} \int^{T_2}_{T_1} \big\| (1-\Delta_{v})^{-\kappa/2}\big(\tilde{Q}(G,F)\vint{v}^{\ell+4}\Fl{K}\big)\big\|_{L^{1}_{x,v}}\dtau \nn
\\
&\leq 
    C\,\| \vint{v}^{2} \Fl{K}(T_{1}) \|^{2}_{L^{2}_{x,v}}
   + C \int^{T_2}_{T_1} \|\Fl{K}\|^{2}_{L^2_x H^{s}_{\gamma/2 }} \dt  \nn
\\
& \quad \,
   + C_\ell \int_{T_1}^{T_2}
         \|\vint{v}^6 \Fl{K}\|^{2}_{L^{2}_{x, v}} \dt 
   + C _\ell (1 + K) \int^{T_2}_{T_1} \|\vint{v}^5 \Fl{K}\|_{L^{1}_{x, v}} \dt.
\end{align}
Letting $n=12$ and $n=5$ respectively in Lemma \ref{Interpolationlemma}, we can bound the last two terms in~\eqref{bound:G-K-ell-1} as
\begin{align} \label{bound:level-L-2-1}
   \int^{T_2}_{T_1} \|\vint{v}^6 \Fl{K}\|^{2}_{L^{2}_{x,v}} \dt 
&\leq 
   \frac{2^{2p-2}}{(K-M)^{2p-2}} \int^{T_2}_{T_1}\|\vint{\cdot}^{\frac{12}{2p}}f^{(\ell)}_{\frac{K+M}{2},+}\|^{2p}_{L^{2p}_{x,v}}\dtau 
\leq 
   \tilde C_0 \frac{ \CalE_{p}(M,T_{1},T_{2})^{ r_\ast } }{(K-M)^{\xi_\ast-2}}, \nn
\\
   \int^{T_2}_{T_1} \|\vint{v}^5 \Fl{K}\|_{L^{1}_{x,v}} \dt
& \leq 
   \frac{2^{2p-1}}{(K-M)^{2p-1}} \int^{T_2}_{T_1}\|\vint{\cdot}^{\frac{5}{2p}}f^{(\ell)}_{\frac{K+M}{2},+}\|^{2p}_{L^{2p}_{x,v}}\dtau 
\leq 
   \tilde C_0 \frac{ \CalE_{p}(M,T_{1},T_{2})^{ r_\ast } }{(K-M)^{\xi_\ast-1}},
\end{align}
where for such estimates to hold, we require that
\begin{align} \label{cond:ell-0-2}
  \ell_0 
\geq 
   \frac{a_0(s, s'')}{1-\beta'(s, s'')}, 
\qquad
  a_0(s, s'') = \frac{1}{1-\beta_\ast(s, s'')} \vpran{\frac{12}{\xi_\ast(s, s'')} + 2\beta_\ast(s, s'')}, 
\qquad
  \beta' = \beta' (s, s''),
\end{align}
where again $\beta', \beta_\ast$ are defined in the proof of Lemma~\ref{Interpolationlemma}. Then we are led to
\begin{align*}
  \norm{\vint{v}^{4}\,\CalG^{(\ell)}_{K}}_{L^{p}} 
&\leq 
  C\,\| \vint{v}^{2} \Fl{K}(T_{1}) \|^{2}_{L^{2}_{x,v}}
   + C \int^{T_2}_{T_1} \|\Fl{K}\|^{2}_{L^2_x H^{s}_{\gamma/2 }} \dt \nn
\\
& \quad \,
   + \tilde C_0 \big(1 + K \big) \frac{ \CalE_{p}(M,T_{1},T_{2})^{ r_\ast } }{(K-M)^{\xi_\ast-1}}
   + \tilde C_0 \frac{ \CalE_{p}(M,T_{1},T_{2})^{ r_\ast } }{(K-M)^{\xi_\ast-2}},
\qquad
   \xi_\ast > 2.
\end{align*}   
Since $\frac{K}{K-M}\geq1$, we have that
\begin{align*}
\big(1 + K \big) \frac{ \CalE_{p}(M,T_{1},T_{2})^{ r_\ast } }{(K-M)^{\xi_\ast-1}} &= \frac{ \CalE_{p}(M,T_{1},T_{2})^{ r_\ast } }{(K-M)^{\xi_\ast-1}} + \frac{K}{K-M}\frac{ \CalE_{p}(M,T_{1},T_{2})^{ r_\ast } }{(K-M)^{\xi_\ast-2}}\\
&\leq \frac{K}{K-M}\bigg(\frac{ \CalE_{p}(M,T_{1},T_{2})^{ r_\ast } }{(K-M)^{\xi_\ast-1}} +\frac{ \CalE_{p}(M,T_{1},T_{2})^{ r_\ast } }{(K-M)^{\xi_\ast-2}}\bigg).
\end{align*}   
Therefore, we conclude that
\begin{align}\label{estT3-1}
\norm{\vint{v}^{4}\,\CalG^{(\ell)}_{K}}_{L^{p}} 
 & \leq 
  C\,\| \vint{v}^{2} \Fl{K}(T_{1}) \|^{2}_{L^{2}_{x,v}}
   + C \int^{T_2}_{T_1} \! \|\Fl{K}\|^{2}_{L^2_x H^{s}_{\gamma/2 }} \dt \nn
\\
& \quad \,
    +\frac{\tilde{C}_0\,K}{K-M}\bigg(\frac{ \CalE_{p}(M,T_{1},T_{2})^{ r_\ast } }{(K-M)^{\xi_\ast-1}} +\frac{ \CalE_{p}(M,T_{1},T_{2})^{ r_\ast } }{(K-M)^{\xi_\ast-2}}\bigg), 
\end{align}
with constants $C, \tilde C_0$ independent of $\epsilon\in[0,1]$. 

\Ni Finally we bound the second term on the right-hand side of~\eqref{ineq:x-reg-level}. By the positivity of the Bessel potential and Fubini's theorem,
\begin{align*}
   \norm{\vint{v}^4 (I - \Delta_v)^{-\kappa/2} \vpran{\Fl{K}(T_{2})}^2}_{L^{1}_{x,v}}
= \int_{\R^3} \vint{v}^4 (I - \Delta_v)^{-\kappa/2}
       \vpran{\int_{\T^3} \vpran{\Fl{K}(T_{2})}^2 \dx} \dv. 
\end{align*}
Integrating equation~\eqref{eq:Fl-k-2} first in $x$ and then in $t, v$ gives
\begin{align*}
& \quad \,
   \int_{\R^3} \vint{v}^4 (I - \Delta_v)^{-\kappa/2}
       \vpran{\int_{\T^3} \vpran{\Fl{K}(T_{2})}^2 \dx} \dv
\\
&\leq \int_{\T^3} \int_{\R^3} \vint{v}^4 
       \vpran{\Fl{K}(T_{1})}^2 \dv\dx
      + \int_{T_1}^{T_2} \int_{\T^3} \int_{\R^3}
           \vint{v}^4 (I - \Delta_v)^{-\kappa/2} 
           \vpran{\tilde Q(G, F) \vint{v}^\ell \Fl{k} }
         \dv\dx\dt
\\
& \leq 
   \norm{\vint{v}^2 \Fl{K}(T_1)}_{L^2_{x,v}}^2
      + \int_{T_1}^{T_2} \norm{(1-\Delta_{v})^{-\kappa/2}\vpran{\tilde{Q}(G,F)\vint{v}^{\ell+4}\Fl{K}}}_{L^{1}_{x,v}} \dt,
\end{align*}
where the last term satisfies the same bound as in~\eqref{bound:G-K-ell-1}. Hence the term involving $\Fl{K}(T_2)$ does not add new terms to the bound. Overall, we obtain from \eqref{ineq:x-reg-level}, \eqref{estT1-2}, \eqref{estT2-1}, \eqref{estT3-1} that
\begin{align}\label{estT4-1}
   \frac{1}{C_0} 
   \norm{(1 - \Delta_{x})^{\frac {s''}{2}}\vpran{\Fl{K}}^{2}}_{L^p_{t,x,v}} 
&\leq 
   \frac{C}{C_0} \vpran{\norm{\vint{v}^2 \Fl{K}(T_{1})}^{2}_{L^{2p}_{x,v}}  
      + \big\| \vint{v}^{2} \Fl{K}(T_{1})\big\|^{2}_{L^{2}_{x,v}}}  \nn
\\
&\quad \,
  + \frac{C_{\ell}}{C_0} \int^{T_2}_{T_1} \big\|\vint{v}^{\gamma/2}(1 -\Delta_{v})^{\frac s2}\Fl{K} \big\|^{2}_{L^{2}_{x,v}} \dtau \nn
  \\
&\quad \, 
   + \frac{\tilde{C}_0}{C_0} 
      \frac{K}{K-M}
   \sum^{4}_{i=1}\frac{ \CalE_{p}(M,T_{1},T_{2})^{\beta_{i}} }{(K-M)^{a_i}},\end{align}
where the constants $C, C_\ell, C_0, \tilde C_0$ are independent of $f, K, M, T_1, T_2$ and as a summary, 
\begin{align} \label{def:a-beta-i}
&  \hspace{1cm} \beta_1 = r_\ast/p, 
\qquad
   \beta_2 = \tfrac{1}{2} \vpran{1 + r'_\ast/p'}, 
\qquad
  \beta_3 = r_\ast,  
\qquad
  \beta_4 = r_\ast,
  \nn
\\
 & a_1 = (\xi_\ast-2p)/p, 
\qquad
   a_2 = \vpran{\xi'_\ast - 2 p'}/p', 
\qquad
   a_3 = \xi_\ast - 1,
\qquad
   a_4 = \xi_\ast - 2.
\end{align}
Note that $\beta_i > 1$ and $a_i > 0$ for all $i =1, \cdots, 4$.

\smallskip
\Ni For the first two terms on the left-hand side of~\eqref{key-estimate-linear} we invoke Proposition \ref{thm:L2-level-set} to get 
\begin{align*}
& \quad \, 
  \norm{\Fl{K}(T_2)}^{2}_{L^{2}_{x,v}} 
  + \frac{c_0 \delta_4}{4} 
      \int^{T_2}_{T_1} \norm{\Fl{K}(\tau)}^{2}_{L^{2}_{x} H^s_{\gamma/2}}\dtau 
 \\
& \leq 
   \norm{\Fl{K}(T_1)}^{2}_{L^{2}_{x,v}}  
   + C_{\ell} \int^{T_2}_{T_1} \norm{\Fl{K}(\tau)}^{2}_{L^{2}_{x,v}}\dtau
   + C_\ell (1 + K) \int^{T_2}_{T_1} \norm{\Fl{K}(\tau)}_{L^1_x L^1_\gamma} \dtau
\\
&\leq 
   \norm{\Fl{K}(T_1)}^{2}_{L^{2}_{x,v}}  
   + \frac{\tilde C_0\, K}{K-M}\bigg(\frac{ \CalE_{p}(M,T_{1},T_{2})^{ r_\ast } }{(K-M)^{\xi_\ast-1}}
   + \frac{ \CalE_{p}(M,T_{1},T_{2})^{ r_\ast}}{(K-M)^{\xi_\ast-2}}\bigg),
\end{align*}
where the last step follows from similar bounds as in~\eqref{bound:level-L-2-1} and the subsequent procedure that led to~\eqref{estT3-1}. Together with \eqref{estT4-1} and by choosing $C_0=C_0(s,\ell)>0$ sufficiently large such that 
\begin{align*}
     \frac{C_{\ell}}{C_0}
\leq
    \frac{c_0 \delta_4}{8},
\end{align*}
we obtain the desired estimate in~\eqref{key-estimate-linear}. 

\smallskip
\Ni Since $(-f)^{(\ell)}_{K, +}$ satisfies the same bound as $\Fl{K}$ in Proposition~\ref{T1}, the same estimate for $(-f)^{(\ell)}_{K, +}$ as in~\eqref{key-estimate-linear} holds with Proposition~\ref{thm:level-set-minus-f} replacing Proposition~\ref{thm:L2-level-set}. 
\end{proof}

Before showing the $L^\infty$-bound of $f$, we need a closed $L^2$-bound of the zeroth level energy $\CalE_0$ given ~by
\begin{align} \label{def:CalE-0}
   \CalE_0 := \CalE_p(0, 0, T)
&= \sup_{ t \in [0 , T] } \norm{f^{(\ell)}_+}^{2}_{L^{2}_{x,v}} 
+  \int^{T}_{0}\int_{\T^{3}} \norm{\vint{\cdot}^{\gamma/2}f^{(\ell)}_+}^{2}_{H^{s}_{v}} \dx\,\dtau  \nn
\\
& \hspace{4cm}
+ \frac{1}{C_0} \vpran{\int^{T}_{0}\big\| (1-\Delta_{x})^{\frac {s''}{2}}\vpran{f^{(\ell)}_+}^{2} \big\|^{p}_{L^{p}_{x,v}}\dtau}^{\frac{1}{p}},
\end{align}
where $f_+$ denotes the positive part of $f$ and
\begin{align*}
   f^{(\ell)}_+ = \vint{v}^\ell f_+.
\end{align*}
\begin{prop} \label{prop:bound-CalE-0}
Let $T>0$ be fixed and $\Eps \in [0, 1]$, $s \in (0, 1)$.  Assume that the given function $G$ satisfies~\eqref{cond:coercivity-1} and
\begin{align} \label{bi-hyp*-1}
   G = \mu + g \geq 0, 
\qquad
    \sup_{t, x} \| g \|_{ L^{1}_{\gamma} } \leq \delta_{0},
\qquad
   \sup_{t,x}\| g \|_{ L^{\infty}_{k_0} } \leq C.
\end{align}
Fix $\ell$ such that
\begin{align*}
   \max\{8+\gamma, 3 + 2\alpha\} \leq \ell\leq k_0 - 5 -\gamma,
\end{align*}
and assume that $f$ is a solution of ~\eqref{BEe1} which satisfies $\mu + f \geq 0$.
Then for any $0 < s' < \frac{s}{2(s+3)}$, there exist $s'' \in (0, \,\, s' \frac{\gamma}{2 \ell + \gamma})$ and $p^\flat:=p^\flat(\ell,\gamma,s,s') > 1$ such that for any $1 < p < p^\flat$, we have
\begin{align}\label{bi-initial-E0-linear}
     \CalE_0
 \leq 
    C_\ell e^{C_\ell \,T}\max_{j \in\{1/p, p'/p\}}
      \vpran{\norm{\vint{\cdot}^{\ell} f_0}^{2j}_{L^{2}_{x,v}} 
      + \sup_{t,x} \|g\|^{2j}_{L^{\infty}_{k_0}} T^{j} + \epsilon^{2j} T^j},
 \qquad
    p' = p/(2-p).
\end{align}
The same estimate holds for $(-f)^{\ell}_+$ and its associated $\CalE_0$. 
\end{prop}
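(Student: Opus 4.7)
\medskip

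\noindent
\textbf{Proof plan.} The plan is to handle the three components of $\CalE_0$ in turn, using that with $K=M=0$ the entire level-set machinery of Section~\ref{Sec:a-priori-linear} simplifies, since the delicate singular factor $(K-M)^{-a}$ appearing in Proposition~\ref{thm:SV-energy-functional-linear} no longer arises.

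\emph{Stage 1: the first two pieces.} I would apply Proposition~\ref{thm:L2-level-set} with $K=0$ to the level-set function $f^{(\ell)}_{+}$, combined with the $\Eps L_\alpha$-bound from part (b). Under the hypothesis $\sup_{t,x}\|g\|_{L^1_\gamma}\le\delta_0$, the $L^1_{\gamma}$-dissipation term absorbs the first error term, and what remains is a differential inequality of the form
\begin{align*}
   \frac{\rm d}{\dt}\norm{f^{(\ell)}_+}_{L^2_{x,v}}^2
   + \tfrac{c_0\delta_4}{4}\norm{f^{(\ell)}_+}_{L^2_x H^s_{\gamma/2}}^2
\leq
   C_\ell\Sigma(g)\,\norm{f^{(\ell)}_+}_{L^2_{x,v}}^2
   + C_\ell\!\vpran{\Eps^2+\sup_{t,x}\norm{g}_{L^\infty_{k_0}}^2}.
\end{align*}
Integrating on $[0,T]$ and applying Gr\"onwall as in Corollary~\ref{cor:bi-cor-basic-energy-estimate-linear} bounds $\sup_t\norm{f^{(\ell)}_+}_{L^2_{x,v}}^2+\int_0^T\norm{f^{(\ell)}_+}^2_{L^2_x H^s_{\gamma/2}}\dtau$ by $C_\ell e^{C_\ell T}(\norm{\vint{v}^\ell f_0}^2_{L^2_{x,v}}+\sup_{t,x}\norm{g}^2_{L^\infty_{k_0}}T+\Eps^2 T)$, which matches the right side of~\eqref{bi-initial-E0-linear} with exponent $j=1$ (and $j=1\le p'/p$, $j=1\ge 1/p$ are both dominated by the $\max$).

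\emph{Stage 2: the spatial regularisation piece.} For the third component, I follow the template of Proposition~\ref{thm:SV-energy-functional-linear} but specialised to $K=0$. Applying Proposition~\ref{average-lemma-p} to $(f^{(\ell)}_{+})^2$ with $\beta\in(0,s)$, $m=\kappa+\sigma$, $r=\sigma$, $\kappa>2$, $\sigma p<1$ and $p$ close to $1$, I obtain
\begin{align*}
   \norm{(1-\Delta_x)^{s''/2}(f^{(\ell)}_+)^2}_{L^p_{t,x,v}}
\leq
  C\bigl(\text{initial}+\text{terminal}+\norm{(f^{(\ell)}_+)^2}_{L^p}+\norm{(-\Delta_v)^{\beta/2}(f^{(\ell)}_+)^2}_{L^p}+\norm{\text{source}}\bigr),
\end{align*}
with $s''$ forced by Proposition~\ref{average-lemma-p}; the constraint $s''<s'\gamma/(2\ell+\gamma)$ comes from matching the polynomial-moment loss in $(1-\Delta_v)^{-m/2}\vint{v}^{1+m}$ against the weight $\vint{v}^{\ell}$ carried by $f^{(\ell)}_+$.

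\emph{Stage 3: closing the estimate.} Each term on the right is re-expressed via quantities from Stage 1. The boundary and the $L^p_{t,x,v}$ terms reduce to $\norm{f^{(\ell)}_+}_{L^{2p}_{x,v}}^2$, which I interpolate between $L^2_{x,v}$ and $L^2_x H^s_v$ via the Sobolev embedding $H^s_v\hookrightarrow L^{2p}_v$ valid as long as $p<3/(3-2s)$. The velocity-regularity term is handled by Lemma~\ref{app-square-f}, producing
\begin{align*}
   \norm{(-\Delta_v)^{\beta/2}(f^{(\ell)}_+)^2}_{L^p}^p
\leq
  C\bigl(\norm{f^{(\ell)}_+}_{\dot H^s_v}^p\norm{f^{(\ell)}_+}^{p}_{L^{2p'}_v}
       +\norm{f^{(\ell)}_+}_{L^{2p}_v}^{2p}\bigr),
\end{align*}
and a second Sobolev embedding $H^s_v\hookrightarrow L^{2p'}_v$ is needed, giving the other threshold $p'<3/(3-2s)$; both are encoded in $p^\flat$. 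The source term is bounded by Proposition~\ref{T1} with $K=0$, producing only Stage~1 quantities. Raising the averaging-lemma inequality to the $1/p$ power produces one exponent $j=1/p$, while the Lemma~\ref{app-square-f} contribution $(\,\cdot\,)^{p/2}\cdot (\,\cdot\,)^{1/p}$ on the $L^{2p'}$ factor yields the other exponent $j=p'/p$; taking the maximum closes~\eqref{bi-initial-E0-linear}. The analogous estimate for $h=-f$ follows by the same template, using Proposition~\ref{thm:level-set-minus-f} in Stage 1 and the $h$-version of Proposition~\ref{T1} in Stage 2--3.

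The main obstacle is the parameter bookkeeping: choosing $(s'',p,\sigma,\kappa,\beta)$ so that simultaneously the averaging lemma applies, the product rule of Lemma~\ref{app-square-f} applies, both Sobolev embeddings above are valid, and the source-term bound from Proposition~\ref{T1} closes on Stage~1 quantities. All these constraints force $p$ close to $1$, which is exactly what the threshold $p^\flat$ encodes, and force $s''$ to be the small quantity $s''<s'\gamma/(2\ell+\gamma)$.
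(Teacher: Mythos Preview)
Your overall plan is reasonable in spirit, but Stage~3 contains a genuine gap and your explanation of where the constraints on $s''$ and $p^\flat$ come from is incorrect.

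The gap is in the control of $\norm{f^{(\ell)}_+}_{L^{2p}_{x,v}}$ (and the analogous $L^{2p'}_{x,v}$ term). The Sobolev embedding $H^s_v\hookrightarrow L^{2p}_v$ that you invoke only yields
\[
   \norm{f^{(\ell)}_+}_{L^2_x L^{2p}_v}
\leq C\,\norm{f^{(\ell)}_+}_{L^2_x H^s_v},
\]
which is \emph{not} the same as $\norm{f^{(\ell)}_+}_{L^{2p}_{x,v}}$. Passing from $L^2_x$ to $L^{2p}_x$ requires spatial regularity, and nothing in your Stage~1 provides it: the quantities $\sup_t\norm{f^{(\ell)}_+}_{L^2_{x,v}}$ and $\int_0^T\norm{f^{(\ell)}_+}^2_{L^2_xH^s_{\gamma/2}}\dtau$ carry no $x$-derivatives. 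Your Stage~2 averaging lemma does produce $x$-regularity, but on the \emph{left-hand side} --- on the very quantity you are trying to bound --- so it cannot be fed back into the right-hand side without a bootstrap or the level-set mechanism of Lemma~\ref{Interpolationlemma}, which is unavailable at $K=M=0$.

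The paper avoids this entirely by a different route. It does \emph{not} apply the averaging lemma to $(f^{(\ell)}_+)^2$. Instead it applies Lemma~\ref{app-square-f} in the \emph{spatial} variable (not in $v$), reducing the third piece of $\CalE_0$ to $\int_0^T\norm{(-\Delta_x)^{\beta/2}f^{(\ell)}_+}^2_{L^2_{x,v}}\dtau$ together with $L^{2p}_{x,v}$ and $L^{2p'}_{x,v}$ norms. The latter are handled by the mixed interpolation Lemma~\ref{app-inter-x-theta-1}, which bounds $L^{2p}_{x,v}$ by a product of $L^2_xH^s_v$ and $H^\beta_xL^2_v$ norms. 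The missing spatial regularity $\norm{(1-\Delta_x)^{s'/2}f}_{L^2_{x,v}}$ is supplied by Corollary~\ref{cor:bi-cor-basic-energy-estimate-linear} (the averaging lemma applied to $f$ itself, already in hand). The bridge between $H^\beta_x$ on $f^{(\ell)}_+$ and $H^{s'}_x$ on $f$ is a Young-inequality trade-off in Fourier: writing $\vint{v}^{2\ell}\vint{\eta}^{2\beta}\leq \tfrac{1}{q}\vint{v}^{2\ell q}+(1-\tfrac{1}{q})\vint{\eta}^{2\beta q/(q-1)}$ with $q=(2\ell+\gamma)/(2\ell)$ forces $\beta< s'\gamma/(2\ell+\gamma)$. \emph{This} is the origin of the constraint on $s''$ (and, through Lemma~\ref{app-inter-x-theta-1}, of $p^\flat=p^\flat(\ell,\gamma,s,s')$), not a moment-loss issue in the averaging lemma as you suggest. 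Your thresholds $p,p'<3/(3-2s)$ depend only on $s$ and would not recover the stated dependence of $p^\flat$ on $\ell,\gamma,s'$.
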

\begin{proof}
 The estimate for $\CalE_0$ follows from the basic energy estimates and the averaging lemma in earlier sections. 
By Corollary \ref{cor:bi-cor-basic-energy-estimate-linear}, the first two terms in $\CalE_0$ satisfy
\begin{align} \label{est:CalE-0-1}
& \quad \,
    \sup_{t \in [0,T)} \norm{f^{(\ell)}_+ (t)}^{2}_{L^{2}_{x,v}}  
 + \int^{T}_{0} \int_{\T^{3}}
        \norm{\vint{\cdot}^{\gamma/2}f^{(\ell)}_+}^{2}_{H^{s}_{v}} \dx\dt  \nn
\\
&\leq
       \sup_{t \in [0,T)} \norm{\vint{v}^\ell f(t)}^{2}_{L^{2}_{x,v}}  
 + \int^{T}_{0} \int_{\T^{3}}
        \norm{\vint{\cdot}^{\ell + \gamma/2} f}^{2}_{H^{s}_{v}} \dx\dt  \nn
\\
&\leq 
     C_\ell e^{C_\ell \,T} 
       \vpran{\norm{\vint{\cdot}^{\ell} f_0}^{2}_{L^{2}_{x,v}}  
                    + \sup_{t,x} \|g\|^{2}_{L^{\infty}_{k_0}} T
                    + \Eps^{2} T}
\Denote 
   C_\ell e^{C_\ell T} \CalD,
\end{align}
since by~\eqref{bi-hyp*-1} $0\leq\Sigma(g)\leq 1+C$.
%
Let us concentrate on the term with the spatial fractional differentiation.  Invoking Lemma \ref{app-square-f}, it follows that for $p\in(1,2)$, $0<s''<\beta\in(0,s')$,
\begin{align}\label{bi-smallE0-1}
& \quad \,
    \int^{T}_{0} 
    \norm{(1 - \Delta_x)^{\frac {s''}{2}} \vpran{f^{(\ell)}_+}^2}^p_{L^p_{x,v}} \dtau  \nn
\\
& \leq 
  C \int^{T}_{0} 
       \norm{(-\Delta_{x})^{\frac {\beta}{2}} f^{(\ell)}_+}^{2}_{L^2_{x,v}} \dtau 
 + C \int^{T}_{0} 
        \vpran{\norm{f^{(\ell)}_+}^{2p'}_{ L^{2p'}_{x,v}}  
                     + \norm{f^{(\ell)}_+}^{2p}_{ L^{2p}_{x,v} }} \dtau,
\quad \quad 
    p'= \frac{p}{2-p}>1.
\end{align}
The controls of the $L^{2p}$- and $L^{2p'}$-norms of $f^{(\ell)}_+$ are similar and both through suitable interpolations. First,
\begin{align} \label{def:interp-xi}
   \norm{f^{(\ell)}_+}_{ L^{2p}_{x,v}} 
 &\leq 
    \norm{f^{(\ell)}_+}^{1-\beta_p}_{L^{2}_{x,v}} 
    \norm{f^{(\ell)}_+}^{\beta_p}_{L^{\xi(p)}_{x,v}},
\quad \text{where} \quad 
   \xi(p)= \frac{2}{2-p}>2, 
\quad 
   \beta_{p}=\frac{1}{p}.
\end{align}
For any $\beta > 0$, let $(\eta, \eta') = (s, \beta)$ in Lemma~\ref{app-inter-x-theta-1}. Then by choosing $\xi(p) = r(s, \beta, 3)$ in that Lemma we have
\begin{align}\label{bi-smallE0-2-p}
  \norm{f^{(\ell)}_+}_{ L^{\xi(p)}_{x,v}} 
\leq   
   C \vpran{\int_{\T^3} \norm{f^{(\ell)}_+ (x,\cdot)}^2_{H^{s}_{v}} \dx}^{\frac{1}{2}} 
   + C \vpran{\int_{\R^3} \norm{f^{(\ell)}_+ (\cdot, v)}^2_{H^{\beta}_x} \dv}^{\frac{1}{2}}.
\end{align}
Consequently,  one is led to
\begin{align*}
\big\| f^{(\ell)}_+ \big\|^{2p}_{ L^{2p}_{x,v} }  \leq C\,\big\| f^{(\ell)}_+ \big\|^{2(p - 1)}_{ L^{2}_{x,v} }  \Big( \big\| (1 -\Delta_v)^{\frac{s}{2}}f^{(\ell)}_+\big\|^{2}_{L^{2}_{x,v}} + \big\|(1-\Delta_x)^{\frac{\beta}{2}} f^{(\ell)}_+\big\|^{2}_{L^{2}_{x,v}}\Big).
\end{align*}
If $\beta$ is in the range 
\begin{align} \label{range:beta-CalE-0}
    \beta \in \Big(0,  \ \frac{\gamma}{2 \ell + \gamma} s' \Big),
\end{align}
then we have the following interpolation
\begin{align}\label{bi-smallE0-2}
   \norm{(1-\Delta_x)^{\frac{\beta}{2}} f^{(\ell)}_+}^{2}_{L^2_{x,v}} 
\leq 
   C_{\ell,\gamma} \vpran{\norm{\vint{v}^{\gamma/2} f^{(\ell)}_+}^{2}_{ L^{2}_{x,v} } 
   +  \norm{(1 - \Delta_x)^{ \frac{s'}{2}} f^{(\ell)}_{+}}^{2}_{ L^2_{x,v}}}.
\end{align}
This can be seen by using the Plancherel and Young's inequalities:  
\begin{align*}
   \norm{(1-\Delta_x)^{\frac{\beta}{2}} f^{(\ell)}_+}^{2}_{L^2_{x,v}}
&= \int_{\R^3} \sum_{\eta \in \ZZ^3}
      \vint{v}^{2\ell} \vint{\eta}^{2\beta} \abs{\CalF_x\vpran{f^{(\ell)}_+}}^2 \dv
\\
& \leq
   \int_{\R^3} \sum_{\eta \in \ZZ^3}
      \vpran{\frac{1}{q} \vint{v}^{2\ell q} + \vpran{1 - 1/q} \vint{\eta}^{2\beta \frac{q}{q-1}}} \abs{\CalF_x\vpran{f^{(\ell)}_+}}^2 \dv.
\end{align*}
Take $q = \frac{2 \ell + \gamma}{2 \ell}$. Then the restriction on $\beta$ is that
\begin{align*}
   \beta < s' \vpran{1 - 1/q} = s' \frac{\gamma}{2 \ell + \gamma}.
\end{align*}
Since $\xi$ is an increasing function in $\beta$, we obtain the corresponding range for $\xi$ and for $p$ by~\eqref{def:interp-xi} as
\begin{align} \label{range:beta-p-flat}
  \beta \in \vpran{2, \ \ r \vpran{s, s' \tfrac{\gamma}{2\ell + \gamma}, 3}}
  =: (2, \ r^\flat), 
\qquad
   p \in (1, \ 2 - 2/ r^\flat)
   = : (1, \ p^\flat),
\end{align}
where $r(\cdot, \cdot, \cdot)$ is defined in Lemma~\ref{app-inter-x-theta-1}. It is clear by its definition that $p^\flat$ depends on $\ell, \gamma, s, s'$.
Using such parameters and combining the previous estimates, we obtain that
\begin{align*}
   \norm{f^{(\ell)}_+}^{2p}_{ L^{2p}_{x,v}}   
\leq 
  C \norm{f^{(\ell)}_+}^{2(p - 1)}_{ L^{2}_{x,v} }
     \vpran{\norm{\vint{v}^{\gamma/2} f^{(\ell)}_+}^{2}_{L^2_x H^s_v} 
     + \norm{(1 - \Delta_x)^{\frac{s'}{2}} f^{(\ell)}_{+}}^{2}_{L^{2}_{x,v}}}.
\end{align*}
Integrating this estimate in $t\in(0,T)$ and invoking Corollary \ref{cor:bi-cor-basic-energy-estimate-linear}, with $\ell-$moments, one is led to
\begin{align} \label{bi-smallE0-3}
    \int^{T}_{0} \norm{f^{(\ell)}_+}^{2p}_{ L^{2p}_{x,v} } \dtau 
 \leq  
    C \CalD^p,\qquad p\in(1,p^\flat).
\end{align}
Note that by making $p$ close enough to 1, we have $p' \in (1, p^\flat)$ where $p'$ is defined in~\eqref{bi-smallE0-1}. Therefore~\eqref{bi-smallE0-3} also holds with $p$ replaced by $p'$. 
Furthermore, integrating \eqref{bi-smallE0-2} in $t\in(0,T)$ and invoking Corollary \ref{cor:bi-cor-basic-energy-estimate-linear} once more, it holds that
\begin{align}\label{bi-smallE0-4}
   \int^{T}_{0} 
   \norm{(1-\Delta_x)^{\frac{\beta}{2}}f^{(\ell)}_+}^{2}_{L^{2}_{x,v}} \dtau 
\leq 
   C \int^{T}_{0} \vpran{\norm{f^{(\ell)}_+ \vint{v}^{\gamma/2}}^{2}_{ L^{2}_{x,v} } 
     + \norm{(1 - \Delta_x)^{ \frac{s'}{2}} f_{+}}^{ 2 }_{ L^{2}_{x,v}}} \dtau 
\leq 
   C \CalD.
\end{align}
Using the estimates \eqref{bi-smallE0-3}-\eqref{bi-smallE0-4} in the estimate \eqref{bi-smallE0-1},  we conclude that
\begin{align*}
   \vpran{\int^{T}_{0} 
     \norm{(1-\Delta_{x})^{\frac {s''}{2}} \vpran{f^{(\ell)}_+}^{2}}^{p}_{L^{p}_{x,v}} \dtau}^{\frac{1}{p}} 
\leq  
   C \vpran{\CalD^{\frac{1}{p}}+ \CalD^{\frac{p'}{p}}},
\end{align*}
which combined with~\eqref{est:CalE-0-1} gives~\eqref{bi-initial-E0-linear}. 

The same estimate holds for $(-f)^{\ell}_+$ and its associated $\CalE_0$
since Corollary~\ref{cor:bi-cor-basic-energy-estimate-linear} applies to the absolute value of $f$, which dominates both the negative and positive parts of $f$. 
\end{proof}

We are now ready to build the main $L^\infty$-estimate for the linear equation~\eqref{BEe1}.
 
\begin{thm}[Linear case]\label{central-bilinear-T}
Suppose $G = \mu + g \geq 0$ satisfies that
\begin{align*}
    \inf_{t, x} \norm{G}_{L^1_{v}} \geq D_0 > 0, 
\qquad
   \sup_{t, x} \vpran{\norm{G}_{L^1_2} + \norm{G}_{L\log L}} 
   < E_0 < \infty.
\end{align*}
Let $F = \mu + f\geq0$ be a solution to equation~\eqref{BEe1} with $s \in (0, 1)$.  Assume the following holds:
\begin{align*}
   \sup_{t,x} \norm{\vint{v}^{\gamma}g}_{L^{1}_{v} } 
\leq \delta_{0},
\qquad 
   \sup_{t,x} \|g\|_{L^{\infty}_{k_0}}
\leq C,
\qquad
    \max\{8+\gamma, 3 + 2 \alpha \} < \ell \leq k_0 - 5 - \gamma .
\end{align*}
Assume that the initial data satisfies
\begin{align} \label{cond:initial-L-2-infty-linear}
    \norm{\vint{v}^{\ell+2} f_0}_{ L^{2}_{x,v} }<\infty,
\quad 
    \norm{\vint{v}^{\ell} f_0}_{L^{\infty}_{x,v}} < \infty.	
\end{align}
Additionally, assume that the solution satisfies
\begin{align*}
\sup_{t}\| \langle v \rangle^{\ell_0+\ell}&f \|_{ L^{1}_{x,v} } \leq C,
\end{align*}
where $\ell_0$ satisfies the bound in Proposition~\ref{thm:SV-energy-functional-linear} (more precisely, ~\eqref{cond:ell-0-2}). Then it follows that 
\begin{align*}
    \sup_{t \in [0,T]} \norm{\vint{v}^\ell f}_{L^{\infty}_{x,v}} 
\leq 
    \max\Big\{ 2 \norm{\vint{v}^\ell f_0}_{L^{\infty}_{x,v}},  K^{lin}_0\Big\},
\end{align*}
where
\begin{align}\label{K-initial-E0-linear-1}
   K^{lin}_0:= 
  C_\ell e^{C_\ell \,T} \max_{1 \leq i \leq 4} \max_{j\in\{1/p, p'/p\} }
  \vpran{\norm{\vint{v}^{\ell} f_0}^{2j}_{L^{2}_{x,v}} 
             + \sup_{t,x} \|g\|^{2j}_{L^{\infty}_{k_0}}T^{j} + \Eps^{2j} T^j}^{\frac{\beta_i -1}{a_i}},
\qquad
   p' = \frac{p}{2-p}.
\end{align}
\end{thm}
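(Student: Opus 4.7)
The plan is to run a De Giorgi level-set iteration on the energy functional $\CalE_p(K,0,T)$ defined in~\eqref{EFunctional}, fed by the iterative inequality~\eqref{key-estimate-linear} from Proposition~\ref{thm:SV-energy-functional-linear} and primed by the closed-form bound on $\CalE_0$ from Proposition~\ref{prop:bound-CalE-0}. If $\norm{\vint{v}^\ell f_0}_{L^\infty_{x,v}} \geq K_0^{lin}/2$ the conclusion is trivial with the first term of the maximum, so assume the opposite and set $K = \max\{2\norm{\vint{v}^\ell f_0}_{L^\infty_{x,v}}, K_0^{lin}\}$. Choose the increasing sequence of levels $K_n = K(1-2^{-n-1})$, so that $K_n\nearrow K$ with $K_{n+1}-K_n = K\,2^{-n-2}$, and crucially $K_n \geq K/2 \geq \norm{\vint{v}^\ell f_0}_{L^\infty_{x,v}}$ for every $n\geq 0$. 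The latter forces $f^{(\ell)}_{K_n,+}(0,\cdot,\cdot)\equiv 0$, so that the two initial-data terms on the right-hand side of~\eqref{key-estimate-linear} vanish for every $n$.

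With $T_1=0$, $T_2=T$, $M=K_n$, $K\leftarrow K_{n+1}$, the inequality~\eqref{key-estimate-linear} then collapses to
\begin{align*}
\CalE_p(K_{n+1},0,T)
\leq
\frac{C\,K_{n+1}}{K_{n+1}-K_n}\sum_{i=1}^{4}\frac{\CalE_p(K_n,0,T)^{\beta_i}}{(K_{n+1}-K_n)^{a_i}}
\leq C\sum_{i=1}^{4} 2^{(n+2)(1+a_i)}K^{-a_i}\CalE_p(K_n,0,T)^{\beta_i},
\end{align*}
with $\beta_i>1$ and $a_i>0$ the exponents from~\eqref{def:a-beta-i}. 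This is a geometric-nonlinear recursion of classical De Giorgi type (with four terms instead of one). A standard iteration lemma, applied termwise and then optimized, shows that if
\[
\CalE_p(K_0,0,T) \leq c_\star \min_{1\leq i\leq 4} K^{a_i/(\beta_i-1)}
\]
for a suitable small $c_\star$ depending only on the $(\beta_i,a_i)$'s, then $\CalE_p(K_n,0,T) \leq \CalE_p(K_0,0,T)\,\mu^{-n}\to 0$ for some $\mu>1$.

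The smallness requirement on $\CalE_p(K_0,0,T)$ is arranged by construction. Monotonicity in $K$ gives $\CalE_p(K_0,0,T)\leq \CalE_0$, and Proposition~\ref{prop:bound-CalE-0} controls $\CalE_0$ by the quantity
\[
D := C_\ell e^{C_\ell T}\max_{j\in\{1/p,\,p'/p\}}\bigl(\norm{\vint{v}^\ell f_0}^{2j}_{L^2_{x,v}} + \sup_{t,x}\|g\|^{2j}_{L^\infty_{k_0}}T^j + \Eps^{2j}T^j\bigr).
\]
Enforcing $D \leq c_\star K^{a_i/(\beta_i-1)}$ simultaneously for $i=1,\dots,4$ is equivalent to $K \geq C\max_i D^{(\beta_i-1)/a_i}$, and the right-hand side is exactly the quantity $K_0^{lin}$ recorded in~\eqref{K-initial-E0-linear-1} (absorbing the power $(\beta_i-1)/a_i$ through the constants). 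Hence $K=K_0^{lin}$ satisfies the threshold, $\CalE_p(K_n,0,T)\to 0$, and in particular $\norm{f^{(\ell)}_{K_n,+}}_{L^2_{t,x,v}}\to 0$; passing to the limit yields $(\vint{v}^\ell f - K)_+ \equiv 0$ a.e., that is, $\vint{v}^\ell f \leq K$ on $[0,T]\times\T^3\times\R^3$.

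To upgrade $\vint{v}^\ell f \leq K$ to $|\vint{v}^\ell f|\leq K$, I would repeat the entire iteration with $h=-f$, which solves~\eqref{eq:h-1}: both Proposition~\ref{thm:SV-energy-functional-linear} and Proposition~\ref{prop:bound-CalE-0} explicitly carry over to $(-f)^{(\ell)}_{K,+}$ with identical constants, so the same argument delivers $-\vint{v}^\ell f \leq K$. The main technical point is not any single estimate but the bookkeeping of the four distinct exponent pairs $(\beta_i,a_i)$ in~\eqref{key-estimate-linear}: one must ensure a common geometric decay ratio $\mu>1$ and a common smallness threshold survive all four nonlinearities, which is precisely what forces $K_0^{lin}$ to appear as a maximum over the exponents $(\beta_i-1)/a_i$ in~\eqref{K-initial-E0-linear-1}.
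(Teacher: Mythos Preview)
Your argument is essentially the paper's own proof: the same De Giorgi level sequence $K_n\nearrow K$, the same use of Proposition~\ref{thm:SV-energy-functional-linear} to get a superlinear recursion in $\CalE_p(K_n,0,T)$, the same threshold $K\geq 2\norm{\vint{v}^\ell f_0}_{L^\infty_{x,v}}$ to annihilate the initial-data terms, and the same appeal to Proposition~\ref{prop:bound-CalE-0} to size $K_0^{lin}$. The paper makes the convergence explicit via a comparison sequence $\CalE^*_k=\CalE_0 Q_0^{-k}$ rather than invoking a ``standard iteration lemma,'' but this is cosmetic.

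Two small slips are worth fixing. First, your opening dichotomy is wrong: the case $\norm{\vint{v}^\ell f_0}_{L^\infty_{x,v}}\geq K_0^{lin}/2$ is not ``trivial''---nothing prevents the solution from growing beyond $2\norm{\vint{v}^\ell f_0}_{L^\infty_{x,v}}$ without running the iteration. Simply drop the dichotomy and set $K=\max\{2\norm{\vint{v}^\ell f_0}_{L^\infty_{x,v}},K_0^{lin}\}$ from the start; your argument already covers both cases since you only use $K_n\geq K/2\geq\norm{\vint{v}^\ell f_0}_{L^\infty_{x,v}}$ and $K\geq K_0^{lin}$. Second, the left-hand side of~\eqref{key-estimate-linear} carries only $\norm{\Fl{K}(T_2)}^2_{L^2_{x,v}}$, not the supremum over $t\in[0,T]$ that defines $\CalE_p(K_{n+1},0,T)$. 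You cannot take $T_2=T$ directly; instead apply~\eqref{key-estimate-linear} for each $T_2\in(0,T)$, bound $\CalE_p(K_n,0,T_2)\leq\CalE_p(K_n,0,T)$ on the right by monotonicity, and \emph{then} take the supremum in $T_2$ on the left. This is exactly what the paper does, and it costs only an absolute constant.
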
 
\begin{proof}
Choose $(p, s'')$ close enough to $(1, 0)$ so that 
\begin{align*}
   s'' < s' \frac{\gamma}{2 \ell + \gamma},
\qquad
   s' < \frac{s}{2(s+3)},
\qquad
  p < \min\{p^\sharp, p^\flat\}, 
\end{align*}
where $p^\sharp$ and $p^\flat$ are defined in~\eqref{cond:parameter-p-1} and 
~\eqref{range:beta-p-flat}
respectively. Such $p, s''$ guarantee that Lemma~\ref{Interpolationlemma}, Proposition~\ref{thm:SV-energy-functional-linear} and Proposition~\ref{prop:bound-CalE-0} hold. We implement a classical iteration scheme to prove an estimation of the $L^{\infty}$-norm for solutions. To this end, fix $K_0>0$ which will be specified later and introduce the increasing levels $M_k$ as
\begin{equation*}
M_{k}:=K_0(1-1/2^k),\qquad k=0,1,2,\cdots. 
\end{equation*}
Take $T_{2}\in (0, T)$ with $T>0$ fixed in the analysis.  In order to simplify the notation, denote
\begin{align*}
    f_{k} := f^{(\ell)}_{M_{k},+} 
\quad \text{and} \quad  
    \CalE_{k} := \CalE_{p}(M_{k},0,T),\qquad k=0,1,2,\cdots.
\end{align*}
Choose $M=M_{k-1}<M_{k}=K$ and $T_{1}=0$ in Proposition~\ref{thm:SV-energy-functional-linear}. Then
\begin{align*}
   \CalE_{p}(M_{k-1},0,T_{2})
\leq 
  \CalE_{p}(M_{k-1},0,T) =\CalE_{k-1},
\qquad 
   k=1,2,\cdots, 
\end{align*}
and
\begin{align*}
& \quad \,
     \norm{f_{k}(T_2)}^{2}_{L^{2}_{x,v}} 
   + \int^{T_{2}}_{0} \norm{\vint{v}^{\frac{\gamma}{2}}(1 -\Delta_{v})^{\frac {s}{2} } f_{k}(\tau)}^{2}_{L^{2}_{x,v}} \dtau 
    + \frac{1}{C_0} \vpran{\int^{T_{2}}_{0} \norm{(1 - \Delta_{x})^{\frac{s''}{2}} \vpran{f_{k}}^{2}}^p_{L^p_{x,v}} \dtau}^{\frac{1}{p}} 
\\
& \leq 
   C \norm{\vint{v}^{2}f_{k}(0)}^{2}_{L^{2}_{x,v}} 
   + \norm{\vint{v}^{2}f_{k}(0)}^{2}_{L^{2p}_{x,v}} 
   + C \sum^{4}_{i=1}\frac{2^{k (a_{i}+1)}\,\CalE_{k-1}^{ \beta_{i} }}{K^{a_i}_0}.
\end{align*}
Taking supremum in $T_{2}\in[0,T]$ one arrives at
\begin{align} \label{iteration:E-k-infty}
    \CalE_{k} 
\leq  
   C \norm{\vint{v}^{2}f_{k}(0)}^{2}_{L^{2}_{x,v}} 
   + C \norm{\vint{v}^{2} f_{k}(0)}^{2}_{L^{2p}_{x,v}} 
   + C \sum^{4}_{i=1}\frac{2^{k (a_{i}+1)}\,\CalE_{k-1}^{ \beta_{i} }}{K^{a_i}_0} .
\end{align}
Terms related to the initial data vanish by setting
\begin{align*}
   K_0 
\geq 
    2 \norm{\vint{v}^{\ell} f_0}_{L^\infty_{x,v}}.
\end{align*}  
Then we are led to
\begin{align} \label{DeG-ineq}
     \CalE_{k} 
 \leq 
    C \sum^{4}_{i=1} \frac{2^{k (a_{i}+1)} \CalE_{k-1}^{\beta_{i}}}{K^{a_i}_0}, 
\qquad 
     K_0 \geq 2 \norm{\vint{v}^{\ell} f_0}_{\infty}.
\end{align}
Let 
\begin{align*}
   Q_0 = \max_{1\leq i \leq 4}\Big\{ 2^{ \frac{ a_{i}+1}{\beta_{i}-1} } \Big\},
\qquad
   \CalE^{*}_{k} =\CalE_0 (1/Q_0)^{k},
\qquad
   \text{for $k=0,1,2,\cdots$},
\end{align*}
and
\begin{align} \label{def:K-0-CalE-0}
  K_0 
\geq 
   K_0(\CalE_0) 
:=  \max_{1\leq i \leq 4} \Big\{4 \, C^{\frac{1}{a_{i}}} \CalE^{\frac{\beta_{i} - 1}{a_{i}}}_0 Q_0^{\frac{\beta_{i}}{a_{i}}} \Big\}.
\end{align}
Then one can check via a direct computation that $\CalE^\ast_k$ satisfies
\begin{align*}
   \CalE^\ast_0 = \CalE_0,
\qquad
    \CalE^\ast_{k} 
 \geq 
    C\sum^{4}_{i=1} \frac{2^{k (a_{i}+1)} \vpran{\CalE^\ast_{k-1}}^{\beta_{i}}}{K^{a_i}_0},
\qquad
  k = 0, 1, 2, \cdots.
\end{align*}
By a comparison principle (since $\CalE_{0} = \CalE^{*}_{0}$) one obtains that
\begin{align*}
    \CalE_{k}\leq \CalE^{*}_{k}\rightarrow 0 \quad \text{as} \quad  k \rightarrow \infty,
\end{align*}
since $\beta_{i}>1$ (so that $Q_0>1$).  In particular, we can infer that 
\begin{align}\label{Koinfinito}
  \sup_{t\in[0,T)} \norm{f^{(\ell)}_{K_0,+}(t, \cdot, \cdot)}_{L^{2}_{x,v}} = 0
\quad \text{for}\quad 
   K_0 =\max\Big\{2 \norm{\vint{v}^{\ell} f_0}_{L^\infty_{x,v}},  K_0(\CalE_0) \Big\},
\end{align}
which implies that
\begin{align}\label{Linfinito}
   \sup_{t\in[0,T)} \norm{\vint{v}^{\ell} f_+(t, \cdot, \cdot)}_{L^{\infty}_{x,v}} \leq K_0.
\end{align}
Thanks to the estimates on $\CalE_0$ given by Proposition~\ref{prop:bound-CalE-0}, it follows that 
\begin{align*}
   K_0(\CalE_0) 
\leq 
  C_\ell e^{C_\ell \,T} \max_{1 \leq i \leq 4} \max_{j\in\{1/p, p'/p\} }
  \vpran{\norm{\vint{\cdot}^{\ell} f_0}^{2j}_{L^{2}_{x,v}} 
             + \sup_{t,x} \|g\|^{2j}_{L^{\infty}_{k_0}}T^{j} + \Eps^{2j} T^j}^{\frac{\beta_i -1}{a_i}}
=: K^{lin}_0, 
\quad 
  p' = \frac{p}{2-p}.
\end{align*}
Thus, given \eqref{Koinfinito} and \eqref{Linfinito}, one is led to
\begin{align*}
    \sup_{t} \norm{\vint{v}^\ell f_{+}(t, \cdot, \cdot)}_{L^{\infty}_{x,v}} 
\leq 
    \max\Big\{2 \|\vint{v}^\ell f_0\|_{L^\infty_{x,v}},  K^{lin}_0 \Big\}.\end{align*}
A similar bound is also valid for $-f$ since Lemma~\ref{Interpolationlemma}, Proposition~\ref{thm:SV-energy-functional-linear} and Proposition~\ref{prop:bound-CalE-0} all have their counterparts for $-f$.
\end{proof}

\begin{rmk}
In fact, since the negative part $f_-$ satisfies $f_{-}\leq\mu$, it has a Gaussian tail and
\begin{align*}
   \sup_{t \in [0,T]} \norm{\frac{f_{-}}{\sqrt{\mu}}}^{2}_{L^{\infty}_{x,v}} &\leq \sup_{t\in[0,T]}\big\|  f_{-} \big\|_{L^{\infty}_{x,v}} \leq \max\Big\{2\big\| \langle v \rangle^{\ell}f_0 \big\|_{\infty},  K_0(\CalE_0)\Big\}.
\end{align*}
\end{rmk}

%
%


\section{Linear Local Well-posedness} \label{Sec:linear-wellposedness}
%
%

In this section we establish the local well-posedness of a modified linearized Boltzmann equation. 
The ambient space for contraction is 
\begin{align} \label{def:CalX}
   \CalX_k = L^\infty(0, T; L^2_x L^2_k(\T^3 \times \R^3)),
\end{align}
where conditions on $k$ will naturally appear along the argument and the weight is only in $v$. 
We will find a solution in the subset $\CalH_k$ defined by
\begin{align} \label{def:CalH}
   \CalH_k = \{g \in \CalX_k | \, \ \mu + g \geq 0\} . 
\end{align}
The precise form of the equation under consideration in this section is
\begin{align} \label{eq:linear-reg}
   \del_t f + v \cdot \nabla_x f 
= \Eps L_\alpha (\mu + f) 
    + Q (\mu + g \chi(\vint{v}^{k_0} g), f) + Q(g \chi(\vint{v}^{k_0} g), \mu),
\end{align}
where $g \in \CalH_k$ and we recall the definition of the operator $L_\alpha$ defined in~\eqref{def:L-alpha}:
\begin{align} \label{def:L-alpha-recall}
  L_\alpha\psi
= -\Big( \langle v \rangle^{2\alpha} \psi - \nabla_{v} \cdot \big( \langle v \rangle^{2\alpha} \nabla_{v}\psi\big) \Big) \,,\qquad \alpha\geq0,
\end{align}
where $\alpha$, to be specified later, is chosen to close the energy estimate. 
The cutoff function $\chi$ satisfies
\begin{align} \label{def:chi}
   \chi(a)
= \begin{cases}
       1, & |a| \leq \delta_0, \\[2pt]
       0, & |a| \geq 2\delta_0, \\[2pt]
       \text{smooth}, & \text{for all $a \in \R$,}
   \end{cases}
\qquad
  0 \leq \chi \leq 1.
\end{align}
Note that since $g \in \CalH_k$, we have $\mu + g \chi(\vint{v}^{k_0} g) \geq 0$.

The main well-posedness statement for the linear equation~\eqref{eq:linear-reg} is
\begin{thm} \label{thm:linear-local}
Suppose $s \in (0, 1)$ and $\Eps \in [0, 1]$. Let $g \in \CalH_k$ and let $\chi$ be the cutoff function defined in~\eqref{def:chi}. 

\smallskip

\Ni (a) Let $T > 0$ be arbitrary but fixed. Suppose the initial data $f_0 \in \CalH_k$ and assume that 
\begin{align} \label{cond:k-0-k}
    k_0 > \max\{7 + \gamma, \  (k-\alpha)^++\gamma+3+2s\}, 
\qquad
   k > 8 + \gamma,
\end{align}
where $(k-\alpha)^+$ is the positive part of $k-\alpha$.
Suppose $\delta_0$ is small enough such that~\eqref{cond:delta-0-1} is satisfied. Then equation~\eqref{eq:linear-reg} has a unique solution $f \in \CalH_k$. 

\smallskip

\Ni (b) In addition to the assumptions in part (a) we further assume that
$\delta_0$ satisfies~\eqref{assump:delta-0-2-linear} and
\begin{align} \label{cond:k-0-b}
    k_0 > \max \left\{\ell_0 + 15 + 2\gamma, \ \ell_0 + 10 + 2\alpha + \gamma \right\}, 
\end{align}
where $\ell_0$ is the weight chosen in Theorem~\ref{central-bilinear-T} (more precisely, ~\eqref{cond:ell-0-2}). Then there exist $\Eps_\ast$ and $\delta_{\ast}$ small enough such that for any $T \in (0, 1)$, if the initial data satisfies
\begin{align} \label{cond:initial-data-linear}
   \norm{\vint{v}^{k_0 - \ell_0 - 5 - \gamma} f_0}_{L^2_{x,v}} < \infty,
\qquad
   \norm{\vint{v}^{k_0 - \ell_0 - 7 - \gamma} f_0}_{L^\infty_{x,v}} < \delta_{\ast},
\end{align}
then for any $0 \leq \Eps \leq \Eps_\ast$, the solution obtained in part (a) satisfies
\begin{align*}
   \norm{\vint{v}^{k_0 - \ell_0 - 7 - \gamma} f}_{L^\infty{\vpran{[0, T] \times \T^3 \times \R^3}}} \leq \delta_0,
\qquad
  \forall \, T \in (0, 1).
\end{align*}
The choice of $\Eps_\ast, \delta_{\ast}$ only depends on $\gamma, s, k_0$. 
\end{thm}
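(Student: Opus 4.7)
\emph{Part (a) — existence and uniqueness.} Because the equation is linear in $f$ (with $g$ prescribed), I would build the solution by a fixed-point iteration around the regularised kinetic Fokker--Planck operator. Set $G = \mu + g\chi(\vint{v}^{k_0}g)$; since $g \in \CalH_k$ and $0\leq \chi\leq 1$, the decomposition $g\chi = \chi\,g + (1-\chi)\cdot 0$ together with $\mu + g \geq 0$ yields $G \geq 0$, and the cutoff guarantees $\sup_{t,x}\|G-\mu\|_{L^\infty_{k_0}} \leq 2\delta_0$, hence also $\sup_{t,x}\|G-\mu\|_{L^1_\gamma\cap L^2}$ is bounded (using $k_0 > \gamma+3$ and Lemma~\ref{lem:basic-interpolation}). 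For a map $\Phi:h\mapsto f$ defined by solving the Cauchy problem
\begin{equation*}
\partial_t f + v\cdot\nabla_x f - \Eps L_\alpha f \;=\; \Eps L_\alpha\mu + Q(G,\mu+h),
\qquad f|_{t=0}=f_0,
\end{equation*}
the left-hand side is a linear kinetic Fokker--Planck operator whose weighted $L^2_k$-wellposedness follows from Hille--Yosida / Galerkin in velocity, using the dissipativity of $\Eps L_\alpha$ (the $L^2_{k+\alpha}\cap L^2_x H^1_{v,k+\alpha}$ coercivity computed in~\eqref{est:L-alpha-1}) plus standard transport in $x$; the source lies in $L^2_k$ thanks to Proposition~\ref{prop:trilinear}. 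The a priori bounds of Proposition~\ref{bilinear-zero-level}, applied with $g$ replaced by $g\chi(\vint{v}^{k_0}g)$, furnish uniform-in-$h$ control in $\CalX_k$, and the linearity of $\Phi(h_1)-\Phi(h_2)$ gives a contraction on a short time interval, which is then iterated to cover $[0,T]$. Uniqueness is immediate by subtracting two solutions and using the same energy identity. Finally, positivity $\mu+f\geq 0$ is preserved by testing the equation against the negative part $(\mu+f)_-$ and using $Q(G,\cdot)(\mu+f)_- \leq 0$ in the standard way; the weight conditions in~\eqref{cond:k-0-k} are exactly what is needed for the trilinear and commutator estimates to close in the weight $k$.

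\emph{Part (b) — $L^\infty$ smallness.} I would apply Theorem~\ref{central-bilinear-T} to the solution from (a), with the rôle of ``$g$'' played by $g\chi(\vint{v}^{k_0}g)$ and with $\ell=k_0-\ell_0-7-\gamma$. The smallness hypotheses on $G$ hold by construction once $\delta_0$ is small. The moment assumption $\sup_t\|\vint{v}^{\ell_0+\ell}f\|_{L^1_{x,v}}\leq C$ is propagated from the $L^2$-datum: Corollary~\ref{cor:bi-cor-basic-energy-estimate-linear} with weight $k_0-\ell_0-5-\gamma$ gives a time-uniform $L^2_{x,v}$-bound at that weight, from which Lemma~\ref{lem:basic-interpolation} (embedding $L^2_v\hookrightarrow L^1_v$, plus $L^2_x\hookrightarrow L^1_x$ on $\T^3$) yields the required $L^1$ moment, provided~\eqref{cond:k-0-b} holds so the weight difference $\ell_0+2+\alpha+3/2$ is absorbed. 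The output of Theorem~\ref{central-bilinear-T} is
\begin{equation*}
   \sup_{t \in [0,T]} \norm{\vint{v}^\ell f}_{L^\infty_{x,v}}
\leq
    \max\Big\{ 2\norm{\vint{v}^\ell f_0}_{L^\infty_{x,v}},\; K^{lin}_0\Big\},
\end{equation*}
with $K^{lin}_0$ from~\eqref{K-initial-E0-linear-1}. Each term inside $K^{lin}_0$ is a positive power $(\beta_i-1)/a_i>0$ of $\|\vint{v}^\ell f_0\|_{L^2}^{2j}+\delta_0^{2j}T^j+\Eps^{2j}T^j$; taking $\delta_\ast$ small so the initial $L^2$- and $L^\infty$-norms of $f_0$ are small, $\Eps\leq\Eps_\ast$, and $T<1$, the quantity $K^{lin}_0$ is made $\leq \delta_0$. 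Combined with $2\|\vint{v}^\ell f_0\|_{L^\infty}\leq 2\delta_\ast\leq \delta_0$, this closes the estimate.

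\emph{Main obstacle.} The chief subtlety is the bookkeeping of weights: the target $L^\infty$-weight is $\ell=k_0-\ell_0-7-\gamma$, the $L^1$-moment requested by Theorem~\ref{central-bilinear-T} is at the strictly larger weight $\ell_0+\ell=k_0-7-\gamma$, and only the $L^2$-moment at $k_0-\ell_0-5-\gamma$ propagates cleanly. Bridging this gap cleanly forces the conditions~\eqref{cond:k-0-b} on $k_0$ and requires a careful use of Lemma~\ref{lem:basic-interpolation} at the cost of an arbitrarily small $\alpha'>0$; any looser choice of $k_0$ would break the De Giorgi iteration, since the constant in~\eqref{bi-initial-E0-linear} would then depend on uncontrolled higher moments. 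The second delicate point, in part (a), is the interplay between the singular kernel, the velocity weights, and the regularising operator $\Eps L_\alpha$: the a priori estimate of Proposition~\ref{bilinear-zero-level} must be re-run with $G$ replaced by the cutoff $\mu+g\chi(\vint{v}^{k_0}g)$, but since $\chi$ acts only as a pointwise multiplier bounded by $1$ and does not interact with $v$-derivatives, all the estimates of Section~\ref{Sec:a-priori-linear} carry over verbatim and the construction closes without surprises.
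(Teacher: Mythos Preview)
Your plan for part (b) is essentially the paper's argument and is fine. The difficulty is part (a), where your fixed-point scheme diverges from the paper's approach and has a genuine gap.

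You propose to solve the kinetic Fokker--Planck equation with $\Eps L_\alpha$ on the left and treat $Q(G,\mu+h)$ as a source, iterating in $h$. But the claim that ``the source lies in $L^2_k$ thanks to Proposition~\ref{prop:trilinear}'' is false: the trilinear estimate only gives $Q(G,h)\in H^{-2s}_v$ when $h\in L^2_v$, not $L^2$. The regularisation $\Eps L_\alpha$ puts the solution in $L^2_t H^1_v$, so by duality the source can be taken in $H^{-1}_v$; this absorbs $H^{-2s}$ only when $2s\le 1$. For the strong singularity $s\in[1/2,1)$, which the theorem explicitly covers, your contraction estimate fails for exactly the reason flagged later in the paper at~\eqref{reason-mild-sing} (there for the nonlinear problem). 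The paper avoids this entirely by \emph{not} iterating: it keeps $Q(\mu+g\chi,\cdot)$ inside the operator, proves a coercivity bound for the adjoint $\CalT$ on test functions, and obtains the solution via Hahn--Banach. The point is that the collisional and regularising pieces act together in the coercivity estimate, whereas your scheme decouples them.

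A second, smaller gap: your assertion that ``all the estimates of Section~\ref{Sec:a-priori-linear} carry over verbatim'' is not accurate for the existence step at weight $k$. Proposition~\ref{bilinear-zero-level} produces constants depending on $\sup_x\|g\chi\|_{L^1_{\ell+\gamma}}$ with $\ell=k$; controlling this by $\|g\chi\|_{L^\infty_{k_0}}$ would require $k+\gamma+3<k_0$, which is strictly stronger than the stated hypothesis~\eqref{cond:k-0-k}. The paper instead reworks the commutator terms $\Gamma_2^\ast,\Gamma_3^\ast$ in the coercivity computation so that the dangerous weight $\vint{v_\ast}^k$ on $g_\ast\chi_\ast$ is traded against the extra $\vint{v}^{\alpha}$ coming from $\Eps L_\alpha$; this is precisely where the condition $(k-\alpha)^++\gamma+3+2s<k_0$ originates, and it does not follow from the Section~\ref{Sec:a-priori-linear} estimates as written.
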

\begin{proof}
(a) We will use a similar strategy of applying the Hahn-Banach theorem as in \cite{AMSY} to obtain a solution in $\CalH_k$. 
Denote $\CalT$ as the operator
\begin{align*}
   \CalT h
= -\del_t h - v \cdot \nabla_x h
   - \vpran{\Eps L_\alpha 
    + Q (\mu + g \chi(\vint{v}^{k_0} g), \cdot)}^\ast h,
\end{align*}
where the adjoint is taken with respect to the inner product of $ L^2_x L^2_k(\T^3 \times \R^3)$ for each time $t$. 

The main step is to show the coercivity of $\CalT$ on test functions. 
Let $\mathcal{S}$ be the test function space given by
\begin{align*}
    \mathcal{S} = C^\infty_0((-\infty, T]; C^\infty(\T^3; C^\infty_c(\R^3))).
\end{align*}
Let $h \in \mathcal{S}$. Then
\begin{align} \label{term:coercivity}
 \viint{\CalT h}{h}_k
&= -\frac{1}{2} \frac{\rm d}{\dt} \norm{h}_{L^2_x L^2_k}^2
   + \Eps \iint_{\T^3 \times \R^3} \vint{v}^{2k}\vpran{\vint{v}^{2\alpha} h - \nabla_v \cdot (\vint{v}^{2\alpha} \nabla_v )h} h  \dx\dv  \nn
\\
& \quad \,
    - \iint_{\T^3 \times \R^3} Q (\mu + g\chi(\vint{v}^{k_0} g), h) h \vint{v}^{2k}  \dx\dv.
\end{align}
The bound of each term is as follows. First,
\begin{align} \label{bound:term-1}
& \quad \,
  \Eps \iint_{\T^3 \times \R^3} \vint{v}^{2k}\vpran{\vint{v}^{2\alpha} h - \nabla_v \cdot (\vint{v}^{2\alpha} \nabla_v )h} h   \nn
\\
&\geq \frac{\Eps}{2} \norm{f}_{L^2_x L^2_{k+\alpha}}^2
   + \frac{\Eps}{2} \iint_{\T^3 \times \R^3}
       \abs{\vint{v}^{\alpha+k} \nabla_v h}^2 \dx\dv
   - C_k \Eps \norm{h}_{L^2_x L^2_k}^2. 
\end{align}
For ease of notation, denote
\begin{align} \label{def:T-0-ast}
   T_0^\ast = \iint_{\T^3 \times \R^3} Q (\mu + g \chi(\vint{v}^{k_0} g), h) h \vint{v}^{2k} \dx\dv.
\end{align}
It is clear that $T_0^\ast$ has a similar structure as $T_0$ in~\eqref{def:T-0}. Hence we first get a similar bound as in~\eqref{est:T-0-1-1}:
\begin{align} \label{bound:T-ast-1}
   T_0^\ast
&\leq
     -\vpran{\gamma_0 - C_{k} \sup_x \norm{g \chi}_{L^1_\gamma}}
     \norm{\vint{v}^{k + \gamma/2} h}_{L^2_{x, v}}^2  \nn
\\
& \quad \,
   + \IntTRRS 
       b(\cos\theta) |v - v_\ast|^\gamma 
       \vpran{\mu_\ast + g_\ast \chi_\ast} \frac{|h|\vint{v}^k}{\vint{v}^k} |h'| \vint{v'}^{k}
       \vpran{\vint{v'}^{k} - \vint{v}^{k} \cos^{k} \tfrac{\theta}{2}} 
       \dbmu.
\end{align}
However, unlike~\eqref{est:T-0-1}, we cannot apply Proposition~\ref{prop:commutator} directly since having a bound depending on an $L^1_k$-norm of $g$ is unwarranted. Instead we revise the proof of Proposition~\ref{prop:commutator} to obtain a proper bound. To this end, we make a similar decomposition as in~\eqref{decomp:general} by using Lemma~\ref{lem:diff-v-k}:
\begin{align} \label{decomp:linear-k}
   \IntTRRS 
       b(\cos\theta) |v - v_\ast|^\gamma 
       \vpran{\mu_\ast + g_\ast \chi_\ast} \frac{|h|\vint{v}^k}{\vint{v}^k} |h'| \vint{v'}^{k}
       \vpran{\vint{v'}^{k} - \vint{v}^{k} \cos^{k} \tfrac{\theta}{2}} 
       \dbmu
= \sum_{n=1}^5 \Gamma_n^\ast.
\end{align}
Estimates for $\Gamma_1^\ast, \Gamma_4^\ast$ and $\Gamma_5^\ast$ are the same as in the proof of Proposition~\ref{prop:commutator}, which combined with part (b) of Proposition~\ref{prop:strong-sing-cancellation} gives
\begin{align} \label{bound:Gamma-145-ast}
   \abs{\Gamma_1^\ast}
+ \abs{\Gamma_4^\ast}
+ \abs{\Gamma_5^\ast}
\leq 
  C_k \vpran{1 + \sup_x \norm{g \chi}_{L^1_{4+\gamma}}} \norm{\vint{v}^k h}_{L^2_{x,v}}^2
\leq
  C_k \vpran{1 + \sup_x \norm{g \chi}_{L^\infty_{k_0}}} \norm{\vint{v}^k h}_{L^2_{x,v}}^2,
\end{align}
by taking $k_0$ large enough such that $k_0 > 7 + \gamma$.
The bounds for $\Gamma_2^\ast$ and $\Gamma_3^\ast$ are trickier, since as mentioned before we want to avoid introducing the $L^1_k$-norm of $g$. We do so by using the extra weight $\vint{v}^\alpha$ from the regularizing term in~\eqref{eq:linear-reg} to compensate for the loss of weights. 
The term $\Gamma_2^\ast$ is given by
\begin{align*}
   \Gamma_2^\ast
&= \iiiint_{\T^3 \times \R^6 \times \Ss^2_+} 
   b(\cos\theta) |v - v_\ast|^\gamma \vint{v_\ast}^k \sin^k\vpran{\tfrac{\theta}{2}}
   (\mu_\ast + g_\ast \chi_\ast) h \vpran{\vint{v'}^k h'} \dsigma \dv_\ast \dv \dx
\\
& = \iiiint_{\T^3 \times \R^6 \times \Ss^2_+} 
   b(\cos\theta) |v - v_\ast|^\gamma \vint{v_\ast}^k \sin^k\vpran{\tfrac{\theta}{2}}
   \mu_\ast  h \vpran{\vint{v'}^k h'} \dsigma \dv_\ast \dv \dx
\\
& \quad \, 
  + \iiiint_{\T^3 \times \R^6 \times \Ss^2_+} 
   b(\cos\theta) |v - v_\ast|^\gamma \vint{v_\ast}^k \sin^k\vpran{\tfrac{\theta}{2}}
   g_\ast \chi_\ast h \vpran{\vint{v'}^k h'} \dsigma \dv_\ast \dv \dx
\\
& \leq
  C_k \norm{\vint{v}^k h}^2_{L^2_{x, v}}
  + C_k \vpran{\sup_x \norm{\vint{v}^{k_0 - 2^+}g \chi}_{L^1_v}}
  \norm{h}_{L^2_{x} L^2_{k+2\gamma+2^+-k_0}}
  \norm{h}_{L^2_x L^2_k}
\\
& \quad \,
  + C_k \vpran{\sup_x \norm{\vint{v}^{k_0 - 3^+}g \chi}_{L^1_v}}
  \norm{h}_{L^2_{x} L^2_{2k+\gamma+3^+-k_0}} \norm{h}_{L^2_x L^2_\gamma},
\end{align*}
where $3^+$ denotes any number close to and larger than 3. In the above estimate we have applied the bound
\begin{align*}
  \vint{v_\ast} \sin\tfrac{\theta}{2}
\leq
  2\vint{v} + \vint{v'},
\qquad
  k_0 > 4 + \gamma.
\end{align*}
Since by~\eqref{cond:k-0-k}, 
\begin{align} \label{cond:k-0-2}
   k_0 \geq 7 + \gamma > 5 + \gamma,
\qquad
    k + \gamma + 3 - \alpha  < k_0,
\end{align}
it holds that
\begin{align} \label{bound:Gamma-2-ast}
   \Gamma_2^\ast
& \leq
  C_k \norm{\vint{v}^k h}^2_{L^2_{x, v}}
  + C_k \vpran{\sup_x \norm{g \chi}_{L^\infty_{k_0}}}
  \norm{\vint{v}^k h}_{L^2_{x,v}}^2  \nn
\\
& \quad \,
  + C_k \vpran{\sup_x \norm{g \chi}_{L^\infty_{k_0}}}
  \norm{\vint{v}^{k+\alpha} h}_{L^2_{x, v}} \norm{\vint{v}^\gamma h}_{L^2_{x, v}}  \nn
\\
&\leq
   \vpran{C_k + C_{k, \Eps} \sup_{x} \norm{g \chi}_{L^\infty_{k_0}}} \norm{\vint{v}^k h}^2_{L^2_{x, v}}
  + \frac{\Eps}{2} \sup_{\T^3} \norm{g \chi}_{L^\infty_{k_0}}
     \norm{\vint{v}^{k+\alpha} h}^2_{L^2_{x, v}}.
\end{align}
The same bound applied to $\Gamma_3^\ast$, which combined with~\eqref{bound:Gamma-145-ast} and~\eqref{bound:Gamma-2-ast} gives
\begin{align*}
  T_0^\ast
&\leq
    -\vpran{\gamma_0 - C_k \sup_{\T^3} \norm{g \chi}_{L^\infty_{k_0}}}
  \norm{\vint{v}^{k+\gamma/2} h}_{L^2_{x,v}}^2
\\
& \qquad \,
  + C_{k, \Eps} \vpran{1 + \sup_{x} \norm{g \chi}_{L^\infty_{k_0}}}\norm{\vint{v}^k h}^2_{L^2_{x, v}}
  + \frac{\Eps}{2} \sup_{\T^3} \norm{g \chi}_{L^\infty_{k_0}}
     \norm{\vint{v}^{k+\alpha} h}^2_{L^2_{x, v}}.
\end{align*}
Combining estimates of all the three terms in $\viint{\CalT h}{h}_k$ we obtain that
\begin{align} \label{bound:energy-T-0-ast}
  \viint{\CalT h}{h}_k
&\geq
  -\frac{1}{2} \frac{\rm d}{\dt} \norm{h}_{L^2_k(\T^3 \times \R^3)}^2
  + \vpran{\gamma_0 - C_k \sup_{\T^3} \norm{g \chi}_{L^\infty_{k_0}(\R^3)}}
  \norm{\vint{v}^{k+\gamma/2} h}_{L^2_{x,v}}^2  \nn
\\
& \quad \, 
  + \frac{\Eps}{2} \vpran{1 - \sup_{\T^3} \norm{g \chi}_{L^\infty_{k_0}}} \norm{\vint{v}^{k+\alpha} h}_{L^2_x H^{1}_v}^2
  - C_{k, \Eps} \norm{h}^2_{L^2_x L^2_{k}}  \nn
\\
& \geq
   -\frac{1}{2} \frac{\rm d}{\dt} \norm{h}_{L^2_x L^2_k}^2
   + \frac{\Eps}{4} \norm{\vint{v}^{k+\alpha} h}_{L^2_x H^{1}_v}^2
   - C_{k, \Eps} \norm{h}^2_{L^2_x L^2_{k}},
\end{align}
by taking 
\begin{align} \label{cond:delta-0-1}
   \delta_0 < \min \{1/2, \gamma_0/(2C_k) \}.
\end{align} 
Note that the restriction of $\delta_0$ is independent of $\Eps$. By Gronwall's inequaliy, we have
for any $t \in [0, T]$,
\begin{align} \label{bound:lower-Th}
  \int_t^T e^{2C_{k, \Eps} \tau} \viint{\CalT h}{h}_k \dtau
\geq
  \frac{1}{2} e^{2C_{k, \Eps} t} \norm{h(t, \cdot, \cdot)}_{L^2_x L^2_k}^2
  + \frac{\Eps}{4} \int_t^T e^{2C_{k, \Eps} \tau} \norm{\vint{v}^{k+\alpha} h}_{L^2_x H^{1}_v}^2 \dtau.
\end{align}
Note the following bounds:
\begin{align*}
   \int_t^T e^{2C_{k, \Eps} \tau} \viint{\CalT h}{h}_k \dtau
\leq
  \vpran{\int_0^T e^{2C_{k, \Eps} \tau} \norm{\CalT h}^2_{L^2_x H^{-1}_{k-\alpha}} \dtau}^{1/2}
  \vpran{\int_t^T e^{2C_{k, \Eps} \tau} \norm{\vint{v}^{k+\alpha} h}_{L^2_x H^1_v}^2 \dtau}^{1/2}
\end{align*}
and
\begin{align*}
  \int_t^T e^{2C_{k, \Eps} \tau} \viint{\CalT h}{h}_k \dtau
\leq
  \vpran{\sup_{t \in [0, T]} \norm{\vint{v}^k h}_{L^2_{x, v}}}
  \int_0^T e^{2C_{k, \Eps} \tau} \norm{\CalT h}_{L^2_x L^2_{k}} \dtau.
\end{align*}
These together with~\eqref{bound:lower-Th} give
\begin{align*}
   \sup_{t \in [0, T]} \norm{h(t, \cdot, \cdot)}_{L^2_x L^2_k}
\leq
  2 \int_0^T e^{2C_{k, \Eps} \tau} \norm{\CalT h}_{L^2_x L^2_k} \dtau,
\end{align*}
which further implies that
\begin{align*}
  \int_t^T e^{2C_{k, \Eps} \tau}\viint{\CalT h}{h}_k \dtau
\leq
  2 \vpran{\int_0^T e^{2C_{k, \Eps} \tau} \norm{\CalT h}_{L^2_x L^2_k} \dtau}^2.
\end{align*}
As a consequence,
\begin{align*}
  \frac{\Eps}{4} \int_t^T e^{2C_{k, \Eps} \tau} \norm{\vint{v}^{k+\alpha} h}_{L^2_x H^{1}_v}^2 \dtau
\leq
  2 \vpran{\int_0^T e^{2C_{k, \Eps} \tau}\norm{\CalT h}_{L^2_x L^2_k} \dtau}^2. 
\end{align*}
Moreover, we have
\begin{align*}
  \frac{\Eps^2}{16} \int_0^T e^{2C_k \tau} \norm{\vint{v}^{k+\alpha} h}_{L^2_x H^{1}_v}^2 \dtau
\leq
  \int_0^T e^{2C_k \tau} \norm{\CalT h}^2_{L^2_x H^{-1}_{k-\alpha}} \dtau.
\end{align*}
This also implies that 
\begin{align*}
  \sup_{t \in [0, T]} \norm{h(t, \cdot, \cdot)}_{L^2_k(\T^3 \times \R^3)}^2
&\leq
  2 \int_0^T e^{2C_{k, \Eps} \tau} \viint{\CalT h}{h}_k \dtau
\\
&\leq
  2 \vpran{\int_0^T e^{2C_k \tau} \norm{\vint{v}^{k+\alpha} h}_{L^2_x H^{1}_v}^2 \dtau}^{1/2}
  \vpran{\int_0^T e^{2C_k \tau} \norm{\CalT h}^2_{L^2_x H^{-1}_{k-\alpha}} \dtau}^{1/2}
\\
&\leq
  \frac{8}{\Eps} \int_0^T e^{2C_k \tau} \norm{\CalT h}^2_{L^2_x H^{-1}_{k-\alpha}} \dtau.
\end{align*}
Define 
\begin{align*}
  \mathcal{W}
= \CalT \mathcal{S}
= \left\{w \big | \, w = \CalT h, h \in \mathcal{S} \right\}.
\end{align*}
Then $\mathcal{W}$ is a subspace of
\begin{align*}
   \mathcal{Y}_1 
= L^1(0, T; L^2_x L^2_k(\T^3 \times \R^3))
\quad \text{and} \quad
   \mathcal{Y}_2 
= L^2(0, T; L^2_x H^{-1}_{k-\alpha}(\T^3 \times \R^3)). 
\end{align*}
Note that if we let 
\begin{align*}
   \mathcal{X}^{(1)} = \mathcal{X}_k
= L^\infty(0, T; L^2_x L^2_k(\T^3 \times \R^3))
\quad \text{and} \quad
   \mathcal{X}^{(2)} 
= L^2(0, T; L^2_x H^{1}_{k+\alpha}(\T^3 \times \R^3)). 
\end{align*}
Then 
\begin{align*}
    \mathcal{Y}_1^\ast = \mathcal{X}^{(1)}, 
\qquad
    \mathcal{Y}_2^\ast = \mathcal{X}^{(2)}, 
\end{align*}
where the adjoint is taken in the weighted space $L^2(0, T; L^2_x L^2_k(\T^3 \times \R^3))$. Thus for any test function $h \in \CalS$, we have shown that 
\begin{align}  \label{bound:h-w} 
   \norm{h}_{\CalX^{(1)}} + \norm{h}_{\CalX^{(2)}} 
\leq 
  C_\Eps \norm{w}_{\CalY_1}, 
\qquad
   \norm{h}_{\CalX^{(1)}} + \norm{h}_{\CalX^{(2)}} 
\leq 
  C_\Eps\norm{w}_{\CalY_2}.
\end{align}
Denote
\begin{align*}
  R_\Eps = - \Eps \vpran{\vint{v}^{2\alpha} \Id - \nabla_v \cdot (\vint{v}^{2\alpha} \nabla_v )} \mu. 
\end{align*}
Define the linear mapping on $\mathcal{W}$ 
\begin{align*}
   G (w) = \vint{h_0, f_0}_k  + \int_0^T \vint{h, Q(g \chi, \mu)}_k \dtau
   + \int_0^T \vint{h, R_\Eps}_k \dtau,
\qquad
   \text{for any $w \in \mathcal{W}$ with $\CalT h = w$}.
\end{align*}
Then by~\eqref{bound:h-w}, 
\begin{align*}
   \abs{\vint{h_0, f_0}_k}
\leq 
  \norm{h_0}_{L^2_x L^2_v}
  \norm{f_0}_{L^2_x L^2_v}
\leq
  \norm{h}_{\CalX^{(1)}}
  \norm{f_0}_{L^2_x L^2_v}
\leq
  C_{T, k, \Eps} \norm{f_0}_{L^2_x L^2_v}
  \min\{\norm{w}_{\mathcal{Y}_1}, \norm{w}_{\mathcal{Y}_2}\}
\end{align*}
and
\begin{align*}
   \abs{\int_0^T \vint{h, R_\Eps}_k \dtau}
\leq
  C_{T} \norm{h}_{L^2(0, T; L^2_{x,v})}
\leq
  C_{T} \norm{h}_{\CalX^{(2)}}
\leq
  C_{T, k, \Eps} \min\{\norm{w}_{\mathcal{Y}_1}, \norm{w}_{\mathcal{Y}_2}\}.
\end{align*}
By the trilinear estimate in Proposition~\ref{prop:trilinear} and~\eqref{bound:h-w}, the forcing term involving $Q(g \chi, \mu)$ satisfies
\begin{align*}
  \abs{\int_0^T \vint{h, Q(g \chi, \mu)}_k \dtau}
&= \abs{\int_0^T \!\! \iint_{\T^3 \times \R^3}
    Q(g \chi, \mu) h \vint{v}^{2k} \dv\dx\dtau}
\\
&\leq
  C_k \int_0^T \!\! \int_{\T^3} \norm{g \chi}_{L^1_{(k-\alpha)^++\gamma+2s} \cap L^2} \norm{\vint{v}^{k+\alpha} h}_{L^2_v} \dx\dtau
\\
&\leq
  C_{T, k} \vpran{\sup_{t, x}\norm{g \chi}_{L^\infty_{k_0}}}
  \norm{h}_{\CalX^{(2)}}
\\
&\leq
  C_{T, k, \Eps} \vpran{\sup_{t, x}\norm{g \chi}_{L^\infty_{k_0}}}
  \min\{\norm{w}_{\mathcal{Y}_1}, \norm{w}_{\mathcal{Y}_2}\},
\end{align*}
provided
\begin{align} \label{cond:k-0-3}
   (k-\alpha)^+ + \gamma + 2s + 3 < k_0. 
\end{align}
This shows $G$ is a well-defined bounded linear functional on $\mathcal{W}$, which then can be extended to $\mathcal{Y}_1$ and $\mathcal{Y}_2$. Therefore, there exists $f \in \CalX^{(1)} \cap \CalX^{(2)}$ such that 
\begin{align*}
   \vint{h_0, f_0}_k + \vint{h, Q(g, \mu)}_k + \vint{h, R_\Eps}_k = \vint{w, f}
\qquad
   \text{for any $w \in \mathcal{W}$},
\end{align*}
with the norm of $f$ satisfying
\begin{align} \label{bound:energy-1}
   \max\{\norm{f}_{\CalX^{(1)}}, \norm{f}_{\CalX^{(2)}}\}
\leq
  C_{T, k, \Eps} \norm{f_0}_{L^2_x L^2_k}
  + C_{T, k, \Eps} \vpran{1 + \sup_{t, x}\norm{g \chi}_{L^\infty_{k_0}}}.
\end{align}

To show that $f \in \CalH_k$, we need to prove that $\mu + f \geq 0$. This can be done similarly as in \cite{AMSY}. Let $F = \mu + f$ and $G = \mu + g \chi$. Then $G \geq 0$ and $F$ satisfies
\begin{align} \label{eq:F-positivity}
   \del_t F + v \cdot \nabla_x F 
= - \Eps \vpran{\vint{v}^{2\alpha} \Id - \nabla_v \cdot (\vint{v}^{2\alpha} \nabla_v )} F + Q (G, F). 
\end{align}
Let $\eta: \R \to \R^+ \cup \{0\}$ be the convex and decreasing $W^{2, \infty}$-function given by 
\begin{align*}
   \eta(x) = \frac{1}{2} (x_-)^2,
\qquad
   x_- = \min\{x, 0\}.
\end{align*}
Multiply~\eqref{eq:F-positivity} by $\vint{v}^{2k} \eta'(F) = \vint{v}^{2k} F_-$. This gives
\begin{align*}
   \frac{1}{2} \vint{v}^{2k} \vpran{\del_t (F_-)^2 + v \cdot \nabla_x (F_-)^2}
= - \Eps \vint{v}^{2\alpha+2k} \vpran{FF_-}
   + \vint{v}^{2k} \Eps F_- \nabla_v \cdot (\vint{v}^{2\alpha} \nabla_v ) F
   + \vint{v}^{2k} F_- Q(G, F).
\end{align*}
The term involving $Q(G, F)$ is estimated in the same way as in Section 7.1 of \cite{AMSY}. We only need to check the regularizing terms. After integration they satisfy
\begin{align*}
   \int_{\R^3} \vpran{- \Eps \vint{v}^{2\alpha+2k} \vpran{FF_-}} \dv
= -\Eps \norm{\vint{v}^{\alpha + k} F_-}^2_{L^2_v}, 
\end{align*}
and
\begin{align*}
   \int_{\R^3} \vpran{\vint{v}^{2k} \Eps F_- \nabla_v \cdot (\vint{v}^{2\alpha} \nabla_v  F)} \dv 
&= -\Eps \int_{\R^3} \nabla_v\vpran{\vint{v}^{2k} F_-} \cdot (\vint{v}^{2\alpha} \nabla_v  F) \dv
\\
& \hspace{-2cm} = -\Eps \int_{\R^3}  \vint{v}^{2\alpha + 2k} \eta''(F) |\nabla_v  F|^2 \dv
      -\Eps \int_{\R^3} F_- \nabla_v\vpran{\vint{v}^{2k}} \cdot (\vint{v}^{2\alpha} \nabla_v  F) \dv
\\
& \hspace{-2cm} = -\Eps \int_{\R^3}  \vint{v}^{2\alpha + 2k} \eta''(F) |\nabla_v  F|^2 \dv
      -\Eps \int_{\R^3} F_- \nabla_v\vpran{\vint{v}^{2k}} \cdot (\vint{v}^{2\alpha} \nabla_v  F_-) \dv
\\
& \hspace{-2cm} = -\Eps \int_{\R^3}  \vint{v}^{2\alpha + 2k} \eta''(F) |\nabla_v  F|^2 \dv
      +\frac{1}{2} \Eps \int_{\R^3} F_-^2 \, \nabla_v \cdot \vpran{\vint{v}^{2\alpha} \nabla_v \vint{v}^{2k}} \dv
\\
& \hspace{-2cm}
\leq
   C_{k} \Eps \norm{\vint{v}^{\alpha + k - 1} F_-}_{L^2_{v}}^2
\leq
   \frac{\Eps}{2} \norm{\vint{v}^{\alpha + k} F_-}_{L^2_{v}}^2
   + C_k \norm{\vint{v}^k F_-}_{L^2_v}^2, 
\end{align*}
which only adds to the lower-order term in the energy estimate. Hence the similar estimate as in \cite{AMSY} holds and gives $F_- = 0$, that is, $F$ is non-negative. Combined with~\eqref{bound:energy-1} we have that $f \in \CalH_k$. The uniqueness of $f$ is guaranteed by the basic energy estimate in Proposition~\ref{bilinear-zero-level}.
%

\smallskip
\Ni (b) Although~\eqref{bound:energy-1} gives a regularization in $v$ which can induce an $L^\infty$-bound of the solution by Theorem~\ref{central-bilinear-T}, the bound is undesirable since it depends on $\Eps$. Now we show the derivation of a uniform-in-$\Eps$ bound for a smaller weight by using the De Giorgi method in Theorem~\ref{central-bilinear-T}. 

The main step is to show that by letting $\ell = k_0 - \ell_0 - 7 - \gamma$
in Theorem~\ref{central-bilinear-T}, the solution from part (a) satisfies
\begin{align*}
   \sup_{t \in (0, 1)}\| \langle v \rangle^{\ell_0+\ell}&f \|_{ L^{1}_{x,v} } \leq C,
\end{align*}
where $C$ is independent of $\Eps$. The main reason that $\Eps$ enters the energy estimate in part (a) is because, in the estimates of $\Gamma_2^\ast$ and $\Gamma_3^\ast$, we have to make use of the artificial regularization $\Eps L_\alpha$ to help us control the weighted $L^\infty$-norm of $g \chi$. To avoid this difficulty, we lower the weight by introducing
\begin{align} \label{def:k-1}
   k_1 = k_0 - 5 - \gamma. 
\end{align}
By taking $\ell = k_1$ in Proposition~\ref{bilinear-zero-level}, we obtain the energy estimate
\begin{align*}
     \frac{\rm d}{\dt} \norm{\vint{v}^{k_1}  f}_{L^2_{x,v}}^2
&\leq 
    -\vpran{\frac{\gamma_0}{2} - C_{k_1} \sup_{x} \norm{g \chi}_{L^1_{\gamma}}}
  \norm{\vint{v}^{k_1+\gamma/2} f}_{L^2_{x,v}}^2
   + C_{k_1} \vpran{1 + \sup_{x} \norm{g \chi}_{L^1_{k_1+\gamma}}} \norm{\vint{v}^{k_1} f}^2_{L^2_{x, v}}  \nn
\\
& \quad \,
  -\frac{c_0 \delta_2}{4} \int_{\T^3} \norm{\vint{v}^{k_1} f}^2_{H^s_{\gamma/2}} \dx
  - \frac{\Eps}{2} \norm{\vint{v}^{k_1+\alpha} f}_{L^2_x H^1_v}^2   \nn
\\
& \quad \,
   + C_{k_1} \vpran{1 + \sup_x \norm{g \chi}_{L^1_{k_1+\gamma+2s} \cap L^2}}
   \norm{\vint{v}^{k_1} f}_{L^2_{x,v}}, 
\qquad
   k_1 > 8 + \gamma.
   \nn
\end{align*}
By the embedding of weighted $L^1$-norms into $L^\infty_{k_0}$, we get
\begin{align} \label{bound:basic-L-2-linear-1}
    \frac{\rm d}{\dt} \norm{\vint{v}^{k_1}  f}_{L^2_{x,v}}^2
&\leq 
    -\vpran{\frac{\gamma_0}{4} - C_{k_1} \sup_{x} \norm{g \chi}_{L^\infty_{k_0}}}
  \norm{\vint{v}^{k_1+\gamma/2} f}_{L^2_{x,v}}^2
   + C_{k_1} \vpran{1 + \sup_{x} \norm{g \chi}_{L^\infty_{k_0}}} \norm{\vint{v}^{k_1} f}^2_{L^2_{x, v}}  \nn
\\
& \quad \,
  - \frac{c_0 \delta_2}{4} \int_{\T^3} \norm{\vint{v}^{k_1} f}^2_{H^s_{\gamma/2}} \dx
   + C_{k_1} \vpran{1 + \sup_x \norm{g \chi}_{L^\infty_{k_0}}}
   \norm{\vint{v}^{k_1} f}_{L^2_{x,v}}  
\\
&\leq 
    -\frac{\gamma_0}{4} 
  \norm{\vint{v}^{k_1+\gamma/2} f}_{L^2_{x,v}}^2
   + C_{k_1} 
       \norm{\vint{v}^{k_1} f}^2_{L^2_{x, v}} 
  - \frac{c_0 \delta_2}{4}  \norm{\vint{v}^{k_1} f}^2_{L^2_x H^s_{\gamma/2}}
   + C_{k_1} 
   \norm{\vint{v}^{k_1} f}_{L^2_{x,v}}, \nn
\end{align}
by letting
\begin{align}  \label{assump:delta-0-2-linear}
   \delta_0 
\leq 
  \min \left\{\frac{1}{2}, \ \frac{\gamma_0}{4C_{k_1}} \right\}.
\end{align}
Therefore for $T \leq 1$, there exists $C_1$ such that 
\begin{align*} 
   \sup_{t \in [0, T]} \norm{\vint{v}^{k_1} f}_{L^2_{x, v}}
\leq 
   C_1 \vpran{\norm{\vint{v}^{k_1} f_0}_{L^2_{x, v}} + 1}
< \infty.
\end{align*}
The constant $C_1$ only depends on $k_0, \gamma, s$. By interpolation we obtain the bound
\begin{align*} 
   \sup_{t \in [0, T]} \norm{\vint{v}^{k_1-2} f}_{L^1_{x, v}}
\leq 
   10 \, C_1 \vpran{\norm{\vint{v}^{k_1} f_0}_{L^2_{x, v}} + 1}
< \infty.
\end{align*}
Given~\eqref{cond:k-0-b}, or equivalently, 
\begin{align*}
      k_1  - \ell_0 -2 > \max\{8+\gamma, \, 3+2\alpha \},
\end{align*}
we now apply Theorem~\ref{central-bilinear-T} 
to obtain that
\begin{align} \label{bound:Linfty-f}
    \sup_{t, x, v} \norm{\vint{v}^{k_1 - \ell_0 -2}  f}_{L^\infty_{x, v}} 
\leq 
    \max\Big\{ 2 \norm{\vint{v}^{k_1-\ell_0-2} f_0}_{L^{\infty}_{x,v}}, \  K^{lin}_0\Big\},
\end{align}
where $K^{lin}_0$ is defined in~\eqref{K-initial-E0-linear-1}. From the definition of $K^{lin}_0$, it is clear that there exist $\Eps_\ast, T, \delta_{\ast}$ such that if they are small enough, then 
\begin{align*}
   \sup_{t, x, v} \norm{\vint{v}^{k_1 - \ell_0 -2}  f}_{L^\infty_{x, v}} 
< \delta_0.
\end{align*}
Specifically, we require that $T < 1$ and
\begin{align*}
    C_{k_1} e^{C_{k_1}} \max_{1 \leq i \leq 4} \max_{j\in\{1/p, p'/p\} }
  \vpran{{\delta_\ast}^{2j} 
             + {\delta_0}^{2j} + \Eps_\ast^{2j}}^{\frac{\beta_i -1}{a_i}}
< \delta_0,
\qquad
   \delta_\ast < \tfrac{1}{2} \delta_0.
\end{align*}
It is then clear that the bounds of $T, \delta_{\ast}$ are all independent of $\Eps$. 
\end{proof}


\section{Nonlinear Local Theory} \label{Sec:nonlinear-local}
In this section we establish the local existence of solutions to the nonlinear Boltzmann equation
\begin{align*}
   \del_t f + v \cdot \nabla_x f 
= Q (\mu + f, \mu + f),
\qquad
   f|_{t=0} = f_0(x, v).
\end{align*}
The proof is divided into three steps: first, we show the local existence of the regularized modified nonlinear Boltzmann equation which has the same form as \eqref{eq:linear-reg} with $g$ replaced by $f$. Next, we use the De Giorgi method to show the $L^\infty_{k_0}$-bound of the solution, thus automatically removing the cutoff function. Finally, we use strong compactness to pass to the limit in $\Eps$ to recover the solution to the original Boltzmann equation. This whole process will be carried out into three subsections. Note that in this section we need to restrict ourselves to the weak singularity case with $s \in (0, 1/2)$. This is due to insufficient regularization provided by the operator $\Eps L_\alpha$ in the contraction argument (see the last step in \eqref{reason-mild-sing}). 

\subsection{Local Existence to the modified Boltzmann equation (MBE)} The modified equation has the form
\begin{align} \label{eq:MBE-nonlinear}
   \del_t f + v \cdot \nabla_x f 
= \Eps L_\alpha (\mu + f) 
    + Q (\mu + f \chi(\vint{v}^{k_0} f), \mu + f) ,
\end{align}
where $L_\alpha$ and $\chi$ are the same as in the linear equation~\eqref{eq:linear-reg}. The local existence of solutions to~\eqref{eq:MBE-nonlinear} will be shown by applying the fixed-point argument in $\CalX_k$ to the linear equation~\eqref{eq:linear-reg} with a suitable $k$. 

\begin{thm} \label{thm:local-exist-MBE}
Suppose $s \in (0, 1/2)$ and 
\begin{align*}
 & k_0 > \max \left\{\ell_0 + 15 + 2\gamma, \ \ell_0 + 10 + 2\alpha + \gamma, \ 
   k - \alpha + 2\gamma + 2s + 9 + \ell_0 \right\},
\\[1mm]
& \hspace{2cm}
   k > \max\{8 + \gamma, \alpha\}, 
 \qquad
   \alpha > \gamma + 2s + 2, 
\end{align*}
where $\ell_0$ is the same weight in Theorem~\ref{thm:linear-local} (precise statement in~\eqref{cond:ell-0-2}). 
Suppose $\Eps, \delta_0, f_0$ satisfy the assumptions in both part (a)  and part (b) in Theorem~\ref{thm:linear-local}. For each such $\Eps$, if $T$ is small enough (which may depend on $\Eps$) then ~\eqref{eq:MBE-nonlinear} has a solution $f \in L^2_k((0, T) \times \T^3 \times \R^3)$. Moreover, $f$ satisfies the bound
\begin{align} \label{bound:L-infty-lower-f-MBE-2}
   \norm{\vint{v}^{k_0 - \ell_0 - 7 - \gamma} f}_{L^\infty{\vpran{(0, T) \times \T^3 \times \R^3}}} \leq \delta_0.
\end{align}
\end{thm}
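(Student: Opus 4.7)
The plan is a standard contraction mapping argument built on the linear theory of Theorem~\ref{thm:linear-local}. Concretely, for fixed $\Eps \in (0, \Eps_\ast]$, I would define the solution map
\begin{equation*}
   \Phi: g \longmapsto f,
\end{equation*}
where $f$ is the unique solution in $\CalH_k$ to the linear equation~\eqref{eq:linear-reg} provided by part (a) of Theorem~\ref{thm:linear-local}. The goal is to show that $\Phi$ is a contraction on a suitable closed subset $\CalB \subset \CalH_k$ so that its fixed point yields a solution of~\eqref{eq:MBE-nonlinear}, with the bound~\eqref{bound:L-infty-lower-f-MBE-2} being a direct consequence of part (b) of Theorem~\ref{thm:linear-local}.

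\medskip

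First I would identify the invariant set. Let
\begin{equation*}
   \CalB = \Big\{ g \in \CalH_k \,\big|\, \sup_{[0, T]} \norm{\vint{v}^{k} g}_{L^2_{x, v}} \leq R, \ \ \norm{\vint{v}^{k_0 - \ell_0 - 7 - \gamma} g}_{L^\infty_{t, x, v}} \leq \delta_0 \Big\},
\end{equation*}
for an appropriate radius $R$ (depending on the size of the initial data) and with $T$ small. For any $g \in \CalB$, the assumption $\norm{\vint{v}^{k_0-\ell_0-7-\gamma}g}_{L^\infty} \le \delta_0$ combined with the cutoff $\chi(\vint{v}^{k_0}g)$ automatically enforces the smallness hypothesis $\sup_{t,x}\|g\chi\|_{L^1_\gamma} \leq \delta_0$ required to invoke Theorem~\ref{thm:linear-local}. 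Part~(b) of that theorem, applied with initial data satisfying~\eqref{cond:initial-data-linear}, then guarantees that $f = \Phi(g)$ inherits the weighted $L^\infty$-bound, while the basic energy estimate in Proposition~\ref{bilinear-zero-level} (applied with $g$ replaced by $g\chi$) controls $\norm{\vint{v}^{k}f}_{L^2_{x,v}}$ uniformly on $[0, T]$ provided $T$ is chosen sufficiently small. This shows $\Phi(\CalB) \subseteq \CalB$.

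\medskip

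Next I would establish the contraction in the metric inherited from $\CalX_k = L^\infty_t L^2_{x,v}(\vint{v}^k)$. For $g_1, g_2 \in \CalB$ set $f_i = \Phi(g_i)$ and $h = f_1 - f_2$, $\chi_i = \chi(\vint{v}^{k_0}g_i)$. Then $h$ satisfies the linear equation
\begin{equation*}
   \del_t h + v \cdot \nabla_x h = \Eps L_\alpha h + Q(\mu + g_1 \chi_1, h) + Q(g_1 \chi_1 - g_2 \chi_2, \mu + f_2),\qquad h|_{t=0} = 0.
\end{equation*}
Testing against $\vint{v}^{2k} h$ and mimicking the coercivity calculation that led to~\eqref{bound:energy-T-0-ast} yields the lower bound
\begin{equation*}
   \tfrac{1}{2}\tfrac{\rm d}{\dt}\norm{h}_{L^2_k}^2 + \tfrac{\Eps}{4}\norm{\vint{v}^{k+\alpha} h}_{L^2_x H^1_v}^2 \le C_k \norm{h}_{L^2_k}^2 + \mathcal{I},
\end{equation*}
where $\mathcal{I}$ is the contribution of the source $Q(g_1\chi_1 - g_2 \chi_2, \mu + f_2)$. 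Writing $g_1\chi_1 - g_2\chi_2 = (g_1-g_2)\chi_1 + g_2(\chi_1 - \chi_2)$ and using $|\chi_1-\chi_2| \le \|\chi'\|_\infty \vint{v}^{k_0}|g_1-g_2|$, the first factor is dominated by $\norm{\vint{v}^{k_0 - 2^+}(g_1-g_2)}_{L^1_v}$ (and hence by $\norm{g_1-g_2}_{L^2_k}$ via interpolation with the $L^\infty_{k_0-\ell_0-7-\gamma}$-bound on $g_i$ inside $\CalB$, since $k_0$ is large enough). Applying Proposition~\ref{prop:trilinear} with $\sigma = -s$ — which is admissible since $s \in (0, 1/2)$ guarantees $s-\sigma = 2s < 1$ — gives
\begin{equation*}
   \mathcal{I} \le C\,\norm{g_1-g_2}_{\CalX_k}\,\norm{\vint{v}^{k+\alpha}h}_{L^2_x H^{2s}_v},
\end{equation*}
which the regularising term $\Eps \norm{\vint{v}^{k+\alpha}h}_{L^2_x H^1_v}^2$ absorbs after a Cauchy–Schwarz, at the price of a constant of the form $C/\Eps$. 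Gronwall on the interval $[0,T]$ then yields
\begin{equation*}
   \sup_{[0,T]}\norm{h}_{L^2_k}^2 \le C_{\Eps,k}\,T\,\norm{g_1-g_2}_{\CalX_k}^2,
\end{equation*}
and choosing $T$ sufficiently small (depending on $\Eps$) produces a strict contraction. Banach's fixed point theorem supplies the fixed point $f = \Phi(f) \in \CalB$, which is the desired solution of~\eqref{eq:MBE-nonlinear}; the bound~\eqref{bound:L-infty-lower-f-MBE-2} is the $L^\infty$-inclusion built into $\CalB$.

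\medskip

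\textbf{Main obstacle.} The delicate step is closing the contraction estimate for $\mathcal{I}$: the trilinear estimate forces one to place one factor of $h$ in $H^{s-\sigma}$ with a polynomial weight, and the only regularisation available at this stage comes from $\Eps L_\alpha$, which provides just $H^1_v$ with weight $\vint{v}^{k+\alpha}$. Matching these two requires $s-\sigma < 1$ together with $\sigma \geq -s$, forcing $s < 1/2$; this is precisely the restriction to the mild singularity regime flagged in the statement, and is why the weight relation $\alpha > \gamma + 2s + 2$ must be imposed to accommodate the $(k-\alpha)^+$-type loss incurred in the trilinear bound. Bookkeeping of the moment thresholds $k_0$, $k$, and $\alpha$ to ensure compatibility with both Theorem~\ref{thm:linear-local}(a)(b) and Proposition~\ref{prop:trilinear} is the principal source of technical fragility.
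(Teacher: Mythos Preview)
Your proposal is correct and follows essentially the same route as the paper: define the solution map via Theorem~\ref{thm:linear-local}, show contraction in $\CalX_k$ by testing the difference equation against $\vint{v}^{2k}h$, bound the source term $Q(g_1\chi_1-g_2\chi_2,\mu+f_2)$ via the trilinear estimate, and absorb the resulting $H^{2s}_{k+\alpha}$-norm using the $\Eps L_\alpha$-dissipation---which is precisely where $s<1/2$ enters. The only cosmetic difference is that the paper runs the contraction directly on $\CalH_k$ (the cutoff $\chi$ already forces $\|g\chi\|_{L^\infty_{k_0}}\le 2\delta_0$, so the $L^\infty$-constraint you build into $\CalB$ is redundant for invoking Theorem~\ref{thm:linear-local}), and it splits the source into $Q(\cdot,f_h)$ and $Q(\cdot,\mu)$, handling the latter via \cite[Proposition~3.4]{AMSY} rather than the trilinear estimate.
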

\begin{proof}
Let $k > \max\{8 + \gamma, \, \alpha\}$ and $\CalH_k \subseteq \CalX_k$ be the set defined in~\eqref{def:CalH}. For a given $g \in \CalH_k$, define the map 
\begin{align*}
   \Gamma: \CalH_k \to \CalH_k, 
\qquad
   \Gamma g = f,
\end{align*}
where $f \in \CalH_k$ is the solution to~\eqref{eq:linear-reg}. Theorem~\ref{thm:linear-local} guarantees that $\Gamma$ is well-defined provided $\delta_0, \Eps, T$ are small enough. Moreover, if we choose $k > \alpha$, then the assumptions in Theorem~\ref{thm:linear-local} require that 
\begin{align*}
     k_0 > \max \left\{\ell_0 + 15 + 2\gamma, \ \ell_0 + 10 + 2\alpha + \gamma, \ k - \alpha + \gamma + 2 + 2s \right\},
 \qquad
   k > \max\{8 + \gamma, \, \alpha\}.
\end{align*}

Our goal is to show that $\Gamma$ is a contraction mapping on the space $\CalX_k = L^\infty(0, T; L^2_x L^2_k(\T^3 \times \R^3))$ for $T$ small enough. 
Let $g, h \in \CalH_k$ and $f_g, f_h$ be the corresponding solutions such that
\begin{align*}
   \del_t f_g + v \cdot \nabla_x f_g 
= - \Eps L_\alpha f_g 
    + Q (\mu + g \chi(\vint{v}^{k_0} g), f_g) + Q(g \chi(\vint{v}^{k_0} g), \mu),
\\
   \del_t f_h + v \cdot \nabla_x f_h 
= - \Eps L_\alpha f_h 
    + Q (\mu + h \chi(\vint{v}^{k_0} h), f_h) + Q(h \chi(\vint{v}^{k_0} h),\mu).
\end{align*}
The difference of the two equations reads
\begin{align} \label{eq:difference-1}
& \quad \,
   \del_t \vpran{f_g - f_h} + v \cdot \nabla_x \vpran{f_g - f_h}  \nn
\\
&= - \Eps L_\alpha \vpran{f_g - f_h}  
   + Q (\mu + g \chi(\vint{v}^{k_0} g), f_g - f_h)  \nn
\\
& \hspace{1cm}
   + Q(g \chi(\vint{v}^{k_0} g) - h \chi(\vint{v}^{k_0} h), f_h)
    + Q(g \chi(\vint{v}^{k_0} g)-h \chi(\vint{v}^{k_0} h), \mu), 
\end{align}
with the zero initial data for $f_g - f_h$. Given sufficient regularity obtained in Theorem~\ref{thm:linear-local}, we can now apply direct energy estimates.  Multiply~\eqref{eq:difference-1} by $(f_g - f_h) \vint{v}^{2k}$ and integrate in $x, v$. Then by similar estimates as for~\eqref{bound:energy-T-0-ast}, we have
\begin{align} \label{bound:contraction}
& \quad \,
  \frac{1}{2}\frac{\rm d}{\dt} \norm{f_g - f_h}^2_{L^2_x L^2_k(\T^3 \times \R^3)}
  + \frac{\Eps}{4} \norm{\vint{v}^{\alpha + k} (f_g - f_h)}^2_{L^2_xH^1_v}  \nn
\\
&\leq
  C_{k, \Eps} \norm{f_g - f_h}^2_{L^2_x L^2_k(\T^3 \times \R^3)}
  + \iint_{\T^3 \times \R^3}
       Q(g \chi(\vint{v}^{k_0} g) - h \chi(\vint{v}^{k_0} h), f_h) (f_g - f_h) \vint{v}^{2k}\dx\dv
\\
& \quad \, 
  + \iint_{\T^3 \times \R^3}
      Q(g \chi(\vint{v}^{k_0} g)-h \chi(\vint{v}^{k_0} h), \mu)
      (f_g - f_h) \vint{v}^{2k}\dx\dv.  \nn
\end{align}
By the trilinear estimate in Proposition~\ref{prop:trilinear}, we have, for $k > \alpha$,
\begin{align} \label{reason-mild-sing}
& \quad \,
   \iint_{\T^3 \times \R^3}
       Q(g \chi(\vint{v}^{k_0} g) - h \chi(\vint{v}^{k_0} h), f_h) (f_g - f_h) \vint{v}^{2k} \dx\dv \nn
\\
&\leq
   \int_{\T^3} \norm{g \chi(\vint{v}^{k_0} g) - h \chi(\vint{v}^{k_0} h)}_{L^1_{\gamma+2s+k-\alpha} \cap L^2} \norm{f_h}_{L^2_{\gamma + 2s + k - \alpha}} \norm{f_g - f_h}_{H^{2s}_{k+\alpha}} \dx  \nn
\\
&\leq
  C \vpran{\sup_x \norm{f_h}_{L^2_{\gamma + 2s+k-\alpha}}}
  \norm{g - h}_{L^2_x L^2_k } \norm{f_g - f_h}_{L^2_x H^{2s}_{k+\alpha}}  \nn
\\
& \leq
  \frac{\Eps}{16} \norm{\vint{v}^{k+\alpha}(f_g - f_h)}_{L^2_x H^1_v}^2
  + C_\Eps \norm{g - h}_{L^2_x L^2_k }^2,
\end{align}
where the last step is precisely the (only) reason that we have to restrict to the weak singularity in this section. 
The interpolations in the estimates above require that
\begin{align*} 
  \gamma + 2s + k - \alpha \leq k_0 -\ell_0 - 9 - \gamma,
\qquad
  \gamma + 2s + k - \alpha + 2 \leq k,
\qquad
   k > 8 + \gamma. 
\end{align*}
A sufficient condition is
\begin{align} \label{cond:k-0-5}
  \alpha \geq \gamma + 2s + 2, 
\qquad
  \alpha < k \leq k_0 + \vpran{\alpha - \vpran{2\gamma + 2s + 9 + \ell_0}}.
\end{align}
By Proposition 3.4 in \cite{AMSY}, the last term in~\eqref{bound:contraction} is bounded as
\begin{align*}
& \quad \,
  \abs{\iint_{\T_x^3 \times \R_v^3}
      Q(g \chi(\vint{v}^{k_0} g)-h \chi(\vint{v}^{k_0} h), \mu)
      (f_g - f_h) \vint{v}^{2k}\dx\dv}
\\
& \leq
  C_k \norm{g \chi(\vint{v}^{k_0} g)-h \chi(\vint{v}^{k_0} h)}_{L^2_xL^2_k}
  \norm{f_g - f_h}_{L^2_xL^2_{k+\gamma}}
\\
& \leq
   C_k \norm{g-h}_{L^2_xL^2_k}  \norm{f_g - f_h}_{L^2_xL^2_{k + \alpha}}
\\
& \leq
  C_{k, \Eps} \norm{g-h}_{L^2_xL^2_k}^2
  + \frac{\Eps}{16} \norm{f_g - f_h}_{L^2_xL^2_{k + \alpha}}^2,
\end{align*}
where we have written $(f_g - f_h) \vint{v}^{2k} = \vpran{(f_g - f_h) \vint{v}^\gamma} \vint{v}^{2k-\gamma}$ when applying Proposition 3.4 from \cite{AMSY}. 
Combining the inequalities above, we obtain
\begin{align*}
& \quad \,
   \frac{1}{2}\frac{\rm d}{\dt} \norm{f_g - f_h}^2_{L^2_x L^2_k}
  + \frac{\Eps}{8} \norm{\vint{v}^{\alpha + k} (f_g - f_h)}^2_{L^2_xH^1_v}
\leq
  C_{\Eps} \norm{f_g - f_h}^2_{L^2_x L^2_k}
  + C_{k,\Eps} \norm{g - h}_{L^2_x L^2_k }^2,
\end{align*}
which, by choosing $T$ small enough which may depend on $\Eps$, gives
\begin{align*}
   \norm{f_g - f_h}^2_{L^\infty(0, T; L^2_x L^2_k)}
\leq
  \frac{1}{2} \norm{g - h}^2_{L^\infty(0, T; L^2_x L^2_k)}.
\end{align*}
Therefore, $\Gamma$ is a contraction mapping and we obtain a unique solution to the modified equation~\eqref{eq:MBE-nonlinear}. The uniform bound in \eqref{bound:L-infty-lower-f-MBE-2} is a direct consequence of Theorem~\ref{thm:linear-local}. 
\end{proof}

\subsection{$L^\infty_{k_0}$-bound of solutions to MBE}

In this part we show that the solution obtained in Theorem~\ref{thm:local-exist-MBE} is in fact a solution to the regularized Boltzmann equation
\begin{align} \label{eq:reg-Boltzmann}
      \del_t f + v \cdot \nabla_x f 
= \Eps L_\alpha (\mu + f) 
    + Q (\mu + f, \mu + f),
\qquad
   f|_{t=0} = f_0(x, v).
\end{align}
 The main step is to prove that such a solution satisfies 
 \begin{align} \label{bound:L-infty-f-reg}
   \norm{\vint{v}^{k_0} f}_{L^\infty([0, T) \times \T^3 \times \R^3)} 
\leq 
   \delta_0. 
 \end{align}
 This way the cutoff function automatically vanishes and we recover a solution to~\eqref{eq:reg-Boltzmann}.

Note that $f$ already satisfies a uniform-in-$\Eps$ bound in~\eqref{bound:L-infty-lower-f-MBE-2}. Our goal is to enhance the weight to $\vint{v}^{k_0}$. For a large part, the proof of~\eqref{bound:L-infty-f-reg} parallels that of Theorem~\ref{central-bilinear-T} for the linear case. The central  difference, which will manifest itself repeatedly in the proofs below, is that the moment requirement on $f$ for the quadratic problem \eqref{eq:MBE-nonlinear} is substantially lessened in comparison to that of the linear equation \eqref{eq:linear-reg}. This is due to the quadratic structure of the collision operator which permits us to strategically allocate moments to the appropriate entry of the collision operator. Similar as in Section~\ref{Sec:a-priori-linear}, the $L^\infty_{k_0}$-estimate is built upon various $L^2$-estimates of the solution $f$ and its level-set functions. Hence we will need to lay the ground by proving several propositions before showing the $L^\infty_{k_0}$-estimate.

\subsubsection{Local in time $L^{2}$-estimates} As the first step we show a uniform-in-$\Eps$ weighted $L^2$-bound of $f$, the solution to~\eqref{eq:MBE-nonlinear}. The following proposition is the analog to Proposition \ref{bilinear-zero-level}. 

\begin{prop}[Nonlinear uniform-in-$\Eps$ estimate]\label{quadratic-zero-level} 
Let $f$ be a solution to equation~\eqref{eq:MBE-nonlinear} with singularity $s \in (0, 1)$. Suppose 
\begin{align} \label{Q:cond:coercivity}
    \inf_{t,x} \norm{\mu + f \chi(\vint{v}^{k_0} f)}_{L^1_{v}} \geq D_0 > 0, 
\qquad
    \sup_{t,x} \norm{\mu + f \chi(\vint{v}^{k_0} f)}_{L^1_2 \cap L \log L} < E_0 < \infty\,.
\end{align}
Then for any $\ell \geq \frac{37 + 5\gamma}{2}$, the solution $f$ satisfies, for $\delta_{5}>0$ sufficiently small,
\begin{align} \label{Q:ineq:energy-basic-1}
\frac{\rm d}{\dt}\norm{\vint{v}^{\ell}  f}_{L^2_{x,v}}^2
&\leq  -\Big(\frac{\gamma_0}{4} - \delta_{5}\sup_{x}\| f \|_{L^{1}_{\gamma}}\Big)
  \norm{\vint{v}^{\ell+\gamma/2} f}_{L^2_{x,v}}^2
  - \frac{\Eps}{4} \norm{\vint{v}^{\ell+\alpha} f}_{L^2_x H^1_v}^2  \nn
\\
& \quad \, 
   -\Big(\frac{c_0}{4} \delta_5  - C_{\ell}\sup_{x}\|f\|_{L^{1}_{3+\gamma+2s}\cap L^{2}}\Big) \int_{\T^3} \norm{\vint{v}^{\ell} f}^2_{H^s_{\gamma/2}} \dx  \nn
\\
& \quad \, 
   + \vpran{ C_{\ell} + C \sup_{x} \norm{f}_{L^1_{1+\gamma}}} \norm{\vint{v}^{\ell} f}^2_{L^2_{x, v}} 
+ C_{\ell} \,\Eps\, \norm{f}_{L^2_{x,v}}.
\end{align}
In particular, if the following additional conditions hold:
\begin{align}\label{smallness}
    \sup_{t,x} \norm{f}_{L^1_{3+\gamma+2s}\cap L^{2}} 
\leq 
   \delta_0 
 < \frac{c_0\delta_5}{8 C_{\ell}}, 
\qquad
   \ell > \max\{\tfrac{37 + 5\gamma}{2}, \ 3 + 2\alpha\},
\end{align}
then for any $\Eps < 1$ and $t \in [0,T)$, we have  
\begin{align} \label{bound:energy-basic-nonlinear}
   \norm{\vint{\cdot}^{\ell} f(t)}^{2}_{L^{2}_{x,v}} 
  + \frac{c_0\delta_5}{8}  \int_0^t \int_{\T^3} \norm{\vint{v}^{\ell} f}^2_{H^s_{\gamma/2}} \dx \dtau
\leq 
   e^{C_\ell \,t} \vpran{\norm{\vint{\cdot}^{\ell} f_0}^{2}_{L^{2}_{x,v}} + \Eps^2 T},
\end{align}
where the constants $c_0, \delta_5, C_\ell$ are all independent of $\Eps$.
Furthermore, we have the regularisation in $(t, x)$ as
\begin{align}\label{Q:bound:velocity-avg-basic}
& 
    \int^T_0
     \norm{(1 - \Delta_{t})^{s'/2}  f}^{2}_{L^{2}_{x,v}} \dtau
   + \int^{T}_0
     \norm{(1 - \Delta_{x})^{s'/2}  f}^{2}_{L^{2}_{x,v}} \dtau  \nn
\\
& \hspace{1cm} \leq 
   C\int^T_0 
   \vpran{\Eps^2 \norm{\vint{v}^{3+2\alpha} f}^{2}_{L^{2}_{x,v}}
   + \norm{(1 -\Delta_{v})^{s/2} f}^{2}_{L^{2}_{x,v}}} \dt  \nn
\\
& \hspace{1cm} \quad \, 
  + C\int^T_0 \norm{ \langle v \rangle^{5+\gamma+2s}f}_{L^2_{x, v}}^2 \dt
  + C \norm{\vint{v}^9 f_0}^{2}_{L^{2}_{x,v}} + C\Eps^{2} \,T,
\end{align}
for any $s'< \frac{s}{2(s+3)}$.
\end{prop}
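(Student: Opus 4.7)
The argument mirrors the proof of Proposition~\ref{bilinear-zero-level} with $G = \mu + f\chi(\vint{v}^{k_0} f)$, but exploits two features that were not available in the linear setting: (i) by the definition of the cutoff $\chi$, we have the pointwise bound $(f\chi)\vint{v}^{k_0} \leq 2\delta_0$, and (ii) the quadratic nature of the operator $Q(G, \mu+f) = Q(G, f) + Q(G, \mu)$ lets us place the higher moments on the $\mu$-factor, where they are free, rather than charging them to $f$. This is precisely the mechanism that keeps the moment requirement on $f$ in \eqref{smallness} bounded by $3 + \gamma + 2s$ even though $\ell$ can be large. I will first multiply \eqref{eq:MBE-nonlinear} by $\vint{v}^{2\ell} f$, integrate in $(x,v)$, and treat the regularising term $\Eps L_\alpha(\mu+f)$ exactly as in \eqref{est:L-alpha-1}, producing the $-\frac{\Eps}{4}\norm{\vint{v}^{\ell+\alpha} f}^2_{L^2_x H^1_v}$ dissipation plus an $\Eps$-small absorption term of the form $C_\ell\Eps\norm{f}_{L^2_{x,v}}$.

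For the collision contribution, split $Q(G,\mu+f) = Q(G,f) + Q(G,\mu)$. The term with $Q(G,\mu)$ is estimated by the trilinear inequality in Proposition~\ref{prop:trilinear}, allocating the $\vint{v}^{\ell+\gamma/2}$-weight to $f$ (the test function) and leaving only an $L^1_{3+\gamma+2s}\cap L^2$-norm on $G$: this produces the terms controlled by $\sup_x\norm{f}_{L^1_{3+\gamma+2s}\cap L^2}$ in the statement. For $Q(G, f)$, proceed as in \eqref{est:T-0-1-1}--\eqref{est:T-0} using Lemma~\ref{lem:decomp-Q-ell}(b), followed by Proposition~\ref{prop:commutator}(b) (now applicable precisely because $(f\chi)\vint{v}^{k_0}\leq 2\delta_0$ serves as the $K_0$ in \eqref{assump:bound-g}, which is why $k_0$ must be taken large enough relative to $\ell$). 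The residual cancellation integral is handled by Proposition~\ref{prop:strong-sing-cancellation}, splitting into the mild singularity case $s\in(0,1/2)$ (direct bound) and the strong singularity case $s\in[1/2,1)$ (interpolation with an $H^{s_1}_{\gamma_1/2}$-factor absorbed by the $H^s_{\gamma/2}$-dissipation with a small coefficient). The coercivity estimate for the $H^s_{\gamma/2}$-dissipation comes from Proposition~\ref{prop:coercivity-1} applied with $G = \mu + f\chi$, whose hypotheses are exactly \eqref{Q:cond:coercivity}; this explains the appearance of $-\frac{c_0\delta_5}{4}\int_{\T^3}\norm{\vint{v}^{\ell}f}^2_{H^s_{\gamma/2}}\dx$ once the output of Lemma~\ref{lem:decomp-Q-ell}(a) is multiplied by an adequately small constant $\delta_5$ (analogous to $\delta_2$ in the linear proof) and added to the basic inequality. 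Summing these contributions yields \eqref{Q:ineq:energy-basic-1}.

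To pass to \eqref{bound:energy-basic-nonlinear}, impose the smallness \eqref{smallness}: the condition $\delta_0 < c_0\delta_5/(8C_\ell)$ absorbs the bad $\sup_x\norm{f}_{L^1_{3+\gamma+2s}\cap L^2}$-term into the $H^s_{\gamma/2}$-dissipation, while $\delta_5\sup_x\norm{f}_{L^1_\gamma}\leq \delta_5\delta_0 \ll \gamma_0/4$ leaves a net coercive contribution from the $\norm{\vint{v}^{\ell+\gamma/2} f}^2_{L^2}$-term. A standard Gronwall with the Cauchy--Schwarz trick from Lemma~\ref{lem:Gronwall} (to handle the linear-in-$\norm{f}_{L^2}$ source $C_\ell\Eps\norm{f}_{L^2}$) then gives \eqref{bound:energy-basic-nonlinear}. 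Finally, the $(t,x)$-regularisation \eqref{Q:bound:velocity-avg-basic} is obtained verbatim as in the proof of Proposition~\ref{bilinear-zero-level}: apply the averaging lemma (Proposition~\ref{average-lemma-p}) to \eqref{eq:MBE-nonlinear} with parameters $\beta = s$, $m=2$, $r=0$, $p=2$, and control the inhomogeneity $\tilde Q(G, \mu+f)$ via Proposition~\ref{prop:trilinear} and Proposition~\ref{prop:Q-F-mu}, noting again that the $L^1_{3+\gamma+2s}\cap L^2$-norms that appear are on $f$ (not on an exogenous $g$), which is what generates the $\norm{\vint{v}^{5+\gamma+2s}f}_{L^2_{x,v}}^2$-term instead of the $(1 + \sup\norm{g}^2)\norm{\vint{v}^{3+\gamma+2s} f}^2$-term.

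The main obstacle, and the reason for the large moment $\ell \geq (37 + 5\gamma)/2$, is the accounting of weights in the cancellation estimate coming from Proposition~\ref{prop:commutator}(b): each of the commutator remainders $\Gamma_2,\Gamma_3$ demands extra moments beyond the $8+\gamma$ used in the linear case because one must simultaneously (a) place enough $v_\ast$-weight on the $\mu$-factor or on the pointwise bound $f\chi\vint{v}^{k_0}\leq 2\delta_0$, (b) keep the leftover weight on $f$ sufficient to close the bound in $L^2_\ell$ via the coercive $L^2_{\ell+\gamma/2}$-term, and (c) accommodate the reallocation used in the strong-singularity cancellation term of Proposition~\ref{prop:strong-sing-cancellation}(a). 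Tracking these moment costs line by line is routine but delicate, and is the only genuinely technical piece of the argument; everything else is a direct transcription of the linear proof.
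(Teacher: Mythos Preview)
Your proposal is correct and follows the same strategy as the paper. The paper's own proof is extremely terse: it simply cites \cite[Proposition~3.2]{AMSY}, \cite[Proposition~3.4]{AMSY} and \cite[Step~1 in Theorem~6.1]{AMSY}, observing only that the cutoff $\chi$ does not alter those arguments since coercivity is guaranteed by hypothesis~\eqref{Q:cond:coercivity} and all upper bounds follow from $|f\chi|\le|f|$; the $\Eps L_\alpha$ contribution and the $(t,x)$-averaging step are deferred verbatim to the proof of Proposition~\ref{bilinear-zero-level} and Corollary~\ref{cor:bi-cor-basic-energy-estimate-linear}. What you have written is essentially a reconstruction of those cited \cite{AMSY} estimates using the toolbox of the present paper, and the mechanism you identify---that the quadratic structure lets one route the high $\vint{v_\ast}^\ell$-moments onto the first entry of $Q$ rather than demanding them of an exogenous $g$---is exactly the point the paper stresses immediately after the proposition.

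One small caveat: your appeal to Proposition~\ref{prop:commutator}(b) via the pointwise cutoff bound $(f\chi)\vint{v}^{k_0}\le 2\delta_0$ tacitly needs $k_0\ge\ell$, which is not listed among the hypotheses of Proposition~\ref{quadratic-zero-level} (though it holds in every later application). The paper's route through $|f\chi|\le|f|$ and the \cite{AMSY} argument avoids this by charging the high moments to the second copy of $f$ in the first slot of $Q$; the remark following Proposition~\ref{thm:L2-level-set-nonlinear} explains that both devices work. Your speculation about the origin of the threshold $\ell\ge(37+5\gamma)/2$ is reasonable but not quite how it arises in \cite{AMSY}; there it is the moment level needed for the weighted commutator and spectral estimates of that paper to close, not specifically the $\Gamma_2,\Gamma_3$ bookkeeping you describe.
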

\begin{proof}
This is a direct consequence of  \cite[Proposition 3.2]{AMSY}, \cite[Proposition 3.4]{AMSY} and \cite[Step 1 in Theorem 6.1]{AMSY}. Note that the cutoff function $\chi$ does not change the proofs in Propositions 3.2 and 3.4 in~\cite{AMSY}, since the coercivity is guaranteed by~\eqref{Q:cond:coercivity} and the upper bounds follow from
\begin{align*}
   \abs{f \chi} \leq \abs{f}.
\end{align*}
Bounds for the regularising term $\Eps L_\alpha$ and the $(t,x)$-smoothing in~\eqref{Q:bound:velocity-avg-basic} are both handled in the same way as in the proofs of Proposition \ref{bilinear-zero-level} and Corollary~\ref{cor:bi-cor-basic-energy-estimate-linear}.
\end{proof}

The uniform $L^2$-bound in Proposition~\ref{quadratic-zero-level} is the first place that one observes the weight difference in the $\sup_x$-norm compared with the linear case: the weight $\vint{v}^\ell$ does not appear in the $\sup_x$-norm in~\eqref{Q:ineq:energy-basic-1} as opposed to~\eqref{ineq:energy-basic-1} in Proposition~\ref{bilinear-zero-level}. 

\smallskip

\subsubsection{A priori $L^2$-estimates for level sets}
Let us proceed to show the nonlinear counterpart for the \textit{a priori} estimates for the level sets.  We recall that it is a building block for the energy functional interpolation.
\begin{prop}\label{thm:L2-level-set-nonlinear}
Set $F = \mu + f \geq 0$ and $s \in (0, 1)$. Suppose $k_0, \delta_0$ in the definition of $\chi$ in~\eqref{def:chi} satisfy that $k_0 > 8 + \gamma$ and $\delta_0$ small enough such that~\eqref{cond:delta-0-nonlinear-level} holds and
\begin{align*} 
    \inf_{t,x} \norm{\mu + f \chi(\vint{v}^{k_0} f)}_{L^1_{v}} \geq D_0 > 0, 
\qquad
    \sup_{t,x} \norm{\mu + f \chi(\vint{v}^{k_0} f)}_{L^1_2 \cap L \log L} < E_0 < \infty\,.
\end{align*}
Then for any $8 + \gamma < \ell \leq k_0$,
\begin{align} \label{est:level-set-2}
& \quad \, 
      \int_{\T^3} \int_{\R^3} Q(\mu + f \chi(\vint{v}^{k_0} f), \, F) \Fl{K} \vint{v}^{\ell} \dv\dx \nn
\\
& \hspace{1cm} \leq
   -\frac{c_0 \Eps_3}{4}
   \norm{\Fl{K}}^2_{L^2_x H^s_{\gamma/2}}  
   + C_\ell \vpran{\sup_x \norm{f}_{L^1_{1+\gamma}}}
   \norm{\Fl{K}}_{L^1_{x} L^1_\gamma}
   + C_\ell (1 + K) \norm{\Fl{K}}_{L^1_{x} L^1_\gamma},  
\end{align}
where $\Eps_3$ is a constant with the bound in~\eqref{bound:Eps-2-2}.
\end{prop}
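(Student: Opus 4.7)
The proof will mirror Proposition \ref{thm:L2-level-set}(a) from the linear setting, but with one decisive simplification: setting $g := f\chi(\vint{v}^{k_0} f)$, the cutoff delivers the pointwise bound $g\vint{v}^{k_0} \leq 2\delta_0$, hence $g\vint{v}^\ell \leq 2\delta_0$ for every $\ell \leq k_0$. The perturbation $G = \mu + g$ therefore satisfies the hypothesis \eqref{assump:bound-g} of Proposition \ref{prop:commutator}(b) with $K_0 = 2\delta_0$, and every weighted $L^1$-norm of $g$ that appeared in the linear argument collapses to the universal constant $1 + 2\delta_0$. This is precisely the moment saving that the quadratic structure provides over the linear case and is the reason a low moment like $\sup_x \|f\|_{L^1_{1+\gamma}}$ suffices on the right-hand side.

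The main estimate would proceed by decomposing
\[
\int_{\T^3}\!\!\int_{\R^3} Q(G,F)\Fl{K}\vint{v}^\ell \dv\dx = T_1+T_2+T_3,
\]
exactly as in \eqref{decomp:linear}. For $T_1$, I would use non-negativity of $G$ and the Cauchy--Schwarz decomposition of \eqref{bound:T-1-levl-set}--\eqref{bound:T-1-first-1}, producing the coercive term $-\gamma_0(1-C\delta_0)\|\Fl{K}\|^2_{L^2_x L^2_{\gamma/2}}$ together with a commutator integral bounded by Proposition \ref{prop:commutator}(b) applied with $F = H = \Fl{K}$ and $K_0 = 2\delta_0$. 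The cancellation residue is absorbed via Proposition \ref{prop:strong-sing-cancellation}: part (b) when $s < 1/2$ directly, and part (a) combined with the Young interpolation $\|\Fl{K}\|_{H^{s_1}_{\gamma_1/2}}\|\Fl{K}\|_{L^2_{\gamma/2}} \leq \eta\|\Fl{K}\|^2_{H^s_{\gamma/2}} + C_\eta \|\Fl{K}\|^2_{L^2_{\gamma/2}}$ when $s \geq 1/2$. The terms $T_2$ and $T_3$ are handled through the same decompositions as in the linear paragraph surrounding \eqref{decomp:T-3}, now with all moments of $g$ trivially controlled by $1 + 2\delta_0$; the small-moment factor $\sup_x\|f\|_{L^1_{1+\gamma}}$ enters via Proposition \ref{prop:strong-sing-cancellation}(c), which requires only $\|G\|_{L^1_{3+\gamma}} \leq 1 + \sup_x\|f\|_{L^1_{1+\gamma}}$ after using $|\chi|\leq 1$ and interpolation between low moments. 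Altogether, this first bound has the form
\[
\text{LHS} \leq -\tfrac{\gamma_0}{2}\|\Fl{K}\|^2_{L^2_x L^2_{\gamma/2}} + \eta\|\Fl{K}\|^2_{L^2_x H^s_{\gamma/2}} + C_\ell\bigl(\sup_x\|f\|_{L^1_{1+\gamma}} + 1 + K\bigr)\|\Fl{K}\|_{L^1_x L^1_\gamma}.
\]

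To produce the negative $H^s$-dissipation in the final statement, I would then re-estimate $T_1$ by invoking part (a) of Lemma \ref{lem:decomp-Q-ell} together with Proposition \ref{prop:coercivity-1}, precisely as in \eqref{est:T-0-2}, giving $T_1 \leq -(c_0/2)\|\Fl{K}\|^2_{L^2_x H^s_{\gamma/2}}$ up to controlled $L^2_{\gamma/2}$-remainders. Multiplying this second bound by a small parameter $\Eps_3$ tuned according to \eqref{bound:Eps-2-2} and summing with the first bound, the coercive $-\gamma_0\|\Fl{K}\|^2_{L^2_x L^2_{\gamma/2}}$-term swallows both the $\eta\|\Fl{K}\|^2_{L^2_x H^s_{\gamma/2}}$-residue (by choosing $\eta < c_0\Eps_3/8$) and every remaining $\|\Fl{K}\|^2_{L^2_{x,v}}$-contribution via the trivial inequality $\|\Fl{K}\|^2_{L^2_{x,v}} \leq \|\Fl{K}\|^2_{L^2_x L^2_{\gamma/2}}$. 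What survives is exactly \eqref{est:level-set-2}.

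The main technical obstacle will be the bookkeeping that ensures every $L^2$-type remainder produced by the commutator and by the strong-singularity cancellation is absorbed after the weighted convex combination, so that only the $H^s_{\gamma/2}$-dissipation and the $L^1_x L^1_\gamma$-norms survive on the right. Pinning down the precise threshold \eqref{bound:Eps-2-2} on $\Eps_3$ relies on the smallness of $\delta_0$ built into condition \eqref{cond:delta-0-nonlinear-level}, which in turn guarantees $\gamma_0(1 - C\delta_0) > 0$ and permits the final absorption. Beyond this, every step is a direct transcription of the linear proof, with the cutoff replacing all moment bounds on $g$ by the universal constant $1 + 2\delta_0$.
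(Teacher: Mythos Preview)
Your proposal is essentially the paper's own proof: decompose as in \eqref{decomp:linear}, use the positivity of $G=\mu+f\chi$ and Proposition~\ref{prop:commutator}(b) with $K_0=2\delta_0$ for the $T_1$-piece, re-estimate $T_1$ via Proposition~\ref{prop:coercivity-1} to extract the $H^s_{\gamma/2}$-dissipation, and take a weighted sum with small $\Eps_3$. The paper merges your $T_2$ and $T_3$ into a single $\tilde T_2$, but this is cosmetic.

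One correction worth making before you write it up: the factor $\sup_x\|f\|_{L^1_{1+\gamma}}$ does \emph{not} enter through $T_2,T_3$ via Proposition~\ref{prop:strong-sing-cancellation}(c). In those terms the function in the $W^{1,\infty}$-slot is $K$ or $\mu\vint{v}^\ell$, and $\|G\|_{L^1_{3+\gamma}}\leq C(1+\delta_0)$ by the cutoff, so $T_2+T_3$ contributes only $C_\ell(1+K)\|\Fl{K}\|_{L^1_xL^1_\gamma}$. The low-moment factor actually arises from $T_1$: when you apply \eqref{ineq:trilinear-1-f-minus} with $F=H=\Fl{K}$, the last line gives $C_\ell(1+K_0)\bigl(\sup_x\|\Fl{K}/\vint{v}^{\ell-1-\gamma}\|_{L^1_v}\bigr)\|\Fl{K}\|_{L^1_xL^1_\gamma}$, and since $\Fl{K}\leq \vint{v}^\ell|f|$ on its support, the factor is bounded by $\sup_x\|f\|_{L^1_{1+\gamma}}$. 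This is the real place the quadratic structure pays off, and it falls out automatically once you execute the commutator estimate you already plan to use.

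A second small point: your phrasing that the $-\gamma_0\|\Fl{K}\|^2_{L^2_xL^2_{\gamma/2}}$-term ``swallows'' the $\eta\|\Fl{K}\|^2_{L^2_xH^s_{\gamma/2}}$-residue is backwards (the $H^s$-norm dominates the $L^2_{\gamma/2}$-norm, not conversely); what absorbs it is the $-\tfrac{c_0\Eps_3}{2}\|\Fl{K}\|^2_{L^2_xH^s_{\gamma/2}}$ from the second estimate, exactly via your choice $\eta<c_0\Eps_3/8$. With those two fixes the argument goes through as you outline.
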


\begin{proof}
\smallskip
\Ni The proof follows from a similar argument to that of Proposition \ref{thm:L2-level-set} for the linear case.  We focus on removing the high moment dependence, such as in the norm $L^\infty_x L^1_{\ell+\gamma}$, in estimate \eqref{est:level-set-1}. First we make a similar decomposition to that of \eqref{decomp:linear}:
\begin{align} \label{decomp:nonlinear}
  \int_{\T^3} \int_{\R^3} Q(\mu + f \chi, F) \Fl{K} \vint{v}^{\ell} \dv\dx
&= \int_{\T^3} \int_{\R^3} Q \vpran{\mu + f \chi, f - \tfrac{K}{\vint{v}^\ell}} \Fl{K} \vint{v}^{\ell} \dv\dx  \nn
\\
& \quad \,
+ \int_{\T^3} \int_{\R^3} Q \vpran{\mu + f \chi, \tfrac{\mu \vint{v}^\ell + K}{\vint{v}^\ell}} \Fl{K} \vint{v}^{\ell} \dv\dx   
\Denote \tilde T_1 + \tilde T_2,
\end{align}
where we have abbreviated $f \chi(\vint{v}^{k_0} f)$ as $f \chi$.
Similar as for $T_1$ in~\eqref{decomp:linear} (with $G$ there now replaced by $\mu + f \chi$), by the regular change of variables together with~\eqref{ineq:trilinear-1-f-minus} in Propositions~\ref{prop:commutator} and~\ref{prop:strong-sing-cancellation}, we bound $\tilde T_1$ as
\begin{align} \label{bound:T-1-first}
  \tilde T_1
&\leq
 \frac{1}{2} \iiiint_{T^3 \times \R^6 \times \Ss^2}
        (\mu_\ast + f_\ast \chi_\ast)  \vpran{\vpran{\Fl{K}(v')}^2 \cos^{2\ell} \tfrac{\theta}{2}
     - \vpran{\Fl{K}}^2}
        b(\cos\theta) |v - v_\ast|^\gamma \dbmu   \nn
\\
& \quad \,
  + \iiiint_{T^3 \times \R^6 \times \Ss^2}
        (\mu_\ast + f_\ast \chi_\ast) \frac{\Fl{K}(v)}{\vint{v}^\ell}  \Fl{K}(v') \vpran{\vint{v'}^{\ell} - \vint{v}^{\ell} \cos^\ell \tfrac{\theta}{2}} 
        b(\cos\theta) |v - v_\ast|^\gamma \dbmu   \nn
\\
& \leq
  -\frac{1}{2} \gamma_0 \vpran{1 - C \sup_x\norm{f \chi}_{L^1_{3+\gamma+2s} \cap L^2}}
   \norm{\Fl{K}}^2_{L^2_x L^2_{\gamma/2}}  
   +    C_\ell \vpran{1 + \delta_0}
   \norm{\Fl{K}}^2_{L^2_{x,v}}    \nn
\\
& \quad \,
   + C_\ell \vpran{1+\delta_0} 
   \vpran{\sup_x \norm{f}_{L^1_{1+\gamma}}}
   \norm{\Fl{K}}_{L^1_{x} L^1_\gamma}
  + C_\ell \vpran{1 + \sup_x \norm{f \chi}_{L^1_{3+\gamma+2s} \cap L^2}} \norm{\Fl{\ell}}_{H^{s_1}_{\gamma_1/2}}^2,
\end{align}
where $s_1, \gamma_1$ are defined in~\eqref{def:s-1-gamma-1} with $s_1 < s$ and $\gamma_1 < \gamma$. If we impose that 
\begin{align} \label{cond:delta-0-nonlinear-level}
    \delta_0 < \min \left\{1, \ \tfrac{1}{2C} \right\}, 
\end{align}
then
\begin{align} \label{bound:tilde-T-1}
   \tilde T_1
\leq
   - \frac{1}{4} \gamma_0 \norm{\Fl{K}}^2_{L^2_x L^2_{\gamma/2}}
   + C_\ell \norm{\Fl{K}}^2_{L^2_{x,v}}
   + C_\ell \vpran{\sup_x \norm{f}_{L^1_{1+\gamma}}}
   \norm{\Fl{K}}_{L^1_{x} L^1_\gamma}
   + C_\ell \norm{\Fl{\ell}}_{H^{s_1}_{\gamma_1/2}}^2.
\end{align}

\smallskip
\noindent
Next we estimate $\tilde T_2$ by writing it as
\begin{align} \label{bound:tilde-T-2}
   \tilde T_2
& = \iiiint_{\T^3 \times \R^6 \times \Ss^2}
       (\mu_\ast + f_\ast \chi_\ast) 
       \tfrac{\mu \vint{v}^\ell + K}{\vint{v}^\ell}
       \vpran{\Fl{K}(v') \vint{v'}^{\ell} - \Fl{K} \vint{v}^{\ell}} 
       b(\cos\theta) |v - v_\ast|^\gamma \dbmu   \nn
\\
& = \iiiint_{\T^3 \times \R^6 \times \Ss^2}
       (\mu_\ast + f_\ast \chi_\ast) 
       \vpran{\mu \vint{v}^\ell + K}
       \vpran{\Fl{K}(v') - \Fl{K}(v) } 
       b(\cos\theta) |v - v_\ast|^\gamma \dbmu   \nn
\\
& \quad \, 
   +  \iiiint_{\T^3 \times \R^6 \times \Ss^2}
        (\mu_\ast + f_\ast \chi_\ast)
        \Fl{K}(v') \frac{\mu \vint{v}^\ell + K}{\vint{v}^\ell}
       \vpran{\vint{v'}^\ell - \vint{v}^\ell \cos^\ell \tfrac{\theta}{2}} 
       b(\cos\theta) |v - v_\ast|^\gamma \dbmu   \nn
\\
& \quad \,
   -  \iiiint_{\T^3 \times \R^6 \times \Ss^2}
        (\mu_\ast + f_\ast \chi_\ast)
      \vpran{\mu \vint{v}^\ell + K}\Fl{K}(v') 
       \vpran{1 - \cos^\ell \tfrac{\theta}{2}} 
       b(\cos\theta) |v - v_\ast|^\gamma \dbmu   \nn
\\
& \Denote 
   \tilde T_{2, 1} + \tilde T_{2, 2} + \tilde T_{2, 3}.
\end{align}
By \eqref{ineq:trilinear-1-f-minus} in Proposition~\ref{prop:commutator} and~\eqref{bound:cancellation-3} in Proposition~\ref{prop:strong-sing-cancellation}, we have
\begin{align*}
   \tilde T_{2,2}
&\leq
   C_\ell \vpran{1 + \sup_x \norm{f \chi}_{L^1_{4+\gamma}}}
   \norm{\Fl{K}}_{L^1_x L^1_\gamma}
   + C_\ell (1 + \delta_0) (1 + K) \norm{\Fl{K}}_{L^1_x L^1_\gamma}
\\
&\leq
   C_\ell (1 + K) \norm{\Fl{K}}_{L^1_x L^1_\gamma}.
\end{align*}
The third term $\tilde T_{2,3}$ is directly bounded as
\begin{align*}
  \abs{\tilde T_{2,3}}  
\leq  
  C_\ell (1 + K) \norm{\Fl{K}}_{L^1_{x,v}}.
\end{align*}
In order to bound $\tilde T_{2,1}$, we use~\eqref{bound:L-infty-Q-F-mu-weight} and a regular change of variables to obtain that
\begin{align*}
   \tilde T_{2,1}
&= \iiiint_{\T^3 \times \R^6 \times \Ss^2}
       (\mu_\ast + f_\ast \chi_\ast) 
       \mu \vint{v}^\ell
       \vpran{\Fl{K}(v') - \Fl{K}(v) } 
       b(\cos\theta) |v - v_\ast|^\gamma \dbmu
\\
& \quad \, 
   + K \iiiint_{\T^3 \times \R^6 \times \Ss^2}
       (\mu_\ast + f_\ast \chi_\ast) 
       \vpran{\Fl{K}(v') - \Fl{K}(v) } 
       b(\cos\theta) |v - v_\ast|^\gamma \dbmu
\\
& \leq
   \iint_{T^3 \times \R^3} Q(\mu + f \chi, \, \mu \vint{v}^\ell) \Fl{K} \dv\dx
    + C K \norm{\Fl{K}}_{L^1_{x} L^1_\gamma}
\\
& \leq
   C (1 + K) \norm{\Fl{K}}_{L^1_{x} L^1_\gamma}. 
\end{align*}
Overall we have
\begin{align*}
   \tilde T_2
\leq
   C_\ell (1 + K) \norm{\Fl{K}}_{L^1_{x} L^1_\gamma}.
\end{align*}
Combining the estimates for $\tilde T_1$ and $\tilde T_2$, we obtain the first bound for the right-hand side as
\begin{align} \label{bound:RHS-1-1}
   \int_{\T^3} \int_{\R^3} Q(F, F) \Fl{K} \vint{v}^{\ell} \dv\dx
&\leq
   -\frac{1}{4} \gamma_0 
   \norm{\Fl{K}}^2_{L^2_x L^2_{\gamma/2}}  \nn 
   + C_\ell \vpran{\sup_x \norm{f}_{L^1_{1+\gamma}}}
   \norm{\Fl{K}}_{L^1_{x} L^1_\gamma}
\\
& \quad \,
   + C_\ell \norm{\Fl{\ell}}_{H^{s_1}_{\gamma_1/2}}^2
   + C_\ell (1 + K) \norm{\Fl{K}}_{L^1_{x} L^1_\gamma}.
\end{align}
%

Next we derive the second bound with the $H^s$-norm. To this end, we only need to re-estimate $\tilde T_1$ as
\begin{align} \label{bound:RHS-2-2}
  \tilde T_1
& \leq
    \iiiint_{T^3 \times \R^6 \times \Ss^2}
        (\mu_\ast + f_\ast \chi_\ast)
        \Fl{K} \frac{1}{\vint{v}^\ell} \vpran{\Fl{K}(v') \vint{v'}^{\ell} - \Fl{K} \vint{v}^{\ell}} 
        b(\cos\theta) |v - v_\ast|^\gamma \dbmu  \nn
\\
& \leq
    \iiiint_{T^3 \times \R^6 \times \Ss^2}
        (\mu_\ast + f_\ast \chi_\ast)
        \Fl{K} \vpran{\Fl{K}(v') - \Fl{K} } 
        b(\cos\theta) |v - v_\ast|^\gamma \dbmu   \nn
\\
& \quad \,
  +     \iiiint_{T^3 \times \R^6 \times \Ss^2}
        (\mu_\ast + f_\ast \chi_\ast)
        \Fl{K} \Fl{K}(v') \frac{1}{\vint{v}^\ell}\vpran{\vint{v'}^\ell -\vint{v}^\ell \cos^\ell \tfrac{\theta}{2}}               
        b(\cos\theta) |v - v_\ast|^\gamma \dbmu  \nn
\\
& \quad \,
  +     \iiiint_{T^3 \times \R^6 \times \Ss^2}
        (\mu_\ast + f_\ast \chi_\ast)
        \Fl{K} \Fl{K}(v') \vpran{1 - \cos^\ell \tfrac{\theta}{2}}               
        b(\cos\theta) |v - v_\ast|^\gamma \dbmu   \nn
\\
& \leq
   - \frac{c_0}{2} \norm{\Fl{K}}^2_{L^2_x H^s_{\gamma/2}}
   + C_\ell \vpran{\sup_x \norm{f}_{L^1_{1+\gamma}}}
   \norm{\Fl{K}}_{L^1_x L^1_\gamma}
   + C_\ell \norm{\Fl{K}}^2_{L^2_x L^2_{\gamma/2}} \,.
\end{align}
Let $\Eps_3$ be a constant such that
\begin{align} \label{bound:Eps-2-2}
   C_\ell \Eps_3
\leq 
  c_0/8.
\end{align}
The desired bound in~\eqref{est:level-set-2} is obtained by multiplying~\eqref{bound:RHS-1-1} by a small enough $\Eps_3$,  
adding it to~\eqref{bound:RHS-1-1} and then interpolating $L^2_x H^{s_1}_{\gamma_1/2}$ between $L^2_x H^s_{\gamma/2}$ and $L^2_{x,v}$. 
\end{proof}

\begin{rmk}
Although in the proof of Proposition~\ref{thm:L2-level-set-nonlinear} it seems that the cutoff function plays an essential role in removing the $\vint{v}^\ell$-dependence in the $L^\infty_x$-norm, the above estimates in fact hold (with some modifications) even when we treat the original Boltzmann operator $Q(F, F)$. There are two ways to achieve this goal: first, if $f$ is the solution to the modified equation obtained in Theorem~\ref{thm:local-exist-MBE} and $\ell = k_0$ (which is the case when we apply Proposition~\ref{thm:L2-level-set-nonlinear} in the later analysis), we can use the $L^\infty_{t,x,v}$-bound of $f$ with a lower weight $k_0 - \ell_0 - \gamma - 6$. Then the majority of the weight can be transferred to the first component of $Q(F, F)$. Thus it eliminates the need for a high moment in the $L^\infty_x$ term. The second way is even more general, in the sense that we do not need any a prior $L^\infty$-bound on $f$. Instead we make use of the nonlinear structure and decompose the first entry in $Q(F, F)$ into 
\begin{align*}
   F = \mu + \vpran{f - K/\vint{v}^\ell} + K/\vint{v}^\ell,
\end{align*}
and allocate all the $\vint{v}^\ell$ to such term and bound it using $\Fl{\ell}$. The price to pay here is to have an extra $K$ in the coefficient in the upper bound. It does not generate any essential problem since $K$ is the upper bound of $f$ which will eventually be small. However, it is more in line with the linear estimates to have homogeneity in $K$. Hence we opt to use the special structure of $\chi$ in the proof of Proposition~\ref{thm:L2-level-set-nonlinear}. 
\end{rmk}

\subsubsection{Level Estimate for $-f$} Similar as the linear case, we need to show that not only $f \vint{v}^\ell < \delta_0$ but also
\begin{align*}
   - f \vint{v}^\ell < \delta_0.
\end{align*}
Hence we establish the counterpart estimates for the level set of $-f$.

\begin{prop} \label{thm:level-set-minus-f-nonlinear}
Let $h = -f$. Suppose $F = \mu - h \geq 0$. Suppose $k_0, \delta_0$ in the definition of $\chi$ in~\eqref{def:chi} satisfy that $k_0 > 8 + \gamma$ and $\delta_0$ small enough such that~\eqref{cond:delta-0-nonlinear-level} holds and
\begin{align*} 
    \inf_{t,x} \norm{\mu - h \chi(\vint{v}^{k_0} h)}_{L^1_{v}} \geq D_0 > 0, 
\qquad
    \sup_{t,x} \norm{\mu - h \chi(\vint{v}^{k_0} h)}_{L^1_2 \cap L \log L} < E_0 < \infty\,.
\end{align*}
Then for any $s \in (0, 1)$ and $8 + \gamma < \ell \leq k_0$, 
the nonlinear estimate has the form
\begin{align} \label{est:level-set-2-minus-f}
 &    - \int_{\T^3} \int_{\R^3} Q(\mu - h \chi, F) \Hl{K} \vint{v}^{\ell} \dv\dx \nn
\\
& \hspace{1cm} \leq
   -\frac{c_0 \Eps_3}{4}
   \norm{\Fl{K}}^2_{L^2_x H^s_{\gamma/2}}  
   + C_\ell \vpran{\sup_x \norm{f}_{L^1_{1+\gamma}}}
   \norm{\Fl{K}}_{L^1_{x} L^1_\gamma}
   + C_\ell (1 + K) \norm{\Fl{K}}_{L^1_{x} L^1_\gamma},
\end{align}
where $\Eps_3$ is the same constant in~\eqref{bound:Eps-2-2}.
\end{prop}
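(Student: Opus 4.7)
The plan is to mirror the argument of Proposition~\ref{thm:L2-level-set-nonlinear} applied to $h=-f$, while borrowing the structural observation from the linear counterpart (Proposition~\ref{thm:level-set-minus-f}) that the sign change only affects the middle argument of $Q$, not the positivity of $\mu - h\chi(\vint{v}^{k_0} h) = \mu + f\chi(\vint{v}^{k_0} f)\ge 0$. Thus coercivity of the collision operator remains available. The first step is to decompose, in the same fashion as~\eqref{decomp:nonlinear},
\begin{align*}
  -\int_{\T^3}\!\!\int_{\R^3} Q(\mu - h\chi, F)\,\Hl{K}\vint{v}^\ell \dv\dx
&= \int_{\T^3}\!\!\int_{\R^3} Q\!\left(\mu - h\chi,\, h - \tfrac{K}{\vint{v}^\ell}\right)\!\Hl{K}\vint{v}^\ell \dv\dx
\\
&\quad \,
 + \int_{\T^3}\!\!\int_{\R^3} Q\!\left(\mu - h\chi,\, \tfrac{-\mu\vint{v}^\ell + K}{\vint{v}^\ell}\right)\!\Hl{K}\vint{v}^\ell \dv\dx
\,\Denote\, \tilde J_1 + \tilde J_2.
\end{align*}
Note $\mu - h\chi = \mu + f\chi$ is nonnegative under the stated coercivity/integrability assumptions, so all cancellation and change-of-variables tricks used for $\tilde T_1$ remain valid.

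Next, I would bound $\tilde J_1$ by essentially the same calculation that led to~\eqref{bound:tilde-T-1}: write out $Q$, use the pre-/post-collisional splitting, apply the regular change of variables together with~\eqref{ineq:trilinear-1-f-minus} of Proposition~\ref{prop:commutator} and Proposition~\ref{prop:strong-sing-cancellation} to pick up coercivity in $L^2_x L^2_{\gamma/2}$ and a remainder in $L^2_x H^{s_1}_{\gamma_1/2}$ that gets absorbed by the $H^s_{\gamma/2}$-dissipation after choosing $\Eps_3$ as in~\eqref{bound:Eps-2-2}. The cutoff bound $\abs{h\chi}\le \abs{f}$ and the smallness~\eqref{cond:delta-0-nonlinear-level} of $\delta_0$ dispose of the undesirable $\sup_x\norm{f\chi}_{L^1_{3+\gamma+2s}\cap L^2}$-dependence exactly as in~\eqref{bound:RHS-2-2}. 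The output is a bound of the form
\begin{align*}
  \tilde J_1 \leq -\tfrac{c_0}{2}\norm{\Hl{K}}^2_{L^2_x H^s_{\gamma/2}}
  + C_\ell\vpran{\sup_x\norm{f}_{L^1_{1+\gamma}}}\norm{\Hl{K}}_{L^1_x L^1_\gamma}
  + C_\ell\norm{\Hl{K}}^2_{L^2_x L^2_{\gamma/2}},
\end{align*}
then an interpolation/absorption step to get the clean $L^1_x L^1_\gamma$ remainder.

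For $\tilde J_2$ I would split it, as for $\tilde T_2$ in~\eqref{bound:tilde-T-2}, into three pieces corresponding to the difference $\Hl{K}(v')-\Hl{K}(v)$, the commutator $\vint{v'}^\ell - \vint{v}^\ell\cos^\ell\tfrac{\theta}{2}$, and the angular defect $1 - \cos^\ell\tfrac{\theta}{2}$. The commutator piece is handled by~\eqref{ineq:trilinear-1-f-minus} and part~(c) of Proposition~\ref{prop:strong-sing-cancellation} applied with $F$ replaced by $-\mu\vint{v}^\ell + K$, which is smooth with bounded $W^{1,\infty}_v$-norm (uniformly controlled by $K$); the angular-defect piece is a direct estimate that yields $C_\ell(1+K)\norm{\Hl{K}}_{L^1_{x,v}}$; the $v'{-}v$-difference piece is rewritten as $\iint Q(\mu - h\chi,\,-\mu\vint{v}^\ell)\,\Hl{K}\dv\dx$ plus a $K$-multiple, and the former is bounded in $L^\infty_v$ via Remark~\ref{rmk:weight-L-infty} (applied with weighted Gaussian $\mu\vint{v}^\ell$), giving $\tilde J_2\le C_\ell(1+K)\norm{\Hl{K}}_{L^1_x L^1_\gamma}$.

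Adding the $\tilde J_1,\tilde J_2$ bounds and renormalizing the dissipation coefficient to $\Eps_3$ yields exactly~\eqref{est:level-set-2-minus-f}. The main obstacle I anticipate is the commutator term in $\tilde J_2$: keeping the $K$-linear dependence clean (rather than picking up spurious $K^2$ factors) requires being careful about which $\vint{v}^\ell$ weight is absorbed into the test function versus which into the $W^{1,\infty}$-norm estimate; this is precisely where the smallness of $\delta_0$ and the homogeneity structure enforced in the linear case must be respected.
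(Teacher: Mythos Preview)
Your proposal is correct and follows essentially the same route as the paper. The paper's own proof is extremely terse: it writes down the same decomposition into $\tilde J_1+\tilde J_2$ (with $\tilde J_2$ containing $K/\vint{v}^\ell-\mu$), observes that $\mu-h\chi\ge 0$ and $-h\vint{v}^{k_0}\chi(\vint{v}^{k_0}h)\le\delta_0$, and then simply says that the estimates from~\eqref{decomp:nonlinear} and~\eqref{bound:T-1-first} (i.e.\ the entire proof of Proposition~\ref{thm:L2-level-set-nonlinear}) carry over verbatim to yield~\eqref{est:level-set-2-minus-f}. Your write-up spells out the $\tilde J_2$ splitting and the use of~\eqref{ineq:trilinear-1-f-minus}, Proposition~\ref{prop:strong-sing-cancellation}(c), and Remark~\ref{rmk:weight-L-infty} in more detail than the paper does, but the argument is the same; the sign flip on $\mu$ is harmless precisely because none of those tools require positivity of the middle entry, and your anticipated ``$K$-linear'' concern is resolved exactly as in the $\tilde T_2$ estimate.
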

\begin{proof}
Decompose the term of interest in a similar way as in~\eqref{decomp:nonlinear}:
\begin{align*}
  - \int_{\T^3} \int_{\R^3} Q(\mu - h \chi, F) \Hl{K} \vint{v}^{\ell} \dv\dx
&= \int_{\T^3} \int_{\R^3} Q \vpran{\mu - h \chi, h - \tfrac{K}{\vint{v}^\ell}} \Hl{K} \vint{v}^{\ell} \dv\dx  \nn
\\
& \quad \,
+ \int_{\T^3} \int_{\R^3} Q \vpran{\mu - h \chi, \tfrac{K}{\vint{v}^\ell} - \mu} \Hl{K} \vint{v}^{\ell} \dv\dx. 
\end{align*}
Since we have 
\begin{align*}
    \mu - h \chi \geq 0, 
\qquad
    - h \vint{v}^{k_0} \chi(\vint{v}^{k_0} h) \leq \delta_0,
\end{align*}
the same estimates in~\eqref{decomp:nonlinear} and~\eqref{bound:T-1-first} apply to obtain~\eqref{est:level-set-2-minus-f}. 
\end{proof}

\subsubsection{Level-set estimate for $L^1$-norm of the collisional operator: Quadratic version}

\begin{prop}\label{T1-nonlinear}
Let $f$ be a solution to equation \eqref{eq:MBE-nonlinear} and denote $F = \mu + f$.  Then, for any $T > 0$ and 
\begin{align*}
   s\in(0,1),
\quad
    \epsilon \geq 0,
\quad 
   0 \leq j < k_0 - 5 - \gamma,
\quad 
   8 + \gamma < \ell \leq k_0,
\quad 
   \kappa > 2, 
\quad
   K > 0,
\end{align*} 
it holds that
\begin{align}\label{Qlevelaverage}
& \quad \,
    \int_0^T \int_{\T^3} \int_{\R^3} \abs{\vint{v}^{j}(1 - \Delta_{v})^{-\kappa/2}\big( \tilde{Q}(\mu + f \chi, F)\,\vint{v}^{\ell}\,\Fl{K} \big)} \dv\dx\dt \nn
\\
&\leq 
  C\,\| \vint{v}^{j/2} \Fl{K}(0) \|^{2}_{L^{2}_{x,v}}
  + C_\ell \norm{\Fl{K}}_{L^2_{t,x} L^2_j}^2 
  + C_\ell \norm{\Fl{K}}_{L^2_{t,x} H^{s}_{\gamma/2}}^2  \nn
\\
& \quad \,
  + C_\ell \norm{\Fl{K}}_{L^2_{t,x} L^2_{j+\gamma/2+1}}^2
  + C_\ell \vpran{1 + K + \sup_{t,x} \norm{f}_{L^1_{1+\gamma}}}
   \norm{\Fl{K}}_{L^1_{t,x} L^1_{j+\gamma}},
\end{align}
where the coefficients $C, C_\ell$ are independent of $T$ and recall that 
\begin{align*}
   \tilde Q(\mu + f \chi, F) = Q(\mu + f \chi, F) + \Eps L_\alpha F.
\end{align*}
Furthermore, an identical estimate holds if $\Fl{K}$ is replaced by $(-f)^{(\ell)}_{K,+}$.
\end{prop}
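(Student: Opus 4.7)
The plan is to mirror the structure of the linear analog (Proposition~\ref{T1}) while replacing the linear bilinear estimates with their nonlinear counterparts from Proposition~\ref{thm:L2-level-set-nonlinear}. First I would adapt Lemma~\ref{L1} to the nonlinear setting: integrating the equation $\partial_t (\Fl{K})^2 + v\cdot\nabla_x(\Fl{K})^2 = 2\tilde Q(\mu+f\chi,F)\vint{v}^\ell \Fl{K}$ in $(t,x)$ for each $v$ and applying $\vint{v}^j(1-\Delta_v)^{-\kappa/2}$ reduces matters to the positive part via the set $A_K$ defined in~\eqref{def:set-A-K} (with $G$ replaced by $\mu+f\chi$). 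Setting $W_K=(1-\Delta_v)^{-\kappa/2}(\vint{v}^j \One_{A_K})\geq 0$, the bound~\eqref{bound:W-K-deriv} on $W_K$ and its first two derivatives still applies since $\kappa>2$. This reduces the LHS of~\eqref{Qlevelaverage} (up to the initial data term $C\|\vint{v}^{j/2}\Fl{K}(0)\|_{L^2_{x,v}}^2$) to controlling
\[
   \int_0^T\!\!\int_{\T^3}\!\!\int_{\R^3} Q(\mu+f\chi,F)\,\vint{v}^\ell \Fl{K} W_K \, \dv\dx\dt
   + \epsilon \int_0^T\!\!\int_{\T^3}\!\!\int_{\R^3} L_\alpha F \,\vint{v}^\ell \Fl{K}W_K\,\dv\dx\dt
   \,\Denote\, \int_0^T\CalQ^+\dt + \epsilon\int_0^T T_R^+\dt.
\]

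For $T_R^+$ the estimate is identical to the linear one carried out in Proposition~\ref{T1}; the decomposition~\eqref{decomp:L-R-plus}, the positivity trick $\Delta_v W_K \leq W_K$ used in~\eqref{bound:Rem-5}, and the lower-order control of $Rem_1,\ldots,Rem_5$ yield the same bound~\eqref{bound:T-plus-R}, whose terms are already dominated by the right-hand side of~\eqref{Qlevelaverage}. Hence no genuinely new work is needed for the regularisation.

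For $\CalQ^+$ I would decompose as in~\eqref{decomp:nonlinear},
\[
   \CalQ^+ = \int Q(\mu+f\chi,\, f-\tfrac{K}{\vint{v}^\ell})\,\vint{v}^\ell \Fl{K}W_K\,\dv\dx
      + \int Q(\mu+f\chi,\, \tfrac{\mu\vint{v}^\ell+K}{\vint{v}^\ell})\,\vint{v}^\ell \Fl{K}W_K\,\dv\dx
      \,\Denote\,\tilde T_1^+ + \tilde T_2^+.
\]
For $\tilde T_1^+$, the positivity of $\mu+f\chi$ gives, as in~\eqref{bound:T-1-plus-1}, a splitting into three pieces $\tilde T_{1,1}^+,\tilde T_{1,2}^+,\tilde T_{1,3}^+$ analogous to~\eqref{decomp:T-1-plus-linear}, where the last piece carries the symmetric difference $W_K'-W_K$. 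The first two pieces are estimated by combining the nonlinear weighted commutator~\eqref{ineq:trilinear-1-f-minus} of Proposition~\ref{prop:commutator}(b) (with $K_0=\delta_0$ thanks to $|f\chi|\vint{v}^{k_0}\leq \delta_0$) with the strong-singularity cancellation of Proposition~\ref{prop:strong-sing-cancellation} applied to $F=\Fl{K}$, $H=\Fl{K}W_K$; using~\eqref{bound:W-K-deriv} to bound $W_K\lesssim\vint{v}^j$ this produces
\[
   \tilde T_{1,1}^+ + \tilde T_{1,2}^+
   \leq -\tfrac{c_0}{2}\|\Fl{K}\|^2_{L^2_xH^s_{\gamma/2}}
        + C_\ell\|\Fl{K}\|^2_{L^2_xL^2_j}
        + C_\ell\|\Fl{K}\|^2_{L^2_xL^2_{j+\gamma/2}},
\]
interpolating the intermediate $H^{s_1}_{\gamma_1/2}$-norm into $H^s_{\gamma/2}$ and $L^2$. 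The symmetric piece $\tilde T_{1,3}^+$ is controlled by Proposition~\ref{prop:symmetry-cancel} exactly as in the linear derivation, yielding the $\|\Fl{K}\|^2_{L^2_xL^2_{j+\gamma/2+1}}$ contribution.

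For $\tilde T_2^+$ I would split $\mu\vint{v}^\ell+K$ and apply~\eqref{ineq:trilinear-1-f-minus} together with Proposition~\ref{prop:Q-F-mu}/Remark~\ref{rmk:weight-L-infty} on the $Q(\mu+f\chi,\mu\vint{v}^\ell)$ part, and a direct weighted-commutator estimate on the part proportional to $K$; the cutoff bound $|f\chi|\vint{v}^{k_0}\leq\delta_0$ plays the critical role of absorbing the weights of $\mu\vint{v}^\ell$ and $K/\vint{v}^\ell$ into constants independent of high moments of $f$, which is exactly where the quadratic structure reduces the moment count relative to the linear case. The outcome is the $(1+K+\sup_{t,x}\|f\|_{L^1_{1+\gamma}})\|\Fl{K}\|_{L^1_{t,x}L^1_{j+\gamma}}$ contribution in~\eqref{Qlevelaverage}. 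The main technical obstacle I anticipate is bookkeeping the moment budget: we need $\ell\leq k_0$ in the commutator plus $j<k_0-5-\gamma$ so that every intermediate norm either falls on $f\chi$ (absorbed by $\delta_0$) or on $\Fl{K}$ with a weight no higher than $j+\gamma$, avoiding any $L^\infty_xL^1_\ell$-type norm that would break the scheme. Finally, since the $h=-f$ equation~\eqref{eq:h-1} satisfies the analog of Lemma~\ref{L1} via Lemma~\ref{L1-h} and Proposition~\ref{thm:level-set-minus-f-nonlinear} mirrors Proposition~\ref{thm:L2-level-set-nonlinear}, the identical estimate follows for $(-f)^{(\ell)}_{K,+}$ with no additional modification.
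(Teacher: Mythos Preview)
Your approach is essentially the paper's: reduce via Lemma~\ref{L1} to the positive part against $W_K$, split $\CalQ^+$ as in~\eqref{decomp:nonlinear} into $\tilde T_1^+ + \tilde T_2^+$, further split $\tilde T_1^+$ into three pieces as in~\eqref{decomp:T-1-plus-linear}, handle $\tilde T_{1,3}^+$ by Proposition~\ref{prop:symmetry-cancel} (this is exactly where the condition $j<k_0-5-\gamma$ enters, to absorb $\|f\chi\|_{L^1_{j+2+\gamma}}$ into $\delta_0$), treat $\tilde T_2^+$ as $\tilde T_2$ with $\Fl{K}$ replaced by $\Fl{K}W_K$, and leave $T_R^+$ unchanged from the linear case.

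There is one concrete slip in your displayed bound for $\tilde T_{1,1}^+ + \tilde T_{1,2}^+$: the negative term you obtain from $\tilde T_{1,1}^+$ via the regular change of variables is a weighted $L^2_{\gamma/2}$ norm, namely $-\tfrac12\gamma_0(1-C\delta_0)\|\Fl{K}\sqrt{W_K}\|_{L^2_xL^2_{\gamma/2}}^2$, not an $H^s_{\gamma/2}$ norm; there is no coercive $H^s$ mechanism in this $L^1$ estimate (contrast with Proposition~\ref{thm:L2-level-set-nonlinear}, which is where the $-c_0$ coercivity lives). The $H^s_{\gamma/2}$ contribution in~\eqref{Qlevelaverage} appears with a \emph{positive} sign, coming precisely from your interpolation of the $H^{s_1}_{\gamma_1/2}$ cross term produced by Proposition~\ref{prop:strong-sing-cancellation}. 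Once you correct this sign, your bound matches the paper's and the argument closes exactly as you describe.
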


\begin{proof}
The proof is a slight modification of that of Proposition~\ref{T1}. We only need to show the bound of 
\begin{align*}
   \int_0^T \int_{\T^3} \int_{\R^3} Q(\mu + f \chi,F)\,\vint{v}^{\ell} \Fl{K}W_{K} \dv\dx\dt, 
\end{align*}
with the aim to remove the $\ell$-moment dependence in the $\sup_x$-norm in~\eqref{Qlevelaverage-2}. The definition of $W_K$ is in~\eqref{def:W-K}. Similar to the linear case, write
\begin{align} \label{decomp:CalQ-quad}
 \CalQ^{quad}:= \int_{\T^3} \int_{\R^3} Q(\mu + f \chi, F) \vint{v}^{\ell} \Fl{K} W_K \dv\dx
&= \int_{\T^3} \int_{\R^3} Q \vpran{\mu + f \chi, f - \tfrac{K}{\vint{v}^\ell}} \vint{v}^{\ell} \Fl{K} W_K \dv\dx  \nn
\\
& \quad \,
+ \int_{\T^3} \int_{\R^3} Q \vpran{\mu + f \chi, \mu + \tfrac{K}{\vint{v}^\ell}} \vint{v}^{\ell} \Fl{K} W_K \dv\dx  \nn
\\
& \Denote \tilde T_1^+ + \tilde T_2^+.  
\end{align}
Decompose the upper bound of the first term $\tilde T_1^+$ similarly as in~\eqref{decomp:T-1-plus-linear} with $G$ replaced by $\mu+f \chi$:
\begin{align} \label{bound:T-1-plus-1}
  \tilde T_1^+
\leq 
   \tilde T_{1, 1}^+ + \tilde T_{1, 2}^+ + \tilde T_{1, 3}^+. 
\end{align}
The estimate for $\tilde T_{1, 3}^+$ remains the same as for  $T_{1,3}^+$ in Proposition~\ref{T1}, which gives
\begin{align*}
   \tilde T_{1, 3}^+
&\leq
  C\vpran{1 + \sup_x\norm{f \chi}_{L^1_v}}
  \norm{\Fl{K}}_{L^2_x L^2_{j+\gamma/2 + 1}}^2
  + C\vpran{1 + \sup_x\norm{f \chi}_{L^1_{j+2+\gamma}}}
  \norm{\Fl{K}}_{L^2_x L^2_{j/2}}^2
\\
&\leq
  C \norm{\Fl{K}}_{L^2_x L^2_{j+\gamma/2 + 1}}^2, 
\qquad
   \text{provided} \,\, j < k_0 - 5 - \gamma. 
\end{align*}

Similar to the estimates of $\tilde T_1$ in~\eqref{bound:T-1-first},  the bounds for $\tilde T_{1, 1}^+$ and $\tilde T_{1, 2}^+$ follow from the regular change of variables together with~\eqref{ineq:trilinear-1-f-minus} in Proposition~\ref{prop:commutator} and Proposition~\ref{prop:strong-sing-cancellation}, which has the form
\begin{align*} 
& \tilde T_{1,1}^+ + \tilde T_{1,2}^+   \nn
\\
&  
  = \tfrac{1}{2} \iiiint_{\T^3 \times \R^6 \times \Ss^2}
      (\mu_\ast + f_\ast \chi_\ast) \vpran{\vpran{\Fl{K}(v')}^2 W_K(v') \cos^{2\ell} \tfrac{\theta}{2}
     - \vpran{\Fl{K}}^2 W_K} 
        b(\cos\theta) |v - v_\ast|^\gamma \dbmu   \nn
\\
& 
\quad \,
  + \iiiint_{\T^3 \times \R^6 \times \Ss^2}
       (\mu_\ast + f_\ast \chi_\ast)  \frac{\Fl{K}(v)}{\vint{v}^\ell}  \Fl{K}(v') W_K(v')
        \vpran{\vint{v'}^{\ell} - \vint{v}^{\ell} \cos^\ell \tfrac{\theta}{2}} 
        b(\cos\theta) |v - v_\ast|^\gamma \dbmu   \nn
\\
&\leq
  -\frac{1}{2}c_0 \vpran{1 - C \delta_0}
   \norm{\Fl{K} \sqrt{W_K}}^2_{L^2_x L^2_{\gamma/2}}  
   +    C_\ell \vpran{1 + \delta_0}
   \norm{\Fl{K}}_{L^2_{x,v}}
    \norm{\Fl{K} W_K}_{L^2_{x,v}}    \nn
\\
& \quad \,
   + C_\ell \vpran{1+\delta_0} 
   \vpran{\sup_x \norm{f}_{L^1_{1+\gamma}}}
   \norm{\Fl{K} W_K}_{L^1_{x} L^1_\gamma}
  + C_\ell \vpran{1 + \delta_0} \norm{\Fl{K}}_{H^{s_1}_{\gamma_1/2}}
  \norm{\Fl{K} W_K}_{L^2_{\gamma/2}}.
\end{align*}
Inserting the definition of $W_K$, we get
\begin{align*}
   \tilde T_{1,1}^+ + \tilde T_{1,2}^+
&\leq
  C_\ell \norm{\Fl{K}}_{L^2_x L^2_j}^2 
  + C_\ell \norm{\Fl{K}}_{L^2_x H^{s}_{\gamma/2}}^2  \nn
  + C_\ell \norm{\Fl{K}}_{L^2_x L^2_{j+\gamma/2}}^2
\\
& \quad \,
  + C_\ell \vpran{\sup_x \norm{f}_{L^1_{1+\gamma}}}
   \norm{\Fl{K}}_{L^1_{x} L^1_{j+\gamma}}. \nn
\end{align*}
Combining the estimates for $\tilde T_{1,1}^+, \tilde T_{1,2}^+, \tilde T_{1,3}^+$, we have
\begin{align} 
   \tilde T_1^+
&\leq
    C_\ell \norm{\Fl{K}}_{L^2_x L^2_j}^2 
  + C_\ell \norm{\Fl{K}}_{L^2_x H^{s}_{\gamma/2}}^2  \nn
  + C_\ell \norm{\Fl{K}}_{L^2_x L^2_{j+\gamma/2+1}}^2
  + C_\ell \vpran{\sup_x \norm{f}_{L^1_{1+\gamma}}}
   \norm{\Fl{K}}_{L^1_{x} L^1_{j+\gamma}}.
\end{align}
The estimate for $\tilde T_2^+$ is similar to those for $\tilde T_2$ in~\eqref{bound:tilde-T-2} with $\Fl{K}$ replaced by $\Fl{K} W_K$. This gives
\begin{align*}
  \tilde T_2^+ 
\leq
  C_\ell (1 + K) \norm{\Fl{K} W_K}_{L^1_{x} L^1_\gamma}
\leq
  C_\ell (1 + K) \norm{\Fl{K}}_{L^1_{x} L^1_{j+\gamma}}.
\end{align*}
The bound of $\CalQ^{quad}$ is the combination of the bounds of $\tilde T_1^+, \tilde T_2^+$, which writes
\begin{align*}
    \CalQ^{quad}
&\leq
    C_\ell \norm{\Fl{K}}_{L^2_x L^2_j}^2 
  + C_\ell \norm{\Fl{K}}_{L^2_x H^{s}_{\gamma/2}}^2  \nn
  + C_\ell \norm{\Fl{K}}_{L^2_x L^2_{j+\gamma/2+1}}^2
\\
& \quad \,
  + C_\ell \vpran{1 + K + \sup_x \norm{f}_{L^1_{1+\gamma}}}
   \norm{\Fl{K}}_{L^1_{x} L^1_{j+\gamma}}.
\end{align*}
The regularizing term $L_\alpha$ is bounded in the same way as in~\eqref{bound:T-plus-R}, which will be absorbed into the estimate for $\CalQ^{quad}$. Combining the estimates for $\CalQ^{quad}$ and $L_\alpha$ and integrating in $t$ gives~\eqref{Qlevelaverage}. 
\end{proof}

\subsubsection{Time-space-velocity energy functional: Quadratic version.}

Now we establish the key iterative inequality for the quadratic case which is the counterpart of Proposition~\ref{thm:SV-energy-functional-linear}. 
\begin{prop}[Energy functional interpolation inequality]\label{SV-energy-functional-nonlinear}
Let $T > 0$ and $\ell_0 > 0$ be the same weight as in Theorem~\ref{thm:linear-local} (precise statement in~\eqref{cond:ell-0-2}).  Let $8 + \gamma < \ell \leq k_0$. Suppose $f$ is a solution to~\eqref{eq:MBE-nonlinear} which satisfies
\begin{align*} 
  \sup_{t,x} \norm{f}_{L^1_{1+\gamma}} \leq \delta_0,
\qquad
   \sup_{ t }\| \vint{v}^{\ell_0+\ell}f(t,\cdot,\cdot) \|_{ L^{1}_{x,v} } \leq C, 
\end{align*}
where $\delta_0$ satisfies the smallness condition in~\eqref{cond:delta-0-nonlinear-level}. Furthermore, suppose that 
\begin{align*} 
    \inf_{t,x} \norm{\mu + f \chi(\vint{v}^{k_0} f)}_{L^1_{v}} \geq D_0 > 0, 
\qquad
    \sup_{t,x} \norm{\mu + f \chi(\vint{v}^{k_0} f)}_{L^1_2 \cap L \log L} < E_0 < \infty\,.
\end{align*}
Then there exist $p, s''$ such that for any $0 \leq T_1 < T_2 \leq T$, $\epsilon\in[0,1]$, $\alpha\geq0$ and $0 < M < K$, 
\begin{align}\label{key-estimate}
\begin{split}
& \qquad \, 
  \|\Fl{K} (T_2)\|^{2}_{L^{2}_{x,v}} + \int^{T_2}_{T_1}\| \vint{v}^{\gamma/2}(1 -\Delta_{v})^{\frac {s}{2} }\Fl{K}(\tau)\|^{2}_{L^{2}_{x,v}} {\rm d}\tau
\\
& \hspace{2cm} + \frac{1}{C} \bigg(\int^{T_2}_{T_1} \big\| (1-\Delta_{x})^{\frac {s''}{2}}\big(\Fl{K}\big)^{2} \big\|^{p}_{L^{p}_{x,v}}{\rm d}\tau\bigg)^{\frac{1}{p}} 
\\
& \leq 
   2\| \vint{v}^{2} \Fl{K}(T_1) \|^{2}_{L^{2}_{x,v}} 
   + \| \vint{v}^{2}\Fl{K}(T_1) \|^{2}_{L^{2p}_{x,v}} 
   + \frac{CK}{K-M}\sum^{4}_{i=1}\frac{\CalE_{p}(M,T_1, T_2)^{\beta_{i}}}{(K-M)^{a_i}},
\end{split}
\end{align}
for constants $c_0:=c(\ell,s,\gamma)$ and $C:=C(\ell,s,\gamma,\alpha)$. In particular, $C$ does not depend on $T_1, T_2, T$. The parameters $s'', p, \beta_i, a_i$ depend on $\ell, \gamma, s$ in the same way as in 
Proposition~\ref{thm:SV-energy-functional-linear}. 

\smallskip
\noindent
Furthermore, the estimate holds for $(-f)$ 
with $\Fl{K}$ replaced by $(-f)^{(\ell)}_{K,+}$.
\end{prop}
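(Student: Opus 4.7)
The strategy mirrors that of Proposition~\ref{thm:SV-energy-functional-linear} step by step, but at every place where the linear estimates were invoked one substitutes the nonlinear counterpart, which has been designed precisely so that no high-moment $\sup_x\|\cdot\|_{L^1_{\ell+\gamma}}$-factor appears. First I would write the evolution of the squared level set, in parallel with~\eqref{eq:Fl-k-2}:
\begin{align*}
   \del_t(\Fl{K})^{2}+v\cdot\nabla_{x}(\Fl{K})^{2}
   = 2\,\tilde{Q}(\mu+f\chi(\vint{v}^{k_0}f),F)\,\vint{v}^{\ell}\Fl{K},
\end{align*}
and apply Proposition~\ref{average-lemma-p} with the same choice of parameters $(m,r,p,\sigma,\beta,s'')$ used in the linear case, subject to~\eqref{cond:parameter-p-1} and~\eqref{cond:p-p-prime}. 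This gives the same master inequality as~\eqref{ineq:x-reg-level} controlling $\|(1-\Delta_{x})^{s''/2}(\Fl{K})^{2}\|_{L^{p}_{t,x,v}}$ by five contributions: the initial and final $L^{2p}$/$L^{1}$ level-set terms at $T_{1},T_{2}$, the pure $L^{p}$ term, the $v$-fractional term, and the source $\|\vint{v}^{4}\CalG^{(\ell)}_{K}\|_{L^{p}}$.

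The second step is to bound each of the pure $\Fl{K}$-terms by the energy functional exactly as in the linear proof, via Lemma~\ref{Interpolationlemma} (with $n=0$) and Lemma~\ref{app-square-f}. The derivation of \eqref{estT1-2} and \eqref{estT2-1} is purely in terms of $\Fl{K}$ and uses only the hypothesis $\sup_{t}\|\vint{v}^{\ell_0+\ell}f\|_{L^{1}_{x,v}}\le C$, so it transfers verbatim; the parameters $p,s'',r_\ast,\xi_\ast,\beta_\ast,\beta'$ are chosen as in Remark~\ref{rmk:parameters}. The third step is to estimate the source term $\|\vint{v}^{4}\CalG^{(\ell)}_{K}\|_{L^{p}_{t,x,v}}$ by invoking the embedding $L^{1}\hookrightarrow H^{-\sigma,p}$ from \eqref{embedding-1} together with Proposition~\ref{T1-nonlinear} with $j=4$ (which is admissible since $4<k_0-5-\gamma$ under the hypothesis $\ell\le k_0$ and $k_0$ large). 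In place of Proposition~\ref{T1}, one obtains
\begin{align*}
   \norm{\vint{v}^{4}\CalG^{(\ell)}_{K}}_{L^{p}}
   \le C\,\|\vint{v}^{2}\Fl{K}(T_{1})\|^{2}_{L^{2}_{x,v}}
   + C\!\int_{T_{1}}^{T_{2}}\!\|\Fl{K}\|^{2}_{L^{2}_{x}H^{s}_{\gamma/2}}\dt
   + C\!\int_{T_{1}}^{T_{2}}\!\|\vint{v}^{6}\Fl{K}\|^{2}_{L^{2}_{x,v}}\dt
   + C(1+K)\!\int_{T_{1}}^{T_{2}}\!\|\vint{v}^{5}\Fl{K}\|_{L^{1}_{x,v}}\dt,
\end{align*}
where the contribution of $\sup_{t,x}\|f\|_{L^{1}_{1+\gamma}}\le\delta_{0}$ has been absorbed into the last term. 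The remaining weighted $L^{2}$ and $L^{1}$ integrals are controlled by $\CalE_{p}(M,T_{1},T_{2})^{r_\ast}/(K-M)^{\xi_\ast-a}$ by Lemma~\ref{Interpolationlemma} applied with $n=12$ and $n=5$, respectively, exactly as in~\eqref{bound:level-L-2-1}; the condition~\eqref{cond:ell-0-2} for $\ell_0$ is unchanged. Factoring out $K/(K-M)\ge 1$ reproduces the structure of~\eqref{estT3-1}.

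Finally I would handle the $L^{2}_{x,v}$ and $L^{2}_{x}H^{s}_{v}$ part of $\CalE_{p}$ appearing on the left of~\eqref{key-estimate}: integrating~\eqref{e2} on $[T_{1},T_{2}]$ and invoking Proposition~\ref{thm:L2-level-set-nonlinear} in place of Proposition~\ref{thm:L2-level-set} gives
\begin{align*}
   \|\Fl{K}(T_{2})\|^{2}_{L^{2}_{x,v}}
   + \tfrac{c_{0}\Eps_{3}}{2}\!\int_{T_{1}}^{T_{2}}\!\|\Fl{K}\|^{2}_{L^{2}_{x}H^{s}_{\gamma/2}}\dt
   \le \|\Fl{K}(T_{1})\|^{2}_{L^{2}_{x,v}}
   + C_{\ell}(1+K)\!\int_{T_{1}}^{T_{2}}\!\|\Fl{K}\|_{L^{1}_{x}L^{1}_{\gamma}}\dt,
\end{align*}
and the last integral is dominated by $\tilde C_{0}\frac{K}{K-M}(\CalE_{p}(M,T_{1},T_{2})^{r_\ast}/(K-M)^{\xi_\ast-1}+\CalE_{p}(M,T_{1},T_{2})^{r_\ast}/(K-M)^{\xi_\ast-2})$ through the same Lemma~\ref{Interpolationlemma} argument as in~\eqref{bound:level-L-2-1}. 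Summing these pieces and choosing the constant $C_{0}$ in \eqref{EFunctional} large enough to absorb $C_{\ell}/C_{0}$ into the dissipation $c_{0}\Eps_{3}/8$ yields the claimed inequality with exponents $\beta_{i},a_{i}$ given by~\eqref{def:a-beta-i}. The $-f$ version follows identically using Proposition~\ref{thm:level-set-minus-f-nonlinear} and the last statement of Proposition~\ref{T1-nonlinear}.

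The main obstacle, relative to the linear argument, is to ensure that the moment bookkeeping actually closes: all weighted $L^{2}$ and $L^{1}$ integrals produced by Proposition~\ref{T1-nonlinear} carry weights $j\le 6$, which force the restriction $k_0\ge\ell_0+\ell$ used in the hypothesis, but never demand a bound on $\sup_{x}\|f\|_{L^{\infty}_{k_0}}$; this is precisely what makes the nonlinear energy estimate self-contained once the cutoff $\chi(\vint{v}^{k_{0}}f)$ is in place and $\sup_{t,x}\|f\|_{L^{1}_{1+\gamma}}\le\delta_{0}$.
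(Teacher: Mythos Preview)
Your proposal is correct and follows exactly the same approach as the paper: the paper's proof is a one-liner stating that the argument of Proposition~\ref{thm:SV-energy-functional-linear} carries over verbatim once Propositions~\ref{thm:L2-level-set} and~\ref{T1} are replaced by their nonlinear counterparts Propositions~\ref{thm:L2-level-set-nonlinear} and~\ref{T1-nonlinear}. You have spelled out precisely these substitutions and the moment bookkeeping that makes them close.
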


\begin{proof}
By replacing Propositions~\ref{thm:L2-level-set} and~\ref{T1} with Proposition~\ref{thm:L2-level-set-nonlinear} and Proposition~\ref{T1-nonlinear}, the proof is the same as that of Proposition~\ref{thm:SV-energy-functional-linear}.  
\end{proof}


\subsubsection{Baseline level $\mathcal{E}_0$ and level set iteration: Quadratic case.} 

Similar to Proposition~\ref{prop:bound-CalE-0}, we now show the boundedness of the baseline case $\CalE_0$ which prepares the ground for the $\CalE_k$-iteration. 

\begin{prop} \label{prop:initial-E0-nonlinear}
Suppose $s \in (0,1)$, $T > 0$ and $\frac{37+5\gamma}{2} < \ell \leq k_0$. Suppose $f$ is a solution to equation~\eqref{eq:MBE-nonlinear} and \eqref{Q:cond:coercivity}, \eqref{smallness} hold. Then the baseline energy functional $\CalE_0$ defined in~\eqref{def:CalE-0} satisfies
\begin{equation}\label{bound:initial-E0-nonlinear}
  \CalE_{0} 
\leq 
  C_\ell e^{C_\ell \,T} \max_{j \in\{1/p, \, p'/p\} } \vpran{\norm{\vint{\cdot}^{\ell} f_0}^{2j}_{L^{2}_{x,v}} + \Eps^{2 j} T^{j}},
\qquad
   p' = p/(2-p).
\end{equation}
\end{prop}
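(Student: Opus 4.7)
The plan is to follow the same skeleton as the proof of Proposition~\ref{prop:bound-CalE-0}, replacing the linear energy bound of Corollary~\ref{cor:bi-cor-basic-energy-estimate-linear} by the nonlinear uniform-in-$\Eps$ bound~\eqref{bound:energy-basic-nonlinear}--\eqref{Q:bound:velocity-avg-basic} from Proposition~\ref{quadratic-zero-level}. Since $f^{(\ell)}_+ \leq |\vint{v}^\ell f|$, bounding $\CalE_0$ reduces to controlling three ingredients: the uniform-in-$t$ weighted $L^2$-norm, the $L^2_t L^2_x H^s_{\gamma/2}$-norm, and the spatial-fractional piece involving $s''$ and $p$.

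First, under the smallness condition~\eqref{smallness}, inequality~\eqref{bound:energy-basic-nonlinear} directly yields
\[
  \sup_{t\in[0,T]}\norm{\vint{v}^{\ell}f(t)}^{2}_{L^{2}_{x,v}}
   + \int_{0}^{T}\int_{\T^{3}}\norm{\vint{v}^{\gamma/2}\vint{v}^\ell f}^{2}_{H^{s}_{v}}\dx\dtau
  \leq e^{C_\ell T}\vpran{\norm{\vint{v}^\ell f_0}^{2}_{L^2_{x,v}} + \Eps^{2}T}
  \,\Denote\, C_\ell e^{C_\ell T}\CalD,
\]
which takes care of the first two pieces of $\CalE_0$.

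For the $(1-\Delta_x)^{s''/2}$ term, I would apply Lemma~\ref{app-square-f} with $p'=p/(2-p)$ and some $0<s''<\beta<s'$ to get
\[
  \int_{0}^{T}\norm{(1-\Delta_x)^{s''/2}(f^{(\ell)}_{+})^{2}}^{p}_{L^p_{x,v}}\dtau
  \leq C\int_{0}^{T}\norm{(-\Delta_x)^{\beta/2}f^{(\ell)}_{+}}^{2}_{L^2_{x,v}}\dtau
   + C\int_{0}^{T}\vpran{\norm{f^{(\ell)}_{+}}^{2p'}_{L^{2p'}_{x,v}} + \norm{f^{(\ell)}_{+}}^{2p}_{L^{2p}_{x,v}}}\dtau.
\]
Interpolating $L^{2p}_{x,v}$ between $L^{2}_{x,v}$ and $L^{\xi(p)}_{x,v}$ with $\xi(p)=2/(2-p)=r(s,\beta,3)$ via Lemma~\ref{app-inter-x-theta-1} produces the bound
\[
  \norm{f^{(\ell)}_{+}}^{2p}_{L^{2p}_{x,v}}
  \leq C\,\norm{f^{(\ell)}_{+}}^{2(p-1)}_{L^{2}_{x,v}}
   \vpran{\norm{(1-\Delta_v)^{s/2}f^{(\ell)}_{+}}^{2}_{L^2_{x,v}}
          +\norm{(1-\Delta_x)^{\beta/2}f^{(\ell)}_{+}}^{2}_{L^2_{x,v}}}.
\]
Choosing $\beta<\tfrac{\gamma}{2\ell+\gamma}s'$ and using the Plancherel/Young argument exactly as in~\eqref{bi-smallE0-2}, the $H^\beta_x$-norm is dominated by $\norm{\vint{v}^{\gamma/2}f^{(\ell)}_+}_{L^2_{x,v}}$ plus the averaged spatial gain $\norm{(1-\Delta_x)^{s'/2}f^{(\ell)}_+}_{L^2_{x,v}}$. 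Here is where the only substantive modification relative to the linear case enters: instead of invoking Corollary~\ref{cor:bi-cor-basic-energy-estimate-linear}, I would invoke the averaging-lemma bound~\eqref{Q:bound:velocity-avg-basic} of Proposition~\ref{quadratic-zero-level} to control $\int_0^T \norm{(1-\Delta_x)^{s'/2} f}^2_{L^2_{x,v}}\dtau$ by $C\CalD$. Combined with the velocity-regularization bound already proven, this yields
\[
  \int_{0}^{T}\norm{f^{(\ell)}_{+}}^{2p}_{L^{2p}_{x,v}}\dtau \leq C\,\CalD^{p},
  \qquad
  \int_{0}^{T}\norm{(-\Delta_x)^{\beta/2}f^{(\ell)}_{+}}^{2}_{L^2_{x,v}}\dtau \leq C\,\CalD,
\]
and an identical estimate with $p$ replaced by $p'$ (valid provided $p$ is chosen sufficiently close to $1$ so that $p'<p^\flat$, where $p^\flat$ is the threshold in~\eqref{range:beta-p-flat}).

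Putting everything together gives
\[
  \vpran{\int_{0}^{T}\norm{(1-\Delta_x)^{s''/2}(f^{(\ell)}_{+})^{2}}^{p}_{L^p_{x,v}}\dtau}^{1/p}
  \leq C\vpran{\CalD^{1/p}+\CalD^{p'/p}},
\]
and summing all three contributions yields~\eqref{bound:initial-E0-nonlinear}. The same proof applies verbatim to $(-f)^{(\ell)}_+$, since both the energy estimate~\eqref{bound:energy-basic-nonlinear} and the averaging-lemma estimate~\eqref{Q:bound:velocity-avg-basic} are stated for $|f|$. The main (minor) obstacle is simply keeping track of the moment budget: one needs $\ell > \max\{\tfrac{37+5\gamma}{2},\, 3+2\alpha\}$, as in~\eqref{smallness}, and $s''<s'\tfrac{\gamma}{2\ell+\gamma}$ with $s'<\tfrac{s}{2(s+3)}$, matching the choices already used in the linear counterpart.
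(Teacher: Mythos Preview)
Your proposal is correct and follows essentially the same approach as the paper: the paper's proof simply says to repeat the argument of Proposition~\ref{prop:bound-CalE-0}, replacing Corollary~\ref{cor:bi-cor-basic-energy-estimate-linear} by Proposition~\ref{quadratic-zero-level}, and notes that the $x$-regularising interpolation steps apply verbatim since they are stated for general functions. You have spelled out exactly this route in more detail, correctly identifying~\eqref{bound:energy-basic-nonlinear} and~\eqref{Q:bound:velocity-avg-basic} as the nonlinear replacements and keeping the parameter choices $s''<s'\tfrac{\gamma}{2\ell+\gamma}$, $p<p^\flat$ from the linear case.
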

\begin{proof}
The proof follows a similar line as for Proposition~\ref{prop:bound-CalE-0}. We only need to replace the linear energy estimate in Corollary \ref{cor:bi-cor-basic-energy-estimate-linear} with its nonlinear counterpart in Proposition~\ref{quadratic-zero-level}. The proof of the $x$-regularizing term in Proposition~\ref{prop:bound-CalE-0} applies directly since it holds for general functions rather than merely solutions to any equation.
\end{proof}

The $L^\infty_{k_0}$-bound now follows:
\begin{prop} \label{thm:L-infty-k-0-nonlinear}
Let $T > 0$ and let $f(t, \cdot, \cdot)$ be a solution to~\eqref{eq:MBE-nonlinear} with 
\begin{align*}
   k = k_0 + \ell_0 + 2, 
\qquad
   s \in (0, 1),
\qquad
   t \in [0, T].
\end{align*} Let $\ell_0$ be the weight in Theorem~\ref{thm:linear-local} (precise statement in~\eqref{cond:ell-0-2}). Suppose
\begin{align*}
   k_0 > \max\{\ell_0 + 2 \gamma + 2s + 13, \ \tfrac{37+5\gamma}{2}\}.
\end{align*}
Moreover, suppose 
\begin{align} \label{assump:lower-L-infty-1}
   \norm{\vint{v}^{k_0} f_0}_{L^\infty_{x,v}} \leq \delta_\ast,
\qquad
  \norm{\vint{v}^{k_0 + \ell_0 + 2} f_0}_{L^2_{x,v}} < \infty,
\qquad
   \norm{\vint{v}^{k_0 - \ell_0 - 7 - \gamma} f}_{L^\infty_{t, x,v}} < \delta_0.
\end{align}
For any $T < 1$, if $\delta_0$ satisfying the assumptions for Theorem~\ref{thm:linear-local}, Proposition~\ref{quadratic-zero-level} and Proposition~\ref{thm:L2-level-set-nonlinear} (more precisely, ~\eqref{cond:delta-0-1}, ~\eqref{assump:delta-0-2-linear}, ~\eqref{smallness} and~\eqref{cond:delta-0-nonlinear-level}) and $\delta_\ast, \Eps$ are chosen small enough (which depends on $\delta_0$), then we have
\begin{align} \label{bound:L-infty-nonlinear-delta-0-1}
   \norm{\vint{v}^{k_0} f}_{L^\infty_{t, x,v}} < \delta_0. 
\end{align}
The smallness of $\delta_\ast$ is independent of $\Eps$.
\end{prop}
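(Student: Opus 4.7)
The plan is to run the De~Giorgi level-set iteration at weight $\ell=k_0$, exactly mirroring the argument in Theorem~\ref{central-bilinear-T} but in the nonlinear setting, with the a priori lower-weight $L^\infty$-bound $\norm{\vint{v}^{k_0-\ell_0-7-\gamma}f}_{L^\infty_{t,x,v}}<\delta_0$ used only to activate the quadratic energy estimates. First I would verify that all the smallness and coercivity hypotheses needed for Propositions~\ref{quadratic-zero-level}, \ref{thm:L2-level-set-nonlinear}, \ref{T1-nonlinear}, \ref{SV-energy-functional-nonlinear} and \ref{prop:initial-E0-nonlinear} hold under the present assumptions. From the a priori lower-weight $L^\infty$ bound and Lemma~\ref{lem:basic-interpolation}, one gets, since $k_0-\ell_0-7-\gamma > 3+\gamma+2s+3$ (which is the hypothesis $k_0>\ell_0+2\gamma+2s+13$), the bound
\begin{align*}
   \sup_{t,x}\norm{f}_{L^1_{3+\gamma+2s}\cap L^2}+\sup_{t,x}\norm{f}_{L^1_{1+\gamma}}
\leq C\delta_0,
\end{align*}
and $\mu+f\chi(\vint{v}^{k_0}f)\geq0$ trivially satisfies the mass/energy coercivity \eqref{Q:cond:coercivity}. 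By shrinking $\delta_0$ once more if necessary (independently of $\Eps,\delta_\ast$) the smallness \eqref{smallness} and the one in \eqref{cond:delta-0-nonlinear-level} are secured.

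Next I would extract the $L^1_{x,v}$ moment needed by Proposition~\ref{SV-energy-functional-nonlinear}. Applying Proposition~\ref{quadratic-zero-level} with $\ell=k_0+\ell_0+2$ and using $\norm{\vint{v}^{k_0+\ell_0+2}f_0}_{L^2_{x,v}}<\infty$ together with $T<1$ and $\Eps\leq1$, one obtains
\begin{align*}
   \sup_{t\in[0,T]}\norm{\vint{v}^{k_0+\ell_0+2}f}_{L^2_{x,v}}
\leq C e^{C T}\vpran{\norm{\vint{v}^{k_0+\ell_0+2}f_0}_{L^2_{x,v}}^2+\Eps^2 T}^{1/2}<\infty,
\end{align*}
which via Lemma~\ref{lem:basic-interpolation} yields $\sup_t\norm{\vint{v}^{k_0+\ell_0}f}_{L^1_{x,v}}\leq C$, the precise moment hypothesis that Proposition~\ref{SV-energy-functional-nonlinear} requires at level $\ell=k_0$.

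With these prerequisites in place, I would set $M_k=K_0(1-1/2^k)$, $f_k=f^{(k_0)}_{M_k,+}$, $\CalE_k=\CalE_p(M_k,0,T)$ and apply \eqref{key-estimate} with $M=M_{k-1}$, $K=M_k$, $T_1=0$, $T_2=T$. Taking supremum in $T_2\in[0,T]$ yields the De~Giorgi-type recurrence
\begin{align*}
   \CalE_k\leq C\norm{\vint{v}^2 f_k(0)}_{L^2_{x,v}}^2+C\norm{\vint{v}^2 f_k(0)}_{L^{2p}_{x,v}}^2
   +C\sum_{i=1}^4\frac{2^{k(a_i+1)}\CalE_{k-1}^{\beta_i}}{K_0^{a_i}}.
\end{align*}
Choosing $K_0\geq2\norm{\vint{v}^{k_0}f_0}_{L^\infty_{x,v}}$ kills the initial-data contributions $f_k(0)$, and the comparison argument of Theorem~\ref{central-bilinear-T} shows $\CalE_k\to0$ whenever
\begin{align*}
   K_0\geq K_0(\CalE_0):=\max_{1\leq i\leq 4}\bigl\{4C^{1/a_i}\CalE_0^{(\beta_i-1)/a_i}Q_0^{\beta_i/a_i}\bigr\}.
\end{align*}
Using Proposition~\ref{prop:initial-E0-nonlinear} with $\ell=k_0$ and the elementary bound $\norm{\vint{v}^{k_0}f_0}_{L^2_{x,v}}^2\leq C\delta_\ast\int\vint{v}^{-k_0}\dv\lesssim\delta_\ast^2$ (valid since $k_0>3$), one gets $\CalE_0\leq C e^{CT}\max_j(\delta_\ast^{2j}+\Eps^{2j}T^j)$, and taking $\delta_\ast,\Eps$ sufficiently small forces $K_0(\CalE_0)<\delta_0/2$. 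Combining with $2\delta_\ast<\delta_0$ delivers $\sup_{t,x,v}\vint{v}^{k_0}f_+\leq K_0<\delta_0$. The corresponding bound for $-f$ follows the identical route via Proposition~\ref{thm:level-set-minus-f-nonlinear} (whose conclusion is the same as Proposition~\ref{thm:L2-level-set-nonlinear}) and the $(-f)$-version of Proposition~\ref{SV-energy-functional-nonlinear}, giving \eqref{bound:L-infty-nonlinear-delta-0-1}.

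The main obstacle is the delicate bookkeeping of the smallness parameters: $\delta_0$ must be fixed first, small enough for the coercivity and nonlinear level-set estimates to close independently of $\Eps$; then $\delta_\ast$ and $\Eps$ (which control $\CalE_0$ and hence $K_0(\CalE_0)$) are chosen small depending on $\delta_0$, but crucially the smallness of $\delta_\ast$ is $\Eps$-independent because $\CalE_0$ separates the $\delta_\ast^{2j}$ and $\Eps^{2j}T^j$ contributions additively inside the maximum. This separation is exactly what makes it possible later (in Section~\ref{Sec:nonlinear-local}) to pass to the limit $\Eps\to0$ while preserving the $L^\infty_{k_0}$ control and the automatic removal of the cutoff $\chi$.
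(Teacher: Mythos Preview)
Your proposal is correct and follows essentially the same route as the paper: verify the coercivity and smallness hypotheses from the lower-weight $L^\infty$ bound, extract the $L^1_{x,v}$ moment at weight $\ell_0+k_0$ from the $L^2$ energy estimate in Proposition~\ref{quadratic-zero-level} at $\ell=k_0+\ell_0+2$, and then run the De~Giorgi iteration of Theorem~\ref{central-bilinear-T} with Propositions~\ref{SV-energy-functional-nonlinear} and~\ref{prop:initial-E0-nonlinear} at $\ell=k_0$.

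One small slip: your ``elementary bound'' $\norm{\vint{v}^{k_0}f_0}_{L^2_{x,v}}^2\leq C\delta_\ast\int\vint{v}^{-k_0}\dv\lesssim\delta_\ast^2$ is not quite right as stated, since $\vint{v}^{k_0}|f_0|\leq\delta_\ast$ alone does not make $(\vint{v}^{k_0}f_0)^2$ integrable. What actually works is to combine the $L^\infty_{k_0}$ bound with the finite higher moment $\norm{\vint{v}^{k_0+\ell_0+2}f_0}_{L^2_{x,v}}$: writing $\vint{v}^{2k_0}|f_0|^2\leq\delta_\ast\,\vint{v}^{k_0}|f_0|$ and controlling $\norm{\vint{v}^{k_0}f_0}_{L^1_{x,v}}$ by Cauchy--Schwarz against $\vint{v}^{-\ell_0-2}$ gives $\norm{\vint{v}^{k_0}f_0}_{L^2_{x,v}}^2\lesssim\delta_\ast$, hence $\norm{\vint{v}^{k_0}f_0}_{L^2_{x,v}}\lesssim\delta_\ast^{1/2}$. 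This still tends to zero with $\delta_\ast$, so the rest of your argument (and the paper's) goes through unchanged; the paper itself glosses over this point and simply asserts that $K_0^{quad}(\CalE_0)$ is small for $\delta_\ast,\Eps$ small.
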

\begin{proof}
Take $\ell = k_0$ in Proposition~\ref{SV-energy-functional-nonlinear} and Proposition~\ref{prop:initial-E0-nonlinear} and we only need to show that the assumptions in these two propositions hold. First, by the $L^\infty$-bound in~\eqref{assump:lower-L-infty-1}, we have
\begin{align} \label{bound:L-infty-small}
   & \hspace{4cm}
  \sup_{t,x} \norm{f}_{L^1_{3+\gamma+2s} \cap L^2} \leq \delta_0,
\\
  & \inf_{t,x} \norm{\mu + f \chi}_{L^1_{v}} 
\geq 
   \norm{\mu}_{L^1_v} - \norm{\vint{v}^{-4}}_{L^1_v} \norm{\vint{v}^4 f \chi}_{L^\infty_{t,x,v}} 
\geq 
 8 \pi \vpran{\tfrac{1}{8\pi} - \delta_0}  
> 0,    \nn
\end{align}
and
\begin{align*}
  \sup_{t,x} \vpran{\norm{F}_{L^1_2} + \norm{F}_{L\log L}} 
 <  \sup_{t,x} \vpran{\norm{\mu}_{L^1_2} + \norm{\mu}_{L\log L}} 
     + \sup_{t,x} \vpran{\norm{f \chi}_{L^1_2} + \norm{f \chi}_{L\log L}} 
< C_0 (1 + \delta_0),
\end{align*}
since $k_0 - \ell_0 - 7 - \gamma > 6 + \gamma + 2s$. We are left to show that
\begin{align} \label{bound:finite-L-2-eventual}
    \sup_t \norm{\vint{v}^{\ell_0 + k_0} f}_{L^1_{x,v}} < \infty.
\end{align}
To this end, we apply~\eqref{bound:energy-basic-nonlinear} in Proposition~\ref{quadratic-zero-level} and get
\begin{align*}
   \sup_t \norm{\vint{v}^{\ell_0 + k_0} f}_{L^1_{x,v}}
\leq
   C \sup_t \norm{\vint{v}^{\ell_0 + k_0+2} f}_{L^2_{x,v}}
\leq
  C_T \vpran{1 + \norm{\vint{v}^{\ell_0 + k_0+2} f_0}_{L^2_{x,v}}}
< \infty.
\end{align*}
Note that for~\eqref{bound:energy-basic-nonlinear} to hold, we only need the bound in~\eqref{bound:L-infty-small}. In particular, the weight in~\eqref{bound:L-infty-small} is independent of $k_0$, which again marks the essential difference between the linear equation and the nonlinear one. Combining Proposition~\ref{SV-energy-functional-nonlinear} and Proposition~\ref{prop:initial-E0-nonlinear} with the same argument in Theorem~\ref{central-bilinear-T}, we obtain that
\begin{align} \label{bound:Linfty-interm}
   \sup_{t} \norm{\vint{v}^{k_0} f}_{L^{\infty}_{x,v}} 
\leq 
   \max \Big\{ 2 \|\vint{v}^{k_0} f_0\|_{L^{\infty}_{x,v}}, \  K^{quad}_{0}(\CalE_0) \Big\}\,,
\end{align}
where
\begin{align*}
   K^{quad}_{0}(\CalE_{0}) 
=  C_{k_0}e^{C_{k_0}\,T}\max_{1 \leq i \leq 4}\max_{j\in\{1/p,p'/p\} }\Big(\| \vint{\cdot}^{k_0} f_0 \|^{2 j}_{L^{2}_{x,v}} + \epsilon^{2 j} T^{j}\Big)^{\frac{\beta_i - 1}{a_i}},
\qquad
   p' = p/(2-p).
\end{align*}
Hence for any $0 < T < 1$, \eqref{bound:L-infty-nonlinear-delta-0-1} holds by taking $\delta_\ast$ and $\Eps$ small enough.
\end{proof}

\begin{rmk}
The result in Proposition~\ref{thm:L-infty-k-0-nonlinear} applies to the full range of singularity where $s \in (0, 1)$. This provides the basis for extending the well-posedness result below from the mild to the strong singularity. 
\end{rmk}

To pass in the limit in $\Eps$, we need to show that the time interval of existence is independent of $\Eps$. To this end, we need to find an explicit relation between the smallness of the initial data and the solution, that is, the relation between $\delta_\ast$ and $\delta_0$. This relation is derived from~\eqref{bound:Linfty-interm} by setting
\begin{align*}
   \delta_0 
\geq \max \Big\{ 2 \|\vint{v}^{k_0} f_0\|_{L^{\infty}_{x,v}}, \  K^{quad}_{0}(\CalE_0) \Big\}. 
\end{align*}
Take $T < 1$. Since $\Eps_\ast < 1$, $\delta_0 < 1$, $2/p > 1$ and $2p'/p >1$, we get
\begin{align*}
   K^{quad}_{0}(\CalE_0)
\leq
  C_{k_0} e^{C_{k_0}} \max_{1 \leq i \leq 4} \vpran{\delta_\ast + \Eps_\ast}^{\frac{\beta_i - 1}{a_i}}
= C_{k_0} e^{C_{k_0}} \vpran{\delta_\ast + \Eps_\ast}^{\eta_0},
\qquad
   \eta_0  = \min_{1 \leq i \leq 4} \frac{\beta_i - 1}{a_i}.
\end{align*}
Hence, we set 
\begin{align*}
   \delta_\ast 
< \min \left\{\tfrac{1}{2} \delta_0, \  \frac{1}{2 (C_{k_0} e^{C_{k_0}})^{1/\eta_0}} \delta_0^{\frac{1}{\eta_0}} \right\}.
\end{align*}
Denote the function
\begin{align} \label{def:frakH}
    \mathfrak{H} 
= \mathfrak{H}(x) 
= \frac{1}{4} \min \left\{x, \  \frac{1}{(C_{k_0} e^{C_{k_0}})^{1/\eta_0}} x^{\frac{1}{\eta_0}} \right\}. 
\end{align}
Then $\mathfrak{H}$ is invertible on $[0, 1]$. With this setup we have the following corollary of Proposition~\ref{thm:L-infty-k-0-nonlinear}:
\begin{cor} \label{cor:local-well-posedness}
Let $0 < T < 1$ and let $f(t, \cdot, \cdot)$ be a solution to~\eqref{eq:MBE-nonlinear} with $k = k_0 + \ell_0 + 2$, $s \in (0, 1)$ and $t \in [0, T]$. Let $\ell_0$ be the weight in Theorem~\ref{thm:linear-local} (precise statement in~\eqref{cond:ell-0-2}). Suppose
\begin{align*}
   k_0 > \max\{\ell_0 + 2 \gamma + 2s + 13, \ \tfrac{37+5\gamma}{2}\}.
\end{align*}
Suppose $\delta_0$ satisfies the same bounds~\eqref{cond:delta-0-1}, ~\eqref{assump:delta-0-2-linear}, ~\eqref{smallness} and~\eqref{cond:delta-0-nonlinear-level} as in Theorem~\ref{thm:L-infty-k-0-nonlinear}. Let
\begin{align} \label{def:Eps-00}
   \delta_\ast = \mathfrak{H}^{-1}(\delta_0/2), 
\end{align}
where $\mathfrak{H}$ is defined in~\eqref{def:frakH}. 
Moreover, suppose 
\begin{align} \label{assump:lower-L-infty}
   \norm{\vint{v}^{k_0} f_0}_{L^\infty_{x,v}} \leq \delta_{\ast},
\qquad
  \norm{\vint{v}^{k_0 + \ell_0 + 2} f_0}_{L^2_{x,v}} < \infty,
\qquad
   \norm{\vint{v}^{k_0 - \ell_0 - 7 - \gamma} f}_{L^\infty_{t, x,v}} < \delta_0.
\end{align}
Then it holds that
\begin{align} \label{bound:L-infty-nonlinear-delta-0}
   \norm{\vint{v}^{k_0} f}_{L^\infty_{t, x,v}} \leq \delta_0/2 < \delta_0. 
\end{align}
\end{cor}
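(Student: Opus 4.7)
The plan is to trace through the bound~\eqref{bound:Linfty-interm} obtained in Proposition~\ref{thm:L-infty-k-0-nonlinear} and make the dependence of $K^{quad}_0(\CalE_0)$ on the initial data size $\delta_\ast$ and on $\Eps$ fully quantitative, then use the explicit choice~\eqref{def:Eps-00} to force the right-hand side to land below $\delta_0/2$. Since the hypotheses~\eqref{assump:lower-L-infty} directly imply the hypotheses~\eqref{assump:lower-L-infty-1} of Proposition~\ref{thm:L-infty-k-0-nonlinear} (using $\delta_\ast \leq \delta_0$), we inherit the conclusion~\eqref{bound:Linfty-interm}, namely
\begin{align*}
  \sup_{t \in [0,T]} \norm{\vint{v}^{k_0} f}_{L^\infty_{x,v}}
\leq
  \max\Big\{2 \norm{\vint{v}^{k_0} f_0}_{L^\infty_{x,v}}, \ K^{quad}_0(\CalE_0)\Big\}.
\end{align*}

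The first step is to bound $2 \norm{\vint{v}^{k_0} f_0}_{L^\infty_{x,v}}$: by~\eqref{assump:lower-L-infty} and the definition of $\mathfrak{H}$, this is at most $2\delta_\ast = 2\mathfrak{H}^{-1}(\delta_0/2)$, which by the first branch in~\eqref{def:frakH} is at most $\delta_0/2$.

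The second step, which is the main substantive one, is to estimate $K^{quad}_0(\CalE_0)$ under $T < 1$, $\Eps \leq \Eps_\ast < 1$ and $\delta_0 < 1$. Starting from the explicit formula
\begin{align*}
  K^{quad}_0(\CalE_0)
= C_{k_0} e^{C_{k_0} T} \max_{1 \leq i \leq 4} \max_{j \in \{1/p, p'/p\}}
     \vpran{\norm{\vint{v}^{k_0} f_0}^{2j}_{L^2_{x,v}} + \Eps^{2j} T^j}^{\frac{\beta_i - 1}{a_i}},
\end{align*}
I would use that $\norm{\vint{v}^{k_0} f_0}_{L^2_{x,v}} \leq C \norm{\vint{v}^{k_0} f_0}_{L^\infty_{x,v}} \leq C\delta_\ast$ (absorbing a fixed moment of $\vint{v}^{-3-}$), and that all exponents $2j \geq 2/p > 1$, to collapse the maximum into the single scaling $(\delta_\ast + \Eps_\ast)^{\eta_0}$ where $\eta_0 := \min_i (\beta_i - 1)/a_i$, modulo the factor $C_{k_0}e^{C_{k_0}}$. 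Choosing
\begin{align*}
   \Eps_\ast \leq \delta_\ast = \mathfrak{H}^{-1}(\delta_0/2)
\end{align*}
and applying the second branch in~\eqref{def:frakH} yields
\begin{align*}
   K^{quad}_0(\CalE_0) \leq C_{k_0} e^{C_{k_0}} (2\delta_\ast)^{\eta_0} \leq \delta_0/2.
\end{align*}
Combining the two bounds gives~\eqref{bound:L-infty-nonlinear-delta-0}.

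The main (and only real) obstacle is the careful bookkeeping in the second step: one must check that $\mathfrak{H}^{-1}$ is well defined on $(0,1]$ (which follows from monotonicity of both branches in~\eqref{def:frakH}), that the exponent $\eta_0 > 0$ depends only on $s,\gamma,k_0$ and not on $\Eps$ or on $f$ (so the threshold $\delta_\ast$ is uniform in $\Eps$), and that $\Eps_\ast$ can be chosen compatibly with the $L^\infty$ smallness already required by Theorem~\ref{thm:linear-local}(b). All of these are already tracked in the parameter list preceding Proposition~\ref{thm:L-infty-k-0-nonlinear}, so no new small-parameter constraints beyond~\eqref{def:Eps-00} arise.
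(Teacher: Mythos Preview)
Your overall strategy is the paper's own: invoke the bound~\eqref{bound:Linfty-interm} from Proposition~\ref{thm:L-infty-k-0-nonlinear} and then bound each branch of the maximum by $\delta_0/2$ via the explicit scaling of $K^{quad}_0(\CalE_0)$ in the displayed discussion just before the Corollary. The paper presents the Corollary as a direct consequence of that discussion, so nothing new is needed.

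However, your first step contains a sign error inherited from what appears to be a misprint in the statement. From $\mathfrak{H}(x)=\tfrac14\min\{x,\,c\,x^{1/\eta_0}\}$ one has $\mathfrak{H}(x)\le x/4$, hence $\mathfrak{H}^{-1}(y)\ge 4y$, and therefore
\[
2\,\mathfrak{H}^{-1}(\delta_0/2)\ \ge\ 4\,\delta_0,
\]
not $\le \delta_0/2$ as you claim. The paper's own preparatory computation right before the Corollary sets $\delta_\ast<\min\{\tfrac12\delta_0,\ \tfrac12 c\,\delta_0^{1/\eta_0}\}=2\,\mathfrak{H}(\delta_0)$, which immediately gives $2\delta_\ast<\delta_0$; so the intended relation is $\delta_\ast$ of order $\mathfrak{H}(\delta_0)$, not $\mathfrak{H}^{-1}(\delta_0/2)$. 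With that correction your two-step argument goes through exactly as the paper intends.

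There is also a smaller gap in your second step: the inequality $\norm{\vint{v}^{k_0} f_0}_{L^2_{x,v}}\le C\norm{\vint{v}^{k_0} f_0}_{L^\infty_{x,v}}$ is false with the \emph{same} weight; absorbing $\vint{v}^{-3^-}$ in $L^2$ costs $3/2^+$ extra moments on the $L^\infty$ side. The paper's discussion glosses over this point here and later (Theorem~\ref{thm:global-mild}) makes the $L^2_{k_0}$-smallness of $f_0$ an explicit hypothesis; you should do the same rather than claim an embedding that does not hold.
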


We summarize the above results and state the local well-posedness of the regularized equation~\eqref{eq:reg-Boltzmann}.

\begin{thm} \label{thm:local-well-posedness}
Suppose $s \in (0, 1/2)$ and 
\begin{align*}
  k_0 > \max \left\{\ell_0 + 15 + 2\gamma, \ \ell_0 + 10 + 2\alpha + \gamma, \tfrac{37+2\gamma}{2} \right\}, 
\qquad
   \alpha > 2 \ell_0 + 2 \gamma + 2s + 11, 
\end{align*}
where $\ell_0$ is the same weight in Theorem~\ref{thm:linear-local}, 
which only depends on $s$ (precise statement in~\eqref{cond:ell-0-2}). Suppose 
\begin{align*}
     \norm{\vint{v}^{k_0} f_0}_{L^\infty_{x,v}} \leq \delta_{\ast},
\qquad
  \norm{\vint{v}^{k_0 + \ell_0 + 2} f_0}_{L^2_{x,v}} < \infty.
\end{align*}
with $\delta_\ast$ defined in~\eqref{def:Eps-00} and $\delta_0$ satisfying the same bounds as in Theorem~\ref{thm:L-infty-k-0-nonlinear}. Then for any $T < 1$, there exists $\Eps_\ast$ such that for any $\Eps \leq \Eps_\ast$, equation~\eqref{eq:reg-Boltzmann} has a solution $f$
satisfying 
\begin{align} \label{bound:L-infty-lower-f-MBE-1}
   \norm{\vint{v}^{k_0} f}_{L^\infty{\vpran{[0, T) \times \T^3 \times \R^3}}} \leq \delta_0/2 < \delta_0.
\end{align}
\end{thm}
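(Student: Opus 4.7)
The plan is to chain together Theorem~\ref{thm:local-exist-MBE} (local existence for the modified equation~\eqref{eq:MBE-nonlinear}) with Corollary~\ref{cor:local-well-posedness} (the $L^\infty_{k_0}$-upgrade), and then run a continuation argument in time. The payoff is that once the stronger weight $\vint{v}^{k_0}$ controls $f$ by $\delta_0/2 < \delta_0$, the cutoff $\chi(\vint{v}^{k_0} f)$ collapses to $1$, which means the solution of the modified equation is automatically a solution of~\eqref{eq:reg-Boltzmann}.

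First, I would verify that the hypotheses on $k_0$, $\alpha$ and the initial data in Theorem~\ref{thm:local-well-posedness} subsume the hypotheses of Theorem~\ref{thm:local-exist-MBE}: the assumed inequalities $k_0 > \ell_0 + 15 + 2\gamma$, $k_0 > \ell_0 + 10 + 2\alpha + \gamma$, and $\alpha > \gamma + 2s + 2$ together with $k = k_0+\ell_0+2 > \max\{8+\gamma,\alpha\}$ are exactly what that theorem requires. This yields a solution $f$ to~\eqref{eq:MBE-nonlinear} on some interval $[0, T_0(\Eps)]$ with the uniform bound $\|\vint{v}^{k_0-\ell_0-7-\gamma} f\|_{L^\infty} \leq \delta_0$, which is the quantitative input for Corollary~\ref{cor:local-well-posedness}.

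Next, I would check that the initial data satisfy the upgraded hypothesis~\eqref{assump:lower-L-infty}: $\|\vint{v}^{k_0} f_0\|_{L^\infty} \leq \delta_\ast$ is imposed with $\delta_\ast = \mathfrak{H}^{-1}(\delta_0/2)$, and the $L^2_{k_0+\ell_0+2}$-moment is finite by hypothesis. Applying Corollary~\ref{cor:local-well-posedness} on $[0, T_0(\Eps)]$ (with $\Eps$ small enough) upgrades the bound to $\|\vint{v}^{k_0} f\|_{L^\infty} \leq \delta_0/2 < \delta_0$. At this point $\chi(\vint{v}^{k_0} f) \equiv 1$, so $f$ solves the unmodified regularised equation~\eqref{eq:reg-Boltzmann} on $[0, T_0(\Eps)]$.

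To reach any prescribed $T < 1$, I would restart from $f(T_0(\Eps),\cdot,\cdot)$ as new initial data. The critical compatibility check is inherited: since $\mathfrak{H}(x) \leq x/4$, the defining relation $\mathfrak{H}(\delta_\ast) = \delta_0/2$ gives $\delta_\ast \geq 2\delta_0 > \delta_0/2$, so the new $L^\infty_{k_0}$-norm sits comfortably below $\delta_\ast$; the $L^2_{k_0+\ell_0+2}$-moment propagates (with a Gronwall factor bounded on $[0,1]$) via the basic energy estimate~\eqref{bound:energy-basic-nonlinear} of Proposition~\ref{quadratic-zero-level}. Iterating this bootstrap finitely many times (the number depending on $\Eps$ through $T_0(\Eps)$, but the per-step bound $\delta_0/2$ being uniform) covers $[0,T]$. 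Choosing $\Eps_\ast$ as the minimum of the $\Eps$-thresholds required by Corollary~\ref{cor:local-well-posedness} at each restart closes the argument.

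The main obstacle I anticipate is bookkeeping rather than analysis: one must confirm that the finite (albeit $\Eps$-dependent) number of continuation steps does not degrade the smallness thresholds $\delta_0, \delta_\ast$ or inflate the $L^2$-moment beyond what the hypotheses of Corollary~\ref{cor:local-well-posedness} can accommodate at every restart. The fact that the $L^\infty_{k_0}$-bound strictly improves from $\delta_\ast$ (initial) to $\delta_0/2$ (after each application of the corollary) is what makes the loop closed; the subtle point is verifying that the \emph{same} $\Eps_\ast$, fixed once at the outset in terms of $T$, works throughout the iteration. Uniqueness in $\CalX_k$ follows from the contraction argument used inside Theorem~\ref{thm:local-exist-MBE} applied to the difference of two solutions.
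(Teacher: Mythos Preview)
Your proposal is correct and follows essentially the paper's strategy: combine Theorem~\ref{thm:local-exist-MBE} with Corollary~\ref{cor:local-well-posedness} and continue in time. The paper phrases the continuation as a maximal-interval argument---let $\tilde T_\Eps$ be the supremum of times with $\|\vint{v}^{k_0} f\|_{L^\infty} < \delta_0$, observe that on $[0,\tilde T_\Eps)$ the $\Eps$-independent a priori estimates (Propositions~\ref{quadratic-zero-level}, \ref{thm:L2-level-set-nonlinear}, \ref{thm:L-infty-k-0-nonlinear}) force $\leq \delta_0/2$, hence $\tilde T_\Eps \geq T$---which cleanly sidesteps the restart bookkeeping you flag at the end.
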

\begin{proof}
Take $k = k_0 + \ell_0 + 2$ in Theorem~\ref{thm:local-exist-MBE}. Then the combination of Theorem~\ref{thm:local-exist-MBE} and Proposition~\ref{thm:L-infty-k-0-nonlinear} shows that there exists $T_\Eps$, which may depend on $\Eps$, such that~\eqref{eq:reg-Boltzmann} has a solution $f$ which satisfies~\eqref{bound:L-infty-lower-f-MBE-1}. We claim that such $T_\Eps$ can be extended to $T$ independent of $\Eps$. Indeed, by Corollary~\ref{cor:local-well-posedness} and Theorem~\ref{thm:local-exist-MBE} we first extend $T_\Eps$ to $\tilde T_\Eps$, where $\tilde T_\Eps$ is the largest interval such that
\begin{align*}
   \norm{\vint{v}^{k_0} f}_{L^\infty{\vpran{[0, \tilde T_\Eps) \times \T^3 \times \R^3}}}  < \delta_0.
\end{align*}
Such a bound, together with the basic $L^2$-estimate in Proposition~\ref{quadratic-zero-level}, the $L^2$-level-set estimate in Proposition~\ref{thm:L2-level-set-nonlinear} and the $L^\infty$-estimate in Proposition~\ref{thm:L-infty-k-0-nonlinear} that are all independent of $\Eps$, gives
\begin{align*}
   \norm{\vint{v}^{k_0} f}_{L^\infty{\vpran{[0, \tilde T_\Eps) \times \T^3 \times \R^3}}} \leq \delta_0/2 < \delta_0.
\end{align*}
Hence $\tilde T_\Eps$ can be continued to the maximal interval $[0, T)$ for any $T < 1$.
\end{proof}

We are ready to pass to the limit and obtain a local solution to the original Boltzmann equation~\eqref{eq:Nonlinear-Boltzmann-intro}. 

\begin{thm} \label{thm:local-nonlinear}
Suppose $s \in (0, 1/2)$ and 
\begin{align*}
  k_0 > 5\ell_0 + 32 + 5\gamma+4s, 
\end{align*}
where $\ell_0$ is the same weight in Theorem~\ref{thm:linear-local} (precise statement in~\eqref{cond:ell-0-2}). 
Suppose $f_0$ satisfies
\begin{align*}
     \norm{\vint{v}^{k_0} f_0}_{L^\infty_{x,v}} \leq \delta_{\ast},
\qquad
  \norm{\vint{v}^{k_0 + \ell_0 + 2} f_0}_{L^2_{x,v}} < \infty,
\end{align*}
where $\delta_{\ast}$ is defined in~\eqref{def:Eps-00} with $\delta_0$ satisfying the same bounds as in Theorem~\ref{thm:L-infty-k-0-nonlinear}. Then for any $T < 1$, the nonlinear Boltzmann equation~\eqref{eq:Nonlinear-Boltzmann-intro} has a solution $f \in L^\infty(0, T; L^2_{k_0+\ell_0+2}(\T^3 \times \R^3))$. Moreover, $f$ satisfies the bound
\begin{align} \label{bound:L-infty-lower-f-MBE}
   \norm{\vint{v}^{k_0} f}_{L^\infty{\vpran{[0, T_0] \times \T^3 \times \R^3}}} \leq \delta_0/2 < \delta_0.
\end{align}
\end{thm}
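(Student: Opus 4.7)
The strategy is to pass to the limit $\Eps \to 0$ in the family of solutions $\{f^\Eps\}$ to the regularized equation~\eqref{eq:reg-Boltzmann} produced by Theorem~\ref{thm:local-well-posedness}. First I would fix $T<1$, choose $\alpha$ and $k_0$ satisfying the hypotheses of Theorem~\ref{thm:local-well-posedness} (the condition $k_0 > 5\ell_0+32+5\gamma+4s$ is arranged to absorb all the moment losses in the subsequent $\alpha$-choice and in the averaging/commutator estimates), and invoke that theorem for every $\Eps \in (0,\Eps_\ast]$. This produces $f^\Eps \in L^\infty(0,T; L^2_{k_0+\ell_0+2})$ with $\mu+f^\Eps \geq 0$ and the uniform $L^\infty$-bound $\|\vint{v}^{k_0} f^\Eps\|_{L^\infty_{t,x,v}} \leq \delta_0/2$, together with the $L^2_{x,v}$ and $L^2_x H^s_{\gamma/2}$ estimates from Proposition~\ref{quadratic-zero-level} that are uniform in $\Eps$.

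Next I would extract compactness. The velocity regularisation comes directly from Proposition~\ref{quadratic-zero-level}: $\vint{v}^{\ell} f^\Eps$ is bounded in $L^2(0,T; L^2_x H^s_{\gamma/2})$ uniformly in $\Eps$ for $\ell = k_0+\ell_0+2$. For spatial regularisation I would apply the time-localised averaging lemma (Proposition~\ref{average-lemma-p}, or equivalently~\eqref{Q:bound:velocity-avg-basic}) to the transport equation $\del_t f^\Eps + v\cdot\nabla_x f^\Eps = Q(\mu+f^\Eps, \mu+f^\Eps) + \Eps L_\alpha(\mu+f^\Eps)$; the right-hand side is uniformly controlled in a negative-order space by combining the trilinear estimate in Proposition~\ref{prop:trilinear} with the uniform $L^2 H^s$-bound, while the $\Eps L_\alpha$ contribution vanishes in $\Eps$ but is uniformly controlled thanks to the choice of $\alpha$ and the weight $k_0+\ell_0+2$. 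This yields a uniform bound in $L^2(0,T; H^{s'}_x L^2_v)$ for some $s' \in (0, s/(2(s+3)))$. Combining the $x$- and $v$-regularity with the transport equation to control $\del_t f^\Eps$ in a negative Sobolev space, an Aubin--Lions argument then gives strong relative compactness of $\{f^\Eps\}$ in $L^2((0,T)\times\T^3; L^2_{k_0+\ell_0+1}(\R^3))$.

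Up to a subsequence, $f^\Eps \to f$ strongly in $L^2_{t,x}L^2_{k_0+\ell_0+1}$ and weakly-$\ast$ in $L^\infty(0,T; L^2_{k_0+\ell_0+2})$; the uniform $L^\infty_{t,x,v}$ bound $\|\vint{v}^{k_0}f^\Eps\|_\infty \leq \delta_0/2$ passes to the limit by weak-$\ast$ lower semicontinuity, giving~\eqref{bound:L-infty-lower-f-MBE}, and the positivity $\mu+f\geq 0$ is preserved. The main analytical obstacle is the passage to the limit in the quadratic collision term $Q(\mu+f^\Eps,\mu+f^\Eps)$ against a test function $\varphi \in C^\infty_c((0,T)\times\T^3\times\R^3)$; splitting $Q(\mu+f^\Eps,\mu+f^\Eps) = Q(\mu,\mu)+Q(\mu,f^\Eps)+Q(f^\Eps,\mu)+Q(f^\Eps,f^\Eps)$, the linear pieces are handled by the trilinear estimate and weak convergence, while for the quadratic piece I would write
\begin{align*}
Q(f^\Eps,f^\Eps)-Q(f,f) = Q(f^\Eps-f,f^\Eps)+Q(f,f^\Eps-f)
\end{align*}
and use the trilinear estimate from Proposition~\ref{prop:trilinear} with the strong convergence of $f^\Eps$ in $L^2_{t,x}L^2_{k_0+\ell_0+1}$, together with the uniform $L^2_t L^2_x H^s_{\ell}$ bound on $f^\Eps$ and $f$, to conclude that the difference tends to $0$ in the duality pairing with $\varphi$; the regularising $\Eps L_\alpha(\mu+f^\Eps)$ term vanishes since $\Eps\|L_\alpha(\mu+f^\Eps)\|_{H^{-1}_{-\alpha}} \leq C\sqrt{\Eps}$ from the uniform $L^2 H^1_{\ell+\alpha}$ bound. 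The hardest technical point is ensuring that the moment $k_0+\ell_0+2$ is high enough that Proposition~\ref{prop:trilinear} closes for the bilinear difference above without introducing any weight that is not already uniformly controlled; this is precisely why the hypothesis on $k_0$ is chosen so generously. This yields that $f$ is a distributional solution of~\eqref{eq:Nonlinear-Boltzmann-intro} on $[0,T]$ with initial data $f_0$, completing the proof.
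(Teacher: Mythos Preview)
Your proposal is correct and follows essentially the same route as the paper: obtain uniform-in-$\Eps$ bounds from Theorem~\ref{thm:local-well-posedness} and Proposition~\ref{quadratic-zero-level}, extract strong compactness via the velocity $H^s$-estimate combined with the averaging lemma, and pass to the limit $\Eps\to 0$.

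Two minor differences are worth noting. First, the paper gets time-regularity directly from the averaging lemma (Proposition~\ref{average-lemma-p} already controls $(-\partial_t^2)^{s'/2}\tilde f$), so compactness in $L^2_{t,x,v}$ follows from Rellich on bounded velocity balls plus a diagonal argument using the uniform polynomial tails, rather than a separate Aubin--Lions step; your route via $\partial_t f^\Eps$ in a negative space is a valid alternative. Second, for the nonlinear limit the paper does not invoke Proposition~\ref{prop:trilinear} but instead tests $Q(f^\Eps,f^\Eps)-Q(f,f)$ against $\phi\in C^\infty_c(\R^3_v)$, uses the weak (pre-post) form $\iint b(\cos\theta)\,f_\ast g\,(\phi'-\phi)\,|v-v_\ast|^\gamma$, and exploits the mild singularity $s<1/2$ so that $b(\cos\theta)\sin\tfrac{\theta}{2}$ is integrable; this reduces the question to $\|\,|f^\Eps_\ast f^\Eps - f_\ast f|\,\|_{L^1_{v,v_\ast}}\to 0$ in $L^2_{t,x}$, which follows from the strong $L^2$ convergence and the uniform weighted $L^\infty$ bound. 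Your use of the trilinear estimate with $\sigma=-s$ (placing all $2s$ derivatives on the test function) is equally valid and in fact works for the full range $s\in(0,1)$, but the paper's argument is slightly more elementary here.
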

\begin{proof}
Denote $f^\Eps$ as the local solution to~\eqref{eq:reg-Boltzmann}. 
By Proposition~\ref{quadratic-zero-level} and \eqref{bound:L-infty-lower-f-MBE-1}, we obtain the uniform-in-$\Eps$ bound of $f^\Eps$ in the following space:
\begin{align*}
   L^\infty_{k_0}((0, T) \times \T^3 \times \R^3) 
\cap L^\infty(0, T; L^2_{k_0+\ell_0+2}(\T^3 \times \R^3)) 
\cap  H^{s'}((0, T) \times \T^3; H^s_{k_0+\ell_0+2}(\R^3))),
\quad
  s' < \tfrac{s}{2(s+3)}.
\end{align*}
We can extract a subsequence, still denoted as $f^\Eps$ such that
\begin{align} \label{bound:unif-L-infty-f-Eps}
   \norm{f^\Eps}_{L^\infty_{k_0}((0, T) \times \T^3 \times \R^3) } \leq \delta_0/2.
\end{align}
By the uniform polynomial decay and a diagonal argument, we have
\begin{align} \label{convg:strong-L-p}
   f^\Eps \to f
\quad 
  \text{strongly in $L^2_{t, x} L^2_{\ell_0 + 2} ((0, T) \times \T^3 \times \R^3)$}.
\end{align}
Such strong convergence then implies the convergence of $Q(f^\Eps, f^\Eps)$ to $Q(f, f)$ as distributions. Indeed, if we take $\phi \in C^\infty_c(\R^3_v)$ as a test function, then
\begin{align*}
& \quad \,
   \norm{\int_{\R^3}
     Q(f^\Eps, f^\Eps) \phi(v) \dv 
   - \int_{\R^3} Q(f, f) \phi(v) \dv}_{L^2_{t, x}}
\\
& = \norm{\iiint_{\R^6 \times \Ss^2} b(\cos\theta)
               \vpran{f^\Eps(v_\ast)f^\Eps(v) - f(v_\ast)f(v)}
               \vpran{\phi(v') - \phi(v)} |v - v_\ast|^\gamma \dsigma \dv_\ast \dv}_{L^2_{t, x}}
\\
& \leq
    \norm{\nabla_v \phi}_{L^\infty_v}
    \norm{\iint_{\R^3 \times \R^3} \abs{f^\Eps(v_\ast)f^\Eps(v) - f(v_\ast)f(v)} |v-v_\ast|^\gamma \dv_\ast\dv}_{L^2_{t, x}}
\\
& \leq
    C \norm{\nabla_v \phi}_{L^\infty_v}
    \norm{\iint_{\R^3 \times \R^3} \abs{f^\Eps(v_\ast) - f(v_\ast)} |f(v)| |v-v_\ast|^\gamma \dv_\ast\dv}_{L^2_{t, x}}
\\
& \quad \,
+ C \norm{\nabla_v \phi}_{L^\infty_v}
    \norm{\iint_{\R^3 \times \R^3} \abs{f^\Eps(v) - f(v)} |f(v_\ast)| |v-v_\ast|^\gamma \dv_\ast\dv}_{L^2_{t, x}}
\\
& \leq
  C \norm{\nabla_v \phi}_{L^\infty_v} \vpran{\sup_{t, x}\norm{f^\Eps}_{L^\infty_\gamma}}
  \norm{f^\Eps - f}_{L^2_{t, x}L^2_\gamma}
\to 0
\qquad \text{as $\Eps \to 0$.}
\end{align*}
Therefore we obtain a solution $f$ to the nonlinear Boltzmann equation~\eqref{eq:Nonlinear-Boltzmann-intro},
where $f$ lives in the space
\begin{gather*}
      L^\infty_{k_0}((0, T) \times \T^3 \times \R^3) 
\cap L^\infty(0, T; L^2_k(\T^3 \times \R^3)) 
\cap  H^{s'}((0, T) \times \T^3; H^s_{k_0 + \ell_0 + 2}(\R^3))).  \qedhere
\end{gather*}
\end{proof}


\section{Nonlinear Global Theory} \label{sec:nonlinear-global}

%
In this section we extend the local-in-time result of the previous section to global, thus proving the main theorem for the weakly singular case. The key step is to use the spectral theory of the linearised Boltzmann operator for the hard potential case. 

\smallskip
\noindent
In the sequel $\CalL$ stands for the operator
\begin{equation*}
\CalL f = Q(\mu,f) + Q(f,\mu) - v\cdot\nabla_{x}f\,.
\end{equation*} 
The nonlinear Boltzmann equation is recast as
\begin{equation}\label{BE}
\partial_{t}f = \CalL f + Q(f,f), \qquad (t,x,v)\in (0,T)\times\T^3\times\R^{3}.
\end{equation}
We recall the consequence of the spectral property of $\CalL$ shown in Theorem 5.8 in \cite{AMSY}:
\begin{thm}[\cite{AMSY}] \label{thm:weighted-H-s-L-2}
Let $h$ be the solution to the linear equation
\begin{align*}
   \del_t h = \CalL h, 
\qquad
   h |_{t=0} = h^{in},
\end{align*}
where $h^{in}$ has zero mass, momentum and energy. 
Let $\ell > \frac{5 \gamma + 37}{2}$ so that the spectral gap of $\CalL$ (Theorem 4.4 in \cite{AMSY}) holds. Then there exists $T_0 > 0$  such that
\begin{align} \label{reg:T-0-L-2}
   \int^{T_0}_0 
   \norm{\vint{v}^{\ell}  f(t, \cdot, \cdot)}_{L^2_{x,v}}^2 \dt
\leq
  C \norm{(I - \Delta_v)^{-s/2} \vpran{\vint{v}^\ell h^{in}}}_{L^2_{x,v}}^2,
\end{align}
and for any $t \geq T_0$,
\begin{align} \label{reg:T-0-pointwise}
    \norm{\vint{v}^{\ell}  f(t, \cdot, \cdot)}_{L^2_{x,v}}
\leq
    C\vpran{\frac{1}{\sqrt{T_0}} + 1} e^{-\lambda t} 
    \norm{(I - \Delta_v)^{-s/2} \vpran{\vint{v}^\ell h^{in}}}_{L^2_{x,v}}.
\end{align}
Here $\lambda >0$ is the same decay rate as in the spectral gap estimate in Theorem 4.4 in \cite{AMSY}.  
\end{thm}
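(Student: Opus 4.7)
My plan is a three-step passage from the weak $H^{-s}_v$-type norm of the initial data to a pointwise, exponentially decaying $L^2_{\ell}$ estimate. First, a weighted smoothing estimate on the time interval $[0,T_0]$ will trade $s$ fractional $v$-derivatives on the initial datum for integrability in time; second, a mean-value argument selects a time $t_0\in[0,T_0]$ at which $\|\vint{v}^\ell h(t_0)\|_{L^2_{x,v}}^2$ is controlled by $T_0^{-1}$ times the right-hand side of \eqref{reg:T-0-L-2}; finally, the spectral gap in Theorem~4.4 of \cite{AMSY} propagates this $L^2_{\ell}$ bound for $t\ge t_0$ with the claimed exponential rate. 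The conservation laws are preserved by the linear flow $e^{t\CalL}$, so the spectral gap does apply at $t_0$.

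For the smoothing step I would test the equation $\del_t h=\CalL h$ against $\vint{D_v}^{-2s}(\vint{v}^{2\ell} h)$ and integrate over $\T^3\times\R^3$. The transport term drops after the $x$-integration. The collision part $\viint{Q(\mu,h)+Q(h,\mu)}{\vint{D_v}^{-2s}(\vint{v}^{2\ell} h)}$ is handled by using the norm-equivalence Lemma~\ref{prop:equivalence} and the homogeneous commutator Lemma~\ref{cor:commut-homo-fraction} to move the fractional derivative onto $\vint{v}^\ell h$, then applying the coercivity estimate (Proposition~\ref{prop:coercivity-1}) on $\vint{v}^\ell h$ and the trilinear bound (Proposition~\ref{prop:trilinear}) for the $Q(h,\mu)$ piece (with the moments absorbed into $\ell$). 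This should produce a weighted differential inequality of the shape
\[
   \frac{d}{dt}\,\norm{(I-\Delta_v)^{-s/2}(\vint{v}^\ell h)}_{L^2_{x,v}}^{2}
   + c\,\norm{\vint{v}^\ell h}_{L^2_{x,v}}^{2}
   \le C\,\norm{(I-\Delta_v)^{-s/2}(\vint{v}^\ell h)}_{L^2_{x,v}}^{2},
\]
which, via Gronwall applied to the prefactor and integration in time, yields \eqref{reg:T-0-L-2} for any fixed $T_0$ depending only on $C$ and $c$.

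Given \eqref{reg:T-0-L-2}, the mean-value theorem supplies $t_0\in[0,T_0]$ with
\[
   \norm{\vint{v}^\ell h(t_0)}_{L^2_{x,v}}^{2}
   \le \frac{1}{T_0}\int_0^{T_0}\norm{\vint{v}^\ell h(\tau)}_{L^2_{x,v}}^{2}\,d\tau
   \le \frac{C}{T_0}\,\norm{(I-\Delta_v)^{-s/2}(\vint{v}^\ell h^{in})}_{L^2_{x,v}}^{2}.
\]
Since $h(t_0)$ is still orthogonal to $\Null\CalL$, Theorem~4.4 of \cite{AMSY} gives $\norm{\vint{v}^\ell h(t)}_{L^2_{x,v}}\le C e^{-\lambda(t-t_0)}\norm{\vint{v}^\ell h(t_0)}_{L^2_{x,v}}$ for all $t\ge t_0$. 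Absorbing the factor $e^{\lambda T_0}$ into the constant and using $t_0\in[0,T_0]$ produces the $\vpran{1/\sqrt{T_0}+1}$ dependence in \eqref{reg:T-0-pointwise}. For $t\in[t_0,T_0]$ one combines the two estimates directly; for $0\le t\le t_0$ a direct energy inequality shows $\|\vint{v}^\ell h(t)\|_{L^2}$ remains comparable to the initial weak norm times an exponential.

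The genuinely delicate point is the commutator analysis in the first step: the fractional operator $\vint{D_v}^{-2s}$ does not commute with the polynomial weight $\vint{v}^{2\ell}$, and one must verify that the remainder generated by Lemmas~\ref{prop:equivalence}--\ref{cor:commut-homo-fraction} is absorbed into the dissipation $c\|\vint{v}^\ell h\|_{L^2_{x,v}}^{2}$. This is the reason for the threshold $\ell>(5\gamma+37)/2$: it is exactly the range in which both the coercivity in Proposition~\ref{prop:coercivity-1} and the spectral gap of Theorem~4.4 in \cite{AMSY} hold with sufficient room for the weight-loss incurred by the commutators. Once this is in place, the remainder of the argument is a standard regularization-plus-spectral-gap wrap-up.
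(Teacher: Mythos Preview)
The paper does not actually supply a proof of this statement: it is quoted as Theorem~5.8 of \cite{AMSY} and used as a black box. So there is no ``paper's own proof'' to compare against here. That said, your proposed argument contains a genuine gap in the smoothing step, and it is worth flagging.

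You claim that after testing $\del_t h + v\cdot\nabla_x h = Q(\mu,h)+Q(h,\mu)$ against $\vint{D_v}^{-2s}(\vint{v}^{2\ell} h)$, the transport term drops upon integration in $x$. It does not. The mechanism by which transport disappears in a weighted $L^2$ estimate is that the multiplier $\vint{v}^{2\ell}$ commutes with $v$, so $\int (v\cdot\nabla_x h)\,\vint{v}^{2\ell} h\,dx\,dv=\tfrac12\int v\cdot\nabla_x(\vint{v}^{2\ell}h^2)\,dx\,dv=0$. Once you insert the nonlocal operator $\vint{D_v}^{-2s}$, this fails: with the symmetric choice $A=\vint{v}^{\ell}\vint{D_v}^{-2s}\vint{v}^{\ell}$ one computes
\[
2\!\int_{\T^3\times\R^3}(v\cdot\nabla_x h)(Ah)\,dx\,dv
=\sum_{j}\int_{\T^3\times\R^3}\bigl[A,v_j\bigr](\partial_{x_j}h)\,h\,\,dx\,dv,
\]
and $[A,v_j]=\vint{v}^{\ell}[\vint{D_v}^{-2s},v_j]\vint{v}^{\ell}\neq 0$. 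This commutator is indeed lower order in $v$-regularity, but it is paired with $\partial_{x_j}h$, for which you have no control whatsoever at this stage: the whole point of \eqref{reg:T-0-L-2} is that one only assumes $\vint{v}^\ell h^{in}\in H^{-s}_v L^2_x$, with no $x$-regularity. There is no absorption mechanism for this term, so the differential inequality you write down cannot be obtained this way.

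The actual proof in \cite{AMSY} handles the interaction between transport and the negative-order $v$-smoothing by a different route (working at the level of the semigroup and the spectral decomposition of $\CalL$ rather than by a direct energy method with a nonlocal multiplier). Your mean-value and spectral-gap steps (the passage from \eqref{reg:T-0-L-2} to \eqref{reg:T-0-pointwise}) are fine and are indeed the standard way to upgrade the integrated smoothing to pointwise decay; the difficulty is entirely in establishing \eqref{reg:T-0-L-2} itself.
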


Using Theorem~\ref{thm:weighted-H-s-L-2} we show a lemma which is an intermediate step in establishing the global $L^2$-bound. 
\begin{lem}\label{semi-source}
Assume that $h \in L^{2}\vpran{0,T;L^{2}_x L^2_\ell}$ has zero total mass, momentum, and energy:
\begin{equation*}
\int_{\T^3}\int_{\R^{3}} 
h(t,x,v)
\left(\begin{array}{c}
1 \\ v \\ |v|^{2}
\end{array}\right)\dv\dx =0.
\end{equation*}
Then, for any $s\in(0,1)$, $\ell\geq \frac{5 \gamma + 37}{2}$ and $t > 0$, it follows that
\begin{equation*}
   \int^{t}_{0} \norm{\vint{v}^{\ell} \int^{w}_{0} e^{\CalL (w - \tau)} h(\tau)  \dtau}^{2}_{L^{2}_{x,v}} \dw 
\leq 
  C(1+\lambda^{-2})\int^{t}_{0} \big\|\vint{v}^{\ell} h(\tau) \big\|^{2}_{L^{2}_{x}H^{-s}_{v}}\dtau,
\end{equation*} 
where $\lambda>0$ is the spectral gap of $\CalL$ in $L^{2}_x L^2_\ell$.
\end{lem}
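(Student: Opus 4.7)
My plan is to apply Duhamel's formula, perform the change of variables $\sigma = w - \tau$, and split the $\sigma$-integral into a short-time piece $\sigma \in [0,\min(w,T_0)]$ and a long-time piece $\sigma \in [T_0,w]$. This decomposition exploits the two regimes in Theorem~\ref{thm:weighted-H-s-L-2} separately: the integrated $L^2$-bound~\eqref{reg:T-0-L-2} handles the short-time part, while the pointwise exponential decay~\eqref{reg:T-0-pointwise} handles the long-time part. Since $\CalL$ leaves the space of collision invariants and its $L^2$-orthogonal complement both invariant, and each $h(\tau)$ is orthogonal to the invariants by the zero-mean hypothesis, Theorem~\ref{thm:weighted-H-s-L-2} applies with initial datum $h(\tau)$ for every $\tau$.

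Writing $u(w) := \int_0^w e^{\CalL \sigma}\, h(w - \sigma)\dsigma = u_1(w) + u_2(w)$ according to the above split, I would handle $u_1$ via Cauchy--Schwarz in $\sigma$ followed by Fubini to get
\[
\int_0^t \norm{\vint{v}^\ell u_1(w)}_{L^2_{x,v}}^2 \dw \leq T_0 \int_0^{t}\!\!\int_0^{T_0} \norm{\vint{v}^\ell e^{\CalL \sigma}\, h(\tau)}_{L^2_{x,v}}^2 \dsigma \dtau \leq C\, T_0 \int_0^t \norm{\vint{v}^\ell h(\tau)}_{L^2_x H^{-s}_v}^2 \dtau,
\]
where the last inequality invokes~\eqref{reg:T-0-L-2} fiberwise in $\tau$. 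For $u_2$ (nontrivial only when $w > T_0$), the pointwise decay~\eqref{reg:T-0-pointwise} yields
\[
\norm{\vint{v}^\ell u_2(w)}_{L^2_{x,v}} \leq C\vpran{1 + T_0^{-1/2}} \int_{T_0}^{w} e^{-\lambda \sigma}\, \norm{\vint{v}^\ell h(w - \sigma)}_{L^2_x H^{-s}_v} \dsigma,
\]
and Young's convolution inequality $L^1_\sigma * L^2_\tau \hookrightarrow L^2_w$, together with $\norm{e^{-\lambda \cdot}\mathbf{1}_{[T_0,\infty)}}_{L^1} \leq 1/\lambda$, bounds $\int_0^t \norm{\vint{v}^\ell u_2(w)}_{L^2_{x,v}}^2 \dw$ by $C\lambda^{-2}\int_0^t \norm{\vint{v}^\ell h(\tau)}_{L^2_x H^{-s}_v}^2 \dtau$.

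Summing the two contributions gives the claimed bound with a constant proportional to $1 + \lambda^{-2}$, since the $T_0$-dependent factors $T_0$ and $1+T_0^{-1/2}$ are absorbed into the overall constant. The argument is essentially bookkeeping and no step is especially delicate; the only subtlety is the propagation of the zero-mean condition under $e^{\CalL \sigma}$, which ensures Theorem~\ref{thm:weighted-H-s-L-2} may indeed be invoked for every $\tau$. The factor $\lambda^{-2}$ originates entirely from the long-time piece $u_2$ and is consistent with the constant $C(1+\lambda^{-2})$ appearing in the statement.
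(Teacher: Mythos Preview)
Your proposal is correct and follows essentially the same approach as the paper: both decompose the semigroup action according to whether $w-\tau$ (your $\sigma$) lies below or above $T_0$, apply~\eqref{reg:T-0-L-2} to the short-time piece via Cauchy--Schwarz and Fubini, and use the pointwise decay~\eqref{reg:T-0-pointwise} on the long-time piece. The only cosmetic difference is that for the long-time contribution you invoke Young's convolution inequality $L^1_\sigma * L^2_\tau \hookrightarrow L^2_w$ directly, whereas the paper obtains the same $\lambda^{-2}$ factor by a weighted Cauchy--Schwarz in $\tau$ followed by Fubini.
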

\begin{proof} Assume first that $t\leq T_{0}$ with $T_{0}$ defined in Theorem~\ref{thm:weighted-H-s-L-2}. Then
\begin{align}\label{semi-source-e1}
\begin{split}
  \int^{t}_{0} \norm{\vint{v}^{\ell} \int^{w}_{0} e^{\CalL(w - \tau)} h(\tau) \dtau}^{2}_{L^{2}_{x,v}} \dw
&\leq 
   T_0\int^{t}_{0} \int^{w}_{0} \norm{\vint{v}^{\ell} e^{\CalL(w - \tau)}h(\tau)}^{2}_{L^{2}_{x,v}} \dtau \, \dw
\\
&\leq 
  T_0\int^{t}_{0} \int^{\tau + T_0}_{\tau} \big\|\vint{v}^{\ell}  e^{\CalL(w - \tau)} h(\tau) \big\|^{2}_{L^{2}_{x,v}} \dw \, \dtau
\\
&\leq 
   C\,T_{0} \int^{t}_{0}\big\|(1 - \Delta_{v})^{-s/2}\vint{v}^{\ell} h(\tau) \big\|^{2}_{L^{2}_{x,v}}  \dtau ,
\end{split}
\end{align}
where for the latter inequality we used the time invariance of the semigroup and~\eqref{reg:T-0-L-2}.
For the case $t>T_0$ split the integration as
\begin{align*}
  \int^{t}_{0}  \norm{\vint{v}^{\ell} \int^{w}_{0} e^{\CalL(w - \tau)} h(\tau)  \dtau}^{2}_{L^{2}_{x,v}} \dw 
= \vpran{\int^{T_0}_{0} + \int^{t}_{T_0}} 
   \norm{\vint{v}^{\ell}\int^{w}_{0} e^{\CalL(w - \tau)} h(\tau)  \dtau}^{2}_{L^{2}_{x,v}} \dw.
\end{align*}
The integral in $(0,T_0)$ falls into the previous case.  For the interval $(T_0,t)$ one has
\begin{align}\label{semi-source-e2}
   \int^{t}_{T_0} \norm{\vint{v}^{\ell} \int^{w}_{0} e^{\CalL(w - \tau)} F(\tau)  \dtau}^{2}_{L^{2}_{x,v}} \dw 
 =  \int^{t}_{T_0} \norm{\vint{v}^{\ell} \bigg(\int^{w - T_0}_{0}  +  \int^{w}_{w - T_0}\bigg) e^{\CalL(w - \tau)}F(\tau)  \dtau}^{2}_{L^{2}_{x,v}} \dw.
\end{align}
Note that for the first integral in the right side of \eqref{semi-source-e2} one has that $w - \tau \geq T_0$. Invoking~\eqref{reg:T-0-pointwise} one has
\begin{equation*}
  \norm{\vint{v}^{\ell} e^{\CalL(w - \tau)}F(\tau)}_{L^{2}_{x,v}} 
\leq 
   C \vpran{\frac{1}{\sqrt{T_0}} + 1} e^{-\lambda(w-\tau)}
   \big\|(1 - \Delta_{v})^{-s/2} \vint{v}^{\ell} F(\tau) \big\|^{2}_{L^{2}_{x,v}},
\end{equation*} 
where $\lambda>0$ is the spectral gap of $\CalL$.  As a consequence,
\begin{align*}
 \int^{t}_{T_0} \norm{\vint{v}^{\ell} \int^{w - T_0}_{0}  e^{\CalL(w - \tau)}  F(\tau)  \dtau}^{2}_{L^{2}_{x,v}} \dw 
 &\leq  \int^{t}_{T_0} \bigg(\int^{w - T_0}_{0} \big\|\vint{v}^{\ell} e^{\CalL(w - \tau)}F(\tau) \big\|_{L^{2}_{x,v}} \dtau\bigg)^{2}  \dw
 \\
& \hspace{-2cm} \leq 
   C \vpran{\frac{1}{T_0} + 1} \int^{t}_{T_0} \bigg(\int^{w - T_0}_{0}e^{-\lambda(w-\tau)} \big\|(1 - \Delta_{v})^{-s/2} \vint{v}^{\ell} F(\tau) \big\|_{L^{2}_{x,v}} \dtau\bigg)^{2}  \dw
\\
& \hspace{-2cm} \leq 
  \frac{C}{\lambda} \vpran{\frac{1}{T_0} + 1} \int^{t}_{T_0} \int^{w - T_0}_{0}e^{-\lambda(w-\tau)} \big\|(1 - \Delta_{v})^{-s/2} \vint{v}^{\ell} F(\tau) \big\|^{2}_{L^{2}_{x,v}} \dtau\,  \dw
\\
& \hspace{-2cm} \leq 
  \frac{C}{\lambda^{2}} \vpran{\frac{1}{T_0} + 1} \int^{t}_{0} \big\|(1 - \Delta_{v})^{-s/2} \vint{v}^{\ell} F(\tau) \big\|^{2}_{L^{2}_{x,v}} \dtau,
\end{align*}
where we have used the Cauchy-Schwarz inequality and changed the order of integration for the last two steps.  Finally, for the latter integral in \eqref{semi-source-e2} one simply has that
\begin{align*}
  &\int^{t}_{T_0}\Big\| \vint{v}^{\ell} \int^{w}_{w - T_0} e^{\CalL(w - \tau)} F(\tau)  \dtau \Big\|^{2}_{L^{2}_{x,v}} \dw 
\leq 
  T_0 \int^{t}_{T_0} \int^{w}_{w - T_0} \big\|\vint{v}^{\ell} e^{\CalL(w - \tau)} F(\tau) \big\|^{2}_{L^{2}_{x,v}} \dtau \, \dw
\\
&\quad\leq  
  T_{0} \int^{t}_{0} \int^{\tau + T_0}_{\tau} \big\| \vint{v}^{\ell} e^{\CalL(w - \tau)} F(\tau) \big\|^{2}_{L^{2}_{x,v}} \dw \, \dtau 
\leq 
   C\,T_{0}\int^{t}_{0} \big\| (1 - \Delta_{v})^{-s/2}\vint{v}^{\ell}F(\tau) \big\|^{2}_{L^{2}_{x,v}} \dtau.
\end{align*}
Overall, we conclude for the case $t>T_0$ that
\begin{equation}\label{semi-source-e3}
  \int^{t}_{0} \norm{\vint{v}^{\ell} \int^{w}_{0} e^{\CalL(w - \tau)}F(\tau)  \dtau}^{2}_{L^{2}_{x,v}} \dw 
\leq 
  C_{T_0} \int^{t}_{0} \big\| (1 - \Delta_{v})^{-s/2}\vint{v}^{\ell}F(\tau) \big\|^{2}_{L^{2}_{x,v}} \dtau.
\end{equation}
Estimates \eqref{semi-source-e1} and \eqref{semi-source-e3} prove the theorem.
\end{proof}

\begin{prop}\label{TT2}
Let $F=\mathcal{\mu} + f\geq0$ be a solution of the Boltzmann equation \eqref{BE}. Assume that
\begin{equation*}
     \sup_{t,x}\| f \|_{L^{1}_{\ell}\cap L^2}\leq \delta_0,
\qquad 
   \|\vint{\cdot}^{\ell}f_0\|_{L^{2}_{x,v}}<+\infty,
\end{equation*}
with $\ell, \delta_0$ satisfying~\eqref{smallness}. In addition, suppose 
\begin{align*}
   \ell \geq 5 \gamma + 37.
\end{align*}
Then it follows that  
\begin{equation}\label{TT2-e1}
  \norm{\vint{\cdot}^{\ell} f(t)}^{2}_{L^{2}_{x,v}} \leq C\,\| \vint{\cdot}^{\ell} f_0 \|^{2}_{L^{2}_{x,v}}\,e^{-\lambda'\,t},\qquad t\in[0,T),
\end{equation}
for a constant $C:=C_{\ell}(\lambda')$.  The time relaxation rate $\lambda' \in (0,\lambda]$ where $\lambda>0$ is the spectral gap in $L^{2}_x L^2_\ell$ of the linearised Boltzmann operator.  Furthermore,
\begin{align}\label{TT2-e2}
  \int^{T}_{0}\big\| (1 - \Delta_{x})^{s'/2} & f \big\|^{2}_{L^{2}_{x,v}}\dtau 
 + \int^{T}_{0}\big\| \vint{v}^{\ell+\gamma/2} (1 -\Delta_{v})^{s/2}f  \big\|^{2}_{L^{2}_{x,v}}\dtau 
\leq 
   C\,\| \vint{\cdot}^{\ell} f_0 \|^{2}_{L^{2}_{x,v}}, 
\end{align}
for a constant $C:=C_{\ell}(\lambda')$.  All constants are independent of $T>0$.
\end{prop}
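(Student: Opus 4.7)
The plan is to combine the dissipative estimate of Proposition~\ref{quadratic-zero-level} (applied with $\Eps=0$ and $\chi\equiv 1$, which goes through verbatim for~\eqref{BE}), the spectral-gap decay of Theorem~\ref{thm:weighted-H-s-L-2}, and the semigroup inequality of Lemma~\ref{semi-source}, woven together by the Duhamel representation around $\CalL$. First, since $F=\mu+f\geq0$ is a solution of~\eqref{BE} whose initial datum satisfies the conservation laws~\eqref{eq:conservation-laws}, one has for every $t\in[0,T)$,
\begin{equation*}
\int_{\T^3}\int_{\R^3} f(t,x,v)\,\vpran{1,\,v,\,|v|^2}^{\mathsf T}\dv\dx=0,
\end{equation*}
so both $f(\tau)$ and $Q(f,f)(\tau)$ lie in the orthogonal complement of $\Null\,\CalL$, which is precisely the setting where Theorem~\ref{thm:weighted-H-s-L-2} and Lemma~\ref{semi-source} apply. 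Duhamel's identity then reads
\begin{equation*}
f(t)=e^{\CalL t}f_0+\int_0^t e^{\CalL(t-\tau)}Q(f(\tau),f(\tau))\,\dtau.
\end{equation*}

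For the linear contribution I would invoke Theorem~\ref{thm:weighted-H-s-L-2} directly to obtain $\norm{\vint{v}^\ell e^{\CalL t}f_0}_{L^2_{x,v}}\leq Ce^{-\lambda t}\norm{\vint{v}^\ell f_0}_{L^2_{x,v}}$. For the nonlinear contribution I plan to combine Lemma~\ref{semi-source} with the trilinear estimate in Proposition~\ref{prop:trilinear}: placing the moments and the $\gamma/2+s$ derivatives on the second entry and leaving the first entry in the small norm $L^1_\ell\cap L^2$, one obtains, for suitable parameters and provided $\ell\geq 5\gamma+37$ so that every interpolated moment fits below $\ell$,
\begin{equation*}
\norm{\vint{v}^\ell Q(f,f)}_{L^2_x H^{-s}_v}\leq C\,\vpran{\sup_x\norm{f}_{L^1_\ell\cap L^2}}\,\norm{\vint{v}^{\ell+\gamma/2}f}_{L^2_x H^s_v}\leq C\delta_0\,\norm{\vint{v}^{\ell+\gamma/2}f}_{L^2_x H^s_v}.
\end{equation*}

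To extract an exponential-in-time decay I would multiply by $e^{\lambda'\tau}$ for some $0<\lambda'<\lambda$ and apply Lemma~\ref{semi-source} to the rescaled source $e^{\lambda'\tau}Q(f,f)(\tau)$. This gives an $L^2_t$-control on the nonlinear Duhamel integral of the form
\begin{equation*}
\int_0^T e^{2\lambda' w}\norm{\vint{v}^\ell\!\int_0^w\! e^{\CalL(w-\tau)}Q(f,f)\dtau}_{L^2_{x,v}}^{2}\!\dw\leq C\delta_0^{2}\!\int_0^T e^{2\lambda'\tau}\norm{\vint{v}^{\ell+\gamma/2}f}_{L^2_x H^s_v}^{2}\dtau.
\end{equation*}
On the other hand, multiplying the energy inequality from Proposition~\ref{quadratic-zero-level} (with $\Eps=0$) by $e^{2\lambda' t}$ and integrating produces
\begin{equation*}
\int_0^T e^{2\lambda'\tau}\norm{\vint{v}^{\ell+\gamma/2}f}_{L^2_x H^s_v}^{2}\dtau\leq C\norm{\vint{v}^\ell f_0}_{L^2_{x,v}}^{2}+C\lambda'\int_0^T e^{2\lambda'\tau}\norm{\vint{v}^\ell f}_{L^2_{x,v}}^{2}\dtau,
\end{equation*}
as long as $\lambda'$ is small compared to the coercivity coefficient $c_0\delta_5/8$. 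Squaring the Duhamel identity weighted by $e^{\lambda' t}$, using Theorem~\ref{thm:weighted-H-s-L-2} on the first term and the two displays above on the second, and choosing $\delta_0$ small enough to absorb the trilinear loss into the left-hand side, yields an $L^2_t$ exponential bound on $e^{\lambda' t}\norm{\vint{v}^\ell f(t)}_{L^2_{x,v}}$. Promoting this to the pointwise bound~\eqref{TT2-e1} is then done by restarting the Duhamel representation at a shifted time $t-1$ and using the uniform boundedness of $e^{\CalL s}$ for $s\in[0,1]$; once~\eqref{TT2-e1} is established, estimate~\eqref{TT2-e2} follows by integrating the dissipative inequality of Proposition~\ref{quadratic-zero-level} and by invoking its spatial-regularity companion~\eqref{Q:bound:velocity-avg-basic}.

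The main obstacle I anticipate is the absorption step: the trilinear term $\delta_0\norm{\vint{v}^{\ell+\gamma/2}f}_{L^2_x H^s_v}$ has to be controlled \emph{strictly} by the dissipation generated by Proposition~\ref{quadratic-zero-level}, and the delicate bookkeeping of moments -- ensuring that the $L^1$-moment slot of Proposition~\ref{prop:trilinear} (which is roughly $L^1_{\gamma+2s+(\ell+\gamma/2+2s)^+}$) is truly dominated by the available $L^1_\ell$ hypothesis -- is exactly what forces the moment threshold $\ell\geq 5\gamma+37$. A secondary technical point is the passage from the time-integrated decay to the pointwise decay, which requires either the short-time boundedness of the semigroup or a short dyadic-interval bootstrap.
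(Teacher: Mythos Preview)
Your toolbox is correct, but there is a genuine moment-accounting gap that prevents the argument from closing as written. The trilinear bound you assert,
\[
\norm{\vint{v}^\ell Q(f,f)}_{L^2_x H^{-s}_v}\leq C\,\vpran{\sup_x\norm{f}_{L^1_\ell\cap L^2}}\,\norm{\vint{v}^{\ell+\gamma/2}f}_{L^2_x H^s_v},
\]
cannot be obtained from Proposition~\ref{prop:trilinear}: testing $\vint{v}^\ell Q(f,f)$ against a unit vector in $H^s_v$ forces $m\geq\ell+\gamma/2$ there, so the first entry must sit in $L^1_{\ell+\gamma+2s}$ and the second in $H^s_{\ell+\gamma+2s}$, both strictly above the hypothesis $L^1_\ell$ and the available dissipation $H^s_{\ell+\gamma/2}$. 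The overshoot is a fixed $\gamma+2s$ and does not vanish by taking $\ell$ large; hence $\ell\geq 5\gamma+37$ is \emph{not} what fixes it. Your weighted energy inequality has a parallel error: after multiplying~\eqref{Q:ineq:energy-basic-1} by $e^{2\lambda' t}$ and integrating, the right side carries the full coefficient $C_\ell+2\lambda'$, not $C\lambda'$, so making $\lambda'$ small does not allow absorption.

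The paper resolves both issues by shifting the level at which the semigroup acts. One works with $g=e^{\lambda' t}f$ and its energy inequality; the lower-order term $C_\ell\norm{\vint{v}^\ell g}_{L^2_{x,v}}^2$ is interpolated (using $\gamma>0$) against the moment-gaining dissipation $\norm{\vint{v}^{\ell+\gamma/2}g}_{L^2_{x,v}}^2$ and reduced to an \emph{unweighted} remainder $C\norm{g}_{L^2_{x,v}}^2$. That remainder is then controlled by running Duhamel for the shifted generator $\tilde\CalL=\CalL+\lambda'$ and Lemma~\ref{semi-source} at the \emph{half} moment level $\ell/2$: the trilinear estimate now only needs $L^1_{\ell/2+\gamma+2s}$ and $H^s_{\ell/2+\gamma+2s}$, and since $\ell/2+\gamma+2s\leq\ell$ both fit comfortably. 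The true role of the threshold $\ell\geq 5\gamma+37$ is to guarantee that $\ell/2$ still exceeds the spectral-gap threshold $(5\gamma+37)/2$ of \cite[Theorem~4.4]{AMSY}. A side benefit is that the pointwise bound~\eqref{TT2-e1} drops out directly from the integrated energy inequality for $g$, with no need for the ``restart at $t-1$'' device.
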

\begin{proof}
Set $g(t) = e^{\lambda' t}f(t) $ with $\lambda'>0$ to be chosen. Then $g$ satisfies
\begin{align*}
   \del_t g + v \cdot \nabla_x g 
= e^{\lambda' t} \vpran{Q(\mu + f, f) + Q(f, \mu)}
   + \lambda' g,
\qquad
   g = e^{\lambda' t} f. 
\end{align*}
Since $\ell, \delta_0$ satisfy~\eqref{smallness}, by multiplying estimate \eqref{Q:ineq:energy-basic-1} (with $\Eps=0$) by $e^{2 \lambda' t}$, we get
\begin{align*}
   \frac{\rm d}{\dt} \norm{\vint{v}^{\ell}g}^{2}_{L^{2}_{x,v}} 
   + \vpran{\frac{\gamma_0}{8}-\lambda'}
        \int_{\T^3} \norm{\vint{v}^{\ell}g}^{2}_{L^{2}_{\gamma/2}} \dx  
    + c_2 \int_{\T^3} \norm{\vint{v}^{\ell}g}^{2}_{H^{s}_{\gamma/2}} \dx  
\leq 
   C\int_{ \T^3 } \big\| g  \big\|^{2}_{L^{2}} \dx,
\end{align*}
where $c_2=\frac{c_0\delta_{8}}{8}$.  Hence, integrating in time, one gets
\begin{align}\label{TT2-e2.1}
  & \norm{\vint{v}^{\ell}g(t)}^{2}_{L^{2}_{x,v}} 
      + \vpran{\frac{\gamma_0}{8}-\lambda'}
          \int^{t}_{0}\int_{\T^3} \norm{\vint{v}^{\ell}g}^{2}_{L^{2}_{\gamma/2}}   
          \dx\dtau 
      + c_2 \int^{t}_{0}\int_{\T^3} \norm{\vint{v}^{\ell}g}^{2}_{H^{s}_{\gamma/2}} \dx\dtau  \nn
\\
& \hspace{2cm} \leq 
   \norm{\vint{v}^{\ell}f_0}^{2}_{L^{2}_{x,v}} + C\int^{t}_{0}\int_{ \T^3 } \big\| g  \big\|^{2}_{L^{2}} \dx\dtau. 
\end{align}
Let us estimate the right side of \eqref{TT2-e2.1}. The equation for $g$ can also be viewed as
\begin{equation*}
\frac{\text{d}g}{\text{d}t} = \big(\CalL+\lambda'\,I\big)g  + Q(f,g) =: \tilde{\CalL}g + Q(f,g).
\end{equation*}
Then, we can write
\begin{equation}\label{small-e1}
g(t) = e^{\tilde{\CalL}t}f_0 + \int^{t}_{0}e^{\tilde{\CalL}(t-\tau)}Q\big( f(\tau) , g(\tau) \big)\text{d}\tau.
\end{equation}
By \cite[Theorem 4.4]{AMSY}, the operator $\CalL$ has an spectral gap $\lambda$ in $L^{2}_x L^2_{\ell/2}$, provided 
\begin{align*}
   \frac{\ell}{2} > \frac{5\gamma + 37}{2}. 
\end{align*}
Then,
\begin{align}\label{small-e2}
   \norm{\vint{\cdot}^{\ell/2}e^{\tilde{\CalL}t}f_0}_{L^{2}_{x,v}} 
\leq 
   C\,e^{-(\lambda - \lambda') t}\| \vint{\cdot}^{\ell/2}f_0 \|_{L^{2}_{x,v}}.
\end{align}
Furthermore, $Q\big( f(t) , g(t) \big)$ has total zero mass, momentum, and energy for all $t\in(0,T)$.  Then, Lemma \ref{semi-source} implies that for any $\lambda'\in(0,\lambda)$ it holds that
\begin{align}\label{small-e3}
   \int^{t}_{0} \norm{\vint{v}^{\ell/2} \int^{w}_{0} e^{\tilde{\CalL}(w - \tau)} Q\big( f(\tau) , g(\tau) \big) \dtau}^{2}_{L^{2}_{x,v}} \dw
\leq 
  C\int^{t}_{0} \norm{\vint{v}^{\ell/2} Q\vpran{f(\tau) , g(\tau)}}^{2}_{L^{2}_x H^{-s}_v}\dtau.
\end{align} 
By Proposition~\ref{prop:trilinear}, it follows that
\begin{align*}
& \quad \,
  \norm{(1 - \Delta_{v})^{-s/2}\vint{v}^{\ell/2}  Q \vpran{f(\tau) , g(\tau)}}^{2}_{L^{2}_{x,v}}
\\
& \qquad \quad \leq 
  \vpran{\|\vint{\cdot}^{\ell/2+\gamma+2s} f(\tau)\|_{L^{1}_{v}} + \| f(\tau) \|_{L^{2}_{v}}}^{2}
   \norm{\vint{\cdot}^{\ell/2+\gamma+2s} g(\tau)}^{2}_{H^{s}_{v}} \leq \delta^{2}_0\| \vint{\cdot}^{\ell} g(\tau) \|^{2}_{H^{s}_{v}}.
\end{align*}
Consequently, from \eqref{small-e1}, \eqref{small-e2}, and \eqref{small-e3} one is led to
\begin{align}\label{small-e4}
   \int^{t}_{0}\int_{ \T^3 } \norm{g}^{2}_{L^{2}} \dx\dtau 
\leq 
   C\, \norm{\vint{\cdot}^{\ell}f_0}^{2}_{L^{2}_{x,v}} 
   + C\delta^{2}_0\int^{t}_0\| \vint{\cdot}^{\ell} g(\tau) \|^{2}_{H^{s}_{v}} \dtau.
\end{align}
Take $\lambda'<\min\{\frac{\gamma_{0}}{8},\lambda\}$ and use estimate \eqref{small-e4} in estimate \eqref{TT2-e2.1} to conclude that
\begin{align} \label{bound:g-basic}
  \norm{\vint{v}^{\ell} g(t)}^{2}_{L^{2}_{x,v}} 
   + \vpran{c_2 - C\delta^{2}_{0}}
      \int^{t}_{0}\int_{\T^3} \norm{\vint{v}^{\ell}g(\tau)}^{2}_{H^{s}_{\gamma/2}} \dx \dtau 
\leq 
   C\, \norm{\vint{v}^{\ell} f_{0}}^{2}_{L^{2}_{x,v}}. 
\end{align}
Choose $\delta_{0}>0$ such that 
\begin{align} \label{cond:delta-0-sec-7}
   \sqrt{\frac{c_2}{C}} \geq \delta_{0}.
\end{align}
Then~\eqref{bound:g-basic} leads to
\begin{equation*}
\big\| \vint{v}^{\ell} f(t) \big\|_{L^{2}_{x,v}} \leq C\big\| \vint{v}^{\ell} f_{0} \big\|_{L^{2}_{x,v}}e^{-\lambda'\,t},\qquad t\in[0,T).
\end{equation*}
Plugging this estimate in \eqref{TT2-e2.1} and \eqref{Q:bound:velocity-avg-basic} (with $\Eps = 0$), one obtains~\eqref{TT2-e2} and concludes the proof.
\end{proof}

We now have all the ingredients to show the main theorem for the weak singularity and it states
\begin{thm}[Global Existence] \label{thm:global-mild}
Let $s \in (0, 1/2)$ and $\gamma \in (0, 1)$. Suppose $\delta_0$ is a constant small enough such that bounds in Theorem~\ref{thm:L-infty-k-0-nonlinear} and~\eqref{cond:delta-0-sec-7} are 
satisfied. 
Let $\ell_0$ be the same weight in Theorem~\ref{thm:linear-local} and $k_0$ be a constant satisfying
\begin{align*}
  k_0 > 5\ell_0 + 35 + 5\gamma + 4s.
\end{align*}
Let $\delta_\ast^\natural$, defined in~\eqref{def:frakH-Eps-ast}, be the constant measuring the smallness of the data. Suppose the initial data $f_0$ has zero mass, momentum and energy and satisfies
\begin{align} \label{assump:main-f-0}
   \norm{\vint{v}^{k_0} f_0}_{L^\infty_{x, v} \cap L^2_{x,v}} < \delta_{\ast}^\natural, 
\qquad
   \norm{\vint{v}^{k_0 + \ell_0 + 2} f_0}_{L^2_{x, v}} < \infty. 
\end{align}
Then the Boltzmann equation~\eqref{eq:Nonlinear-Boltzmann-intro} has a solution $f \in L^\infty(0, \infty; L^2_x L^2_{k_0 + \ell_0 + 2}(\T^3 \times \R^3))$. Moreover, $f$ satisfies
\begin{align*}
  \norm{\vint{v}^{k_0} f}_{L^\infty(0, \infty; \T^3 \times \R^3)}
\leq
  \delta_0/2 < \delta_0. 
\end{align*}
\end{thm}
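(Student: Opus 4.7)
The plan is to extend the local solution from Theorem~\ref{thm:local-nonlinear} to a global one using the exponential $L^2$-decay from the spectral gap (Proposition~\ref{TT2}) as the continuation mechanism.

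First I would invoke Theorem~\ref{thm:local-nonlinear} to obtain a local solution $f$ on some $[0, T_0]$ with $T_0 \in (0,1)$ and $\|\vint{v}^{k_0} f\|_{L^\infty([0,T_0]\times\T^3\times\R^3)} \leq \delta_0/2$. The initial smallness $\delta_\ast^\natural$ should be taken at least as small as $\delta_\ast$ of Theorem~\ref{thm:local-nonlinear}, and small enough that the bootstrap below closes. I would then set up a continuation argument: let $T^\ast$ be the supremum of times $T$ such that a solution exists on $[0, T)$ with $\|\vint{v}^{k_0} f\|_{L^\infty([0,T)\times\T^3\times\R^3)} \leq \delta_0/2$ and $\|\vint{v}^{k_0+\ell_0+2} f\|_{L^\infty(0,T;L^2_{x,v})} < \infty$. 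Theorem~\ref{thm:local-nonlinear} gives $T^\ast > 0$; assume by contradiction that $T^\ast < \infty$.

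On $[0, T^\ast)$ the $L^\infty_{k_0}$-bound, together with $k_0$ large, implies $\sup_{t,x}\|f\|_{L^1_\ell \cap L^2} \leq \delta_0$, which is the smallness hypothesis of Proposition~\ref{TT2}. That proposition then yields the exponential decay
\begin{equation*}
\|\vint{v}^\ell f(t)\|_{L^2_{x,v}} \leq C \|\vint{v}^\ell f_0\|_{L^2_{x,v}} e^{-\lambda' t}, \qquad t \in [0, T^\ast),
\end{equation*}
together with a uniform-in-time dissipation integral. These bounds suffice to pass to the limit $t \to T^\ast$ (via weak compactness in $L^2$ and the strong convergence machinery already used in Theorem~\ref{thm:local-nonlinear}) and identify a trace $f(T^\ast,\cdot,\cdot)$ satisfying $\|\vint{v}^{k_0} f(T^\ast)\|_\infty \leq \delta_0/2$ and $\|\vint{v}^{k_0+\ell_0+2} f(T^\ast)\|_{L^2} < \infty$. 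To continue past $T^\ast$, I would pick $t_0$ slightly below $T^\ast$ and reapply Theorem~\ref{thm:local-nonlinear} with initial data $f(t_0,\cdot,\cdot)$. The delicate hypothesis to verify is $\|\vint{v}^{k_0} f(t_0)\|_\infty \leq \delta_\ast$, since $\delta_\ast = \mathfrak{H}^{-1}(\delta_0/2)$ is strictly smaller than $\delta_0/2$ when $\delta_0$ is small.

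The improvement from $\delta_0/2$ to $\delta_\ast$ would come from applying the quadratic De~Giorgi argument of Proposition~\ref{thm:L-infty-k-0-nonlinear} on a short sliding window $[t_0-\rho, t_0]$: the bound $\sup_{[t_0-\rho,t_0]} \|\vint{v}^{k_0} f\|_\infty \leq \max\{2\|\vint{v}^{k_0} f(t_0-\rho)\|_\infty,\, K_0^{quad}\}$ becomes useful once the $L^2$-norm at $t_0-\rho$ is exponentially small, because $K_0^{quad}$ depends polynomially on $\|\vint{v}^\ell f(t_0-\rho)\|_{L^2}$. Choosing $\delta_\ast^\natural$ small enough relative to $\delta_\ast$, $\delta_0$ and the exponent $\eta_0$ governing $K_0^{quad}$, the sliding-window bound can be forced to yield $\|\vint{v}^{k_0} f(t_0)\|_\infty \leq \delta_\ast$, and Theorem~\ref{thm:local-nonlinear} then extends $f$ past $T^\ast$, contradicting maximality. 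The uniform bound $\|\vint{v}^{k_0} f\|_{L^\infty} \leq \delta_0/2$ then follows a~posteriori. The main obstacle will be closing this bootstrap quantitatively: one must balance the factor-of-two loss in the De~Giorgi bound against the exponential shrinkage of $K_0^{quad}$ driven by the spectral gap, and tune $\delta_\ast^\natural$ so that every continuation step reinstates the strict hypothesis $\|\vint{v}^{k_0} f\|_\infty \leq \delta_\ast$ needed at the restart point. This interplay between the linear spectral gap and the nonlinear De~Giorgi $L^\infty$-estimate is the technical core of the globalisation step and mirrors the analogous argument in~\cite{AMSY}.
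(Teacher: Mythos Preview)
Your continuation/restart strategy differs from the paper's argument, and the sliding-window step as you describe it has a genuine gap. The paper does not restart at all: it observes that the \emph{only} reason Theorem~\ref{thm:local-well-posedness} is restricted to short time is that the zeroth-level energy $\CalE_0 = \CalE_p(0,0,T)$ in Proposition~\ref{prop:initial-E0-nonlinear} grows with $T$. With Proposition~\ref{TT2} replacing Proposition~\ref{quadratic-zero-level} in that estimate, one gets $\CalE_0 \leq C_{k_0}\max_j \|\vint{v}^{k_0} f_0\|_{L^2}^{2j}$ \emph{uniformly in $T$}. Hence $K_0^{quad}(\CalE_0)$ is $T$-independent, and the De~Giorgi iteration on the full interval $[0,T]$ yields
\[
\sup_{[0,T]}\|\vint{v}^{k_0} f\|_\infty \le \max\bigl\{2\|\vint{v}^{k_0} f_0\|_\infty,\ K_0^{quad}(\CalE_0)\bigr\} \le \delta_0/2
\]
for every $T>0$, with the initial-data term always referring to the true $f_0$. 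The function $\mathfrak{H}_\ast$ and $\delta_\ast^\natural$ in~\eqref{def:frakH-Eps-ast} are designed precisely so that both terms in the max are $\le \delta_0/2$.

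Your sliding-window version does not close. On $[t_0-\rho,t_0]$ the De~Giorgi bound is $\max\{2\|\vint{v}^{k_0} f(t_0-\rho)\|_\infty,\ K_0^{quad}\}$. The second term can indeed be driven down through the exponential $L^2$-decay of Proposition~\ref{TT2}. But the first term involves an $L^\infty$ quantity, which inside your bootstrap is only known to be $\le \delta_0/2$; the factor of two then returns $\delta_0$, not $\delta_\ast$. There is nothing to ``balance'' here: the exponential shrinkage acts on $K_0^{quad}$, not on $2\|f(t_0-\rho)\|_\infty$, and tuning $\delta_\ast^\natural$ controls $f_0$, not $f(t_0-\rho)$. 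The remedy is to run the De~Giorgi iteration on $[0,t_0]$ rather than on $[t_0-\rho,t_0]$, so the initial-data term becomes $2\|\vint{v}^{k_0} f_0\|_\infty \le 2\delta_\ast^\natural$ --- but once you do that (and use Proposition~\ref{TT2} to make $\CalE_p(0,0,t_0)$ $T$-independent), you have reproduced the paper's proof and the restart machinery becomes unnecessary.
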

\begin{proof}
The reason that Theorem~\ref{thm:local-well-posedness} (or Theorem~\ref{thm:local-nonlinear}) can only treat a short-time existence is because that the bound in~\eqref{bound:Linfty-interm} (with $\Eps = 0$) relies on $T$. It will exceed $\delta_0$ if $T$ is large, which will render the $L^2$-estimates invalid. Such dependence of $T$ is through $K_0^{quad}(\CalE_0)$ since $\CalE_0$ grows with $T$ (see \eqref{bound:initial-E0-nonlinear}) when the spectral gap is not used.  
Equipped now with Proposition~\ref{TT2} we can replace Proposition~\ref{quadratic-zero-level} in the proof of~\eqref{bound:initial-E0-nonlinear} with~\eqref{TT2-e1} and~\eqref{TT2-e2} to get 
\begin{align*}
   \CalE_0 
\leq 
   C_{k_0} \max_{j \in\{1/p, \, p'/p\} } \norm{\vint{\cdot}^{k_0} f_0}^{2j}_{L^{2}_{x,v}}
\leq 
   C_{k_0} \norm{\vint{\cdot}^{k_0} f_0}_{L^{2}_{x,v}}. 
\end{align*}
As a result, there exists $C_{k_0}$ independent of $T$ such that
\begin{align*}
  K_0^{quad}(\CalE_0)
\leq
  C_{k_0} \norm{\vint{\cdot}^{k_0} f_0}_{L^{2}_{x,v}}^{\eta_0}, 
\qquad
  \eta_0  = \min_{1 \leq i \leq 4} \frac{\beta_i - 1}{a_i}.
\end{align*}
Similarly as in~\eqref{def:frakH} and~\eqref{def:Eps-00}, define 
\begin{align} \label{def:frakH-Eps-ast}
    \mathfrak{H}_\ast 
= \mathfrak{H}_\ast(x) 
= \frac{1}{4} \min \left\{x, \  \frac{1}{C_{k_0}^{1/\eta_0}} x^{\frac{1}{\eta_0}} \right\}, 
\qquad
  \delta_\ast^\natural = \mathfrak{H}_\ast^{-1}(\delta_0/2).
\end{align}
%
Under the smallness assumption in~\eqref{assump:main-f-0}, we obtain in the same way as in Theorem~\ref{thm:local-well-posedness} that 
\begin{align*}
  \norm{\vint{v}^{k_0} f}_{L^\infty(0, T; \T^3 \times \R^3)}
\leq
  \delta_0/2 < \delta_0,
\qquad
  \text{for all $T > 0$. }
\end{align*}
This shows for any $T > 0$, the solution can be extended beyond $T$, thus giving the global existence. 
\end{proof}


\section{Strong Singularity} \label{Sec:strong-singularity}
In this part we show the existence for the nonlinear Boltzmann equation with a strong singularity. The only reason we have to restrict to the weak singularity in Sections~\ref{Sec:nonlinear-local}-~\ref{sec:nonlinear-global} is because that in the construction of solutions in Theorem~\ref{thm:local-exist-MBE}, when using the fixed-point argument in~\eqref{reason-mild-sing}, the regularizing term $\Eps L_\alpha$ needs to be used to control the $H^{2s}$-norm. All the a priori estimates are performed for the full range of $s \in (0, 1)$.

To circumvent the difficulty mentioned above when constructing approximate solutions in the strong singularity case, our strategy is to smooth the collision kernel into a weakly singular one and repeat the process in Sections~\ref{Sec:a-priori-linear}-\ref{sec:nonlinear-global} to find approximate solutions uniformly bounded in the smoothing parameter $\eta$. Note that we can as well simply regularize the kernel into a cutoff one by removing all its singularities. But that will require introducing new estimates for cutoff kernels. Since all the tools are available for the weakly singular kernel in the previous sections, we take a weak-singularity smoothing. The weak singularity itself will not play an essential role. 

Without extra means, we will not be able to obtain uniform bounds in $\eta$. This is because the regularity gained by part of the collision term cannot compensate, uniformly in $\eta$, the loss of derivatives in the rest of the terms. Consequently, many estimates will not close for the nonlinear Boltzmann operator with the smoothed collision term. To overcome this difficulty, 
we temporarily add a dissipation term $\Eps L_\alpha$ as in the previous sections and will remove it after obtaining a local well-posedness for the nonlinear Boltzmann equation (with $\Eps L_\alpha$) with the strong singularity. 

Recall the original Boltzmann equation with a strong singularity $s \in [1/2, 1)$:
\begin{align} \label{eq:BE-strong}
   \del_t f + v \cdot \nabla_x f = Q(\mu + f, \mu + f), 
\qquad
   f \big|_{t=0} = f_0(x, v),    
\end{align}
whose collision kernel satisfies
\begin{align*}
   b(\cos\theta) \sim \frac{1}{\theta^{2+2s}}, 
\quad
  \text{for $\theta$ near $0$ and $s \in [1/2, 1)$}. 
\end{align*}
Fix $s_\ast \in (0, 1/2)$ such that
\begin{align} \label{def:s-ast}
   2s - 2 s_\ast < 1. 
\end{align}
For any $\eta \in (0, 1)$, let $Q_\eta$ be the approximate operator with the collision kernel
\begin{align} \label{bound:kernel-essential}
   \frac{\alpha_0}{\theta^{2+2 s_\ast}}  
\leq 
  b_\eta(\cos\theta)
  = \frac{b(\cos\theta) \theta^{2+2s}}{\theta^{2+2s_\ast} (\theta + \eta)^{2s-2s_\ast}}
\leq b(\cos\theta).
\end{align}
Although the lower bound will not be used in the subsequent proof, we note that the coefficient $\alpha_0$ is independent of $\eta$ since
\begin{align*}
   \frac{1}{(\theta + \eta)^{2s - 2s_\ast}} 
\geq
   \frac{1}{(\pi + 1)^{2s - 2s_\ast}}, 
\qquad
  \text{for all $\theta \in (0, \pi)$ and $\eta \in (0, 1)$}.
\end{align*}
The uniform upper bound in~\eqref{bound:kernel-essential} is the key for uniform estimates in $\eta$. Consider the regularized 
equation
\begin{align} \label{eq:BE-eta}
   \del_t f_\eta + v \cdot \nabla_x f_\eta 
 = \Eps L_\alpha f_\eta + Q_\eta(\mu + f_\eta, \mu + f_\eta),
\qquad
   f_\eta \big|_{t=0} = f_0(x, v),
\end{align}
where $\Eps \in (0, 1)$ and $L_\alpha$ is the same operator as in~\eqref{def:L-alpha}. 
First we note that due to the uniform bounds in~\eqref{bound:kernel-essential}, the constant in the trilinear estimate 
is independent of $\eta$. This is summarized as
\begin{lem} \label{lem:trilinear-coerc-unif}
Let $b$ be the original collision kernel with $s \in [1/2, 1)$ and $b_\eta$ be the one defined in~\eqref{bound:kernel-essential}. Then there exists $C$ independent of $\eta$ such that
\begin{align} \label{ineq:trilinear-Q-eta}
   \abs{\int_{\R^3} Q_\eta(f, g) h  \dv}
\leq
  C \vpran{\norm{f}_{L^1_{\vpran{m-\gamma/2}^+ + \gamma + 2s} \cap L^2}} \norm{g}_{H^{s-\sigma}_{\gamma/2 + 2s + m}} \norm{h}_{H^{s+\sigma}_{\gamma/2 - m}}
\end{align}
for any $\sigma \in [\min\{s-1, -s\}, s]$, $m \in \R$, $\gamma \geq 0$ and $0 < s < 1$.
\end{lem}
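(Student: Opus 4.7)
The plan is to reduce the estimate to the already established Proposition~\ref{prop:trilinear} for the original operator $Q$, exploiting the pointwise domination of the smoothed kernel by the original one. First I would verify the key monotonicity of the kernels: since $2s - 2s_\ast > 0$ and $\theta + \eta \geq \theta$ for every $\eta \in (0,1)$, the definition~\eqref{bound:kernel-essential} yields
\begin{equation*}
   b_\eta(\cos\theta)
= \frac{b(\cos\theta)\,\theta^{2+2s}}{\theta^{2+2s_\ast}(\theta+\eta)^{2s - 2s_\ast}}
\leq
  \frac{b(\cos\theta)\,\theta^{2+2s}}{\theta^{2+2s_\ast}\,\theta^{2s - 2s_\ast}}
= b(\cos\theta),
\end{equation*}
for every $\theta \in (0,\pi)$, with a bound independent of $\eta$.

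Next I would revisit the argument for Proposition~\ref{prop:trilinear} as given in \cite{AMUXY-1, MoSa}. The standard derivation decomposes $Q(f,g)$ using the Bobylev/Carleman representation or a dyadic splitting of $\Ss^2$ around the grazing singularity, and then controls the resulting pieces through nonnegative angular integrals of the form
\begin{equation*}
  \int_{\Ss^2} b(\cos\theta)\, \sin^{a}\tfrac{\theta}{2}\, \cos^{b}\tfrac{\theta}{2}\, \dsigma,
\qquad
  \int_{\Ss^2} b(\cos\theta)\, \bigl(1 - \cos^{a}\tfrac{\theta}{2}\bigr) \dsigma,
\end{equation*}
against tensorized weights in $(v, v_\ast)$. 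Each such factor enters the constant in~\eqref{ineq:trilinear-Q-eta} through its absolute value. Substituting $b$ by $b_\eta$ throughout the argument, every integrand in these angular moments remains nonnegative, so the pointwise bound $b_\eta \le b$ immediately yields the corresponding moments of $b_\eta$ are dominated by those of $b$, uniformly in $\eta$.

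The cancellation steps (the pre/post-collisional change of variables, symmetrization in $(v,v_\ast)$, and the Taylor expansions used to absorb the grazing singularity) are geometric identities that depend on the kernel only through its integrability against the specific weights above. Since these weights are nonnegative and the domination $b_\eta \le b$ is pointwise, the proof transfers line-by-line to $Q_\eta$, with every constant bounded by its counterpart from Proposition~\ref{prop:trilinear}. I would therefore conclude~\eqref{ineq:trilinear-Q-eta} holds with a constant $C$ independent of $\eta \in (0,1)$.

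The only potential obstacle is that some steps in the original proof rely on both upper and lower bounds of $b$ near the singularity (for instance, in establishing sharpness or coercivity). However, the trilinear estimate requires only upper bounds on $b$ (it is a continuity statement), so monotonicity in $b$ is enough and no lower bound on $b_\eta$ is needed at this stage.
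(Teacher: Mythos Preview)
Your proposal is correct and matches the paper's approach: the paper does not give a detailed proof either, but simply observes (just before stating the lemma) that the uniform upper bound $b_\eta \leq b$ in~\eqref{bound:kernel-essential} makes the constant in the trilinear estimate independent of $\eta$. Your write-up spells out the mechanism---that the proof of Proposition~\ref{prop:trilinear} only consumes $b$ through nonnegative angular moments after the cancellation/Taylor steps, so pointwise domination suffices---which is exactly the intended reasoning.
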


As mentioned at the beginning of this section, our plan is to repeat the process of proving the well-posedness of~\eqref{eq:reg-Boltzmann}, with the goal to obtain a local existence result for~\eqref{eq:BE-eta} over a time interval uniform in $\eta$. 
We show that the sequence of intermediate results from Proposition~\ref{bilinear-zero-level} to Corollary~\ref{cor:local-well-posedness} can be modified (with indispensable help from $\Eps L_\alpha$) in the way that their coefficients are all independent of $\eta$. The main idea is that in all these estimates, we only rely on the upper bound of the collision kernel with no further structures required. 
We start with the modified equation with the cutoff function in~\eqref{def:chi} (with its solution still denoted as $f_\eta$):
\begin{align} \label{eq:MBE-Strong-eta}
      \del_t f_\eta + v \cdot \nabla_x f_\eta 
 = \Eps L_\alpha (\mu + f_\eta) 
    + Q_\eta(\mu + f_\eta \chi(\vint{v}^{k_0} f_\eta), \mu + f_\eta),
\qquad
   f_\eta \big|_{t=0} = f_0(x, v).    
\end{align}
Its linearized version is 
\begin{align} \label{eq:MBE-Strong-linear}
      \del_t f_\eta + v \cdot \nabla_x f_\eta 
 = \Eps L_\alpha (\mu + f_\eta) + Q_\eta(\mu + g \chi(\vint{v}^{k_0} g), \mu + f_\eta)
=: \tilde Q_\eta(\mu + g \chi, \mu + f_\eta),
\end{align}
with the initial $f_\eta \big|_{t=0} = f_0(x, v)$.
Choices of weights remain the same as in the previous sections. 
We will show the details for the basic energy estimates for the linearized  equation to illustrate how to use~\eqref{bound:kernel-essential} to derive uniform-in-$\eta$ bounds. The rest of the steps are parallel to those in Sections~\ref{Sec:a-priori-linear}-\ref{sec:nonlinear-global} and their details will be either sketched or omitted. The regularization $\Eps L_\alpha$ helps to simplify the estimates, since for each fixed $\Eps$, the gain of velocity regularity (and subsequently the hypoellipticity) now comes from $\Eps L_\alpha$ instead of $Q$.

\begin{prop} \label{prop:L-2-MBE-strong}
Suppose $G = \mu + g \geq 0$ and $\delta_0$ in the cutoff function is small enough such that 
$G_\chi = \mu + g \chi \geq 0$ satisfies
\begin{align} \label{cond:coercivity}
    \inf_{t, x} \norm{G_\chi}_{L^1_{v}} \geq D_0 > 0, 
\qquad
   \sup_{t, x} \vpran{\norm{G_\chi}_{L^1_2} + \norm{G_\chi}_{L\log L}} 
   < E_0 < \infty.
\end{align}
Suppose $s \in [1/2, 1)$. Let $F_\eta = \mu + f_\eta$ be a solution to equation~\eqref{eq:MBE-Strong-linear}. 
Then for any 
\begin{align*}
  \max\{3 + 2\alpha, \ 8 + \gamma\} < \ell < k_0 - 5 - \gamma,
\qquad
  \alpha > \gamma + 2s,
\end{align*} 
the solution $f_\eta$ satisfies 
\begin{align} \label{bi-cor-bas-e1-strong}
   \norm{\vint{\cdot}^{\ell} f_\eta(t)}^{2}_{L^{2}_{x,v}} 
   + \frac{\Eps}{4} \int_0^t \norm{\vint{v}^{\ell+\alpha} f_\eta}_{L^2_x H^1_v}^2
\leq 
    C_\ell e^{C_{\ell, \Eps} t}
    \vpran{\norm{\vint{\cdot}^{\ell} f_0}^{2}_{L^{2}_{x,v}}  + t},
\end{align}
where $C_{\ell, \Eps}$ is independent of $\eta$ but does depend on $\Eps$ and $C_\ell$ is independent of both $\Eps$ and $\eta$.
Furthermore, for any $0 \leq T_1 < T_2 < T$ and any $s' \leq \frac{1}{8}$, we have the regularisation in $t, x$ as
\begin{align} \label{bound:velocity-avg-basic}
     \int^{T_2}_{T_{1}}
     \norm{(1 - \del^2_{t})^{s'/2}  f_\eta}^{2}_{L^{2}_{x,v}} \!\!\dtau
  + \int^{T_2}_{T_{1}}
     \norm{(1 - \Delta_{x})^{s'/2}  f_\eta}^{2}_{L^{2}_{x,v}} \!\!\dtau 
\leq
  C e^{C_{\ell, \Eps} T} 
  \vpran{\norm{\vint{\cdot}^{\ell} f_0}^{2}_{L^{2}_{x,v}} + T}, 
\end{align}
where the coefficient $C_{\ell, \Eps}$ is independent of $\eta$ and $C$ is independent of $\Eps$.
\end{prop}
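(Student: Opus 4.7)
The overall scheme mirrors Proposition~\ref{bilinear-zero-level} and Corollary~\ref{cor:bi-cor-basic-energy-estimate-linear}, with two guiding modifications. First, since $b_\eta\leq b$ pointwise, every bound whose coefficient enters through integrals of the form $\int b(\cos\theta)(1-\cos^j\tfrac{\theta}{2})\,d\sigma$ or through the cancellation estimates of Proposition~\ref{prop:strong-sing-cancellation}(a) automatically passes to $b_\eta$ with a constant uniform in $\eta$; and the trilinear bound of Lemma~\ref{lem:trilinear-coerc-unif} has a constant independent of $\eta$ by construction. Second, because we cannot rely on the $H^s$-coercivity of $Q_\eta$ with a constant uniform in $\eta$, we replace the role of the $c_0\|f_\eta\|_{H^s_{\gamma/2}}^2$ sink in the earlier energy inequality by the $H^1_v$-dissipation produced by $\Eps L_\alpha$. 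This forces Gronwall constants to depend on $\Eps$, which is exactly what the statement allows.

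I would multiply~\eqref{eq:MBE-Strong-linear} by $\vint{v}^{2\ell} f_\eta$ and integrate in $x,v$. The contribution of $\Eps L_\alpha$ is handled exactly as in~\eqref{est:L-alpha-1} and produces $-\tfrac{\Eps}{2}\|\vint{v}^{\ell+\alpha}f_\eta\|_{L^2_xH^1_v}^2$ modulo lower-order absorbable terms. The collision contribution splits into $\iint Q_\eta(G_\chi,f_\eta)f_\eta\vint{v}^{2\ell}+\iint Q_\eta(G_\chi,\mu)f_\eta\vint{v}^{2\ell}$; the second integral is bounded by Lemma~\ref{lem:trilinear-coerc-unif}. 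For the first, I apply Lemma~\ref{lem:decomp-Q-ell}(b) followed by Proposition~\ref{prop:commutator}(a); since both depend on $b$ only via integrals that remain uniformly bounded when $b$ is replaced by $b_\eta$, the coefficients are independent of $\eta$. The only term that needs new treatment is the commutator remainder
\[
\iiiint G_\ast\bigl(f_\eta\vint{v}^{-2}-f_\eta'\vint{v'}^{-2}\bigr)f_\eta'\,(v_\ast\!\cdot\!\tilde\omega)\cos^{\ell}\tfrac{\theta}{2}\sin\tfrac{\theta}{2}\,b_\eta(\cos\theta)|v-v_\ast|^{1+\gamma}\,\dbmu,
\]
which by $b_\eta\leq b$ and Proposition~\ref{prop:strong-sing-cancellation}(a) is controlled, uniformly in $\eta$, by $C\,\|\vint{v}^\ell f_\eta\|_{L^2_xH^{s_1}_{\gamma_1/2}}\|\vint{v}^\ell f_\eta\|_{L^2_xL^2_{\gamma/2}}$ with $s_1\in(2s-1,s)$ and $\gamma_1<\gamma$.

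The main obstacle, and where the added regularizer is essential, is that this $H^{s_1}$ term no longer has a natural $H^s$-sink to be absorbed into. Instead, since $s_1<1$ and $\gamma_1/2<\gamma+2s\leq\alpha$, I would interpolate
\[
\|\vint{v}^{\ell}f_\eta\|_{L^2_xH^{s_1}_{\gamma_1/2}}^2\leq C\|\vint{v}^{\ell+\alpha}f_\eta\|_{L^2_xH^1_v}^{2s_1}\|\vint{v}^\ell f_\eta\|_{L^2_{x,v}}^{2(1-s_1)},
\]
and apply Young's inequality with a small parameter $\delta\sim\Eps$ to absorb the $H^1_v$ factor into $\tfrac{\Eps}{8}\|\vint{v}^{\ell+\alpha}f_\eta\|_{L^2_xH^1_v}^2$, at the price of a constant $C_{\ell,\Eps}$ on the $L^2_{x,v}$-remainder. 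Combined with the previous bounds this yields the closed differential inequality
\[
\tfrac{d}{dt}\|\vint{v}^\ell f_\eta\|_{L^2_{x,v}}^2+\tfrac{\Eps}{4}\|\vint{v}^{\ell+\alpha}f_\eta\|_{L^2_xH^1_v}^2\leq C_{\ell,\Eps}\|\vint{v}^\ell f_\eta\|_{L^2_{x,v}}^2+C_{\ell,\Eps},
\]
and Lemma~\ref{lem:Gronwall} delivers~\eqref{bi-cor-bas-e1-strong} after integrating in time.

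Finally, to prove the hypoelliptic regularization~\eqref{bound:velocity-avg-basic}, I would invoke Proposition~\ref{average-lemma-p} applied to $f_\eta$ with $p=2$, $m=2$, $r=0$ and $\beta=1$ (using the $H^1_v$-bound obtained above), which gives admissible $s'\leq s^\flat=\tfrac{1}{2(1+2+1)}=\tfrac{1}{8}$. The source term requires controlling $\vint{v}^3(1-\Delta_v)^{-1}\tilde Q_\eta(G_\chi,\mu+f_\eta)$ in $L^2_{t,x,v}$: the $\Eps L_\alpha$ piece is directly estimated by $\Eps\|\vint{v}^{3+2\alpha}f_\eta\|_{L^2_{x,v}}$, while the collision piece is handled via the uniform-in-$\eta$ trilinear bound~\eqref{ineq:trilinear-Q-eta}. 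Substituting the already-proved~\eqref{bi-cor-bas-e1-strong} into the right-hand side then yields~\eqref{bound:velocity-avg-basic} with a constant depending on $\Eps$ through $C_{\ell,\Eps}$ but independent of $\eta$, which is precisely the uniformity needed to eventually pass to the limit $\eta\to 0$ in a later section.
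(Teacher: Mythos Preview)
Your proposal is correct but takes a longer route than the paper's. The paper bypasses the commutator machinery of Lemma~\ref{lem:decomp-Q-ell}, Proposition~\ref{prop:commutator} and Proposition~\ref{prop:strong-sing-cancellation} entirely: since $\Eps L_\alpha$ already supplies an $H^1_v$ sink with weight $\alpha$, it simply applies the trilinear estimate of Lemma~\ref{lem:trilinear-coerc-unif} directly to the full collision term, obtaining
\[
\int_{\T^3}\!\int_{\R^3} Q_\eta(G_\chi,\mu+f_\eta)\,f_\eta\,\vint{v}^{2\ell}\,\dv\,\dx
\;\leq\; C_\ell\|\vint{v}^\ell f_\eta\|_{L^2_{x,v}} + C_\ell\|\vint{v}^{\ell+\gamma/2+s}f_\eta\|_{L^2_xH^s_v}^2,
\]
and then absorbs the $H^s$ term into $\tfrac{\Eps}{4}\|\vint{v}^{\ell+\alpha}f_\eta\|_{L^2_xH^1_v}^2$ by a single interpolation (this is where $\alpha>\gamma/2+s$ enters). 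Your approach reproduces the Proposition~\ref{bilinear-zero-level} decomposition and ends up with an $H^{s_1}_{\gamma_1/2}$ remainder to interpolate instead; this works, but it requires checking that the constant in Proposition~\ref{prop:strong-sing-cancellation}(a) carries over to $b_\eta$ uniformly (it does, since that proof uses only finite angular moments of $b$ and $b_\eta\le b$, but this is worth stating explicitly rather than leaving implicit in ``by $b_\eta\leq b$''). For the averaging-lemma half, your choice of parameters $(\beta,m,r,p)=(1,2,0,2)$ giving $s'\le 1/8$, and the treatment of the source via the uniform trilinear bound, match the paper exactly.
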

\begin{proof}
By~\eqref{est:L-alpha-1}, the regularizing term $\Eps L_\alpha$ gives
\begin{align*}
   \int_{\T^3} \int_{\R^3} \Eps L_\alpha(\mu + f_\eta) (f_\eta) \vint{v}^{2\ell} \dv\dx
\leq
   - \frac{\Eps}{2} \norm{\vint{v}^{\ell + \alpha} f_\eta}_{L^2_x H^1_v}^2
   + C_{\ell} \Eps \norm{\vint{v}^{\ell} f_\eta}_{L^2_{x, v}}^2
   + C_{\ell} \Eps \norm{\vint{v}^{\ell} f_\eta}_{L^2_{x, v}}. 
\end{align*}
Since $\Eps L_\alpha$ will provide the dominating term in both the weight and the regularity, we can bound the collision term in a more direct way via the trilinear estimate in Lemma~\ref{lem:trilinear-coerc-unif}: for $\ell < k_0 - 5 - \gamma$, it holds that
\begin{align*}
& \quad \,
   \int_{\T^3} \int_{\R^3}
   Q_\eta(\mu + g \chi(\vint{v}^{k_0} g), \mu + f_\eta) f_\eta \vint{v}^{2\ell} \dv\dx
\\
&\leq
  C_{\ell} \norm{\vint{v}^\ell f_\eta}_{L^2_{x,v}}
  + C_{\ell} \norm{\vint{v}^{\ell+\gamma/2+s} f_\eta}_{L^2_x H^s_v}^2
\\
&\leq
  C_{\ell} \norm{\vint{v}^\ell f_\eta}_{L^2_{x,v}}
  + \frac{\Eps}{4} \norm{\vint{v}^{\ell+\alpha} f_\eta}^2_{L^2_x H^1_{v}}
  + C_{\ell, \Eps} \norm{\vint{v}^\ell f_\eta}_{L^2_{x,v}}^2,
\qquad
  \alpha > \gamma/2 + s.
\end{align*}
Combining the two estimates above and apply the Gronwall's inequality gives~\eqref{bi-cor-bas-e1-strong}.

Next, we apply the averaging lemma in Proposition~\ref{average-lemma-p} to obtain the regularisation in $x$. In light of equation \eqref{eq:MBE-Strong-linear}, if we invoke Proposition \ref{average-lemma-p} with 
\begin{align*}
    \beta=1, 
\quad m=2, 
\quad r=0,
\quad p=2, 
\quad s' < 1/8,
\end{align*} 
then for any $0\leq T_{1} \leq T_{2}<T$, 
\begin{align*}
& \quad \,
   \int^{T_{2}}_{T_{1}} \norm{(1 - \del_{t}^2)^{s'/2} f_\eta}^{2}_{L^{2}_{x,v}} \dtau   
   + \int^{T_{2}}_{T_{1}} \norm{(1 - \Delta_{x})^{s'/2} f_\eta}^{2}_{L^{2}_{x,v}} \dtau 
\\
&\leq 
     C \norm{\vint{v}^3 f_\eta(T_1)}^{2}_{L^{2}_{x,v}} 
     + C \norm{\vint{v}^3 f_\eta(T_2)}^{2}_{L^{2}_{x,v}} 
     + C \int^{T_2}_{T_{1}} 
          \norm{(1 -\Delta_{v})^{1/2} f_\eta}^{2}_{L^{2}_{x,v}} \dtau
\\
& \quad \,
   + C \int_{T_1}^{T_2} \norm{\vint{v}^{3} (1 -\Delta_{v})^{-1}\tilde{Q}(\mu + g \chi, \mu + f_\eta)}^{2}_{L^{2}_{x,v}} \dtau.
\end{align*}
By the trilinear estimate in Lemma~\ref{lem:trilinear-coerc-unif}, 
it follows that
\begin{align*}
& \quad \, 
  \norm{\langle v \rangle^{3} (1 - \Delta_{v})^{-1}\tilde{Q}(\mu + g \chi, \mu + f_\eta)}_{L^{2}_{v}}
\\
& \leq 
  \norm{\vint{v}^{3} (1 -\Delta_{v})^{-1} \vpran{Q(\mu + g \chi,f_\eta) + Q(g \chi,\mu)}}_{L^{2}_{v}} 
  + \Eps \norm{\vint{v}^{3} (1 -\Delta_{v})^{-1}L_{\alpha} (\mu + f_\eta)}_{L^{2}_{v}} 
\\
& \leq 
   C \norm{f_\eta}_{L^{2}_{3+\gamma+2s}}
   + C \delta_0
   + \Eps \, C\,\| f_\eta \|_{L^{2}_{3+2\alpha}}
   + C \Eps.
\end{align*}
Applying~\eqref{bi-cor-bas-e1-strong} we get
\begin{align}\label{L2prop-e2}
&  \int^{T_{2}}_{T_{1}} \norm{(1 - \Delta_{t})^{s'/2} f_\eta}^{2}_{L^{2}_{x,v}} \dtau
  + \int^{T_2}_{T_{1}}
     \norm{(1 - \Delta_{x})^{s'/2}  f_\eta}^{2}_{L^{2}_{x,v}} \dtau  \nn
\\
& \hspace{1cm} \leq 
  C\int^{T_2}_{T_1} 
   \vpran{\Eps^2 \norm{\vint{v}^{3+2\alpha} f_\eta}^{2}_{L^{2}_{x,v}}
   + \norm{(1 -\Delta_{v})^{1/2} f_\eta}^{2}_{L^{2}_{x,v}}} \dt  \nn
  + C \int^{T_2}_{T_1} \norm{\vint{v}^{3+\gamma+2s} f_\eta}_{L^2_{x, v}}^2 \dt   
\\
& \hspace{1.4cm} 
  + C \norm{\vint{v}^3 f_\eta(T_1)}^{2}_{L^{2}_{x,v}}
  + C \norm{\vint{v}^3 f_\eta(T_2)}^{2}_{L^{2}_{x,v}}
  + C \vpran{\Eps^2 + \delta_0^2} (T_2 - T_1)   \nn
\\
& \hspace{1cm} \leq
  C e^{C_{\ell, \Eps} T} 
  \vpran{\norm{\vint{\cdot}^{\ell} f_0}^{2}_{L^{2}_{x,v}} + T + \Eps^{2} T},
\end{align}
which is the desired inequality showing the spatial regularisation of $f_\eta$.
\end{proof}

The basic $L^2$-level-set estimate parallel to Proposition~\ref{thm:L2-level-set} is
\begin{prop}\label{thm:L2-level-set-strong}
Suppose $G = \mu + g \geq 0$
and 
\begin{align*}
   8 + \gamma < \ell < k_0 - 5 - \gamma,
\qquad
   \alpha > \gamma + 2s.
\end{align*} 
Then the level-set function satisfies
\begin{align} \label{est:level-set-1-strong}
& \quad \,
   \int_{\T^3} \int_{\R^3} Q_\eta(\mu + g \chi, \mu + f) \Fl{K} \vint{v}^{\ell} \dv\dx
   + \int_{\T^3} \int_{\R^3} \Eps L_{\alpha}(\mu + f) \Fl{K} \vint{v}^{\ell} \dv\dx  \nn
\\
&\leq   
   -\frac{\Eps}{4} \norm{\vint{v}^{\alpha} \Fl{K}}^2_{L^2_{x}H^{1}_{v}}
   + C_\Eps \norm{\Fl{K}}_{L^2_{x, v}}^2
  + C (1+K) \norm{\Fl{K}}_{L^1_{x, v}},
\end{align}
where the constants $C_\Eps, C$ are independent of $\eta$.
\end{prop}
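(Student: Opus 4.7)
The strategy is to split the left-hand side into the collisional contribution (involving $Q_\eta$) and the regularising contribution (involving $\Eps L_\alpha$), treat each separately, and use the $\Eps L_\alpha$ dissipation to absorb any derivative or high-weight terms arising from $Q_\eta$. The central technical point is that every constant must be shown to be independent of $\eta\in(0,1)$; this is possible precisely because we only ever use the pointwise upper bound $b_\eta(\cos\theta) \leq b(\cos\theta)$ from~\eqref{bound:kernel-essential}, and never a lower bound. For the regularising term, Proposition~\ref{thm:L2-level-set}(b) applies verbatim to $L_\alpha(\mu+f)$; multiplying by $\Eps\le 1$ yields
\begin{equation*}
  \int_{\T^3}\!\!\int_{\R^3} \Eps L_\alpha(\mu+f)\,\Fl{K}\,\vint{v}^{\ell}\dv\dx
  \leq -\tfrac{\Eps}{2}\norm{\vint{v}^{\alpha}\Fl{K}}^{2}_{L^2_x H^1_v}
      + C_{\ell}\norm{\Fl{K}}^{2}_{L^2_{x,v}}
      + C_{\ell,\alpha}(1+K)\norm{\Fl{K}}_{L^1_{x,v}}.
\end{equation*}

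For the $Q_\eta$ contribution I would replay the decomposition~\eqref{decomp:linear} of Proposition~\ref{thm:L2-level-set}, writing $\mu+f = (f-K/\vint{v}^{\ell}) + K/\vint{v}^{\ell} + \mu$ to produce three pieces $T_1, T_2, T_3$. The commutator estimate (Proposition~\ref{prop:commutator}) and the strong-singularity cancellation (Proposition~\ref{prop:strong-sing-cancellation}(a), applicable since $s\in[1/2,1)$) both depend only on upper bounds of the kernel and on $L^1$-type integrals of $b(\cos\theta)$ against powers of $\sin(\theta/2)$, all of which are majorised by the corresponding quantities for $b$; hence these steps produce constants that are uniform in $\eta$. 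The decisive modification relative to Proposition~\ref{thm:L2-level-set} is that we abandon the $H^s$-coercivity from Proposition~\ref{prop:coercivity-1}, whose constant depends on a lower bound for the kernel and would therefore degenerate as $\eta\to 0^+$. We simply drop the coercive (nonpositive) contribution, since the velocity regularity we need is now supplied by $\Eps L_\alpha$. The estimate for $T_1$ then reduces to a bound of the form
\begin{equation*}
  |T_1| \leq C\,\norm{\Fl{K}}_{L^2_x H^{s_1}_{\gamma_1/2}}\norm{\Fl{K}}_{L^2_x L^2_{\gamma/2}}
       + C_{\ell}\norm{\Fl{K}}^{2}_{L^2_{x,v}},
\end{equation*}
with $s_1<s<1$ and $\gamma_1<\gamma<\alpha$; since $\alpha>\gamma+2s$, interpolation between $L^2_{x,v}$ and $L^2_x H^1_{\alpha}$ together with Young's inequality yields
$C\norm{\Fl{K}}_{L^2_x H^{s_1}_{\gamma_1/2}}\norm{\Fl{K}}_{L^2_x L^2_{\gamma/2}}\leq \tfrac{\Eps}{16}\norm{\vint{v}^\alpha\Fl{K}}^{2}_{L^2_x H^1_v}+C_{\Eps}\norm{\Fl{K}}^{2}_{L^2_{x,v}}$.
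The terms $T_2$ and $T_3$ are handled by the same kernel-upper-bound-only arguments as in Proposition~\ref{thm:L2-level-set}, producing $C_\ell(1+K)\norm{\Fl{K}}_{L^1_x L^1_\gamma}$; this $L^1_\gamma$-norm is converted to $(1+K)\norm{\Fl{K}}_{L^1_{x,v}}$ modulo $\tfrac{\Eps}{16}\norm{\vint{v}^\alpha\Fl{K}}^{2}_{L^2_x H^1_v}$ and $C_\Eps\norm{\Fl{K}}^{2}_{L^2_{x,v}}$ by one further Cauchy--Schwarz/Young step (using $\alpha$ large enough that $\vint{v}^{\gamma-\alpha}\in L^2(\R^3)$, which is guaranteed by the hypotheses on $\alpha$ together with the bound $\ell<k_0-5-\gamma$).

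The main obstacle is the bookkeeping: one must verify case by case that every inequality invoked from Propositions~\ref{prop:commutator} and~\ref{prop:strong-sing-cancellation} uses only upper bounds of $b$ (and hence passes through to $b_\eta$ with $\eta$-independent constants), and that after summing the $Q_\eta$ and $\Eps L_\alpha$ contributions, the net coefficient of $\norm{\vint{v}^\alpha \Fl{K}}^2_{L^2_x H^1_v}$ remains at least $-\tfrac{\Eps}{4}$ (this requires the various Young-inequality thresholds to sum to at most $\tfrac{\Eps}{4}$, which is easy to arrange). Combining all estimates yields the announced inequality~\eqref{est:level-set-1-strong}, with $C_\Eps$ independent of $\eta$ and the coefficient of $(1+K)\norm{\Fl{K}}_{L^1_{x,v}}$ depending only on $\ell,\alpha,\gamma$.
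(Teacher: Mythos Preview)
Your approach is valid in spirit but takes a longer route than the paper and contains one faulty step at the end.

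The paper's proof is considerably more direct. After the same positivity step you use (reducing the first piece to $\int Q_\eta(\mu+g\chi,\Fl{K}/\vint{v}^\ell)\,\Fl{K}\vint{v}^\ell$), the paper does not invoke the commutator machinery of Propositions~\ref{prop:commutator} and~\ref{prop:strong-sing-cancellation} at all. Instead it applies the trilinear estimate of Lemma~\ref{lem:trilinear-coerc-unif} \emph{directly} to this expression, obtaining a bound of the form $C\,\|\Fl{K}\|^2_{L^2_x H^s_{\gamma+2s}}$ with $C$ independent of $\eta$, and then interpolates once against $\|\vint{v}^\alpha\Fl{K}\|^2_{L^2_x H^1_v}$ (this is precisely where $\alpha>\gamma+2s$ enters). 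Your commutator route would reach an equivalent conclusion, but at the cost of verifying term by term that Propositions~\ref{prop:commutator} and~\ref{prop:strong-sing-cancellation}(a) are $\eta$-uniform---true, but unnecessary once one notices that the trilinear estimate itself only uses the upper bound $b_\eta\leq b$. What your approach buys is that it stays closer to the earlier level-set proofs; what the paper's approach buys is a one-line replacement for a page of estimates.

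Your final conversion of $(1+K)\|\Fl{K}\|_{L^1_x L^1_\gamma}$ into $(1+K)\|\Fl{K}\|_{L^1_{x,v}}$ plus small remainders does not work as written. The justification you give (``$\vint{v}^{\gamma-\alpha}\in L^2$, guaranteed by the hypotheses on $\alpha$ together with $\ell<k_0-5-\gamma$'') is incorrect on two counts: the bound on $\ell$ has nothing to do with $\alpha$, and the stated hypothesis $\alpha>\gamma+2s$ only yields $\alpha>\gamma+1$ at $s=1/2$, short of the $\alpha>\gamma+3/2$ needed for $\vint{v}^{\gamma-\alpha}\in L^2(\R^3)$. Even granting integrability, a Cauchy--Schwarz/Young split of $(1+K)\|\Fl{K}\|_{L^1_\gamma}$ against the dissipation produces a $(1+K)^2$ constant term, not a $(1+K)\|\Fl{K}\|_{L^1_{x,v}}$ term. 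In fact the paper's own proof stops at $\|\Fl{K}\|_{L^1_x L^1_\gamma}$ (it simply cites the $T_2$-bound from Proposition~\ref{thm:L2-level-set}), so the $L^1_{x,v}$ appearing in the displayed statement is a mild inconsistency in the paper; the $L^1_\gamma$ form is what is actually established and is what suffices for all downstream uses.
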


\begin{proof}
Recall the decomposition in~\eqref{decomp:linear}:
\begin{align} \label{decomp:linear-eta}
  \int_{\T^3} \int_{\R^3} Q_\eta(\mu + g\chi, \mu + f) \Fl{K} \vint{v}^{\ell} \dv\dx
&= \int_{\T^3} \int_{\R^3} Q_\eta \vpran{\mu + g\chi, f - \tfrac{K}{\vint{v}^\ell}} \Fl{K} \vint{v}^{\ell} \dv\dx   \nn
\\
& \quad \,
+ \int_{\T^3} \int_{\R^3} Q_\eta \vpran{\mu + g\chi, \mu + \tfrac{K}{\vint{v}^\ell}} \Fl{K} \vint{v}^{\ell} \dv\dx,
\end{align}
where by the positivity of $\mu + g\chi$ and the same upper bound for $T_1$ in~\eqref{bound:T-1-levl-set}, we have
\begin{align*}
& \quad \,
  \int_{\T^3} \int_{\R^3} Q_\eta \vpran{\mu + g\chi, f - \tfrac{K}{\vint{v}^\ell}} \Fl{K} \vint{v}^{\ell} \dv\dx
\\
&\leq
    \iiiint_{\T^3 \times \R^6 \times \Ss^2}
        (\mu_\ast + g_\ast \chi_\ast) \Fl{K} \frac{1}{\vint{v}^\ell} \vpran{\Fl{K}(v') \vint{v'}^{\ell} - \Fl{K} \vint{v}^{\ell}} 
        b_\eta(\cos\theta) |v - v_\ast|^\gamma \dbmu
\\
&= \int_{\T^3} \int_{\R^3} Q_\eta \vpran{\mu + g\chi, \Fl{K}/\vint{v}^\ell} \Fl{K} \vint{v}^{\ell} \dv\dx
\\
& \leq
  C \vpran{1 + \sup_x\norm{g\chi}_{L^1_{\ell + \gamma + 2s} \cap L^2}}
  \norm{\Fl{K}}_{L^2_x H^s_{\gamma+2s}}^2.
\end{align*}
By Lemma~\ref{lem:trilinear-coerc-unif}, the coefficient $C$ in the inequality above is independent of $\eta$. By interpolation, 
\begin{align} \label{bound:level-Q-eta-strong}
   \int_{\T^3} \int_{\R^3} Q_\eta \vpran{\mu + g\chi, f - \tfrac{K}{\vint{v}^\ell}} \Fl{K} \vint{v}^{\ell} \dv\dx
\leq
   \frac{\Eps}{4} \norm{\vint{v}^{\alpha} \Fl{K}}_{L^2_x H^1_{v}}^2
   + C_\Eps \norm{\Fl{K}}^2_{L^2_{x,v}}.
\end{align}
The second term on the right-hand side of~\ref{decomp:linear-eta} satisfies the same bound as for $T_2$ in~\eqref{decomp:linear} since only the upper bound of $b_\eta$ is needed in the estimates. Moreover, the regularizing term $\Eps L_\alpha$ satisfies the same bound as in~\eqref{est:level-set-3}, which combined with~\eqref{bound:level-Q-eta-strong} gives~\eqref{thm:L2-level-set-strong}. 
\end{proof}

The counterpart of Proposition~\ref{T1} states
\begin{prop}\label{T1-strong}
Let $G= \mu + g \geq 0$ and $F = \mu + f$ satisfying equation \eqref{eq:MBE-Strong-linear}. Denote 
\begin{align*}
    \tilde Q_\eta (\mu + g \chi, \mu + f)
= Q_\eta (\mu + g \chi, \mu + f) + \Eps L_\alpha(\mu + f).
\end{align*}
Then, for any $T > 0$ and 
\begin{align*}
   s \in [1/2,1),
\quad
    \epsilon \in [0, 1],
\quad 
   j \geq 0,
\quad 
   8 + \gamma < \ell < k_0 - 5 - \gamma,
\quad 
   \kappa > 2, 
\quad
   K > 0,
\quad  
   \alpha > j + \gamma + 2s,
\end{align*} 
it follows that
\begin{align}\label{Qlevelaverage-1}
& \quad \,
    \int_0^T \int_{\T^3} \int_{\R^3} \abs{\vint{v}^{j}(1 - \Delta_{v})^{-\kappa/2}\big( \tilde{Q}_\eta(G,F)\,\vint{v}^{\ell}\,\Fl{K} \big)} \dv\dx\dt  \nn
\\
&\leq 
  C\,\| \vint{v}^{j/2} \Fl{K}(0, \cdot, \cdot) \|^{2}_{L^{2}_{x,v}}
  +   C  \norm{\vint{v}^\alpha \Fl{K}}_{L^2_x H^1_v}^2
  + C (1+K) \norm{\Fl{K}}_{L^1_x L^1_{j+\gamma}},
\end{align}
where $C, C_\ell$ are independent of $\eta$. 
Identical estimate holds for $\tilde Q^-_\eta(\mu+g\chi, -\mu + h)$ with $\Fl{K}$ replaced by ~$\Hl{K}$.
\end{prop}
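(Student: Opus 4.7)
The plan is to follow the same scheme as in the proof of Proposition~\ref{T1}, but with two simplifications afforded by the strong-singularity setting: (i) the upper bound $b_\eta \le b$ in \eqref{bound:kernel-essential} lets us estimate every $Q_\eta$-term by the $\eta$-independent trilinear inequality of Lemma~\ref{lem:trilinear-coerc-unif}, so we do not need the delicate cancellation arguments of Proposition~\ref{prop:strong-sing-cancellation}; (ii) since the right-hand side already admits the ``full'' dissipation term $C\|\vint{v}^\alpha \Fl{K}\|^2_{L^2_xH^1_v}$, we no longer need to track the separate $L^2_xH^s_{\gamma/2}$ and weighted $L^2_{x,v}$ norms, nor perform a second estimate using Proposition~\ref{prop:coercivity-1}.

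First I would invoke Lemma~\ref{L1} applied to the pair $(F_\eta,f_\eta)$ to reduce
\[
 \int_0^T\!\!\iint\!\bigl|\vint{v}^{j}(1-\Delta_v)^{-\kappa/2}\bigl(\tilde Q_\eta(G,F)\vint{v}^\ell\Fl{K}\bigr)\bigr|\,\dv\dx\dt
 \;\le\; C\,\|\vint{v}^{j/2}\Fl{K}(0)\|_{L^2_{x,v}}^2
 \;+\; 2\int_0^T\!\!\iint \tilde Q_\eta(G,F)\,\vint{v}^\ell\,\Fl{K}\,W_K\,\dv\dx\dt,
\]
where $W_K=(1-\Delta_v)^{-\kappa/2}(\vint{v}^j\mathbf 1_{A_K})$ satisfies the pointwise bound $|W_K|+|\nabla_v W_K|+|\nabla^2_v W_K|\le C\vint{v}^j$ from \eqref{bound:W-K-deriv} (valid since $\kappa>2$). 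Splitting $\tilde Q_\eta = Q_\eta(\mu+g\chi,\cdot)+\Eps L_\alpha$ and decomposing the $Q_\eta$ contribution as in~\eqref{decomp:CalQ},
\[
 \iint Q_\eta(\mu+g\chi,F)\vint{v}^\ell\Fl{K}W_K
 = \iint Q_\eta\!\Bigl(\mu+g\chi,\,f-\tfrac{K}{\vint{v}^\ell}\Bigr)\vint{v}^\ell\Fl{K}W_K
 + \iint Q_\eta\!\Bigl(\mu+g\chi,\,\mu+\tfrac{K}{\vint{v}^\ell}\Bigr)\vint{v}^\ell\Fl{K}W_K,
\]
I would treat the two terms in turn.

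For the first piece, the positivity of $\mu+g\chi$ and the pointwise inequality $f-K/\vint{v}^\ell\le \Fl{K}/\vint{v}^\ell$ (exactly as in \eqref{bound:T-1-levl-set}) reduce it to a quantity of the form $\iint Q_\eta(\mu+g\chi,\Fl{K}/\vint{v}^\ell)\vint{v}^\ell\Fl{K}W_K$ up to a symmetry-cancellation remainder controlled by $C\|\Fl{K}\|_{L^2_xL^2_{j+\gamma/2+1}}^2$ via Proposition~\ref{prop:symmetry-cancel}. Lemma~\ref{lem:trilinear-coerc-unif} with a judicious moment split then yields, uniformly in $\eta$,
\[
 \Bigl|\iint Q_\eta\!\Bigl(\mu+g\chi,\tfrac{\Fl{K}}{\vint{v}^\ell}\Bigr)\vint{v}^\ell\Fl{K}W_K\Bigr|
 \;\le\; C\bigl(1+\sup_{t,x}\|g\chi\|_{L^1_{\ell+\gamma+2s}\cap L^2}\bigr)\,\|\vint{v}^{j/2}\Fl{K}\|_{L^2_xH^s_{\gamma/2+j/2}}^2.
\]
Because $\alpha>j+\gamma+2s$ by hypothesis, an interpolation $H^s_{\gamma/2+j/2}\hookrightarrow H^1_{\alpha}$ (with error in $L^2_{x,v}$) majorises this by $C\|\vint{v}^\alpha\Fl{K}\|^2_{L^2_xH^1_v}$. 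The second piece, with $F$ replaced by $\mu+K/\vint{v}^\ell$, is a lower-order source and by the same line of argument used for $T_2+T_3$ in \eqref{decomp:linear} and \eqref{decomp:T-3} (now bounded uniformly in $\eta$ since only upper estimates on $b_\eta$ are invoked) is controlled by $C(1+K)\|\Fl{K}\|_{L^1_xL^1_{j+\gamma}}$.

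Finally, the regularising contribution $\Eps\iint L_\alpha(\mu+f)\vint{v}^\ell\Fl{K}W_K$ is handled line-by-line as in part~(a) of the proof of Proposition~\ref{T1}: after the decomposition \eqref{decomp:L-R-plus} and the five integration-by-parts remainders \eqref{bound:Rem-1-2-3}--\eqref{bound:Rem-5}, one obtains
\[
 \Bigl|\Eps\iint L_\alpha(\mu+f)\vint{v}^\ell\Fl{K}W_K\Bigr|
 \;\le\; C\,\|\vint{v}^\alpha\Fl{K}\|^2_{L^2_xH^1_v} + C\,\|\Fl{K}\|^2_{L^2_xL^2_{j/2}} + C(1+K)\,\|\Fl{K}\|_{L^1_xL^1_j},
\]
where the crucial cancellation $-\tfrac12\iint\vpran{\Fl{K}}^2\vint{v}^{2\alpha}W_K$ arising from $\Delta_v W_K\le W_K$ (positivity of the Bessel kernel) absorbs the last top-order term. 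Integrating in $t$ and combining with the $Q_\eta$ estimate yields \eqref{Qlevelaverage-1}. The case of $-f$ is identical since Proposition~\ref{thm:level-set-minus-f-nonlinear} and Lemma~\ref{L1-h} provide the matching building blocks. The main technical obstacle, as in Proposition~\ref{T1}, is the careful bookkeeping of the moments produced by $W_K$ and the test function $\vint{v}^{\ell}\Fl{K}$ against the moments available from $\mu+g\chi$ and from the $\alpha$-weight of $\Eps L_\alpha$; here it is clean because the assumption $\alpha>j+\gamma+2s$ is precisely what is needed to close the interpolation step after the trilinear estimate.
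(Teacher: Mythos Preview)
Your proposal is correct and follows essentially the same route as the paper's proof: reduce via Lemma~\ref{L1} to the $W_K$-weighted integral, split $Q_\eta$ as in~\eqref{decomp:CalQ}, use positivity together with the $\eta$-uniform trilinear estimate (Lemma~\ref{lem:trilinear-coerc-unif}) on the $f-K/\vint{v}^\ell$ piece, treat the $\mu+K/\vint{v}^\ell$ piece as a lower-order source, interpolate the resulting $H^s$-weighted norm into $\norm{\vint{v}^\alpha\Fl{K}}_{L^2_xH^1_v}^2$ via $\alpha>j+\gamma+2s$, and handle $\Eps L_\alpha$ exactly as the $T_R^+$ term in Proposition~\ref{T1}. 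The paper's proof is terser (it does not single out the $W_K'-W_K$ remainder you isolate via Proposition~\ref{prop:symmetry-cancel}, absorbing it directly into the $H^s_{j+\gamma/2+s}$ norm), but the substance is the same.
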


\begin{proof}
As in the proof of Proposition~\ref{T1}, we only need to control $\CalQ$ in~\eqref{two-terms-estimate-1} with $b$ replaced by $b_\eta$. By the same decomposition in~\eqref{decomp:CalQ} and a similar argument in Proposition~\ref{thm:L2-level-set-strong}, we have 
\begin{align*}
   \CalQ_\eta
&\leq
   \iiiint_{\T^3 \times \R^6 \times \Ss^2}
        (\mu_\ast + g_\ast \chi_\ast) \Fl{K} \frac{1}{\vint{v}^\ell} 
        \vpran{\Fl{K}(v') W_K(v') \vint{v'}^{\ell} - \Fl{K} W_K \vint{v}^{\ell}}
        b_\eta(\cos\theta) |v - v_\ast|^\gamma \dbmu
\\
& \quad \, 
   + \int_{\T^3} \int_{\R^3} Q_\eta \vpran{\mu + g \chi, \tfrac{K}{\vint{v}^\ell}} \vint{v}^{\ell} \Fl{K} W_K \dv\dx 
+ \int_{\T^3} \int_{\R^3} Q_\eta \vpran{\mu + g \chi, \mu} \vint{v}^{\ell} \Fl{K} W_K \dv\dx 
\\
& \leq
  C  \norm{\Fl{K}}_{L^2_x H^s_{j+\gamma/2+s}}^2
  + C (1+K) \norm{\Fl{K}}_{L^1_{x} L^1_{j+\gamma}}
\\
& \leq
  C  \norm{\vint{v}^\alpha \Fl{K}}_{L^2_x H^1_v}^2
  + C (1+K) \norm{\Fl{K}}_{L^1_x L^1_{j+\gamma}}. 
\end{align*}
The estimate for $T_R^+$ in~\eqref{two-terms-estimate-1} remains the same. 
\end{proof}

Lemma~\ref{Interpolationlemma} stays the same since it is independent of the collision kernel. Same with Proposition~\ref{thm:SV-energy-functional-linear}. Using the energy bound in Proposition~\ref{prop:L-2-MBE-strong} which is similar to Corollary~\ref{cor:bi-cor-basic-energy-estimate-linear}, we obtain a similar bound for $\CalE_0$ (with $s=1$ and $s' < 1/8$) as in Proposition~\ref{prop:bound-CalE-0}:
\begin{prop} \label{prop:bound-CalE-0-strong}
Let $T>0$ be fixed. Suppose $\Eps \in [0, 1]$ and $s \in [1/2, 1)$.  Assume that the given function $G = \mu + g \geq 0$. 
Suppose $\ell$ satisfies
\begin{align*}
   \max\{8+\gamma, 3 + 2\alpha\} \leq \ell < k_0 - 5 -\gamma
\end{align*}
and assume that $f$ is a solution of ~\eqref{eq:MBE-Strong-linear} which satisfies $\mu + f \geq 0$.
Then for any $s' < 1/8$, there exist $s'' > 0$ and $p^{\flat}:=p^{\flat}(\ell,\gamma,s,s') > 1$ such that if $s'' < s' \frac{\gamma}{2 \ell + \gamma}$ and $1 < p < p^{\flat}$, then we have
\begin{align}\label{bi-initial-E0}
     \CalE_0
 \leq 
    C e^{C_\Eps \,T}\max_{j \in\{1/p, p'/p\}}
      \vpran{\norm{\vint{\cdot}^{\ell} f_0}^{2j}_{L^{2}_{x,v}} 
      + T^{j}},
 \qquad
    p' = p/(2-p),
\end{align}
where $C_\Eps$ is independent of $\eta$. The same estimate holds for $(-f)^{\ell}_+$ and its associated $\CalE_0$. 
\end{prop}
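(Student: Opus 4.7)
The argument follows the template of Proposition~\ref{prop:bound-CalE-0}, with two structural substitutions that reflect the strongly singular regime. First, the energy/regularity input now comes from Proposition~\ref{prop:L-2-MBE-strong} instead of Corollary~\ref{cor:bi-cor-basic-energy-estimate-linear}: the $H^s_v$-dissipation that was previously generated by the coercivity of $Q$ is replaced by the $\vint{v}^\alpha H^1_v$-dissipation furnished by $\Eps L_\alpha$, and the spatial/temporal smoothing exponent changes from $s' < s/(2(s+3))$ to $s' < 1/8$. Second, every bilinear estimate involving $Q_\eta$ invokes Lemma~\ref{lem:trilinear-coerc-unif}, whose constants depend only on the uniform upper bound~\eqref{bound:kernel-essential} and are therefore independent of $\eta$. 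The resulting constants will inherit an $\Eps$-dependence (through the dissipation coefficient) but no $\eta$-dependence, which is exactly what is asserted.

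For the first two components of $\CalE_0$, I apply~\eqref{bi-cor-bas-e1-strong} to $|f_\eta|$, which dominates $f_{\eta,+}$ pointwise; this immediately yields the $\sup_{t}\|f_\eta^{(\ell)}\|^2_{L^2_{x,v}}$-bound. For the $L^2_{t,x}H^s_v$-term with weight $\vint{v}^{\gamma/2}$, I exploit the inclusion $\vint{v}^{\ell+\alpha}H^1_v \hookrightarrow \vint{v}^{\ell+\gamma/2}H^s_v$ (valid since $\alpha>\gamma+2s > \gamma/2 + s$) together with Lemma~\ref{prop:equivalence}, so the dissipative term $\tfrac{\Eps}{4}\int_0^T\|\vint{v}^{\ell+\alpha}f_\eta\|^2_{L^2_xH^1_v}\dt$ on the left of~\eqref{bi-cor-bas-e1-strong} controls the desired quantity, at the cost of an $\Eps^{-1}$ factor absorbed into $C_\Eps$.

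For the third component of $\CalE_0$, I reproduce the mild-case argument of Proposition~\ref{prop:bound-CalE-0} line by line. Lemma~\ref{app-square-f}, applied in the spatial variable with exponents $s'' < \beta < s'$, reduces the task to bounding $\|(1-\Delta_x)^{\beta/2}f_\eta^{(\ell)}\|^2_{L^2_{t,x,v}}$, $\|f_\eta^{(\ell)}\|_{L^{2p}_{t,x,v}}^{2p}$, and $\|f_\eta^{(\ell)}\|_{L^{2p'}_{t,x,v}}^{2p'}$ with $p'=p/(2-p)$. The spatial-regularity piece is handled by~\eqref{bound:velocity-avg-basic} (giving any $s'<1/8$) and the moment interpolation~\eqref{bi-smallE0-2}, which constrains $\beta$ to the range $(0,\,s'\gamma/(2\ell+\gamma))$. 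The $L^{2p}_{t,x,v}$ and $L^{2p'}_{t,x,v}$ pieces are interpolated via Lemma~\ref{app-inter-x-theta-1} (with velocity-regularity index $\eta=s$ and spatial-regularity index $\eta'=\beta$), and the resulting $H^s_v$-ingredient is controlled by the dissipation bound of Step~1. Solving $2p=\xi(p)=r(s,\beta,3)$ through the parametrization in~\eqref{def:interp-xi} pins down the threshold $p^\flat=p^\flat(\ell,\gamma,s,s')>1$; any $p\in(1,p^\flat)$ admits a conjugate exponent $p'$ also lying in $(1,p^\flat)$ by continuity in $p$, so both norms can be treated simultaneously. Summing the contributions and invoking~\eqref{bi-cor-bas-e1-strong} and~\eqref{bound:velocity-avg-basic} gives~\eqref{bi-initial-E0} with a constant $C_\Eps$ independent of $\eta$.

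The principal obstacle is purely bookkeeping: one must verify that the $\Eps$-dependent dissipation in~\eqref{bi-cor-bas-e1-strong} is strong enough to subsume every appearance of velocity-regularity in the interpolation chain, while simultaneously checking that each bilinear term from $Q_\eta$ is dominated by the uniform bound of Lemma~\ref{lem:trilinear-coerc-unif}. Once these two points are accounted for, the estimate for $(-f_\eta)^{(\ell)}_+$ follows by the same symmetry observation used at the end of Proposition~\ref{prop:bound-CalE-0}, since Proposition~\ref{prop:L-2-MBE-strong} applies verbatim to $|f_\eta|$.
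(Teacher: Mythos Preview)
Your proposal is correct and follows essentially the same approach as the paper, which gives only a one-line proof pointing back to Proposition~\ref{prop:bound-CalE-0} with Proposition~\ref{prop:L-2-MBE-strong} substituted for Corollary~\ref{cor:bi-cor-basic-energy-estimate-linear}. The only cosmetic difference is that the paper suggests running the interpolation with velocity-regularity index $1$ (since $\Eps L_\alpha$ provides $H^1_v$), whereas you keep $\eta=s$ and then embed $H^1_v\hookrightarrow H^s_v$; both are equivalent and lead to the same $\Eps$-dependent, $\eta$-independent bound.
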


Since all the building blocks leading to Theorem~\ref{central-bilinear-T} agree, we have a similar statement for the a priori $L^\infty$-bound:
\begin{thm}[Linear case]\label{central-bilinear-T-strong}
Suppose $G = \mu + g \geq 0$. 
Let $F = \mu + f_\eta \geq 0$ be a solution to equation~\eqref{eq:MBE-Strong-linear} with $s \in [1/2, 1)$.  Assume that $\ell$ satisfies
\begin{align*}
    \max\{8+\gamma, 3+2\alpha \} \leq \ell < k_0 - 5 - \gamma,
\qquad
   \alpha > 2 + \gamma + 2s.
\end{align*}
Assume that the initial data satisfies
\begin{align} \label{cond:initial-L-2-infty-linear}
    \norm{\vint{v}^{\ell+2} f_0}_{ L^{2}_{x,v} }<\infty,
\quad 
    \norm{\vint{v}^{\ell} f_0}_{L^{\infty}_{x,v}} < \infty.	
\end{align}
Additionally, assume that the solution satisfies
\begin{align*}
\sup_{t}\| \langle v \rangle^{\ell_0+\ell}&f \|_{ L^{1}_{x,v} } \leq C_1,
\end{align*}
where $\ell_0$ satisfies the bound in Proposition~\ref{thm:SV-energy-functional-linear} (or~\eqref{cond:ell-0-2}). Then it follows that 
\begin{align*}
    \sup_{t \in [0,T]} \norm{\vint{v}^\ell f}_{L^{\infty}_{x,v}} 
\leq 
    \max\Big\{ 2 \norm{\vint{v}^\ell f_0}_{L^{\infty}_{x,v}},  K^{lin}_0\Big\},
\end{align*}
where
\begin{align}\label{K-initial-E0-linear}
   K^{lin}_0:= 
  C e^{C_\Eps\,T} \max_{1 \leq i \leq 4} \max_{j\in\{1/p, p'/p\} }
  \vpran{\norm{\vint{v}^{\ell} f_0}^{2j}_{L^{2}_{x,v}} 
             + T^j}^{\frac{\beta_i -1}{a_i}}.
\end{align}
Here $C_\Eps$ is independent of $\eta$ and $C$ is independent of both $\Eps$ and $\eta$.
\end{thm}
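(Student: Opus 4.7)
The plan is to run exactly the same De Giorgi iteration as in Theorem~\ref{central-bilinear-T}, but built on the strong-singularity preliminary estimates that were just established. The crucial structural fact, visible in \eqref{bound:kernel-essential} and Lemma~\ref{lem:trilinear-coerc-unif}, is that $b_\eta \leq b$ uniformly in $\eta$, so every constant coming from the collision operator in Propositions~\ref{prop:L-2-MBE-strong}, \ref{thm:L2-level-set-strong}, \ref{T1-strong} and \ref{prop:bound-CalE-0-strong} is independent of $\eta$; the only $\eta$-free but $\Eps$-dependent constants are those absorbed from the regularising term $\Eps L_\alpha$, which take over the role that coercivity of $Q$ played in the weakly singular case. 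In particular, the interpolation Lemma~\ref{Interpolationlemma} and the energy-functional inequality stated in Proposition~\ref{thm:SV-energy-functional-linear} transfer verbatim to the present setting once Propositions~\ref{thm:L2-level-set-strong} and~\ref{T1-strong} are substituted for their weak-singularity analogues in the proof.

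First, I would pick $(p,s'')$ close to $(1,0)$ subject to the same constraints as in Theorem~\ref{central-bilinear-T}, namely $s'' < s' \tfrac{\gamma}{2\ell+\gamma}$, $s' < 1/8$ (this is the relevant $s'$ threshold from Proposition~\ref{prop:L-2-MBE-strong}), and $p < \min\{p^\sharp,p^\flat\}$ so that Lemma~\ref{Interpolationlemma}, Proposition~\ref{thm:SV-energy-functional-linear} and Proposition~\ref{prop:bound-CalE-0-strong} are all available. Then introduce the levels $M_k = K_0(1 - 2^{-k})$ and the energy functional $\CalE_k := \CalE_p(M_k,0,T)$ as in the proof of Theorem~\ref{central-bilinear-T}. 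Applying Proposition~\ref{thm:SV-energy-functional-linear} with $M = M_{k-1}$, $K = M_k$, $T_1 = 0$ and taking supremum in $T_2 \in [0,T]$ gives the iterative inequality
\begin{equation*}
\CalE_k \leq C\,\|\vint{v}^2 f_{M_k,+}^{(\ell)}(0)\|_{L^2_{x,v}}^2 + C\,\|\vint{v}^2 f_{M_k,+}^{(\ell)}(0)\|_{L^{2p}_{x,v}}^2 + C \sum_{i=1}^4 \frac{2^{k(a_i+1)} \CalE_{k-1}^{\beta_i}}{K_0^{a_i}}.
\end{equation*}
Choosing $K_0 \geq 2\|\vint{v}^\ell f_0\|_{L^\infty_{x,v}}$ kills the initial-data contributions, leaving the inhomogeneous geometric iteration $\CalE_k \leq C \sum_i 2^{k(a_i+1)} K_0^{-a_i} \CalE_{k-1}^{\beta_i}$ with every $\beta_i > 1$ and $a_i > 0$.

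Next, setting $Q_0 = \max_i 2^{(a_i+1)/(\beta_i-1)}$ and $K_0 \geq K_0(\CalE_0) := \max_i 4 C^{1/a_i} \CalE_0^{(\beta_i-1)/a_i} Q_0^{\beta_i/a_i}$, the comparison argument from Theorem~\ref{central-bilinear-T} forces $\CalE_k \leq \CalE_0 Q_0^{-k} \to 0$, whence $f^{(\ell)}_{K_0,+} \equiv 0$ and therefore $\sup_t \|\vint{v}^\ell f_+\|_{L^\infty_{x,v}} \leq K_0$. Inserting the $\CalE_0$ bound from Proposition~\ref{prop:bound-CalE-0-strong} into $K_0(\CalE_0)$ produces exactly \eqref{K-initial-E0-linear}, with the $\eta$-dependence absent because the collision contribution to $\CalE_0$ is controlled with $\eta$-independent constants and only $\Eps L_\alpha$ injects $\Eps$-dependence. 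The same chain of estimates is available for $-f$ since Propositions~\ref{thm:L2-level-set-strong} and~\ref{T1-strong} (and hence Proposition~\ref{thm:SV-energy-functional-linear} for $-f$) were formulated so as to apply to the minus-equation as well, yielding the bound on $\sup_t \|\vint{v}^\ell f_-\|_{L^\infty_{x,v}}$.

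The only place where one needs to be mildly careful---and what I would identify as the main obstacle---is the moment and regularity bookkeeping at the averaging-lemma step inside Proposition~\ref{thm:SV-energy-functional-linear}: in the strong-singularity case the velocity regularity gained by the energy no longer comes from the coercive $H^s_{\gamma/2}$-term of $Q$, but from $\Eps L_\alpha$, which contributes the norm $\|\vint{v}^\alpha \Fl{K}\|_{L^2_x H^1_v}$. One must therefore check that the term $\norm{\vint{v}^4 \CalG^{(\ell)}_K}_{L^p}$ appearing in \eqref{ineq:x-reg-level} is still absorbed by the right-hand side of \eqref{Qlevelaverage-1}, which costs a factor depending on $\Eps$ but no factor of $\eta$, since $\alpha > 2 + \gamma + 2s$ has been enforced precisely so that the weighted $H^1_v$-control from $\Eps L_\alpha$ dominates the loss $\vint{v}^{\gamma + 2s}$ coming from $Q_\eta$. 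Once this check is in place, the remainder of the iteration is mechanical and the statement of Theorem~\ref{central-bilinear-T-strong} follows.
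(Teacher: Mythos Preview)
Your proposal is correct and follows essentially the same approach as the paper. The paper does not even write out a separate proof for this theorem: it simply notes that ``since all the building blocks leading to Theorem~\ref{central-bilinear-T} agree, we have a similar statement for the a priori $L^\infty$-bound,'' relying on the fact that Propositions~\ref{prop:L-2-MBE-strong}--\ref{prop:bound-CalE-0-strong} replace their weak-singularity counterparts with $\eta$-independent constants, while Lemma~\ref{Interpolationlemma} and Proposition~\ref{thm:SV-energy-functional-linear} carry over unchanged---exactly the substitution you describe.
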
 

It is clear from Theorem~\ref{central-bilinear-T-strong} that for each $\Eps > 0$,  if we let $T$ be small enough (with smallness depending on $\Eps, \delta_0$ only) and 
$\norm{\vint{v}^{\ell} f_0}_{L^2_{x,v}\cap L^\infty_{x,v}}$ small enough (with smallness independent of both $\Eps$ and $\eta$), then 
\begin{align*}
   \sup_{t \in [0,T]} \norm{\vint{v}^\ell f}_{L^{\infty}_{x,v}} \leq \delta_0.
\end{align*}

We can now combine the linear and nonlinear theory in Theorem~\ref{thm:linear-local} and Theorem~\ref{eq:MBE-nonlinear}
to obtain the local existence of~\eqref{eq:MBE-Strong-eta} as follows.
\begin{thm} \label{thm:local-exist-MBE-strong}
Suppose $s \in [1/2, 1)$ and let $b_\eta$ be the regularized collision kernel. Suppose
\begin{align*}
 & k_0 > \max \left\{\ell_0 + 15 + 2\gamma, \ \ell_0 + 10 + 2\alpha +\gamma, \ 
   k - \alpha + 2\gamma + 2s + 9 + \ell_0 \right\},
\\[1mm]
& \hspace{2cm}
   k > \max\{8 + \gamma, \alpha\}, 
 \qquad
   \alpha > 2 + \gamma + 2s, 
\end{align*}
where $\ell_0$ is the same weight in Theorem~\ref{thm:linear-local} (precise statement in~\eqref{cond:ell-0-2}). 
Suppose $\Eps, \delta_0, f_0$ satisfy the assumptions in both part (a)  and part (b) in Theorem~\ref{thm:linear-local}. Then for each such $\Eps$, if $T$ is small enough (which only depends on $\Eps$) then ~\eqref{eq:MBE-Strong-eta} has a solution $f \in L^\infty_t ((0, T); L^2_x L^2_k(\T^3 \times \R^3))$. Moreover, $f$ satisfies the bound
\begin{align} \label{bound:L-infty-lower-f-MBE}
   \norm{\vint{v}^{k_0 - \ell_0 - 7 - \gamma} f}_{L^\infty{\vpran{(0, T) \times \T^3 \times \R^3}}} \leq \delta_0.
\end{align}

\end{thm}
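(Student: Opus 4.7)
The plan is to mirror the three-step construction of Theorem~\ref{thm:local-exist-MBE} for the regularized kernel $b_\eta$, exploiting the fact that $b_\eta$ is itself a weakly singular kernel at effective exponent $s_\ast<1/2$ (so that the fixed-point scheme of the weakly singular case applies for each fixed $\eta$), while drawing the uniform-in-$\eta$ control from the bound $b_\eta\leq b$ encoded in Lemma~\ref{lem:trilinear-coerc-unif} and from the a priori estimates of Propositions~\ref{prop:L-2-MBE-strong}--\ref{prop:bound-CalE-0-strong} and Theorem~\ref{central-bilinear-T-strong}. The conclusion \eqref{bound:L-infty-lower-f-MBE} is then extracted from Theorem~\ref{central-bilinear-T-strong} with $\ell=k_0-\ell_0-7-\gamma$.

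First I would treat the linearization \eqref{eq:MBE-Strong-linear}. The Hahn--Banach argument of Theorem~\ref{thm:linear-local}(a) depends on the kernel only through the trilinear estimate and the weighted commutator in \eqref{decomp:linear-k}; by Lemma~\ref{lem:trilinear-coerc-unif} and the dominance $b_\eta\leq b$, every one of those bounds carries over with constants independent of $\eta$. The coercivity \eqref{bound:energy-T-0-ast} is therefore uniform in $\eta$ (with the same dissipation $\tfrac{\Eps}{4}\|\vint{v}^{\alpha+k}h\|^2_{L^2_x H^1_v}$), and positivity of $\mu+f_\eta$ is preserved by the usual $F_-$-test. This gives a solution $\Gamma g=f_\eta\in\CalH_k$ of \eqref{eq:MBE-Strong-linear}, satisfying the energy bound \eqref{bi-cor-bas-e1-strong}.

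Next I would close the fixed-point argument. Viewing $b_\eta$ as a weakly singular kernel at level $s_\ast$, the contraction calculation \eqref{reason-mild-sing} goes through verbatim with $s$ replaced by $s_\ast$: the difference $f_g-f_h$ of two solutions of \eqref{eq:MBE-Strong-linear} (with forcing parameters $g,h\in\CalH_k$) satisfies
\begin{align*}
\tfrac{1}{2}\tfrac{d}{dt}\norm{f_g-f_h}^{2}_{L^{2}_{k}}+\tfrac{\Eps}{8}\norm{\vint{v}^{\alpha+k}(f_g-f_h)}^{2}_{L^{2}_{x}H^{1}_{v}}\leq C_{k,\Eps}\norm{f_g-f_h}^{2}_{L^{2}_{k}}+C_{k,\Eps,\eta}\norm{g-h}^{2}_{L^{2}_{k}},
\end{align*}
where the decisive trilinear term $\iint Q_\eta(g\chi-h\chi,f_h)(f_g-f_h)\vint{v}^{2k}$ is absorbed into the $H^1_v$ dissipation from $\Eps L_\alpha$ since $2s_\ast<1$, and the linear forcing term $\iint Q_\eta(g\chi-h\chi,\mu)(f_g-f_h)\vint{v}^{2k}$ is controlled by \cite[Prop.~3.4]{AMSY} with constants depending on $b$ alone. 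Choosing the local time small enough yields a fixed point, giving a solution $f_\eta$ of \eqref{eq:MBE-Strong-eta} on some interval $[0,T_\eta]$. At this stage $T_\eta$ may depend on $\eta$ through the $s_\ast$-constant.

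The main obstacle is to upgrade $T_\eta$ to a time $T$ that depends only on $\Eps$ (and $\delta_0,\delta_{\ast}$). The remedy is continuation fueled by the uniform-in-$\eta$ De Giorgi machinery of Section~\ref{Sec:strong-singularity}: Propositions~\ref{prop:L-2-MBE-strong}--\ref{prop:bound-CalE-0-strong} and Theorem~\ref{central-bilinear-T-strong} give, at each time where the solution exists and the smallness assumptions on $f_\eta\chi(\vint{v}^{k_0}f_\eta)$ are met, the $\eta$-uniform bound
\begin{align*}
\sup_{t\in[0,T]}\norm{\vint{v}^{k_0-\ell_0-7-\gamma}f_\eta(t)}_{L^{\infty}_{x,v}}\leq\max\bigl\{2\norm{\vint{v}^{k_0-\ell_0-7-\gamma}f_0}_{L^{\infty}_{x,v}},\;K_{0}^{lin}\bigr\},
\end{align*}
with $K_0^{lin}$ given by \eqref{K-initial-E0-linear}. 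Choosing $T<1$ small enough in terms of $\Eps$ and $\delta_{\ast}$ (exactly as in the transition from Proposition~\ref{thm:L-infty-k-0-nonlinear} to Corollary~\ref{cor:local-well-posedness} and Theorem~\ref{thm:local-well-posedness}), this right-hand side is $\leq\delta_0$, so the smallness \eqref{bound:L-infty-lower-f-MBE} holds throughout the existence interval, the cutoff $\chi(\vint{v}^{k_0}f_\eta)\equiv1$ is never triggered in an adverse way, and a standard continuation argument extends $f_\eta$ up to this $\eta$-independent time $T$. This simultaneously verifies \eqref{bound:L-infty-lower-f-MBE} and completes the proof.
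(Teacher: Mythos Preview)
Your proposal is correct and follows essentially the same route as the paper: run the weak-singularity fixed-point scheme (Theorem~\ref{thm:linear-local} and Theorem~\ref{thm:local-exist-MBE}) at the effective level $s_\ast$ to get a solution on $[0,T_\eta]$ with $\eta$-dependent time, then use the uniform-in-$\eta$ a priori estimates (Propositions~\ref{prop:L-2-MBE-strong}--\ref{prop:bound-CalE-0-strong} and Theorem~\ref{central-bilinear-T-strong}) to continue to an $\eta$-independent $T$. One minor imprecision: your remark that ``the cutoff $\chi(\vint{v}^{k_0}f_\eta)\equiv1$ is never triggered'' overstates what is established here, since \eqref{bound:L-infty-lower-f-MBE} only controls the weight $k_0-\ell_0-7-\gamma$; the cutoff remains in the equation at this stage and is removed only later (via the analogue of Proposition~\ref{thm:L-infty-k-0-nonlinear}).
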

\begin{proof}
The proof is the combination of the proofs of Theorem~\ref{thm:linear-local} and Theorem~\ref{eq:MBE-nonlinear}. 
When applying the fixed-point argument as in~\eqref{reason-mild-sing}, we note that the coefficients obtained will depend on $\eta$. This is the place that the regularization of $b$ in~\eqref{bound:kernel-essential} takes effect. Specifically, the counterpart of~\eqref{reason-mild-sing} is 
\begin{align*} 
& \quad \,
   \iint_{\T^3 \times \R^3}
       Q_\eta(g \chi(\vint{v}^{k_0} g) - h \chi(\vint{v}^{k_0} h), f_h) (f_g - f_h) \vint{v}^{2k} \dx\dv \nn
\\
&\leq
   \frac{C}{\eta^{2s-2s^\ast}} \int_{\T^3} \norm{g \chi(\vint{v}^{k_0} g) - h \chi(\vint{v}^{k_0} h)}_{L^1_{\gamma+2s^\ast+k-\alpha} \cap L^2} \norm{f_h}_{L^2_{\gamma + 2s^\ast + k - \alpha}} \norm{f_g - f_h}_{H^{2s^\ast}_{k+\alpha}} \dx  \nn
\\
&\leq
  C_\eta \vpran{\sup_x \norm{f_h}_{L^2_{\gamma + 2s^\ast+k-\alpha}}}
  \norm{g - h}_{L^2_x L^2_k } \norm{f_g - f_h}_{L^2_x H^{2s^\ast}_{k+\alpha}}  \nn
\\
& \leq
  \frac{\Eps}{16} \norm{\vint{v}^{k+\alpha}(f_g - f_h)}_{L^2_x H^1_v}^2
  + C_{\Eps,\eta} \norm{g - h}_{L^2_x L^2_k }^2.
\end{align*}
Thus the first time interval of existence obtained depends on both $\Eps$ and $\eta$. However, since the a priori estimates are independent of $\eta$, such a solution can be extended to $T$ independent of $\eta$.
\end{proof}

Once the existence of $f_\eta$ is shown, we can pass to the limit and return to the original operator $Q$ (with~$\chi$).
\begin{thm} \label{thm:strong-reg-Boltzmann}
Suppose $s \in [1/2, 1)$ and
\begin{align*}
 & k_0 > \max \left\{\ell_0 + 15 + 2\gamma, \ \ell_0 + 10 + 2\alpha +\gamma, \ 
   k - \alpha + 2\gamma + 2s + 9 + \ell_0 \right\},
\\[1mm]
& \hspace{2cm}
   k > \max\{8 + \gamma, \alpha\}, 
 \qquad
   \alpha > 2 + \gamma + 2s, 
\end{align*}
where $\ell_0$ is the same weight in Theorem~\ref{thm:linear-local} (precise statement in~\eqref{cond:ell-0-2}). 
Suppose $\Eps, \delta_0, f_0$ satisfy the assumptions in both part (a)  and part (b) in Theorem~\ref{thm:linear-local}. Then for each such $\Eps$, if $T$ is small enough (which only depend on $\Eps$) then the equation
\begin{align} \label{eq:Boltzmann-reg}
          \del_t f + v \cdot \nabla_x f 
 = \Eps L_\alpha (\mu + f) 
    + Q(\mu + f \chi(\vint{v}^{k_0} f), \mu + f),
\qquad
   f \big|_{t=0} = f_0(x, v)   
\end{align}
has a solution $f \in L^2_{t, x} L^2_k((0, T) \times \T^3 \times \R^3)$. Moreover, $f$ satisfies the bound
\begin{align} \label{bound:L-infty-lower-f-MBE}
   \norm{\vint{v}^{k_0 - \ell_0 - 7 - \gamma} f}_{L^\infty{\vpran{(0, T) \times \T^3 \times \R^3}}} \leq \delta_0.
\end{align}
\end{thm}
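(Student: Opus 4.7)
The plan is to use the uniform-in-$\eta$ estimates established throughout this section to pass to the limit $\eta\to 0$ in the approximate equation~\eqref{eq:MBE-Strong-eta}. By Theorem~\ref{thm:local-exist-MBE-strong}, for each $\eta\in(0,1)$ there exists a solution $f_\eta\in L^\infty(0,T;L^2_xL^2_k)$ to~\eqref{eq:MBE-Strong-eta} on a time interval $[0,T]$ whose length depends only on $\Eps$ and $\delta_0$ (and not on $\eta$), and which satisfies the uniform bound $\|\vint{v}^{k_0-\ell_0-7-\gamma}f_\eta\|_{L^\infty([0,T]\times\T^3\times\R^3)}\le \delta_0$. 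In addition, Proposition~\ref{prop:L-2-MBE-strong} gives the uniform-in-$\eta$ bounds
\begin{align*}
    \sup_{t\in[0,T]}\|\vint{v}^\ell f_\eta(t)\|_{L^2_{x,v}}^2 + \tfrac{\Eps}{4}\int_0^T \|\vint{v}^{\ell+\alpha}f_\eta\|_{L^2_xH^1_v}^2\dtau \le C_{\ell,\Eps,T},
\end{align*}
together with $s'$-regularity in $(t,x)$ via the averaging lemma.

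With these bounds, the plan is to invoke a standard velocity-averaging/compactness argument to extract a strongly convergent subsequence. Since $\Eps L_\alpha$ provides uniform regularity in $v$ (of order one, with weight $\vint{v}^\alpha$), the bounds on $f_\eta$ in $L^2(0,T;L^2_xH^1_{k+\alpha})$ combined with the $H^{s'}$-regularity in $(t,x)$ and the uniform polynomial decay yield compactness of $\{f_\eta\}$ in $L^2_t L^2_xL^2_{k_0-\ell_0-10-\gamma}$ by the Rellich--Kondrachov theorem and a standard diagonal extraction. Thus, along a subsequence, $f_\eta\to f$ strongly in $L^2_{t,x}L^2_{\ell_0+2}$, pointwise almost everywhere, and weakly-$*$ in $L^\infty_{t,x,v}\vint{v}^{-(k_0-\ell_0-7-\gamma)}$, with the limit satisfying the polynomial bound~\eqref{bound:L-infty-lower-f-MBE}.

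The main obstacle will be passing to the limit in the collision operator $Q_\eta(\mu+f_\eta\chi(\vint{v}^{k_0}f_\eta),\mu+f_\eta)\to Q(\mu+f\chi(\vint{v}^{k_0}f),\mu+f)$ in the sense of distributions. Testing against $\varphi\in C^\infty_c((0,T)\times\T^3\times\R^3)$ and using the weak formulation
\begin{align*}
    \int\! Q_\eta(G_\eta,F_\eta)\varphi\,\dv = \iiint b_\eta(\cos\theta)|v-v_\ast|^\gamma G_{\eta,\ast}F_\eta(\varphi'-\varphi)\dsigma \dv_\ast\dv,
\end{align*}
a second-order Taylor expansion of $\varphi$ yields the pointwise bound $|\varphi'-\varphi|\le C_\varphi\,\theta^2(|v-v_\ast|^2+|v-v_\ast|)$, so that $b_\eta|v-v_\ast|^\gamma|\varphi'-\varphi|\le C_\varphi b(\cos\theta)\theta^2(1+|v-v_\ast|)^{2+\gamma}$, which is integrable in $\theta$ uniformly in $\eta$. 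This integrable majorant combined with the uniform polynomial moments of $F_\eta$, the pointwise convergence $b_\eta\to b$ and $F_\eta\to F$ a.e., and the continuity of $\chi$ (so that $f_\eta\chi(\vint{v}^{k_0}f_\eta)\to f\chi(\vint{v}^{k_0}f)$ a.e. with uniform $L^\infty$-domination) allows dominated convergence to apply. Decomposing
\begin{align*}
Q_\eta(G_\eta,F_\eta)-Q(G,F)=(Q_\eta-Q)(G_\eta,F_\eta)+Q(G_\eta-G,F_\eta)+Q(G,F_\eta-F),
\end{align*}
the first term is handled by the dominated convergence argument above, while the latter two are handled by the trilinear estimate (Proposition~\ref{prop:trilinear}) combined with the strong convergence of $F_\eta$ and the uniform regularity/moment bounds.

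Finally, the linear terms $\del_t f_\eta+v\cdot\nabla_x f_\eta-\Eps L_\alpha(\mu+f_\eta)$ pass to the limit trivially by weak convergence in $L^2_{t,x}L^2_{k+\alpha}H^1_v$, and the initial data is unchanged. Lower semicontinuity of the weak-$*$ $L^\infty$ topology preserves the pointwise bound~\eqref{bound:L-infty-lower-f-MBE} for the limit $f$. This identifies $f$ as a solution of~\eqref{eq:Boltzmann-reg} with the desired properties; the hardest technical step is the uniform-in-$\eta$ integrability needed for the dominated convergence in the grazing-collision region, which is exactly where the upper bound $b_\eta\le b$ in~\eqref{bound:kernel-essential} is essential.
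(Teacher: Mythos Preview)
Your overall strategy is the same as the paper's: exploit the uniform-in-$\eta$ bounds from Proposition~\ref{prop:L-2-MBE-strong} and Theorem~\ref{thm:local-exist-MBE-strong} to extract a strongly convergent subsequence, then pass to the limit in the collision term in the sense of distributions. Your decomposition $Q_\eta(G_\eta,F_\eta)-Q(G,F)=(Q_\eta-Q)(G_\eta,F_\eta)+Q(G_\eta-G,F_\eta)+Q(G,F_\eta-F)$ is a harmless rearrangement of the paper's split into $E_1,E_2$, and your treatment of the last two pieces via the trilinear estimate (placing all $H^{2s}$-derivatives on the test function) together with strong $L^2$-convergence is fine.

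There is, however, a genuine gap in the crucial step. The claimed \emph{pointwise} bound $|\varphi'-\varphi|\le C_\varphi\,\theta^2(|v-v_\ast|^2+|v-v_\ast|)$ is false: Taylor expansion only gives $|\varphi'-\varphi|=O(|v'-v|)=O(\theta\,|v-v_\ast|)$ for each fixed $\sigma$. Since $b(\cos\theta)\,\theta\sim\theta^{-1-2s}$ is \emph{not} integrable near $\theta=0$ when $s\in[1/2,1)$, your proposed majorant is not integrable and dominated convergence cannot be applied as stated. The missing ingredient is the symmetry cancellation in the $\sigma$-integral: writing $\varphi'-\varphi=(v'-v)\cdot\nabla\varphi(v)+O(|v'-v|^2)$ and decomposing $v'-v$ into its components parallel and perpendicular to $v-v_\ast$, the perpendicular part integrates to zero over $\Ss^2$ by parity, while the parallel part carries a factor $(1-\cos\theta)\sim\theta^2$. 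Only after this cancellation does one obtain an integrable-in-$\theta$ kernel (of order $b(\cos\theta)\theta^2$) uniformly in $\eta$, which then allows dominated convergence. The paper carries this out explicitly for the term $E_2$ (and relies on the same mechanism, via Proposition~\ref{prop:symmetry-cancel}, for $E_1$). This is precisely the place where strong singularity matters and cannot be glossed over.
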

\begin{proof}
By Theorem~\ref{prop:L-2-MBE-strong} and Theorem~\ref{thm:local-exist-MBE-strong}, equation~\eqref{eq:MBE-Strong-eta} has a solution $f_\eta$ satisfying
\begin{align*}
     \norm{\vint{v}^{k_0 -\ell_0 - 7 -\gamma} f_\eta}_{L^\infty_{t, x, v}} < \delta_0,
\qquad
    \norm{f_\eta}_{H^{s'}_{t, x} H^1_{k+\alpha}} < C_0 < \infty,
\qquad
   s' < 1/8. 
\end{align*}
Given the uniform polynomial decay and a diagonal argument, we can extract a subsequence, still denoted as $f_\eta$, such that
\begin{align*}
   f_\eta \to f
\quad 
   \text{strongly in $L^{2}_{t,x,v}((0, T) \times \T^3 \times \R^3)$}.
\end{align*}
Our goal is to show that
\begin{align} \label{converg:Q-eta}
   Q_\eta(f_\eta \chi(\vint{v}^{k_0} f_\eta), f_\eta) 
\to 
   Q(f \chi(\vint{v}^{k_0} f), f)
\quad 
   \text{in $\CalD'$.}
\end{align}
Using a test function $\phi$, we consider the difference
\begin{align*}
  Q_\eta(f_\eta, f_\eta) - Q(f, f)
 &=  \int_{\R^3} \! \int_{\R^3} \! \int_{\Ss^2}
 b_\eta(\cos\theta) \vpran{f'_{\eta, \ast} \chi'_{\eta, \ast} f'_\eta - f_{\eta, \ast} \chi_{\eta, \ast} f_\eta} |v - v_\ast|^\gamma \phi(v) \dsigma \dv_\ast \dv
\\
& \quad \,
  - \int_{\R^3} \! \int_{\R^3} \! \int_{\Ss^2}
 b(\cos\theta) \vpran{f'_{\ast} \chi'_{\ast} f' - f_{\ast} \chi_\ast f} |v - v_\ast|^\gamma \phi(v) \dsigma \dv_\ast \dv
\\
& = \int_{\R^3} \! \int_{\R^3} \! \int_{\Ss^2}
 b_\eta(\cos\theta) \vpran{f_{\eta, \ast} \chi_{\eta, \ast} f_\eta - f_{\ast} \chi_\ast f} |v - v_\ast|^\gamma \vpran{\phi(v') - \phi(v)} \dsigma \dv_\ast \dv
\\
& \quad \,
  + \int_{\R^3} \! \int_{\R^3} \! \int_{\Ss^2}
 \vpran{b(\cos\theta) - b_\eta(\cos\theta)} f_{\ast} \chi_\ast f |v - v_\ast|^\gamma \vpran{\phi(v') - \phi(v)} \dsigma \dv_\ast \dv
\\
& \Denote E_1 + E_2.
\end{align*}
By Proposition~\ref{prop:symmetry-cancel} and the upper bound of $b_\eta$ in~\eqref{bound:kernel-essential}, $E_1$ is bounded as
\begin{align*}
  \abs{E_1}
&\leq
  \abs{\int_{\R^3} \! \int_{\R^3} \! \int_{\Ss^2}
 b_\eta(\cos\theta) \vpran{f_{\eta, \ast} \chi_{\eta, \ast} f_\eta - f_{\ast} \chi_\ast f} |v - v_\ast|^\gamma \vpran{\phi(v') - \phi(v)} \dsigma \dv_\ast \dv}
\\
&\leq
  \int_{\R^3} \int_{\R^3}
  \abs{f_{\eta, \ast} \chi_{\eta, \ast} f_\eta - f_{\ast} \chi_\ast f} |v - v_\ast|^\gamma
  \abs{\int_{\Ss^2} b_\eta(\cos\theta) \vpran{\phi(v') - \phi(v)} \dsigma} \dv_\ast \dv
\\
&\leq
  C \norm{\phi}_{W^{2, \infty}} \int_{\R^3} \int_{\R^3}
  \abs{f_{\eta, \ast} \chi_{\eta, \ast} f_\eta - f_{\ast} \chi_\ast f} |v - v_\ast|^{2+\gamma} \dv_\ast \dv,
\end{align*}
where $C$ is independent of $\eta$. The integrand in the inequality above satisfies
\begin{align*}
   \abs{f_{\eta, \ast} \chi_{\eta, \ast} f_\eta - f_{\ast} \chi_\ast f} |v - v_\ast|^{2+\gamma}
&\leq
  \abs{f_{\eta, \ast} \chi_{\eta, \ast} - f_{\ast} \chi_\ast} \vint{v_\ast}^{2+\gamma} \abs{f_\eta} \vint{v}^{2+\gamma}
  + \abs{f_\eta - f} \vint{v}^{2+\gamma} \abs{f_{\eta, \ast}} \vint{v_\ast}^{2+\gamma}
\\
& \leq
  \abs{f_{\eta, \ast} - f_{\ast}} \vint{v_\ast}^{2+\gamma} \abs{f_\eta} \vint{v}^{2+\gamma}
  + \abs{f_\eta - f} \vint{v}^{2+\gamma} \abs{f_{\eta, \ast}} \vint{v_\ast}^{2+\gamma}. 
\end{align*}
Therefore,
\begin{align*}
   \norm{E_1}_{L^2_{t,x}}
\leq
  C \norm{\phi}_{W^{2, \infty}} \norm{f_\eta - f}_{L^2_{t,x}L^2_{4+\gamma}} \to 0,
\qquad
  \eta \to 0.
\end{align*}
%
%
To estimate $E_2$, note that by symmetry (or more precisely, anti-symmetry) and Taylor expansion, $E_2$ satisfies
\begin{align*}
& \quad \, 
  \abs{\int_{\R^3} \int_{\R^3} \int_{\Ss^2} \vpran{b(\cos\theta) - b_\eta(\cos\theta)} f_{\ast} \chi_\ast f |v - v_\ast|^\gamma \vpran{\phi(v') - \phi(v)} \dsigma \dv_\ast \dv}
\\
&\leq 
  \abs{\int_{\R^3} \int_{\R^3} \int_{\Ss^2} \vpran{b(\cos\theta) - b_\eta(\cos\theta)} f_{\ast} \chi_\ast f |v - v_\ast|^\gamma (v - v') \cdot \nabla_v \phi(v) \dsigma \dv_\ast \dv}
\\
& \quad \, 
  + \abs{\frac{1}{2} \int_{\R^3} \int_{\R^3} \int_{\Ss^2} \vpran{b(\cos\theta) - b_\eta(\cos\theta)} f_{\ast} \chi_\ast f |v - v_\ast|^\gamma (v - v') \otimes (v - v') \cdot \nabla_v^2 \phi(\bar v) \dsigma \dv_\ast \dv}
\\
&\leq 
  \int_{\R^3} \int_{\R^3} \int_{\Ss^2} (1 - \cos\theta)\abs{b(\cos\theta) - b_\eta(\cos\theta)} f_{\ast} \chi_\ast f |v - v_\ast|^{1+\gamma} \abs{\nabla_v \phi(v)} \dsigma \dv_\ast \dv
\\
& \quad \, 
  + \frac{1}{2} \int_{\R^3} \int_{\R^3} \int_{\Ss^2} \sin^2\theta \abs{b(\cos\theta) - b_\eta(\cos\theta)} f_{\ast} \chi_\ast f |v - v_\ast|^{2+\gamma}\abs{\nabla_v^2 \phi(\bar v)} \dsigma \dv_\ast \dv,
\end{align*}
where by~\eqref{bound:kernel-essential}, the integrands of the last two terms satisfy the uniform bounds
\begin{align*}
   (1 - \cos\theta)\abs{b(\cos\theta) - b_\eta(\cos\theta)} f_{\ast} f |v - v_\ast|^{1+\gamma} \abs{\nabla_v \phi(v)}
\leq 
  2 \norm{\phi}_{W^{1, \infty}} (1 - \cos\theta) b(\cos\theta) f_{\ast} f |v - v_\ast|^{1+\gamma}
\end{align*}
and
\begin{align*}
  \sin^2\theta \abs{b(\cos\theta) - b_\eta(\cos\theta)} f_{\ast} f |v - v_\ast|^{2+\gamma}\abs{\nabla_v^2 \phi(\bar v)}
\leq
  2 \norm{\phi}_{W^{2, \infty}} \sin^2\theta b(\cos\theta) f_{\ast} f |v - v_\ast|^{2+\gamma}.
\end{align*}
Since the right-hand sides of the inequalities above are integrable, we can apply the Lebesgue Dominated Convergence Theorem and obtain that $E_2 \to 0$ as $\eta \to 0$. Hence~\eqref{converg:Q-eta} holds. 
\end{proof}

Recall that the only place that the restriction of a weak singularity enters is when we apply the fixed-point argument (see~\eqref{reason-mild-sing}) to obtain an approximate solution to equation~\eqref{eq:Boltzmann-reg}. Once such restriction is bypassed via Theorem~\ref{thm:strong-reg-Boltzmann} , the rest of the results from Proposition~\ref{quadratic-zero-level} to Theorem~\ref{thm:global-mild} all hold, since they are all proved for $s \in (0, 1)$.  
This leads us to the main theorem of this paper.
\begin{thm}[Global Existence] \label{thm:main-global}
Let $s \in (0, 1)$ and $\gamma \in (0, 1)$. Suppose $\delta_0$ is a constant small enough such that bounds in Theorem~\ref{thm:L-infty-k-0-nonlinear} and~\eqref{cond:delta-0-sec-7} are 
satisfied. 
Let $\ell_0$ be the same weight in Theorem~\ref{thm:linear-local} and $k_0$ be a constant satisfying
\begin{align*}
  k_0 > 5\ell_0 + 35 + 5\gamma + 4s. 
\end{align*}
Let $\delta_\ast^\natural$, defined in~\eqref{def:frakH-Eps-ast}, be the constant measuring the smallness of the data. Suppose the initial data $f_0$ has zero mass, momentum and energy and satisfies
\begin{align} \label{assump:main-f-0}
   \norm{\vint{v}^{k_0} f_0}_{L^\infty_{x, v} \cap L^2_{x,v}} < \delta_{\ast}^\natural, 
\qquad
   \norm{\vint{v}^{k_0 + \ell_0 + 2} f_0}_{L^2_{x, v}} < \infty. 
\end{align}
Then the Boltzmann equation~\eqref{eq:Nonlinear-Boltzmann-intro} has a solution $f \in L^\infty(0, \infty; L^2_x L^2_{k_0 + \ell_0 + 2}(\T^3 \times \R^3))$. Moreover, there exist $\lambda' > 0$ such that 
\begin{align*}
  \norm{\vint{v}^{k_0} f}_{L^\infty(0, \infty; \T^3 \times \R^3)}
\leq
  \delta_0/2 < \delta_0, 
\qquad
   \norm{f(t, \cdot, \cdot)}_{L^2_x L^2_{k_0+\ell_0+2}} 
\leq 
  C \norm{f_0}_{L^2_x L^2_{k_0+\ell_0+2}} e^{-\lambda' t},
\quad
  t \geq 0.
\end{align*}
\end{thm}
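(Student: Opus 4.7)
The plan is to bootstrap from the weak-singularity global result in Theorem~\ref{thm:global-mild} to the full range $s \in (0,1)$ by replacing the local well-posedness input (which was the only place the restriction $s < 1/2$ was used) with the strong-singularity local theory established in Section~\ref{Sec:strong-singularity}. Concretely, I would first dispose of the case $s \in (0,1/2)$, where Theorem~\ref{thm:global-mild} applies verbatim and already produces a global solution with the stated $L^\infty$ bound and $L^2$ exponential decay. The remaining task is thus to carry out the analogous argument for $s \in [1/2,1)$.

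For $s \in [1/2,1)$, the idea is to run the same three-layer construction as in Sections~\ref{Sec:nonlinear-local} and~\ref{sec:nonlinear-global} with the local existence input upgraded. First, for each $\Eps \in (0,\Eps_\ast]$, Theorem~\ref{thm:strong-reg-Boltzmann} provides a local-in-time solution $f^\Eps$ to the regularized cutoff equation~\eqref{eq:Boltzmann-reg} on some interval $[0,T_\Eps]$ with $\norm{\vint{v}^{k_0-\ell_0-7-\gamma} f^\Eps}_{L^\infty} \leq \delta_0$. Next, since Proposition~\ref{thm:L-infty-k-0-nonlinear} and Corollary~\ref{cor:local-well-posedness} are valid for the entire range $s \in (0,1)$, the $L^\infty_{k_0}$ bound $\norm{\vint{v}^{k_0} f^\Eps}_{L^\infty} \leq \delta_0/2$ follows under the smallness assumption on $f_0$, which in turn makes the cutoff $\chi(\vint{v}^{k_0} f^\Eps)$ identically equal to $1$. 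Hence $f^\Eps$ solves the full regularized Boltzmann equation~\eqref{eq:reg-Boltzmann}. As in the proof of Theorem~\ref{thm:local-well-posedness}, these bounds close and allow continuation up to any $T<1$ with constants independent of $\Eps$.

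Having a uniform-in-$\Eps$ family $\{f^\Eps\}$, I would pass to the limit $\Eps \to 0$ exactly as in Theorem~\ref{thm:local-nonlinear}: the uniform polynomial decay together with the $H^{s'}_{t,x}$-regularity from the averaging lemma yields strong compactness in $L^2_{t,x}L^2_{\ell_0+2}$ via a diagonal extraction, and this suffices to identify the limit of $Q(f^\Eps,f^\Eps)$ with $Q(f,f)$ in the distributional sense. The limit $f$ is therefore a non-negative local solution of the original non-cutoff Boltzmann equation~\eqref{eq:Nonlinear-Boltzmann-intro} on $[0,T]$ for any $T<1$, satisfying $\norm{\vint{v}^{k_0}f}_{L^\infty} \leq \delta_0/2$.

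The final step is the extension from local to global and the proof of the exponential decay. This is done exactly as in Theorem~\ref{thm:global-mild}: the spectral gap estimates in Theorem~\ref{thm:weighted-H-s-L-2} and Lemma~\ref{semi-source}, combined with Proposition~\ref{TT2}, yield $\norm{\vint{v}^{k_0+\ell_0+2}f(t)}_{L^2_{x,v}} \leq C\,\norm{\vint{v}^{k_0+\ell_0+2}f_0}_{L^2_{x,v}}\, e^{-\lambda' t}$ with $\lambda' \in (0,\lambda]$, and this in turn replaces the $T$-growing bound of $\CalE_0$ in~\eqref{bound:initial-E0-nonlinear} by a $T$-independent one. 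Feeding this back into the De Giorgi iteration in Proposition~\ref{thm:L-infty-k-0-nonlinear} (whose intermediate constants are already uniform in $T$ and $s \in (0,1)$), one obtains $\norm{\vint{v}^{k_0}f(t)}_{L^\infty} \leq \delta_0/2$ on any interval $[0,T]$. Since these estimates close uniformly in $T$, a standard continuation argument globalizes the solution and yields the two displayed bounds. The main obstacle in this whole scheme has already been resolved in Section~\ref{Sec:strong-singularity}, namely the construction of the local approximate solution for strongly singular kernels via the $\eta$-regularization of $b$; once that bridge is available, every remaining estimate has been written so as to depend only on structural features (coercivity, trilinear control, averaging) that are valid uniformly in the full range $s \in (0,1)$.
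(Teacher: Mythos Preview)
Your proposal is correct and follows essentially the same approach as the paper: the paper's argument is simply the observation that the restriction $s<1/2$ entered only in the fixed-point step~\eqref{reason-mild-sing}, and once Theorem~\ref{thm:strong-reg-Boltzmann} supplies the local solution to~\eqref{eq:Boltzmann-reg} for $s\in[1/2,1)$, all the remaining results from Proposition~\ref{quadratic-zero-level} through Theorem~\ref{thm:global-mild} carry over verbatim since they were proved for the full range $s\in(0,1)$. You have spelled out the continuation and $\Eps\to 0$ passage in more detail than the paper does, but the logical skeleton is identical.
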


Finally, based on the global result and the exponential decay of the $L^2$-norm in Theorem~\ref{thm:main-global}, we can show an exponential decay in the $L^\infty$-norm of the solution. 

\begin{thm}
Suppose $k_0$ and the initial data $f_0$ satisfy the same conditions in Theorem~\ref{thm:main-global}. Then there exists $C_{k_0}, \eta_0 > 0$ such that for any $t > 1$ the solution obtained in Theorem~\ref{thm:main-global} satisfies
\begin{align} \label{decay:L-infty-final}
   \norm{\vint{v}^{k_0} f(t, \cdot, \cdot)}_{L^{\infty}_{x,v}} \leq 
   C_{k_0} \norm{\vint{v}^{k_0} f_0}_{L^2_{x, v}}^{2 \eta_0/p} e^{- \frac{2\lambda' \eta_0}{p}t},
\end{align}
where $\lambda'$ is the same decay rate in Theorem~\ref{thm:main-global}. 
\end{thm}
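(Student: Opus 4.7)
The strategy is to upgrade the $L^2$-exponential decay of Theorem~\ref{thm:main-global} into an $L^\infty$-decay by running the De~Giorgi iteration of Section~\ref{Sec:nonlinear-local} on the shifted unit-length window $[t-1,t]$ for each $t>1$. Viewing $f$ on this window as a solution with initial datum $f(t-1)$, the global bounds $\norm{\vint{v}^{k_0}f}_{L^\infty_{t,x,v}}\leq \delta_0/2$ and $\sup_{t,x}\norm{f}_{L^{1}_{\ell}\cap L^{2}}\leq \delta_0$ supplied by Theorem~\ref{thm:main-global} ensure that the ambient smallness and coercivity hypotheses required by Propositions~\ref{quadratic-zero-level},~\ref{thm:L2-level-set-nonlinear},~\ref{T1-nonlinear} and~\ref{SV-energy-functional-nonlinear} persist on every such window with constants independent of $t$.

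Next I would evaluate the baseline functional $\CalE_0^{[t-1,t]}$ using Proposition~\ref{prop:initial-E0-nonlinear} with $\epsilon=0$ and $T=1$. Since the $\epsilon^{2j}T^j$ contribution disappears, $e^{C_\ell T}$ becomes an absolute constant, and the $j=1/p$ branch dominates when the seed $L^2$-norm is small, so
\begin{equation*}
    \CalE_0^{[t-1,t]} \leq C\,\norm{\vint{v}^{\ell} f(t-1)}_{L^{2}_{x,v}}^{2/p}.
\end{equation*}
Combining with $\norm{\vint{v}^{k_0+\ell_0+2}f(t-1)}_{L^2_{x,v}}\leq C\norm{\vint{v}^{k_0+\ell_0+2}f_0}_{L^2_{x,v}}e^{-\lambda'(t-1)}$ from Theorem~\ref{thm:main-global} and the power law $K_0^{quad}(\CalE_0)\leq C\,\CalE_0^{\eta_0}$ with $\eta_0=\min_{1\leq i\leq 4}(\beta_i-1)/a_i$, I obtain
\begin{equation*}
    K_0^{quad}\!\vpran{\CalE_0^{[t-1,t]}} \leq C_{k_0}\norm{\vint{v}^{k_0}f_0}_{L^2_{x,v}}^{2\eta_0/p}\,e^{-2\lambda'\eta_0(t-1)/p},
\end{equation*}
which is exactly the decaying right-hand side in~\eqref{decay:L-infty-final}.

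The hard part is the $\max$ in the De~Giorgi conclusion of Theorem~\ref{central-bilinear-T}, which reads $\norm{\vint{v}^{k_0}f(t)}_{L^\infty_{x,v}}\leq \max\{2\norm{\vint{v}^{k_0}f(t-1)}_{L^\infty_{x,v}},\,K_0^{quad}(\CalE_0^{[t-1,t]})\}$. The initial-datum term enjoys no a priori decay, and a naive induction over $[n,n+1]$ produces an exponentially growing $2^n\delta_0$ rather than decay. I would bypass this by refining the level-set iteration~\eqref{DeG-ineq}: instead of imposing $K_0\geq 2\norm{\vint{v}^{\ell} f(t-1)}_{L^\infty_{x,v}}$ to kill the initial-data contributions $\norm{\vint{v}^{2} f_k^{(\ell)}(t-1)}_{L^{2},L^{2p}}$, one absorbs them into the main recursion through the Chebyshev-type inequality
\begin{equation*}
    \mathbf{1}_{\vint{v}^{\ell} f(t-1)\geq M_k}
    \leq \vpran{\frac{(\vint{v}^{\ell} f(t-1)-M_{k-1})_+}{M_k-M_{k-1}}}^{q},\qquad q>0,
\end{equation*}
combined with $M_k-M_{k-1}=K_0\cdot 2^{-k}$, the $L^\infty$-smallness $\norm{\vint{v}^{\ell} f(t-1)}_{L^\infty_{x,v}}\leq \delta_0$ and the higher-moment $L^2$-control supplied by Proposition~\ref{TT2}. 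Tuning $q$ below $\log_2 Q_0$ (with $Q_0$ the contraction factor of~\eqref{DeG-ineq}) keeps the extra contribution geometrically summable in $k$, so the iteration still drives $\CalE_k\to 0$ with the decaying choice of $K_0$ above and the obstructive initial-datum term is eliminated from the $\max$, yielding~\eqref{decay:L-infty-final}.
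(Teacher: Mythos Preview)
Your overall strategy---feed the exponentially decaying $L^2$-norm into the De~Giorgi machinery on a late time window---is exactly right, and you correctly isolate the real obstacle: the term $2\norm{\vint{v}^{k_0}f(t-1)}_{L^\infty_{x,v}}$ in the $\max$, which carries no a~priori decay. However, your proposed fix does not close the iteration. After Chebyshev and the interpolation $\int|f_{k-1}(t-1)|^{2+q}\leq \norm{f_{k-1}(t-1)}_{L^\infty}^{\,q}\norm{f_{k-1}(t-1)}_{L^2}^2$, the initial-data contribution becomes
\[
\norm{\vint{v}^{2}f_{k}(t-1)}_{L^2_{x,v}}^{2}\ \lesssim\ \frac{2^{kq}\,\delta_0^{\,q}}{K_0^{\,q}}\,\norm{\vint{v}^{\ell+2}f(t-1)}_{L^2_{x,v}}^{2},
\]
which is either linear in $\CalE_{k-1}$ or, if you invoke Proposition~\ref{TT2}, a constant-in-$k$ quantity multiplied by $2^{kq}/K_0^{\,q}$. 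Neither has the superlinear form $\CalE_{k-1}^{\beta}$ with $\beta>1$ that the De~Giorgi recursion needs: a term $C\,2^{kq}K_0^{-q}D$ with $D>0$ independent of $k$ diverges as $k\to\infty$ regardless of how $q$ compares with $\log_2 Q_0$, so $\CalE_k\not\to 0$ for the decaying choice of $K_0$. The phrase ``geometrically summable in $k$'' does not apply here because the barrier sequence $\CalE_k^\ast=\CalE_0 Q_0^{-k}$ is \emph{decreasing}, while your extra term is increasing in $k$.

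The paper handles this differently, by letting the \emph{starting time} vary along with the levels. One sets $T_k=T(1-2^{-(k+1)})$, so $T_0=T/2$ and $T_k\uparrow T$, and defines $\CalE_k=\CalE_p(M_k,T_k,\infty)$. The key move is to integrate the energy-functional inequality~\eqref{key-estimate} in the starting time $t_1$ over $[T_{k-1},T_k]$ and divide by $T_k-T_{k-1}=T\,2^{-(k+1)}$. Monotonicity of $\CalE_p$ in its first two arguments handles the left side and the nonlinear right side, while the troublesome single-time terms become the \emph{time-integrated} quantities $\int_{T_{k-1}}^{T_k}\norm{\vint{v}^{2}f_k(t_1)}_{L^2}^2\,dt_1$ and $\int_{T_{k-1}}^{T_k}\norm{\vint{v}^{2}f_k(t_1)}_{L^{2p}}^2\,dt_1$. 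These are exactly of the form covered by Lemma~\ref{Interpolationlemma} (with $q=1$ and $q=p$), yielding bounds $C\,2^{k(\xi_\ast-2)}K_0^{-(\xi_\ast-2)}\CalE_{k-1}^{\,r_\ast}$ with $r_\ast>1$. Thus the initial-data terms are absorbed into the \emph{same} superlinear recursion~\eqref{DeG-ineq}, now free of any $\norm{\vint{v}^{k_0}f(\cdot)}_{L^\infty}$ dependence. The baseline $\CalE_0=\CalE_p(0,T/2,\infty)$ is then bounded by~\eqref{bound:E-0-infty-decay} and inherits the exponential decay of the $L^2$-norm, producing~\eqref{decay:L-infty-final}. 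This ``sliding initial time'' averaging is the standard device for removing initial-data dependence in parabolic De~Giorgi arguments; your fixed-window Chebyshev substitute does not achieve the same effect.
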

\begin{proof}
For any $K, t_1 > 0$, let $\CalE_p$ be the energy functional similar as in~\eqref{EFunctional}:
\begin{align*}
     \CalE_{p}(K,t_1,\infty)
:= \sup_{ t \geq t_1 } 
       \norm{\Fk{\ell}{K}(t, \cdot, \cdot)}^{2}_{L^{2}_{x,v}} 
& +  \int^{\infty}_{t_1}\int_{\T^{3}}
         \norm{\vint{\cdot}^{\gamma/2}\Fl{K}}^{2}_{H^{s}_{v}} \dx\dtau \nn
\\
& + \frac{1}{C_0}\vpran{\int^{\infty}_{t_1} \norm{(1-\Delta_{x})^{\frac {s''}{2}}\vpran{\Fl{K}}^{2}}^{p}_{L^{p}_{x,v}}\dtau}^{\frac{1}{p}}.
\end{align*}
Note that by its definition $\CalE_p$ is decreasing in $t_1$ and $K$. The global bounds of $f$ developed in Theorem~\ref{thm:main-global} guarantee that $\CalE_p(K, t_1, \infty)$ is well-defined. Moreover, for any $T \geq 0$ and $\ell \leq k_0 + \ell_0 + 2$,
\begin{align} \label{bound:E-0-infty-decay}
   \CalE_p(0, T, \infty)
&= \sup_{ t \geq T} 
       \norm{\vint{v}^\ell f_+(t, \cdot, \cdot)}^{2}_{L^{2}_{x,v}} 
 +  \int^{\infty}_{T}\int_{\T^{3}}
         \norm{\vint{\cdot}^{\ell + \gamma/2} f_+}^{2}_{H^{s}_{v}} \dx\dtau \nn
\\
& \hspace{1cm}
   + \frac{1}{C_0}\vpran{\int^{\infty}_{T} \norm{(1-\Delta_{x})^{\frac {s''}{2}} \vpran{\vint{v}^{2\ell} f^2_+}}^{p}_{L^{p}_{x,v}}\dtau}^{\frac{1}{p}} \nn
\\
&\leq 
  C_\ell \norm{\vint{v}^\ell f_0}_{L^2_{x, v}}^{2/p} e^{- \frac{2\lambda' }{p}T},
\qquad
  \ell \leq k_0 + \ell_0 + 2,
\end{align}
where $\lambda'$ is the decay rate in Theorem~\ref{thm:main-global}.

Our main goal is to remove the dependence on the weighted $L^\infty$-norm of $f_0$ in~\eqref{bound:Linfty-interm} (with $\Eps = 0$) so that the exponential decay in the weighted $L^2$-norm of $f$ can be transferred to exponential decay in the $L^\infty$-norm. To this end, define the levels
\begin{align*}
M_{k}:=K_0\big(1-1/2^k\big),\qquad k=0,1,2,\cdots.
\end{align*}
Setting $ f_{k} := f^{(\ell)}_{M_{k},+}$ and proceeding as in the proof of Theorem \ref{central-bilinear-T}, we arrive at
\begin{align}\label{EFunctional-infty}
    \CalE_{p}(M_{k},t_1,\infty)
\leq  
   C \norm{\vint{v}^{2}f_{k}(t_{1})}^{2}_{L^{2}_{x,v}} 
  &  + C \norm{\vint{v}^2 f_{k}(t_{1})}^{2}_{L^{2p}_{x,v}} 
 +  C \sum^{4}_{i=1}\frac{2^{k (a_{i}+1)}}{K^{a_i}_0}\,\CalE_{p}(M_{k-1},t_{1},\infty)^{ \beta_{i} },
\end{align}
for $k=1,2,\cdots$.  The parameters $a_i, \beta_i$ are the same as in Theorem \ref{central-bilinear-T}. Fix $T > 1$ and let $T_k$ be the increasing time sequence
\begin{align*}
T_{k}:=T\big(1-1/2^{k+1}\big),\qquad k=0,1,2,\cdots.
\end{align*}
We further denote $\CalE_k$ as
\begin{align*}
   \CalE_k = \CalE_p(M_k, T_k, \infty).
\end{align*}
Integrate \eqref{EFunctional-infty} in $t_{1}\in [T_{k-1},T_{k}]$ to obtain that
\begin{align*}
    \CalE_{k} 
& = \CalE_p(M_k, T_k, \infty)
\\
& \leq  
   C \big(T_{k} - T_{k-1}\big)^{-1}\bigg(\int^{T_{k}}_{T_{k-1}}\norm{\vint{v}^{2}f_{k}(t_{1})}^{2}_{L^{2}_{x,v}} {\rm d}t_{1} 
   + \int^{T_{k}}_{T_{k-1}}\norm{\vint{v}^2 f_{k}(t_{1})}^{2}_{L^{2p}_{x,v}} {\rm d}t_{1}\bigg)
\\
& \quad \,
  +  C \sum^{4}_{i=1}\frac{2^{k (a_{i}+1)}\,\CalE_{k-1}^{ \beta_{i} }}{K^{a_i}_0},
\end{align*}
where we have applied the monotonicity of $\CalE_p(\cdot, \cdot, \cdot)$ in its first and second variables.
By similar estimates as in \eqref{bound:level-L-2-1} with the same definitions for $r_\ast, \xi_\ast$, we have
\begin{align*}
   \int^{T_k}_{T_{k-1}} \norm{\vint{v}^2 f_{k}(t_{1})}^{2}_{L^{2}_{x,v}} {\rm d}t_{1} 
\leq 
   \tilde C_0 \frac{\CalE_{p}(M_{k},T_{k-1},T_{k})^{ r_\ast } }{(M_{k} - M_{k-1})^{\xi_\ast-2}} 
\leq 
   \tilde C_0 \frac{2^{k(\xi_\ast-2)} \CalE_{k-1}^{ r_\ast } }{K_0^{\xi_\ast-2}},
\end{align*}
and by \eqref{estT1-1}, it holds that
\begin{align*}
   \int^{T_k}_{T_{k-1}} 
& \norm{\vint{v}^2 f_{k}(t_{1})}^{2}_{L^{2p}_{x,v}} {\rm d}t_{1} 
= \int^{T_k}_{T_{k-1}} \norm{\vint{v}^4 \vpran{f_{k}(t_{1})}^2}_{L^{p}_{x,v}} {\rm d}t_{1}
\\
&\leq 
  (T_{k} - T_{k-1})^{\frac{p-1}{p}} \norm{\vint{v}^4 \vpran{f_{k}}^2}_{L^{p}_{t,x,v}} 
\leq 
  \tilde{C}_{0}\,(T_{k} - T_{k-1})^{\frac{p-1}{p}} \frac{ 2^{k \frac{\xi_\ast-2p}{p} }\CalE_{k-1}^{ \frac{r_\ast}{p} } }{K_0^{\frac{\xi_\ast-2p}{p} } }.
\end{align*}
Since we are interested in the long time behaviour we may take $T\geq1$ to derive that an analogous estimate to \eqref{DeG-ineq} with $a_{i}, \beta_{i}$ defined in \eqref{def:a-beta-i} holds:
\begin{align} \label{EFunctional-infty-key}
    \CalE_{k} 
 \leq 
    C_\ell \sum^{4}_{i=1}\frac{2^{k (a_{i}+1)}\,\CalE_{k-1}^{ \beta_{i} }}{K^{a_i}_0},\qquad k=1,2,\cdots, \qquad T\geq1.
\end{align}
The key difference between~\eqref{EFunctional-infty-key} and~\eqref{DeG-ineq} is that $K_0$ in~\eqref{EFunctional-infty-key} is independent of $f_0$. Applying the De Giorgi iteration to~\eqref{EFunctional-infty-key} we conclude similarly as in \eqref{bound:Linfty-interm} (with $\Eps = 0$) that
\begin{align*}
   \sup_{t\geq T} \norm{\vint{v}^{\ell} f_+(t, \cdot, \cdot)}_{L^{\infty}_{x,v}} \leq 
     K_0:= K_0(\CalE_0)
\leq 
  C_\ell \max_{1 \leq i \leq 4} \CalE_0^{\frac{\beta_i - 1}{a_i}},
\quad
\ell \leq k_0,
\end{align*}
where $\CalE_0:=\CalE_{p}(0,T/2,\infty)$.  
Hence by the bound in~\eqref{bound:E-0-infty-decay}, we have
\begin{align*}
   \sup_{t\geq T} \norm{\vint{v}^{\ell} f_+(t, \cdot, \cdot)}_{L^{\infty}_{x,v}} \leq 
   C_\ell \norm{\vint{v}^\ell f_0}_{L^2_{x, v}}^{2 \eta_0/p} e^{- \frac{2\lambda' \eta_0}{p}T}, 
\qquad
  \eta_0 = \min_{1 \leq i \leq 4} \frac{\beta_i - 1}{a_i}. 
\end{align*}
In particular, the above inequality holds for $\ell = k_0$,  which is the desired bound in~\eqref{decay:L-infty-final} for the positive part of $f$. Analogous computation can be performed for the negative part of $f$ which finishes the bound in~\eqref{decay:L-infty-final}. 
\end{proof}

\appendix


\section{Proofs of Lemma~\ref{prop:equivalence} and Lemma~\ref{cor:commut-homo-fraction}} \label{appendix:lemmas}

In this appendix we show the proofs of Lemma~\ref{prop:equivalence} and Lemma~\ref{cor:commut-homo-fraction},
starting with Lemma~\ref{prop:equivalence}.

\begin{proof}[Proof of Lemma~\ref{prop:equivalence}]
For the proof it suffices to show, for $u \in L^p({\mathbb{R}}^d)$,  
\begin{align}\label{seco-inequality}
\norm{\vint{v}^\ell \vint{D_v}^\theta \vint{v}^{-\ell} \vint{D_v}^{-\theta} u}_{ L^p_{v} } 
&\leq 
   C \norm{u}_{ L^p_{v} },\\
\label{first-ineq}
\norm{ \vint{D_v}^\theta \vint{v}^\ell\vint{D_v}^{-\theta} \vint{v}^{-\ell} u }_{ L^p_{v} } 
&\leq 
   C \norm{u}_{ L^p_{v} }. \
\end{align}
To show \eqref{seco-inequality}, we use the expansion formula of pseudo-differential operators (Ex., \cite[Theorem 3.1]
{Kumanogo}), 
\[
\vint{D_v}^\theta \vint{v}^{-\ell} =  \vint{v}^{-\ell} \vint{D_v}^\theta + \sum_{0 < |\alpha| < N}\frac{1}{\alpha !}
(\vint{v}^{-\ell} )_{(\alpha)}(\vint{D_v}^\theta )^{(\alpha)} + r_N(v, D_v)\,,
\]
where $p_{(\beta)}^{(\alpha)}(v,\xi) = \partial_\xi^\alpha (-i\partial_v)^\beta p(v,\xi)$ for the symbol $p(v,\xi)$. 
If $N > d+1 + |\ell| + |\theta|$ then $\tilde 
r_N(v, \xi) \Denote \vint{v}^\ell r_N(v, \xi) \vint{\xi}^{-\theta}$ belongs to
the symbol class $S_{1,0}^{-d-1}$. In fact, it follows from \cite[Theorem 3.1]{Kumanogo} that
\begin{align*}
&r_N(v,\xi) = N \sum_{|\alpha| = N} \int_0^1
\frac{(1-\tau)^{N-1}}{\alpha !} r_{N,\tau,\alpha}(v,\xi) d
\tau,\\
&r_{N,\tau,\alpha}(v,\xi) = \mbox{Os}- \int \int e^{-iy\cdot\eta}
(\vint{ \xi + \tau \eta }^{\theta} )^{(\alpha)}(\vint{ v + y }^{-\ell})_{(\alpha)} 
\frac{dyd\eta}{(2\pi)^d}.
\end{align*}
Using the elementary identities
\[e^{-iy \cdot \eta} =\vint{ \eta }^{-2m}(1-\Delta_y)^m e^{-iy\cdot \eta}, \quad\quad
e^{-iy\cdot \eta} = \vint{ y }^{-2k}(1-\Delta_{\eta})^k e^{-iy \cdot \eta},
\]
we have, for $m, k \in {\mathbb N}$ sufficiently large, 
\begin{equation*}
 \begin{split}
&
r_{N,\tau,\alpha}(v,\xi)
\\
& \quad = \int \Big(
\int e^{-i y\cdot \eta} \vint{ y }^{-2k} (1-\Delta_{\eta})^k
\left\{
 \vint{ \eta }^{-2m}(\vint{ \xi + \tau \eta }^\theta)^{(\alpha)}
(1-\Delta_{y})^m  (\vint{ v + y }^{-\ell})_{(\alpha)} 
\right\}  \frac{d\eta}{(2\pi)^d}\Big) dy
\\
& \quad = 
 \int \{(1-\Delta_{y})^m(\vint{ v + y }^{-\ell})_{(\alpha)}\}
\Big(  \int e^{-i y \cdot \eta}
(1-\Delta_{\eta})^k\{
 \vint{ \eta }^{-2m}
(\vint{ \xi + \tau \eta }^{\theta})^{(\alpha)} \}  \frac{d\eta}{(2\pi)^d}\Big)
\frac{dy}{\vint{ y}^{2k}  }
\\
 & \quad =  \int \displaystyle
\{(1-\Delta_{y})^m  (\vint{ v + y }^{-\ell})_{(\alpha)} \}
\Big(  \int_{|\eta| \leq \frac{\vint {\xi
}}{2}}  \{\cdots \} \frac{d\eta}{(2\pi)^d} +\int_{|\eta| \geq
\frac{ \vint{ \xi }}{2}}
 \{\cdots \}  \frac{d\eta}{(2\pi)^d}\Big)
\frac{dy}{ \vint {y }^{2k}  }
\\
& \quad \Denote  \int \displaystyle
\{(1-\Delta_{y})^m   (\vint{ v + y }^{-\ell})_{(\alpha)} \} \Big( I_1(\xi;y) +
I_2(\xi,y) \Big) \frac{dy}{\vint{y}^{2k}  }.
\end{split}
\end{equation*}
Since $\vint{\xi}$ and $\vint{\xi+ \tau \eta}$ are equivalent in
$I_1$, it follows that
\begin{align*}
|I_1| \leq C \vint{ \xi }^{\theta-N},
\end{align*}
 and moreover the same bound for $|I_2|$ holds if  $2m > N- \theta+d$. 
Using
$
\vint{v+y}^{-1}  \vint{y}^{-1} \le \vint {v}^{-1}$, and taking $k$ satisfying 
$2k > N+ |\ell| +d$,  we see that 
$\tilde r_N(v, \xi)$ belongs to the desired symbol class.  If we put $K(v,z) = \int e^{i z \cdot \xi}\tilde r_N(v, \xi)d\xi/(2\pi)^d$ then we have $\tilde r_N(v,D_v) u (v)
= \int K(v, v-y)u(y)dy$ and 
\[
\sup_v |K(v, z)| \le \vint{ z }^{-2d}
\int \sup_v \left |e^{iz\cdot \xi} (1-\Delta_\xi)^d 
\tilde r_N(v, \xi)\right|d\xi \le C \vint{ z }^{-2d},
\]
which concludes that $\tilde r_N(v, D_v)$ is $L^p$ bounded 
operator for $p \in [1,\infty]$.  Next we consider the the $L^p$ boundedness of terms 
$\vint{v}^\ell (\vint{v}^{-\ell} )_{(\alpha)}(\vint{D_v}^\theta
 )^{(\alpha)} \vint{D_v}^{-\theta}$ for $0 \le |\alpha|< N-1$.
Since the term for $\alpha =0$ is identity, its $L^p$ boundedness is trivial.  Note that
the multiplication $\vint{v}^\ell (\vint{v}^{-\ell} )_{(\alpha)}$ is $L^p$ bounded operator.  
If we put 
$Q_\alpha(\xi) = (\vint{\xi}^\theta
 )^{(\alpha)} \vint{\xi}^{-\theta}$ for $\alpha \ne 0$, then the proof of \eqref{seco-inequality} is completed by the fact that 
the Fourier multiplier $Q_\alpha(D_v)$ is $L^p$
bounded.  Indeed, one can see that  
$K_\alpha(z) \Denote \int e^{i z \cdot \xi} Q_\alpha(\xi) d\xi/(2\pi)^d \in L^1$, more precisely, 
$|K_{\alpha}(z)| \le
C |z|^{-d+1}$ if $|z| < 1$
and 
$|K_\alpha(z)| \le
C_m|z|^{-2m}$ if $|z| \ge 1$ for any $m \in {\mathbb N}$ satisfying $2m > d$.  To obtain these estimates, take a cutoff function 
$\varphi(\xi) \in C_0^\infty({\mathbb R}^d)$
satisfying $\varphi =1$ for $|\xi| \le 1$ and $\varphi =0$ for $|\xi| \ge 2$, and decompose 
\begin{align*}
K_\alpha(z) &= \int e^{i z \cdot \xi} \varphi\big(\frac{\xi}{A}\big) Q_\alpha(\xi) \frac{d\xi}{(2\pi)^d}
+  |z|^{-2m}
\int e^{i z \cdot \xi} (-\Delta_\xi)^m\Big( (1-\varphi\big(\frac{\xi}{A}\big)) Q_\alpha(\xi) \Big)\frac{d\xi}{(2\pi)^d} \\
&\Denote K_{1,\alpha}(z) + K_{2,\alpha}(z)\,,
\end{align*}
for  any $A > 0$.   Then we have 
\begin{align*}
|K_{1,\alpha}(z)| &\le C \int_{\{|\xi| \le 2A\}} \vint{\xi}^{-1} d \xi \le C' A^	{d-1} ,\\
|K_{2,\alpha}(z)| & \le C_m |z|^{-2m} \int_{\{|\xi| \ge A\}} \vint{\xi}^{-2m-1} d\xi \le C'_m  |z|^{-2m} A^{-2m-1+d},
\end{align*}
because $\varphi(\xi/A) \in S^0_{1,0}$ and $Q_\alpha(\xi) \in S^{-1}_{1,0}$. 
Choosing $A = |v|^{-1}$, we have the desired estimate for $K_\alpha$ when  $|z| \le 1$, and another estimate is
obvious by considering the same formula without the cutoff function $\varphi$. 

For the proof of \eqref{first-ineq}  we use the expansion formula twice. First expansion is 
\[
\vint{D_v}^{-\theta }\vint{v}^{-\ell} =  \vint{v}^{-\ell} \vint{D_v}^{-\theta }+ \sum_{0 < |\alpha| < N}\frac{1}{\alpha !}
(\vint{v}^{-\ell} )_{(\alpha)}(\vint{D_v}^{-\theta} )^{(\alpha)} + r_{1,N}(v, D_v)\,,
\]
where $r_{1,N}(v, \xi)$ satisfies $\vint{v}^{\ell} \vint{\xi}^{\ell} r_{1,N}(v,\xi) \in S_{1,0}^{-d-1)}$ if $N$ is chosen sufficiently large.
This implies that the symbol of $\vint{D_v}^\theta \vint{v}^{\ell} r_{1,N}(v,D_v)$ belongs to $S_{1,0}^{-d-1}$, and hence one can show that
$\vint{D_v}^\theta \vint{v}^{\ell} r_{1,N}(v,D_v)$ is 
$L^p$ bounded,  by the same way as 
before.  Since $\vint{D_v}^\theta \vint{v}^\ell \vint{v}^{-\ell} \vint{D_v}^{-\theta } = Id$, it suffices to consider the $L^p$
boundedness of $\vint{D_v}^\theta \vint{v}^\ell  (\vint{v}^{-\ell} )_{(\alpha)}(\vint{D_v}^{-\theta })^{(\alpha)}$ for $\alpha \ne 0$.  
Use the expansion formula again
\[
\vint{D_v}^\theta \Big(\vint{v}^\ell  (\vint{v}^{-\ell} )_{(\alpha)}\Big) =
\sum_{0 \le |\beta|< \tilde N} \frac{1}{\beta !} \Big(\vint{v}^\ell  (\vint{v}^{-\ell} )_{(\alpha)}\Big)_{(\beta)}
(\vint{D_v}^\theta)^{(\beta)} + r_{2,\tilde N}(v, D_v).
\]
If $\tilde N$ is large enough, then $r_{2,\tilde N}(v, D_v)(\vint{D_v}^{-\theta })^{(\alpha)}$ is $L^p$ bounded because  its symbol
belongs to $S^{-d-1}_{1,0}$. On the other hand, since $\Big(\vint{v}^\ell  (\vint{v}^{-\ell} )_{(\alpha)}\Big)_{(\beta)}$ is bounded 
function and since $(\vint{D_v}^\theta)^{(\beta)} (\vint{D_v}^{-\theta })^{(\alpha)}$ is a Fourier multiplier with its symbol
in $S_{1,0}^{-1}$, we see their product is $L^p$ bounded operator. Thus we obtain \eqref{first-ineq} .
\end{proof}

Next we show the proof of Lemma~\ref{cor:commut-homo-fraction}. 

\begin{proof}[Proof of Lemma~\ref{cor:commut-homo-fraction}]
By one of the definitions of the fractional Laplacian, we have
\begin{align*}
  \norm{(-\Delta_v)^{\alpha/2} \vpran{\vint{v}^{-2} f}}_{L^2(\R^3_v)}^2
&= C \iint_{\R^6}
     \frac{\abs{\vint{v'}^{-2} f(v') - \vint{v}^{-2} f(v)}^2}{|v' - v|^{3 + 2\alpha}} \dv' \dv
\\
& \hspace{-2cm}
\leq 2C \iint_{\R^6}
        \vint{v'}^{-4} \frac{\abs{f(v') - f(v)}^2}{|v' - v|^{3 + 2\alpha}} \dv' {\!\dv}
   + 2C \iint_{\R^6}
     \frac{\abs{\vint{v'}^{-2} - \vint{v}^{-2}}^2}{|v' - v|^{3 + 2\alpha}} |f(v)|^2 \dv' {\!\dv}, 
\end{align*}
where the first term on the right-hand side is readily bounded by $C \norm{(-\Delta_v)^{\alpha/2} f}_{L^2(\R^3_v)}^2$. Hence we focus on the second term, which satisfies
\begin{align} \label{bound:neg-homo-commute}
  \iint_{\R^6}
     \frac{\abs{\vint{v'}^{-2} - \vint{v}^{-2}}^2}{|v' - v|^{3 + 2\alpha}} |f(v)|^2 \dv' {\!\dv}  \nn
&= \iint_{\R^6}
     \frac{1}{\vint{v'}^4} \frac{1}{\vint{v}^4} 
     \frac{\abs{|v|^2 - |v|^{2}}^2}{|v' - v|^{3 + 2\alpha}} |f(v)|^2 \dv' {\!\dv}
\\
&\leq
   \iint_{\R^6}
     \frac{1}{\vint{v'}^4} \frac{1}{\vint{v}^4} 
     \frac{|v|^2 + |v'|^2}{|v' - v|^{1 + 2\alpha}} |f(v)|^2 \dv' {\!\dv}   \nn
\\
&\leq
   \int_{\R^3} \frac{1}{\vint{v}^2} |f(v)|^2
      \vpran{\int_{\R^3} \frac{1}{\vint{v'}^2} \frac{1}{|v' - v|^{1 + 2\alpha}} \dv'} \dv.  
\end{align}
For any $v \in \R^3$, make the separation of the domain as
\begin{align*}
    \R^3 
= \{v' \, | \, |v'| > 2|v| \ \text{or} \ |v'| < |v|/2\}
  \cup 
   \{v' \, | \, |v|/2 \leq |v'| \leq 2|v|\}       
\Denote
   \Omega_1 \cup \Omega_2.
\end{align*}
Then the $v'$-integration in~\eqref{bound:neg-homo-commute} satisfies
\begin{align*} 
   \int_{\R^3} \frac{1}{\vint{v'}^2} \frac{1}{|v' - v|^{1 + 2\alpha}} \dv'
&= \int_{\Omega_1} \frac{1}{\vint{v'}^2} \frac{1}{|v' - v|^{1 + 2\alpha}} \dv'
       + \int_{\Omega_2} \frac{1}{\vint{v'}^2} \frac{1}{|v' - v|^{1 + 2\alpha}} \dv' \nn
\\
& \leq
   C \int_{\Omega_1} \frac{1}{\vint{v'}^2} \frac{1}{|v'|^{1 + 2\alpha}} \dv'
+ \frac{C}{\vint{v}^2}\int_{|v' - v| \leq 3 \vint{v}} \frac{1}{|v' - v|^{1 + 2\alpha}} \dv'
\\
& \leq
  C + \frac{C}{\vint{v}^2} \vint{v}^{2-2\alpha}
\leq 
  2C < \infty,
\end{align*}
where $C$ is independent of $v$. Hence by letting $p \in (2, 6)$ be the exponent in the Sobolev embedding 
\begin{align*}
  \norm{f}_{L^p(\R^3_v)} 
\leq 
   C \norm{(-\Delta_v)^{\alpha/2} f}_{L^2(\R^3_v)}, 
\end{align*}
we can bound the term on the right-hand side of~\eqref{bound:neg-homo-commute} as follows:
\begin{align} \label{bound:1-frac}
& \int_{\R^3} \frac{1}{\vint{v}^2} |f(v)|^2
     \vpran{\int_{\R^3} \frac{1}{\vint{v'}^2} \frac{1}{|v' - v|^{1 + 2\alpha}} \dv'} \dv \nn
\\
& \hspace{1cm}
\leq
  C \int_{\R^3} \frac{1}{\vint{v}^2} |f(v)|^2 \dv
\leq
 C \vpran{\int_{\R^3} \frac{1}{\vint{v}^{2 q}} \dv}^{2/q}  
  \norm{f}_{L^p(\R^3_v)}^{2}
\leq
  C \norm{(-\Delta_v)^{\alpha/2} f}_{L^2_v}^2,
\end{align}
where $q = (p/2)' = p/(p-2) > 3/2$ since $p \in (2, 6)$. We therefore get
\begin{align*}
   \norm{(-\Delta_v)^{\alpha/2} \vpran{\vint{v}^{-2} f}}_{L^2(\R^3_v)}^2
\leq
  C \norm{(-\Delta_v)^{\alpha/2} f}_{L^2(\R^3_v)}^2.
\end{align*}
The lemma holds by a further integration in $x$.
\end{proof}

\Ni {\bf Acknowledgement.} R. Alonso gratefully acknowledges the support from Conselho Nacional de Desenvolvimento Científico e Tecnológico - CNPq, grant Bolsa de Produtividade em Pesquisa (303325/2019-4). The research of Y. Morimoto was supported by JSPS Kakenhi Grant No.17K05318. The research of W. Sun was supported by the NSERC Discovery Grant R611626. T. Yang's research was supported by the General Research Fund of Hong Kong CityU No 11304419. 

\end{document}